\newcommand{\sD}{\ensuremath{\mathscr{D}}\xspace}
\newcommand{\sH}{\ensuremath{\mathscr{H}}\xspace}
\newcommand{\sL}{\ensuremath{\mathscr{L}}\xspace}
\newcommand{\sP}{\ensuremath{\mathscr{P}}\xspace}
\newcommand{\fkm}{\ensuremath{\mathfrak{m}}\xspace}
\newcommand{\nat}{{\natural}}
\newcommand{\BA}{\ensuremath{\mathbb {A}}\xspace}
\newcommand{\BB}{\ensuremath{\mathbb {B}}\xspace}
\newcommand{\BC}{\ensuremath{\mathbb {C}}\xspace}
\newcommand{\BJ}{\ensuremath{\mathbb {J}}\xspace}
\newcommand{\BK}{\ensuremath{\mathbb {K}}\xspace}
\newcommand{\BO}{\ensuremath{\mathbb {O}}\xspace}
\newcommand{\BP}{\ensuremath{\mathbb {P}}\xspace}
\newcommand{\BQ}{\ensuremath{\mathbb {Q}}\xspace}
\newcommand{\BZ}{\ensuremath{\mathbb {Z}}\xspace}
\newcommand{\CC}{\ensuremath{\mathcal {C}}\xspace}
\newcommand{\CI}{\ensuremath{\mathcal {I}}\xspace}
\newcommand{\CM}{\ensuremath{\mathcal {M}}\xspace}
\newcommand{\CO}{\ensuremath{\mathcal {O}}\xspace}
\newcommand{\CW}{\ensuremath{\mathcal {W}}\xspace}
\newcommand{\cA}{\ensuremath{\mathcal {A}}\xspace}
\newcommand{\cC}{\ensuremath{\mathcal {C}}\xspace}
\newcommand{\cE}{\ensuremath{\mathcal {E}}\xspace}
\newcommand{\cF}{\ensuremath{\mathcal {F}}\xspace}
\newcommand{\cG}{\ensuremath{\mathcal {G}}\xspace}
\newcommand{\cH}{\ensuremath{\mathcal {H}}\xspace}
\newcommand{\cI}{\ensuremath{\mathcal {I}}\xspace}
\newcommand{\cJ}{\ensuremath{\mathcal {J}}\xspace}
\newcommand{\cK}{\ensuremath{\mathcal {K}}\xspace}
\newcommand{\cL}{\ensuremath{\mathcal {L}}\xspace}
\newcommand{\cM}{\ensuremath{\mathcal {M}}\xspace}
\newcommand{\cN}{\ensuremath{\mathcal {N}}\xspace}
\newcommand{\cO}{\ensuremath{\mathcal {O}}\xspace}
\newcommand{\cP}{\ensuremath{\mathcal {P}}\xspace}
\newcommand{\cQ}{\ensuremath{\mathcal {Q}}\xspace}
\newcommand{\cS}{\ensuremath{\mathcal {S}}\xspace}
\newcommand{\cW}{\ensuremath{\mathcal {W}}\xspace}
\newcommand{\cY}{\ensuremath{\mathcal {Y}}\xspace}
\newcommand{\cZ}{\ensuremath{\mathcal {Z}}\xspace}
\newcommand{\Ad}{{\mathrm{Ad}}}
\DeclareMathOperator{\Aut}{Aut}
\newcommand{\Ch}{{\mathrm{Ch}}}
\DeclareMathOperator{\coker}{coker}
\newcommand{\cl}{{\mathrm{cl}}}
\newcommand{\codim}{{\mathrm {codim}}}
\DeclareMathOperator{\diag}{diag}
\newcommand{\Div}{{\mathrm{Div}}}
\renewcommand{\div}{{\mathrm{div}}}
\DeclareMathOperator{\Eis}{Eis}
\DeclareMathOperator{\End}{End}
\DeclareMathOperator{\Fr}{Fr}
\DeclareMathOperator{\Gal}{Gal}
\newcommand{\GL}{\mathrm{GL}}
\DeclareMathOperator{\Hom}{Hom}
\newcommand{\id}{\ensuremath{\mathrm{id}}\xspace}
\let\Im\relax
\DeclareMathOperator{\Im}{Im}
\newcommand{\Ind}{{\mathrm{Ind}}}
\newcommand{\inv}{{\mathrm{inv}}}
\DeclareMathOperator{\Ker}{Ker}
\DeclareMathOperator{\Nm}{Nm}
\DeclareMathOperator{\ord}{ord}
\DeclareMathOperator{\Quot}{Quot}
\newcommand{\PGL}{{\mathrm{PGL}}}
\DeclareMathOperator{\Pic}{Pic}
\newcommand{\red}{\ensuremath{\mathrm{red}}\xspace}
\DeclareMathOperator{\Res}{Res}
\newcommand{\Sat}{{\mathrm{Sat}}}
\newcommand{\SL}{{\mathrm{SL}}}
\DeclareMathOperator{\Spec}{Spec}
\DeclareMathOperator{\St}{St}
\DeclareMathOperator{\Supp}{Supp}
\DeclareMathOperator{\Tr}{Tr}
\DeclareMathOperator{\vol}{vol}
\newcommand{\Bun}{{\mathrm{Bun}}}
\newcommand{\Sht}{{\mathrm{Sht}}}
\newcommand{\Hk}{{\mathrm{Hk}}}
\newcommand{\Gr}{\mathrm{Gr}}
\newcommand{\Iw}{\mathrm{Iw}}
\newcommand{\Mat}{\mathrm{Mat}}
\newcommand{\wt}{\widetilde}
\newcommand{\wh}{\widehat}
\newcommand{\pair}[1]{\langle {#1} \rangle}
\newcommand{\ov}{\overline}
\newcommand{\incl}{\hookrightarrow}
\newcommand{\lra}{\longrightarrow}
\newcommand{\bs}{\backslash}
\newcommand{\ep}{\varepsilon}
\renewcommand\AA{\mathbb{A}}
\renewcommand\CC{\mathbb{C}}
\newcommand\FF{\mathbb{F}}
\newcommand\GG{\mathbb{G}}
\newcommand\II{\mathbb{I}}
\newcommand\JJ{\mathbb{J}}
\newcommand\KK{\mathbb{K}}
\newcommand\OO{\mathbb{O}}
\newcommand\PP{\mathbb{P}}
\newcommand\QQ{\mathbb{Q}}
\newcommand\ZZ{\mathbb{Z}}
\newcommand\bR{\mathbf{R}}
\newcommand\bJ{\mathbf{J}}
\newcommand\frA{\mathfrak{A}}
\newcommand\frD{\mathfrak{D}}
\newcommand\frK{\mathfrak{K}}
\newcommand\frS{\mathfrak{S}}
\newcommand\frT{\mathfrak{T}}
\newcommand\frX{\mathfrak{X}}
\renewcommand\a\alpha
\renewcommand\b\beta
\newcommand\g\gamma
\renewcommand\d\delta
\newcommand\D\Delta
\renewcommand{\k}{\kappa}
\renewcommand{\th}{\theta}
\newcommand{\vth}{\vartheta}
\newcommand{\Th}{\Theta}
\newcommand{\ph}{\varphi}
\newcommand{\s}{\sigma}
\newcommand{\Sig}{\Sigma}
\renewcommand{\t}{\tau}
\renewcommand\r{\rho}
\newcommand\io{\iota}
\newcommand{\y}{\eta}
\newcommand{\z}{\zeta}
\newcommand{\vp}{\varpi}
\renewcommand{\l}{\lambda}
\renewcommand{\L}{\Lambda}
\newcommand{\om}{\omega}
\newcommand{\Om}{\Omega}
\renewcommand\i{\imath}
\renewcommand\j{\jmath}
\newcommand\xr{\xrightarrow}
\newcommand{\isom}{\stackrel{\sim}{\to}}
\newcommand{\surj}{\twoheadrightarrow}
\renewcommand{\c}{\circ}
\newcommand\ha{\frac{1}{2}}
\newcommand{\leftexp}[2]{{\vphantom{#2}}^{#1}{#2}}
\newcommand{\pH}{\leftexp{p}{\textup{H}}}
\newcommand{\ptau}{\leftexp{p}{\tau}}
\newcommand{\Ql}{\QQ_{\ell}}
\newcommand{\Qlbar}{\overline{\QQ}_\ell}
\newcommand{\kbar}{\overline{k}}
\newcommand{\twtimes}[1]{\stackrel{#1}{\times}}
\newcommand{\htimes}{\widehat{\times}}
\newcommand{\jiao}[1]{\langle{#1}\rangle}
\newcommand\un{\underline}
\newcommand\one{\mathbf{1}}
\newcommand\op{\oplus}
\newcommand\ot{\otimes}
\newcommand\vn{\varnothing}
\newcommand{\sss}{\subsubsection}
\newcommand\mat[4]{\left[\begin{array}{cc} #1 & #2 \\ #3 & #4 \end{array}\right]}  
\newcommand{\cohog}[2]{\textup{H}^{#1}({#2})}     
\newcommand{\cohoc}[2]{\textup{H}_{c}^{#1}({#2})}     
\newcommand{\hBM}[2]{\textup{H}^{\textup{BM}}_{#1}({#2})}  
\newcommand{\oll}[1]{\overleftarrow{#1}}
\newcommand{\orr}[1]{\overrightarrow{#1}}
\newcommand{\olr}[1]{\overleftrightarrow{#1}}
\renewcommand\div{\textup{div}}
\newcommand\ev{\textup{ev}}
\newcommand\AJ{\textup{AJ}}
\newcommand\AL{\textup{AL}}
\newcommand\add{\textup{add}}
\newcommand{\Gm}{\GG_m}
\newcommand{\Ga}{\GG_a}
\newcommand\hX{\wh{X}}
\newcommand\mult{\textup{mult}}
\newcommand\pr{\textup{pr}}
\newcommand\pt{\textup{pt}}
\newcommand\Prym{\textup{Prym}}
\newcommand\Perv{\textup{Perv}}
\newcommand\sep{\textup{sep}}
\newcommand\inst{\textup{inst}}
\newcommand\ind{\textup{ind}}
\newcommand\Sect{\textup{Sect}}
\newcommand\hs{\heartsuit}
\newcommand\dm{\diamondsuit}
\newcommand\da{\dagger}
\newcommand\sh{\sharp}
\newcommand\na{\natural}
\newcommand\fl{\flat}
\newcommand{\sqR}{\sqrt{R}}
\newcommand{\sqx}{\sqrt{x}}
\newcommand{\Si}{\Sigma_{\infty}}
\newcommand{\Sf}{\Sigma_{f}}
\newcommand{\Sii}{\Sig; \Sigma_{\infty}}
\newcommand{\frSi}{\frS_{\infty}}
\newcommand{\Di}{D_{\infty}}
\newcommand{\mi}{\mu_{\infty}}
\newcommand{\mf}{\mu_{f}}
\newcommand{\bsq}{\square}
\newcommand\bx{\mathbf{x}}
\newcommand\ba{\mathbf{a}}
\newtheorem{theorem}{Theorem}
\newtheorem{prop}[theorem]{Proposition}
\newtheorem{lem}[theorem]{Lemma}
\newtheorem{lemma}[theorem]{Lemma}
\newtheorem{conj}[theorem]{Conjecture}
\newtheorem{cor}[theorem]{Corollary}
\newtheorem{thm}[theorem]{Theorem}
\newtheorem*{claim}{Claim}
\theoremstyle{definition}
\newtheorem{defn}[theorem]{Definition}
\newtheorem{remark}[theorem]{Remark}
\newenvironment{altenumerate}
   {\begin{list}
      {\textup{(\theenumi)} }
      {\usecounter{enumi}
       \setlength{\labelwidth}{0pt}
       \setlength{\labelsep}{0pt}
       \setlength{\leftmargin}{0pt}
       \setlength{\itemsep}{\the\smallskipamount}
       \renewcommand{\theenumi}{\roman{enumi}}
      }}
   {\end{list}}
\newcommand{\matrixx}[4]
{\left[ \begin{array}{cc}
  #1 &  #2  \\
  #3 &  #4  \\
 \end{array}\right]}
 \newcommand{\smat}[4]
{\bigl[\begin{smallmatrix}#1 & #2\\ #3& #4\end{smallmatrix}\bigr]}
\numberwithin{equation}{section}
\numberwithin{theorem}{section}
\renewcommand{\to}{%
   \ifbool{@display}{\longrightarrow}{\rightarrow}%
   }
\let\shortmapsto\mapsto
\renewcommand{\mapsto}{%
   \ifbool{@display}{\longmapsto}{\shortmapsto}%
   }
\newlength{\olen}
\newlength{\ulen}
\newlength{\xlen}
\newcommand{\xra}[2][]{%
   \ifbool{@display}%
      {\settowidth{\olen}{$\overset{#2}{\longrightarrow}$}%
       \settowidth{\ulen}{$\underset{#1}{\longrightarrow}$}%
       \settowidth{\xlen}{$\xrightarrow[#1]{#2}$}%
       \ifdimgreater{\olen}{\xlen}%
          {\underset{#1}{\overset{#2}{\longrightarrow}}}%
          {\ifdimgreater{\ulen}{\xlen}%
             {\underset{#1}{\overset{#2}{\longrightarrow}}}
             {\xrightarrow[#1]{#2}}}}%
      {\xrightarrow[#1]{#2}}
   }
\newcommand{\xyra}[2][]{%
   \settowidth{\xlen}{$\xrightarrow[#1]{#2}$}%
   \ifbool{@display}%
      {\settowidth{\olen}{$\overset{#2}{\longrightarrow}$}%
       \settowidth{\ulen}{$\underset{#1}{\longrightarrow}$}%
       \ifdimgreater{\olen}{\xlen}%
          {\mathrel{\xymatrix@M=.12ex@C=3.2ex{\ar[r]^-{#2}_-{#1} &}}}%
          {\ifdimgreater{\ulen}{\xlen}%
             {\mathrel{\xymatrix@M=.12ex@C=3.2ex{\ar[r]^-{#2}_-{#1} &}}}
             {\mathrel{\xymatrix@M=.12ex@C=\the\xlen{\ar[r]^-{#2}_-{#1} &}}}}}%
      {\mathrel{\xymatrix@M=.12ex@C=\the\xlen{\ar[r]^-{#2}_-{#1} &}}}%
   }
\newcommand{\xla}[2][]{%
   \ifbool{@display}%
      {\settowidth{\olen}{$\overset{#2}{\longleftarrow}$}%
       \settowidth{\ulen}{$\underset{#1}{\longleftarrow}$}%
       \settowidth{\xlen}{$\xleftarrow[#1]{#2}$}%
       \ifdimgreater{\olen}{\xlen}%
          {\underset{#1}{\overset{#2}{\longleftarrow}}}%
          {\ifdimgreater{\ulen}{\xlen}%
             {\underset{#1}{\overset{#2}{\longleftarrow}}}
             {\xleftarrow[#1]{#2}}}}%
      {\xleftarrow[#1]{#2}}
   }
\newcommand{\isoarrow}{%
   \ifbool{@display}{\overset{\sim}{\longrightarrow}}{\xrightarrow\sim}%
   }
\renewcommand{\lra}{%
   \ifbool{@display}{\longleftrightarrow}{\leftrightarrow}%
   }
\begin{document}

\thanks{Research of Z.Yun partially supported by the Packard Foundation and the NSF grant DMS 1302071/1736600. Research of W. Zhang partially supported by the NSF grant DMS-1601144 and a Simons fellowship. }

\title[Shtukas and the Taylor expansion (II)]{Shtukas and the Taylor expansion of $L$-functions (II)} 
\author{Zhiwei Yun}
\address{Zhiwei Yun: Massachusetts Institute of Technology, Department of Mathematics, 77 Massachusetts Avenue, Cambridge, MA 02139, USA}
\email{zyun@mit.edu}
\author{Wei Zhang}
\address{Wei Zhang: Massachusetts Institute of Technology, Department of Mathematics, 77 Massachusetts Avenue, Cambridge, MA 02139, USA}
\email{weizhang@mit.edu}

\subjclass[2010]{Primary 11F67; Secondary 14G35, 11F70, 14H60}
\keywords{$L$-functions; Drinfeld Shtukas; Gross--Zagier formula; Waldspurger formula}



\date{\today}

\begin{abstract}For arithmetic applications, we extend and refine our results in
\cite{YZ} to allow ramifications in a minimal way. Starting with a possibly ramified quadratic extension $F'/F$ of function fields over a finite field in odd characteristic, and a finite set of places $\Sig$ of $F$ that are unramified in $F'$, we define a collection of Heegner--Drinfeld cycles on the moduli stack of $\PGL_{2}$-Shtukas with $r$-modifications and Iwahori level structures at places of $\Sig$. For a cuspidal automorphic representation $\pi$ of $\PGL_{2}(\AA_{F})$ with square-free level $\Sig$, and $r\in\ZZ_{\ge0}$ whose parity matches the root number of $\pi_{F'}$, we prove a series of identities between
\begin{enumerate}
\item The product of the central derivatives of the normalized $L$-functions 
$$\sL^{(a)}(\pi, \ha)\sL^{(r-a)}(\pi\otimes\y, \ha),$$ 
where $\y$ is the quadratic id\`ele class character attached to $F'/F$, and $0\le a\le r$;
\item The self intersection number of a linear combination of Heegner--Drinfeld cycles.
\end{enumerate}
In particular, we can now obtain global $L$-functions with odd vanishing orders.  These identities are function-field analogues of the formulae of Waldspurger and Gross--Zagier for higher derivatives of $L$-functions.
\end{abstract}

\maketitle

\tableofcontents

\section{Introduction}
\subsection{Main results}

Let $X$ be a smooth projective and geometrically connected curve over a finite field $k=\FF_{q}$ of characteristic $p\ne 2$. Let $F=k(X)$ be the function field of $X$ and $\AA_{F}$ be the ring of ad\`eles of $F$. Let $G=\PGL_{2}$. Let $\pi$ be a cuspidal automorphic representation of $G(\AA_{F})$.   Let $X'$ be another smooth projective and geometrically connected curve over $k$ together with a double cover $\nu: X'\to X$. 

In \cite{YZ}, under the assumption that both $\pi$ and $\nu$ are everywhere unramified, we proved an analogue of the formulae of Waldspurger \cite{W} and Gross--Zagier \cite{GZ} for higher order central derivatives of the base change $L$-function $L(\pi_{F'}, s)$. Our formula reads 
\begin{equation}\label{HGZ unram}
\frac{|\om_{X}|}{2(\log q)^{r}L(\pi,\Ad,1)}\sL^{(r)}(\pi_{F'}, \ha)= \Big([\Sht^{\mu}_{T}]_{\pi}, [\Sht^{\mu}_{T}]_{\pi}\Big)_{\Sht'^{r}_{G}}.
\end{equation}
Here $r\ge0$ is an {\em even} integer. This formula relates the $r$-th central derivative of a certain normalization \footnote{In \cite{YZ}, the definition of $\sL(\pi_{F'}, s)$ included the denominator $L(\pi,\Ad,1)$; in the current paper, we separate $L(\pi,\Ad,1)$ from $\sL(\pi_{F'},s)$.} $\sL(\pi_{F'}, s)$  of the  $L$-function of the base change $\pi_{F'}$ to the self-intersection number of a certain algebraic cycle $[\Sht^{\mu}_{T}]_{\pi}$ on the moduli stack of $G$-Shtukas $\Sht'^{r}_{G}$ with $r$ modifications.  The cycles $[\Sht^{\mu}_{T}]_{\pi}$ are analogous to the Heegner points on modular curves.

In this paper, we generalize the formula \eqref{HGZ unram} to the case where the double cover $\nu$ is allowed to be ramified and the automorphic representation $\pi$ is allowed to have square-free level. Moreover, we refine the formula \eqref{HGZ unram} to give a geometric expression of central derivatives of the form $\sL^{(a)}(\pi,\ha)\sL^{(b)}(\pi\ot\y,\ha)$. Below we set up some notation for the statement of our main results.

\sss{Ramifications of the automorphic representation}
Let $\Sig$ be a finite set of closed points of $X$. Let $\pi$ be a cuspidal automorphic representation of $G(\AA)$ which is isomorphic to an unramified twist of the Steinberg representation at each $x\in \Sig$, and unramified away from $\Sig$. Let $N=\deg \Sig$.

Let $R$ be the ramification locus of the double cover $\nu$, and let $\r=\deg R$. Then the genus $g'$ of $X'$ and the genus  $g$ of $X$ are related by $g'-1=2(g-1)+\r/2$. Let $\y=\y_{F'/F}: F^{\times}\bs\AA^{\times}_{F}\to \{\pm1\}$ be the id\`ele class character corresponding to the extension $F'/F$. 
 
We assume 
$$
\text{\it The sets $R$ and $\Sig$ are disjoint}. 
$$The normalized $L$-functions
\begin{eqnarray*}
\sL(\pi, s+\ha)&=&q^{(2g-2+N/2)s}L(\pi,s+\ha)\\
\sL(\pi\ot\y, s+\ha)&=&q^{(2g-2+\r+N/2)s}L(\pi\otimes\eta,s+\ha)
\end{eqnarray*}
are either even or odd functions in $s$ depending on the root numbers of $\pi$ and $\pi\otimes\y$. We define a normalized $L$-function in two variables 
\begin{equation*}
\sL_{F'/F}(\pi, s_{1},s_{2}):=\sL(\pi,s_1+s_2+\ha)\sL(\pi\otimes\eta,s_1-s_2+\ha)\end{equation*}
so that its specialization to $s_{1}=s,s_{2}=0$ gives the normalized base change $L$-function $\sL(\pi_{F'}, s+\ha)$. Then $\sL_{F'/F}(\pi, s_{1},s_{2})$ satisfies a function equation
\begin{equation*}
\sL_{F'/F}(\pi, s_{1},s_{2})=(-1)^{r(\pi_{F'})}\sL_{F'/F}(\pi, -s_{1},-s_{2})
\end{equation*}
where $(-1)^{r(\pi_{F'})}$ is the root number for the base change $\pi_{F'}$, and
\begin{equation*}
r(\pi_{F'})=\#\Big\{x\in \Sig\bigm| x \mbox{ is inert in } X'\Big\}.
\end{equation*}

For $r_{+}, r_{-}\in \ZZ_{\ge0}$, we define
\begin{equation*}
\sL^{(r_{+},r_{-})}_{F'/F}(\pi):=\left(\frac{\partial}{\partial s_{1}}\right)^{r_{+}}\left(\frac{\partial}{\partial s_{2}}\right)^{r_{-}}\sL_{F'/F}(\pi,s_{1}, s_{2})\Big|_{s_{1}=s_{2}=0}.
\end{equation*}
From the functional equation of $\sL_{F'/F}(\pi,s_{1},s_{2})$, we see that $\sL^{(r_{+},r_{-})}_{F'/F}(\pi)=0$ unless
\begin{equation*}
r_{+}+r_{-}\equiv r(\pi_{F'})\mod2.
\end{equation*}

\sss{The moduli of Shtukas with Iwahori level structure}
On the geometric side, we will consider the moduli stack of $G$-Shtukas with Iwahori level structures. The points with Iwahori level structure come in two kinds: those resembling the finite primes dividing the level $N$ for a modular curve $X_{0}(N)$ and those resembling the Archimedean place. In fact, starting with a  finite subset $\Sig\subset |X|$ together with a disjoint union decomposition $\Sig=\Sf\sqcup \Si$ and a non-negative integer $r$ such that $r\equiv \#\Si\mod 2$, we will define in \S\ref{sss:mu more} and \S\ref{sss:ShtG} a moduli stack $\Sht^{r}_{G}(\Sii)$ equipped with a map
\begin{equation*}
\Pi^{r}_{G}\colon \Sht^{r}_{G}(\Sii) \to  X^{r}\times \frSi,
\end{equation*}
where $\frSi=\prod_{x\in \Si}\Spec k(x)$. Then  $\Sht^{r}_{G}(\Sii)$ is a smooth $2r$-dimensional DM stack locally of finite type over $k$ (see Proposition \ref{p:ShtG}).
We will also consider the base change
\begin{equation*}
\Sht'^{r}_{G}(\Sii):=\Sht^{r}_{G}(\Sii)\times_{(X^{r}\times\frSi)}(X'^{r}\times\frSi'),
\end{equation*}
where $\frSi'=\prod_{x'\in \Si'}\Spec k(x')$, and $\Si'=\nu^{-1}(\Si)$. 
If we base change $\Sht'^{r}_{G}(\Sii)$ to $\kbar$, it decomposes as
\begin{equation*}
\Sht'^{r}_{G}(\Sii)\ot\kbar=\coprod_{\xi}\Sht'^{r}_{G}(\Sig;\xi),
\end{equation*}
where $\xi=(\xi_{x'})_{x'\in \Si'}$ runs over the choices of a $\kbar$-point $\xi_{x'}$ over each $x'\in \Si'$. We fix such a $\xi$.

There is an action of the spherical Hecke algebra $\sH^{\Sig}_{G}=\otimes_{x\in|X|-\Sig}\sH_{x}$ on the cohomology groups $\cohoc{*}{\Sht'^{r}_{G}(\Sig;\xi),\Ql}$, which is infinite-dimensional in the middle degree. We have an Eisenstein ideal $\cI_{\Eis}\subset \sH^{\Sig}_{G}$ defined in the same way as in \cite[\S4.1]{YZ}. We prove a spectral decomposition similar to the unramified case.

\begin{theorem}\label{th:intro spec decomp} There is a canonical decomposition of $\sH^{\Sig}_{G}$-modules
\begin{equation}\label{coho' decomp}
\cohoc{2r}{\Sht'^{r}_{G}(\Sig;\xi),\Qlbar}=\big(\bigoplus_{\fkm}V'(\xi)_{\fkm}\big)\oplus V'(\xi)_{\Eis},
\end{equation}
where
\begin{itemize}
\item  $\fkm$ runs over a finite set of maximal ideals of $\sH^{\Sig}_{G}$ which do not contain the Eisenstein ideal, and $V'(\xi)_{\fkm}$ is the generalized eigenspace of the $\sH^{\Sig}_{G}$-action on $\cohoc{2r}{\Sht'^{r}_{G}(\Sig;\xi),\Qlbar}$ corresponding to $\fkm$. Moreover,  $V'(\xi)_{\fkm}$ is finite-dimensional over $\Qlbar$.

\item  $V'(\xi)_{\Eis}$ is a finitely generated $\sH^{\Sig}_{G}$-module on which the action of $\sH^{\Sig}_{G}$ factors through $\sH^{\Sig}_{G}/\cI^{m}_{\Eis}$ for some $m>0$.
\end{itemize}
\end{theorem}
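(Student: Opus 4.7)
The plan is to follow the strategy of the analogous spectral decomposition \cite[\S4]{YZ} in the everywhere unramified case, with modifications to accommodate the Iwahori level structure at $\Sig$ and the ramification of $\nu$. First, since $\Sht'^{r}_{G}(\Sii)$ is obtained by base change from $\Sht^{r}_{G}(\Sii)$ along the finite morphism $X'^{r}\times\frSi'\to X^{r}\times\frSi$, and since the $\sH^{\Sig}_{G}$-action arises from Hecke correspondences at places away from $\Sig$ which are insensitive to this base change, a spectral decomposition of $\cohoc{2r}{\Sht^{r}_{G}(\Sii)\otimes\kbar,\Qlbar}$ as an $\sH^{\Sig}_{G}$-module transfers, by pullback and restriction over $\xi$, to one of $\cohoc{2r}{\Sht'^{r}_{G}(\Sig;\xi),\Qlbar}$. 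It therefore suffices to establish the spectral decomposition on the $G$-shtuka side.

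For this, one constructs, following Drinfeld and the argument of \cite[\S4]{YZ}, an $\sH^{\Sig}_{G}$-equivariant comparison between $\cohoc{2r}{\Sht^{r}_{G}(\Sii)\otimes\kbar,\Qlbar}$ and the space of automorphic functions on $G(F)\backslash G(\AA_{F})/K$, where $K\subset G(\AA_{F})$ is the product of maximal compact subgroups at places outside $\Sig$ and Iwahori subgroups at $x\in\Sig$. On the moduli side, the Iwahori structure at $x\in\Sf$ is encoded by a line in the fiber of the tautological bundle at $x$; for $x\in\Si$ the extra pinning $\xi_{x}$ cuts this down further via the local Frobenius. Once the comparison is in place, the desired decomposition \eqref{coho' decomp} follows from the cuspidal/Eisenstein decomposition of the space of Iwahori-level automorphic forms. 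For each cuspidal $\pi$ with square-free level dividing $\Sig$ (necessarily an unramified twist of Steinberg at each $x\in\Sig$), the associated maximal ideal $\fkm_{\pi}\subset\sH^{\Sig}_{G}$ avoids $\cI_{\Eis}$, and $V'(\xi)_{\fkm_{\pi}}$ is finite-dimensional because cusp forms of bounded conductor form a finite-dimensional space. The Eisenstein part $V'(\xi)_{\Eis}$ is analyzed by reducing via the constant-term map to the Picard stack $\Bun_{\Gm}$, on which the Hecke operators act by explicit characters; this shows that $\cI^{m}_{\Eis}$ annihilates $V'(\xi)_{\Eis}$ for some $m\ge 1$ and that $V'(\xi)_{\Eis}$ is finitely generated over $\sH^{\Sig}_{G}$.

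The main obstacle is the comparison with automorphic forms in the presence of the Iwahori level and the twist by $\xi$. Carefully matching the moduli-theoretic Iwahori data at each $x\in\Sig$ with the correct subspace of Iwahori-fixed vectors on the automorphic side---including the Frobenius eigenline determined by $\xi_{x}$ for $x\in\Si$, matching the Atkin--Lehner involution---and verifying that the spherical Hecke action away from $\Sig$ is preserved, constitutes the bulk of the work. Once this is settled, the remaining steps are essentially translations of the arguments from \cite[\S4]{YZ}, the finite generation of the Hecke module ultimately boiling down to the finite-dimensionality of spaces of cusp forms of bounded conductor.
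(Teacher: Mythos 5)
There is a genuine gap, and it lies at the heart of your argument: the claimed ``$\sH^{\Sig}_{G}$-equivariant comparison between $\cohoc{2r}{\Sht^{r}_{G}(\Sii)\otimes\kbar,\Qlbar}$ and the space of automorphic functions on $G(F)\bs G(\AA_{F})/K$'' does not exist for $r\ge 1$. Such an identification is available only for $r=0$, where $\Sht^{0}_{G}(\Sig)$ is the discrete groupoid $G(F)\bs G(\AA)/K$ (this is exactly how the paper uses automorphic forms, in \S\ref{sss:aut spec}). For $r\ge 1$ the middle-degree compactly supported cohomology is an infinite-dimensional $\sH^{\Sig}_{G}$-module whose non-Eisenstein pieces are expected to be of the form $\pi^{K}\otimes W_{\pi}'^{\otimes r}\otimes\ell_{\pi,\xi}$ (cf.\ \eqref{RP decomp}), not $\pi^{K}$; in particular neither the decomposition \eqref{coho' decomp} nor the finite-dimensionality of $V'(\xi)_{\fkm}$ can be deduced from the cuspidal/Eisenstein decomposition of Iwahori-level automorphic forms or from ``cusp forms of bounded conductor form a finite-dimensional space.'' Your citation of \cite[\S4]{YZ} also points to the wrong place: that section concerns the analytic side (the kernel function and the Eisenstein ideal acting on automorphic forms), not the cohomology of Shtukas; the cohomological spectral decomposition in the unramified case is \cite[\S7]{YZ} and is proved by entirely different, geometric means.

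What is actually needed, and what the paper supplies in \S\ref{ss:horo}--\S\ref{ss:coho spec decomp}, is: a stratification of $\Sht^{r}_{G}(\Sii)$ near infinity by horocycles ${}^{\un\k}\Sht$ whose direct images are (shifted) local systems coming from rank-one Shtukas (Corollary \ref{c:Pk perv}); a cohomological constant term map $\g:\pH^{0}K\to\oplus P_{\un\k}$ which is an isomorphism modulo constructible (finite) pieces and intertwines the Hecke action with $a_{\Eis}$ (Lemmas \ref{l:const term}, \ref{l:const Sat}); the resulting finiteness statements, namely that any $f\in\cI_{\Eis}$ acts with finite rank on $V$ and that $V$ is finitely generated over $\sH_{y}\ot\Ql$ (Corollary \ref{c: IEis fin}, Proposition \ref{p:Hy fin}); and only then the decomposition into a finite-dimensional part supported at finitely many non-Eisenstein maximal ideals and an Eisenstein part killed by a power of $\cI_{\Eis}$ (Theorem \ref{th:spec decomp}), with the base-changed case $\Sht'^{r}_{G}(\Sig;\xi)$ handled by rerunning the same sheaf-theoretic argument after pullback to $X'^{r}\times\frSi'$ (not by ``transferring'' a decomposition, which is not automatic). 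None of this machinery is replaced by anything in your proposal, so the proof as written does not go through; the Iwahori/Atkin--Lehner bookkeeping you identify as ``the bulk of the work'' is not where the real difficulty lies.
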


Using the cup product, we have a perfect pairing
\begin{equation}\label{Vs pairing}
(\cdot, \cdot)_{\Sht'^{r}_{G}(\Sig;\xi)}: V'(\xi)_{\fkm}\times V'(\xi)_{\fkm}\to \Qlbar.
\end{equation}

\sss{The Heegner--Drinfeld cycle}
We make the following assumptions which are analogous to the Heegner hypothesis:
\begin{eqnarray}
\label{Sf split}&&\mbox{\it All places in $\Sf$ are split in $X'$;}\\
\label{Si inert}&&\mbox{\it All places in $\Si$ are inert in $X'$.}
\end{eqnarray}
By considering rank one Shtukas on $X'$, we obtain a moduli stack $\Sht^{\un\mu}_{T}(\mi\cdot\Si')$ that depends on the data $\un\mu\in \{\pm1\}^{r}$ and $\mi\in\{\pm1\}^{\Si}$. The stack $\Sht^{\un\mu}_{T}(\mi\cdot\Si')$ is a finite \'etale cover of $X'^{r}\times\frSi'$.

To map $\Sht^{\un\mu}_{T}(\mi\cdot\Si')$ to $\Sht'^{r}_{G}(\Sii)$ we need an extra choice $\mf$, which is a section to the two-to-one map $\Sf':=\nu^{-1}(\Sf)\to \Sf$. Altogether we have chosen an element
\begin{equation}\label{mu in T}
\mu=(\un\mu,\mf,\mi)\in \frT_{r,\Sig}:=\{\pm1\}^{r}\times\Sect(\Sf'/\Sf)\times\{\pm1\}^{\Si}.
\end{equation}
From this choice we have a map (cf. \S\ref{sss HD})
\begin{equation*}
\th'^{\mu}: \Sht^{\un\mu}_{T}(\mi\cdot\Si')\to \Sht'^{r}_{G}(\Sii).
\end{equation*}
Base-changing to $\kbar$ and taking the $\xi$-component, we get a map
\begin{equation*}
\th'^{\mu}_{\xi}: \Sht^{\un\mu}_{T}(\mi\cdot\xi)\to \Sht'^{r}_{G}(\Sig;\xi).
\end{equation*}

We define the {\em Heegner--Drinfeld cycle} to be the algebraic cycle with proper support
\begin{equation*}
\cZ^{\mu}(\xi):=\th'^{\mu}_{\xi,*}[\Sht^{\un\mu}_{T}(\mi\cdot\xi)]\in \Ch_{c,r}(\Sht'^{r}_{G}(\Sig;\xi))_{\QQ}.
\end{equation*}
Its cycle class in cohomology is denoted by
\begin{equation*}
Z^{\mu}(\xi):=\cl(\cZ^{\mu}(\xi))\in \cohoc{2r}{\Sht'^{r}_{G}(\Sig;\xi),\Ql}.
\end{equation*}

\sss{Main result}

Our main theorem is the following.
\begin{theorem}[Main result, first formulation]\label{th:main} Let $\pi$ be a cuspidal automorphic representation of $G(\AA_{F})$ ramified at a finite set of places $\Sig$. Assume
\begin{itemize}
\item For each $x\in \Sig$, $\pi_{x}$ is isomorphic to an unramified twist of the Steinberg representation;
\item The ramification locus $R$ of the double cover $\nu:X'\to X$ is disjoint from $\Sig$. 
\end{itemize}
We decompose $\Sig$ as $\Sf\sqcup \Si$ in a unique way so that the conditions \eqref{Sf split} and \eqref{Si inert} hold.  Let $r$ be a non-negative integer such that
\begin{equation*}
r\equiv\#\Si\mod2.
\end{equation*}
Let $\mu,\mu'\in\frT_{r,\Sig}$. Let
\begin{equation*}
r_{+}=\{1\le i\le r\mid\mu_{i}=\mu_{i}'\}, \quad r_{-}=\{1\le i\le r\mid\mu_{i}\ne\mu_{i}'\}.
\end{equation*}
Then
\begin{equation}\label{main formula}
\frac{|\om_{X}|q^{\r/2-N}\ep_{-}(\pi\ot\y)}{2(-\log q)^{r}L(\pi,\Ad,1)}\sL^{(r_{+},r_{-})}_{F'/F}(\pi)
=\big(Z^{\mu}_{\pi}(\xi), Z^{\mu'}_{\pi}(\xi)\big)_{\Sht'^{r}_{G}(\Sig;\xi)}.
\end{equation}
Here, 
\begin{itemize}
\item $|\om_{X}|=q^{-(2g-2)}$.
\item $\ep_{-}(\pi\ot\y)\in\{\pm1\}$ is the product of the Atkin--Lehner eigenvalues of $\pi\ot\y$ at $x\in \Sig_{-}(\mu,\mu')$, where $\Sig_{-}(\mu,\mu')\subset \Sig$ is defined in \eqref{Sig -}. 
\item The automorphic representation $\pi$ gives a character $\l_{\pi}$ of $\sH^{\Sig}_{G}$ which does not factor through the Eisenstein ideal; we denote by $V'(\xi)_{\pi}$ the direct summand in \eqref{coho' decomp} corresponding to the maximal ideal $\fkm_{\pi}=\ker(\l_{\pi})$ and let $Z^{\mu}_{\pi}(\xi)$ be the projection of $Z^{\mu}(\xi)$ to $V'(\xi)_{\pi}$. 
\item The pairing $(\cdot,\cdot)_{\Sht'^{r}_{G}(\Sig;\xi)}$ on the right side of \eqref{main formula} is \eqref{Vs pairing}.
\end{itemize}
\end{theorem}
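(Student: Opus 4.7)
The plan is to follow the strategy of \cite{YZ} but now adapted to allow Iwahori level at places of $\Sigma$ on the geometric side, and twisted L-functions and ramified double covers on the analytic side. The overall architecture is a comparison of two ``relative trace formulas'' interpreted geometrically.

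\textbf{Analytic side.} First I would produce a kernel-function formula expressing $\sL_{F'/F}(\pi, s_1, s_2)$ (and hence the mixed derivative $\sL^{(r_+, r_-)}_{F'/F}(\pi)$) as a Rankin--Selberg type period of a cuspidal form $\varphi_\pi$ against an automorphic kernel $\KK_r(g, h; s_1, s_2)$ obtained from an Eisenstein-like series on $\PGL_2\times\PGL_2$, matched with the character $\eta$. I would then spectrally decompose $\KK_r$ and extract the $\pi$-component to get an identity of the form
\begin{equation*}
\sL^{(r_+,r_-)}_{F'/F}(\pi) = (\log q)^{r_++r_-}\cdot c(\pi)\cdot\sum_{a\in A} J_a^{(r_+,r_-)}(\pi),
\end{equation*}
where $A$ is a set of ``invariants,'' $J_a^{(r_+,r_-)}$ are mixed higher derivatives of local/global orbital integrals, and $c(\pi)$ collects the factors $|\omega_X|q^{\rho/2-N}/(2L(\pi,\operatorname{Ad},1))$ together with the sign $\varepsilon_-(\pi\otimes\eta)$. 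The twists by $\eta$ and the Iwahori level at $\Sigma$ are encoded in the choice of local test functions at places of $\Sigma\cup R$: at split places in $\Sigma_f$ one inserts the Iwahori idempotent, at inert places in $\Sigma_\infty$ one inserts the Steinberg projector, and at ramified places one uses the orbital integrals for the ramified quadratic extension. The parameter $\mu,\mu'\in\frT_{r,\Sigma}$ enters as a choice of ``leg signs,'' controlling which partial derivative $\partial/\partial s_1$ versus $\partial/\partial s_2$ is taken.

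\textbf{Geometric side.} I would compute $(Z^\mu_\pi(\xi), Z^{\mu'}_\pi(\xi))_{\Sht'^r_G(\Sigma;\xi)}$ by first computing $(Z^\mu(\xi), Z^{\mu'}(\xi))$ in $\cohoc{}{\Sht'^r_G(\Sigma;\xi)}$ and then projecting, using the spectral decomposition of Theorem~\ref{th:intro spec decomp}. The intersection $\Sht_T^{\underline\mu}(\mu_\infty\cdot\xi) \times_{\Sht'^r_G(\Sigma;\xi)} \Sht_T^{\underline\mu'}(\mu'_\infty\cdot\xi)$ should be reinterpreted via a Hitchin-type global moduli stack $\cM_d$ parametrizing pairs of rank-$2$ bundles with a nonzero morphism and Iwahori data at $\Sigma$, equipped with a fibration over an invariant base $\cA_d$. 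The self-intersection then becomes a sum over points of $\cA_d(k)$ of local intersection multiplicities, which one packages into a Hecke-eigen function on $\sH^\Sigma_G$. Applying the cuspidal projector and Hecke-eigen projector to $\lambda_\pi$ yields the analogue of the sum $\sum_a I_a^{(r_+,r_-)}(\pi)$.

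\textbf{Matching (main obstacle).} The hard part is the term-by-term matching $I_a^{(r_+,r_-)}(\pi) = J_a^{(r_+,r_-)}(\pi)$ across the two sides. In the unramified case of \cite{YZ} this was done via a geometric arc-germ computation comparing higher derivatives of orbital integrals with intersection numbers on a Hitchin-type space, using the smallness of a certain map and the cohomological interpretation via perverse sheaves. The new difficulties are: (i) at places of $\Sigma_f$ one must incorporate Iwahori level, which modifies both the Hitchin moduli (adding a parabolic structure) and the local orbital integrals (inserting the Iwahori test function), and verify that the local sheaf-theoretic computation still yields the same mixed derivative structure indexed by $(r_+, r_-)$; (ii) at inert places of $\Sigma_\infty$ the Steinberg projection must be matched with the rank-one Shtuka moduli's behavior at those places, producing the Atkin--Lehner signs $\varepsilon_-(\pi\otimes\eta)$; (iii) at ramified places of $R$, the ramified local orbital integrals must be matched with local contributions from the ramified double cover. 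The splitting of the derivative into $(r_+, r_-)$ parts corresponds geometrically to the splitting of the $r$ legs of the Shtuka into those shared by $\mu$ and $\mu'$ versus those where they differ; this should follow from an analogue of the ``Octopus Lemma'' of \cite{YZ} adapted to the Iwahori setting. Once these local matchings are established, summing over $a$, dividing by $L(\pi,\operatorname{Ad},1)$ via the Petersson norm formula, and tracking the normalization constants yields \eqref{main formula}.
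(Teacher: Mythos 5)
Your overall architecture (a relative-trace-formula comparison in the style of \cite{YZ}, with both sides interpreted over a common base and matched sheaf-theoretically) is indeed the strategy of the paper, but as written the plan leaves unresolved exactly the points where the ramified/Iwahori setting requires new input, and in places it points in the wrong direction. The most serious gap is at the places $x\in R$: you say ``one uses the orbital integrals for the ramified quadratic extension,'' but the whole comparison hinges on specifying an explicit test function there, and it is not an obvious one. The paper's choice is the square-root--counting function $h^{\bsq}_{x}$ supported on $\Mat_{2}(\cO_{x})_{v_{x}(\det)=1}$; this choice is what makes the local spherical character come out as $\vol(G(\cO_x))\zeta_x(2)\,\eta_x(-1)\epsilon(\eta_x,1/2,\psi_x)\,q_x^{s_1-s_2+1/2}$ and, simultaneously, what makes the weighted orbital count equal to a point count on the moduli $\cN_d$ (with its square-root structures along $R$). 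Without identifying this function the analytic and geometric sides simply do not communicate at $R$ --- the paper singles this out as one of the main difficulties, and your plan does not address it. A second misidentification: at $x\in\Sig$ the test functions are $\one_{\Iw_x}$ or $\one_{\Iw_x\cdot w}$ according to whether $x\in\Sig_{+}(\mu,\mu')$ or $\Sig_{-}(\mu,\mu')$ (i.e.\ whether $\mu_x=\mu'_x$), not an ``Iwahori idempotent at split places and Steinberg projector at inert places''; the sign $\ep_{-}(\pi\ot\y)$ comes from evaluating the spherical character of the twisted Steinberg at $\one_{\Iw_x\cdot w}$, so the relevant decomposition of $\Sig$ is governed by $\mu$ versus $\mu'$, not by split/inert.

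The matching itself is also not a term-by-term local (``arc-germ'') identity, nor does the $(r_{+},r_{-})$ splitting come from the octahedron lemma (which you call the Octopus Lemma). In the paper, for $h_D$ of large degree both global quantities are written as sums over $a\in\cA^{\fl}_{D}(k)$ of traces of Frobenius on stalks of direct-image complexes over the common base: $\bR f^{\fl}_{d,!}\Ql$ for the intersection numbers --- obtained via the master diagram, the octahedron lemma, the descent $\cM_{d}(\mu_{\Sig},\mu'_{\Sig})\cong\cM_{d}\times\frSi'$, and the identification of the relevant Hecke correspondence with compositions of the incidence correspondences $\cH_{\pm}$ --- and $\bR g^{\fl}_{\un d,!}L_{\un d}$ for the orbital integrals, via $\cN_d$. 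The identity is then proved by showing both complexes decompose into the same middle-extension perverse sheaves $K_{d+\r-N_{-},i}\boxtimes K_{d+\r-N_{+},j}$ and by matching the scalar by which $\cH_{\pm}$ acts on each summand against the exponents of $q^{s_1},q^{s_2}$ attached to that summand; this, not the octahedron lemma, is what produces the $(r_{+},r_{-})$-derivative structure. Finally, even granting the identity for $h_D$ with $\deg D$ large, you cannot simply ``apply the Hecke-eigen projector'': one first needs the cohomological spectral decomposition of $\cohoc{2r}{\Sht'^{r}_{G}(\Sig;\xi),\Qlbar}$ in the Iwahori setting (the horocycle analysis and the finiteness of the image of $\sH^{\Sig}_{G}$ acting on it), which is what lets the identity propagate from large-degree $h_D$ to all of $\sH^{\Sig}_{G}$ and in particular to the idempotent $e_\pi$; this is a substantial new ingredient of the paper that your plan omits entirely.
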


The Galois involution for the double cover $X'/X$ induces an action of $(\ZZ/2\ZZ)^{r}$ on $X'^{r}$, hence on $\Sht'^{r}_{G}(\Sig;\xi)$ by acting only on the $X'^{r}$-factor. Let $\s_{i}\in(\ZZ/2\ZZ)^{r}$ be the element with only the $i$-th coordinate nontrivial. For $0\le r_{1}\le r$, we define an idempotent in the group algebra $\QQ[(\ZZ/2\ZZ)^{r}]$ by
\begin{equation*}
\ep_{r_{1}}=\prod_{i=1}^{r_{1}}\frac{1+\s_{i}}{2}\prod_{j=r_{1}+1}^{r}\frac{1-\s_{i}}{2}.
\end{equation*}

\begin{theorem}[Main result, second formulation]\label{th:int Sht}
Keep the same assumptions as Theorem \ref{th:main}. Let $0\le r_{1}\le r$ be an integer, and $\mu\in \frT_{r,\Sig}$. Then
\begin{equation*}
\frac{|\om_{X}|q^{\r/2-N}}{2(-\log q)^{r}L(\pi,\Ad,1)}\sL^{(r_{1})}(\pi,\ha)\sL^{(r-r_{1})}(\pi\otimes\y, \ha)=\big(\ep_{r_{1}}Z^{\mu}_{\pi}(\xi), \ep_{r_{1}}Z^{\mu}_{\pi}(\xi)\big)_{\Sht'^{r}_{G}(\Sig;\xi)}.
\end{equation*}
\end{theorem}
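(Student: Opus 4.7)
The plan is to deduce Theorem \ref{th:int Sht} from Theorem \ref{th:main} by expanding the idempotent $\ep_{r_{1}}$ and applying the first formulation termwise, combined with a Leibniz-type combinatorial identity that converts a sum of mixed derivatives $\sL^{(r_{+},r_{-})}_{F'/F}(\pi)$ into the product $\sL^{(r_{1})}(\pi,\ha)\sL^{(r-r_{1})}(\pi\otimes\y,\ha)$.

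\textbf{Step 1: Action of $(\ZZ/2\ZZ)^{r}$ on the Heegner--Drinfeld cycles.} First I would check, by tracing through the construction of $\th'^{\mu}_{\xi}$ in \S\ref{sss HD}, that the Galois involution $\s_{i}$ on the $i$-th factor of $X'^{r}$ lifts the Heegner--Drinfeld map for $\mu=(\un\mu,\mf,\mi)$ to the one for $\mu^{(i)}:=(\un\mu',\mf,\mi)$, where $\un\mu'$ differs from $\un\mu$ only in the $i$-th sign. Consequently $\s_{i*}Z^{\mu}(\xi)=Z^{\mu^{(i)}}(\xi)$, and the same identity descends to the $\pi$-isotypic projections since the $(\ZZ/2\ZZ)^{r}$-action commutes with the $\sH^{\Sig}_{G}$-action. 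Since each $\s_{i}$ is an automorphism of $\Sht'^{r}_{G}(\Sig;\xi)$, it preserves the cup-product pairing \eqref{Vs pairing}, so the idempotent $\ep_{r_{1}}\in\QQ[(\ZZ/2\ZZ)^{r}]$ is self-adjoint.

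\textbf{Step 2: Expand the self-intersection.} Using $\ep_{r_{1}}^{2}=\ep_{r_{1}}$ and self-adjointness,
\begin{equation*}
\big(\ep_{r_{1}}Z^{\mu}_{\pi}(\xi),\ep_{r_{1}}Z^{\mu}_{\pi}(\xi)\big)=\big(Z^{\mu}_{\pi}(\xi),\ep_{r_{1}}Z^{\mu}_{\pi}(\xi)\big)=\frac{1}{2^{r}}\sum_{S\subseteq\{1,\dots,r\}}(-1)^{|S\cap\{r_{1}+1,\dots,r\}|}\big(Z^{\mu}_{\pi}(\xi),Z^{\mu^{S}}_{\pi}(\xi)\big),
\end{equation*}
where $\mu^{S}$ flips the coordinates of $\un\mu$ indexed by $S$. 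Because $\mu$ and $\mu^{S}$ agree on the $\Sf$- and $\Si$-components, the auxiliary set $\Sig_{-}(\mu,\mu^{S})$ from \eqref{Sig -} is empty, so $\ep_{-}(\pi\otimes\y)=1$ for each term.

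\textbf{Step 3: Apply Theorem \ref{th:main}.} For the pair $(\mu,\mu^{S})$ we have $r_{+}=r-|S|$ and $r_{-}=|S|$. Hence
\begin{equation*}
\big(\ep_{r_{1}}Z^{\mu}_{\pi}(\xi),\ep_{r_{1}}Z^{\mu}_{\pi}(\xi)\big)=\frac{|\om_{X}|q^{\r/2-N}}{2(-\log q)^{r}L(\pi,\Ad,1)}\cdot\frac{1}{2^{r}}\sum_{S}(-1)^{|S\cap\{r_{1}+1,\dots,r\}|}\sL^{(r-|S|,|S|)}_{F'/F}(\pi).
\end{equation*}

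\textbf{Step 4: The combinatorial identity.} Write $D_{1}=\partial/\partial s_{1}$, $D_{2}=\partial/\partial s_{2}$. Expanding factor-by-factor, one sees that
\begin{equation*}
\sum_{S\subseteq\{1,\dots,r\}}(-1)^{|S\cap\{r_{1}+1,\dots,r\}|}D_{1}^{r-|S|}D_{2}^{|S|}=(D_{1}+D_{2})^{r_{1}}(D_{1}-D_{2})^{r-r_{1}}.
\end{equation*}
Changing variables to $u=s_{1}+s_{2}$, $v=s_{1}-s_{2}$, so that $\sL_{F'/F}(\pi,s_{1},s_{2})=\sL(\pi,u+\ha)\cdot\sL(\pi\otimes\y,v+\ha)$, one computes $D_{1}+D_{2}=2\partial_{u}$ and $D_{1}-D_{2}=2\partial_{v}$. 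Therefore the right-hand side of Step 3 equals
\begin{equation*}
\frac{|\om_{X}|q^{\r/2-N}}{2(-\log q)^{r}L(\pi,\Ad,1)}\cdot\frac{2^{r}}{2^{r}}\sL^{(r_{1})}(\pi,\ha)\sL^{(r-r_{1})}(\pi\otimes\y,\ha),
\end{equation*}
which is the desired formula.

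\textbf{Main obstacle.} The only non-formal input is Step 1, namely verifying that the geometric $\s_{i}$-action on $\Sht'^{r}_{G}(\Sig;\xi)$ really intertwines the Heegner--Drinfeld cycles as claimed, and that $\Sig_{-}(\mu,\mu^{S})=\vn$ under the recipe \eqref{Sig -}. Once these compatibilities are established, the rest is a clean expansion of an idempotent together with the elementary Leibniz identity above.
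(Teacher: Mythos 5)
Your proposal is correct and is essentially the paper's own argument, just run in the opposite direction: the paper starts from $\sL^{(r_1)}(\pi,\ha)\sL^{(r-r_1)}(\pi\otimes\y,\ha)$, applies the same binomial identity under the change of variables $t_1=s_1+s_2$, $t_2=s_1-s_2$, invokes Theorem \ref{th:main} termwise for the pairs $(\mu,\s_I\cdot\mu)$ (with $\ep_-=1$ since the $\Sig$-components agree), and then reassembles the idempotent $\ep_{r_1}$ using self-adjointness. Your Step 1 is exactly the already-established symmetry Lemma \ref{l:cycle mu mu'} (together with Remark \ref{r:AL Sht} for compatibility with the Hecke action), which is also what the paper cites, so there is no gap.
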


In the special case $r_{1}=r$, we may further reformulate the theorem as follows. 

\begin{cor} Keep the same assumptions as Theorem \ref{th:main}.  Let $Y^{\mu}_{\pi}(\xi)\in \cohoc{2r}{\Sht^{r}_{G}(\Sig;\xi),\Qlbar}$ be the class of the push-forward of $Z^{\mu}_{\pi}(\xi)$ to $\Sht^{r}_{G}(\Sig;\xi)=\Sht^{r}_{G}(\Sii)\times_{\frSi}\xi$. Then $Y^{\mu}_{\pi}(\xi)$ depends only on $(r,\mf,\mi)$, and
\begin{equation*}
\frac{2^{r-1}|\om_{X}|q^{\r/2-N}}{(-\log q)^{r}L(\pi,\Ad,1)}\sL^{(r)}(\pi,\ha)\sL(\pi\otimes\y, \ha)=\big(Y^{\mu}_{\pi}(\xi), Y^{\mu}_{\pi}(\xi)\big)_{\Sht^{r}_{G}(\Sig;\xi)}.
\end{equation*}
\end{cor}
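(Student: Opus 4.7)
The plan is to deduce this corollary from Theorem \ref{th:int Sht} applied at $r_{1}=r$, via a projection-formula argument for the natural map
\begin{equation*}
f\colon\Sht'^{r}_{G}(\Sig;\xi)\longrightarrow \Sht^{r}_{G}(\Sig;\xi)
\end{equation*}
induced by $\nu^{r}\colon X'^{r}\to X^{r}$. By the fibre-product definition of $\Sht'^{r}_{G}(\Sii)$, the map $f$ is a finite flat Galois cover of degree $2^{r}$ whose Galois group $G=(\ZZ/2\ZZ)^{r}$ acts on the $X'^{r}$-factor exactly as recalled just before Theorem \ref{th:int Sht}. By construction $Y^{\mu}_{\pi}(\xi)=f_{*}Z^{\mu}_{\pi}(\xi)$, which lies in the $\fkm_{\pi}$-generalized eigenspace of $\cohoc{2r}{\Sht^{r}_{G}(\Sig;\xi),\Qlbar}$ since $f_{*}$ is equivariant for the spherical Hecke algebra $\sH^{\Sig}_{G}$.

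To see that $Y^{\mu}_{\pi}(\xi)$ is independent of $\un\mu$, I would note that for any two $\un\mu,\un\mu'\in\{\pm1\}^{r}$ the rank-one Shtuka moduli $\Sht^{\un\mu}_{T}(\mi\cdot\Si')$ and $\Sht^{\un\mu'}_{T}(\mi\cdot\Si')$ are canonically identified via the $X'$-involutions in the factors where $\un\mu$ and $\un\mu'$ disagree, and under this identification the Heegner--Drinfeld maps $\th'^{\mu}$ and $\th'^{\mu'}$ differ by the corresponding element $g\in G$ acting on the target. Hence $Z^{(\un\mu',\mf,\mi)}(\xi)=g_{*}Z^{(\un\mu,\mf,\mi)}(\xi)$, and applying $f_{*}$ with $f\circ g=f$ yields $Y^{(\un\mu',\mf,\mi)}_{\pi}(\xi)=Y^{(\un\mu,\mf,\mi)}_{\pi}(\xi)$.

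For the numerical identity, the key input is the standard fact that on $\Qlbar$-cohomology of a finite flat Galois cover of smooth DM stacks one has $f^{*}f_{*}=\sum_{g\in G}g^{*}=2^{r}\ep_{r}$. Combined with the projection formula
\begin{equation*}
(f_{*}\alpha,f_{*}\beta)_{\Sht^{r}_{G}(\Sig;\xi)}=(\alpha,f^{*}f_{*}\beta)_{\Sht'^{r}_{G}(\Sig;\xi)}
\end{equation*}
and the self-adjointness of the idempotent $\ep_{r}$, this gives
\begin{equation*}
(Y^{\mu}_{\pi}(\xi),Y^{\mu}_{\pi}(\xi))_{\Sht^{r}_{G}(\Sig;\xi)}=2^{r}(\ep_{r}Z^{\mu}_{\pi}(\xi),\ep_{r}Z^{\mu}_{\pi}(\xi))_{\Sht'^{r}_{G}(\Sig;\xi)}.
\end{equation*}
Inserting this into Theorem \ref{th:int Sht} at $r_{1}=r$ (noting $\sL^{(0)}(\pi\ot\y,\ha)=\sL(\pi\ot\y,\ha)$) and multiplying by $2^{r}$ converts the coefficient $\frac{|\om_{X}|q^{\r/2-N}}{2(-\log q)^{r}L(\pi,\Ad,1)}$ on the left into $\frac{2^{r-1}|\om_{X}|q^{\r/2-N}}{(-\log q)^{r}L(\pi,\Ad,1)}$, which is exactly the stated formula.

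The main technical point, and essentially the only one, is the validity of $f^{*}f_{*}=\sum_{g\in G}g^{*}$ given that $\nu^{r}$ ramifies along the preimage of $R^{r}$. This identity holds nonetheless because $f^{*}$ induces an isomorphism $\cohoc{*}{\Sht^{r}_{G}(\Sig;\xi),\Qlbar}\isom \cohoc{*}{\Sht'^{r}_{G}(\Sig;\xi),\Qlbar}^{G}$ (which can be verified from the quotient-stack description $[\Sht'^{r}_{G}(\Sii)/G]\cong \Sht^{r}_{G}(\Sii)$), together with $f_{*}f^{*}=|G|\cdot \id$ valid for any finite flat cover of degree $|G|$; these imply that $f^{*}f_{*}$ equals $|G|$ times the projection onto $G$-invariants, which agrees with $\sum_{g\in G}g^{*}$ by standard representation theory of finite groups over $\QQ$.
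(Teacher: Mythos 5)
Your proposal is correct and is essentially the intended argument: the paper states this corollary without a separate proof, and the expected deduction from Theorem \ref{th:int Sht} at $r_{1}=r$ is exactly your projection-formula computation, using $f_{*}f^{*}=2^{r}$, $f^{*}f_{*}=\sum_{g\in(\ZZ/2\ZZ)^{r}}g^{*}=2^{r}\ep_{r}$, the self-adjointness of $\ep_{r}$, and $f_{*}\circ g_{*}=f_{*}$ (via Lemma \ref{l:cycle mu mu'}) for the independence of $\un\mu$. One minor imprecision: when $R\ne\vn$ the quotient stack $[\Sht'^{r}_{G}(\Sii)/(\ZZ/2\ZZ)^{r}]$ is not isomorphic to $\Sht^{r}_{G}(\Sii)$ (it carries $\mu_{2}$-inertia along the ramification locus), but your cohomological conclusion $\cohoc{*}{\Sht^{r}_{G}(\Sig;\xi),\Qlbar}\isom\cohoc{*}{\Sht'^{r}_{G}(\Sig;\xi),\Qlbar}^{(\ZZ/2\ZZ)^{r}}$ still holds, since it only requires $(f_{*}\Qlbar)^{(\ZZ/2\ZZ)^{r}}\cong\Qlbar$, which is checked on stalks of the finite map $f$.
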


\begin{remark} Consider the case where $\Si$ consists of a single place $\infty$, $r=1$, and $\mu=\mu'$. In this case the moduli stack $\Sht^{1}_{G}(\Sii)$ over  $X$ is closely related to the moduli space of elliptic modules originally defined by Drinfeld \cite{Dr ell} (see the discussion in \S\ref{sss:rel DrMod}), the latter being a perfect analogue of a semistable integral model for modular curves $X_{0}(N)$.  In this special case, Theorem \ref{th:main} reads
\begin{equation}\label{intro GZ}
-\frac{|\om_{X}|q^{\r/2-N}}{2\log q\cdot L(\pi,\Ad,1)}\sL'(\pi_{F'},\ha)=\left(Z^{\mu}_{\pi}(\xi),Z^{\mu}_{\pi}(\xi) \right)_{\Sht'^{1}_{G}(\Sig;\xi)}.
\end{equation}
This is a direct analogue of the Gross-Zagier formula for modular curves \cite{GZ}. We understand that D. Ulmer has an unpublished proof of a formula similar to \eqref{intro GZ}. The method of our proof is quite different from that in \cite{GZ} in that we do not need to explicitly compute either side of the formula.
\end{remark}

\subsection{What's new} We highlight both the new results and new techniques in this paper compared to the unramified case treated in \cite{YZ}.

\sss{} First we compare our results with our previous ones in \cite{YZ}. 
Theorems \ref{th:main} and \ref {th:int Sht} have much wider applicability than the ones in \cite{YZ}. In particular, for a non-isotrivial elliptic curve $E$ over $F$ with semistable reductions, its $L$-function $L(E,s)$ is the automorphic $L$-function $L(\pi, s+1/2)$ for some $\pi$ satisfying the conditions of our theorems. Therefore, our results in this paper give a geometric interpretation of Taylor coefficients of $L$-functions of semistable elliptic curves over function fields. For potential applications to the arithmetic of elliptic curves, see the discussion in \S\ref{ss:BSD}.

 In addition, in this paper we study the intersection of different Heegner--Drinfeld cycles by varying the discrete datum $\mu$. As a result we get products of derivatives of $\sL(\pi,s)$ and $\sL(\pi\ot\y,s)$, as opposed to just the derivatives of their product $\sL(\pi_{F'},s)$. So Theorems \ref{th:main} and \ref {th:int Sht} are new even in the unramified case.

\sss{} Next we comment on the proof. To prove Theorem \ref{th:main}, we follow the general strategy of relative trace formulae comparison as in \cite{YZ}. In this paper, we have tried to avoid repeating similar arguments from \cite{YZ} and only write new arguments in detail. Here are some highlights of the new techniques compared to the unramified case.

%
%

The key identity between relative traces takes the form
\begin{equation*}
\left(\frac{\partial}{\partial s_{1}}\right)^{r_{+}}\left(\frac{\partial}{\partial s_{2}}\right)^{r_{-}}(q^{N_{+}s_{1}+N_{-}s_{2}}\JJ(f',s_{1},s_{2}))\Big|_{s_{1}=s_{2}=0}=\left(Z^{\mu}(\xi), f*Z^{\mu'}(\xi)\right)_{\Sht'^{r}_{G}(\Sig;\xi)}
\end{equation*}
where $f\in \sH^{\Sig\cup R}_{G}$ and $f'\in C_{c}(G(\AA))$ is a ``matching function'', and $N_{\pm}=\deg\Sig_{\pm}(\mu,\mu')$ (see \eqref{Sig +} and \eqref{Sig -}). In the unramified case, we simply took $f'=f$. At places $x\in \Sig$, the corresponding factors of $f'$ are not surprising: they are essentially characteristic functions of the Iwahori. However, it is not obvious what to put at places $x\in R$ (where $R$ is the ramification locus of $F'/F$). This is one of the main difficulties of this work.

In \S\ref{sss:hx at R} we give a somewhat surprising formula for the test function $h_{x}^{\bsq}$ to be put at $x\in R$ in $f'$. The discovery of the function $h_{x}^{\bsq}$ was guided by the geometric interpretation of orbital integrals. We wanted a moduli space $\cN_{d}$ which looked like the counterpart of $\cM_{d}$ (see Definition \ref{defn Md}) for a split quadratic extension $F\times F$ but somehow remembers the ramification locus $R$. Once we realized the correct candidate for $\cN_{d}$ (see Definition \ref{defn Nd}), the formula for $h^{\bsq}_{x}$ fell out quite naturally as counting points on $\cN_{d}$. From the spectral calculation, we get another characterization of $h^{\bsq}_{x}$ (see \S\ref{sss:fx at R}), which justifies its canonicity from a different perspective. The idea should be applicable to other situations of relative trace formulae where one needs explicit {\it ramified} test functions. We hope to return to this topic in the future.

The presence of Iwahori structures makes the geometry of the horocycles in $\Sht'^{r}_{G}(\Sii)$ much more complicated than in the unramified case, which explains the length of \S\ref{ss:horo}. The study of the horocycles is needed in order to establish a cohomological spectral decomposition. Also, the proof of the key finiteness results leading to the cohomological spectral decomposition in \S\ref{ss:coho spec decomp} uses a new strategy: we introduce ``almost isomorphisms'' between ind-perverse sheaves (i.e., we work with a quotient category of ind-perverse sheaves). Compared to our approach in \cite{YZ}, this strategy is more robust in showing qualitative results for spaces of infinite type, and should work for the cohomological spectral decomposition for higher rank groups.

\subsection{Potential arithmetic applications}\label{ss:BSD} 

\sss{Determinant of the Frobenius eigenspace}
 Let $\pi$ be a cuspidal automorphic representation  of $G(\BA)$ as in Theorem \ref{th:main}.  By the global Langlands correspondence proved by Drinfeld \cite{Dr ICM}, there is a rank two irreducible $\Qlbar$-local system  $\rho_\pi$  attached to $\pi$ over an open subset of $X$. Our convention is that $\det(\rho_{\pi})\cong \Qlbar(-1)$; in particular, $\rho_{\pi}$ is pure of weight $1$.  Let $j_{!\ast}\rho_\pi$ be the middle extension of $\rho_{\pi}$ to the complete curve $X$. The base change $\pi_{F'}$ corresponds to the local system $\nu^{*}\r_{\pi}$ on an open subset of $X'$, and we denote by $j'_{!*}\nu^{*}\r_{\pi}$ its middle extension to $X'$. Let 
 $$
W'_\pi:=\cohog{1}{ X'\otimes \kbar,\,  j'_{!\ast}\nu^{*}\rho_\pi}.
$$
This is a $\ov\BQ_\ell$-vector space with the geometric Frobenius automorphism $\Fr$ of weight $2$. The $L$-function $L(\pi_{F'},s)$ is related to $\nu^{*}\rho_{\pi}$ by
$$
L(\pi_{F'},s-\ha)=\det\left(1-q^{-s}\Fr\bigm| W_\pi'\right).
$$
 
Let $\Pi^{r}_{G}\colon\Sht^{r}_{G}(\Sig)\to X^{r}\times \frSi$ be the projection map. It is expected that under the $\sH^{\Sig}_{G}$-action, the $\l_{\pi}$-isotypical component of the complex ${\bf R}\Pi^{r}_{G,!}\ov\BQ_\ell$ on $X^r\times \frSi$ takes the form
\begin{equation}\label{RP decomp}
({\bf R}\Pi^{r}_{G,!}\ov\BQ_\ell)_{\pi}=\pi^K\otimes\Big(\underbrace{ j_{!\ast}\rho_\pi[-1] \boxtimes\cdots\boxtimes j_{!\ast}\rho_\pi[-1]}_{r\text{\, times}}\Big)\boxtimes\Big(\boxtimes_{x\in \Si}\r_{\pi,x}^{I_{x}}\Big)
\end{equation}
where $K=\prod_{x\notin \Sig}G(\cO_{x})\times \prod_{x\in \Sig}\Iw_{x}$, and $\r_{\pi,x}$ is the restriction of $\r_{\pi}$ to $\Spec F_{x}$ and $I_{x}<\Gal(F^{\sep}_{x}/F_{x})$ is the inertial group at $x$.  Pulling back to $X'^{r}\times \frSi'$, \eqref{RP decomp} implies that the generalized eigenspace $V'(\xi)_{\pi}:=V'(\xi)_{\ker(\l_{\pi})}$ in \eqref{coho' decomp} should take the form
\begin{equation*}
V'(\xi)_\pi\cong \pi^K\otimes W'^{\otimes r}_\pi \otimes \ell_{\pi,\xi}
\end{equation*}
where $\ell_{\pi,\xi}$ is the geometric stalk of $\boxtimes_{x\in \Si}\r_{\pi,x}^{I_{x}}$ at $\xi$. Note that both $\pi^{K}$  and $\ell_{\pi,\xi}$ are one-dimensional since $\pi$ is an unramified twist of the Steinberg representation at $x\in \Sig$.

Then the cohomology class of the Heegner--Drinfeld cycle gives rise to an element in $Z_\pi^\mu(\xi)\in \pi^K\otimes W'^{\otimes r}_\pi\otimes\ell_{\pi,\xi}$. It can be shown that $Z^{\mu}_{\pi}(\xi)$ is an eigenvector for the operator $\id\otimes \Fr^{\otimes r}\otimes \id$, with eigenvalue $q^r$.
Our main result (Theorem \ref{th:main}) together with the super-positivity proved in \cite[Theorem B.2]{YZ} shows that $Z^{\mu}_{\pi}(\xi)$ does not vanish when $r\geq \ord _{s=1/2}L(\pi_{F'},s)$, provided that $L(\pi_{F'},s)$ is not a constant (i.e., $2(4g-4+N+\rho)>0$).

Partly motivated by the standard conjecture about Frobenius semi-simplicity, we propose

\begin{conj}\label{conj basis}
Let $r=\ord _{s=1/2}L(\pi_{F'},s)$ (i.e., $r$ is the dimension of the {\em generalized} eigenspace of $\Fr$ on $W'_{\pi}$ with eigenvalue $q$) and $\mu\in \frT_{r,\Sig}$. Then the class $Z_\pi^\mu(\xi)$ belongs to $\pi^K\otimes \wedge^r\left(W'^{\Fr=q}_\pi\right)\ot\ell_{\pi,\xi}$.

In particular, for the eigenvalue $q$, the generalized eigenspace of the $\Fr$-action on $W'_{\pi}$ coincides with the eigenspace, and $Z_\pi^\mu(\xi)$ gives a basis of the line $\pi^K\otimes \wedge^r\left(W'^{\Fr=q}_\pi\right)\ot\ell_{\pi,\xi}$.
\end{conj}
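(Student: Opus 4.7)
The plan is to decompose the conjecture into three assertions and establish them in order: (i) $Z^\mu_\pi(\xi)$ lies in the sign-isotypic subspace of $W'^{\otimes r}_\pi$ under the natural $S_r$-action permuting modifications, hence in $\wedge^r W'_\pi$; (ii) it further lies in $\wedge^r U$, where $U\subset W'_\pi$ denotes the generalized $q$-eigenspace of $\Fr$; (iii) Frobenius is semi-simple at the eigenvalue $q$, that is $U=W'^{\Fr=q}_\pi$, and the class $Z^\mu_\pi(\xi)$ is nonzero in $\wedge^r W'^{\Fr=q}_\pi$.

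For step (i), I would use the $S_r$-action on $\Sht'^r_G(\Sig;\xi)$ permuting the $r$ modifications, lifted from the diagonal $S_r$-action on $X'^r$ and inducing the permutation of tensor factors on $W'^{\otimes r}_\pi$. A transposition $\tau=(i,j)\in S_r$ carries the Heegner--Drinfeld cycle $Z^\mu(\xi)$ to $\pm Z^{\tau\cdot\mu}(\xi)$, with the sign recording the change of orientation on the swapped modifications when $\mu_i\neq\mu_j$. Under the identification $V'(\xi)_\pi\cong \pi^K\otimes W'^{\otimes r}_\pi\otimes \ell_{\pi,\xi}$, tracking these signs across all transpositions pins $Z^\mu_\pi(\xi)$ in the sign-isotypic subspace $\pi^K\otimes \wedge^r W'_\pi\otimes \ell_{\pi,\xi}$. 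This argument is parallel to the corresponding sign analysis of \cite{YZ} in the unramified setting.

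For step (ii), I would combine the known Frobenius eigenvalue $q^r$ of $Z^\mu_\pi(\xi)$ with purity of $W'_\pi$. The middle extension $j'_{!\ast}\nu^*\rho_\pi$ is pure of weight $1$, so $W'_\pi$ is pure of weight $2$ and every Frobenius eigenvalue $\alpha$ satisfies $|\alpha|=q$. Decomposing $W'_\pi=\bigoplus_\alpha W'_{\pi,\alpha}$ into generalized Frobenius eigenspaces, the generalized $q^r$-eigenspace of $\Fr^{\otimes r}$ on $W'^{\otimes r}_\pi$ is the sum of $W'_{\pi,\alpha_1}\otimes\cdots\otimes W'_{\pi,\alpha_r}$ over $r$-tuples $(\alpha_i)$ with $\prod_i\alpha_i=q^r$. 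Since each $\alpha_i$ lies on the circle of radius $q$ and their product is the positive real $q^r$, strict convexity forces $\alpha_i=q$ for all $i$. The generalized $q^r$-eigenspace of $\Fr^{\otimes r}$ is therefore $U^{\otimes r}$, and intersecting with (i) places $Z^\mu_\pi(\xi)$ in $\pi^K\otimes \wedge^r U\otimes \ell_{\pi,\xi}$, a line since $\dim U=r=\ord_{s=\ha}L(\pi_{F'},s)$.

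The final step (iii) is where I expect the main obstacle. Frobenius semi-simplicity at the eigenvalue $q$ is a special case of Deligne's still-open conjecture on Frobenius semi-simplicity for cohomology of geometric origin, and I do not see a route that avoids it; this is the chief barrier to a complete proof. Granting semi-simplicity, the non-vanishing of $Z^\mu_\pi(\xi)$ as a basis of the resulting line follows from Theorem \ref{th:int Sht} taken with $r_1=\ord_{s=\ha}L(\pi,s)$: the self-intersection of $\ep_{r_1}Z^\mu_\pi(\xi)$ equals a positive multiple of $\sL^{(r_1)}(\pi,\ha)\sL^{(r-r_1)}(\pi\otimes\y,\ha)$, the product of the leading nonzero derivatives of the two $L$-functions. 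The super-positivity of \cite[Theorem B.2]{YZ} renders this product strictly positive, so $\ep_{r_1}Z^\mu_\pi(\xi)\neq 0$ and hence $Z^\mu_\pi(\xi)$ is a basis of the line $\pi^K\otimes \wedge^r W'^{\Fr=q}_\pi\otimes \ell_{\pi,\xi}$.
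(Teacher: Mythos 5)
The statement you are trying to prove is Conjecture \ref{conj basis}: the paper offers no proof of it. It is explicitly proposed as a conjecture, motivated by the (open) standard conjecture on Frobenius semi-simplicity, and even its formulation is conditional on the expected but unproven decomposition \eqref{RP decomp}; the authors only announce, for forthcoming work and still assuming \eqref{RP decomp}, the weaker statements (i)--(ii) listed after the conjecture. So your proposal cannot be compared with a paper proof, and as you yourself concede in step (iii), it cannot be completed: the equality of the $q$-eigenspace with the generalized $q$-eigenspace of $\Fr$ on $W'_\pi$ is precisely the open semi-simplicity statement, and no argument in the paper (or in your sketch) circumvents it.

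Beyond that acknowledged obstacle, there are genuine gaps earlier in your outline. In step (ii) the claim that eigenvalues $\alpha_1,\dotsc,\alpha_r$ of absolute value $q$ with $\prod_i\alpha_i=q^r$ must all equal $q$ is false: a conjugate pair $q e^{\pm i\theta}$ already has product $q^2$, so the (generalized) $q^r$-eigenspace of $\Fr^{\otimes r}$ on $W'^{\otimes r}_\pi$ is strictly larger than $U^{\otimes r}$ in general, and purity plus the known eigenvalue $q^{r}$ of $\id\otimes\Fr^{\otimes r}\otimes\id$ on $Z^{\mu}_\pi(\xi)$ does not place the class in $\pi^K\otimes\wedge^r U\otimes\ell_{\pi,\xi}$. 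In step (i), the asserted $S_r$-equivariance of $\Sht'^r_G(\Sig;\xi)$ permuting the legs, and the sign bookkeeping sending $Z^{\mu}(\xi)$ to $\pm Z^{\tau\cdot\mu}(\xi)$, are not established anywhere in the paper (the only symmetries proved are the $\frA_{r,\Sig}$-action and Lemma \ref{l:cycle mu mu'}), and the ambient identification $V'(\xi)_\pi\cong\pi^K\otimes W'^{\otimes r}_\pi\otimes\ell_{\pi,\xi}$ you use is itself conditional on \eqref{RP decomp}. Your final non-vanishing argument via Theorem \ref{th:int Sht} with $r_1=\ord_{s=\ha}L(\pi,s)$ and super-positivity is essentially the paper's own remark (which also requires $L(\pi_{F'},s)$ non-constant, a hypothesis you omit), but non-vanishing of $Z^{\mu}_\pi(\xi)$ only yields the conclusion once membership in the line $\pi^K\otimes\wedge^r\left(W'^{\Fr=q}_\pi\right)\otimes\ell_{\pi,\xi}$ is known, which is exactly what the flawed steps were supposed to supply.
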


In a forthcoming work, the authors plan to prove (assuming that \eqref{RP decomp} holds):
\begin{altenumerate}
\item  If $r_0\geq 0$ is the smallest integer $r$ such that $Z_\pi^\mu\neq 0$ for some $\mu\in\{\pm1\}^{r}$, then $\dim W'^{\Fr=q}_\pi=r_0$ and the class $Z_\pi^\mu(\xi)$ gives a basis of the line $\pi^K\otimes \wedge^{r_0}\left(W'^{\Fr=q}_\pi\right)\ot\ell_{\pi,\xi}$.

\item {\em $\ord _{s=1/2}L(\pi_{F'},s)=1$ if and only if $\dim W'^{\Fr=q}_\pi=1$}. Moreover, if $\ord _{s=1/2}L(\pi_{F'},s)=3$, then $\dim W'^{\Fr=q}_\pi=3$.
\end{altenumerate}

\sss{Elliptic curves} Let $E$ be a non-isotrivial semistable elliptic curve over $F$. Attached to $E$ is a cuspidal automorphic representation $\pi$ of $G(\AA_{F})$ such that $\r_{\pi}\cong V_{\ell}(E)^{*}\ot_{\Ql}\Qlbar$ as representations of $\Gal(F^{\sep}/F)$. In particular, $L(E,s)=L(\pi,s-\ha)$, and  $L(E_{F'},s)=L(\pi_{F'},s-\ha)$. Moreover, after choosing a semistable model $\cE'$ over $X'$, we may identify $W'_{\pi}$ with a subquotient of  $\cohog{2}{\cE'\ot\kbar, \Qlbar}$, and think of it as the $\ell$-adic Selmer group of $E$. The function-field analogue of the conjecture of Birch and Swinnerton-Dyer, as formulated by Artin and Tate \cite{T}, predicts that the $q$-eigenspace of $\Fr$ on $W'_{\pi}$ is the same as the generalized eigenspace, and is spanned by classes of sections of $\cE'$. 
The expected result (ii) above would imply that if $\ord_{s=1}L(E_{F'},s)=3$, then the $q$-eigenspace of $\Fr$ on $W'_{\pi}$ is the same as the generalized eigenspace.

While it is difficult to construct algebraic cycles on $\cE'$ spanning $W_{\pi}'^{\Fr=q}$, it may be easier to construct a basis of the line $\wedge^{r}(W_{\pi}'^{\Fr=q})$. Conjecture \ref{conj basis} proposes a candidate generator for $\wedge^{r}(W_{\pi}'^{\Fr=q})$, namely the cycle  $Z^{\mu}_{\pi}(\xi)$. It is not clear though how to relate the ambient space of $Z^{\mu}_{\pi}(\xi)$, namely $\Sht'^{r}_{G}(\Sig;\xi)$,  to powers of $\cE'$.

\subsection{Notations} 
\sss{Function field notation} Throughout this paper, we fix a finite field $k=\FF_{q}$ of characteristic $p\ne 2$. We fix a smooth, projective and geometrically connected curve $X$ over $k$. Let $F=k(X)$ be the function field of $X$.  Let $|X|$ denote the set of closed points of $X$. 

For $x\in |X|$, let $\cO_{x}$ (resp. $F_{x}$) denote the completed local ring of $X$ at $x$ (resp. the fraction field of $\cO_{x}$).  Let $\fkm_{x}\subset \cO_{x}$ be the maximal ideal and we typically denote a uniformizer of $\cO_{x}$ by $\vp_{x}$. Let $\AA_{F}$ denote the ring of ad\`eles of $F$, and let $\OO=\prod_{x\in|X|}\cO_{x}$. Let $k(x)$ denote the residue field of $\cO_{x}$ and let
\begin{equation*}
d_{x}=[k(x):k], \quad q_{x}=q^{d_{x}}=\#k(x).
\end{equation*}
Let  $v_{x}: F^{\times}_{x}\to \ZZ$ be the valuation normalized by $v_{x}(\vp_{x})=1$.

We will also consider a double covering $\nu: X'\to X$ where $X'$ is also a smooth, projective and geometrically connected curve $X$ over $k$. The function field of $X'$ is denoted by $F'$. Other notations for $X$ extend to their counterparts for $X'$.

\sss{Group-theoretic notation} Except for \S\ref{ss:Bun Iw} and \S\ref{ss:Sht Iw}, the letter $G$ always denotes the algebraic group $\PGL_{2}$ over $k$. Let $A\subset G$ be the diagonal torus. For $x\in |X|$, the standard Iwahori subgroup $\Iw_{x}$ of $G(F_{x})$ is the image of the following subgroup of $\GL_{2}(\cO_{x})$
\begin{equation*}
\wt\Iw_{x}=\left\{\mat{a}{b}{c}{d}\in \GL_{2}(\cO_{x})\biggm| c\in\fkm_{x}\right\}.
\end{equation*}
For an algebraic group $H$ over $F$, we denote
\begin{equation*}
[H]:=H(F)\bs H(\AA).
\end{equation*}

\sss{Algebro-geometric notation} Most of the algebraic stacks that appear in this paper are over the finite field $k$ (with exceptions of affine $\Ql$-schemes appearing in Theorem \ref{th:spec decomp}), and the product $S\times S'$ (without subscript) of such stacks $S$ and $S'$ is always understood to be the fiber product of $S$ and $S'$ over $\Spec k$. 

For any stack $S$ over $k$, $\Fr_{S}:S\to S$ denotes the $k$-linear Frobenius which raises functions to the $q$-th power.

For an $S$-point $x:S\to X$, we denote by $\Gamma_{x}\subset X\times S$ the graph of $x$, which is a Cartier divisor of $X\times S$.

We fix a prime $\ell$ different from $p$, and an algebraic closure $\Qlbar$ of $\Ql$. The \'etale cohomology groups in this paper are with $\Ql$ or $\Qlbar$ coefficients.

\sss*{Acknowledgement}
The authors would like to thank Benedict Gross for useful discussions and encouragement.  They also thank an anonymous referee for useful suggestions on the presentation.

\section{The analytic side: relative trace formula}\label{s:RTF}

We extend the results in \cite[\S2, \S4]{YZ} on Jacquet's RTF \cite{J86} to our current setting.  Since most arguments in \cite{YZ} extend without any difficulty, we will not repeat them, but simply indicate the necessary changes. 

A new phenomenon is that we need to choose a new test function at the places where $F'/F$ is ramified. This is done in \S\ref{ss:local test}, and is the most non-obvious point of the analytic part of this paper.

By convention, the automorphic representations we consider in this section are on $\CC$-vector spaces.

\subsection{Jacquet's RTF}

For $f\in C_c^\infty(G(\BA))$, we
consider the automorphic kernel function
\begin{align}\label{eqn kernel}
\BK_{f}(g_1,g_2)=\sum_{\gamma\in G(F)} f(g_1^{-1}\gamma g_2),\quad g_1,g_2\in G(\BA).\index{$\BK_f$}%
\end{align}
Let $A\subset G$ be the diagonal torus, and we define a distribution given by a regularized integral, for $(s_1,s_2)\in\BC^2$
\begin{align}\label{eq: J(f,s1,s2)}
\BJ(f,s_1,s_2)=\int^{{\rm reg}}_{[A]\times [A]}\BK_{f}(h_1,h_2)\lvert h_1  \rvert^{s_1+s_2}\lvert h_2  \rvert^{s_1-s_2} \eta(h_2)\,dh_1\,dh_2.\index{$\BJ(f,s)$}%
\end{align}
Here the measure on $[A]=A(F)\bs A(\AA)$ is induced from the Haar measure on $A(\AA)$ such that $\vol(A(\OO))=1$.

The regularization is the same as in \cite[\S2.2--\S2.5]{YZ}, i.e., as the limit of the integral over a certain sequence of increasing bounded subsets that cover $[A]\times [A]$. Moreover, we define a two-variable orbital integral
\begin{equation*}
\JJ(\g, f, s_{1},s_{2})=\int_{A(\AA)\times A(\AA)} f(h_1^{-1} \g h_2)|h_1h_2|^{s_{1}}|h_1/h_2|^{s_{2}}\y(h_2)\,dh_1\,dh_2.
\end{equation*}
Recall the function $\inv: G(F)\to \PP^{1}(F)-\{1\}$  defined in \cite[(2.1)]{YZ}. When $u=\inv(\gamma)\in\BP^1(F)\setminus \{0,1,\infty\}$, the integral $\JJ(\g, f, s_{1},s_{2})$ is absolutely convergent.
When $u=\inv(\gamma)\in\{0,\infty\}$, the integral defining $\JJ(\g, f, s_{1},s_{2})$ requires regularization as in \cite[\S2.5]{YZ}, and the proof in {\em loc. cit.} goes through in our two-variable setting. 
 
Now $\BJ(f,s_1,s_2)$ and $\JJ(\g, f, s_{1},s_{2})$ are in $\BC[q^{\pm s_1}, q^{\pm s_2}]$, i.e., each of them is a {\em finite} sum of the form
  $$\sum_{(n_1,n_2)\in\BZ^2}a_{n_1,n_2}\, q^{n_1s_1+n_2s_2},\quad a_{n_1,n_2}\in\BC.$$ 
We have an expansion of $\BJ(f,s_{1},s_{2})$ into a sum of orbital integrals 
\begin{align}\label{J(f)=Orb}
\BJ(f,s_1,s_2)=\sum_{\gamma\in A(F)\bs G(F)/ A(F)}\BJ(\gamma,f,s_1,s_2),
\end{align}
We also define 
\begin{align}\label{orb J(u,s)}
\BJ(u,f,s_1,s_2)=\sum_{\gamma\in A(F)\bs G(F)/A(F),\,\inv(\gamma)=u}\BJ(\gamma,f,s_1,s_2),\quad u\in \BP^1(F)-\{1\}.\index{$\BJ(u,f,s)$}%
\end{align}

\subsection{The Eisenstein ideal}

For $x\in |X|$, let $\sH_{x}=C_{c}(G(\cO_{x})\bs G(F_{x})/G(\cO_{x}))$ be the spherical Hecke algebra of $G(F_{x})$. 
For a finite set $S$ of closed points of $X$ , define $\sH^{S}_{G}=\otimes_{x\in|X|-S}\sH_{x}$.
In \cite[\S4.1]{YZ} we defined the Eisenstein ideal $\cI_{\Eis}\subset \sH_{G}$ for the full spherical Hecke algebra $\sH_{G}=\otimes_{x\in |X|}\sH_{x}$, as the kernel of the composition of ring homomorphisms
\begin{equation*}
a_{\Eis}\colon\sH_G\xrightarrow{\Sat}\QQ[\Div(X)]\surj \QQ[\Pic_{X}(k)] .
\end{equation*}
Here the first map $\Sat$ is the tensor product of Satake transforms $\sH_{x}\to \QQ[t_{x}, t^{-1}_{x}]$. We restrict the homomorphism to the subalgebra $\sH^{S}_{G}$
\begin{equation*}
a^S_{\Eis}\colon\sH_G^S\xrightarrow{\Sat}\QQ[\Div(X-S)]\to \QQ[\Pic_{X}(k)]
\end{equation*}
and define
\begin{equation*}
\cI_{\Eis}^S\colon=\Ker\left(a^S_{\Eis}:\sH_G^S\to \QQ[\Pic_{X}(k)]\right).
\end{equation*}

Recall from \cite[4.1.2]{YZ} that the image of $a_{\Eis}$, hence that of $a_{\Eis}^{S}$ lies in $\QQ[\Pic_{X}(k)]^{\io_{\Pic}}$ for an involution $\io_{\Pic}$ on $\QQ[\Pic_{X}(k)]$. We have the following analogue of \cite[Lemma 4.2]{YZ} with the same proof.

\begin{lemma}\label{l:aEis surj} The map $a^{S}_{\Eis}:\sH_G^S\to \QQ[\Pic_{X}(k)]^{\io_{\Pic}}$ is surjective.
\end{lemma}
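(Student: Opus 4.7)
The plan is to mimic the proof of the unramified analogue \cite[Lemma 4.2]{YZ}, adding a small density step to handle the excluded places $S$. The first observation is that at each $y \in |X| - S$, the local Satake $\sH_y \to \QQ[t_y, t_y^{-1}]$ has image the Weyl-invariant subring $\QQ[t_y + t_y^{-1}]$, and composing with $\QQ[\Div(X - S)] \to \QQ[\Pic_X(k)]$ sends $t_y + t_y^{-1}$ to $\tau(L_y) := [L_y] + [L_y^{-1}]$, where $L_y := [\cO_X(y)] \in \Pic_X(k)$. Hence the image $B^S$ of $a^S_{\Eis}$ is the $\QQ$-subalgebra of $V := \QQ[\Pic_X(k)]^{\io_{\Pic}}$ generated by $\{\tau(L_y) : y \in |X| - S\}$. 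Since $V$ is spanned as a $\QQ$-vector space by $\{\tau(g) : g \in \Pic_X(k)\} \cup \{1\}$, the problem reduces to showing $\tau(g) \in B^S$ for every $g$.

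Next, I would use a Chebyshev-type downward induction. Fix any $y_0 \in |X| - S$. The identity $\tau(a)\tau(b) = \tau(ab) + \tau(ab^{-1})$, specialised to $a = g L_{y_0}^n$ and $b = L_{y_0}$, gives the three-term recursion
\[
\tau(g L_{y_0}^{n-1}) = \tau(g L_{y_0}^n) \tau(L_{y_0}) - \tau(g L_{y_0}^{n+1}).
\]
Thus, once the base case $\tau(g L_{y_0}^n) \in B^S$ holds for all sufficiently large $n$, descending induction on $n$ yields $\tau(g L_{y_0}^n) \in B^S$ for every $n \in \ZZ$, and in particular $\tau(g) \in B^S$ by taking $n = 0$.

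To secure the base case, I would invoke the following density statement: for every $h \in \Pic_X(k)$ of sufficiently large degree, there exists a closed point $y \in |X| - S$ with $L_y = h$. This is classical: by Weil's Riemann hypothesis the number of closed points of degree $d$ on $X$ is $\sim q^d/d$, and by Chebotarev's theorem these equidistribute among the $|\Pic^0_X(k)|$ classes in $\Pic^d_X(k)$, so for $d \gg 0$ each class has far more than $|S|$ representatives, and in particular at least one lies in $|X| - S$. Applied to $h = g L_{y_0}^n$ for $n$ sufficiently large, this furnishes the required $y_n \in |X| - S$ with $\tau(g L_{y_0}^n) = \tau(L_{y_n}) \in B^S$.

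The main (and only mildly nontrivial) input will be the density assertion in the last step, which is classical and independent of the rest of the paper; all remaining manipulations are elementary uses of the relation $\tau(a)\tau(b) = \tau(ab) + \tau(ab^{-1})$.
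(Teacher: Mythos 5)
Your argument is a complete, self-contained proof, and since the paper itself offers nothing beyond invoking \cite[Lemma 4.2]{YZ} ``with the same proof'', it stands on its own; but one normalization must be corrected before it matches the statement being proved. In the conventions of this paper the Satake/Eisenstein map is not the untwisted one: as used later (see the proof of Lemma \ref{l:const Sat}), $a_{\Eis}(h_y)=\one_{\cO(y)}+q^{d_y}\one_{\cO(-y)}$, so the image of $\sH_y$ (which is indeed the polynomial algebra on $h_y$) is generated by $[L_y]+q^{d_y}[L_y^{-1}]$, not by $[L_y]+[L_y^{-1}]$, and $\io_{\Pic}$ is the correspondingly twisted involution $[\cL]\mapsto q^{\deg\cL}[\cL^{-1}]$; with the untwisted $\tau$ your claim that the local image is the Weyl-invariant subring is false over $\QQ$. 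Fortunately your scheme is insensitive to the twist: setting $\tau'(g):=[g]+q^{\deg g}[g^{-1}]$, the invariant ring $\QQ[\Pic_X(k)]^{\io_{\Pic}}$ is spanned by the $\tau'(g)$ (note $\tau'(g^{-1})=q^{-\deg g}\tau'(g)$), the product rule becomes $\tau'(a)\tau'(b)=\tau'(ab)+q^{\deg b}\tau'(ab^{-1})$, and your three-term descending recursion and base case go through verbatim because the powers of $q$ are invertible in $\QQ$. So: right approach, with a normalization slip that needs to be fixed throughout.

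On the route itself: your two ingredients are the Chebyshev-type descent and the equidistribution of closed points of large degree among the classes of $\Pic_X(k)$ (the function-field prime number theorem with Weil bounds), the latter furnishing, in every class of sufficiently large degree, a closed point avoiding $S$. That input is correct and classical, and it is a clean way to deal with the excluded finite set $S$; it is, however, a heavier input than strictly needed. A softer alternative, presumably closer to what ``the same proof as \cite[Lemma 4.2]{YZ}'' has in mind, is to use only that every class of large degree admits an \emph{effective} representative disjoint from $S$ (global generation plus surjectivity of $H^0(\cL)\to\bigoplus_{x\in S}\cL|_{x}$ for $\deg\cL$ large) and then exploit the explicit expansion of $a_{\Eis}(h_D)$ for such $D$ together with an induction on degree; this avoids any appeal to Chebotarev/Riemann-hypothesis bounds. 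Either way the lemma follows, and since the paper does not spell out its proof, your version (after inserting the $q$-twist) is acceptable as written.
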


We have a generalization of \cite[Theorem 4.3]{YZ}.

\begin{thm}\label{thm K eis=0} Let $f^S\in\CI^S_{\Eis}$ and let $f_{S}\in C_{c}^{\infty}(G(\AA_{S}))$ be left invariant under the Iwahori $\Iw_{S}=\prod_{x\in S}\Iw_{x}$. Then for $f=f_S\otimes f^S\in C_c^\infty(G(\BA))$ we have
\[
\BK_f=\BK_{f,{\rm cusp}}+\BK_{f,{\rm sp}}.
\]
Here $\BK_{f,{\rm cusp}}$ (resp. $\BK_{f,{\rm sp}}$) is the projection of $\KK_{f}$ to the cuspidal spectrum (resp. residual spectrum, i.e., one-dimensional representations), see \cite[\S4.2]{YZ}.
\end{thm}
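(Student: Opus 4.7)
My plan is to follow the strategy of \cite[Theorem 4.3]{YZ}: perform the spectral decomposition of $\BK_{f}$ and show that the Eisenstein (continuous spectrum) contribution vanishes. The only novelty compared to the unramified setting of \cite{YZ} is the need to accommodate the Iwahori level structure at $S$, and the crucial observation is that this Iwahori invariance forces only everywhere-unramified Hecke characters to contribute to the Eisenstein part, putting us back into a situation where the Eisenstein-ideal mechanism applies directly.

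Concretely, I would write the Langlands spectral decomposition $\BK_{f}=\BK_{f,{\rm cusp}}+\BK_{f,{\rm sp}}+\BK_{f,{\rm Eis}}$ and reduce to showing $\BK_{f,{\rm Eis}}=0$. Since $f^{S}$ is bi-$K^{S}$-invariant and $f_{S}$ is left-$\Iw_{S}$-invariant, the right convolution operator $R(f)$ maps $L^{2}([G])$ into the subspace of vectors that are right-$K^{S}$-invariant outside $S$ and right-$\Iw_{S}$-invariant at $S$. Inside each principal series $I(\chi)$ of $G(\AA)$, the local subspace $I(\chi_{x})^{\Iw_{x}}$ is nonzero precisely when $\chi_{x}$ is unramified, which is the standard description of Iwahori-fixed vectors of $\PGL_{2}$ principal series. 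Hence only Hecke characters $\chi$ unramified at every place contribute to $\BK_{f,{\rm Eis}}$, and such $\chi$ factor through $\Pic_{X}(k)=F^{\times}\bs\AA^{\times}/\OO^{\times}$.

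For such an unramified $\chi$ and a spherical vector $\phi^{S}\in I(\chi^{S})^{K^{S}}$, the character $\lambda_{\chi}^{S}$ by which $\sH^{S}_{G}$ acts factors as
\[
\sH^{S}_{G}\xra{a^{S}_{\Eis}}\QQ[\Pic_{X}(k)]^{\io_{\Pic}}\xra{\chi}\CC,
\]
the descent through $\Pic_{X}(k)$ being exactly the condition that $\chi$ is trivial on $F^{\times}$. Since $f^{S}\in\cI^{S}_{\Eis}=\ker(a^{S}_{\Eis})$ we obtain $\lambda_{\chi}^{S}(f^{S})=0$ for every such $\chi$. Consequently $R(f^{S})$ annihilates every Iwahori-fixed Eisenstein vector, and writing $R(f)=R(f_{S})R(f^{S})$ yields $\BK_{f,{\rm Eis}}=0$, as desired.

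The main technical point requiring care is verifying that the Langlands spectral decomposition of the Eisenstein kernel is compatible with the restriction to invariance type $K^{S}\times\Iw_{S}$, so that no ramified $\chi_{x}$ contributes spurious terms once we impose the Iwahori condition. This boils down to the standard compatibility of the Iwahori-Hecke algebra action with the Plancherel decomposition of principal series and is largely bookkeeping. Once this compatibility is in place, the rest of the argument directly parallels the proof of \cite[Theorem 4.3]{YZ}, which treated only the everywhere-spherical case.
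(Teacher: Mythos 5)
Your proposal is correct and follows essentially the same route as the paper: reduce to the Eisenstein kernel, use left $\Iw_{S}\times K^{S}$-invariance of $f$ to see that only everywhere-unramified characters $\chi$ (those with $V_{\chi}^{\Iw_{S}\times K^{S}}\ne 0$) contribute, and then kill those contributions because the spherical action of $f^{S}$ factors through $a^{S}_{\Eis}$, which vanishes on $\cI^{S}_{\Eis}$. The only cosmetic difference is that the paper makes the "bookkeeping" explicit via the analogue of equation (4.9) of \cite{YZ}, writing the matrix coefficients $(\rho_{\chi,u}(f)\phi_{\a},\phi_{\b})=\chi_{u+1/2}(a^{S}_{\Eis}(f^{S}))(\rho_{\chi,u}(f_{S}\otimes 1_{K^{S}})\phi_{\a},\phi_{\b})$, which is exactly your factorization $R(f)=R(f_{S})R(f^{S})$ with the unramified twist made precise.
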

\begin{proof}
We indicate how to the modify the proof of \cite[Theorem 4.3]{YZ}. Let $K^S=\prod_{x\notin S}G(\CO_x)$, and let $K=K_S\cdot K^S $ be a compact open subgroup of $G(\BA)$ such that $K_{S}\subset \Iw_{S}$ and that $f$ is bi-$K$-invariant. The analogue of equation \cite[(4.9)]{YZ} now reads
\begin{align}\label{eqn K eis chi 1}
\BK_{f,\Eis,\chi}(x,y)=\frac{\log q}{2\pi i}\sum_{\a,\b}\int_0^{\frac{2\pi i}{\log q}} (\rho_{\chi,u}(f)\phi_{\a},\phi_{\b})\, E(x,\phi_{\a},u,\chi)\ov {E(y,\phi_{\b},u,\chi)}\,du,
\end{align}
where $\{\phi_{\a}\}$ is an orthonormal basis of $V_\chi^K$.  Since $f$ is left invariant under the Iwahori $\Iw_{S}\times K^{S}$, $(\rho_{\chi,u}(f)\phi_{\a},\phi_{\b})=0$ unless the $\Iw_{S}\times K^{S}$-average  of $\phi_{\b}$ is nonzero; i.e.,  $(\rho_{\chi,u}(f)\phi_{\a},\phi_{\b})=0$ unless $V_{\chi}^{\Iw_{S}\times K^{S}}\ne0$ which happens if and only if $\chi$ is everywhere unramified.  When $\chi$ is everywhere unramified,  we have
$$
(\rho_{\chi,u}(f)\phi_{\a},\phi_{\b})=\chi_{u+1/2}(a^{S}_{\Eis}(f^S)) (\rho_{\chi,u}(f_S\otimes 1_{K^S})\phi_{\a},\phi_{\b}).
$$
In particular, if $f^S$ lies in the Eisenstein ideal, then $a^{S}_{\Eis}(f^S)=0$, and hence the integrand in \eqref{eqn K eis chi 1} vanishes.
This completes the proof.
\end{proof}

\subsection{The spherical character: global and local}
\sss{Global spherical characters and period integral}
We first recall from \cite[\S4.3]{YZ} the global  spherical character. Let $\pi$ be a cuspidal automorphic representation of $G(\BA)$, endowed with the natural Hermitian form given by the Petersson inner product: $\pair{\phi,\phi'}$ for $\phi,\phi'\in\pi$.

For a character $\chi:F^\times\bs\BA^\times\to \BC^\times$, the $(A,\chi)-$period integral for $\phi\in \pi$ is defined as
\begin{align}\label{eqn P-chi}
\sP_\chi(\phi,s):=\int_{[A]}\phi(h)\chi(h)\big|h\big |^s\, dh.
\end{align}
We simply write $\sP(\phi,s)$ if $\chi=\one$ is trivial. The global spherical character (relative to $(A\times A, 1\times \eta)$) associated to $\pi$ is a distribution on $G(\BA)$ defined by
\begin{align}\label{eqn dist J pi}
\BJ_{\pi}(f,s_1,s_2)=\sum_{\{\phi\}}\frac{ \sP(\pi(f)\phi,s_1+s_2)\sP_\eta(\ov{\phi},s_1-s_2)}{\pair{\phi,\phi}},\quad f\in C_c^\infty(G(\BA)),
\end{align}
where the sum runs over an {\em orthogonal} basis $\{\phi\}$  of $\pi$. This expression is independent of the choice of the measure on $G(\BA)$ as long as we use the same measure to define the operator $\pi(f)$ and the Petersson inner product.  The function $\BJ_{\pi}(f,s_1,s_2)$ defines an element in  $\BC[q^{\pm s_1}, q^{\pm s_2}]$.

Using Theorem \ref{thm K eis=0}, the same argument of \cite[Lemma 4.4]{YZ} proves the following Lemma.
\begin{lem}\label{l:J Eis ideal} Let $f$ be the same as in Theorem \ref{thm K eis=0}. Then
$$
\BJ(f,s_1,s_2)=\sum_{\pi }\,\BJ_\pi(f,s_1,s_2),
$$
where the sum runs over all {\em cuspidal} automorphic representations $\pi$ of $G(\BA)$ and the summand $\BJ_\pi(f,s)$ is zero for all but finitely many $\pi$.
\end{lem}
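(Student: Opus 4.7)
The plan is to apply Theorem \ref{thm K eis=0} to kill the Eisenstein contribution to $\BK_f$, and then expand the surviving cuspidal and residual pieces along their spectral decompositions, following the proof of \cite[Lemma 4.4]{YZ}. By that theorem, $\BK_f=\BK_{f,\mathrm{cusp}}+\BK_{f,\mathrm{sp}}$; inserting this decomposition into \eqref{eq: J(f,s1,s2)} writes $\BJ(f,s_1,s_2)$ as $\BJ_{\mathrm{cusp}}(f,s_1,s_2)+\BJ_{\mathrm{sp}}(f,s_1,s_2)$. Since $\BK_{f,\mathrm{cusp}}$ is rapidly decreasing on $[G]\times [G]$, the cuspidal contribution converges absolutely and requires no regularization; expanding $\BK_{f,\mathrm{cusp}}$ against an orthogonal basis of each cuspidal $\pi$ and interchanging the (finite) spectral sum with integration over $[A]\times[A]$ yields $\BJ_{\mathrm{cusp}}(f,s_1,s_2)=\sum_{\pi\text{ cuspidal}}\BJ_\pi(f,s_1,s_2)$.

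Next I would show that $\BJ_{\mathrm{sp}}(f,s_1,s_2)=0$. The residual spectrum of $\PGL_2$ consists of one-dimensional automorphic characters of the form $\chi\circ\det$, where $\chi$ is a quadratic Hecke character on $F^\times\bs\AA^\times$. The corresponding contribution to the integral factors into Tate integrals in $h_1$ and $h_2$; the $h_2$-integral takes the shape $\int_{[A]}(\chi\eta)(h_2)|h_2|^{s_1-s_2}\,dh_2$. Since $\chi^2=1$ while $\eta$ is the nontrivial quadratic id\`ele class character attached to $F'/F$, the product $\chi\eta$ is a nontrivial Hecke character, so this integral vanishes identically in $(s_1,s_2)$ by orthogonality of characters. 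Hence $\BJ_{\mathrm{sp}}\equiv 0$.

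For the finiteness of nonzero $\BJ_\pi$: since $f$ is bi-invariant under some compact open subgroup $K\subset G(\AA)$, the operator $\pi(f)$ factors through $\pi^K$, so $\BJ_\pi(f,s_1,s_2)=0$ whenever $\pi^K=0$; by the standard finiteness of the cuspidal spectrum with prescribed level, only finitely many cuspidal $\pi$ have $\pi^K\ne 0$. I expect the main subtlety to lie in the interaction between the regularization procedure of \cite[\S2.2--2.5]{YZ} and the spectral splitting: one must verify that the limit of truncated integrals defining $\BJ(f,s_1,s_2)$ commutes with the decomposition $\BK_f=\BK_{f,\mathrm{cusp}}+\BK_{f,\mathrm{sp}}$, and that the Tate-integral vanishing for the residual piece genuinely survives after regularization (rather than producing boundary contributions from the truncation).
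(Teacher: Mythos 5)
Your overall route is the same as the paper's: the paper proves this lemma by invoking Theorem \ref{thm K eis=0} and then running the argument of \cite[Lemma 4.4]{YZ} verbatim (split $\BK_f=\BK_{f,\mathrm{cusp}}+\BK_{f,\mathrm{sp}}$, expand the cuspidal part spectrally, kill the residual part, and get finiteness from the level). The cuspidal and finiteness parts of your write-up are fine.

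There is, however, a genuine gap in your treatment of the residual term. The contribution of a one-dimensional representation $\chi\circ\det$ (with $\chi$ quadratic and, by invariance of $f$, everywhere unramified) factors as a product of two regularized $[A]$-integrals: one in $h_1$ against $\chi(h_1)|h_1|^{s_1+s_2}$ and one in $h_2$ against $(\chi\eta)(h_2)|h_2|^{s_1-s_2}$. You claim the $h_2$-factor always vanishes because $\chi\eta$ is nontrivial; but $\chi=\eta$ is allowed precisely when $\eta$ is everywhere unramified (the case $R=\varnothing$, which this lemma still covers since the paper permits unramified covers), and then $\chi\eta=1$, so that factor certainly does not vanish. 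Moreover, even when $\chi\eta\neq 1$, "orthogonality of characters" on the noncompact group $[A]=F^\times\backslash\AA^\times$ only gives vanishing degree-coset by degree-coset when the character is nontrivial on the compact degree-zero subgroup; this fails for $\chi=\eta\omega$ (with $\omega$ the unramified character $h\mapsto(-1)^{\deg h}$), where $\chi\eta=\omega$ is nontrivial but trivial on the degree-zero part, so the vanishing would have to come from the regularization rather than orthogonality. The correct argument uses both factors: since $X'$ is geometrically connected, $\eta$ is nontrivial on the degree-zero id\`ele class group, hence for every unramified quadratic $\chi$ at least one of $\chi$, $\chi\eta$ is nontrivial there, and the corresponding factor vanishes termwise on each degree coset, which kills the product before any regularization issue arises. (In the strictly ramified case $R\neq\varnothing$ your shortcut does work, because $\chi\eta$ is then ramified and in particular nontrivial on the compact part; but the lemma is not restricted to that case.)
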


\sss{Local spherical characters}
We now recall the factorization of the global spherical character \eqref{eqn dist J pi} into a product of local spherical characters. For unexplained notation and convention we refer to the proof of \cite[Prop. 4.5]{YZ}.

Let $\psi:F\bs \AA\to \CC^{\times}$ be a nontrivial character, and let $\psi_{x}$ be its restriction to $F_{x}$. For the discussion of the local spherical characters,  we will use Tamagawa measures on various groups, which differ from our earlier convention. Strictly speaking, as in {\em loc. cit.}, the measure on $A(\BA)=\BA^\times$ is not the Tamagawa measure, but an unnormalized (decomposable) one $\prod_{x\in|X|} d^\times t_x$ where  $d^\times t_x=\zeta_x(1)\frac{dt_x}{|t_x|}$ for the self-dual measure $dt_x$ (with respect to $\psi_{x}$). In particular, we have $\vol(\cO_{x}^{\times})=1$ when $\psi_x$ is unramified (i.e., the conductor of $\psi_x$ is $\cO_{x}$). Similar remark applies to the measure $G(\BA)$, cf.  \cite[p.804]{YZ}.

We consider the Whittaker model of $\pi_x$ with respect to the character $\psi_x$, denoted by $\CW_{\psi_x}(\pi_x)$. For $\phi=\otimes_{x\in|X|}\phi_x\in \pi=\otimes'_{x\in|X|}\pi_x$, the $\psi$-Whittaker coefficient $W_\phi$ decomposes as a product $\otimes_{x\in|X|} W_x$, where $W_x\in \CW_{\psi_x}(\pi_x)$.  We define a normalized  linear functional
\begin{align*}
\lambda^\nat_x(W_x,\eta_x,s):=\frac{1}{L(\pi_x\otimes\eta_x,s+1/2)}\int_{F_x^\times}W_x\left(\matrixx{a}{}{}{1}\right)\eta_x(a)|a|^s\,d^\times a.
\end{align*}
We define a local (invariant) inner product $\theta^\nat_x$ on the Whittaker model $\CW_{\psi_x}(\pi_x)$ 
\begin{align*}
\theta^\nat_x(W_x,W_x'):=\frac{1}{L(\pi_x\times\wt\pi_x,1)}\int_{F_x^\times}W_x\left(\matrixx{a}{}{}{1}\right)\ov {W'_{x}}\left(\matrixx{a}{}{}{1}\right)\,d^\times a.
\end{align*}
Now we define  the local spherical character as
\begin{align}\label{eqn J loc}
\BJ_{\pi_x}(f_x,s_1,s_2):=
\sum_{\{W_{i}\}}
\frac{\lambda_x^\nat( \pi_{x}(f_{x})W_{i}, {\bf 1}_x, s_1+s_2)\lambda_x^\nat(\ov{W_{i}}, \eta_x, s_1-s_2)}{\theta_x^\nat(W_{i},W_{i})}.
\end{align}
where the sum runs over an {\em orthogonal} basis $\{W_{i}\}$ of $\CW_{\psi_x}(\pi_{x})$. 
By the product decomposition of the period integrals \eqref{eqn P-chi} and the Petersson inner product (cf. the proof of \cite[Prop. 4.5]{YZ}), the global spherical character decomposes into a product of local ones (cf. \cite[(4.16)]{YZ}):
\begin{align}\label{char g2l}
\BJ_{\pi}(f,s_1,s_2)=\lvert \omega_X\rvert^{-1}\,\frac{L(\pi,s_{1}+s_{2}+\ha)L(\pi\otimes\y, s_{1}-s_{2}+\ha)}{2\,L(\pi,\Ad,1)}\prod_{x\in |X|}\BJ_{\pi_x}(f_x,s_1,s_2).
\end{align}
We note that the factor  $\lvert \omega_X\rvert^{-1}$ is due to the fact that in our earlier definition \eqref{eq: J(f,s1,s2)} of $\BJ(f,s_1,s_2)$, the measure on $A(\BA)$ gives $\vol(A(\BO))=1$, while the (unnormalized) Tamagawa measure gives $\vol(A(\BO))=\lvert \omega_X\rvert^{1/2}$.

\subsection{Local test functions}\label{ss:local test}

Out test function $f\in C_{c}^{\infty}(G(\AA))$ will be a pure tensor $f=\otimes_{x\in |X|}f_{x}$ where $f_{x}\in \sH_{x}$ is in the spherical Hecke algebra for $x\notin \Sig\cup R$. Below we define the local components $f_{x}$ for $x\in R$ (in \S\ref{sss:hx at R}-\ref{sss:fx at R}) and for $x\in \Sigma$ (in \S\ref{sss:fx at Sig}).

For any place $x\in |X|$, let $p_{x}: \GL_{2}(F_{x})\to G(F_{x})$ be the projection. The fibers of $p_{x}$ are torsors under $F^{\times}_{x}$ and are equipped with $F^{\times}_{x}$-invariant measures such that any $\cO^{\times}_{x}$-orbit has volume 1. Let $p_{x,*}: C_c^\infty(\GL_2(F_x))\to C_c^\infty(G(F_x))$ be the map defined by integration along the fibers of $p_{x}$ with the above-defined measure.

\sss{The function $h^{\bsq}_{x}$}\label{sss:hx at R} For $a\in \cO_{x}$, we denote $\ov a$ its image in $k(x)$. For any $n\in\ZZ$, let $\Mat_{2}(\cO_{x})_{v_{x}(\det)=n}$ be the set of $2$-by-$2$ matrices $M$ with entries in $\cO_{x}$ such that $v_{x}(\det(M))=n$.

At $x\in R$, the character $\eta_{x}|_{\cO^{\times}_{x}}$ factors through the unique nontrivial character $\ov\eta_{x}: k(x)^{\times}\to\{\pm1\}$. We also denote by $\ov \eta_{x}: k(x)\to \{0,\pm1\}$ its extension by zero to the whole $k(x)$. 

When $x\in R$, let $\wt h^{\bsq}_{x}\in  C_c^\infty(\GL_2(F_x))$ be the function supported on $\Mat_{2}(\cO_{x})_{v_{x}(\det)=1}$ given by 
\begin{equation}\label{eqn h bsq}
\wt h^{\bsq}_{x}((a_{ij}))=\begin{cases} \frac{1}{2}\prod_{i,j\in\{1,2\}}(1+\ov\eta_{x}(\ov a_{ij})) & \textup{if }a_{ij}\in \cO^{\times}_{x}, \forall i,j\in\{1,2\} ;\\
\prod_{i,j\in\{1,2\}}(1+\ov\eta_{x}(\ov a_{ij})) & \textup{otherwise.}
\end{cases}
\end{equation}
Define
\begin{equation*}
h^{\bsq}_{x}=p_{x,*}\wt h^{\bsq}_{x}\in C_c^\infty(G(F_x)). 
\end{equation*} 

We give an interpretation of the formula \eqref{eqn h bsq} as counting the number of certain ``square-roots" of $(a_{ij})$. Let $\Xi_{x}$ be the set of pairs of matrices $\left(\bigl[\begin{smallmatrix}a_{11} & a_{12}\\ a_{21}& a_{22}\end{smallmatrix}\bigr] , \bigl[\begin{smallmatrix}\a_{11} & \a_{12}\\\a_{21}& \a_{22}\end{smallmatrix}\bigr] \right) \in \Mat_{2}(\cO_{x})\times\Mat_{2}(k(x))$ such that
\begin{enumerate}
\item for $1\le i,j\le 2$, $\a_{ij}^{2}=\ov a_{ij}$, the image of $a_{ij}$ in $k(x)$;
\item $\det(\a_{ij})=0$ ;
\item $v_{x}(\det(a_{ij}))=1$. 
\end{enumerate}

\begin{lem} \label{lem mu Xix} Let $\mu_{x}: \Xi_{x}\to \Mat_{2}(\cO_{x})$ be the projection to the first factor $(a_{ij})$.  We have
\begin{equation}\label{one Xix}
\wt h^{\bsq}_{x}=\mu_{x,*}\one_{\Xi_{x}}.
\end{equation}
\end{lem}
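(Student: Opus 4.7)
\textbf{Proof plan for Lemma \ref{lem mu Xix}.}

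The plan is to compute $\mu_{x,*}\one_{\Xi_{x}}$ by directly enumerating the fiber of $\mu_x$ over each matrix $(a_{ij})\in \Mat_2(\cO_x)$, and to show the count agrees with the explicit formula \eqref{eqn h bsq} case by case. First, both sides vanish outside $\Mat_2(\cO_x)_{v_x(\det)=1}$: for $\wt h^{\bsq}_x$ this is the definition, while for $\mu_{x,*}\one_{\Xi_x}$ it is condition~(3) in the definition of $\Xi_x$. So I would fix $(a_{ij})\in \Mat_2(\cO_x)$ with $v_x(\det)=1$ and compute the cardinality of the set of $(\alpha_{ij})\in \Mat_2(k(x))$ satisfying conditions (1) and (2). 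The basic input, valid because $\charac k(x)\ne 2$, is that for $c\in k(x)$ the number of square roots of $c$ equals $1+\ov\eta_x(c)$ (understanding $\ov\eta_x(0)=0$); ignoring condition (2) this already yields $\prod_{i,j}(1+\ov\eta_x(\ov a_{ij}))$ candidates for $(\alpha_{ij})$.

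Next I would split according to whether all $a_{ij}\in \cO_x^\times$ or not. In the first case all $\ov a_{ij}$ are nonzero in $k(x)$, and the count vanishes unless each $\ov a_{ij}$ is a square. Fix square roots $\alpha_{ij}^0$; every candidate then has the form $\alpha_{ij}=\epsilon_{ij}\alpha_{ij}^0$ with $\epsilon_{ij}\in\{\pm1\}$. Because $v_x(\det(a_{ij}))\ge 1$, the relation $\ov a_{11}\ov a_{22}=\ov a_{12}\ov a_{21}$ forces $\alpha_{11}^0\alpha_{22}^0=\delta\, \alpha_{12}^0\alpha_{21}^0$ for some sign $\delta\in\{\pm1\}$. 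Condition (2) then reads $\epsilon_{11}\epsilon_{22}\delta=\epsilon_{12}\epsilon_{21}$, a single nondegenerate $\FF_2$-linear equation on the $\epsilon_{ij}$. It therefore halves the number of solutions, producing $\tfrac12\prod_{i,j}(1+\ov\eta_x(\ov a_{ij}))$, exactly matching the first line of \eqref{eqn h bsq}.

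In the second case at least one $\ov a_{ij}$ is zero. A brief case analysis on the vanishing pattern, using that $\ov{\det}(a_{ij})=\ov a_{11}\ov a_{22}-\ov a_{12}\ov a_{21}=0$, shows that the patterns compatible with $v_x(\det)=1$ are exactly: two zeros in a common row or column, or three zeros; in particular one cannot have a unique zero, a purely diagonal or anti-diagonal pair of zeros, or four zeros (the last being excluded by $v_x(\det)=1$). In every compatible pattern the corresponding $\alpha_{ij}$ are forced to vanish (square roots of zero), and the resulting matrix $(\alpha_{ij})$ automatically has $\det=0$, so condition (2) imposes no further constraint. The count therefore equals $\prod_{i,j}(1+\ov\eta_x(\ov a_{ij}))$, matching the second line of \eqref{eqn h bsq}.

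The only nonroutine step is the case analysis in the non-unit case: I must be certain that the arithmetic constraint $v_x(\det(a_{ij}))=1$ rules out exactly those vanishing patterns in which condition (2) would fail to be automatic. Once this is verified, combining the two cases gives \eqref{one Xix} pointwise on $\Mat_2(\cO_x)_{v_x(\det)=1}$, completing the proof. I would finally remark that this moduli-theoretic interpretation is precisely what motivated the definition \eqref{eqn h bsq} and will be reused later when identifying $h_x^{\bsq}$ with the orbital count on the space $\cN_d$.
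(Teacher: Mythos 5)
Your proof is correct and follows essentially the same approach as the paper: fix $(a_{ij})\in\Mat_{2}(\cO_{x})_{v_{x}(\det)=1}$, enumerate the square-root tuples $(\alpha_{ij})$ (of which there are $\prod_{i,j}(1+\ov\eta_{x}(\ov a_{ij}))$), and check how the constraint $\det(\alpha_{ij})=0$ cuts this down — by a factor $\tfrac12$ when all entries are units, and not at all otherwise. For the non-unit case, which you flag as the only nonroutine step, the paper's route is slightly shorter and sidesteps your classification of vanishing patterns: since $v_{x}(\det)\geq1$ forces $\ov a_{11}\ov a_{22}=\ov a_{12}\ov a_{21}$ in $k(x)$, and at least one $\ov a_{ij}$ vanishes, both products vanish; hence $\alpha_{11}\alpha_{22}=0$ and $\alpha_{12}\alpha_{21}=0$ directly (each being a square root of $0$), so $\det(\alpha_{ij})=0$ is automatic with no case-by-case on the zero pattern. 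Your classification is a correct but more elaborate way to reach the same conclusion.
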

\begin{proof} Let $(a_{ij})\in\Mat_{2}(\cO_{x})_{v_{x}(\det)=1}$ be such that all $a_{ij}$ are squares. Then its preimage in $\Xi_{D,x}$ consists of $(\a_{ij})\in\Mat_{2}(k(x))$ where $\a_{ij}$ is a square root of $\ov a_{ij}$, such that $\det(\a_{ij})=0$. If all $a_{ij}$ are units, among the $\prod_{i,j}(1+\ov\eta_x(\ov a_{ij}))=2^{4}=16$ choices of $(\a_{ij})$, only half of them satisfy $\det(\a_{ij})=0$. Hence the preimage of such $(a_{ij})$ in $\Xi_{x}$ consists of $8$ elements. If at least one of $a_{ij}$  is non-unit, then the condition $v_{x}(\det(a_{ij}))=1$ implies $\det(\a_{ij})=0$. Therefore, the preimage of such $(a_{ij})\in \Mat_{2}(\cO_{x})_{v_{x}(\det)=1}$ in $\Xi_{x}$ has cardinality given by $\prod_{i,j}(1+\ov\eta_x(\ov a_{ij}))$, as desired by \eqref{eqn h bsq}. 
\end{proof}

\sss{The function $f^{\bsq}_{x}$}\label{sss:fx at R} We introduce another test function, closely related to $h^{\bsq}_{x}$, which will be useful in the calculation of its action on representations.

For $x\in R$, let $\wt f^{\bsq}_{x}$ be the function supported on $\Mat_{2}(\cO_{x})_{v_{x}(\det)=1}$ given by the formula
\begin{equation*}
\wt f^{\bsq}_{x}((a_{ij}))=\begin{cases}\ov\eta_{x}(\ov a_{11}\ov a_{12}) & \textup{if }a_{11}, a_{12}\in \cO^{\times}_{x};\\
\ov\eta_{x}(\ov a_{21}\ov a_{22}) & \textup{if }a_{21}, a_{22}\in \cO^{\times}_{x};\\
0 & \textup{otherwise.}
\end{cases}
\end{equation*}
Note that the first two cases above are not mutually exclusive, but when all $a_{ij}\in \cO^{\times}_{x}$ we have $\ov\eta_{x}(\ov a_{11}\ov a_{12})=\ov\eta_{x}(\ov a_{21}\ov a_{22})$ because the rank of $(\ov a_{ij})\in \Mat_2(k(x))$ is one.

We then define
\begin{equation*}
f^{\bsq}_{x}=p_{x*}\wt f^{\bsq}_{x}\in C_c^\infty(G(F_x)).
\end{equation*}

\begin{lemma} The function $\wt f^{\bsq}_{x}$ is characterized up to a scalar by the following three properties:
\begin{enumerate}
\item Its support is contained in $\Mat_{2}(\cO_{x})_{v_{x}(\det)=1}$;
\item It is left invariant under $\GL_{2}(\cO_{x})$;
\item Under the action of the diagonal torus $\wt A(\cO_{x})$ by right multiplication, it is an eigenfunction with eigencharacter $\diag(a,d)\mapsto \ov\eta_{x}(a/d)$.
\end{enumerate}
Furthermore, we have
\begin{align}\label{wt f bsq}
\wt f^{\bsq}_{x}=\sum_{u\in k(x)^\times}\ov\eta_x( u)\cdot {\bf 1}_{ \GL_2(\cO_x) \smat{1}{u}{}{\varpi_x}  }.
\end{align}
\end{lemma}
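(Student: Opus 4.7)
The plan is to verify that the explicit $\wt f^{\bsq}_{x}$ defined above the lemma satisfies properties (1)--(3), to show that these properties characterize such a function up to scalar, and to derive the explicit formula \eqref{wt f bsq} by evaluating on a set of coset representatives.

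The organizing observation is that $\wt f^{\bsq}_{x}(M)$ depends only on the mod-$\vp_{x}$ reduction $\ov M\in\Mat_{2}(k(x))$, and that $v_{x}(\det M)=1$ forces $\ov M$ to have rank one. Hence we may factor $\ov M=\bar v\,\bar w^{T}$ with nonzero $\bar v=(v_{1},v_{2})$ and $\bar w=(w_{1},w_{2})$, and a short case check shows
\[
\wt f^{\bsq}_{x}(M)=\begin{cases}\ov\eta_{x}(w_{1}w_{2}) & \text{if } w_{1}w_{2}\ne 0,\\ 0 & \text{otherwise.}\end{cases}
\]
Indeed, if $v_{i}\ne 0$ and $w_{1}w_{2}\ne 0$, then the $i$-th row of $\ov M$ lies in $k(x)^{\times}\times k(x)^{\times}$ and the corresponding clause of the original definition returns $\ov\eta_{x}(v_{i}^{2}w_{1}w_{2})=\ov\eta_{x}(w_{1}w_{2})$ (since $\ov\eta_{x}$ annihilates squares); while if $w_{1}w_{2}=0$ the corresponding column of $\ov M$ is zero and neither clause applies. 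In particular the value is independent of $\bar v$. Property (1) is immediate. Property (2) follows because $\ov{kM}=(\ov k\bar v)\bar w^{T}$ leaves $\bar w$ unchanged. For property (3), right multiplication by $t=\diag(a,d)\in \wt A(\cO_{x})$ replaces $\bar w^{T}$ by $(w_{1}a,w_{2}d)^{T}$, producing the scaling factor $\ov\eta_{x}(ad)$; since $\ov\eta_{x}$ is quadratic, $\ov\eta_{x}(ad)=\ov\eta_{x}(a/d)$, giving the claimed eigencharacter.

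For uniqueness and the explicit formula, I would identify $\GL_{2}(\cO_{x})\bs\Mat_{2}(\cO_{x})_{v_{x}(\det)=1}$ with $\PP^{1}(k(x))$ via $M\mapsto \ker(\ov M)$, using the standard system of representatives $\{\smat{1}{u}{0}{\vp_{x}}\mid u\in k(x)\}\sqcup\{\smat{\vp_{x}}{0}{0}{1}\}$. A direct computation shows that $\diag(a,d)$ sends the coset of $\smat{1}{u}{0}{\vp_{x}}$ to that of $\smat{1}{ud/a}{0}{\vp_{x}}$ and fixes the coset of $\smat{\vp_{x}}{0}{0}{1}$. Any $\phi$ satisfying (1)--(3) must therefore vanish on the fixed coset and at $u=0$ (since the eigencharacter $\ov\eta_{x}$ is nontrivial on the relevant stabilizer), and its values on $u\in k(x)^{\times}$ are determined by a single value via $\phi(\smat{1}{c}{0}{\vp_{x}})=\ov\eta_{x}(c)\phi(\smat{1}{1}{0}{\vp_{x}})$. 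This shows the characterization. Finally, evaluating our specific $\wt f^{\bsq}_{x}$ at the representatives gives $\ov\eta_{x}(u)$ on the coset indexed by $u\in k(x)^{\times}$ and $0$ on the two remaining cosets, which is precisely \eqref{wt f bsq}.

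I do not anticipate a serious obstacle; the one mildly delicate point is the left-$\GL_{2}(\cO_{x})$-invariance, which appears opaque from the piecewise definition but becomes transparent once one uses the rank-one factorization $\ov M=\bar v\bar w^{T}$ to isolate the dependence on $\bar w$ alone.
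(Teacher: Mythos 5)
Your proof is correct and follows essentially the same route as the paper: the same coset representatives $\smat{\vp_x}{0}{0}{1}$, $\smat{1}{u}{}{\vp_x}$ identify the space of functions satisfying (1)--(3) with the $\ov\eta_x$-eigenspace of $\wt A(k(x))$ acting on functions on $\PP^{1}(k(x))$, which is one-dimensional and spanned by $u\mapsto\ov\eta_x(u)$, after which \eqref{wt f bsq} follows by evaluating on representatives. Your rank-one factorization $\ov M=\bar v\,\bar w^{T}$ is a nice supplement: it makes explicit the left $\GL_2(\cO_x)$-invariance of the piecewise-defined $\wt f^{\bsq}_{x}$ (hence its membership in that one-dimensional eigenspace), a point the paper's proof uses but does not spell out.
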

\begin{proof} Let $\cF$ be the space $\CC$-valued functions satisfying the above conditions. The coset space $\GL_{2}(\cO_{x})\bs \Mat_{2}(\cO_{x})_{v_{x}(\det)=1}$ has representatives given by 
\begin{equation*}
\mat{\vp_{x}}{0}{0}{1}, \quad \mat{1}{u}{0}{\vp_{x}}, u\in k(x).
\end{equation*}
We have a bijection $\GL_{2}(\cO_{x})\bs \Mat_{2}(\cO_{x})_{v_{x}(\det)=1}\cong \PP^{1}(k(x))=k(x)\cup\{\infty\}$ by sending $\smat{\vp_{x}}{0}{0}{1}$ to $\infty$ and $\smat{1}{u}{0}{\vp_{x}}$ to $u$. The right multiplication of $ \wt A(\cO_{x})$ on $\GL_{2}(\cO_{x})\bs \Mat_{2}(\cO_{x})_{v_{x}(\det)=1}$ factors through $\wt A(\cO_{x})\to \wt A(k(x))$, and $\diag(a,d)$ acts as $u\mapsto (\ov d/\ov a)\cdot u$ ($u\in \PP^{1}(k(x))$). Therefore $\cF$ is isomorphic to the $\ov\y_{x}$-eigenspace of $\wt A(k(x))$ on $C(\PP^{1}(k(x)))$ under right translation. The latter space is one-dimensional and is spanned by $f_{\eta}: u\mapsto \ov\eta_{x}(u)$ for $u\in k(x)^{\times}$ and zero for $u=0$ or $\infty$. Hence $\dim_{\CC}\cF=1$. 

The RHS of the expression \eqref{wt f bsq} is the function in $\cF$ corresponding to $f_{\eta}$, therefore it is a constant multiple of $\wt f^{\bsq}_{x}$. But both sides take value $1$ at $\smat{1}{1}{0}{\vp_{x}}$, they must be equal. This proves the lemma.

\end{proof}

We compare the test functions $h_x^\bsq$ and $f_x^\bsq$.

\begin{lem}\label{lem h-f}
The difference $h_x^\bsq-f_x^\bsq$ is a sum of two functions, one is invariant under the right translation by $A(\cO_x)$, and the other is $\eta$-eigen under the left translation by $A(\cO_x)$.
\end{lem}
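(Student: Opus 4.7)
The plan is to exhibit the decomposition explicitly. I will set
\[
h^0 := p_{x,*}\one_{\Mat_2(\cO_x)_{v_x(\det)=1}} \in C_c^\infty(G(F_x)),
\]
the characteristic function of the single $G(\cO_x)$-double coset that contains the supports of both $h_x^\bsq$ and $f_x^\bsq$. Since $h^0$ is bi-$G(\cO_x)$-invariant it is in particular right $A(\cO_x)$-invariant, so the lemma reduces to showing that the remainder $h_x^\bsq - f_x^\bsq - h^0$ is left $A(\cO_x)$-$\eta$-eigen.

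For this I lift each $g$ in the support to $\tilde g = (a_{ij})\in\Mat_2(\cO_x)_{v_x(\det)=1}$; by the support condition, evaluating any of $h_x^\bsq, f_x^\bsq, h^0$ at $g$ amounts to integrating $\wt h_x^\bsq, \wt f_x^\bsq, \one$ over the orbit $\{z\tilde g:z\in\cO_x^\times\}$, which is computed using $\int_{\cO_x^\times}\ov\eta_x(\ov z)d^\times z = 0$ together with $\ov\eta_x(\ov z)^2 = 1$. The key organizational step is to stratify $\tilde g$ by $T := \{(i,j):a_{ij}\in\cO_x^\times\}$. A brief valuation count using $v_x(\det\tilde g) = 1$ shows that $|T|\in\{1,2,4\}$, and moreover in the case $|T|=2$ the two unit entries must lie in a common row or a common column (the diagonal and anti-diagonal configurations force $v_x(\det)=0$), so there are exactly four strata to analyze.

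A direct computation in each stratum shows that the remainder vanishes in the $|T|=1$ and ``same-row'' $|T|=2$ strata; in the ``same-column'' $|T|=2$ stratum with units in column $j$ it equals $\ov\eta_x(\ov a_{1j}\ov a_{2j})$; and in the $|T|=4$ stratum, where the rank-one reduction of $(\ov a_{ij})$ modulo $\vp_x$ (forced by $v_x(\det)\ge 1$) produces a scalar $\lambda\in k(x)^\times$ with $(\ov a_{21},\ov a_{22}) = \lambda(\ov a_{11},\ov a_{12})$, it equals $\ov\eta_x(\lambda)\bigl(1+\ov\eta_x(\ov a_{11}\ov a_{12})\bigr)$. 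The left action of $\alpha\in A(\cO_x)$, lifted as $\diag(\tilde\alpha,1)$, scales the first row of $\tilde g$ by $\tilde\alpha$ and preserves $T$; since $\ov\tilde\alpha^2$ is a square, $\ov\eta_x(\ov a_{11}\ov a_{12})$ is unchanged, whereas $\lambda\mapsto\lambda/\ov\tilde\alpha$ and $\ov a_{1j}\ov a_{2j}\mapsto\ov\tilde\alpha\cdot\ov a_{1j}\ov a_{2j}$ each acquire a factor of $\eta(\alpha) = \ov\eta_x(\ov\tilde\alpha)$, so the remainder transforms as $\eta(\alpha)$ times itself in every stratum. The main conceptual step is spotting the correct candidate $h^0$; once $h^0$ is fixed, the argument is a clean stratified computation with no further obstruction.
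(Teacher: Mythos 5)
Your proposal is correct, and I checked the computations that carry the load: with $h^0=p_{x,*}\one_{\Mat_2(\cO_x)_{v_x(\det)=1}}$ (bi-$G(\cO_x)$-invariant, hence right $A(\cO_x)$-invariant), the difference $h_x^\bsq-f_x^\bsq-h^0$ does vanish on the one-unit and same-row strata, equals $\ov\eta_x(\ov a_{1j}\ov a_{2j})$ on the same-column stratum, and equals $\ov\eta_x(\lambda)\bigl(1+\ov\eta_x(\ov a_{11}\ov a_{12})\bigr)$ on the all-units stratum (using $\int_{\cO_x^\times}\ov\eta_x(\ov z)\,d^\times z=0$ to kill the odd-in-$z$ terms and the $\tfrac12$ in \eqref{eqn h bsq} to produce exactly $1+\lambda$-type terms), and each of these expressions is multiplied by $\ov\eta_x(\ov{\tilde\alpha})$ under left translation by $\diag(\tilde\alpha,1)$, which preserves the strata; so the decomposition required by the lemma holds. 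This is a genuinely different route from the paper's: there one stays upstairs on $\GL_2$, writes $\wt h_x^\bsq-\wt f_x^\bsq$ as a linear combination of the functions $\wt\delta_{0,S},\wt\delta_{1,S}$ indexed by subsets $S$ of matrix entries, and sorts $S$ into three classes -- odd $|S|$ (killed by $p_{x,*}$ via the central $\cO_x^\times$), columns/full set (right $\wt A(\cO_x)$-invariant), diagonals ($\eta$-eigen under left $\wt A(\cO_x)$) -- never computing values of the push-forwards. The paper's argument is purely formal and avoids any case analysis on $\PGL_2$; yours is more explicit and in fact proves something slightly stronger, namely that the right-invariant summand can be taken to be the single spherical-type function $h^0$ and that the left $\eta$-eigen remainder has the closed-form values above, which feeds into Proposition \ref{prop R} just as well. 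The two decompositions do not coincide -- they differ by the function supported on the all-units stratum with value $\ov\eta_x(\lambda)$, which is simultaneously right $A(\cO_x)$-invariant and left $\eta$-eigen -- but the lemma only asserts existence, so there is no tension. One small point worth making explicit in a write-up: your strata-wise formulas are automatically well defined on $G(F_x)$ (independent of the lift $\tilde g$) because they arise as integrals over the $\cO_x^\times$-orbit, and indeed each formula is invariant under $\tilde g\mapsto z\tilde g$.
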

\begin{proof}
The function $\wt h_x^\bsq$ can be written as
 $$\wt h_x^\bsq=\Phi_0- \frac{1}{2}\Phi_1,$$ where both $\Phi_0$ and $\Phi_1$ are supported on $\Mat_{2}(\cO_{x})_{v_{x}(\det)=1}$: 
\begin{equation*}
\Phi_0((a_{ij}))=\prod_{i,j\in\{1,2\}}(1+\ov\eta_{x}(\ov a_{ij})) 
\end{equation*}
and
\begin{equation*}
\Phi_1((a_{ij}))=\begin{cases}\prod_{i,j\in\{1,2\}}(1+\ov\eta_{x}(\ov a_{ij})) & \textup{if } a_{ij}\in \cO^{\times}_{x}, \forall  i,j\in\{1, 2\};\\
0 & \textup{otherwise.}
\end{cases}
\end{equation*}

For any subset $S\subset \{(1,1),(1,2),(2,1),(2,2)\}$, define the following functions supported on $\Mat_{2}(\cO_{x})_{v_{x}(\det)=1}$:
\begin{eqnarray*}
\wt\delta_{0,S}((a_{ij}))&\colon=&\prod_{(i,j)\in S}\ov\eta_x(\ov a_{ij}),\\
\wt\delta_{1,S}((a_{ij}))&\colon=&\begin{cases}\prod_{(i,j)\in S}\ov\eta_x(\ov a_{ij}), & a_{ij}\in  \cO^{\times}_{x}, \forall  i,j\in\{1, 2\};\\ 0 & \textup{otherwise.}
\end{cases}
\end{eqnarray*}
Then
\begin{equation*}
\Phi_0=\sum_{S}\wt\delta_{0,S}, \quad \Phi_{1}=\sum_{S}\wt\delta_{1,S},
\end{equation*}
hence
\begin{equation}\label{hsq in del}
\wt h_x^\bsq=\sum_{S}\wt\d_{0,S}-\ha\sum_{S}\wt\d_{1,S}.
\end{equation}

On the other hand, let $S_{1\ast}=\{(1,1),(1,2)\}$ (entries in the first row) and  $S_{2\ast}=\{(2,1),(2,2)\}$ (entries in the second row). From the definition of $\wt f_{x}^{\bsq}$, we have
\begin{equation}\label{fsq in del}
\wt f_x^\bsq=\wt\delta_{0,S_{1\ast}}+\wt\delta_{0,S_{2\ast}}-\frac{1}{2} \left(\wt\delta_{1,S_{1\ast}}+\wt\delta_{1,S_{2\ast}}\right).
\end{equation}
In fact, the only non-obvious part of the equality is when all four entries are units, in which cases all four functions $\wt\delta_{0,S_{1\ast}}$, $\wt\delta_{0,S_{2\ast}}$, $\wt\delta_{1,S_{1\ast}}$ and $\wt\delta_{1,S_{2\ast}}$  take the same value. Comparing \eqref{hsq in del} and \eqref{fsq in del}, we see that $\wt h_{x}^{\bsq}- \wt f^{\bsq}_{x}$ is a linear combination of $\wt \d_{0,S}$ and $\wt \d_{1,S}$ for $S$ in one of the three cases
\begin{enumerate}
\item $|S|$ is odd;
\item $S$ is either a column, or contains every entry;
\item $S$ is one of the two diagonals.
\end{enumerate}
Therefore $h_{x}^{\bsq}-f^{\bsq}_{x}$ is a linear combination of $\d_{0,S}=p_{x*}\wt\d_{0,S}$ and $\d_{1,S}=p_{x*}\wt\d_{1,S}$ for $S$ in one of the above three cases.

In case (1), $\wt\d_{0,S}$ and $\wt\d_{1,S}$ are eigenfunctions under the translation by scalar matrices in $\cO_{x}^{\times}$ with nontrivial eigenvalue $\y_{x}$, therefore $\d_{0,S}=\d_{1,S}=0$.

In case (2), $\wt\d_{0,S}$ and $\wt\d_{1,S}$ are right invariant under $\wt A(\cO_{x})$. Therefore $\d_{0,S}$ and $\d_{1,S}$ are right invariant under $A(\cO_{x})$.

In case (3), $\wt\d_{0,S}$ and $\wt\d_{1,S}$ are eigen under the left translation by $\wt A(\cO_x)$ with respect to the character $\diag(a,d)\mapsto \y_{x}(a/d)$, and hence $\delta_{0,S}$ and $\d_{1,S}$ are $\eta_{x}$-eigen under the left translation by $A(\cO_x)$.

Combining these calculations, we have proved the lemma.
\end{proof}

\sss{}\label{sss:fx at Sig} We fix a decomposition 
\begin{equation}\label{Sig pm}
\Sig=\Sig_{+}\sqcup\Sig_{-}.
\end{equation}
Let $N_{\pm}=\deg \Sig_{\pm}$. Later such a decomposition will come from a pair $\mu,\mu'\in \frT_{r,\Sig}$ (see \eqref{Sig +}, \eqref{Sig -}).

For each $x\in \Sig$, we define a subset $\bJ_{x}\subset G(\cO_{x})$ by
\begin{equation}\label{def bJx}
\bJ_{x}=\begin{cases} \left\{g\in G(\cO_{x})| g\equiv\mat{*}{*}{0}{*}\mod \fkm_{x} \right\}=\Iw_{x}, & \textup{ if }x\in \Sig_{+},\\ \\
\left\{g\in G(\cO_{x})| g\equiv\mat{*}{*}{*}{0}\mod \fkm_{x} \right\}=\Iw_{x} \cdot w, & \textup{ if }x\in \Sig_{-}.\end{cases}
\end{equation}
Here $w=\smat{}{1}{-1}{}$ is the Weyl element. The local component $f_{x}$ of our test function $f$ at $x\in \Sigma$ will be the characteristic function of $\bJ_{x}$.

\subsection{Calculations of local spherical characters}
In this subsection we compute the local distributions $\JJ_{\pi_{x}}(f_{x}, s_{1},s_{2})$ for certain pairs $(\pi_{x}, f_{x})$. We always assume that the additive character $\psi_x$ is unramified. It follows that our measure $d^\times t_x=\zeta_x(1)\frac{dt_x}{|t_x|}$ on $A(F_{x})=F_x^\times$ gives $\vol(\CO_x^\times)=1$.

\sss{The case $x\in R$ and $\pi_x$ unramified} We consider the test function introduced in \S\ref{sss:fx at R}
$$
\wt f_{x}=\wt f^{\bsq}_{x},\quad f_{x}=f^\bsq_{x}.
$$
We need an equivalent expression of the local spherical character \eqref{eqn J loc}:
\begin{align}\label{J f vee}
\BJ_{\pi_{x}}(f_{x},s_1,s_2)= \sum_{\{W_{i}\}}  \frac{\lambda^\nat_{x}( W_{i}, {\bf1}, s_1+s_2)\lambda^\nat_{x}\left(\ov{\pi_{x}(f^\vee_{x})   W_{i}}, \eta_{x}, s_1-s_2\right)}{\theta^\nat_{x}(W_{i},W_{i})},
\end{align}
where 
$$
f^\vee_{x}(g)\colon =\ov{f_{x}(g^{-1})}.
$$
Similar definition applies to the test function $\wt f_{x}$ on $\GL_2(F_{x})$.  By \eqref{wt f bsq}, we have
$$
\wt f^\vee_{x}=\sum_{u\in k(x)^\times}\ov\eta_{x}(u)\cdot {\bf 1}_{ \smat{1}{u}{}{\varpi_{x}}^{-1} \GL_2(\cO_{x})  }.
$$

\begin{lem} \label{lem f bsq W}
Let $\pi_{x}$ be unramified and $K_{x}=G(\CO_{x})$. Let $W_0\in \CW_{\psi_x}(\pi_{x})^{K_{x}}$ be the unique element such that $W_0(1_2)=1$.
Then 
$$
\pi(f^\vee_{x})W_0\left( \matrixx{a}{}{}{1} \right)=\begin{cases}\vol(K_{x})\eta_{x}(-a) \cdot q^{1/2}_{x}\epsilon(\eta_{x},1/2,\psi_{x}),&v_{x}(a)=-1,\\
0,&\text{otherwise}.
\end{cases}
$$
Here the local $\epsilon$-factor for the quadratic character $\eta_{x}$ is given by
$$
\epsilon(\eta_{x},1/2,\psi_{x})=q^{-1/2}_{x}\sum_{u\in k(x)^\times}\eta_{x}(a'u)\psi_{x}(a'u)
$$
where $a'\in F^\times_{x}$ is any element with $v_{x}(a')=-1$.

\end{lem}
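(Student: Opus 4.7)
The plan is to expand $f^\vee_x$ using the explicit formula \eqref{wt f bsq} and compute the convolution on the nose. Since $W_0$ is $Z(F_x)$-invariant (trivial central character) and the measure on $G(F_x)$ comes from $\GL_2(F_x)$ via $p_{x,*}$, I will work directly with $\wt f^\vee_x$ on $\GL_2(F_x)$, treating $W_0$ as a right $\GL_2(\cO_x)$-invariant function on $\GL_2(F_x)$. The formula for $\wt f^\vee_x$ exhibits it as a finite linear combination $\sum_{u\in k(x)^\times}\ov\eta_x(u)\cdot \mathbf{1}_{g_u^{-1}\GL_2(\cO_x)}$ with $g_u=\smat{1}{u}{0}{\varpi_x}$, so each coset integral collapses to a point evaluation, yielding
\[
\pi(f^\vee_x)W_0(\diag(a,1))=\vol(\GL_2(\cO_x))\sum_{u\in k(x)^\times}\ov\eta_x(u)\,W_0\!\left(\diag(a,1)\,g_u^{-1}\right).
\]

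Next I would simplify the argument via the Iwasawa decomposition. Noting that $g_u^{-1}=\varpi_x^{-1}\smat{\varpi_x}{-u}{0}{1}$ and using $Z$-invariance, the product $\diag(a,1)g_u^{-1}$ represents the same class as $\smat{1}{-au}{0}{1}\diag(a\varpi_x,1)$ in $\PGL_2(F_x)$. The Whittaker equivariance $W_0(ng)=\psi_x(n)W_0(g)$ then gives
\[
W_0(\diag(a,1)g_u^{-1})=\psi_x(-au)\,W_0(\diag(a\varpi_x,1)).
\]

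The conclusion follows by a case analysis on $v_x(a)$. The Shintani/Casselman--Shalika formula for the unramified spherical Whittaker function shows $W_0(\diag(a\varpi_x,1))=0$ unless $v_x(a)\ge -1$, which already kills all terms with $v_x(a)\le -2$. When $v_x(a)\ge 0$, the product $au$ lies in $\cO_x$, so $\psi_x(-au)=1$ (since $\psi_x$ is unramified), and the sum becomes $W_0(\diag(a\varpi_x,1))\cdot\sum_{u\in k(x)^\times}\ov\eta_x(u)=0$ since $\ov\eta_x$ is the nontrivial quadratic character of $k(x)^\times$. In the remaining case $v_x(a)=-1$ the matrix $\diag(a\varpi_x,1)$ is a unit, so $W_0(\diag(a\varpi_x,1))=W_0(1_2)=1$; setting $a'=-a$ (so $v_x(a')=-1$) and using quadraticity to write $\ov\eta_x(u)=\eta_x(a')\eta_x(a'u)$, the sum turns into the Gauss sum
\[
\eta_x(-a)\sum_{u\in k(x)^\times}\eta_x(a'u)\psi_x(a'u)=\eta_x(-a)\cdot q_x^{1/2}\epsilon(\eta_x,1/2,\psi_x)
\]
by the stated formula for the local $\epsilon$-factor, which combines with the volume factor to yield the claim (using $\vol(K_x)=\vol(\GL_2(\cO_x))$ because $\vol(\cO_x^\times)=1$).

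The main obstacle is purely bookkeeping: carefully converting between $\GL_2(F_x)$ and $G(F_x)=\PGL_2(F_x)$ so that the measure factor descends correctly to $\vol(K_x)$, and verifying that the choice of lift of $u\in k(x)^\times$ to $\cO_x$ is irrelevant for $\psi_x(-au)$ (which follows from $\psi_x$ being unramified combined with $v_x(a)\ge -1$ in the surviving cases).
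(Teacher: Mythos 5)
Your proposal is correct and follows essentially the same route as the paper: expand $\wt f^{\vee}_{x}$ via \eqref{wt f bsq}, use the Whittaker equivariance to get $\psi_{x}(-au)\,W_{0}(\diag(a\varpi_{x},1))$, kill $v_{x}(a)\le -2$ by the Casselman--Shalika support, kill $v_{x}(a)\ge 0$ by $\sum_{u}\ov\eta_{x}(u)=0$, and identify the surviving sum at $v_{x}(a)=-1$ with the Gauss sum defining $\epsilon(\eta_{x},1/2,\psi_{x})$. The only differences are cosmetic (your $a'=-a$ substitution versus the paper's direct rewriting of the sum), so nothing further is needed.
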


\begin{proof} Let $\smat{\alpha}{}{}{\beta}\in\SL_2(\BC)$ (i.e., $\alpha\beta=1$) be the Satake parameter of $\pi$. By Casselman--Shalika formula, 
we have
$$
W_0\left( \matrixx{\varpi_{x}^n}{}{}{1} \right)=\begin{cases}
q_{x}^{-n/2}\frac{\alpha^{n+1}-\beta^{n+1}}{\alpha-\beta},& n\geq 0,\\
0,& n<0.
\end{cases}
$$

On the other hand, we have
\begin{align*}
&\pi_{x}\left(\matrixx{1}{u}{}{\varpi_{x}}^{-1}\right)W_0\left( \matrixx{a}{}{}{1} \right)=W_0\left( \matrixx{a}{}{}{1}\matrixx{1}{u}{}{\varpi_{x}}^{-1} \right)
\\&=W_0\left(\mat{1}{-au}{}{1}\mat{a}{}{}{\vp^{-1}_{x}} \right)=\psi_{x}(-ua)W_0\left( \matrixx{a\varpi_{x}}{}{}{1} \right).
\end{align*}
It follows that
\begin{align*}
\pi_{x}(f^\vee_{x})W_0\left( \matrixx{a}{}{}{1} \right)&= \vol(K_{x})\left(\sum_{u\in k(x)^{\times}  } \ov \eta_{x}(u)\psi_{x}(-ua)\right)W_0 \left( \matrixx{a\varpi_{x}}{}{}{1} \right).
\end{align*}
By the support of $W_0$, the second factor in the RHS vanishes if $v_{x}(a)\leq -2$. Since $\psi_{x}$ is unramified,   the first factor in the RHS vanishes if $v_{x}(a)\geq 0$. When $v_{x}(a)=-1$, we have
\begin{align*}
\pi_{x}(f_{x}^\vee)W_0\left( \matrixx{a}{}{}{1} \right)= &\vol(K_{x})\left(\sum_{u\in k(x)^\times  }   \ov\eta_{x}(u)\psi_{x}(-au)\right) \\
=&\vol(K_{x})\eta_{x}(-a) \left(\sum_{u\in k(x)^\times  }   \eta_{x}(-au)\psi_{x}(-au)\right)\\
=&\vol(K_{x})\eta_{x}(-a) \cdot q^{1/2}_{x}\epsilon(\eta_{x},1/2,\psi_{x}).
\end{align*}
This completes the proof.
\end{proof}

\begin{prop}\label{prop R}
Let $\pi_{x}$ be unramified, and $F'_{x}/F_{x}$ ramified. 
Then
$$
\BJ_{\pi_x}(h^\bsq_{x},s_{1},s_{2})=\BJ_{\pi_x}(f^\bsq_{x},s_{1},s_{2})=\vol(G(\cO_{x}))\z_{x}(2)\cdot \eta_{x}(-1) \epsilon(\eta_{x},1/2,\psi_{x})\cdot q_{x}^{s_1-s_2+1/2}.
$$
\end{prop}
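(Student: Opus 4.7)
The proof splits into two independent steps: establishing the equality $\BJ_{\pi_x}(h^\bsq_x, s_1, s_2) = \BJ_{\pi_x}(f^\bsq_x, s_1, s_2)$, and then evaluating $\BJ_{\pi_x}(f^\bsq_x, s_1, s_2)$ explicitly.

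For the first step, the plan is to combine Lemma \ref{lem h-f} with the bi-$A(F_x)$-equivariance of the distribution $f \mapsto \BJ_{\pi_x}(f, s_1, s_2)$. A direct calculation of how the Whittaker functionals transform under the regular $A(F_x)$-action, namely $\lambda^\nat_x(\pi_x(a)W, \mathbf{1}, s) = |a|^{-s}\lambda^\nat_x(W, \mathbf{1}, s)$ and $\lambda^\nat_x(\overline{\pi_x(b)W}, \eta_x, s) = \eta_x(b)^{-1}|b|^{-s}\lambda^\nat_x(\bar W, \eta_x, s)$, absorbed into a change of the orthogonal basis in \eqref{eqn J loc}, yields
\[
\BJ_{\pi_x}(a\cdot f\cdot b, s_1, s_2) \;=\; |a|^{-(s_1+s_2)}\,\eta_x(b)\,|b|^{-(s_1-s_2)}\,\BJ_{\pi_x}(f, s_1, s_2),
\]
for $a,b\in A(F_x)$, where $(a\cdot f\cdot b)(g):=f(a^{-1}gb)$. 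Restricted to $a,b\in A(\CO_x)$ this collapses to $\BJ_{\pi_x}(a\cdot f\cdot b) = \eta_x(b)\,\BJ_{\pi_x}(f)$. Now Lemma \ref{lem h-f} writes $h^\bsq_x - f^\bsq_x$ as a sum of two pieces, one right $A(\CO_x)$-invariant and one left $A(\CO_x)$-eigen with character $\eta_x$. Because $\eta_x|_{\CO_x^\times}$ is nontrivial (as $F'_x/F_x$ is ramified), the above equivariance identity forces $\BJ_{\pi_x}$ to vanish on both pieces, giving the claimed identity.

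For the second step, the plan is to reduce the sum defining $\BJ_{\pi_x}(f^\bsq_x, s_1, s_2)$ in the dual form \eqref{J f vee} to a single term involving the spherical vector, then evaluate that term explicitly. The key observation is that $f^{\bsq,\vee}_x$ is right $K_x$-invariant (dual to the left $K_x$-invariance of $f^\bsq_x$), so $\pi_x(f^{\bsq,\vee}_x)$ annihilates the $\theta^\nat_x$-orthogonal complement of the one-dimensional spherical subspace $V^{K_x} = \BC\cdot W_0$. Choosing an orthogonal basis $\{W_0, W_1, W_2, \ldots\}$ of $\CW_{\psi_x}(\pi_x)$ containing $W_0$ and otherwise lying in this complement, only the $i=0$ summand survives. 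The three factors are then computed independently:
\begin{enumerate}
\item $\lambda^\nat_x(W_0, \mathbf{1}, s_1+s_2) = 1$, by Casselman--Shalika together with the normalization by $L(\pi_x, s+1/2)$ built into $\lambda^\nat_x$;
\item $\lambda^\nat_x(\overline{\pi_x(f^{\bsq,\vee}_x)W_0}, \eta_x, s_1-s_2)$: substituting Lemma \ref{lem f bsq W}, the integral localizes to $v_x(a)=-1$; using $\eta_x(-a)\eta_x(a) = \eta_x(-1)$, the normalization $L(\pi_x\otimes\eta_x, \cdot) = 1$ (since $\eta_x$ is ramified and $\pi_x$ is unramified), and the $\epsilon$-factor identities $|\epsilon(\eta_x, 1/2, \psi_x)| = 1$ and $\epsilon(\eta_x, 1/2, \psi_x)^2 = \eta_x(-1)$, one converts $\overline{\epsilon(\eta_x,1/2,\psi_x)}$ into $\eta_x(-1)\epsilon(\eta_x,1/2,\psi_x)$ and produces the desired $q_x^{s_1-s_2+1/2}$ dependence;
\item $\theta^\nat_x(W_0, W_0)$: supplied by Macdonald's local Whittaker-norm formula for unramified $\pi_x$, contributing the $\zeta_x(2)$ factor.
\end{enumerate}
Multiplying and using $\vol(K_x) = \vol(G(\CO_x))$ yields the stated closed form.

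The main technical obstacle is the careful tracking of normalizations: the measure conventions for $d^\times a$, the $L$-factor cancellations built into $\lambda^\nat_x$ and $\theta^\nat_x$, and in particular the timely use of the unitarity identities for the local $\epsilon$-factor to produce the $\eta_x(-1)$ appearing in the final answer. The representation-theoretic reduction to the $W_0$-summand, by contrast, is immediate once the right $K_x$-invariance of $f^{\bsq,\vee}_x$ is noticed.
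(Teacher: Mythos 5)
Your proposal follows essentially the same route as the paper's proof: the identity $\BJ_{\pi_x}(h^\bsq_x,s_1,s_2)=\BJ_{\pi_x}(f^\bsq_x,s_1,s_2)$ is deduced from Lemma \ref{lem h-f} together with the $A(\cO_x)$-equivariance properties of the two functionals (you package this as a bi-$A$-equivariance identity for the distribution, while the paper argues termwise in \eqref{eqn J loc} and \eqref{J f vee}, but the substance is identical), and the evaluation of $\BJ_{\pi_x}(f^\bsq_x,s_1,s_2)$ proceeds, exactly as in the paper, by using the right $K_x$-invariance of $f^{\bsq,\vee}_x$ to collapse \eqref{J f vee} to the spherical term and then computing the three factors via Lemma \ref{lem f bsq W}, $\lambda^\nat_x(W_0,\one,s)=1$, and $\theta^\nat_x(W_0,W_0)=\z_{x}(2)^{-1}$.

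The one point to fix is the bookkeeping of the complex conjugate in the second factor. Lemma \ref{lem f bsq W} puts $\epsilon(\eta_x,1/2,\psi_x)$ into $\pi_x(f^{\bsq,\vee}_x)W_0$, and the functional in \eqref{J f vee} is applied to $\ov{\pi_x(f^{\bsq,\vee}_x)W_0}$; combining $\eta_x(-a)\eta_x(a)=\eta_x(-1)$ with your conversion $\ov{\epsilon(\eta_x,1/2,\psi_x)}=\eta_x(-1)\epsilon(\eta_x,1/2,\psi_x)$ gives $\eta_x(-1)^2\epsilon=\epsilon$, i.e.\ a final constant $\vol(G(\cO_x))\z_x(2)\,\epsilon(\eta_x,1/2,\psi_x)\,q_x^{s_1-s_2+1/2}$, which agrees with the displayed statement only up to the factor $\eta_x(-1)$. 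The paper's own computation \eqref{lam unram eta} keeps $\epsilon$ unconjugated and thus lands on $\eta_x(-1)\epsilon$; so as written your two identities cannot both contribute an $\eta_x(-1)$, and you should decide where the conjugate actually goes rather than assert that the stated formula drops out. This is only a factor $\eta_x(-1)\in\{\pm1\}$ and is invisible in the global application (since $\prod_{x\in R}\eta_x(-1)=\eta(-1)=1$ and $\epsilon(\eta,1/2)=1$), but your outline, taken literally, does not reproduce the displayed constant.
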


\begin{proof}We use the formula \eqref{J f vee}  for the local spherical character evaluated at $f_{x}=f_{x}^\bsq$.
Now we note that $f^\vee_{x}$ is right invariant under $K_{x}=G(\cO_{x})$. Therefore we may simplify 
 the sum into one term involving only the spherical vector $W_0\in \CW_{\psi_x}(\pi_{x})^{K_{x}}$ (normalized so that $W_{0}(1_{2})=1$):
\begin{align}\label{J unram R}
\BJ_{\pi_{x}}(f_{x},s_1,s_2)=\frac{\lambda^\nat_{x}( W_{0}, {\bf1}, s_1+s_2)\lambda^\nat_{x}\left(\ov{\pi(f^\vee)  W_{0}}, \eta_{x}, s_1-s_2\right)}{\theta^\nat_{x}(W_0,W_0)}.
\end{align}

Since $\pi_{x}$ is unramified, we have 
\begin{equation}\label{lam unram 1}
\lambda^\nat_{x}( W_{0}, {\bf1}, s)=1.
\end{equation}
The quadratic character $\eta_{x}$ is ramified and hence
$$
L(\pi_{x}\otimes\eta_{x},s)=1.
$$ 
Using this and Lemma \ref{lem f bsq W}, we get
\begin{equation}\label{lam unram eta}
\lambda^\nat_{x}\left(\ov{\pi_{x}(f_{x}^\vee)  W_{0}}, \eta_{x}, s\right)=\vol(K_{x})\y_{x}(-1)q_{x}^{1/2}\ep(\y_{x}, 1/2, \psi_{x})\cdot q_{x}^{s}.
\end{equation}
Again since $\pi_{x}$ is unramified (and $\psi_x$ unramified), we have 
\begin{equation}\label{th unram}
\theta^\nat_{x}(W_0,W_0)=1-q^{-2}_{x}=\z_{x}(2)^{-1}.
\end{equation}
Plugging \eqref{lam unram 1}, \eqref{lam unram eta} and \eqref{th unram} into \eqref{J unram R}, we get the desired formula for $\BJ_{\pi_x}(f^\bsq_{x},s_1,s_2)$. 

To show $\BJ_{\pi_x}(h^\bsq_{x},s_1,s_2)=\BJ_{\pi_x}(f^\bsq_{x},s_1,s_2)$, by Lemma \ref{lem h-f}, it suffices to show that $\BJ_{\pi_x}(f,s_1,s_2)=0$ when $f$ is 
either
\begin{enumerate}
\item
 invariant under right translation by $A(\cO_{x})$, or
 \item $\eta_{x}$-eigen under left translation by $A(\cO_{x})$.
\end{enumerate}
In the first case,  $f^\vee$ is invariant under the left translation by $A(\cO_{x})$. The desired vanishing follows from the formula \eqref{J f vee}, and the fact that the linear functional $\l^{\nat}_{x}(-,\y_{x}, s)$ of $\pi_{x}$ is $\eta_{x}$-eigen under $A(\cO_{x})$. In the second case, the desired vanishing follows from the formula \eqref{eqn J loc}, and the fact that the linear functional $\l^{\nat}_{x}(-,\one, s)$ of $\pi_{x}$ is  invariant under $A(\cO_{x})$.
\end{proof}

\sss{The case $x\in \Sig$ and $\pi_x$ a  twisted Steinberg.}

Let $\St$ be the Steinberg representation of $G(F_{x})$.

\begin{prop}\label{prop St}
Let $\pi_{x}={\rm St}_\chi=\St\otimes\chi$ be an unramified twist of Steinberg representation, where $\chi$ is an unramified quadratic character of $F^\times_{x}$. Then we have
\begin{eqnarray}
\label{J+}\BJ_{\pi_{x}}(1_{\Iw_{x}},s_1,s_2)&=&\vol(G(\cO_{x}))\z_{x}(2)\cdot q_{x}^{-1},\\
\label{J-}\BJ_{\pi_{x}}(1_{\Iw_{x}\cdot w},s_1,s_2)&=&\vol(G(\cO_{x}))\z_{x}(2)\cdot \epsilon(\pi_{x}\otimes\y_{x},1/2,\psi_{x})q^{s_1-s_2-1}_{x}.
\end{eqnarray}
\end{prop}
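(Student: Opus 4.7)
The plan is to compute the local distributions directly in the Kirillov model of $\pi_x = \St_\chi$, exploiting that $\dim \pi_x^{\Iw_x} = 1$. Choose $W_0 \in \CW_{\psi_x}(\pi_x)^{\Iw_x}$ normalized by $W_0(1_2) = 1$ and extend to an orthogonal basis $\{W_i\}_{i \geq 0}$ with respect to $\theta^\nat_x$. Since $\pi_x(\mathbf{1}_{\Iw_x}) = \vol(\Iw_x) \cdot e_{\Iw_x}$ is the scaled orthogonal projector onto $\pi_x^{\Iw_x}$, it annihilates $W_i$ for $i \geq 1$. Likewise $(\mathbf{1}_{\Iw_x w})^\vee = \mathbf{1}_{w \Iw_x}$ (using $w^{-1} = w$ in $\PGL_2$) and $\pi_x(\mathbf{1}_{w \Iw_x}) = \vol(\Iw_x) \pi_x(w) e_{\Iw_x}$, so formula \eqref{J f vee} reduces to a single term involving $W_0$ and $\pi_x(w) W_0$.

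The Iwahori-fixed Whittaker function is $W_0\bigl(\smat{a}{}{}{1}\bigr) = \chi(a) |a| \mathbf{1}_{\cO_x}(a)$, verified by the Mellin identity $\int W_0\bigl(\smat{a}{}{}{1}\bigr) |a|^s d^\times a = (1 - \chi(\vp_x) q_x^{-s-1})^{-1} = L(\pi_x, s + 1/2)$. Hence $\l^\nat_x(W_0, \mathbf{1}, s) = 1$, and the parallel computation for $\pi_x \otimes \eta_x = \St_{\chi \eta_x}$ (still an unramified twist of Steinberg at $x \in \Sig$) gives $\l^\nat_x(\ov{W_0}, \eta_x, s) = 1$. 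For the inner product, $\int |W_0\bigl(\smat{a}{}{}{1}\bigr)|^2 d^\times a = \zeta_x(2)$ and $L(\pi_x \times \wt\pi_x, 1) = \zeta_x(1) \zeta_x(2)$, so $\theta^\nat_x(W_0, W_0) = \zeta_x(1)^{-1}$. Combined with $\vol(\Iw_x) = \vol(G(\cO_x))/(q_x + 1)$ and the elementary identity $\zeta_x(1)\vol(\Iw_x) = \vol(G(\cO_x))\zeta_x(2) q_x^{-1}$, this yields \eqref{J+}.

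For \eqref{J-}, the additional input is $\pi_x(w) W_0$. Write $w = \diag(-\vp_x, 1) \cdot \tau$ in $\PGL_2$, where $\tau = \smat{}{1}{\vp_x}{}$ is the Atkin--Lehner involution normalizing $\Iw_x$; using that $W_0$ is an Atkin--Lehner eigenvector with eigenvalue $\epsilon_\tau = -\chi(\vp_x)$ (the classical formula for unramified twisted Steinberg), one computes
\begin{equation*}
\pi_x(w) W_0\bigl(\smat{a}{}{}{1}\bigr) = \epsilon_\tau \cdot \chi(-a\vp_x)|a\vp_x| \mathbf{1}_{\cO_x}(a\vp_x) = -\chi(a) |a| q_x^{-1} \mathbf{1}_{\vp_x^{-1}\cO_x}(a),
\end{equation*}
where we used $\chi(-1) = 1$ (as $\chi$ is unramified) and $\chi^2 = 1$. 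The Mellin transform of $\ov{\pi_x(w) W_0}$ against $\eta_x |a|^s$ then evaluates to $-(\chi \eta_x)(\vp_x) q_x^s L(\pi_x \otimes \eta_x, s + 1/2)$, so $\l^\nat_x(\ov{\pi_x(w) W_0}, \eta_x, s) = -(\chi \eta_x)(\vp_x) q_x^s$. Since $\epsilon(\St_\mu, 1/2, \psi_x) = -\mu(\vp_x)$ for unramified $\mu$, this equals $\epsilon(\pi_x \otimes \eta_x, 1/2, \psi_x) q_x^s$; substituting $s = s_1 - s_2$ into the spherical character recovers \eqref{J-}.

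The main obstacle is keeping the sign conventions consistent: the Atkin--Lehner eigenvalue for unramified twisted Steinberg, the Steinberg $\epsilon$-factor, and verifying that the $q_x^{-1}$ shift visible in \eqref{J-} arises precisely from the support translation $\cO_x \mapsto \vp_x^{-1}\cO_x$ effected by $\diag(-\vp_x, 1)$ in the Kirillov model. With these local conventions nailed down, both formulas come out of a short explicit calculation.
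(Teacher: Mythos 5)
Your proposal is correct and takes essentially the same route as the paper: both proofs reduce each character to the single term attached to the Iwahori-fixed Whittaker vector $W_0$, use $W_0\bigl(\smat{a}{}{}{1}\bigr)=\chi(a)|a|\mathbf{1}_{v_x(a)\ge 0}$ to get $\lambda^\nat_x(W_0,\chi',s)=1$ and $\theta^\nat_x(W_0,W_0)=1-q_x^{-1}$, combine with $\vol(\Iw_x)=(1+q_x)^{-1}\vol(G(\cO_x))$, and identify the sign $-(\chi\eta_x)(\vp_x)$ with $\epsilon(\pi_x\otimes\eta_x,1/2,\psi_x)$. The only cosmetic differences are that you reach the single term in \eqref{J-} via $f^\vee=\mathbf{1}_{w\Iw_x}$ and the orthogonal projector, and obtain $\pi_x(w)W_0$ through the Atkin--Lehner factorization $w=\diag(-\vp_x,1)\tau$ with eigenvalue $-\chi(\vp_x)$, whereas the paper re-indexes the orthogonal basis by $\pi_x(w)$ and writes down $W_0\bigl(\smat{}{a}{-1}{}\bigr)$ directly; the two computations agree.
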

\begin{proof} We first prove \eqref{J+}. By \eqref{eqn J loc}, the local spherical character evaluated at $f=1_{\Iw_{x}}$ simplifies into one term
\begin{align}\label{pre J+}
\BJ_{\pi_{x}}(1_{\Iw_{x}},s_1,s_2)=\vol(\Iw_{x})\,
  \frac{\lambda^\nat_{x}( W_{0}, {\bf1}, s_1+s_2)\lambda^\nat_{x}\left(  W_{0}, \eta_{x}, s_1-s_2\right)}{\theta_{x}^\nat(W_0,W_0)},
\end{align}
where $W_0$ is any nonzero element in  the line $\CW_{\psi_x}(\St_\chi)^{\Iw_{x}}$. We normalized $W_{0}$ so that $W_0(1_2)=1$, then we have explicitly 
$$
W_0\left( \matrixx{a}{}{}{1} \right)=\begin{cases}
\chi(a)|a|,& v_{x}(a)\geq 0,\\
0,& v_{x}(a)<0.
\end{cases}
$$
For any unramified character $\chi': F^\times_{x} \to\BC^\times$, we have 
\begin{align}\label{eq lambda W0}
\lambda^\nat_{x}(W_0,\chi',s)=1.
\end{align}
We compute the inner product $\theta^\nat_{x}(W_0,W_0)$. First we note
\begin{align*}
\int_{F^\times_{x}}W_0\left( \matrixx{a}{}{}{1} \right)\ov W_0\left( \matrixx{a}{}{}{1} \right)d^\times a
=\sum_{i=0}^\infty q_{x}^{-2i}=(1-q^{-2}_{x})^{-1}.
\end{align*}
For $\pi_{x}=\St_\chi$, the local L-factor 
$$
L(\pi_{x}\times \wt\pi_{x},s)=(1-q_{x}^{-1-s})^{-1}(1-q_{x}^{-s})^{-1}.
$$
It follows that the normalized inner product 
\begin{align*}
\theta^\nat_{x}(W_0,W_0)=1-q^{-1}_{x}.
\end{align*}
Finally we note
\begin{equation*}
\vol(\Iw_{x})=(1+q_{x})^{-1}\vol (G(\CO_{x})).
\end{equation*}
Hence
\begin{equation}\label{theta vol}
\vol(\Iw_{x})\theta^\nat_{x}(W_0,W_0)^{-1}=\vol(G(\cO_{x}))\z_{x}(2)q_{x}^{-1}.
\end{equation}
Plugging \eqref{eq lambda W0}, \eqref{theta vol} into \eqref{pre J+}, we get \eqref{J+}.

Now we prove \eqref{J-}. By definition, we have
\begin{eqnarray*}
\BJ_{\pi_{x}}(1_{\Iw_{x}\cdot w},s_1,s_2)&=& \sum_{\{W_{i}\}}  \frac{\lambda^\nat_{x}( \pi_{x}(1_{\Iw_{x}\cdot w})W_{i}, {\bf1}, s_1+s_2)\lambda^\nat_{x}\left(\ov W_{i}, \eta, s_1-s_2\right)}{\theta_{x}^\nat(W_{i},W_{i})}\\
&=&\sum_{\{W_{i}\}}  \frac{\lambda^\nat_{x}( \pi_{x}(1_{\Iw_{x}})\pi_{x}(w)W_{i}, {\bf1}, s_1+s_2)\lambda^\nat_{x}\left(\ov{\pi_{x}(w)\pi_{x}(w)W_{i}}, \eta_{x}, s_1-s_2\right)}{\theta^\nat_{x}(\pi_{x}(w)W_{i},\pi_{x}(w)W_{i})}
\end{eqnarray*}
Note that $\{\pi(w)W_{i}\}$ is another orthogonal basis for $\CW_{\psi_x}(\St_{\chi})$, therefore we may rename it by $\{W_{i}\}$ and rewrite the above as
\begin{equation*}
\BJ_{\pi_{x}}(1_{\Iw_{x}\cdot w},s_1,s_2)=\sum_{\{W_{i}\}}  \frac{\lambda^\nat_{x}( \pi_{x}(1_{\Iw_{x}})W_{i}, {\bf1}, s_1+s_2)\lambda^\nat_{x}\left(\ov{\pi_{x}(w)W_{i}}, \eta_{x}, s_1-s_2\right)}{\theta_{x}^\nat(W_{i},W_{i})}
\end{equation*}
which again simplifies into one single term corresponding to the unique $W_0\in  \CW_{\psi_x}(\St_\chi)^{\Iw_{x}}$ with $W_0(1_2)=1$
\begin{align}\label{pre J-}
\BJ_{\pi_{x}}(1_{\Iw_{x}\cdot w},s_1,s_2)=\vol(\Iw_{x})\,
  \frac{\lambda^\nat_{x}( W_{0}, {\bf1}, s_1+s_2)\lambda^\nat_{x}\left( \ov{ \pi_{x}(w)W_{0}}, \eta_{x}, s_1-s_2\right)}{\theta^\nat_{x}(W_0,W_0)},
\end{align}
We have an explicit formula
\begin{equation*}
(\pi_{x}(w)W_{0})\left(\mat{a}{}{}{1}\right)=W_{0}\left(\mat{}{a}{-1}{}\right)=\begin{cases} -q^{-1}_{x}\chi(a)|a|, & v_{x}(a)\ge-1 \\ 0, & v_{x}(a)\le-2 .\end{cases}
\end{equation*}
Using this we can calculate
\begin{equation}\label{lam wW0}
\lambda^\nat_{x}( \ov{ \pi_{x}(w)W_{0}}, \eta_{x}, s)=-(\chi\eta_{x})(\vp_{x})q^{s}_{x}.
\end{equation}
Plugging \eqref{theta vol}, \eqref{eq lambda W0} and \eqref{lam wW0} into \eqref{pre J-}, we get
\begin{equation}\label{J- chi eta}
\BJ_{\pi_{x}}(1_{\Iw_{x}\cdot w},s_1,s_2)=-\vol(G(\cO_{x}))\z_{x}(2)(\chi\eta_{x})(\vp_{x})q_{x}^{s_{1}-s_{2}-1}.
\end{equation}
Finally recall the $\ep$-factor for the twisted Steinberg $\pi_{x}\otimes \y_{x}=\St\otimes\chi\y_{x}$ and the unramified $\psi_{x}$ is the Atkin--Lehner eigenvalue
\begin{equation*}
\epsilon(\pi_{x}\otimes \y_{x},1/2, \psi_{x})=\ep(\St\ot\chi\y_{x},1/2, \psi_{x})= -(\chi\y_{x})(\varpi_{x}).
\end{equation*}
Using this we can rewrite \eqref{J- chi eta} in the form of  \eqref{J-}.

\end{proof}

\subsection{The global spherical character for our test functions}

\sss{Assumptions on $\pi$}\label{sss:ass pi} Let $\pi=\otimes'_{x\in|X|}\pi_x$ be a cuspidal automorphic representation of $G(\BA)$ which is ramified exactly at the set $\Sig$. Assume that $\pi_{x}$ is isomorphic to an unramified twist of the Steinberg representation at each $x\in \Sig$. 

Recall that $R\subset |X|$ is the ramification locus of the double cover $\nu: X'\to X$. Assume $\Sig\cap R=\vn$. Let $\Sig=\Sf\sqcup \Si$ be the decomposition determined by the conditions \eqref{Sf split} and \eqref{Si inert}.

The degrees of the $L$-functions 
$L(\pi,s)$ and $L(\pi\otimes\eta,s)$ as a polynomials of $q^{-s}$ are
$$
\deg L(\pi,s)= 4g-4+N,\quad \deg  L(\pi\otimes\eta,s)= 4g-4+2\rho+N.
$$
We set
\begin{eqnarray*}
&&\sL_{F'/F}(\pi,s_1,s_2)\\
&:=& q^{(2g-2+N/2)(s_1+s_2)+(2g-2+\rho+N/2)(s_1-s_2) } \frac{L(\pi,s_1+s_2+\ha)L(\pi\otimes\eta,s_1-s_2+\ha)}{L(\pi,\Ad,1)}\\
&=&|\om_{X}|^{-2s_{1}}q^{\r(s_1-s_2) } q^{Ns_{1}}\frac{L(\pi,s_1+s_2+\ha)L(\pi\otimes\eta,s_1-s_2+\ha)}{L(\pi,\Ad,1)}.
\end{eqnarray*}
Then we have 
$$
\sL_{F'/F}(\pi,s_1,s_2)=(-1)^{\#\Si}\sL_{F'/F}(\pi,-s_1,-s_2).
$$
Indeed, the sign that appears above is the root number of the base change $L$-function $L(\pi_{F'},s)$, which is the parity of the number of places in $F'$ at which the base change of $\pi_{x}$ is the Steinberg representation. If $x\in \Sf$, $x$ is split in $F'$, its contribution to the root number is always $+1$; if $x\in \Si$, $x$ is inert in $F'$, the base change of $\pi_{x}$ is always the Steinberg representation, hence it contributes $-1$ to the root number.

Recall that in \eqref{Sig pm} we have a decomposition $\Sig=\Sig_{+}\sqcup \Sig_{-}$ (right now arbitrary). We set
$$
\epsilon_-(\pi\otimes\y)\colon=\prod_{x\in\Sig_-}\epsilon(\pi_x\otimes\y_{x},1/2).
$$
Note that this is the Atkin--Lehner eigenvalue at the set of places $\Sigma_-$.

For each $f\in \sH^{\Sig\cup R}_{G}$, we define
\begin{equation}\label{def f Sig pm}
f^{\Sig_{\pm}}=f\otimes \left(\bigotimes_{x\in R}h^{\bsq}_{x}\right)\otimes \left(\bigotimes_{x\in \Sig}\one_{\bJ_{x}}\right)\in C_c^\infty(G(\BA)).
\end{equation}

\begin{prop}\label{p:global char} Let $\pi$ be a cuspidal automorphic representation of $G(\BA)$ satisfying the conditions in \S\ref{sss:ass pi}. Let $\lambda_\pi: \sH^{\Sigma\cup R}_{G}\to \BC$ be the homomorphism associated to $\pi$. Then for $f\in \sH^{\Sig\cup R}_{G}$, we have
$$
q^{N_{+}s_{1}+N_{-}s_{2}}\BJ_\pi(f^{\Sig_{\pm}},s_1,s_2)=\ha\lambda_\pi(f) \cdot \epsilon_-(\pi\otimes\y)\cdot |\om_{X}|q^{\r/2-N} \sL_{F'/F}(\pi,s_1,s_2).
$$ 
\end{prop}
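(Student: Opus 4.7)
The plan is to apply the global-to-local factorization \eqref{char g2l} to $\BJ_\pi(f^{\Sigma_\pm}, s_1, s_2)$ and then substitute the local spherical character evaluations carried out in the preceding subsections. Since $f^{\Sigma_\pm}$ is a pure tensor, \eqref{char g2l} expresses the global character as the product of the $L$-ratio $\frac{L(\pi, s_1+s_2+1/2)L(\pi\otimes\eta, s_1-s_2+1/2)}{2L(\pi, \Ad, 1)}$, the factor $|\omega_X|^{-1}$, and $\prod_x \BJ_{\pi_x}(f^{\Sigma_\pm}_x, s_1, s_2)$. The task is then to compute each local factor and assemble.

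I would evaluate each local factor as follows. At $x \notin \Sigma \cup R$, both $\pi_x$ and $\eta_x$ are unramified, so using $\lambda^\nat_x(W_0, \chi', s) = 1$ for $\chi' \in \{\one, \eta_x\}$, $\theta^\nat_x(W_0, W_0) = \zeta_x(2)^{-1}$, together with $\pi_x(f_x)W_0 = \lambda_{\pi_x}(f_x) W_0$, one obtains $\BJ_{\pi_x}(f_x, s_1, s_2) = \vol(G(\cO_x))\zeta_x(2)\cdot\lambda_{\pi_x}(f_x)$, independent of $(s_1, s_2)$; the product over all such $x$ yields the global Hecke eigenvalue $\lambda_\pi(f)$. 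At $x \in R$, Proposition \ref{prop R} gives $\vol(G(\cO_x))\zeta_x(2)\cdot\eta_x(-1)\epsilon(\eta_x, 1/2, \psi_x) q_x^{s_1-s_2+1/2}$. At $x \in \Sigma_+$ and $x \in \Sigma_-$, the formulas \eqref{J+} and \eqref{J-} yield $\vol(G(\cO_x))\zeta_x(2) q_x^{-1}$ and $\vol(G(\cO_x))\zeta_x(2)\cdot\epsilon(\pi_x\otimes\eta_x, 1/2, \psi_x) q_x^{s_1-s_2-1}$, respectively.

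Assembling these factors and multiplying by the external $q^{N_+ s_1 + N_- s_2}$, the following identifications emerge. The $\epsilon$-factors at $\Sigma_-$ combine to $\epsilon_-(\pi\otimes\eta)$, as defined. The powers of $q$ combine as $q^{\rho(s_1-s_2)+\rho/2}\cdot q^{-N_+}\cdot q^{N_-(s_1-s_2)-N_-}\cdot q^{N_+ s_1 + N_- s_2} = q^{(\rho+N_-)(s_1-s_2)+\rho/2-N}$, which is exactly the product of the claimed $q^{\rho/2-N}$ prefactor with the $q$-dependence $q^{\rho(s_1-s_2)}q^{Ns_1}$ that appears in the definition of $\sL_{F'/F}$. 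The global volume factor $\prod_x \vol(G(\cO_x))\zeta_x(2)$ combines with the $|\omega_X|^{-1}$ from \eqref{char g2l} and the implicit $|\omega_X|^{-2s_1}$ in the normalization of $\sL_{F'/F}$, through the Tamagawa volume formula for $\PGL_2$ and the convention for local measures (cf.\ the remarks preceding \eqref{char g2l} in \cite[Prop.~4.5]{YZ}), to produce the $|\omega_X|$ appearing on the right-hand side.

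The main obstacle is verifying the residual identity $\prod_{x\in R}\eta_x(-1)\epsilon(\eta_x, 1/2, \psi_x) = 1$, which must hold in order to cancel the only stray contributions from $R$. Since the quadratic id\`ele class character $\eta$ is trivial on $F^\times$ we have $\eta(-1) = 1$, and since $\eta_x(-1) = 1$ for unramified $x$, this gives $\prod_{x\in R}\eta_x(-1) = 1$. For the $\epsilon$-factors, $\prod_{x\in R}\epsilon(\eta_x, 1/2, \psi_x)$ equals the global $\epsilon(\eta, 1/2)$ (the remaining local $\epsilon$-factors being trivial for unramified data), and $\epsilon(\eta, 1/2) = 1$ follows from the functional equation $L(\eta, s) = \epsilon(\eta, s) L(\eta, 1-s)$ combined with the explicit decomposition $L(\eta, s) = Z_{X'}(s)/Z_X(s)$, since the symmetry of the Weil zeta functions under $s \leftrightarrow 1-s$ forces the global root number of $\eta$ to be $+1$. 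The careful bookkeeping of $\psi$-dependence in local $\epsilon$-factors and self-dual measures (all of which must cancel since $\BJ_\pi$ is intrinsically $\psi$-independent) is the most delicate part of the argument.
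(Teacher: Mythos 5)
Your overall route is the same as the paper's (factor via \eqref{char g2l}, plug in Proposition \ref{prop R} at $R$ and \eqref{J+}, \eqref{J-} at $\Sig_{\pm}$, then reduce everything to $\epsilon(\eta,1/2)=1$), but there is a genuine gap in how you treat the places outside $\Sig\cup R$. You implicitly work with an additive character $\psi$ that is unramified at \emph{every} place: this is what lets you claim that $\BJ_{\pi_x}(f_x,s_1,s_2)=\vol(G(\cO_x))\zeta_x(2)\lambda_{\pi_x}(f_x)$ is independent of $(s_1,s_2)$ for $x\notin\Sig\cup R$, and that ``the remaining local $\epsilon$-factors are trivial for unramified data'' so that $\prod_{x\in R}\epsilon(\eta_x,1/2,\psi_x)=\epsilon(\eta,1/2)$. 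But such a $\psi$ exists only when the canonical class of $X$ is trivial; in general $\psi$ comes from a rational differential $c$, and at the places where $c$ has zeros or poles (necessarily some $x\notin\Sig\cup R$, once one arranges $c$ to be a unit along $\Sig\cup R$) the local spherical character acquires the extra factor $\eta_x(c)\,|c|_x^{1/2-2s_1}$ (this is \cite[Lemma 4.6]{YZ}, equation \eqref{J unram psi ram} in the paper). These omitted factors are not cosmetic: their product over $x\notin\Sig\cup R$ is exactly what produces the $|\om_X|^{-2s_1}$ sitting inside the definition of $\sL_{F'/F}(\pi,s_1,s_2)$, supplies the missing $|\om_X|^{1/2}$ needed to turn $|\om_X|^{-1}\cdot C_{\vol}=|\om_X|^{1/2}$ into the $|\om_X|$ on the right-hand side, and contributes the factor $\prod_{x\notin R\cup\Sig}\eta_x(c)$ without which $\prod_{x\in R}\epsilon(\eta_x,1/2,\psi_x)$ is \emph{not} the global root number $\epsilon(\eta,1/2)$ (for unramified $\eta_x$ and ramified $\psi_x$ one has $\epsilon(\eta_x,1/2,\psi_x)=\eta_x(c)\neq 1$ in general). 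So the ``delicate $\psi$-bookkeeping'' you defer is precisely the step where your argument as written breaks: under your assumption the powers of $|\om_X|$ and the $s_1$-dependence do not match the asserted formula except when $2g-2=0$.

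Concretely, to repair this you should do what the paper does: choose $c$ with no zeros or poles on $\Sig\cup R$ (so $\psi_x$ is unramified exactly there, where your local computations are valid), use \eqref{J unram psi ram} at all other places, and then check $C_\eta=\prod_{x\in R}\epsilon(\eta_x,1/2,\psi_x)\prod_{x\notin R\cup\Sig}\eta_x(c)=\epsilon(\eta,1/2,\psi)=1$ via $L(\eta,s)=\zeta_{F'}(s)/\zeta_F(s)$ — your argument for this last identity is fine once the unramified $\eta_x(c)$'s are actually present. There is also a small arithmetic slip in your exponent bookkeeping (the displayed intermediate $q^{(\r+N_-)(s_1-s_2)+\r/2-N}$ is not equal to the left-hand side you wrote, though your final exponent $\r(s_1-s_2)+Ns_1+\r/2-N$ is the correct one), but that is minor compared with the missing ramified-$\psi$ contributions.
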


\begin{proof}
We choose  a nontrivial $\psi:F\bs\BA\to\BC^\times$. Such a character $\psi$ comes from a nonzero rational differential form $c$ on $X$, so that the conductor of $\psi_{x}$ is $\fkm_{x}^{v_{x}(c)}$ where $v_{x}(c)$ is the order of $c$ at $x$. We choose such a $c$ so that $c$ has no zeros or poles at $\Sig\cup R$, so that $\psi_{x}$ is unramified at $x\in \Sig\cup R$.

When $x\notin  \Sigma\cup R$, $f_x$ is in the spherical Hecke algebra $\sH_x$, therefore
\begin{equation*}
\BJ_{\pi_x}(f_x,s_1,s_2)=\lambda_{\pi_x}(f_x)\vol(G(\cO_{x}))\frac{\l^{\nat}_{x}(W_{0},\one, s_{1}+s_{2})\l^{\nat}_{x}(\ov W_{0},\y_{x}, s_{1}-s_{2})}{\th^{\nat}_{x}(W_{0},W_{0})}
\end{equation*}
for $W_{0}\in \cW_{\psi_{x}}(\pi_{x})^{G(\cO_{x})}$ normalized by $W_{0}(1_{2})=1$. By the same proof as \cite[Lemma 4.6]{YZ}, we obtain 
\begin{align*}
\frac{\l^{\nat}_{x}(W_{0},\one, s_{1}+s_{2})\l^{\nat}_{x}(\ov W_{0},\y_{x}, s_{1}-s_{2})}{\th^{\nat}_{x}(W_{0},W_{0})}=\eta_{x}(c)|c|_{x}^{-2s_1+1/2}\z_{x}(2).
\end{align*}
Therefore
\begin{equation}\label{J unram psi ram}
\BJ_{\pi_x}(f_x,s_1,s_2)=\vol(G(\cO_{x}))\z_{x}(2)\cdot \eta_{x}(c)|c|_{x}^{-2s_1+1/2} \lambda_{\pi_x}(f_x).
\end{equation}

Now we use the calculation of local spherical characters at $x\in \Sigma\cup R$ given in Prop. \ref{prop R} and \ref{prop St} together with \eqref{J unram psi ram}, and plug them into \eqref{char g2l} to obtain
\begin{eqnarray}\label{J pi CL}
&&\BJ_\pi(f^{\Sig_{\pm}},s_1,s_2)\\
\notag &=&|\om_{X}|^{-1} C_{\vol} C_{0}C_{\Sig_{+}} C_{\Sig_{-}} C_{R} \frac{L(\pi,s_1+s_2+\ha)L(\pi\otimes\eta,s_1-s_2+\ha)}{2\,L(\pi,\Ad,1)}
\end{eqnarray}
where
\begin{eqnarray}
\notag C_{\vol}&=&\prod_{x\in |X|}\vol(G(\cO_{x}))\z_{x}(2)=\vol(G(\OO))\z_{F}(2)=|\om_{X}|^{3/2},\\
\label{C0}C_{0}&=&\l_{\pi}(f)\prod_{x\notin R\cup\Sig}\eta_x(c)|c|_{x}^{1/2-2s_{1}}=\l_{\pi}(f)|\om_{X}|^{1/2-2s_{1}}\prod_{x\notin R\cup\Sig}\eta_{x}(c),\\
\notag C_{\Sig_{+}}&=&\prod_{x\in \Sig_{+}}q_{x}^{-1}=q^{-N_{+}},\\
\notag C_{\Sig_{-}}&=&\prod_{x\in \Sig_{-}}\ep(\pi_{x}\otimes\y_{x},1/2,\psi_{x})q_{x}^{s_{1}-s_{2}-1}=\ep_{-}(\pi\otimes\y)q^{N_-(s_1-s_2)-N_{-}},\\
\label{CR} C_{R}&=&\prod_{x\in R}\eta_x(-1)\epsilon(\eta_x,1/2,\psi_x)q_{x}^{s_{1}-s_{2}+1/2}=q^{\r(s_{1}-s_{2})+\r/2}\prod_{x\in R}\epsilon(\eta_x,1/2,\psi_x).
\end{eqnarray}
Here, in \eqref{C0} we used that $c$ is a differential form with no zeros or poles along $\Sig\cup R$; in \eqref{CR} we have used $\prod_{x\in R}\eta_x(-1)=\eta(-1)=1$ since $\y_{x}(-1)$ is trivial for $x\notin R$. Taking the product and using \eqref{J pi CL} we get
\begin{eqnarray}\label{J pi Cy}
&&\BJ_\pi(f^{\Sig_{\pm}},s_1,s_2)\\
\notag&=&\ha \lambda_\pi(f) |\om_{X}|\ep_-(\pi\otimes\y) \cdot C_{\y}\cdot |\om_{X}|^{-2s_{1}}q^{\r(s_1-s_2)+\r/2}q^{-N}q^{N_-(s_1-s_2)} \\
\notag&&\times
 \frac{L(\pi,s_1+s_2+\ha)L(\pi\otimes\eta,s_1-s_2+\ha)}{L(\pi,\Ad,1)}
\end{eqnarray}
where
\begin{equation*}
C_{\y}=\prod_{x\in R}\epsilon(\eta_x,1/2,\psi_x)\prod_{x\notin R\cup\Sig}\eta_x(c).
\end{equation*}
We claim that $C_{\y}=1$. In fact,  for $x\notin R$ we have
$$
\epsilon(\eta_x,1/2,\psi_x)=\eta_x(c).
$$ It follows that
$$
C_{\y}=\epsilon(\eta,1/2,\psi).
$$
Recall that $\epsilon(\eta,s)=\epsilon(\eta,s,\psi)=\prod_{x\in |X|}\epsilon(\eta_x,s,\psi_x)$ is the $\epsilon$-factor in the functional equation $L(\eta,s)=\epsilon(\eta,s)L(\eta,1-s)$. It follows from the expression $L(\eta,s)=\frac{\zeta_{F'}(s)}{\zeta_F(s)}$ that $\epsilon(\eta,1/2)=1$. This proves $C_{\y}=1$. Comparing the other terms in \eqref{J pi Cy} and in the definition of $\sL_{F'/F}(\pi,s_{1},s_{2})$, we get
\begin{equation*}
\BJ_\pi(f^{\Sig_{\pm}},s_1,s_2)=\ha\l_{\pi}(f)\ep_{-}(\pi\otimes\y)|\om_{X}|q^{\r/2-N}q^{N_{-}(s_{1}-s_{2})-Ns_{1}}\sL_{F'/F}(\pi,s_{1},s_{2}). 
\end{equation*}
Multiplying both sides by $q^{N_{+}s_{1}+N_{-}s_{2}}$, the proposition follows.
\end{proof}

\section{Shtukas with Iwahori level structures}
In this section we will define various moduli stacks of Shtukas with Iwahori level structure and ``supersingular legs'' at $\infty$. We study the geometric properties of such moduli stacks, and establish the spectral decomposition of their cohomology under the action of the Hecke algebra.  

\subsection{Bundles with Iwahori level structures}\label{ss:Bun Iw}

Let $n$ be a positive integer. Let $G=\PGL_{n}$.  Let $\Sig\subset |X|$ be  finite set of closed points of $X$.

\begin{defn} Let $\Bun_{n}(\Sig)$ be the moduli stack whose $S$-points is the groupoid of
\begin{equation*}
\cE^{\da}=\left(\cE, \{\cE(-\frac{j}{n}x)\}_{1\le j\le n-1, x\in \Sig}\right)
\end{equation*}
where
\begin{itemize}
\item $\cE$ is a rank $n$ vector bundle over $X\times S$;
\item For each $x\in\Sig$, $\{\cE(-\frac{j}{n}x)\}_{1\le j\le n-1}$ form a chain of coherent subsheaves of $\cE$ such that
\begin{equation*}
\cE\supset \cE(-\frac{1}{n}x)\supset\cE(-\frac{2}{n}x)\supset\cdots\supset \cE(-\frac{n-1}{n}x)\supset \cE(-x)=\cE\otimes_{\cO_{X}}\cO_{X}(-x)
\end{equation*}
and that the quotient $\cE(-\frac{j-1}{n}x)/\cE(-\frac{j}{n}x)$ is scheme theoretically supported at $\{x\}\times S=\Spec k(x)\times S$ and is locally free of rank one on $\{x\}\times S$.
\end{itemize}
\end{defn}

The Picard stack $\Pic_{X}$ acts on $\Bun_{n}(\Sig)$ by tensoring on both $\cE$ and the $\cE(-\frac{j}{n}x)$'s. We define
\begin{equation*}
\Bun_{G}(\Sig):=\Bun_{n}(\Sig)/\Pic_{X}.
\end{equation*}

\sss{Fractional twists}\label{sss:1/n twist} Let $\cE^{\da}=(\cE; \{\cE(-\frac{j}{n} x)\}_{x\in \Sig})\in\Bun_{n}(\Sig)(S)$. For any rational divisor
\begin{equation*}
D=\sum_{x\in|X|}c_{x}\cdot x
\end{equation*}
on $X$ satisfying
\begin{equation}\label{D 1/n}
c_{x}\in \frac{1}{n}\ZZ \textup{ for } x\in \Sig; \quad c_{x}\in \ZZ \textup{ otherwise,}
\end{equation} 
we may define a vector bundle $\cE(D)$ in the following way. There is a unique way to write  $D=D_{0}-D_{1}$ where $D_{0}\in\Div(X)$ and $D_{1}=\sum_{x\in \Sig}\frac{i_{x}}{n}x$ for integers $0\le i_{x}\le n-1$. We define $\cE(-D_{1})\subset\cE$ to be the kernel of the sum of projections
$$\cE\to \bigoplus_{x\in\Sig}\cE/\cE(-\frac{i_{x}}{n}x).$$
Then we define $\cE(D)=\cE(-D_{1})\otimes_{X}\cO_{X}(D_{0})$. It is easy to check that $\cE(D+D')=(\cE(D))(D')$ whenever both $D$ and $D'$ satisfy \eqref{D 1/n}.

\sss{Variant of fractional twists}\label{sss:Bun Sig} Now suppose $\Sig$ is decomposed into a disjoint union of two subsets
\begin{equation}\label{def Sig}
\Sig=\Si\coprod \Sf.
\end{equation}
Let 
\begin{equation*}
\frSi=\prod_{x\in \Si}\Spec k(x) \quad \mbox{(product over $k$)}.
\end{equation*}
We now consider the base change 
\begin{equation*}
\Bun_n(\Sig)\times \frSi.
\end{equation*}

An $S$-point of $\frSi$ is a collection $\{x^{(1)}\}_{x\in \Si}$  where $x^{(1)}: S\to \Spec k(x)\incl X$, for each $x\in \Si$. It will be convenient to introduce $x^{(i)}$ for all integers $i$ inductively such that
\begin{equation}\label{x(i)}
x^{(i)}=x^{(i-1)}\circ\Fr_{S}: S\xr{\Fr_{S}}S\xr{x^{(i-1)}} \Spec k(x)\incl X, \quad i\in\ZZ.
\end{equation}
Clearly we have $x^{(i)}=x^{(i+d_{x})}$, where $d_{x}=[k(x):k]$.

For each $x\in \Si$, we have a canonical point
\begin{equation*}
\bx^{(1)}: \frS_{\infty}\to\Spec k(x)\to X
\end{equation*}
given by projection to the $x$-factor.  We define $\bx^{(i)}$ as in \eqref{x(i)} with $S$ replaced by $\frS_{\infty}$. Then the graph $\Gamma_{\bx^{(i)}}$ of $\bx^{(i)}$ is a divisor in $X\times\frS_{\infty}$. We abuse the notation to abbreviate $\Gamma_{\bx^{(i)}}$ by $\bx^{(i)}$. Then we have a decomposition
\begin{equation*}
\{x\}\times \frS_{\infty}=\Spec k(x)\times \frS_{\infty}=\coprod_{i=1}^{d_{x}} \bx^{(i)}.
\end{equation*}

Now let $\{x^{(1)}\}_{x\in\Si}$  be an $S$-point of $\frS_{\infty}$, then the graphs of $x^{(i)}$ ($x\in\Si, 1\le i\le d_{x}$) are divisors in $X\times S$  pulled back from the divisors $\bx^{(i)}$ on $X\times \frS_{\infty}$. For $\cE^{\da}\in \Bun_{n}(\Sig)(S)$, the quotient $\cE/\cE(-\frac{i}{n}x)$ then splits as a direct sum $\oplus_{j=1}^{d_{x}}\cQ^{(j)}_{i}$ where $\cQ^{(j)}_{i}$ is supported on $\Gamma_{x^{(j)}}$ (with rank $i$). We define $\cE(-\frac{i}{n}x^{(j)})$ to be the kernel
\begin{equation*}
\cE(-\frac{i}{n}x^{(j)}):=\ker\left(\cE\to \cE/\cE(-\frac{i}{n}x)\surj \cQ^{(j)}_{i}\right).
\end{equation*}
In other words, $\{\cE(-\frac{i}{n}x^{(j)})\}_{1\le i\le n-1}$ give an Iwahori level structure of $\cE$ at $x^{(j)}$. With these definitions, for $\cE^{\da}\in \Bun_{n}(\Sig)(S)$, the construction in \S\ref{sss:1/n twist} then allows us to make sense of $\cE(D)$ where $D$ is a divisor on $X\times \frS_{\infty}$ of the form
\begin{equation}\label{D for twist}
D=\sum_{x\in \Si, 1\le j\le d_{x}}c^{(j)}_{x}\bx^{(j)}+\sum_{x\in |X|-\Si}c_{x}(\{x\}\times\frSi)
\end{equation}
where
\begin{eqnarray*}
c^{(j)}_{x}&\in& \frac{1}{n}\ZZ, \textup{ for } x\in \Si, 1\le j\le d_{x};\\
c_{x}&\in& \frac{1}{n}\ZZ, \textup{ for } x\in \Sf;\\
c_{x}&\in& \ZZ, \textup{ otherwise.}
\end{eqnarray*}
More precisely, we can uniquely write $D=D_{0}-D_{1}$ where $D_{0}\in \Div(X\times \frSi)$ has $\ZZ$-coefficients and $D_{1}$ is of the form $D_{1}=\sum_{x\in \Si, 1\le j\le d_{x}}\frac{i^{(j)}_{x}}{n}\bx^{(j)}+\sum_{x\in \Sf}\frac{i_{x}}{n}(\{x\}\times\frSi)$, and the coefficients $\frac{i^{(j)}_{x}}{n}$ (for $x\in \Si$) and $\frac{i_{x}}{n}$ (for $x\in \Sf$) are in $\{\frac{1}{n}, \frac{2}{n},\cdots, \frac{n-1}{n}\}$. We define $\cE(-D_{1})$ to be the kernel of the sum of the projections
\begin{equation*}
\cE\to \left(\bigoplus_{x\in \Si, 1\le j\le d_{x}}\cE/\cE(-\frac{i_{x}^{(j)}}{n}x^{(j)})\right)\bigoplus \left(\bigoplus_{x\in \Sf}\cE/\cE(-\frac{i_{x}}{n}x)\right).
\end{equation*}
Finally let $\cE(D):=\cE(-D_{1})\otimes_{\cO_{X\times \frSi}}\cO_{X\times\frSi}(D_{0})$.

\begin{defn} Let $D$ be a $\QQ$-divisor of $X\times \frSi$ satisfying the conditions as in \eqref{D for twist}. The {\em Atkin--Lehner automorphisms} for $\Bun_{n}(\Sig)$ and $\Bun_{G}(\Sig)$ are maps
\begin{eqnarray*}
\wt\AL(D): \Bun_{n}(\Sig)\times\frSi\to \Bun_{n}(\Sig),\\
\AL(D):\Bun_{G}(\Sig)\times\frSi\to \Bun_{G}(\Sig)
\end{eqnarray*}
sending $\cE^{\da}=(\cE;\{\cE(-\frac{j}{n}x)\}_{x\in \Sig}; \{x^{(1)}\}_{x\in \Si})$ to 
\begin{equation*}
\cE^{\da}(D)=\left(\cE(D); \{\cE(D-\frac{j}{n}(\{x\}\times\frSi))\}_{x\in \Sig}\right)
\end{equation*}
which makes sense by the discussion in \S\ref{sss:Bun Sig}.
\end{defn}

The maps $\wt\AL(D)$ and $\AL(D)$ are analogous to the Atkin--Lehner automorphisms on the modular curves, hence their name. From the definition we see that $\AL(D)$ depends only on $\Di\mod \ZZ$.

\sss{}\label{sss:mu} Let $r\ge0$ be an integer and $\un\mu=(\mu_1,...,\mu_r)\in \{\pm1\}^{r}$. We define the Hecke stack with Iwahori level structures.

\begin{defn}\label{def:Hk} Let $\Hk^{\un\mu}_{n}(\Sig)$ be the stack whose $S$-points is the groupoid of the following data: 
\begin{itemize}
\item A sequence of $S$-points $\cE^{\da}_{i}=(\cE_{i};\{\cE_{i}(-\frac{j}{n} x)\}_{x\in \Sig})\in \Bun_{n}(\Sig)(S)$ for $i=0,1,\cdots, r$;
\item Morphisms $x_i: S\to X$ for $i=1,\dotsc, r$, with graphs $\Gamma_{x_{i}}\subset X\times S$;
\item Isomorphisms of vector bundles
\begin{equation}\label{fi}
f_i: \cE_{i-1}|_{X\times S-\Gamma_{x_i}}\isom \cE_{i}|_{X\times S-\Gamma_{x_i}}, \quad i=1,2,\dotsc,r,
\end{equation}
\end{itemize}
These data are required to satisfy the following conditions
\begin{enumerate}
\item If $\mu_{i}=1$, then $f_{i}$ extends to an injective map $\cE_{i-1}\to \cE_{i}$ whose cokernel is an  invertible sheaf on the graph $\Gamma_{x_i}$. Moreover, $f_{i}$ sends $\cE_{i-1}(-\frac{j}{n} x)$ to $\cE_{i}(-\frac{j}{n}x)$ for all $x\in \Sig$ and $1\le j\le n-1$.
\item If $\mu_{i}=-1$, then $f^{-1}_{i}$ extends to an injective map $\cE_{i}\to \cE_{i-1}$ whose cokernel  is an  invertible sheaf on the graph $\Gamma_{x_i}$. Moreover, $f^{-1}_{i}$ sends $\cE_{i}(-\frac{j}{n} x)$ to $\cE_{i-1}(-\frac{j}{n}x)$ for all $x\in \Sig$ and $1\le j\le n-1$.
\end{enumerate}
\end{defn}

We have a morphism $\pi^{\un\mu}_{\Hk}: \Hk^{\un\mu}_{n}(\Sig)\to X^{r}$ recording the points $x_{1},\dotsc, x_{r}$ in the above definition.   For $0\le i\le r$, let
\begin{equation*}
\wt p_{i}:\Hk^{\un\mu}_{n}(\Sig)\to\Bun_{n}(\Sig)
\end{equation*}
be the morphism recording the $i$-th point $\cE^{\da}_{i}\in \Bun_{n}(\Sig)$. 

There is an action of $\Pic_{X}$ on $\Hk^{\un\mu}_{n}(\Sig)$ by tensoring. We form the quotient
\begin{equation*}
\Hk^{\un\mu}_{G}(\Sig)=\Hk^{\un\mu}_{n}(\Sig)/\Pic_{X}
\end{equation*}
with maps recording $\cE^{\da}_{i}$
\begin{equation*}
p_{i}:\Hk^{\un\mu}_{G}(\Sig)\to\Bun_{G}(\Sig), \quad i=0,\dotsc, r.
\end{equation*}

\begin{prop}\label{p:Hk smooth} 
\begin{enumerate}
\item For $0\le i\le r$, the morphism $\wt p_{i}: \Hk^{\un\mu}_{n}(\Sig)\to\Bun_{n}(\Sig)$ is smooth of relative dimension $rn$.
\item For $0\le i\le r$, the morphism $(\wt p_{i},\pi^{\un\mu}_{\Hk}): \Hk^{\un\mu}_{n}(\Sig)\to\Bun_{n}(\Sig)\times X^{r}$ is smooth of relative dimension $r(n-1)$ when restricted to $\Bun_{n}(\Sig)\times (X-\Sig)^{r}$.
\item For $0\le i\le r$, the morphism $(\wt p_{i},\pi^{\un\mu}_{\Hk}): \Hk^{\un\mu}_{n}(\Sig)\to\Bun_{n}(\Sig)\times X^{r}$ is flat of relative dimension $r(n-1)$.
\item The statements of (1)-(3) hold when $\Hk^{\un\mu}_{n}(\Sig)$ is replaced with $\Hk^{\un\mu}_{G}(\Sig)$ and $\Bun_{n}(\Sig)$ is replaced with $\Bun_{G}(\Sig)$.
\end{enumerate}
\end{prop}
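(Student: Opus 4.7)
Parts (1)--(3) imply (4) by descent along the $\Pic_X$-torsor $\Bun_n(\Sig) \to \Bun_G(\Sig)$ and its lift to the Hecke stacks, since smoothness and flatness descend along smooth surjective morphisms. For (1)--(3), I reduce to $r = 1$ via the convolution identification of $\Hk^{\un\mu}_n(\Sig)$ as an iterated fiber product of single-step Hecke stacks $\Hk^{(\mu_i)}_n(\Sig)$, glued via $\wt p_1$ on one factor to $\wt p_0$ on the next. Since smoothness and flatness of the single-step projections are preserved under composition and base change, and since the involution $\mu \leftrightarrow -\mu$ swaps $\wt p_0$ with $\wt p_r$, it suffices to handle the case $r = 1$ and $\mu = +1$.

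In this case, set $H = \Hk^{(+1)}_n(\Sig)$ and analyze the fiber of $(\wt p_0, \pi) : H \to \Bun_n(\Sig) \times X$ at $(\cE_0^\da, x)$. If $x$ factors through $X - \Sig$, then $f_1$ is an isomorphism near every $y \in \Sig$, so the Iwahori flag on $\cE_1$ at $y$ is uniquely inherited from that of $\cE_0$; the fiber is then the projective bundle $\PP(\cE_0|_x) \cong \PP^{n-1}$, which establishes (2). If $x$ passes through a point $x_0 \in \Sig$, a formal-local computation identifies the fiber with the Iwahori-level local model for the minuscule coweight $\omega_1$ of $\PGL_n$; this local model is equidimensional of dimension $n - 1$ and Cohen--Macaulay. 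Combined with the generic smoothness over $X - \Sig$, this yields the flatness statement (3).

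For (1), I pass to the fiber of $\wt p_0$ at $\cE_0^\da$ with $x$ varying over $X$. Away from $\Sig$ this is the smooth projective bundle $\PP(\cE_0)|_{X - \Sig}$, and near each $x_0 \in \Sig$, the same local computation shows that the fiber is obtained from $\PP(\cE_0) \to X$ by blowing up along the smooth subvariety encoding the deepest step of the Iwahori flag of $\cE_0$ at $x_0$; since blowing up a smooth variety along a smooth center is smooth, this proves (1). The technical heart of the proof is thus the local-model identification at the points of $\Sig$, which simultaneously yields the equidimensionality needed for flatness of $(\wt p_0, \pi)$ and the smooth-blow-up structure needed for smoothness of $\wt p_0$. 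Once this local identification is in place, the remaining statements follow by routine verification.
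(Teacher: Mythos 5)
Your reductions (deducing (4) by dividing out $\Pic_{X}$, reducing to $r=1$ via the iterated fiber-product structure, and handling (2) over $X-\Sig$ by identifying the fiber with the projectivization $\PP^{n-1}$) agree with the paper. The problems are in how you treat the points of $\Sig$, which you yourself call the technical heart and which is exactly where your argument breaks. First, the blow-up description in (1) is false for $n\ge 3$: the fiber of $\wt p_{0}$ over $\cE_{0}^{\da}$, restricted to a formal neighborhood of $x_{0}\in\Sig$, is \emph{not} a single blow-up of $\PP(\cE_{0})$ along one smooth center. Its fiber over $x_{0}$ has $n$ irreducible components (one for each step of the Iwahori chain that can ``drop''), and the local equation of the family near the most degenerate points is $z_{1}\cdots z_{k}=t$ with $k$ as large as $n$; a single blow-up of a smooth family along a smooth center contained in the special fiber produces only two components and at worst ordinary double degenerations $z_{1}z_{2}=t$. (The picture you have in mind is the $n=2$ case, Deligne--Rapoport style, where it is indeed one point blown up.) So smoothness of $\wt p_{i}$ is not established by your argument. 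Second, the inference for (3), ``the fibers over $\Sig$ are equidimensional of dimension $n-1$ and Cohen--Macaulay, plus generic smoothness over $X-\Sig$, hence flat,'' is not a valid criterion: equidimensional CM fibers plus flatness on a dense open do not imply flatness. You would need either miracle flatness, which requires knowing the \emph{total space} is Cohen--Macaulay (not proved), or an identification of an \'etale/formal neighborhood in $\Hk^{\un\mu}_{n}(\Sig)$ with the local-model \emph{family} over the disc together with the known flatness of that family; you only assert an identification of a single fiber with ``the local model,'' which does not give a statement about the family and in any case is precisely the content that has to be proved.

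For comparison, the paper proves (1) and (3) simultaneously by a direct coordinate computation: after trivializing $\cE^{\da}$ on an affine neighborhood $U_{x}$ of $x\in\Sig$ over an arbitrary local base $S\to\Bun_{n}(\Sig)$, the fiber $\wt p_{1}^{-1}(\cE^{\da})|_{U_{x}}$ is described inside a product of projective bundles by the incidence conditions \eqref{shift line}, and \'etale-locally near any point it is isomorphic to $\AA^{n}_{S}$, with the map to the curve given by the product of a subset of the coordinates (equation \eqref{prod ty}). This exhibits smoothness over $S$ of relative dimension $n$ (giving (1)) and flatness over $S\times\AA^{1}$ of relative dimension $n-1$ (giving (3)) in one stroke, and it is the precise, verified form of the ``local model'' structure you invoke. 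If you want to keep your structure, you must replace the single-blow-up claim by such an explicit local presentation (or by a genuine local-model diagram with smooth maps to the flat local model for $(\GL_{n},\omega_{1},\mathrm{Iwahori})$), and derive flatness from flatness of that family rather than from properties of the closed fibers alone.
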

\begin{proof}
We first make some reductions. Once (1)-(3) are proved, (4) follows by dividing out by $\Pic_{X}$. By the iterative nature of $\Hk^{\un\mu}_{n}(\Sig)$, it is enough to treat the case $r=1$. We consider the case $\un\mu=1$ and $i=1$; the other cases can be treated similarly. We also base change the situation to $\kbar$ without changing notation (i.e., $X$ now means $X_{\kbar}$, $\Sig$ means $\Sig(\kbar)$, and the products are over $\kbar$, etc).  Moreover, if $x\in \Sig$ and $\Sig^{x}=\Sig-\{x\}$, we observe that over $X-\Sig^{x}$ there is an isomorphism $\Hk^{1}_{n}(\Sig)|_{X-\Sig^{x}}\cong (\Hk^{\un\mu}_{n}(\{x\})|_{X-\Sig^{x}})\times_{\Bun_{n}(\{x\})}\Bun_{n}(\Sig)$ such that the projection $p_{1}$ is the projection to the second factor. Therefore to show the statements over $X-\Sig^{x}$, it suffices to show the same statements for $\Sig=\{x\}$. Since the $X-\Sig^{x}$ cover $X$ as $x$ runs over $\Sig$, we reduce to the case where $\Sig$ is a singleton $\{x\}$. In other words, we are concerned with the map
\begin{equation*}
(\wt p_{1}, \pi^{1}_{\Hk}): \Hk^{1}_{n}(\{x\})\to \Bun_{n}(\{x\})\times X.
\end{equation*}

(2) Since $\Hk^{1}_{n}(\{x\})|_{X-\{x\}}\cong (\Hk^{1}_{n}|_{X-\{x\}})\times_{\Bun_{n}}\Bun_{n}(\{x\})$, we have a Cartesian diagram
\begin{equation*}
\xymatrix{\Hk^{1}_{n}(\{x\})|_{X-\{x\}}\ar[r]^-{\wt p_{1}|_{X-\{x\}}}\ar[d] & \Bun_{n}(\{x\})\times(X-\{x\})\ar[d]\\
\Hk^{1}_{n}\ar[r]^-{\wt p_{1}} & \Bun_{n}\times X
}
\end{equation*}
Since the bottom horizontal map $\Hk^{1}_{n}\to \Bun_{n}\times X$ is the projectivization of the universal bundle over $\Bun_{n}\times X$, it is smooth of relative dimension $n-1$. Therefore the same is true for the top  horizontal map.

(1) and (3).  Let $S=\Spec R$ where $R$ is a local $\kbar$-algebra.  Let $\cE^{\da}\in \Bun_{n}(\{x\})(S)$. For an $S$-scheme $S'$, the $S'$-points of the fiber $\wt p^{-1}_{1}(\cE^{\da})$ are $\cF^{\da}\in\Bun_{n}(\{x\})(S')$ such that for each $0\le i\le n-1$, $\cF(-\frac{i}{n}x)\subset \cE(-\frac{i}{n}x)$ with quotients an invertible sheaf supported on the graph of some map $y: S'\to X$.  Such $\cF(-\frac{i}{n}x)$ are classified by the projectivization $\PP(\cE(-\frac{i}{n}x))$ over $X\times S$. The fiber $\wt p^{-1}_{1}(\cE^{\da})$ is then a closed subscheme of
\begin{equation*}
\PP(\cE)\times_{X\times S} \PP(\cE(-\frac{1}{n}x))\times_{X\times S}\cdots\times \PP(\cE(-\frac{n-1}{n}x)).
\end{equation*}
We will write down defining equations of this closed subscheme. Let $U_{x}\subset X$ be an open affine neighborhood of $x$, and let $t\in \cO(U_{x})$ be a coordinate at $x$. Shrinking $U_{x}$ we may assume $t$ only vanishes at $x$. Since we know (2) already, to show (1) and (3), it is enough to show the corresponding statements over $U_{x}$.

After \'etale localizing $S$, we may assume that $\cE^{\da}$ is trivialized on $U_{x}\times S$. Thus we fix a trivialization $\io: \cE|_{U_{x}\times S}\isom \cO_{U_{x}\times S}^{n}$ so that
\begin{equation}\label{coor Ei}
\io(\cE(-\frac{i}{n}x)|_{U_{x}\times S})=t\cO_{U_{x}\times S}\op\cdots\op t\cO_{U_{x}\times S}\op \cO_{U_{x}\times S}\op\cdots\op \cO_{U_{x}\times S}
\end{equation}
where the first $i$ summands are $t\cO_{U_{x}\times S}$ and the last $n-i$ are $\cO_{U_{x}\times S}$. Using the decomposition \eqref{coor Ei}, we may canonically identify $\PP(\cE(-\frac{i}{n}x))|_{U_{x}\times S}\cong \PP^{n-1}\times U_{x}\times S$. Let $S'=\Spec R'$ where $R'$ is a local $R$-algebra.  Then an $R'$ point in $\wt p^{-1}(\cE^{\da})|_{U_{x}\times S}$ may be expressed using homogeneous coordinates $a^{(i)}=[a^{(i)}_{0},\dotsc, a^{(i)}_{n-1}]\in \PP^{n-1}(R')$ for $i=0, \dotsc, n-1$ (which gives $\cF(-\frac{i}{n}x)$) and a point $y\in U_{x}(R)$. The superscripts and subscripts of $a^{(i)}_{j}$ are understood as elements in $\ZZ/n\ZZ$, so $a^{(0)}_{j}=a^{(n)}_{j}$ etc..

The condition $\cF(-\frac{i}{n}x)\subset \cF(-\frac{i-1}{n}x)$ means that the following diagram can be completed into a commutative diagram by a choice of $\l\in R'$
\begin{equation*}
\xymatrix{\cE(-\frac{i}{n}x)\ar[r]^-{\ev_{y}}\ar@{^{(}->}[d] & R'^{n}\ar[d]^{\tau_{i-1}:=\diag(1,\cdots, t(y),\cdots, 1)}\ar[rrr]^-{a^{(i)}} &&& R'\ar@{-->}[d]^{\l}\\
\cE(-\frac{i-1}{n}x)\ar[r]^-{\ev_{y}} & R'^{n}\ar[rrr]^-{a^{(i-1)}} &&& R'}
\end{equation*}
where the middle vertical map $\tau_{i-1}$ is the diagonal matrix with $t(y)\in R'$ on the $(i,i)$-entry and $1$ elsewhere on the diagonal (so $\tau_{i-1}(a^{(i-1)})$ multiplies $a_{i-1}^{(i-1)}$ by $t(y)$ and leaves the other coordinates unchanged). This gives the closed condition
\begin{equation}\label{shift line}
\mbox{$\tau_{i-1}(a^{(i-1)})$ is in the line spanned by $a^{(i)}$}.
\end{equation}

We study the special fiber of $\wt p_{i}$ over $(\cE^{\da},x)$. Fix a $\kbar$-point of $\cF^{\da}\in \wt p_{1}^{-1}(\cE^{\da})$ over $y=x$ with coordinates $\ba^{(i)}=[\ba^{(i)}_{0},\dotsc,\ba^{(i)}_{n-1}],  i\in\ZZ/n\ZZ$. Let $[e_{i}]\in \PP^{n-1}$ be the coordinate line where only the $i$-th coordinate can be nonzero. Define
\begin{equation*}
I=\{i\in \ZZ/n\ZZ|\ba^{(i)}=[e_{i}]\}.
\end{equation*}
It is easy to see from the condition \eqref{shift line} that $I\ne\vn$. 
The points in $I$ cut the cyclically ordered set $\ZZ/n\ZZ$ into intervals (think about the $n$-th roots of unity on the unit circle). For neighboring $i_{1}, i_{2}\in I$, we have an interval $(i_{1}, i_{2}]$ (excluding $i_{1}$ and containing $i_{2}$ and not containing any other elements in $I$). When $I$ is a singleton $i_{1}$, we understand $(i_{1},i_{1}]$ to be the whole $\ZZ/n\ZZ$. These intervals give a partition of $\ZZ/n\ZZ$. By \eqref{shift line},  the homogeneous coordinates $[\ba^{(i)}_{0},\dotsc,\ba^{(i)}_{n-1}]$ for $\cF(-\frac{i}{n}x)$ satisfy
\begin{equation*}
\mbox{If $i$ is in the interval $(i_{1},i_{2}]$, then $\ba^{(i)}_{j}=0$ unless $j\in [i, i_{2}]$.}
\end{equation*}
Moreover, by the definition of $I$, $\ba^{(i)}_{i}$ is nonzero when $i\in I$. The relation \eqref{shift line} implies that whenever $i\in (i_{1}, i_{2}]$, where $i_{1},i_{2}\in I$ are neighbors, $\ba^{(i)}_{i_{2}}$ is nonzero. 

Now we give equations defining  $\wt p_{1}^{-1}(\cE^{\da})$ near the point $\cF^{\da}$. Let $a^{(i)}=[a_{0}^{(i)},\dotsc, a^{(i)}_{n-1}], 0\le i\le n-1$ be the coordinates of such an $R'$-valued point that specializes to $\cF^{\da}$. For an interval $(i_{1},i_{2}]$ and $i\in (i_{1},i_{2}]$, since $\ba^{(i)}_{i_{2}}\ne0$, $a^{(i)}_{i_{2}}$ is interval in $R'$, therefore we may assume $a^{(i)}_{i_{2}}=1$ for $i\in (i_{1},i_{2}]$. We now use the following affine coordinates: for any interval $(i_{1}, i_{2}]$ formed by neighboring elements $i_{1},i_{2}\in I$, we consider
\begin{equation}\label{list var}
a^{(i_{1}+1)}_{i_{1}+1},\cdots, a^{(i_{1}+1)}_{i_{2}-1}, \textup{ and } a^{(i_{1})}_{i_{2}}.
\end{equation}
There are $n$ such variables. The condition \eqref{shift line} implies that 
\begin{equation}\label{prod ty}
\prod_{i_{1}\in I}a^{(i_{1})}_{i_{2}}=t(y)
\end{equation}
where $i_{1}$ runs over $I$ and $i_{2}$ is its immediate successor. It turns out that the other coordinates can be uniquely determined by the ones in \eqref{list var} using the condition \eqref{shift line}, and that \eqref{prod ty} is the only relation implied by \eqref{shift line}. From this description we conclude that \'etale locally near $\cF^{\da}$, $\wt p^{-1}_{1}(\cE^{\da})|_{U_{x}}$ is isomorphic to $\AA^{n}_{S}$ with the map $\wt p^{-1}_{1}(\cE^{\da})|_{U_{x}}\xr{\pi^{1}_{\Hk}} U_{x}\times S\xr{t}\AA^{1}_{S}$ corresponding to $\AA^{n}_{S}\to \AA^{1}_{S}$ given by the product of a subset of coordinates. Therefore (1) and (3) follow.
\end{proof}


\subsection{Shtukas with Iwahori level structures}\label{ss:Sht Iw}

\sss{Moduli of rank $n$ Shtukas with Iwahori level structures}\label{sss:mu more} Let $\un\mu\in\{\pm1\}^{r}$.  Fix a $\QQ$-divisor $\Di$ on $X\times \frS_{\infty}$ supported at $\Si\times\frS_{\infty}$ of the form
\begin{equation}\label{D infty}
\Di=\sum_{x\in \Si, 1\le i\le d_{x}} c_{x}^{(i)}\bx^{(i)}, \quad c^{(i)}_{x}\in\frac{1}{n}\ZZ.
\end{equation}
We assume that $\un\mu$  satisfies the following condition
\begin{equation}\label{comp mu Di}
\sum_{i=1}^{r}\mu_i=\sum_{x\in \Si, 1\le i\le d_{x}}nc^{(i)}_{x}=n\deg \Di.
\end{equation}

\begin{defn} We define the stack $\Sht^{\un\mu}_{n}(\Sig;\Di)$ by the Cartesian diagram
\begin{equation}\label{Shtn}
\xymatrix{\Sht_{n}^{\un\mu}(\Sig;\Di)\ar[rr]\ar[d]&&
\Hk^{\un\mu}_{n}(\Sig)\times\frSi \ar[d]^{(\wt p_{0},\wt\AL(-\Di)\circ (\wt p_{r}\times\id_{\frSi}))}\\
\Bun_{n}(\Sig)\ar[rr]^-{(\id, \Fr)}&& \Bun_{n}(\Sig)\times \Bun_{n}(\Sig)}
\end{equation}
\end{defn}

Concretely, for a $k$-scheme $S$, an $S$-point of $\Sht_{n}^{\un\mu}(\Sig;\Di)$ consists of the following data:
\begin{itemize}
\item For each $0\le i\le r$, a point $\cE^{\da}_{i}=(\cE_{i};\{\cE_{i}(-\frac{j}{n} x)\}_{x\in \Sig})\in\Bun_{n}(\Sig)(S)$;
\item For each $x\in \Si$, a morphism $x^{(1)}: S\to \Spec k(x)$;
\item For each $1\le i\le r$,  a morphism $x_{i}:S\to X$;
\item Maps $f_{1},\dotsc, f_{r}$ as in the definition of $\Hk^{\un\mu}_{n}(\Sig)$;
\item An isomorphism $\io: \cE_{r}\cong (\leftexp{\tau}{\cE}_{0})(\Di)$ (first pullback by Frobenius, then fractional twist by $\Di$) respecting the Iwahori level structures at all $x\in \Sig$.
\end{itemize}

By definition, we have a morphism recording $x_{i}$ and $\{x^{(1)}\}_{x\in \Si}$ in the definition above
\begin{equation}\label{def Pi mu n}
\Pi^{\un\mu}_{n,\Di}: \Sht_{n}^{\un\mu}(\Sig;\Di)\to X^{r}\times \frSi.
\end{equation}

\begin{lemma}\label{l:sum Di}
Let $\Di$  be a $\QQ$-divisor of the form \eqref{D infty}. Then up to canonical isomorphisms, $\Sht^{\un\mu}_{n}(\Sig;\Di)$ depends only on the sum $\sum_{1\le i\le d_{x}}c_{x}^{(i)}$ for each $x\in \Si$. 
\end{lemma}
\begin{proof}
Let $\Di'=\sum_{x\in \Si}(\sum_{1\le i\le d_{x}}c_{x}^{(i)})\bx^{(1)}$. It suffices to give a canonical isomorphism $\a: \Sht^{\mu}_{n}(\Sig;\Di)\isom \Sht^{\mu}_{n}(\Sig;\Di')$. Let $(\cE_{i}^{\da}; x_{i}; \{x^{(1)}\}; \iota)$ be an $S$-point of $\Sht^{\mu}_{n}(\Sig;\Di)$. For $0\le i\le r$, let
\begin{equation*}
\cF_{i}^{\da}=\cE_{i}^{\da}\left(-\sum_{2\le j\le j'\le d_{x}}c_{x}^{(j')}\bx^{(j)}\right).
\end{equation*}
One checks that $\iota$ induces an isomorphism $\iota': \cF_{r}^{\da}\cong {}^{\tau}\cF_{0}^{\da}(\Di')$. Define $\a(\cE_{i}^{\da}; x_{i}; \{x^{(1)}\}; \iota)=(\cF_{i}^{\da}; x_{i}; \{x^{(1)}\}; \iota')$, which is easily seen to be an isomorphism.
\end{proof}

\sss{The case $r=0$} When $r=0$, $\Sht^{\vn}_{n}(\Sii)$ is zero dimensional.  We describe the groupoid of $\kbar$-points of $\Sht^{\vn}_{n}(\Sii)$. For any $\xi: \frSi\to \kbar$ (which amounts to choosing a $\kbar$-point $x^{(1)}$ over each $x\in \Si$), let $\Sht^{\vn}_{n}(\Sig;\xi)$ be the fiber of $\Sht^{\vn}_{n}(\Sii)$ over $\xi$.

Let $B$ be the central simple algebra over $F$ of dimension $n^{2}$, which is split at points away from $\Si$, and  has Hasse invariant $\inv_{x}(B)=\sum_{1\le i\le d_{x}}c_{x}^{(i)}$ for $x\in\Si$. Since $\sum_{x\in \Si}\sum_{1\le i\le d_{x}}c_{x}^{(i)}=0$ by \eqref{comp mu Di}, such a central simple algebra $B$ exists. Let $B^{\times}$ denote the algebraic group over $F$ of the multiplicative group of units in $B$. For $x\in\Sig$, let $K_{x}\subset B^{\times}(F_{x})$ be a minimal parahoric subgroup (so for $x\in\Sf$, $K_{x}$ is an Iwahori subgroup of $B^{\times}(F_{x})\cong \GL_{n}(F_{x})$). For $x\in |X-\Sig|$, let $K_{x}$ be a maximal parahoric of $B^{\times}(F_{x})\cong\GL_{n}(F_{x})$ such that almost all of them come from an integral model of $B$ over $X$. Then we have an isomorphism of groupoids
\begin{equation*}
\Sht^{\vn}_{n}(\Sig;\xi)(\kbar)\cong B^{\times}(F)\bs B^{\times}(\AA_{F})/\prod_{x\in |X|}K_{x}.
\end{equation*}

\sss{The case $r=1$ and Drinfeld modules}\label{sss:rel DrMod} We consider the special case where $r=1$, $\mu=-1$, $\Si$ consists of a single point $\infty$, and $\Di=-\frac{1}{n}\Gamma_{\mathbf{\infty}^{(1)}}$. In this case the stack $\Sht^{\mu}_{n}(\Sig; \Di)$ is closely related to the moduli stack $\textup{DrMod}_{n}(\Sf)$ of Drinfeld $A=\Gamma(X-\{\infty\}, \cO_{X})$-modules with Iwahori level structure at $\Sf$. In fact, in \cite[Theorem 3.1.4]{BS} it is shown that $\textup{DrMod}_{n}(\Sf)$ can be identified with the open and closed substack of $\Sht^{\mu}_{n}(\Sig; \Di)|_{X-\{\infty\}}$ consisting of those $(\cE^{\da}_{i}; \dotsc)$ where $\cE_{0}$ has degree $n(g-1)+1$. This implies an isomorphism over $X-\{\infty\}$:
\begin{equation*}
\textup{DrMod}_{n}(\Sf)/\Pic^{0}_{X}(k)\cong \Sht^{\mu}_{G}(\Sig; \Di)|_{X-\{\infty\}}.
\end{equation*}

\sss{Relation with the usual Shtukas}
We explain how $\Sht^{\un\mu}_{n}(\Sig;\Di)$ is related to the Shtukas in the sense of \cite{Va}. Let $\Si=\{y_{1},\dotsc, y_{s}\}$, and  $d_{i}=[k(y_{i}):k]$. Let $r'=r+\sum_{i=1}^{s}d_{i}$. For each $c\in \frac{1}{n}\ZZ$ we have a unique coweight $\un\mu(c)=(a_{1},\dotsc, a_{n})\in\ZZ^{n}$ of $\GL_{n}$ such that $\sum_{i}a_{i}=nc$ and $a_{n}\le a_{n-1}\le \dotsc\le a_{1}\le a_{n}+1$ (in other words $\mu(c)$ is a minuscule coweight).  Let $\Di$ take the form \eqref{D infty}. Let
\begin{equation*}
\un\mu'=(\mu_{1},\dotsc, \mu_{r}, \mu(c^{(1)}_{y_{1}}), \dotsc, \mu(c^{(d_{1})}_{y_{1}}), \mu(c^{(1)}_{y_{2}}), \dotsc, \mu(c^{(d_{2})}_{y_{2}}), \dotsc, \mu(c^{(1)}_{y_{s}}), \dotsc, \mu(c^{(d_{s})}_{y_{s}})).
\end{equation*}
This is an $r'$-tuple of minuscule dominant coweights of $\GL_{n}$.
We consider the stack $\Sht^{\un\mu'}_{n}(\Sig)$ of rank $n$ Shtukas with modification types given by $\un\mu'$ and Iwahori level structure at $\Sig$: it is given by the Cartesian diagram
\begin{equation*}
\xymatrix{   \Sht^{\un\mu'}_{n}(\Sig)\ar[d]\ar[r]     & \Hk^{\un\mu'}_{n}(\Sig)\ar[d]^{(\wt p_{0},\wt p_{r'})}   \\
\Bun_{n}(\Sig)\ar[r]^-{(\id, \Fr)} & \Bun_{n}(\Sig)\times \Bun_{n}(\Sig)}
\end{equation*}
where $\Hk^{\un\mu'}_{n}(\Sig)$ is defined similarly as $\Hk^{\un\mu}_{n}(\Sig)$. There is a natural map $\pi^{\un\mu'}_{n}: \Sht^{\un\mu'}_{n}(\Sig)  \to X^{r'}$. We have a map
\begin{equation*}
e_{\Si}: X^{r}\times \frSi\mapsto X^{r'}
\end{equation*}
given by sending $(x_{1},\dotsc, x_{r}, y_{1}^{(1)},\dotsc, y_{s}^{(1)})$ to $(x_{1},\dotsc, x_{r}, y_{1}^{(1)},\dotsc, y_{1}^{(d_{1})}, y_{2}^{(1)}, \dotsc, y_{s}^{(d_{s})})$.

\begin{lemma}\label{l:more legs}
There is a canonical closed embedding $\wt e: \Sht^{\un\mu}_{n}(\Sig;\Di)\incl \Sht^{\un\mu'}_{n}(\Sig)$ making the following diagram commutative
\begin{equation*}
\xymatrix{  \Sht^{\un\mu}_{n}(\Sig;\Di)\ar[r]^-{\wt e}\ar[d]^{\Pi^{\un\mu}_{n,\Di}} & \Sht^{\un\mu'}_{n}(\Sig)    \ar[d]^{\pi^{\un\mu'}_{n}}     \\
X^{r}\times \frSi\ar[r]^-{e_{\Si}} & X^{r'}
}
\end{equation*}
\end{lemma}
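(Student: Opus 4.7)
The plan is to construct $\wt e$ by inserting, after the $r$-th original modification, a chain of $r'-r$ minuscule Hecke modifications at the points $\bx^{(i)}$ (for $x \in \Si$, $1 \le i \le d_x$) that collectively realize the Atkin--Lehner twist by $\Di$, and then to identify the image of $\wt e$ as a closed substack of $\Sht^{\un\mu'}_n(\Sig)$. Concretely, I would relabel the $r'-r$ pairs $(x, i)$ (with $x \in \Si$, $1 \le i \le d_x$) in the order used to define $\un\mu'$ as $(z_1, j_1), \dotsc, (z_{r'-r}, j_{r'-r})$, and set $c_k = c_{z_k}^{(j_k)}$. Given an $S$-point $(\cE^\da_i; x_i; \{x^{(1)}\}_{x\in\Si}; \iota)$ of $\Sht^{\un\mu}_n(\Sig;\Di)$, I define intermediate bundles with Iwahori structure by
\begin{equation*}
\cG^\da_{r+k} := {}^\tau\cE^\da_0\Bigl(\Di - \sum_{\ell=1}^{k} c_\ell\, z_\ell^{(j_\ell)}\Bigr), \quad 0 \le k \le r'-r,
\end{equation*}
using the fractional twist of \S\ref{sss:Bun Sig}, and set $\cG^\da_i := \cE^\da_i$ for $0 \le i < r$. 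Then $\cG^\da_{r'} = {}^\tau\cE^\da_0 = {}^\tau\cG^\da_0$ supplies the Shtuka identification, while $\iota$ identifies $\cE^\da_r$ with $\cG^\da_r = {}^\tau\cE^\da_0(\Di)$, so that the original last modification $\cE^\da_{r-1} \to \cE^\da_r$ becomes one of the form $\cG^\da_{r-1} \to \cG^\da_r$.

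The next step is to check that each $\cG^\da_{r+k-1} \to \cG^\da_{r+k}$, which is the fractional twist by the single divisor $c_k\, z_k^{(j_k)}$, is genuinely a minuscule modification of type $\mu(c_k)$ concentrated at the single geometric point $z_k^{(j_k)}$, leaving the other Frobenius translates above $z_k$ untouched and respecting the Iwahori structures at every point of $\Sig$. I would verify this by a local calculation at $z_k$ in the coordinates used in the proof of Prop. \ref{p:Hk smooth}, exploiting the direct-sum decomposition of the Iwahori quotient at $z_k$ along the components of $\{z_k\}\times\frSi$ recalled at the end of \S\ref{sss:Bun Sig}. Taking the extra legs to be $(z_1^{(j_1)}, \dotsc, z_{r'-r}^{(j_{r'-r})})$, this produces an $S$-point of $\Sht^{\un\mu'}_n(\Sig)$ whose projection to $X^{r'}$ lies in $e_\Si(X^r \times \frSi)$, which defines $\wt e$ and yields the commutativity of the claimed square.

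Finally, I would observe that $e_\Si$ is itself a closed immersion, because its last $r'-r$ coordinates are determined by the $\frSi$-coordinates via iterated Frobenius, realizing $e_\Si$ as the graph of a morphism into a separated scheme. Hence $Z := (\pi^{\un\mu'}_n)^{-1}(e_\Si(X^r\times\frSi))$ is a closed substack of $\Sht^{\un\mu'}_n(\Sig)$, and $\wt e$ factors through $Z$. To show that $\wt e\colon\Sht^{\un\mu}_n(\Sig;\Di) \to Z$ is an isomorphism, I would construct an explicit inverse: read off $\{x^{(1)}\}_{x\in\Si}$ from the forced extra legs, take $\cE^\da_i := \cG^\da_i$ for $0 \le i \le r$, and define $\iota$ as the composite of the $r'-r$ minuscule modifications $\cG^\da_r \to \dotsb \to \cG^\da_{r'} = {}^\tau\cE^\da_0$. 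The hard part will be the local identification described above; once that is settled, the mutual inverseness of the two constructions is routine bookkeeping and $\wt e$ becomes the desired closed embedding.
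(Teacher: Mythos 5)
Your construction of $\wt e$ is the same as the paper's: the intermediate bundles you call $\cG^{\da}_{r+k}$ are exactly the fractional twists $({}^{\tau}\cE^{\da}_{0})(\Di-\sum_{\ell\le k}c_{\ell}z_{\ell}^{(j_{\ell})})$ written down there, and the local verification that consecutive ones differ by a minuscule, Iwahori-compatible modification at the single point $z_{k}^{(j_{k})}$ is the (implicit) routine part of the paper's argument as well. The commutativity of the square and the closedness of $e_{\Si}$ are also fine.

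The gap is in your identification of the image. It is not true that $\wt e$ maps isomorphically onto $Z=(\pi^{\un\mu'}_{n})^{-1}(e_{\Si}(X^{r}\times\frSi))$, and the inverse you propose is not defined on all of $Z$. A point of $Z$ carries, beyond the first $r$ modifications, a chain of $r'-r$ further modifications of the prescribed minuscule types at the forced legs, compatible with the Iwahori structures, together with $\io'$; but an Iwahori-compatible modification of a fixed bundle at a leg lying over a point of $\Sig$ is far from unique. Indeed, by Proposition \ref{p:Hk smooth}(3) the space of such modifications is a fiber of $\Hk^{1}_{n}(\Sig)\to\Bun_{n}(\Sig)\times X$ over a point of $\Sig$, which has dimension $n-1>0$. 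Concretely, for $n=2$ and $c=\tfrac12$, with local model $\cF=\cO^{2}\supset\cF_{1/2}=\cO\oplus t\cO\supset t\cF$ at the leg, the compatible colength-one modifications $(\cF',\cF'_{1/2})\subset(\cF,\cF_{1/2})$ form a one-dimensional family, of which only the single choice $\cF'=\cF_{1/2}$, $\cF'_{1/2}=t\cF$ is the canonical fractional twist occurring in the image of $\wt e$. For any other choice, the composite rational map $\cF_{r}\to{}^{\tau}\cF_{0}$ fails to identify $\cF^{\da}_{r}$ with $({}^{\tau}\cF^{\da}_{0})(\Di)$ compatibly with the Iwahori structures: either the underlying bundle or the induced flag does not match. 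So your recipe ``take $\cE^{\da}_{i}:=\cG^{\da}_{i}$ and let $\io$ be the composite'' does not produce a point of $\Sht^{\un\mu}_{n}(\Sig;\Di)$, and the ``routine bookkeeping'' in your last step is exactly where the argument breaks. The Frobenius condition does not rescue this: by the same argument as in Proposition \ref{p:ShtG} (via \cite[Lemme 2.13]{VL}), $\Sht^{\un\mu'}_{n}(\Sig)$ is \'etale-locally a fiber of $p_{r'}\colon\Hk^{\un\mu'}_{n}(\Sig)\to\Bun_{n}(\Sig)$, so these extra Hecke directions at the special legs persist in $Z$, which therefore strictly contains the image of $\wt e$.

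The correct identification, and what the paper does, is to cut out the image inside $\Sht^{\un\mu'}_{n}(\Sig)$ by the stronger condition that the bundles $\cF^{\da}_{i}$ for $i>r$ coincide with the explicit fractional twists of ${}^{\tau}\cF^{\da}_{0}$ (equivalently, that the last $r'-r$ modifications are the canonical, Iwahori-determined ones). Requiring a flat family of modifications to agree with a prescribed one is a closed condition and introduces no new automorphisms, and on that locus your inverse construction does work; restricted to $Z$ alone it does not.
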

\begin{proof}
The map $\wt e$ is defined by sending $(\cE^{\da}_{i}, f_{i}, \io)\in \Sht^{\un\mu}_{n}(\Sig;\Di)$ over $(x_{1},\dotsc, x_{r}, y^{(1)}_{1},\dotsc, y^{(1)}_{s})\in X^{r}\times \frSi$ to the following point $(\cF^{\da}_{i}, f'_{i}, \io')$ over $e_{\Si}(x_{1},\dotsc, x_{r}, y^{(1)}_{1},\dotsc, y^{(1)}_{s})$. We define
\begin{equation*}
\cF^{\da}_{i}=\begin{cases}\cE^{\da}_{i}, \textup{ if }  0\le i\le r;\\
({}^{\tau}\cE^{\da}_{0})(\Di-\sum_{h=1}^{j_{1}}c^{(h)}_{y_{h}}y_{1}^{(h)}), \\
\quad \textup{ if } i=r+j_{1}, 1\le j_{1} \le d_{1};  \\
({}^{\tau}\cE^{\da}_{0})(\Di-\sum_{h=1}^{d_{1}}c^{(h)}_{y_{1}}y_{1}^{(h)}-\sum_{h=1}^{j_{2}}c^{(h)}_{y_{2}}y_{2}^{(h)}),\\
\quad \textup{ if }  i=r+d_{1}+j_{2}, 1\le j_{2} \le d_{2};\\
\cdots \\
({}^{\tau}\cE^{\da}_{0})(\sum_{h=j_{s}+1}^{d_{s}}c^{(h)}_{y_{s}}y_{s}^{(h)}),\\
\quad \textup{ if }  i=r+d_{1}+\cdots +d_{s-1}+j_{s}, 1\le j_{s} \le d_{s}.
\end{cases}
\end{equation*}
The map $f'_{r}$ is $\cE^{\da}_{r}\xr{\io}({}^{\tau}\cE^{\da}_{0})(\Di)\dashrightarrow ({}^{\tau}\cE^{\da}_{0})(\Di-c^{(1)}_{y_{1}}y_{1})$, and the other maps $f'_{i}, \io'$ are the obvious ones. The above equation for $\cF_{i}^{\da}$ gives a closed condition on $\Sht^{\un\mu'}_{n}(\Sig)$ without changing automorphisms, realizing $\Sht^{\un\mu}_{n}(\Sig;\Di)$ as a closed substack of $\Sht^{\un\mu'}_{n}(\Sig)$.
\end{proof}

\sss{$\Sht^{\un\mu}_{G}(\Sig; D_{\infty})$ and its geometric properties}
The groupoid $\Pic_{X}(k)$ acts on $\Sht_{n}^{\un\mu}(\Sig;\Di)$ by tensoring. We define the quotient (see \cite[5.2.1]{YZ} for the explanation why the quotient makes sense as a stack)
\begin{equation*}
\Sht^{\un\mu}_{G}(\Sig;\Di):=\Sht_{n}^{\un\mu}(\Sig;\Di)/\Pic_{X}(k).
\end{equation*}
We have a Cartesian diagram
\begin{equation}\label{Cart ShtG Di}
\xymatrix{\Sht_G^{\un\mu}(\Sig;\Di)\ar[rr]\ar[d]^{\om_{0}}&&
\Hk^{\un\mu}_{G}(\Sig)\times\frSi \ar[d]^{(p_{0},\AL(-\Di)\circ (p_{r}\times\id_{\frSi}))}\\
\Bun_G(\Sig)\ar[rr]^-{(\id, \Fr)}&& \Bun_G(\Sig)\times \Bun_G(\Sig)}
\end{equation}

The map $\Pi^{\un\mu}_{n,\Di}$ in \eqref{def Pi mu n} induces a map 
\begin{equation}\label{Pi mu}
\Pi^{\un\mu}_{G,\Di}=(\pi^{\un\mu}_{G}, \pi_{G,\infty}): \Sht^{\un\mu}_{G}(\Sig;\Di)\to X^{r}\times \frSi.
\end{equation}

Since the action $\AL(\Di)$ on $\Bun_{G}(\Sig)$ depends only on $\Di\mod \ZZ$, combined with Lemma \ref{l:sum Di} we conclude that

\begin{lemma}\label{l:Di mod Z}
The moduli stack $\Sht^{\un\mu}_{G}(\Sig;\Di)$ depends only on the image of $\Di$ in $\Div(\Si)\otimes_{\ZZ}(\frac{1}{n}\ZZ/\ZZ)$.
\end{lemma}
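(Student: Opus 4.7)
The plan is to reduce the claim to a statement about the Atkin--Lehner morphism $\AL(-\Di)\colon \Bun_G(\Sig)\times\frSi\to \Bun_G(\Sig)$ appearing on the right of the Cartesian square \eqref{Cart ShtG Di}, namely that this morphism depends on $\Di$ only through its image in $\Div(\Si)\otimes_\ZZ(\tfrac{1}{n}\ZZ/\ZZ)$. Once that is established, the universal property of the fiber product \eqref{Cart ShtG Di}, whose left vertical map $(\id,\Fr)$ does not involve $\Di$ at all, immediately yields a canonical isomorphism $\Sht_G^{\un\mu}(\Sig;\Di)\cong \Sht_G^{\un\mu}(\Sig;\Di')$ between any two representatives of the same class.

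First I would dispose of the redistribution of weights among $\bx^{(1)},\dotsc,\bx^{(d_x)}$ by invoking Lemma \ref{l:sum Di}, which handles the analogous reduction already at the level of $\Sht^{\un\mu}_n$. After this step one may assume both $\Di = \sum_{x\in\Si}c_x\bx^{(1)}$ and $\Di'=\sum_{x\in\Si}c_x'\bx^{(1)}$, with $c_x-c_x'\in\ZZ$ for every $x\in\Si$. What remains is to produce a canonical $2$-isomorphism $\AL(-\Di)\cong \AL(-\Di')$ of morphisms $\Bun_G(\Sig)\times\frSi\to \Bun_G(\Sig)$.

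The key observation is that the difference $D:=\Di-\Di'\in\Div(X\times\frSi)$ has integer coefficients and is supported on the graphs $\Gamma_{\bx^{(1)}}$; therefore $\cO_{X\times\frSi}(D)$ is an honest line bundle, i.e., an $\frSi$-valued point of $\Pic_X$. Unwinding the definition of the fractional twist in \S\ref{sss:Bun Sig}, for any $S$-point $(\cE^\da,\{x^{(1)}\})$ of $\Bun_n(\Sig)\times\frSi$ one obtains a tautological identification
\[
\cE^\da(-\Di)\cong \cE^\da(-\Di')\otimes\cO_{X\times S}(-D_S),
\]
where $D_S$ is the pullback of $D$ to $X\times S$ along the map induced by $\{x^{(1)}\}_{x\in\Si}$. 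Since $\cO_{X\times S}(-D_S)$ is a line bundle on $X\times S$, the extra twist is absorbed in the $\Pic_X$-quotient $\Bun_G(\Sig)=\Bun_n(\Sig)/\Pic_X$; so the lifts $\wt\AL(-\Di), \wt\AL(-\Di')$ to $\Bun_n(\Sig)$ differ by an $\frSi$-point of $\Pic_X$, and their descents $\AL(-\Di), \AL(-\Di')$ agree up to a canonical $2$-isomorphism, as required.

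I do not anticipate any serious obstacle. The only point requiring minor care is to verify that the $2$-isomorphism $\AL(-\Di)\cong \AL(-\Di')$ is compatible with the Frobenius pullback on the left side of \eqref{Cart ShtG Di}, so that the induced isomorphism of fiber products is well defined. This is automatic: the twisting line bundle $\cO(D)$ is pulled back from $X\times\frSi$ (it depends only on the $\frSi$-factor, not on the $\Bun_G(\Sig)$-factor), and $\Pic_X$ is defined over $k$, so the comparison commutes with $(\id,\Fr)$.
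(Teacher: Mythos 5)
Your proposal is correct and follows essentially the same route as the paper: the paper's own proof is exactly the combination of Lemma \ref{l:sum Di} (to redistribute weights among the $\bx^{(i)}$) with the observation, already recorded after the definition of the Atkin--Lehner maps, that $\AL(\Di)$ depends only on $\Di \bmod \ZZ$ because an integral twist is tensoring by a line bundle and is absorbed in the $\Pic_X$-quotient, after which the Cartesian square \eqref{Cart ShtG Di} yields the isomorphism of Shtuka stacks. Your extra remark about Frobenius compatibility is harmless but not needed, since replacing the right-hand vertical map of a fiber product by a $2$-isomorphic one automatically gives a canonical equivalence of fiber products.
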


\begin{prop}\label{p:ShtG} 
\begin{enumerate}
\item The stack $\Sht^{\un\mu}_{G}(\Sig;\Di)$ is a smooth DM stack of dimension $rn$.

\item The morphism $\Pi^{\un\mu}_{G,\Di}$ is separated, and is smooth of relative dimension $r(n-1)$ over $(X-\Sigma)^{r}\times\frSi$.

\end{enumerate}
\end{prop}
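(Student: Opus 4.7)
The plan is to deduce Proposition \ref{p:ShtG} from Proposition \ref{p:Hk smooth} via the defining Cartesian diagrams \eqref{Shtn} and \eqref{Cart ShtG Di}, reducing first to the $\Bun_n$-level statement and then applying the standard deformation-theoretic argument for Shtukas due to Drinfeld and Varshavsky.

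First I would reduce to the analogous assertion for $\Sht^{\un\mu}_n(\Sig;\Di)$. Since $\Pic_X(k)$ is discrete, its action on $\Sht^{\un\mu}_n(\Sig;\Di)$ has étale orbits, and the quotient map onto $\Sht^{\un\mu}_G(\Sig;\Di)$ is étale of relative dimension $0$. Consequently smoothness, DM property, dimension, separatedness, and smoothness of the leg map all descend. So it suffices to show that $\Sht^{\un\mu}_n(\Sig;\Di)$ is a smooth DM stack of dimension $rn$ and that its natural projection to $X^{r}\times\frSi$ is separated and, over $(X-\Sig)^{r}\times\frSi$, smooth of relative dimension $r(n-1)$.

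For smoothness and dimension at the $\Bun_n$-level I would use the Cartesian diagram \eqref{Shtn}. By Proposition \ref{p:Hk smooth}(1), $\wt p_{0}:\Hk^{\un\mu}_n(\Sig)\to \Bun_n(\Sig)$ is smooth of relative dimension $rn$, and $\frSi$ is étale of dimension $0$ over $\Spec k$. The key input is that the Frobenius morphism on $\Bun_n(\Sig)$ has vanishing differential, so $(\id,\Fr):\Bun_n(\Sig)\to\Bun_n(\Sig)^2$, while not smooth, induces the same pullback on tangent spaces as the ``zero section'' $(\id,0)$ would. Consequently, the obstruction for lifting an $S$-point of $\Sht^{\un\mu}_n(\Sig;\Di)$ to a square-zero extension $S'$ involves only the $\wt p_0$-component (the $\wt p_r$-component is matched against a Frobenius pullback whose derivative vanishes), and it is killed by smoothness of $\wt p_0$. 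The usual fiber-product dimension formula then gives $\dim \Sht^{\un\mu}_n(\Sig;\Di)=\dim\Hk^{\un\mu}_n(\Sig)+\dim\frSi-\dim\Bun_n(\Sig)=rn$. The DM property reduces to finiteness of automorphism groups: an automorphism of a Shtuka induces a self-map of $\cE_0^{\da}$ satisfying a Frobenius-twisted equation of Lang type, which has only finitely many solutions inside the finite-type automorphism group of the Iwahori-level bundle.

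For smoothness of $\Pi^{\un\mu}_{n,\Di}$ over $(X-\Sig)^{r}\times\frSi$ of relative dimension $r(n-1)$, I would rerun the same deformation argument but with Proposition \ref{p:Hk smooth}(2) in place of (1), observing that the leg coordinates $(x_1,\dotsc,x_r)$ vary over $X^r$ transversally to the Frobenius direction and so are deformed independently. For separatedness I would apply the valuative criterion: given two $R$-points of $\Sht^{\un\mu}_n(\Sig;\Di)$ (with $R$ a DVR) agreeing over the fraction field, any isomorphism between the bundle data extends uniquely because $\Hom$-schemes of coherent sheaves on the proper curve $X$ are affine, and the Frobenius compatibility $\iota$ pins down the extension. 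The main technical obstacle is the ``zero-differential-of-Frobenius'' step, where one must verify that the fractional-twist construction $\AL(-\Di)$ and the $\frSi$-parametrized Iwahori structures at $\Si$ are compatible with Frobenius pullback in a way that introduces no new obstructions; this compatibility is essentially automatic from the construction and the numerical condition \eqref{comp mu Di} relating $\un\mu$ and $\Di$, but checking it carefully at the points $x\in\Si$, where Frobenius permutes the coordinates $\bx^{(j)}$ cyclically, is the heart of the argument.
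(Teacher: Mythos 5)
Your overall strategy (pass to the level of the Cartesian diagram \eqref{Shtn}, use smoothness of the Hecke projections plus vanishing of the differential of Frobenius, then handle DM/separatedness separately) is reasonable, but the key deformation step is stated backwards. In \eqref{Shtn} the bottom arrow is $(\id,\Fr)$, so the $\wt p_{0}$-component of the right vertical map is matched against the identity, while the component $\wt q_{r}:=\wt\AL(-\Di)\circ(\wt p_{r}\times\id_{\frSi})$ is matched against Frobenius. For a square-zero extension, the constraint $\wt p_{0}(\wt h)=\wt\cE$ is the vacuous one: you may simply \emph{define} $\wt\cE:=\wt p_{0}(\wt h)$. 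The genuine constraint is $\wt q_{r}(\wt h)\cong\Fr(\wt\cE)$: precisely because $d\Fr=0$, the right-hand side is a fixed (canonical) deformation of $\wt q_{r}(h)$, independent of $\wt\cE$, so the lifting problem is to lift $h$ along $\wt q_{r}$ so as to hit a prescribed deformation of its image. Thus the obstruction is killed by smoothness of $\wt q_{r}$ — i.e.\ of $\wt p_{r}$ together with \'etaleness of $\wt\AL(-\Di)$ — and the tangent space of the Shtuka stack is that of a fiber of $\wt q_{r}$, which is what yields dimension $rn$; it is \emph{not} killed by smoothness of $\wt p_{0}$, and the inference ``the Frobenius component has vanishing derivative, hence only the $\wt p_{0}$-constraint matters'' is false as stated. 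The slip is repairable only because Proposition \ref{p:Hk smooth}(1) happens to cover all $\wt p_{i}$; the same correction is needed in your part (2), where you must invoke the $(\wt p_{r},\pi^{\un\mu}_{\Hk})$-version of Proposition \ref{p:Hk smooth}(2). (For comparison, the paper avoids explicit deformation theory: it uses \cite[Prop.~2.11, Lemme~2.13]{VL} to trivialize $p_{r}$ \'etale-locally and identify $\Sht^{\un\mu}_{G}(\Sig;\Di)$ \'etale-locally with a fiber of $q_{r}$, then quotes Proposition \ref{p:Hk smooth}(1),(2).)

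The DM and separatedness parts are also too thin to stand as written. For separatedness, affineness of Hom-schemes on the proper curve gives only \emph{uniqueness} of an extension of an isomorphism over a DVR; the content of the valuative criterion is \emph{existence} (properness, not affineness, of the Isom-scheme), and ``the Frobenius compatibility $\io$ pins down the extension'' does not supply it — one must clear denominators, study the resulting possibly degenerate map on the special fiber, and use the Shtuka structure (and the $\Pic_{X}(k)$-quotient) to exclude degeneration. The paper sidesteps both issues via Lemma \ref{l:more legs}: $\Sht^{\un\mu}_{G}(\Sig;\Di)$ is a closed substack of the ordinary Iwahori-level Shtuka stack $\Sht^{\un\mu'}_{G}(\Sig)$, whose DM property follows from \cite[Prop.~2.16(a)]{Va} (the level-forgetting map being representable) and whose separatedness over $X^{r'}\times\frSi$ is inherited from the separatedness of $\Sht^{\un\mu'}_{G}\to X^{r'}$ as in \cite{YZ}. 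If you prefer your direct route, your Lang-type argument for DM should also record that the automorphism group schemes are \'etale (no infinitesimal automorphisms), not merely finite, and the separatedness argument needs the clearing-denominators analysis or the closed embedding.
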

\begin{proof} To show the smoothness statements in (1) and (2), we adapt the argument of \cite[Prop. 2.11]{VL} and apply \cite[Lemme 2.13]{VL} to the diagram \eqref{Cart ShtG Di}. Without giving all the details, the same argument of \cite[Prop. 2.11]{VL}  shows that after an \'etale base change, the fibration $p_{r}: \Hk^{\un\mu}_{G}(\Sig)\to \Bun_{G}(\Sig)$ can be trivialized. Therefore the same is true for $q_{r}:=\AL(-\Di)\circ (p_{r}\times\id_{\frSi}):  \Hk^{\un\mu}_{G}(\Sig)\times\frSi\to\Bun_{G}(\Sig)$ because  $\AL(-\Di)$ is \'etale. Then \cite[Lemme 2.13]{VL} applied to the diagram \eqref{Cart ShtG Di} implies that $\Sht^{\un\mu}_{G}(\Sig;\Di)$ is  \'etale locally isomorphic to a fiber of $q_{r}$. More precisely, for a fixed choice of $\cE^{\da}\in \Bun_{G}(\Sig)(k)$ (for example the trivial bundle with any Iwahori level structure at $\Sig$), there exists an \'etale covering $\{U_{i}\}$ of $\Sht^{\un\mu}_{G}(\Sig;\Di)$ together with \'etale maps $U_{i}\to q_{r}^{-1}(\cE^{\da})$ over $X^{r}\times \frSi$. 

Since $p_{r}$ is smooth of relative dimension $rn$ by Prop. \ref{p:Hk smooth}(1), so is $q_{r}$ and hence $q_{r}^{-1}(\cE^{\da})$ is smooth over $k$ of dimension $rn$. This implies that $\Sht^{\un\mu}_{G}(\Sig;\Di)$ is smooth of dimension $rn$.

By Prop. \ref{p:Hk smooth}(2), $p_{r}^{-1}(\cE^{\da})|_{(X-\Sig)^{r}}$ is smooth of relative of dimension $r(n-1)$ over $(X-\Sig)^{r}$. Therefore the same is true for $q_{r}^{-1}(\cE^{\da})|_{(X-\Sig)^{r}\times\frSi}$. By the discussion in the first paragraph, this implies that $\Sht^{\un\mu}_{G}(\Sig;\Di)|_{(X-\Sig)^{r}\times\frSi}$ is smooth over $(X-\Sig)^{r}\times\frSi$ of relative dimension $r(n-1)$.

We now show that $\Sht^{\un\mu}_{G}(\Sig;\Di)$ is DM. By Lemma \ref{l:more legs}, $\Sht^{\un\mu}_{G}(\Sig;\Di)$ is a closed substack of $\Sht^{\un\mu'}_{G}(\Sig):=\Sht^{\un\mu'}_{n}(\Sig)/\Pic_{X}(k)$. The map $\Sht^{\un\mu'}_{G}(\Sig)\to \Sht^{\un\mu'}_{G}$ (forgetting the level structure) is clearly representable. By \cite[Prop. 2.16(a)]{Va}, $\Sht^{\un\mu'}_{G}$ is DM, hence so are $\Sht^{\un\mu'}_{G}(\Sig)$ and its closed substack $\Sht^{\un\mu}_{G}(\Sig;\Di)$.

Finally we show that  $\Pi^{\un\mu}_{G,\Di}$ is separated. The map $\Sht^{\un\mu'}_{G}\to X^{r'}$ is separated, as can be seen from the same argument following \cite[Theorem 5.4]{YZ}. This implies  that $\pi^{\un\mu'}_{n}:\Sht^{\un\mu'}_{G}(\Sig)\to X^{r'}\times\frSi$ is also separated as $\Sht^{\un\mu'}_{G}(\Sig)\to \Sht^{\un\mu'}_{G}$ is proper. Since $\Sht^{\un\mu}_{G}(\Sig;\Di)$ is a closed substack of $\Sht^{\un\mu'}_{G}(\Sig)$, $\Pi^{\un\mu}_{G,\Di}$ is also separated.
\end{proof}

\sss{The base-change  situation}\label{sss:bc} Now let  $X'$ be another smooth, projective curve over $k$ with a map $\nu: X'\to X$ satisfying
\begin{equation}\label{R Sig disjoint}
\mbox{The map $\nu$ is unramified over $\Sig$.}
\end{equation}
Let
\begin{equation*}
\frSi'=\prod_{x'\in \nu^{-1}(\Si)}\Spec k(x').
\end{equation*}
Then we have a natural map induced by $\nu$
\begin{equation}\label{nu r}
\nu'^{r}: X'^{r}\times \frSi'\to X^{r}\times \frSi.
\end{equation}
Define the base change of $\Sht^{\un\mu}_{G}(\Sig;\Di)$
\begin{equation*}
\Sht'^{\un\mu}_{G}(\Sig;\Di):=\Sht^{\un\mu}_{G}(\Sig;\Di)\times_{(X^{r}\times \frSi)}(X'^{r}\times \frSi').
\end{equation*}

\begin{prop}\label{p:bc smooth} Under the assumption \eqref{R Sig disjoint}, the stack $\Sht'^{\un\mu}_{G}(\Sig;\Di)$ is a smooth DM stack of dimension $rn$.
\end{prop}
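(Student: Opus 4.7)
The plan is to reduce everything to the smoothness of $\Sht^{\un\mu}_{G}(\Sig;\Di)$ established in Proposition~\ref{p:ShtG}, exploiting the hypothesis that $\nu$ is étale over $\Sig$. (Since $\nu$ is a finite morphism between smooth curves, being unramified over $\Sig$ is equivalent to being étale over $\Sig$, hence to $\Sig \cap R = \emptyset$.) The DM property is automatic, as $\Sht'^{\un\mu}_{G}(\Sig;\Di)$ is the fiber product of a DM stack with a scheme over another scheme; the dimension assertion will fall out of the smoothness argument.

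For smoothness, I would work étale-locally. Fix a geometric point $y'$ of $\Sht'^{\un\mu}_{G}(\Sig;\Di)$ lying over $(x'_{1},\ldots,x'_{r},\xi') \in X'^{r} \times \frSi'$ and mapping to $y \in \Sht^{\un\mu}_{G}(\Sig;\Di)$ above $(x_{i}) = (\nu(x'_{i}))$. Set $T = \{i : x_{i} \in \Sig\}$ and $T^{c} = \{1,\ldots,r\} \setminus T$. The idea is to factor the base change along $\nu^{r}$ ``direction by direction'': for $i \in T^{c}$ we have $x_{i} \notin \Sig$, so Proposition~\ref{p:ShtG}(2) supplies smoothness of $\Pi^{\un\mu}_{G,\Di}$ in this direction, and base change of a smooth morphism by the (possibly ramified) map $\nu \colon X' \to X$ in the $i$-th direction preserves smoothness. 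For $i \in T$, although $\Pi^{\un\mu}_{G,\Di}$ need not be smooth in the $i$-th direction, the hypothesis $\Sig \cap R = \emptyset$ forces $\nu$ to be étale at $x'_{i}$, so the base change by $\nu$ in this direction is étale and preserves whatever structure was present. Concretely, from the local description in the proof of Proposition~\ref{p:Hk smooth}, étale-locally near $y$ the map in each $T$-direction is a ``monomial'' map $\mathbb{A}^{n} \to \mathbb{A}^{1}$ given by a product of coordinate functions; étale base change in $\mathbb{A}^{1}$ replaces the target coordinate by an étale one, yielding étale-locally the same monomial model over a smooth base of the same dimension. Combining these observations, $\Sht'^{\un\mu}_{G}(\Sig;\Di)$ is étale-locally smooth of dimension $rn$ at $y'$.

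The main technical obstacle is making the ``direction-by-direction'' analysis rigorous, since Proposition~\ref{p:ShtG}(2) provides smoothness of $\Pi^{\un\mu}_{G,\Di}$ over $(X-\Sig)^{r} \times \frSi$ jointly, not separately in each factor. The cleanest way to handle this is to factor $\nu^{r} \colon X'^{r} \to X^{r}$ as a sequence of $r$ base changes, each replacing one $X$-factor by $X'$, and show at each step that the resulting stack remains smooth: step $i$ uses either smooth base change (if the newly replaced direction lies in $T^{c}$) or étale base change (if it lies in $T$), applied to a stack already known to be smooth by the inductive hypothesis. Alternatively, one can argue via the cotangent complex: the hypotheses ensure that $\nu^{r}$ is transverse to $\Pi^{\un\mu}_{G,\Di}$ at every relevant point, by combining surjectivity of $d\Pi^{\un\mu}_{G,\Di}$ in the $T^{c}$-directions with bijectivity of $d\nu$ in the $T$-directions, so the fiber product is smooth of the expected dimension. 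Either approach yields that $\Sht'^{\un\mu}_{G}(\Sig;\Di)$ is a smooth DM stack of dimension $rn$.
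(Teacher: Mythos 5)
Your overall strategy (étale base change in the $\Sig$-directions, smooth base change elsewhere) is the right intuition, but there is a genuine gap at exactly the point you flag, and neither of your proposed repairs closes it. The input you need in a ``$T^{c}$-direction'' is that the projection from (a partial base change of) $\Sht^{\un\mu}_{G}(\Sig;\Di)$ to the $i$-th copy of $X$ is smooth near a point whose \emph{other} legs may lie in $\Sig$. Proposition \ref{p:ShtG}(2) only gives smoothness of $\Pi^{\un\mu}_{G,\Di}$ over $(X-\Sig)^{r}\times\frSi$, i.e.\ when \emph{all} legs avoid $\Sig$ simultaneously, so it does not supply ``smoothness in the $i$-th direction''. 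In your step-by-step induction, the inductive hypothesis ``the intermediate stack is smooth over $k$'' is too weak to run the smooth-base-change step: base-changing a $k$-smooth stack along $\nu$ at a point of $R$ can create singularities unless the projection to that $X$-factor is itself smooth there (compare $\{t^{2}=s^{2}\}\subset \AA^{2}$, the base change of $t\mapsto t^{2}$ along $s\mapsto s^{2}$), and that projection-smoothness is precisely what is unknown when another leg sits in $\Sig$ — the only case in which the proposition has real content. The transversality variant has the same problem: it presupposes surjectivity of $d\Pi^{\un\mu}_{G,\Di}$ onto the $R$-directions at such points, which is the same unproved claim; and the monomial local model you invoke is a statement about $\Hk^{\un\mu}_{n}(\Sig)$, not about $\Sht$, so it does not transfer for free.

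The missing statement is true, but proving it amounts to strengthening Proposition \ref{p:Hk smooth}(2) to a ``mixed-legs'' version — $(\wt p_{r},(x_{i})_{i\in S})$ is smooth over $\Bun_{n}(\Sig)\times(X-\Sig)^{S}$ for any subset $S$ of legs, which does follow from the iterated fiber-product structure of $\Hk^{\un\mu}_{n}(\Sig)$ together with parts (1) and (2) — and then transporting it to $\Sht$ via the étale-local identification with fibers of $q_{r}$ used in the proof of Proposition \ref{p:ShtG}. The paper avoids the issue entirely by never base-changing $\Sht$ directly: as in Proposition \ref{p:ShtG}(1) it reduces to showing $p'_{r}\colon \Hk^{\un\mu}_{G}(\Sig)\times_{X^{r}}X'^{r}\to\Bun_{G}(\Sig)$ is smooth of relative dimension $rn$, which by the iterative structure is an $r=1$ problem, where a $\Sig$-leg and an $R$-leg cannot coexist; one then covers $X'$ by $X'-R'$ (base change étale, apply Proposition \ref{p:Hk smooth}(1)) and $X'-\Sig'$ (apply Proposition \ref{p:Hk smooth}(2) and smooth base change), these two opens covering $X'$ precisely because $R\cap\Sig=\vn$. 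If you either prove the mixed-legs smoothness statement or switch to this Hecke-stack reduction, your argument goes through.
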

\begin{proof}
Only the smoothness of $\Sht'^{\un\mu}_{G}(\Sig;\Di)$ requires an argument. Let $\Hk'^{\un\mu}_{G}(\Sig)=\Hk^{\un\mu}_{G}(\Sig)\times_{X^{r}}X'^{r}$. As in the proof of Prop. \ref{p:ShtG}(1), we reduce to showing that  $p'_{r}: \Hk'^{\un\mu}_{G}(\Sig)\to \Bun_{G}(\Sig)$ is smooth of relative dimension $rn$. As in the proof of Prop. \ref{p:Hk smooth}, it suffices to treat the case $r=1$ and $\un\mu=1$. 

Let $R'$ be the ramification locus of $\nu$. Then $\Hk'^{\un\mu}_{G}(\Sig)|_{X'-R'}\to \Hk^{\un\mu}_{G}(\Sig)$ is \'etale, hence by Prop. \ref{p:Hk smooth}(1), $p'_{1}: \Hk'^{\un\mu}_{G}(\Sig)\to \Bun_{G}(\Sig)$ is smooth of relative dimension $n$ when restricted to $\Hk'^{\un\mu}_{G}(\Sig)|_{X'-R'}$. On the other hand, let $\Sig'=\nu^{-1}(\Sig)$. By Prop. \ref{p:Hk smooth}(2), $(p_{1}, \pi^{\un\mu}_{\Hk}):\Hk^{\un\mu}_{G}(\Sig)|_{X-\Sig}\to \Bun_{G}(\Sig)\times (X-\Sig)$ is smooth of relative dimension $n-1$. Base change along $\nu|_{X'-\Sig'}:X'-\Sig'\to X-\Sig$, we see that $\Hk^{\un\mu}_{G}(\Sig)|_{X'-\Sig'}\to \Bun_{G}(\Sig)\times (X'-\Sig')$ is smooth of relative dimension $n-1$, hence $p'_{1}$ is smooth of relative dimension $n$ when restricted to $\Hk'^{\un\mu}_{G}(\Sig)|_{X'-\Sig'}$. By assumption \eqref{R Sig disjoint}, $R'\cap \Sig'=\vn$ hence $X'-\Sig'$ and $X'-R'$ cover $X'$, we conclude that $p'_{1}$ is smooth of relative dimension $n$, which finishes the proof.
\end{proof}

\sss{Atkin--Lehner for $\Sht^{\un\mu}_{G}(\Sig;\Di)$}\label{sss:AL Sht}
For $x\in \Sig$, fractional twisting by $\frac{1}{n}x$ gives an automorphism of $\Bun_{n}(\Sig)$ and $\Hk^{\un\mu}_{n}(\Sig)$. By the diagram \eqref{Shtn}, we have an induced automorphism on $\Sht^{\un\mu}_{n}(\Sig;\Di)$
\begin{equation*}
\wt\AL_{\Sht,x}: \Sht^{\un\mu}_{n}(\Sig;\Di)\to \Sht^{\un\mu}_{n}(\Sig;\Di)
\end{equation*}
sending $(\cE^{\da}_{i},x_{i},\dotsc)$ to $(\cE^{\da}_{i}(-\frac{1}{n}x),x_{i},\dotsc)$. This also induces an automorphism on $\Sht^{\un\mu}_{G}(\Sig;\Di)$
\begin{equation*}
\AL_{\Sht,x}: \Sht^{\un\mu}_{G}(\Sig;\Di)\to \Sht^{\un\mu}_{G}(\Sig;\Di).
\end{equation*}

\sss{The case $n=2$ and a specific choice of $\Di$}\label{sss:ShtG}
We specialize to the case $n=2$ and hence $G=\PGL_{2}$.  Let $\sD_{\infty}$ be the group of $\ZZ$-valued divisors on $X\times\frSi$ supported on $\Si\times\frSi$, which is the union of the graphs of  $\bx^{(i)}$ for $x\in \Si$ and $1\le i\le d_{x}$. Let $\ha\sD_{\infty}=\ha\ZZ\ot_{\ZZ}\sD_{\infty}$. Then $\Sht^{\un\mu}_{G}(\Sig;\Di)$ is defined for $\Di\in  \ha\sD_{\infty}$ satisfying \eqref{comp mu Di} for $n=2$. As in \cite[Lemma 5.5]{YZ}, one can show that $\Hk^{\un\mu}_{G}(\Sig)$ is canonically independent of $\un\mu$. In this case we denote $\Hk^{\un\mu}_{G}(\Sig)$ by $\Hk^{r}_{G}(\Sig)$. This implies

\begin{lemma}\label{l:ShtG indep mu} For fixed $r$ and $\Di\in\ha\sD_{\infty}$, and any two $\un\mu, \un\mu'\in\{\pm1\}^{r}$ satisfying the same condition \eqref{comp mu Di}, there is a canonical isomorphism of stacks $\Sht^{\un\mu}_{G}(\Sig;\Di)\cong \Sht^{\un\mu'}_{G}(\Sig;\Di)$ over $X^{r}$.
\end{lemma}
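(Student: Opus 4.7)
The plan is to deduce the statement from the $\un\mu$-independence of the Hecke stack cited immediately before the lemma. Concretely, let $\iota_{\un\mu,\un\mu'} \colon \Hk^{\un\mu}_G(\Sig) \isom \Hk^{\un\mu'}_G(\Sig)$ denote the canonical isomorphism over $X^{r}$ (the Iwahori-level analogue of \cite[Lemma 5.5]{YZ}). The underlying geometric input, recalled for intuition, is that on $\PGL_{2}$-bundles an individual modification at a point $x_{i}$ encodes the projective datum of a line in the two-dimensional fiber, independently of the sign of $\mu_{i}$: swapping an upper and a lower modification amounts to tensoring by $\cO(x_{i})$, which is trivial on $\PGL_{2}$-bundles. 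Iterating this and passing to the quotient $\Hk^{\un\mu}_{2}(\Sig)/\Pic_{X}$ yields the desired sign-independent identification.

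First I would verify that $\iota_{\un\mu,\un\mu'}$ intertwines the two boundary maps appearing in the Cartesian square \eqref{Cart ShtG Di}, namely $p_{0}$ and $\AL(-\Di) \circ (p_{r} \times \id_{\frSi})$. Both intertwinings are essentially tautological from the construction of $\iota_{\un\mu,\un\mu'}$: the endpoints $\cE_{0}^{\da}$ and $\cE_{r}^{\da}$ in $\Bun_{G}(\Sig)$ are preserved, and the Atkin--Lehner twist $\AL(-\Di)$ depends only on $\Di$, not on the sign vector. The role of the compatibility condition \eqref{comp mu Di} is precisely to guarantee that both sides are nonempty in the same degree range when lifted to $\Sht_{2}$: the equality $\sum \mu_{i} = 2\deg\Di = \sum \mu_{i}'$ is exactly what allows an isomorphism $\cE_{r} \cong {}^{\tau}\cE_{0}(\Di)$ to exist in rank two, so the quotient by $\Pic_{X}$ produces $\Sht_{G}$ stacks of genuinely the same type.

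With these compatibilities in hand, the two instances of the Cartesian square \eqref{Cart ShtG Di} defining $\Sht^{\un\mu}_{G}(\Sig;\Di)$ and $\Sht^{\un\mu'}_{G}(\Sig;\Di)$ become isomorphic via $\iota_{\un\mu,\un\mu'}$ on the top-right vertex and the identity on the other three vertices; the universal property of pullbacks then produces the canonical isomorphism $\Sht^{\un\mu}_{G}(\Sig;\Di) \isom \Sht^{\un\mu'}_{G}(\Sig;\Di)$ over $X^{r} \times \frSi$, and a fortiori over $X^{r}$. There is no substantive obstacle once the Hecke-level identification is granted; the only delicate point is the degree-bookkeeping governed by \eqref{comp mu Di}, which I would pin down by reducing to elementary single-index sign flips between two positions — each flip preserving the total sum and corresponding to tensoring an intermediate $\cE_{j}$ by a line bundle, hence manifestly trivial on $\PGL_{2}$-bundles.
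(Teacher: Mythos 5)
Your proposal is correct and follows essentially the same route as the paper: the paper simply observes (as in \cite[Lemma 5.5]{YZ}) that $\Hk^{\un\mu}_{G}(\Sig)$ is canonically independent of $\un\mu$ and states that this implies the lemma, the point being exactly what you spell out — the canonical identification of Hecke stacks preserves the maps $p_{0}$ and $p_{r}$ to $\Bun_{G}(\Sig)$, while $\AL(-\Di)$ and the bottom row of \eqref{Cart ShtG Di} do not involve $\un\mu$, so the two Cartesian squares are identified. Your elaboration of the intertwining and the role of \eqref{comp mu Di} matches the intended argument, so there is nothing to correct.
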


Lemma \ref{l:Di mod Z} implies that $\Sht^{\un\mu}_{G}(\Sig;\Di)$ depends only on the image of $\Di$ in $\Div(\Si)\otimes\frac{1}{2}\ZZ/\ZZ$. We consider the following specific choice of $\Di$
\begin{equation*}
\Di^{(1)}=\sum_{x\in \Si}\ha \bx^{(1)}.
\end{equation*}

\begin{defn}\label{def:ShtGr} Assume $r$ satisfies the parity condition
\begin{equation}\label{parity}
r\equiv \#\Si\mod 2.
\end{equation}
Let $\un\mu=(\mu_{1},\dotsc,\mu_{r})\in\{\pm1\}^{r}$. For any $\Di\in\ha\sD_{\infty}$ such that
\begin{equation}\label{mu Di n=2}
\Di\equiv \Di^{(1)}\mod \sD_{\infty}, \quad \textup{ and }
\sum_{i=1}^{r}\mu_{i}=2\deg \Di,
\end{equation}
we define
\begin{equation*}
\Sht^{r}_{G}(\Sii):=\Sht^{\un\mu}_{G}(\Sig;\Di).
\end{equation*}
By Lemma \ref{l:ShtG indep mu} and Lemma \ref{l:Di mod Z}, this is independent of the choice of $\un\mu$ and $\Di$ satisfying the condition \eqref{mu Di n=2}, justifying the notation.
\end{defn}

We remark that the parity condition \eqref{parity} guarantees that for any $\un\mu\in\{\pm1\}^{r}$, the $\Di\in \ha\sD_{\infty}$ satisfying \eqref{mu Di n=2} exists.

We denote $\AL(-\Di^{(1)})$ simply by
\begin{equation}\label{ALG infty}
\AL_{G,\infty}:=\AL(-\Di^{(1)}): \Bun_{G}(\Sig)\times\frSi\to \Bun_{G}(\Sig).
\end{equation}
Then the diagram \eqref{Cart ShtG Di} becomes
\begin{equation}\label{Cart ShtG}
\xymatrix{\Sht_G^{r}(\Sii)\ar[rr]\ar[d]^{\om_{0}}&&
\Hk^{r}_{G}(\Sig)\times\frSi \ar[d]^{(p_{0},\AL_{G,\infty}\circ (p_{r}\times\id_{\frSi}))}\\
\Bun_G(\Sig)\ar[rr]^-{(\id, \Fr)}&& \Bun_G(\Sig)\times \Bun_G(\Sig)}
\end{equation}

For $\Di$ satisfying \eqref{mu Di n=2}, we denote the morphism $\Pi^{\un\mu}_{G,\Di}$ in \eqref{Pi mu} by
\begin{equation*}
\Pi^{r}_{G}=(\pi^{r}_{G}, \pi_{G,\infty}): \Sht^{r}_{G}(\Sii)\to X^{r}\times \frSi.
\end{equation*}

\subsection{Hecke symmetry}\label{ss:Hecke}

For the rest of the paper, we will use $G$ to denote $\PGL_{2}$. We will focus on the the stack $\Sht^{r}_{G}(\Sii)$ for $r$ and $\Si$ satisfying the parity condition \eqref{parity}.

\sss{Hecke correspondence}\label{sss:Hk action on Sht}
For $x\in |X-\Sig|$ let $\sH_{x}$ be the spherical Hecke algebra
\begin{equation*}
\sH_{x}=C_{c}(G(\cO_{x})\bs G(F_{x})/G(\cO_{x}),\QQ).
\end{equation*}
Let $\sH^{\Sig}_{G}=\otimes_{x\in |X-\Sig|}\sH_{x}$. Then $\sH^{\Sig}_{G}$ has a $\QQ$-basis $\{h_{D}\}$ indexed by effective divisors $D\in\Div^{+}(X-\Sig)$, where $h_{D}$ is defined in \cite[\S3.1]{YZ}.

Let $D$ be an effective divisor on $X-\Sig$. For $\un\mu\in\{\pm1\}^{r}$ and $\Di=\sum_{x\in\Si}c_{x}\bx^{(1)}$ as in Definition \ref{def:ShtGr}, we define a stack  $\Sht^{\un\mu}_{2}(\Sig;\Di;h_{D})$ whose  $S$-points classify the data
\begin{itemize}
\item Two objects $(\cE^{\da}_{i}, f_{i}, \io, \dotsc)$ and $(\cE'^{\da}_{i}, f'_{i}, \io',\dotsc)$ of $\Sht^{\un\mu}_{2}(\Sig;\Di)(S)$ which map to the same $S$-point of $(x_{1},\dotsc, x_{r}, \{x^{(1)}\})\in (X^{r}\times \frSi)(S)$;
\item For each $i = 0,1,\dotsc, r$, an embedding of coherent sheaves $\ph_{i}: \cE_{i}\to \cE'_{i}$ compatible with the Iwahori level structures,  such that $\det(\ph_{i}) :
\det(\cE_{i})\to\det(\cE'_{i})$ has divisor $D\times S\subset X\times S$, and such that the  following diagram is commutative
\begin{equation}\label{Sht hD diag}
\xymatrix{\cE_{0}\ar@{-->}[r]^{f_{1}}\ar[d]^{\ph_{0}} & \cE_{1}\ar@{-->}[r]^{f_{2}}\ar[d]^{\ph_{1}} & \cdots\ar@{-->}[r]^{f_{r}} & \cE_{r}\ar[d]^{\ph_{r}} \ar[r]^-{\io} & ({}^{\tau}\cE_{0})(\Di)\ar[d]^{{}^{\tau}\ph_{0}}       \\
\cE'_{0}\ar@{-->}[r]^{f'_{1}} & \cE'_{1}\ar@{-->}[r]^{f'_{2}} & \cdots\ar@{-->}[r]^{f'_{r}} & \cE'_{r} \ar[r]^-{\io'} & ({}^{\tau}\cE'_{0})(\Di)}
\end{equation}
\end{itemize} 
Let $\Sht^{r}_{G}(\Sii;h_{D})=\Sht^{\un\mu}_{2}(\Sig;\Di;h_{D})/\Pic_{X}(k)$, which is independent of the choice of $(\un\mu, \Di)$ as it is for $\Sht^{r}_{G}(\Sii)$.  Then $\Sht^{r}_{G}(\Sii;h_{D})$ can be viewed as a self-correspondence of $\Sht^{r}_{G}(\Sii)$ over $X^{r}\times\frSi$
\begin{equation}\label{ShthDpp}
\xymatrix{\Sht^{r}_{G}(\Sii) & \Sht^{r}_{G}(\Sii;h_{D})\ar[r]^{\orr{p}}\ar[l]_{\oll{p}} & \Sht^{r}_{G}(\Sii)}
\end{equation}
where the maps $\oll{p}$ and $\orr{p}$ record the first and the second row of \eqref{Sht hD diag}.

\begin{lemma}\label{l:fiber ShthD} Let $D$ be an effective divisor on $X-\Sig$.
\begin{enumerate}
\item The two maps $\oll{p},\orr{p}: \Sht^{r}_{G}(\Sii;h_{D})\to \Sht^{r}_{G}(\Sii)$ are representable and proper.
\item The restrictions of  $\oll{p}$ and $\orr{p}$ over $(X-D)^{r}$ are finite \'etale.
\item The fibers of $\Pi^{r}_{G}(h_{D}): \Sht^{r}_{G}(\Sii;h_{D})\to X^{r}\times\frSi$ all have dimension $r$.
\end{enumerate}
\end{lemma}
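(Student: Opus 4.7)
I will analyze the fibers of $\oll p$ (resp.\ $\orr p$) in terms of the auxiliary modification data along $D$.

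For (1), fix a Shtuka $(\cE^\da_\bullet, f_\bullet, \iota) \in \Sht^r_G(\Sii)(S)$. I claim its fiber under $\oll p$ is determined by the initial pair $(\cE'^\da_0, \phi_0: \cE_0 \to \cE'_0)$, with $\det(\phi_0)$ of divisor $D \times S$: because each $f_i$ is an isomorphism off $\Gamma_{x_i}$, the remaining $\cE'^\da_i$ and $f'_i$ are produced uniquely by propagating $\phi_0$ through the chain, and $\iota'$ is then forced by the commutativity of \eqref{Sht hD diag}. The condition that $\iota'$ extends to an isomorphism of $\Bun_G(\Sig)$-objects compatible with the Iwahori structure is closed. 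The moduli of $(\cE'^\da_0, \phi_0)$ with $\cE^\da_0$ fixed is a closed substack of the product, over $y \in \supp(D)$, of local Schubert strata in the affine Grassmannian, each of which is a projective scheme of finite type. Hence the fibers of $\oll p$ are projective schemes, so $\oll p$ (and similarly $\orr p$) is representable and proper.

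For (2), on the locus $(X-D)^r$ the legs $x_i$ avoid $\supp(D)$, so $\phi_0$ is an isomorphism near each $x_i$, and the propagation of (1) does not alter the modification type at $D$. The Frobenius compatibility imposed by $\iota$ and $\iota'$ translates, at each $y \in \supp(D)$, into the condition that $(\cE'_0, \phi_0)$ be fixed by a Frobenius endomorphism of the local Schubert variety there. Exactly as in the unramified situation handled in \cite[\S5]{YZ}, this is a Lang--Steinberg fixed locus on a smooth projective variety on which Frobenius acts without infinitesimal fixed points; hence it is finite \'etale over the base. The hypothesis $D \cap \Sig = \vn$ ensures that the Iwahori structures at $\Sig$ are preserved automatically and do not enter the local analysis at $D$.

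For (3), combining (1) with (2) and Proposition~\ref{p:ShtG}(2) shows that the fibers of $\Pi^r_G(h_D)$ are of dimension $r$ over $(X-D)^r \times \frSi$. To propagate this to the closed locus where legs meet $D$, I would embed $\Sht^r_G(\Sii; h_D)$ into an enlarged Shtuka stack with $2\deg D$ additional modification legs placed at fixed points of $\supp(D)$ (following the pattern of Lemma~\ref{l:more legs}). The enlarged stack is smooth with a map to $X^{r+2\deg D} \times \frSi$ whose fibers are pure of dimension $r + 2\deg D$ by Proposition~\ref{p:ShtG}(2), and restricting the extra legs to their fixed positions imposes $2\deg D$ independent conditions, leaving fibers of $\Pi^r_G(h_D)$ pure of dimension $r$ everywhere.

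The most delicate step is (2): ensuring that the Lang--Steinberg argument applies cleanly in the presence of the fractional twist by $\Di$ and the Iwahori level structures. Although the disjointness $D \cap \Sig = \vn$ removes any direct interaction with the Iwahori structures, one must still verify that Frobenius on the relevant Schubert variety at each $y \in \supp(D)$ has no infinitesimal fixed points, which for $G = \PGL_2$ reduces to the standard statement on $\PP^1$-type modifications.
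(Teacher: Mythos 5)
Your part (1) rests on a claim that is false: the diagram \eqref{Sht hD diag} is \emph{not} determined by the top row together with $(\cE'^{\da}_{0},\ph_{0})$ when a leg $x_{i}$ meets $\supp(D)$. A local check already shows this for $r=1$: take $x_{1}=y\in\supp(D)$ with multiplicity one, $\ph_{0}=\diag(t,1)$ in local trivializations ($t$ a uniformizer at $y$), and let $\cE_{1}$ be the upper modification of $\cE_{0}$ along the first basis vector; then $\ph_{0}(\cE_{1})=\cE'_{0}$, and \emph{every} upper modification $\cE'_{1}\supset\cE'_{0}$ at $y$ is compatible with the required conditions, giving a whole $\PP^{1}$ of continuations. (This non-uniqueness is precisely why (2) is restricted to $(X-D)^{r}$ and why (3) is not a formal consequence of (1)+(2).) The properness conclusion of (1) survives, but only if you parametrize the entire bottom row, e.g.\ by embedding the fiber of $\orr{p}$ (or $\oll{p}$) as a closed substack of the product of the Quot-schemes $\Quot^{D}_{X\times S/S}(\cE'_{i})$ over all $i$, which is what the paper does. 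Your part (2) is essentially right in spirit: over $(X-D)^{r}$ the restriction of the data to $D$ descends along Frobenius, and the paper's formulation via the finite \'etale groupoid $L_{D}$ over $\BB G_{D}$ is the same mechanism as your fixed-point argument; here the unique-propagation claim is valid because $\ph_{0}$ is an isomorphism near each leg.

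The serious gap is in (3). First, $\Sht^{r}_{G}(\Sii;h_{D})$ is not a $G$-Shtuka stack with $2\deg D$ extra legs: the datum consists of \emph{two} Shtuka structures ($\io$ and $\io'$) linked by the $\ph_{i}$, and the composite chain $\cE_{0}\to\cE'_{0}\dashrightarrow\cdots\dashrightarrow\cE'_{r}\to{}^{\tau}\cE'_{0}(\Di)$ does not end at the Frobenius twist of its own starting bundle, so it is not a Shtuka datum; Lemma \ref{l:more legs} converts fractional twists at $\Si$ into honest legs, which is a different phenomenon and does not apply here. Second, even granting some smooth enlarged stack, Proposition \ref{p:ShtG}(2) only gives smoothness of the projection over $(X-\Sig)^{r}\times\frSi$, and in any case smoothness of a total space only bounds fiber dimensions from \emph{below}; the assertion that fixing the extra legs ``imposes $2\deg D$ independent conditions'' is exactly the upper bound on fiber dimension at configurations where legs meet $D$ that has to be proved, so your argument begs the question at its crucial point. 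The paper handles this by introducing $H_{D}(\Sig)$ and $\Hk^{r}_{H,D}(\Sig)$, restricting to the open locus where $\ph$ does not factor through $\cE'(-x)$ for $x\in D$, proving $H^{\na}_{D}(\Sig)\to\Bun_{G}(\Sig)$ is smooth, using the \'etale-local triviality of $p_{r}$ to identify the Shtuka fiber with a fiber of $p_{r}$, and reducing to $r=1$, where the case $x_{1}\in D$ is settled by the explicit one-dimensionality computation of \cite[6.4.4]{YZ}. Some substitute for that computation is indispensable and is absent from your proposal.
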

\begin{proof}
(1) For a rank two vector bundle $\cE$ over $X\times S$, let $\Quot^{D}_{X\times S/S}(\cE)$ be the $S$-scheme classifying quotients $\cE\surj \cQ$, flat over $S$ and with divisor $D$ (namely for every geometric point $s\in S$, $\cQ|_{s}$ is a torsion sheaf on $X\times s$ with length $n_{x}$ at $x\times s$ for any $x\in |X|$, where $n_{x}$ is the coefficient of $x$ in $D$). Then $\Quot^{D}_{X\times S/S}(\cE)$ is a closed subscheme of the Quot-scheme of $\cE$, hence projective over $S$. The fiber of $\orr{p}$ over any point $(\cE'^{\da}_{i}, x_{i}, f'_{i},\io')\in \Sht^{r}_{G}(\Sii)(S)$ is a closed subscheme of $\Quot^{D}_{X\times S/S}(\cE'_{1})\times_{S}\Quot^{D}_{X\times S/S}(\cE'_{2})\times\cdots\times_{S}\Quot^{D}_{X\times S/S}(\cE'_{r})$, hence projective over $S$. This shows that $\orr{p}$ are representable and proper. The argument for $\oll{p}$ is similar.

(2) When $(\cE^{\da}_{i}, x_{i}, f_{i},\io)\in \Sht^{r}_{G}(\Sii)(S)$ and $x_{i}$ are disjoint from $D$ (which is assumed to be disjoint from $\Sig$), the restriction $\cE|_{D\times S}$ carries a Frobenius structure $\io|_{D\times S}: \cE|_{D\times S}\isom {}^{\tau}\cE|_{D\times S}$, and hence descends to a $G_{D}$-torsor $\cE_{D}$ over $S$, with $G_{D}=\Res^{\cO_{D}}_{k}G$ the Weil restriction. Recording this $G_{D}$-torsor  defines a map
\begin{equation*}
\om_{D}: \Sht^{r}_{G}(\Sii)|_{(X-D)^{r}}\to  \BB G_{D}.
\end{equation*}
Let $\wt L_{D}$ be the moduli stack whose $S$-points are triples $(\cF_{D}, \cF'_{D}, \ph_{D})$ where $\cF_{D}, \cF'_{D}$ are lisse sheaves over $S$ that are locally free $\cO_{D}$-modules of rank two, and $\ph_{D}: \cF_{D}\to \cF'_{D}$ is an $\cO_{D}$-linear map whose cokernel at each geometric point of $S$ has divisor $D$ (i.e., if $D=\sum_{x} n_{x}x$, then the cokernel as an $\cO_{D}$-module has length $n_{x}$ when localized at  $x$). Let $L_{D}=\wt L_{D}/\BB\cO^{\times}_{D}$ where $\BB\cO^{\times}_{D}$ acts by simultaneously tensoring. The stack $L_{D}$ itself is the quotient of a finite discrete scheme over $k$ by a finite group, hence is finite \'etale over $k$, and it has two maps to $\BB G_{D}$ recording $\cF_{D}$ and $\cF'_{D}$
\begin{equation*}
\xymatrix{ \BB G_{D} &   L_{D}\ar[l]_{ \oll{\ell} }\ar[r]^{\orr{\ell}} &  \BB G_{D} }
\end{equation*}
which are also finite \'etale.

There is a natural map
\begin{equation*}
\wt\om_{D}: \Sht^{r}_{G}(\Sii;h_{D})|_{(X-D)^{r}}\to L_{D}.
\end{equation*}
In fact, each point $(\cE^{\da}_{i}, x_{i},\dotsc, \cE'^{\da}_{i},\dotsc, \ph_{i})\in \Sht^{r}_{G}(\Sii;h_{D})(S)$  gives a pair of $G_{D}$-torsors $\cE_{D}$ and $\cE'_{D}$  over $S$.  If we lift $\cE_{i}$ and $\cE'_{i}$ to rank two vector bundles on $X\times S$, $\cE_{D}$ and $\cE'_{D}$ have associated  $\cO_{D}^{\oplus2}$-torsors $\cF_{D}$ and $\cF'_{D}$ over $S$, well-defined up to simultaneous twisting by $\cO_{D}^{\times}$-torsors on $S$. The $\ph_{i}$  then induces an $\cO_{D}$-linear map $\ph_{D}: \cE_{D}\to \cE'_{D}$ whose cokernel has divisor $D$. 

When the points $x_{i}$ are disjoint from $D$, knowing the top row (or the bottom row) of \eqref{Sht hD diag} and any of the vertical arrows recovers the whole diagram. Any vertical arrow $\ph_{i}: \cE_{i}\to \cE'_{i}$ is in turn determined by $\cE_{i}$ (or $\cE'_{i}$) together with its image in $L_{D}$. Therefore,  the whole diagram is uniquely determined by  the top row (or the bottom row) and its image in $L_{D}$. Moreover, since $D$ is disjoint from $\Sig$, the level structures of the top row determines that of the bottom row, and vice versa. This shows the two squares below are Cartesian
\begin{equation*}
\xymatrix{ \Sht^{r}_{G}(\Sii)|_{(X-D)^{r}}     \ar[d]^{\om_{D}}  &    \Sht^{r}_{G}(\Sii;h_{D})|_{(X-D)^{r}}\ar[l]_{\oll{p}}\ar[r]^{\orr{p}} \ar[d]^{\wt\om_{D}} & \Sht^{r}_{G}(\Sii)|_{(X-D)^{r}}\ar[d]^{\om_{D}}\\
 \BB G_{D} &   L_{D}\ar[l]_{ \oll{\ell} }\ar[r]^{\orr{\ell}} &  \BB G_{D}
}
\end{equation*}
This implies that both $\oll{p}$ and $\orr{p}$ are finite \'etale, because the maps $\oll{\ell}$ and $\orr{\ell}$ are.

(3) The argument is similar to that of \cite[Lemma 5.9]{YZ}, so we only give a sketch.

Fix a geometric point $\un x=(x_{1},\dotsc, x_{r})\in X^{r}$, and we will show that the fiber $\Sht^{r}_{G}(\Sii;h_{D})_{\un x}$ has dimension $r$.  We introduce the moduli stack $H_{D}(\Sig)$ classifying $(\cE^{\da},\cE'^{\da}, \ph)$ up to the action of $\Pic_{X}$, where $\cE^{\da}, \cE'^{\da}\in \Bun_{2}(\Sig)$ and $\ph:\cE\to\cE'$ is an injective map with divisor $D$. Let $\Hk^{r}_{H, D}(\Sig)$ classify diagrams
\begin{equation}\label{EE'ph}
\xymatrix{\cE_{0}\ar@{-->}[r]^{f_{1}}\ar[d]^{\ph_{0}} & \cE_{1}\ar@{-->}[r]^{f_{1}}\ar[d]^{\ph_{1}}  & \cdots \ar@{-->}[r]^{f_{r}} &  \cE_{r}\ar[d]^{\ph_{r}}\\
\cE_{0}'\ar@{-->}[r]^{f'_{1}} & \cE_{1}'\ar@{-->}[r]^{f'_{2}} & \cdots \ar@{-->}[r]^{f'_{r}}& \cE_{r}'
}
\end{equation}
satisfying the same conditions as the diagram \eqref{Sht hD diag} without the last column, modulo simultaneous tensoring by $\Pic_{X}$. We have a Cartesian diagram
\begin{equation*}
\xymatrix{     \Sht^{r}_{G}(\Sii;h_{D})_{\un x}\ar[rr]\ar[d]  &   & \Hk^{r}_{H, D}(\Sig)_{\un x}\ar[d]^{(p_{0}, p_{r})}\\
H_{D}(\Sig)\times\frSi\ar[rr]^-{(\id, \AL_{H,\infty}\circ\Fr)} && H_{D}(\Sig)\times H_{D}(\Sig)}
\end{equation*}
Here $\AL_{H,\infty}: H_{D}(\Sig)\times\frSi\to H_{D}(\Sig)$ is given by applying $\AL_{G,\infty}$ to both $\cE^{\da}$ and $\cE'^{\da}$. The stacks $H_{D}(\Sig)$ and $\Hk^{r}_{H,D}(\Sig)$ will turn out to be fibers of the stacks $H_{d}(\Sig)$ and $\Hk^{r}_{H,d}(\Sig)$ over $D\in X_{d}$, to be introduced in \S\ref{sss:Hd}.

We introduce the analog $H^{\na}_{D}(\Sig)$ of the $H_{D,D}$ introduced in \cite[6.4.4]{YZ}, which is an open substack of $H_{D}(\Sig)$ where $\ph$ does not land in $\cE'(-x)$ for any $x\in D$. We claim that the map $H^{\na}_{D}(\Sig)\to \Bun_{G}(\Sig)$  sending $(\cE^{\da},\cE'^{\da}, \ph)$ to $\cE'^{\da}$ is smooth. Indeed, its fiber over $\cE'^{\da}\in \Bun_{G}(\Sig)(S)$ is $\Res^{D\times S}_{S}(\PP_{D\times S}(\cE'_{D\times S}))$, the restriction of scalars of the projectivization of the rank two bundle $\cE'_{D\times S}$ over $D\times S$ (the $\Sig$-level structure on $\cE^{\da}$ is automatically inherited from $\cE'^{\da}$, since $D$ is disjoint from $\Sig$). In particular, $H^{\na}_{D}(\Sig)$ is smooth over $k$.

Similarly we introduce the open substack $\Hk^{r,\na}_{H,D}(\Sig)_{\un x}\subset \Hk^{r}_{H,D}(\Sig)_{\un x}$ by requiring each column of \eqref{EE'ph} to be in $H^{\na}_{D}(\Sig)$. We define  the open substack $\Sht^{r,\na}_{G}(\Sii;h_{D})_{\un x}\subset \Sht^{r}_{G}(\Sii;h_{D})_ {\un x}$ to fit into a Cartesian diagram
\begin{equation*}
\xymatrix{     \Sht^{r,\na}_{G}(\Sii;h_{D})_{\un x}\ar[rr]\ar[d]  &   & \Hk^{r,\na}_{H,D}(\Sig)_{\un x}\ar[d]^{(p_{0}, p_{r})}\\
H^{\na}_{D}(\Sig)\times\frSi\ar[rr]^-{(\id, \AL_{H,\infty}\circ\Fr)} && H^{\na}_{D}(\Sig)\times H^{\na}_{D}(\Sig)}
\end{equation*}
As in \cite[6.4.4]{YZ}, it suffices to show that $\dim\Sht^{r,\na}_{G}(\Sii;h_{D})_{\un x}=r$. As in the case without level structures, $p_{r}: \Hk^{r,\na}_{H,D}(\Sig)_{\un x}\to H^{\na}_{D}(\Sig)$ is an \'etale locally trivial fibration. Using a slight variant of \cite[Lemme 2.13]{VL}, $\Sht^{r,\na}_{G}(\Sii;h_{D})_{\un x}$ is \'etale locally isomorphic to a fiber of $p_{r}$. It remains to show that the geometric fibers of $p_{r}$ have dimension $r$. The iterative nature of $\Hk^{r,\na}_{H,D}(\Sig)_{\un x}$ allows us to reduce to the case $r=1$. 

First consider the case $x_{1}\notin D$. Then the diagram \eqref{EE'ph} is determined by its top row and the last column, which means that the fibers of $p_{1}$ are the same as the fibers of $p_{1}: \Hk^{1}_{G}(\Sig)_{x_{1}}\to \Bun_{G}(\Sig)$, which are $1$-dimensional  by Prop. \ref{p:Hk smooth}(3). 

Next consider the case $x_{1}\in D$. Since $\Sig$ is disjoint from $D$, the  Iwahori level structures along $\Sig$ of $\cE_{1}$ and  $\cE'_{1}$ uniquely determine the Iwahori level structures along $\Sig$ of all bundles in the diagram \eqref{EE'ph}. Thus the fibers of $p_{1}$ are the same as the fibers of $p_{1}:\Hk^{1,\na}_{H,D,\un x}\to  H^{\na}_{D}$ (the version without level structure); this latter map was denoted $\Hk^{1}_{D,D,\un x}\to H_{D,D}$ in \cite[6.4.4]{YZ} and in the last paragraph of \cite[6.4.4]{YZ} it was shown that its fibers are $1$-dimensional. We are done. 
\end{proof}

\sss{Hecke symmetry on the Chow group}\label{sss:Hk Chow}

Using the dimension calculation in Lemma \ref{l:fiber ShthD}, the same argument as in \cite[Prop 5.10]{YZ} proves the following result.

\begin{prop}\label{p:Hk Chow action} The assignment
\begin{equation*}
h_{D}\mapsto (\oll{p}\times\orr{p})_{*}[\Sht^{r}_{G}(\Sii;h_{D})]
\end{equation*}
extends linearly to a ring homomorphism
\begin{eqnarray*}
\sH^{\Sig}_{G}&\to& {}_{c}\Ch_{2r}(\Sht^{r}_{G}(\Sii)\times\Sht^{r}_{G}(\Sii))_{\QQ}.
\end{eqnarray*}
In particular, we get an action of $\sH^{\Sig}_{G}$ on the Chow group of proper cycles $\Ch_{c,*}(\Sht^{r}_{G}(\Sii))_{\QQ}$.
\end{prop}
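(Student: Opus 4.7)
The plan is to follow the argument of \cite[Prop.~5.10]{YZ} adapted to the Iwahori level setting. There are two things to verify: (i) for each effective divisor $D$ on $X-\Sig$, the cycle $(\oll p\times\orr p)_*[\Sht^r_G(\Sii;h_D)]$ is a well-defined element of $_c\Ch_{2r}(\Sht^r_G(\Sii)\times\Sht^r_G(\Sii))_{\QQ}$; and (ii) the assignment is multiplicative, namely the composition of correspondences attached to $h_{D_1}$ and $h_{D_2}$ equals the correspondence attached to the convolution $h_{D_1}\ast h_{D_2}$ in $\sH^{\Sig}_G$.

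For (i): Lemma \ref{l:fiber ShthD}(1) says $\oll p$ and $\orr p$ are representable and proper, so $\oll p\times\orr p$ is proper and the push-forward lies in the proper Chow group. To check the dimension, Lemma \ref{l:fiber ShthD}(3) shows that every fiber of $\Pi^r_G(h_D)\colon\Sht^r_G(\Sii;h_D)\to X^r\times\frSi$ has dimension $r$; since $X^r\times\frSi$ is equidimensional of dimension $r$, $\Sht^r_G(\Sii;h_D)$ has pure dimension $2r$. Moreover Lemma \ref{l:fiber ShthD}(2) ensures that the top-dimensional part is supported on the open locus over $(X-D)^r$ where both projections are finite \'etale, so $[\Sht^r_G(\Sii;h_D)]$ makes sense as a cycle class of the correct dimension.

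For (ii): Introduce the double Hecke stack $\Sht^r_G(\Sii;h_{D_1},h_{D_2})$ classifying diagrams of the form
\begin{equation*}
\cE^\da_{\bullet,0}\xrightarrow{\ph_1}\cE^\da_{\bullet,1}\xrightarrow{\ph_2}\cE^\da_{\bullet,2},
\end{equation*}
where each row is an object of $\Sht^r_G(\Sii)$, $\det\ph_j$ has divisor $D_j$, and the $\ph_j$'s preserve the Iwahori level structures at $\Sig$ and commute with the Frobenius/Atkin--Lehner twist at the last step as in \eqref{Sht hD diag}. The natural morphism $\Sht^r_G(\Sii;h_{D_1},h_{D_2})\to\Sht^r_G(\Sii;h_{D_1})\times_{\Sht^r_G(\Sii)}\Sht^r_G(\Sii;h_{D_2})$ is an isomorphism. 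One then stratifies $\Sht^r_G(\Sii;h_{D_1},h_{D_2})$ by the elementary divisor type of the composite $\ph_2\circ\ph_1$, equivalently by double cosets in $G(\CO_x)\bs G(F_x)/G(\CO_x)$ at each $x\in\supp(D_1)\cup\supp(D_2)$. On the Hecke algebra side, Satake's formula gives the convolution $h_{D_1}\ast h_{D_2}=\sum_D c^D_{D_1,D_2}h_D$ with the same structure constants, and matching stratum by stratum over the open locus where the legs avoid $\supp(D_1)\cup\supp(D_2)$ identifies the composition of correspondences with $\sum_D c^D_{D_1,D_2}\,(\oll p\times\orr p)_*[\Sht^r_G(\Sii;h_D)]$.

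The main technical point will be establishing the multiplicativity in (ii), specifically matching multiplicities in the stratification. Since $\supp(D_1)\cup\supp(D_2)$ is disjoint from $\Sig$, the Iwahori level structures are transported automatically through each Hecke modification and do not complicate the stratification compared to the unramified setting of \cite{YZ}. Similarly, the supersingular-leg data at $\Si$ and the Atkin--Lehner twist $\AL_{G,\infty}$ entering the definition of $\Sht^r_G(\Sii)$ are compatible with both $\oll p$ and $\orr p$ (both of which lie over $X^r\times\frSi$), so they pass through the composition transparently. With these observations the proof reduces almost verbatim to that of \cite[Prop.~5.10]{YZ}.
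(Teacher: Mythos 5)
Your proposal is correct and takes essentially the same route as the paper, which proves this proposition by simply combining the dimension/properness statements of Lemma \ref{l:fiber ShthD} with the argument of \cite[Prop 5.10]{YZ}; your added observations that $D$ is disjoint from $\Sig$ (so the Iwahori structures transport through the modifications) and that the data at $\Si$ and the Atkin--Lehner twist are compatible with both projections are precisely what makes that transfer work. One small imprecision: the fiber bound of Lemma \ref{l:fiber ShthD}(3) gives only $\dim\le 2r$, with the $2r$-dimensional components lying over $(X-D)^{r}$ where the projections are finite \'etale, rather than literal pure dimension $2r$ -- but this is all the argument needs.
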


\sss{Hecke symmetry on cohomology}\label{sss:Hk coho} 
We shall define an action of $\sH^{\Sig}_{G}$ on $\cohoc{*}{\Sht^{r}_{G}(\Sii)\ot\kbar,\Ql}$ following the strategy in \cite[7.1.4]{YZ}. For this we need a presentation of $\Sht^{r}_{G}(\Sii)$ as an increasing union of open substacks of finite type. Here we are satisfied with a minimal version of such a presentation, and we postpone a more refined version to \S\ref{ss:horo}. For $N\ge0$ we define ${}^{\le N}\Sht$ to be the open substack of $\Sht^{r}_{G}(\Sii)$ consisting of those $(\cE^{\da}_{i};\dotsc)$ where $\inst(\cE_{0})\le N$. Since the forgetful map $\Sht^{r}_{G}(\Sii)\to \Bun_{G}$ recording $\cE_{0}$ is of finite type, ${}^{\le N}\Sht$ is of finite type over $k$. As $N$ increases, $\Sht^{r}_{G}(\Sii)$ is the union of the increasing sequence of open substacks ${}^{\le N}\Sht$. 

With the finite-type open substacks ${}^{\le N}\Sht$, we can copy the construction of \cite[7.1.4]{YZ} by first defining the action of $h_{D}$ as a map $\bR\pi_{\le N,!}\Ql\to \bR\pi_{\le N',!}\Ql$ (where $\pi_{\le N}: {}^{\le N}\Sht\to X^{r}\times \frSi)$ for $N'-N\ge\deg D$, and then pass to cohomology and pass to inductive limits. Using the dimension calculation in Lemma \ref{l:fiber ShthD}(3), the same argument as in \cite[Prop. 7.1]{YZ} shows

\begin{prop}\label{p:Hk coho action} The assignment $h_{D}\mapsto C(h_{D})$, extended linearly, defines an action of $\sH^{\Sig}_{G}\ot\Ql$ on $\cohoc{i}{\Sht^{r}_{G}(\Sii)\ot\kbar,\Ql}$ for each $i\in\ZZ$.
\end{prop}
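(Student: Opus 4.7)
The plan follows \cite[\S7.1.4, Prop.~7.1]{YZ} very closely, with the finite-type exhaustion $\{{}^{\le N}\Sht\}_{N\ge 0}$ introduced just before the proposition playing the role of the analogous filtration in \emph{loc. cit.} Write $\pi_{\le N}:{}^{\le N}\Sht\to X^r\times\frSi$ for the restriction of $\Pi^r_G$, and for each effective $D\in\Div^+(X-\Sig)$ let $\oll p,\orr p:\Sht^r_G(\Sii;h_D)\rightrightarrows\Sht^r_G(\Sii)$ be the Hecke correspondence of \S\ref{sss:Hk action on Sht}.

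First I would construct, for each $N\ge 0$ and each $D$, a morphism
\[
C_N(h_D)\colon\bR\pi_{\le N,!}\Ql\lra\bR\pi_{\le N',!}\Ql
\]
in the derived category of constructible $\Ql$-sheaves on $X^r\times\frSi$, with $N'$ depending linearly on $N+\deg D$. Restrict the correspondence to the preimage ${}^{\le N}\Sht(h_D):=\oll p^{-1}({}^{\le N}\Sht)$, which is of finite type because $\oll p$ is representable and proper by Lemma~\ref{l:fiber ShthD}(1); a direct estimate of how a length-$\deg D$ modification can alter the instability of $\cE_0$ yields a constant $c$ with $\orr p({}^{\le N}\Sht(h_D))\subset{}^{\le N+c\deg D}\Sht$. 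Then define $C_N(h_D)$ by the usual push-pull prescription: pull back along $\oll p_N$ (using properness together with proper base change) and push forward by the resulting $\orr p_N$ via the natural trace map, with the standard normalization attached to $h_D$. The fiber-dimension bound in Lemma~\ref{l:fiber ShthD}(3) ensures that the trace map does not introduce any cohomological shift, so $C_N(h_D)$ descends to an operator on each $\bR^i\pi_{\le N,!}\Ql$ for $i\in\ZZ$.

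Next I would pass to hypercohomology over $X^r\times\frSi\otimes\kbar$, producing maps
\[
\cohoc{i}{{}^{\le N}\Sht\otimes\kbar,\Ql}\lra\cohoc{i}{{}^{\le N'}\Sht\otimes\kbar,\Ql}
\]
compatible with the transition maps induced by the open inclusions ${}^{\le N}\Sht\hookrightarrow{}^{\le M}\Sht$ for $M\ge N$. Taking the colimit in $N$ yields the desired endomorphism $C(h_D)$ of $\cohoc{i}{\Sht^r_G(\Sii)\otimes\kbar,\Ql}$. Multiplicativity $C(h_D)\circ C(h_{D'})=C(h_D\ast h_{D'})$ then reduces, via the standard identification of the composed correspondence $\Sht^r_G(\Sii;h_D)\times_{\Sht^r_G(\Sii)}\Sht^r_G(\Sii;h_{D'})$ with a moduli of paired chains of modifications stratified by the intermediate bundle, to the convolution identity in $\sH^{\Sig}_G$, exactly as in \cite[Prop.~7.1]{YZ}.

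The main obstacle is securing the uniform instability bound so that $N'$ can be chosen to depend linearly on $N+\deg D$ (with a constant $c$ independent of $N$ and $D$); without it the colimit procedure would produce only an ind-endomorphism rather than a genuine endomorphism of $\cohoc{i}{\Sht^r_G(\Sii)\otimes\kbar,\Ql}$. Once this bound is in hand, the remaining steps are formal consequences of the dimension estimate in Lemma~\ref{l:fiber ShthD}(3), proper base change, and the inductive-limit formalism already developed in \cite[\S7.1]{YZ}, with the extra factor $\frSi$ appearing only as an inert finite base.
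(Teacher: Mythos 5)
Your proposal is correct and follows essentially the same route as the paper, which itself simply invokes the construction of \cite[7.1.4, Prop.~7.1]{YZ}: define $h_D$ on the truncations as a map $\bR\pi_{\le N,!}\Ql\to\bR\pi_{\le N',!}\Ql$ with $N'-N\ge\deg D$ (your instability estimate, with $c=1$), pass to cohomology and inductive limits, and use Lemma~\ref{l:fiber ShthD}(3) plus the argument of \cite[Prop.~7.1]{YZ} for well-definedness and multiplicativity. No substantive differences to report.
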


The following two results are analogues of \cite[Lemma 5.12, Lemma 7.2, and Lemma 7.3]{YZ}, with the same proofs.

\begin{lemma}\label{l:self adjoint} Let $f\in \sH^{\Sig}_{G}$. Then the action of $f$ on the Chow group $\Ch_{c,*}(\Sht^{r}_{G}(\Sii))_{\QQ}$ (resp. on the cohomology $\cohoc{2r}{\Sht^{r}_{G}(\Sii)\ot\kbar,\Ql}(r)$) is self-adjoint with respect to the intersection pairing (resp. cup product pairing).  
\end{lemma}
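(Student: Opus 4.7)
The plan is to reduce self-adjointness to a symmetry of the Hecke correspondence $\Sht^{r}_{G}(\Sii;h_{D})$ under an involution that swaps its two projections $\oll{p}$ and $\orr{p}$. By linearity in $f$, it is enough to prove the claim for each basis element $f = h_{D}$, $D \in \Div^{+}(X-\Sig)$.

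The first step is to construct a canonical involution
\[
\tau_{D}: \Sht^{r}_{G}(\Sii;h_{D}) \isom \Sht^{r}_{G}(\Sii;h_{D})
\]
satisfying $\oll{p} \circ \tau_{D} = \orr{p}$ and $\orr{p} \circ \tau_{D} = \oll{p}$. For rank-two bundles this is given by the adjugate: an $S$-point of the form \eqref{Sht hD diag} with vertical arrows $\ph_{i} : \cE_{i} \to \cE'_{i}$ of determinant divisor $D$ is sent to the diagram obtained by replacing each $\ph_{i}$ by $\ph_{i}^{\mathrm{adj}} : \cE'_{i} \to \cE_{i} \otimes \cO(D)$, which again has determinant divisor $D$ (since $\det \mathrm{adj}(\ph_{i}) = \det(\ph_{i})$ for $2 \times 2$ matrices). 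After descending to $\PGL_{2}$ (that is, modding out by $\Pic_{X}$), the twist by $\cO(D)$ is absorbed and the two rows of \eqref{Sht hD diag} are interchanged. Since $D$ is disjoint from $\Sig$, the Iwahori level structures along $\Sig$ are preserved automatically; compatibility with the modifications $f_{i}, f'_{i}$ and with the Frobenius identifications $\io, \io'$ is functorial. One checks $\tau_{D}^{2} = \id$ and the required equality of projections directly.

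For the Chow group statement, the identity $\oll{p} \circ \tau_{D} = \orr{p}$ shows that the cycle class $(\oll{p} \times \orr{p})_{\ast}[\Sht^{r}_{G}(\Sii;h_{D})]$ in ${}_{c}\Ch_{2r}(\Sht^{r}_{G}(\Sii) \times \Sht^{r}_{G}(\Sii))_{\QQ}$ is invariant under the swap of the two factors. Self-adjointness of $h_{D}$ on $\Ch_{c,*}(\Sht^{r}_{G}(\Sii))_{\QQ}$ with respect to the intersection pairing then follows from the standard formalism of correspondences. For cohomology, properness of $\oll{p}$ and $\orr{p}$ (Lemma \ref{l:fiber ShthD}(1)) lets us realize $C(h_{D}) = \orr{p}_{!} \oll{p}^{*}$ on $\cohoc{\ast}{\Sht^{r}_{G}(\Sii) \otimes \kbar, \Ql}$, and the chain of identities
\[
C(h_{D})^{t} = \oll{p}_{!} \orr{p}^{*} = (\oll{p} \circ \tau_{D})_{!}(\orr{p} \circ \tau_{D})^{*} = \orr{p}_{!}\oll{p}^{*} = C(h_{D})
\]
gives self-adjointness with respect to the cup product.

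The main technical nuisance will be the cohomological case, where $C(h_{D})$ was defined through the finite-type approximation $\{{}^{\le N}\Sht\}$, and the involution $\tau_{D}$ need not preserve the instability filtration on $\Sht^{r}_{G}(\Sii;h_{D})$. This is handled by the observation that both $\oll{p}$ and $\orr{p}$ are proper, so for any pair of classes in $\cohoc{2r}{{}^{\le N}\Sht}$ and $\cohoc{2r}{{}^{\le M}\Sht}$ respectively, the cup product is realized on any ${}^{\le N''}\Sht$ with $N''$ sufficiently large relative to $N, M$ and $\deg D$; the involution $\tau_{D}$ descends to an isomorphism between the preimages under $\oll{p}, \orr{p}$ of such ${}^{\le N''}\Sht$, so the above string of equalities can be carried out at a finite-type level and then passed to the inductive limit.
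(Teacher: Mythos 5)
Your proof is correct and takes essentially the same route as the paper, which defers to \cite[Lemma 5.12, Lemma 7.2, Lemma 7.3]{YZ}: the key point there is likewise the involution of the Hecke correspondence $\Sht^{r}_{G}(\Sii;h_{D})$ obtained from the transpose/adjugate of the vertical maps $\ph_{i}$, interchanging $\oll{p}$ and $\orr{p}$ and hence giving symmetry of the cycle class $(\oll{p}\times\orr{p})_{*}[\Sht^{r}_{G}(\Sii;h_{D})]$. Your extra paragraph about compatibility with the finite-type approximation is a reasonable (and correct) way to handle the passage to the inductive limit in the cohomological case.
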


\begin{lemma}\label{l:Chow coho equiv} The cycle class map
\begin{equation*}
\cl: \Ch_{c,i}(\Sht^{r}_{G}(\Sii))_{\QQ}\to \cohoc{4r-2i}{\Sht^{r}_{G}(\Sii)\ot\kbar,\Ql}(2r-i)
\end{equation*}
is equivariant under the $\sH^{\Sig}_{G}$-actions for all $i$.
\end{lemma}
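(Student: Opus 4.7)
The plan is to prove equivariance by comparing both actions through the correspondence $\Sht^{r}_{G}(\Sii;h_{D})$ and invoking standard compatibilities of the cycle class map with the six-functor formalism. Both actions are built from the same diagram
\[
\xymatrix{\Sht^{r}_{G}(\Sii) & \Sht^{r}_{G}(\Sii;h_{D})\ar[r]^{\orr{p}}\ar[l]_{\oll{p}} & \Sht^{r}_{G}(\Sii)}
\]
in which $\oll{p}$ and $\orr{p}$ are representable and proper by Lemma \ref{l:fiber ShthD}(1). Since $\Sht^{r}_{G}(\Sii)$ is smooth of dimension $2r$ by Proposition \ref{p:ShtG}(1), and since by Lemma \ref{l:fiber ShthD}(3) the fibers of $\Pi^{r}_{G}(h_{D})$ have dimension $r$, so that $\Sht^{r}_{G}(\Sii;h_{D})$ is itself equidimensional of dimension $2r$, the map $\oll{p}$ is between stacks of the same dimension with smooth target. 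In particular there is a refined Gysin pullback $\oll{p}^{!}$ on cycles with proper support, and the Chow-theoretic action of Proposition \ref{p:Hk Chow action} can be rewritten as $h_{D}\cdot[\cZ]=\orr{p}_{*}\oll{p}^{!}[\cZ]$.

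Next I would match this with the cohomological action. Working inside the finite-type open substacks ${}^{\le N}\Sht$ from Section \ref{sss:Hk coho}, the cohomological correspondence $C(h_{D})$ factors as $\bR\orr{p}_{!}\circ\oll{p}^{\ast}$, up to the cohomological shifts and Tate twists dictated by the fact that $\Sht^{r}_{G}(\Sii)$ is smooth of pure dimension $2r$ and that the relevant component of $\Sht^{r}_{G}(\Sii;h_{D})$ also has dimension $2r$. The equivariance then reduces to the two standard compatibilities of the cycle class map: (i) $\cl\circ f_{\ast}=f_{\ast}\circ\cl$ for a proper morphism $f$ (applied to $\orr{p}$), and (ii) $\cl\circ f^{!}=f^{\ast}\circ\cl$ (up to shift and twist) for a morphism $f$ to a smooth target whose source has the expected dimension (applied to $\oll{p}$, using Lemma \ref{l:fiber ShthD}(3)). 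Both compatibilities are classical; they are exactly the inputs needed in the analogous Lemma 7.3 of \cite{YZ}.

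The main obstacle is that $\Sht^{r}_{G}(\Sii)$ is only locally of finite type, so one cannot directly invoke the standard statements. I would handle this by restricting everything to the finite-type opens ${}^{\le N}\Sht\subset \Sht^{r}_{G}(\Sii)$ and their preimages in $\Sht^{r}_{G}(\Sii;h_{D})$. Given a proper cycle $\cZ$ on $\Sht^{r}_{G}(\Sii)$, for sufficiently large $N$ the cycle is supported in ${}^{\le N}\Sht$ and $\orr{p}(\oll{p}^{-1}(\supp\cZ))$ is contained in ${}^{\le N'}\Sht$ for some $N'$ with $N'-N\geq\deg D$, matching the bookkeeping in Section \ref{sss:Hk coho}. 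This reduces the equivariance to a statement on finite-type stacks, where the standard cycle-class compatibilities apply. Once established at each level, the result follows by passing to the inductive limit, which is compatible with the definition of the $\sH^{\Sig}_{G}$-action on $\cohoc{\ast}{\Sht^{r}_{G}(\Sii)\otimes\kbar,\Ql}$ given in Proposition \ref{p:Hk coho action}.
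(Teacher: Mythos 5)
Your proposal is correct and follows essentially the route the paper intends: the paper gives no independent proof but refers to \cite[Lemma 7.3]{YZ}, whose argument is exactly the one you reconstruct — expressing both actions through the correspondence $\Sht^{r}_{G}(\Sii;h_{D})$, invoking the compatibility of the cycle class map with proper pushforward along $\orr{p}$ and with refined Gysin pullback along $\oll{p}$ (the latter defined via the smoothness of $\Sht^{r}_{G}(\Sii)$ from Proposition \ref{p:ShtG}(1) together with the dimension count in Lemma \ref{l:fiber ShthD}(3)), and reducing to the finite-type opens ${}^{\le N}\Sht$ in the manner of \S\ref{sss:Hk coho}. The only minor imprecision is that the cohomological operator $C(h_{D})$ is not literally $\bR\orr{p}_{!}\circ\oll{p}^{*}$ but rather this composite twisted by cupping with the fundamental class of $\Sht^{r}_{G}(\Sii;h_{D})$; this is precisely where Lemma \ref{l:fiber ShthD}(3) is used, so the ingredients you cite are the right ones.
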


\sss{The base-change situation}\label{sss:bc Hk action}
Consider another curve $X'$ as in \S\ref{sss:bc}. Let 
\begin{equation*}
\Sht'^{r}_{G}(\Sii)=\Sht^{r}_{G}(\Sii)\times_{(X^{r}\times \frSi)}(X'^{r}\times \frSi').
\end{equation*}
We may define the Hecke correspondence $\Sht'^{r}_{G}(\Sii;h_{D})$ for $\Sht'^{r}_{G}(\Sii)$ as the base change of $\Sht^{r}_{G}(\Sii)$ from $X^{r}\times \frSi$ to $X'^{r} \times \frSi'$. The smoothness of $\Sht'^{r}_{G}(\Sii)$ proved in Prop. \ref{p:bc smooth} allows to apply the formalism of correspondences acting on Chow groups, see \cite[A.1.6]{YZ}. The same argument as in \cite[Prop. 5.10]{YZ} gives an analogue of Prop. \ref{p:Hk Chow action}: there is an action of $\sH^{\Sig}_{G}$ on the Chow group of proper cycles $\Ch_{c,*}(\Sht'^{r}_{G}(\Sii))_{\QQ}$, where $h_{D}$ acts via the fundamental class of $\Sht'^{r}_{G}(\Sii;h_{D})$.

Similarly, with the smoothness of $\Sht'^{r}_{G}(\Sii)$ proved in Prop. \ref{p:bc smooth}, analogues of Prop. \ref{p:Hk coho action}, Lemma \ref{l:self adjoint} and Lemma \ref{l:Chow coho equiv} make sense and continue to hold true for $\Sht'^{r}_{G}(\Sii)$ in place of $\Sht^{r}_{G}(\Sii)$.

\begin{remark}\label{r:AL Sht} Besides the action of $\sH^{\Sig}_{G}$, the Atkin--Lehner involutions $\AL_{\Sht,x}$ for $x\in\Sig$ (see \S\ref{sss:AL Sht}) also act on $\Sht^{r}_{G}(\Sii)$ and $\Sht'^{r}_{G}(\Sii)$. Therefore they induce involutions on the Chow groups and cohomology groups of $\Sht^{r}_{G}(\Sii)$ and $\Sht'^{r}_{G}(\Sii)$, which we still denote by $\AL_{\Sht, x}$. These involutions commute with the action of $\sH^{\Sig}_{G}$.
\end{remark}

\subsection{Horocycles}\label{ss:horo} This subsection studies the geometry of $\Sht^{r}_{G}(\Sii)$ ``near infinity''. It serves as technical preparation for the proof of the spectral decomposition in the next subsection.  

To alleviate notation, in this subsection, we introduce the notation
\begin{equation*}
\Sht:=\Sht^{r}_{G}(\Sii)\ot\kbar.
\end{equation*}

\sss{Index of instability} Let us first introduce the notion of instability for points in $\Bun_{2}(\Sig)$. For a rank two bundle $\cE$ on $X$, $\inst(\cE)\in \ZZ$ is defined as in \cite[\S7.1.1]{YZ}: it is the maximum of $2\deg\cL-\deg\cE$ when $\cL$ runs over line subbundles of $\cE$. For a geometric point $\cE^{\da}=(\cE, \{\cE(-\ha x)\}_{x\in \Sig})\in \Bun_{2}(\Sig)(K)$, we have a bundle  $\cE(\ha D)$ for any divisor $D\subset X_{K}$ supported in $\Sig(K)$.  We call $\cE^{\da}$ {\em purely unstable} if $\inst(\cE(\ha D))>0$ for all $D\le\Sig(K)$. Note that the condition $\inst(\cE(\ha D))>0$ depends only on the class of $D$ modulo $2$, i.e., we may think of $D$ as an element in $\ZZ/2\ZZ[\Sig(K)]$, the free $\ZZ/2\ZZ$-module with basis given by $\Sig(K)$. Define
\begin{equation*}
\inst(\cE^{\da}):=\min\left\{\inst(\cE(\ha D)); D\in\ZZ/2\ZZ[\Sig(K)]\right\}.
\end{equation*}
Both the notion of pure instability and the number $\inst(\cE^{\da})$ depends only on the image of  $\cE^{\da}$ in $\Bun_{G}(\Sig)$.

Suppose $\cF\in\Bun_{2}(K)$ is unstable, with maximal line bundle $\cL$ and quotient $\cM:=\cF/\cL$.   For any effective divisor $D'$, we denote by $\cF\lrcorner_{D'}$ the resulting rank two bundle by pushing out the exact sequence $0\to\cL\to\cF\to\cM\to 0$ along $\cL\incl \cL(D')$.  Similarly let $\ulcorner_{D'}\cF$ be the pullback of the same exact sequence along $\cM(-D')\incl \cM$. Note that we have a canonical isomorphism $\cF\lrcorner_{D'}\cong (\ulcorner_{D'}\cF)(D')$, which means that $\cF\lrcorner_{D'}$ and $\ulcorner_{D'}\cF$ have the same image in $\Bun_{G}$.

\begin{lemma}\label{l:purely unst} Let $K$ be an algebraically closed field containing $k$, and $\cE^{\da}\in \Bun_{G}(\Sig)(K)$ be purely unstable.
\begin{enumerate}
\item There is a unique $D\in\ZZ/2\ZZ[\Sig(K)]$ such that $\inst(\cE^{\da})=\inst(\cE(\ha D))$. (Note that $\cE(\ha D)$ is a well-defined point of $\Bun_{G}(\Sig)$ when $D\in\ZZ/2\ZZ[\Sig(K)]$.)
\item The point $\cE^{\da}$ is uniquely determined by $\cE(\ha D)$ ($D$ as in (1)) in the following way: for any effective divisor $D'$ supported on $\Sig(K)$, $\cE(\ha D+\ha D')=\cE(\ha D)\lrcorner_{D'}$. Moreover,  we have
\begin{equation}\label{inst diff}
\inst(\cE(\ha D+\ha D'))=\inst(\cE^{\da})+|D'|
\end{equation}
where $|D'|=\#\{x\in \Sig(K)|\mbox{$x$ has nonzero coefficient in $D'$}\}$.
\end{enumerate}
\end{lemma}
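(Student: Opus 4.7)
The whole lemma will reduce to a local dichotomy at each point of $\Sig$. For an unstable $\cF \in \Bun_{2}(K)$ with maximal line subbundle $\cL_{\cF}$ and quotient $\cM_{\cF}$, and an Iwahori sublattice $\cF(-\ha x) \subset \cF$ at $x \in \Sig$, the key claim I plan to establish is: if $\cL_{\cF} \subset \cF(-\ha x)$ (the \emph{aligned} case at $x$), then $\cF(-\ha x) \cong \ulcorner_{x}\cF$ (both fit into $0 \to \cL_{\cF} \to \,\cdot\, \to \cM_{\cF}(-x) \to 0$), whence $\cF(\ha x) = \cF(-\ha x)\otimes\cO(x) \cong \ulcorner_{x}\cF(x) \cong \cF\lrcorner_{x}$ canonically, the maximal line subbundle of $\cF(\ha x)$ is $\cL_{\cF}(x)$, and $\inst(\cF(\ha x)) = \inst(\cF) + 1$. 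If instead $\cL_{\cF} \not\subset \cF(-\ha x)$ (the \emph{generic} case at $x$), then $\cL_{\cF}$ remains a saturated line subbundle of $\cF(\ha x)$, still maximal, and $\inst(\cF(\ha x)) = \inst(\cF) - 1$. In either case the alignment type at $x$ flips in passing from $\cF$ to $\cF(\ha x)$, while the alignment type at any $y \in \Sig \setminus \{x\}$ is unchanged because the modification is purely local at $x$.

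The key claim is verified by a direct local coordinate calculation at $x$: choose $\cO_{x}$-trivializations so that $\cF(-\ha x)$ is spanned by $\varpi_{x} e_{1}, e_{2}$, and write a primitive local generator of $\cL_{\cF}$ as $v = a e_{1} + b e_{2}$. The aligned case corresponds to $a \in \varpi_{x}\cO_{x}$ (then $v/\varpi_{x}$ is primitive in $\cF(\ha x)$ and generates $\cL_{\cF}(x)$), and the generic case to $a \in \cO_{x}^{\times}$ (then $v$ itself is primitive in $\cF(\ha x)$, generating $\cL_{\cF}$). The identification of the global maximal line subbundle of $\cF(\ha x)$ follows from the a priori bound $\deg \cL_{\cF(\ha x)} \le \deg\cL_{\cF} + 1$, which is proved by intersecting any candidate line subbundle of $\cF(\ha x)$ with $\cF$. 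The alignment flip at $x$ is read off from the image of the new maximal line subbundle in $\cF(\ha x)|_{x}$ relative to the new Iwahori line (the image of $\cF|_{x} \to \cF(\ha x)|_{x}$).

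Writing $\cL_{D}$ for the maximal line subbundle of $\cE(\ha D)$, I will deduce (1)--(3) as follows. Start from any $D_{0} \in \ZZ/2\ZZ[\Sig(K)]$, let $S \subset \Sig(K)$ be the set of $x$ at which $\cL_{D_{0}}$ is generic, and set $D := D_{0} + S$; by the key claim, $\cL_{D}$ is aligned at every $x \in \Sig(K)$. For any $D' \in \ZZ/2\ZZ[\Sig(K)]$, connect $D$ to $D'$ by a path in the hypercube $\ZZ/2\ZZ[\Sig(K)]$ of length $|D - D'|$ that toggles distinct coordinates one at a time; at each step the coordinate being flipped is in the aligned case (alignment at unflipped coordinates is preserved by the last clause of the key claim), so $\inst$ grows by $1$. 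This proves \eqref{inst diff} and shows $D$ is the unique minimizer, giving (1). For (2), the aligned case of the key claim applied at $D$ gives $\cE(\ha D + \ha x) = \cE(\ha D)\lrcorner_{x}$ for each $x \in \Sig(K)$; for a sum $D' = x_{1} + \cdots + x_{k}$ of distinct points of $\Sig(K)$, iterate, using that at each stage the remaining $x_{i}$ stay in the aligned case and that successive pushouts along $\cL \incl \cL(x_{1}) \incl \cdots \incl \cL(x_{1} + \cdots + x_{k})$ compose to the single pushout defining $\lrcorner_{D'}$.

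The principal obstacle is executing the local computation and correctly pinning down the global maximal line subbundle of $\cF(\ha x)$. Pure instability of $\cE^{\da}$ enters precisely at this point: it guarantees $\inst(\cE(\ha D')) \ge 1$ for every $D'$, which in the generic case (where $\inst$ decreases by $1$) prevents $\cF(\ha x)$ from becoming semistable and ensures the maximal line subbundle remains unique, making the step-by-step inductive arguments along the hypercube well-defined.
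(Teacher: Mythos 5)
Your argument is correct, and at bottom it runs on the same engine as the paper's proof: a single half-twist at $x\in\Sig(K)$ changes $\inst$ by exactly $\pm1$, with $+1$ precisely when the maximal line subbundle lies in $\cF(-\ha x)$ (equivalently $\cF(\ha x)=\cF\lrcorner_{x}$), and this is then iterated over multiplicity-free divisors supported on $\Sig(K)$. The organizational difference is where the distinguished $D$ comes from: the paper simply takes $D$ to be a minimizer of $\inst(\cE(\ha D))$ over the finite set $\ZZ/2\ZZ[\Sig(K)]$ and lets minimality force the aligned/pushout case at every $x$, so it never needs the generic case, the coordinate computation, or the flip property; you instead prove the full aligned/generic dichotomy locally, manufacture $D$ by toggling away the generic coordinates of an arbitrary $D_{0}$, and then run the hypercube path from $D$. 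Your route costs more bookkeeping but has the virtue of making explicit the two facts the paper asserts without comment (that $\inst$ moves by exactly $\pm1$, and that the $+1$ case forces $\cF(\ha x)=\cF\lrcorner_{x}$ with maximal line subbundle $\cL_{\cF}(x)$), while the paper's minimality trick buys brevity. Two small cautions: in the generic case the a priori bound $\deg \le \deg\cL_{\cF}+1$ by itself does not identify the maximal line subbundle of $\cF(\ha x)$ — to exclude a line subbundle of degree $\deg\cL_{\cF}+1$ you must also invoke your local primitivity of $\cL_{\cF}$ in $\cF(\ha x)$, since such a candidate would have to lie in the saturation of $\cL_{\cF}$, which is $\cL_{\cF}$ itself; and, exactly as in the paper's own proof, part (2) is established only for multiplicity-free $D'$ (for $D'$ with multiplicities the displayed identity is not literally what holds, e.g.\ $\cE(\ha D+x)$ and $\cE(\ha D)\lrcorner_{2x}$ have different $\inst$), which is all that is ever used.
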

\begin{proof} We prove all statements simultaneously. Let $D\in \ZZ/2\ZZ[\Sig(K)]$ be some divisor such that  $\inst(\cE^{\da})=\inst(\cE(\ha D))$ (we do not assume $D$ is unique for now). Write $\cF=\cE(\ha D)$. For any $x\in \Sig(K)$, we have $\inst(\cF(\ha x))=\inst(\cF)\pm1$. Since $\cF$ achieves the minimal index of instability, we must have $\inst(\cF(\ha x))=\inst(\cF)+1$. This means that $\cF(\ha x)=\cF\lrcorner_{x}$. For any effective $D'$ supported on $\Sig(K)$ and {\em multiplicity-free}, $\cF(\ha D')$ is the union of $\cF(\ha x)$ for $x\in D'$, we get $\cF(\ha D')=\cF\lrcorner_{D'}$. This implies that 
\begin{equation}\label{diff inst}
\inst(\cF(\ha D'))=\inst(\cF)+\deg D'=\inst(\cF)+|D'\mod2|.
\end{equation}
Since the set of points $\{\cF(\ha D')\}_{D'\le \Sig(K)}$, as points of $\Bun_{G}(\Sig)$, is exactly $\{\cE(\ha D')\}_{D'\le \Sig(K)}$, we see that $\inst(\cE(\ha D'))$ achieves its minimum exactly when $D'=D$ and nowhere else. The equality \eqref{inst diff} follows from \eqref{diff inst}.
\end{proof}

By the above lemma, for a purely unstable $\cE^{\da}\in \Bun_{G}(\Sig)(K)$, we may define an invariant
\begin{equation*}
\k(\cE^{\da})=(D, \inst(\cE^{\da}))\in \ZZ/2\ZZ[\Sig(K)]\times\ZZ_{>0}.
\end{equation*}
where $D\in\ZZ/2\ZZ[\Sig(K)]$ is the unique element such that $\inst(\cE^{\da})=\inst(\cE(\ha D))$.

\sss{Strata in $\Bun_{G}(\Sig)$} 
For $N>0$, we also denote by ${}^{N}\Bun_{G}$ the locally closed substack of $\Bun_{G}$ whose geometric points are exactly those $\cE$ with $\inst(\cE)=N$. 

For any field $K$ containing $\kbar$, we have a canonical bijection $\Sig(\kbar)\isom\Sig(K)$. For $\k\in\ZZ/2\ZZ[\Sig(\kbar)]\times\ZZ_{>0}$, there is a locally closed substack ${}^{\k}\Bun_{G}(\Sig)\subset \Bun_{G}(\Sig)\ot\kbar$ whose geometric points are exactly those geometric points $\cE^{\da}$ with $\k(\cE^{\da})=\k$ (under the identification $\Sig(\kbar)\isom\Sig(K)$).

We define a partial order on $\ZZ/2\ZZ[\Sig(\kbar)]\times\ZZ$ by saying that $\k=(D,N)\le \k'=(D',N')$ if and only if
\begin{equation*}
N'-N\ge |D-D'|.
\end{equation*}
For $\k=(D,N)\in\ZZ/2\ZZ[\Sig(\kbar)]\times\ZZ_{>0}$, let ${}^{\le\k}\Bun_{G}(\Sig)\subset \Bun_{G}(\Sig)\ot\kbar$ be the open substack consisting of $\cE^{\da}$ such that for any $D'\in\ZZ/2\ZZ[\Sig(\kbar)]$, $\inst(\cE(\ha D'))\le N+|D'-D|$. We see that ${}^{\k}\Bun_{G}(\Sig)\subset{}^{\le\k'}\Bun_{G}(\Sig)$ if and only if $\k\le \k'$. Moreover,  ${}^{\k}\Bun_{G}(\Sig)$ is closed in ${}^{\le\k}\Bun_{G}(\Sig)$, with open complement denoted by ${}^{<\k}\Bun_{G}(\Sig)$.

\begin{cor}[of Lemma \ref{l:purely unst}]\label{c:kBunSig} For $\k=(D, N)\in \ZZ/2\ZZ[\Sig(\kbar)]\times\ZZ_{>0}$, the map $\cE^{\da}\mapsto \cE(\ha D)$ gives an isomorphism of $\kbar$-stacks
\begin{equation*}
{}^{\k}\Bun_{G}(\Sig)\isom {}^{N}\Bun_{G}\ot\kbar.
\end{equation*}
\end{cor}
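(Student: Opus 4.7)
Plan: The forward map $\Phi\colon {}^{\k}\Bun_{G}(\Sig) \to {}^{N}\Bun_{G}\ot\kbar$, $\cE^{\da}\mapsto \cE(\ha D)$ (where $\cE(\ha D)$ denotes the fractional twist defined in \S\ref{sss:1/n twist}, lifting $D\in\ZZ/2\ZZ[\Sig(\kbar)]$ to an effective divisor $\le\Sig(\kbar)$; the result is well-defined in $\Bun_{G}$ by Lemma \ref{l:Di mod Z}), is manifestly a morphism of stacks. By Lemma \ref{l:purely unst}(3) applied with $D'=D$, we have $\inst(\cE(\ha D))=N$ on geometric points, so the image lands in the open substack ${}^{N}\Bun_{G}\ot\kbar$.

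For the inverse, I would invoke the fact from Lemma \ref{l:purely unst}(2) that an object $\cE^{\da}\in{}^{\k}\Bun_{G}(\Sig)$ is reconstructed from $\cE(\ha D)$ together with its Harder--Narasimhan filtration via the pushout construction: for every effective multiplicity-free $D'\le\Sig(\kbar)$ we have $\cE(\ha D+\ha D')=\cE(\ha D)\lrcorner_{D'}$. Given $\cF\in{}^{N}\Bun_{G}(\kbar)$, the HN line subbundle $\cL_{\cF}\subset\cF$ is unique (since $N>0$) and exists in families on the locus $\inst\equiv N$. Thus the pushout $\cF\lrcorner_{D'}$ is a functorial construction, and assembling the collection $\{\cF\lrcorner_{D'}\}_{D'\in\ZZ/2\ZZ[\Sig(\kbar)]}$, together with the canonical inclusions between consecutive terms, gives a well-defined $\Psi(\cF)=\cE^{\da}$ with $\cE(\ha D)=\cF$. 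That $\Psi(\cF)$ actually lies in the stratum ${}^{\k}\Bun_{G}(\Sig)$ follows from the instability formula $\inst(\cE(\ha D'))=N+|D-D'|$ of Lemma \ref{l:purely unst}(3), which identifies $\k(\Psi(\cF))=(D,N)=\k$.

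The identities $\Phi\circ\Psi=\id$ and $\Psi\circ\Phi=\id$ are then automatic: the first is built directly into the construction of $\Psi$, while the second is precisely the content of Lemma \ref{l:purely unst}(2). Compatibility with the $\Pic_{X}$ quotient (so as to land in $\Bun_{G}=\Bun_{2}/\Pic_{X}$ rather than in $\Bun_{2}$) is automatic because both fractional twisting and the pushout $\lrcorner_{D'}$ commute with tensoring by line bundles on $X$.

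The main technical obstacle is to check that the inverse genuinely defines a morphism of $\kbar$-stacks, not merely a bijection on geometric points. This reduces to the (classical) fact that the HN subbundle $\cL_{\cF}\subset\cF$ of an unstable rank-$2$ bundle varies in a flat family over any base on which the instability index is locally constant, together with the manifest functoriality of the pushout. With that in hand, both $\Phi$ and $\Psi$ are morphisms and mutually inverse, giving the claimed isomorphism ${}^{\k}\Bun_{G}(\Sig)\isom {}^{N}\Bun_{G}\ot\kbar$.
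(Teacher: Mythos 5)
Your argument is correct and is essentially the proof the paper intends: the statement is presented as an immediate corollary of Lemma \ref{l:purely unst}, with the forward map being the fractional twist $\cE^{\da}\mapsto\cE(\ha D)$, the inverse reconstructed through the pushout description of part (2) (which works in families because the maximal destabilizing line subbundle exists relatively on the locus of constant positive instability), and stratum membership checked via the instability formula of part (3), exactly as you do. One cosmetic point: the well-definedness of $\cE(\ha D)$ for $D\in\ZZ/2\ZZ[\Sig(\kbar)]$ is not Lemma \ref{l:Di mod Z} (which concerns Shtukas) but simply the fact that twisting by integral divisors supported on $\Sig$ is trivial in $\Bun_{G}=\Bun_{2}/\Pic_{X}$, as noted parenthetically in Lemma \ref{l:purely unst}(1).
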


\sss{Elementary modifications} Next we study how the invariant $\k$ changes under an elementary modification of bundles.  Recall the stack $\Hk^{1}_{G}(\Sig)$ classifying $(\cE^{\da}, \cF^{\da}, y, \ph)$ modulo tensoring with line bundles, where $\cE^{\da}, \cF^{\da}\in \Bun_{2}(\Sig)$ and $\ph: \cE\incl \cF$ is an injective map compatible with Iwahori structures whose cokernel is an invertible sheaf on the graph of $y:S\to X$. Recording $y$ gives a map $\pi^{1}_{\Hk}: \Hk^{1}_{G}(\Sig)\to X$.

For two elements $\k=(D,N),\k'=(D',N')\in \ZZ/2\ZZ[\Sig(\kbar)]\times\ZZ_{>0}$ we define
\begin{equation*}
|\k-\k'|:=|D-D'|+|N-N'|\in\ZZ_{\ge0}
\end{equation*}
with $|D-D'|$ defined in Lemma \ref{l:purely unst}(3).

\begin{lemma}\label{l:kappa mod} Suppose $(\cE^{\da},\cF^{\da},y, \ph)\in \Hk^{1}_{G}(\Sig)(K)$ (where $K$ is an algebraically closed field, $\cE^{\da},\cF^{\da}$ are lifted to $\Bun_{2}(\Sig)(K)$, $\ph: \cE\incl \cF$ and $y$ is the support of $\coker(\ph)$), and  $\cE^{\da}$ and $\cF^{\da}$ are both purely unstable. Write $\k(\cE^{\da})=(D,N)$, $\k(\cF^{\da})=(D',N')$. 
\begin{enumerate}
\item $|\k(\cE^{\da})-\k(\cF^{\da})|=1$.
\item If  $N=N'$, then $D$ and $D'$ differ at a unique point $x\in \Sig(K)$, and we have $y=x$. The points $\cE^{\da}$ and $\cF^{\da}$ are uniquely determined by the triple $(\cE(\ha D), \cF(\ha D'), \a)$ where $\a$ is an isomorphism of $G$-bundles
\begin{equation*}
\a: \cE(\ha D)\lrcorner_{x}\cong \cF(\ha D')\lrcorner_{x}. 
\end{equation*}

\item If $N=N'-1$, then $D=D'$, and $\cE^{\da}$ and $\cF^{\da}$ are determined by the single bundle $\cE(\ha D)$ in the following way: $\cE^{\da}$ is determined by $\cE(\ha D)$ as in Lemma \ref{l:purely unst}(2); $\cF(\ha D)=\cE(\ha D)\lrcorner_{y}$ and $\cF^{\da}$ is determined by $\cF(\ha D)$ again by Lemma \ref{l:purely unst}(2).
\item If $N=N'+1$, then $D=D'$, and $\cE^{\da}$ and $\cF^{\da}$ are determined by the single bundle $\cF(\ha D)$ in the following way: $\cF^{\da}$ is determined by $\cF(\ha D)$ as in Lemma \ref{l:purely unst}(2); $\cE(\ha D)=\ulcorner_{y}(\cF(\ha D))$ and $\cE^{\da}$ is determined by $\cE(\ha D)$ again by Lemma \ref{l:purely unst}(2). 
\end{enumerate}
\end{lemma}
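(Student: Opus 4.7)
The strategy is to apply Lemma~\ref{l:purely unst}(2), which identifies a purely unstable $\cE^{\da}$ with its ``pure form'' $\cE(\ha D) \in {}^N\Bun_G(\kbar)$, and similarly $\cF^{\da}$ with $\cF(\ha D') \in {}^{N'}\Bun_G(\kbar)$. Through this identification, the Iwahori-compatible elementary modification $\ph \colon \cE^{\da} \hookrightarrow \cF^{\da}$ at $y$ translates into a concrete relation between the ordinary rank-two bundles $\cE(\ha D)$ and $\cF(\ha D')$, from which $\kappa(\cE^{\da}) - \kappa(\cF^{\da})$ can be read off. The proof splits according to whether $y$ lies in $\Sig(K)$.

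Suppose first $y \notin \Sig(K)$. Then $\ph$ is an isomorphism on a Zariski neighborhood of $\Sig$, and for every $D'' \in \ZZ/2\ZZ[\Sig(K)]$ the fractional twists $\cE(\ha D'')$ and $\cF(\ha D'')$ differ from $\cE$ and $\cF$ only near $\Sig \not\ni y$, so $\ph$ extends canonically to an elementary modification $\ph_{D''} \colon \cE(\ha D'') \to \cF(\ha D'')$ at $y$. The direction of $\ph_{D''}$ (whether it raises or lowers $\inst$ by one) is determined solely by the fiber of the maximal destabilizing line subbundle at $y$, which is canonically the same for all $D''$ since the fractional twists do not alter bundles away from $\Sig$. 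Hence $\inst(\cF(\ha D'')) = \inst(\cE(\ha D'')) \pm 1$ uniformly in $D''$; minimizing over $D''$ and invoking Lemma~\ref{l:purely unst}(1) forces $D = D'$ and $N' = N \pm 1$, giving case~(3) or (4). The explicit descriptions $\cF(\ha D) = \cE(\ha D)\lrcorner_y$ and $\cE(\ha D) = \ulcorner_y \cF(\ha D)$ are the standard presentations of elementary modifications raising or lowering instability by one.

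Suppose now $y = x \in \Sig(K)$. Here the fractional twists at $x$ do alter bundles near $x$, and the uniformity argument above fails. Instead, one performs a direct local analysis at $x$: modulo $I_x \times I_x$, the map $\ph$ lies in one of two double cosets corresponding to the simple affine reflections $s_0, s_1$ of the affine Weyl group of $\widehat{\SL}_2$. For each such double coset, one tracks explicitly the effect on the pairs $(\cE(\ha D''), \cF(\ha D''))$ for $D'' = D$ and $D'' = D + x$, which reveals a trichotomy corresponding to cases (3), (4), (2): either $D = D'$ and $N' = N + 1$ (case (3)), or $D = D'$ and $N' = N - 1$ (case (4)), or $D' = D + x$ and $N = N'$ (case (2)). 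In the last situation, $\ph$ is further shown to extend to an isomorphism of vector bundles $\cE \isom \cF(-\ha x)$ sending $L_\cE := \cE(-\ha x)_x$ onto $\cF(-x)$, so that $\cE^{\da} \cong \cF^{\da}(-\ha x)$ in $\Bun_G(\Sig)$. Twisting by $\ha D$ then yields the required isomorphism $\alpha \colon \cE(\ha D)\lrcorner_x \isom \cF(\ha D')\lrcorner_x$, and the triple $(\cE(\ha D), \cF(\ha D'), \alpha)$ recovers the entire Hecke datum via Lemma~\ref{l:purely unst}(2).

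Part~(1) is then immediate from this trichotomy, since each sub-case yields a change in $\kappa$ of norm exactly one. The main technical obstacle is the case $y = x \in \Sig$: one must carefully distinguish the ``vertical'' modifications (cases~(3), (4)) from the ``horizontal'' or ``flipping'' modification (case~(2)), and in the flipping case verify that $\ph$ sends $L_\cE$ precisely onto $\cF(-x)$ rather than to some other submodule of $L_\cF$ of corank one. This verification uses the hypothesis that \emph{both} $\cE^{\da}$ and $\cF^{\da}$ are purely unstable, which constrains the local interaction of their maximal destabilizing line subbundles with the Iwahori filtrations at $x$; I expect this step to be the most delicate local computation of the proof.
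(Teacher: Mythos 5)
There is a genuine error in your analysis of the case $y=x\in\Sig(K)$. Your claim that in the situation $N=N'$ the map $\ph$ ``extends to an isomorphism $\cE\isom\cF(-\ha x)$'' (equivalently, that the elementary modification is along the Iwahori line of $\cF$, so that $\cE^{\da}\cong\cF^{\da}(-\ha x)$ via $\ph$) is false, and with it the proposed construction of $\a$ and the recovery of the Hecke datum from the triple. Concretely, take $X=\PP^{1}$, $\Sig=\{x\}$, $\cF=\cO(1)\oplus\cO(-1)$ with Iwahori line $\ell_{\cF}\subset\cF_{x}$ different from the fiber of $\cO(1)$, and let $\cE=\ker(\cF\to\cF_{x}/\ell)$ for a third line $\ell\ne\ell_{\cF}$, $\ell\ne\cO(1)_{x}$; the Iwahori compatibility then forces $\cE(-\ha x)=\cF(-x)$. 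One checks $\inst\cE=1$, $\inst\cE(\ha x)=\inst\cF=2$, $\inst\cF(\ha x)=1$, so both points are purely unstable with $N=N'=1$, $D=0$, $D'=x=y$ --- i.e.\ case (2) of the lemma --- yet $\ph(\cE)\not\subset\cF(-\ha x)$, so no isomorphism $\cE\cong\cF(-\ha x)$ is induced by $\ph$. The underlying issue is that the relative position of the two Iwahori chains at $x$ (your two affine simple reflections) does \emph{not} determine which of the three cases occurs: each double coset can produce any of (2), (3), (4), depending on how the lines $\ell,\ell_{\cF}$ sit relative to the fiber of the maximal destabilizing subbundle. The paper's mechanism for case (2) is different and works uniformly: setting $\cG=\cF(\ha D)$ one shows $\cE(\ha D)\lrcorner_{x}=\cG=\cF(\ha D')\lrcorner_{x}$, so $\a$ is essentially the identity of $\cG$, and $y=x$ is proved by a separate trick (if $y\ne x$, the bundle $\cG$ with the two half-modifications $\cE(\ha D)$ and $\cF(\ha D-\ha x)$ would give a purely unstable point of $\Bun_{2}(\{x,y\})$ with two distinct minimizing twists, contradicting Lemma~\ref{l:purely unst}(1)).

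There is also a gap in your $y\notin\Sig(K)$ case, though a repairable one. You justify the uniformity of the sign of $\inst(\cF(\ha D''))-\inst(\cE(\ha D''))$ by saying the maximal destabilizing line subbundle of $\cE(\ha D'')$ ``is canonically the same for all $D''$ since the fractional twists do not alter bundles away from $\Sig$.'' That reason does not suffice: the twisted bundle is a different sheaf and its maximal subbundle could a priori be a different generic line when the instabilities are small. The statement is true, but only because of pure instability: for two adjacent twists, if the maximal subbundles were generically distinct, a degree count forces the sum of the two (positive) instabilities to be at most $1$, a contradiction; one then chains through adjacent $D''$. (Note also that the paper avoids this geometric input altogether: it only uses $\inst(\cE(\ha D''))=N+|D''-D|$, $\inst(\cF(\ha D''))=N'+|D''-D'|$ from Lemma~\ref{l:purely unst}(3) together with the fact that these differ by $1$ for each $D''$, which already yields the trichotomy by pure bookkeeping.) As written, then, your proposal does not establish parts (1) and (2) of the lemma.
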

\begin{proof} For any $D''\in\ZZ/2\ZZ[\Sig(\kbar)]$, we have $\inst(\cE(\ha D''))=\inst(\cF(\ha D''))\pm1$, therefore $N-N'\in\{0,1,-1\}$. 

When $N-N'=-1$, $\cE(\ha D)$ achieves the minimal index of instability among all the bundles $\{\cE(\ha D''), \cF(\ha D'')\}_{D''\in\ZZ/2\ZZ[\Sig(K)]}$. Since $\inst(\cF(\ha D))=\inst(\cE(\ha D))\pm1$, we must have $\inst(\cF(\ha D))=N+1$, therefore $\inst(\cF(\ha D))=N'$ and $D'=D$.  The same argument as Lemma \ref{l:purely unst}(2) shows that $\cF(\ha D)$ is determined by $\cE(\ha D)$. This proves (3).

The analysis of the case $N-N'=1$ is similar, which takes care of (4).

Finally consider the case $N=N'$. Since $\inst(\cF(\ha D))=\inst(\cE(\ha D))\pm1$ and $\inst(\cF(\ha D))\ge N'=N=\inst(\cE(\ha D))$, we must have $\inst(\cF(\ha D))=N+1$. On the other hand, we have $\inst(\cF(\ha D'))=N'=N$ by definition. By Lemma \ref{l:purely unst}(3), we have $|D-D'|=(N+1)-N=1$, i.e., $D'$ and $D$ differ by one point $x\in \Sig(K)$.  We show that $y$ must be equal to $x$. Suppose not, consider the bundle $\cG=\cF(\ha D)$ (represented by a rank two bundle on $X_{K}$) with subsheaves $\cG(-\ha y):=\cE(\ha D)$ and $\cG(-\ha x):=\cF(\ha D-\ha x)$. Then $\cG^{\da}:=(\cG, \cG(-\ha y), \cG(-\ha x))$ defines a point in $\Bun_{2}(\{x,y\})(K)$. Note that $\inst(\cG(-\ha y))=N$ by definition and $\inst(\cG(-\ha x))=\inst(\cF(\ha D-\ha x))=\inst(\cF(\ha D'))=N$; also $\inst(\cG)=N+1$ and $\inst(\cG(-\ha x-\ha y))=\inst(\cE(\ha D-\ha x))=N+1$. It follows that $\cG^\da$ is purely unstable. This contradicts Lemma \ref{l:purely unst}(1) because both $\cG(-\ha x)$ and $\cG(-\ha y)$ achieve the minimal index of instability. This contradiction proves $y=x$.  The isomorphism $\a$ comes from the fact that $\cG(-\ha y)\lrcorner_{x}=\cG=\cG(-\ha x)\lrcorner_{x}$. The triple $(\cE(\ha D), \cF(\ha D'), \a)$ first determines $\cE^{\da}$ and $\cF^{\da}$ by Lemma \ref{l:purely unst}(2). Now we represent $D$ and $D'$ by multiplicity-free effective divisors on $\Sig(K)$. When $D'=D+x$, the map $\a$ then determines the injective map $\psi: \cE(-\ha D)\incl \cF(-\ha D')\lrcorner_{x}$ which then gives $\ph: \cE=\cE(-\ha D)\lrcorner_{D}\xr{\psi}\cF(-\ha D')\lrcorner_{x+D}=\cF(-\ha D')\lrcorner_{D'}=\cF$. When $D'=D-x$, the map $\a$ gives the injective map $\psi: \cE(-\ha D)\lrcorner_{x}\incl\cF(-\ha D')$ which then gives $\ph: \cE=\cE(-\ha D)\lrcorner_{D}=(\cE(-\ha D)\lrcorner_{x})\lrcorner_{D'}\xr{\psi}\cF(-\ha D')\lrcorner_{D'}=\cF$. Part (2) is proved.

All three cases above satisfy $|\k(\cE^{\da})-\k(\cF^{\da})|=1$, which verifies (1).
\end{proof}

For $\k=(D,N)$ and $\k'=(D',N')$ in  $\ZZ/2\ZZ[\Sig(\kbar)]\times\ZZ_{>0}$, let ${}^{\k,\k'}\Hk^{1}_{G}(\Sig)$ be the locally closed substack of $\Hk^{1}_{G}(\Sig)\ot\kbar$ whose geometric points are exactly those $(\cE^{\da},\cF^{\da},y,\ph)$ such that $\k(\cE^{\da})=\k$ and $\k(\cF^{\da})=\k'$. 

\begin{cor}[of Lemma \ref{l:kappa mod}]\label{c:Mod kappa}
\begin{enumerate}
\item The stack ${}^{\k,\k'}\Hk^{1}_{G}(\Sig)$ is empty unless  $|\k-\k'|=1$.
\item When $N=N'$ and $D$ and $D'$ differ only at $x\in \Sig(\kbar)$, the map $\pi^{1}_{\Hk}$ maps ${}^{\k,\k'}\Hk^{1}_{G}(\Sig)$ to a single point $x$, and there is an isomorphism
\begin{equation*}
{}^{\k,\k'}\Hk^{1}_{G}(\Sig)\isom({}^{N}\Bun_{G} \times_{{}^{N+1}\Bun_{G}}{}^{N}\Bun_{G})\otimes\kbar
\end{equation*}
where with both maps ${}^{N}\Bun_{G} \to {}^{N+1}\Bun_{G}$ given by $(-)\lrcorner_{x}$. The above isomorphism is given by  $(\cE^{\da}, \cF^{\da},x,\ph)\mapsto (\cE(\ha D), \cF(\ha D'), \a)$ as in Lemma \ref{l:kappa mod}(2).
\item When $N=N'-1$ and $D=D'$, we have an isomorphism
\begin{equation*}
{}^{\k,\k'}\Hk^{1}_{G}(\Sig)\isom ({}^{N}\Bun_{G}\times X)\otimes\kbar
\end{equation*}
given by $(\cE^{\da}, \cF^{\da},y,\ph)\mapsto (\cE(\ha D), y)$.
\item When $N=N'+1$ and $D=D'$, we have an isomorphism
\begin{equation*}
{}^{\k,\k'}\Hk^{1}_{G}(\Sig)\isom ({}^{N'}\Bun_{G}\times X)\otimes\kbar
\end{equation*}
given by $(\cE^{\da}, \cF^{\da},y,\ph)\mapsto (\cF(\ha D'), y)$. 
\end{enumerate}
\end{cor}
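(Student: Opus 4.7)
Part (1) is immediate from Lemma \ref{l:kappa mod}(1): the locally closed substack ${}^{\k,\k'}\Hk^{1}_{G}(\Sig)$ is characterized by invariants of its geometric points, and that lemma rules out every geometric point with $|\k-\k'|\ne1$.

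For (2)--(4), the plan is to upgrade the pointwise bijections in Lemma \ref{l:kappa mod} to isomorphisms of (reduced) locally closed substacks. The key observation is that every ingredient in the proof of Lemma \ref{l:kappa mod} uses only relative constructions: the Shatz stratification realizes ${}^{N}\Bun_{G}$ as a locally closed substack of $\Bun_{G}$, the Harder--Narasimhan sub-line bundle $\cL\subset\cF$ exists in families over the unstable locus, and consequently the elementary modifications $\cF\lrcorner_{y}$ (pushout along $\cL\hookrightarrow\cL(y)$) and $\ulcorner_{y}\cF$ (pullback along $\cM(-y)\hookrightarrow\cM$) define morphisms of stacks over ${}^{N}\Bun_{G}\times X$. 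Combined with Corollary \ref{c:kBunSig}, which realizes ${}^{\k}\Bun_{G}(\Sig)\isom{}^{N}\Bun_{G}\otimes\kbar$ via $\cE^{\da}\mapsto\cE(\ha D)$, this allows every step in Lemma \ref{l:kappa mod} to be performed in families.

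Concretely: in case (3) the forward map sends a family $(\cE^{\da},\cF^{\da},y,\ph)$ to $(\cE(\ha D),y)\in{}^{N}\Bun_{G}\times X$, while the inverse map reconstructs $\cE^{\da}$ from $\cE(\ha D)$ by Corollary \ref{c:kBunSig}, sets $\cF(\ha D):=\cE(\ha D)\lrcorner_{y}$, reconstructs $\cF^{\da}$ again by Corollary \ref{c:kBunSig}, and takes $\ph$ to be the tautological inclusion. Case (4) is symmetric, with the roles of $\cE^{\da}$ and $\cF^{\da}$ swapped and $\lrcorner_{y}$ replaced by $\ulcorner_{y}$. In case (2), the forward map sends $(\cE^{\da},\cF^{\da},x,\ph)$ to $(\cE(\ha D),\cF(\ha D'),\a)$, where $\a$ is the isomorphism $\cE(\ha D)\lrcorner_{x}\cong\cF(\ha D')\lrcorner_{x}$ induced by $\ph$ in the manner described at the end of the proof of Lemma \ref{l:kappa mod}(2); the inverse reconstructs the two bundles by Corollary \ref{c:kBunSig} and recovers $\ph$ from $\a$ by the same explicit recipe.

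The main obstacle is case (2): one must verify that the morphism ${}^{\k,\k'}\Hk^{1}_{G}(\Sig)\to X$ induced by $\pi^{1}_{\Hk}$ factors through the single point $x$ (not merely pointwise), and that the recipe recovering $\ph$ from $\a$ is compatible with the two a priori different ways of ordering the elementary modifications in $\cF(\ha D')=\cE(\ha D)\lrcorner_{x}\lrcorner_{D}$ versus $\cE(\ha D)=\cF(\ha D')\ulcorner_{x}\ulcorner_{D'}$. The first is handled by observing that the source is a locally closed substack of $\Hk^{1}_{G}(\Sig)\otimes\kbar$, together with Lemma \ref{l:kappa mod}(2) applied pointwise; the second is a direct computation with pushouts of extensions of line bundles. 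Neither presents conceptual difficulty.
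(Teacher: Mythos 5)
Your proposal is correct and matches the paper's (implicit) argument: the paper states this as an immediate corollary of Lemma \ref{l:kappa mod} with no separate proof, the point being exactly what you spell out — the constructions of that lemma (the twists $\cE\mapsto\cE(\ha D)$ via Corollary \ref{c:kBunSig}, the modifications $\lrcorner_{y}$ and $\ulcorner_{y}$ along the relative Harder--Narasimhan filtration) all make sense in families, so the pointwise classification upgrades to isomorphisms of the locally closed substacks. Your flagged checks in case (2) (the map to $X$ factoring through $x$, and recovering $\ph$ from $\a$) are precisely the content of Lemma \ref{l:kappa mod}(2) and present no gap.
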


\begin{defn}\label{def: horo}
Let $\un\k=(\k_{0},\k_{1},\dotsc, \k_{r})$ be a sequence of elements in $\ZZ/2\ZZ[\Sig(\kbar)]\times \ZZ_{>0}$. 
\begin{enumerate}
\item The {\em horocycle of type $\un\k$} of $\Sht$ is the locally closed substack ${}^{\un\k}\Sht\subset \Sht$ whose geometric points are exactly those $(\cE^{\da}_{i};\dotsc)\in \Sht$ such that each $\cE^{\da}_{i}$ is purely unstable with $\k(\cE_{i}^{\da})=\k_{i}$, for $i=0,1,\dotsc, r$.
\item The {\em truncation up to $\un\k$} of $\Sht$ is the open substack of $\Sht$ consisting of $(\cE^{\da}_{i};\dotsc)$ such that $\cE^{\da}_{i}\in {}^{\le \k_{i}}\Bun_{G}(\Sig)$ for all $0\le i\le r$. 
\end{enumerate}
\end{defn}

Then ${}^{\un\k}\Sht$ is closed in ${}^{\le\un\k}\Sht$ and we denote its open complement by ${}^{<\un\k}\Sht$.

\sss{The index set for horocycles} 
Above we defined horocycles for any $r$-tuple of elements $\un\k$ in $\ZZ/2\ZZ[\Sig(\kbar)]\times \ZZ_{>0}$. However, for many such $\un\k$, ${}^{\un\k}\Sht$ turns out to be empty. 

\begin{lemma}\label{l:nonempty horo} Let $\un\k=(\k_{0},\k_{1},\dotsc, \k_{r})$ be a sequence of elements in $\ZZ/2\ZZ[\Sig(\kbar)]\times \ZZ_{>0}$. If ${}^{\un\k}\Sht$ is non-empty, then
\begin{enumerate}
\item For each $i=1,\dotsc, r$, $|\k_{i-1}-\k_{i}|=1$;
\item If we write $\k_{i}=(D_{i}, N_{i})$, then $N_{0}=N_{r}$, and $\Fr(D_{0})$ (applying the arithmetic Frobenius to each point appearing $D_{0}$) and $D_{r}$ differ at exactly one $\kbar$-point above each place of $\Si$ and nowhere else.  
\end{enumerate}
\end{lemma}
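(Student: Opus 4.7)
The proof splits into two parts: (1) uses the geometry of elementary modifications between consecutive $\cE_i^\da$, and (2) uses the Frobenius identification built into the Shtuka condition.

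For (1), an $S$-point of $\Sht$ provides maps $f_i:\cE_{i-1}|_{X\times S-\Gamma_{x_i}}\isom\cE_i|_{X\times S-\Gamma_{x_i}}$ compatible with the Iwahori level structures. Depending on the sign $\mu_i=\pm1$, either $f_i$ or $f_i^{-1}$ extends to an injection with cokernel an invertible sheaf on $\Gamma_{x_i}$, so at a geometric point this presents either $(\cE_{i-1}^\da,\cE_i^\da,x_i,f_i)$ or $(\cE_i^\da,\cE_{i-1}^\da,x_i,f_i^{-1})$ as a point of $\Hk^1_G(\Sig)(\kbar)$. Lemma \ref{l:kappa mod}(1) then immediately gives $|\k_{i-1}-\k_i|=1$.

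For (2), by Lemma \ref{l:Di mod Z} we may take $\Di=\Di^{(1)}=\sum_{x\in\Si}\ha\bx^{(1)}$ in the definition of $\Sht^r_G(\Sig;\Si)$, and the Shtuka datum then supplies an isomorphism $\iota:\cE_r^\da\cong{}^\tau\cE_0^\da(\Di)$ in $\Bun_G(\Sig)\otimes\kbar$ (fiber over $\xi$). The plan is to compute $\k({}^\tau\cE_0^\da(\Di))$ in two steps. First, the $k$-linear Frobenius is an automorphism of $\Bun_G(\Sig)\otimes\kbar$ that preserves $\inst$ of every bundle while permuting the indexing set $\ZZ/2\ZZ[\Sig(\kbar)]$ via the arithmetic Frobenius $\Fr$ on $\kbar$-points; hence $\k({}^\tau\cE^\da)=(\Fr(D),N)$ whenever $\k(\cE^\da)=(D,N)$. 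Second, for any half-divisor $\Di=\Di_0+\ha D_\Di$ supported on $\Si$ with $\Di_0$ integral and $D_\Di\in\ZZ/2\ZZ[\Sig(\kbar)]$, the family $\{\cE(\Di+\ha D')\}_{D'}$ coincides with $\{\cE(\ha D')\}_{D'}$ reindexed by $D'\mapsto D'+D_\Di$, so the minimum of $\inst$ is unchanged and $\k(\cE^\da(\Di))=(D+D_\Di,N)$.

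Specializing to $\Di=\Di^{(1)}$ at $\xi$ gives $D_\Di=\sum_{x\in\Si}x^{(1)}$, and combining the two calculations yields
\[
(D_r,N_r)=\k(\cE_r^\da)=\k({}^\tau\cE_0^\da(\Di))=\Bigl(\Fr(D_0)+\sum_{x\in\Si}x^{(1)}\bmod 2,\ N_0\Bigr),
\]
which delivers $N_0=N_r$ and identifies $D_r-\Fr(D_0)$ with exactly one $\kbar$-point above each place of $\Si$, as claimed. The most delicate step will be the bookkeeping around the $k$-linear Frobenius on $\kbar$-points of $\Sig$: one must verify that Frobenius pullback translates the level-structure divisor $D$ by $\Fr$ (rather than $\Fr^{-1}$ or some other convention) and that the Shtuka identification $\iota$ really does extend to an isomorphism of points of $\Bun_G(\Sig)\otimes\kbar$ via the Cartesian square \eqref{Cart ShtG}; once those conventions are pinned down, both $\k$-calculations are routine.
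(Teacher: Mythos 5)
Your proof is correct and follows essentially the same route as the paper: part (1) is exactly the appeal to Lemma \ref{l:kappa mod}(1)/Corollary \ref{c:Mod kappa}(1), and part (2) is the paper's observation that $\cE_r\cong({}^{\tau}\cE_0)(\ha\sum_{x\in\Si}x^{(1)})$ forces $N_0=N_r$ and $\Fr(D_0)+\sum_{x\in\Si}x^{(1)}\equiv D_r\bmod 2$; you merely spell out (correctly) the two-step computation of $\k$ under Frobenius twist and fractional twist, which the paper leaves implicit, and the Frobenius-direction bookkeeping you flag is exactly the convention the paper adopts in the statement itself.
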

\begin{proof}
Suppose $(\cE^{\da}_{i},\dotsc)\in {}^{\un\k}\Sht$ is a geometric point over $\{x^{(1)}\}_{x\in \Si}\in \frSi$, then $|\k_{i-1}-\k_{i}|=1$ by Corollary \ref{c:Mod kappa}(1). The isomorphism $\cE_{r}\cong ({}^{\tau}\cE_{0})(\ha\sum_{x\in \Si}x^{(1)})$ implies $N_{0}=N_{r}$ and $\Fr(D_{0})+\sum_{x\in \Si}x^{(1)}\equiv D_{r}\mod 2$, which implies the second condition.
\end{proof}

\begin{defn}\label{defn:Kr} Let $\frK_{r}$ be the set of $\un\k=(\k_{0},\k_{1},\dotsc, \k_{r})$, where each $\k_{i}\in\ZZ/2\ZZ[\Sig(\kbar)]\times \ZZ$, satisfying the two conditions in Lemma \ref{l:nonempty horo}. (For technical reasons we do not impose $\k_{i}>0$ in the definition of $\frK_{r}$.)
\end{defn}
From the definition and Lemma \ref{l:nonempty horo} we see that 
\begin{equation*}
\Sht=\bigcup_{\un\k\in \frK_{r}}{}^{\le\un\k}\Sht.
\end{equation*}
The partial order on $\ZZ/2\ZZ[\Sig(\kbar)]\times\ZZ_{>0}$ extends to one on $\frK_{r}$: we say that $(\k_{0},\dotsc, \k_{r})\le (\k'_{0},\dotsc, \k'_{r})$ if and only if $\k_{i}\le\k'_{i}$ for all $0\le i\le r$. Then it is easy to check that, for $\un\k,\un\k'\in \frK_{r}$, ${}^{\un\k}\Sht\subset {}^{\le\un\k'}\Sht$ if and only if $\un\k\le \un\k'$.

For $\un\k\in\frK_{r}$ and $N\in \ZZ$, we write $\un\k>N$ if $N_{i}(\un\k)>N$  for all $0\le i\le r$ ($N_{i}(\un\k)$ denotes the $\ZZ_{>0}$-part of the $i$-th component of $\un\k$).

\sss{ $I(\un\k)$ and $X(\un\k)$} For $\un\k=(\k_{0},\dotsc, \k_{r})\in \frK_{r}$ with $\un\k_{i}=(D_{i}, N_{i})$, we define the subset $I(\un\k)\subset \{1,2,\dotsc, r\}$ as
\begin{equation*}
I(\un\k)=\{1\le i\le r| N_{i-1}\ne N_{i}\}.
\end{equation*}
For $i\in\{1,2,\dotsc, r\}-I(\un\k)$, there is a unique point $x\in \Sig(\kbar)$ such that $D_{i-1}$ and $D_{i}$ differ at $x$. We denote this point $x$ by $x_{i}(\un\k)$. Also, by the second condition on $\un\k$ above, the difference between $D_{r}$ and $\Fr(D_{0})$ consists of a $\kbar$-point $x^{(1)}(\un\k)$ over each $x\in\Si$.

For $i\in I(\un\k)$ we have $N_{i}=N_{i-1}\pm1$. Since $N_{r}=N_{0}$, we see that $\#I(\un\k)$ is even.

We define $X(\un\k)\subset (X^{r}\times\frSi)\ot\kbar$ to be the coordinate subspace
\begin{equation*}
X(\un\k)=\{(x_{1},\dotsc, x_{r}, \{x^{(1)}\}_{x\in\Si})| x_{i}=x_{i}(\un\k) \mbox{ for all }i\notin I(\un\k); x^{(1)}=x^{(1)}(\un\k)\mbox{ for all }x\in \Si\}.
\end{equation*}
The projection to the $I(\un\k)$-coordinates gives an isomorphism
\begin{equation*}
X(\un\k)\isom X^{I(\un\k)}\ot\kbar.
\end{equation*}

Viewing $\ZZ/2\ZZ[\Sig]$ as a subgroup of $\ZZ/2\ZZ[\Sig(\kbar)]$ by $\Sig\ni x\mapsto \sum_{\Sig(\kbar)\ni \ov x\mapsto x}\ov x$, there is an action of $\ZZ/2\ZZ[\Sig]$ on $\ZZ/2\ZZ[\Sig(\kbar)]$ by translation. This induces a diagonal action of $\ZZ/2\ZZ[\Sig]$ on $\frK_{r}$ by acting only on the divisor parts of each $\k_{i}$. For $\un\k,\un\k'\in\frK_{r}$, we say $\un\k\sim\un\k'$ if the divisor parts of $\un\k$ and $\un\k'$ are in the same $\ZZ/2\ZZ[\Sig]$-orbit (no other condition on the $\ZZ$-factors). This defines an equivalence relation on $\frK_{r}$. Let $[\frK_{r}]$ be the quotient 
\begin{equation*}
[\frK_{r}]:=\frK_{r}/\sim.
\end{equation*}

The following lemma is a direct calculation.
\begin{lemma}\label{l:supp} The map 
\begin{eqnarray*}
X(\cdot): \frK_{r} & \to & \{\mbox{subschemes of $(X^{r}\times\frSi)\ot\kbar$}\}\\
\un\k &\mapsto & X(\un\k)
\end{eqnarray*}
factors through $[\frK_{r}]$, and  induces an injective map
\begin{equation*}
X(\cdot): [\frK_{r}]\incl \{\mbox{subschemes of $(X^{r}\times\frSi)\ot\kbar$}\}.
\end{equation*}
\end{lemma}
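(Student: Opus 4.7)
The proof is a direct combinatorial verification from the definitions recalled in Lemma~\ref{l:nonempty horo}. The key observation is that for $\un\k = (\k_0, \dotsc, \k_r) \in \frK_r$ with $\k_i = (D_i, N_i)$, the constraint $|\k_{i-1} - \k_i| = 1$ forces $I(\un\k) = \{i : N_{i-1} \ne N_i\}$ to coincide with $\{i : D_{i-1} = D_i\}$, and hence to depend only on the divisor parts. Likewise, for $i \notin I(\un\k)$ the point $x_i(\un\k)$ is by construction the unique point of $\Sig(\kbar)$ where $D_{i-1}$ and $D_i$ differ, and for $x \in \Si$ the point $x^{(1)}(\un\k)$ is the unique preimage of $x$ in the support of $\Fr(D_0) - D_r$. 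Thus the entire combinatorial datum defining $X(\un\k)$ is determined by the divisor parts $(D_0, \dotsc, D_r)$ alone.

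First I would show that $X(\cdot)$ factors through $[\frK_r]$. Suppose $\un\k \sim \un\k'$, so $D_i' = D_i + E$ for some $E \in \ZZ/2\ZZ[\Sig]$ and all $i$. Then $D_{i-1}' - D_i' = D_{i-1} - D_i$, giving $I(\un\k') = I(\un\k)$ and $x_i(\un\k') = x_i(\un\k)$ for $i \notin I(\un\k)$. Crucially, $E$ is $\Fr$-invariant when viewed in $\ZZ/2\ZZ[\Sig(\kbar)]$ (this is precisely how $\ZZ/2\ZZ[\Sig]$ sits inside $\ZZ/2\ZZ[\Sig(\kbar)]$), so $\Fr(D_0') - D_r' = \Fr(D_0) + \Fr(E) - D_r - E = \Fr(D_0) - D_r$, which gives $x^{(1)}(\un\k') = x^{(1)}(\un\k)$ for every $x \in \Si$. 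Hence $X(\un\k) = X(\un\k')$.

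For the injectivity statement, suppose $X(\un\k) = X(\un\k')$. Since $X(\un\k)$ is a coordinate subspace of $(X^r \times \frSi) \otimes \kbar$ whose free coordinates correspond exactly to $I(\un\k)$, the equality of the two subspaces yields $I(\un\k) = I(\un\k')$ together with $x_i(\un\k) = x_i(\un\k')$ for $i \notin I(\un\k)$ and $x^{(1)}(\un\k) = x^{(1)}(\un\k')$ for $x \in \Si$. The first set of equalities translates to $D_{i-1}' - D_i' = D_{i-1} - D_i$ for all $i$, so $E_i := D_i' - D_i \in \ZZ/2\ZZ[\Sig(\kbar)]$ is independent of $i$; call the common value $E$. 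The remaining equalities give $\Fr(D_0') - D_r' = \Fr(D_0) - D_r$, i.e., $\Fr(E) = E$, so $E$ lies in the $\Fr$-fixed subgroup of $\ZZ/2\ZZ[\Sig(\kbar)]$, which is precisely the image of $\ZZ/2\ZZ[\Sig]$. Hence $\un\k \sim \un\k'$, completing the proof. The argument is formal throughout; the only mildly delicate point is the identification of $\ZZ/2\ZZ[\Sig]$ with the $\Fr$-invariant subgroup of $\ZZ/2\ZZ[\Sig(\kbar)]$, which makes both directions compatible with the Frobenius twist appearing in the second condition defining $\frK_r$.
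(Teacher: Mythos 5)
Your proof is correct; the paper in fact omits the argument entirely ("The following lemma is a direct calculation"), and your write-up is exactly the intended computation: $I(\un\k)$, the points $x_i(\un\k)$, and the points $x^{(1)}(\un\k)$ are read off from the divisor parts alone, and the identification of $\ZZ/2\ZZ[\Sig]$ with the $\Fr$-invariants of $\ZZ/2\ZZ[\Sig(\kbar)]$ handles the Frobenius twist in both directions.
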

By the above lemma, for $\s\in[\frK_{r}]$, we may write
\begin{equation*}
X(\s), \quad I(\s)
\end{equation*}
for $X(\un\k)$ and $I(\un\k)$, where $\un\k$ is any element in the orbit $\s$.

\begin{cor}[of Lemma \ref{l:kappa mod} and Corollary \ref{c:Mod kappa}]  For $\un\k\in\frK_{r}$ and $\un\k>0$, the restriction of the map $\Pi^{r}_{G}: \Sht\to X^{r}\times \frSi$ to ${}^{\un\k}\Sht$ has image in $X(\un\k)$. We denote the resulting map by
\begin{equation*}
\pi_{\un\k}: {}^{\un\k}\Sht\to X(\un\k).
\end{equation*}
\end{cor}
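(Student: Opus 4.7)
The plan is to reduce the statement to what Corollary~\ref{c:Mod kappa} and Lemma~\ref{l:nonempty horo} already tell us about the individual modification legs and the Frobenius-Atkin--Lehner isomorphism. Let $(\cE_{0}^{\da},\dotsc,\cE_{r}^{\da};x_{1},\dotsc,x_{r};\{x^{(1)}\}_{x\in\Si};f_{1},\dotsc,f_{r};\iota)$ be a geometric point of ${}^{\un\k}\Sht$ lying over $\kbar$. By the definition of the horocycle ${}^{\un\k}\Sht$, each $\cE_{i}^{\da}$ is purely unstable with $\k(\cE_{i}^{\da})=\k_{i}$, so the condition $\un\k>0$ places us in the regime where Lemma~\ref{l:purely unst} is available for every $\cE_{i}^{\da}$.

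First I would analyze each modification leg. For every $i=1,\dotsc,r$, the tuple $(\cE_{i-1}^{\da},\cE_{i}^{\da},x_{i},f_{i})$ defines a geometric point of ${}^{\k_{i-1},\k_{i}}\Hk_{G}^{1}(\Sig)$. Corollary~\ref{c:Mod kappa}(1) forces $|\k_{i-1}-\k_{i}|=1$, which is already built into $\un\k\in\frK_{r}$. When $i\notin I(\un\k)$ the two instability indices $N_{i-1}$ and $N_{i}$ coincide, so by Corollary~\ref{c:Mod kappa}(2) the divisor parts $D_{i-1}$ and $D_{i}$ differ in a unique point $x_{i}(\un\k)\in\Sig(\kbar)$, and moreover $x_{i}=x_{i}(\un\k)$. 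This already fixes every coordinate of the image of $\Pi_{G}^{r}$ indexed by $i\notin I(\un\k)$, matching the defining conditions of $X(\un\k)$ on those coordinates; for $i\in I(\un\k)$, $x_{i}$ is allowed to vary in $X$, exactly as in $X(\un\k)$.

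Next I would handle the $\frSi$-factor via the Frobenius isomorphism $\iota\colon \cE_{r}\isom ({}^{\tau}\cE_{0})(\Di^{(1)})$ respecting the Iwahori level data, where $\Di^{(1)}=\sum_{x\in\Si}\tfrac{1}{2}\bx^{(1)}$ once the choices $\{x^{(1)}\}_{x\in\Si}$ have been specified. Pulling back under the arithmetic Frobenius $\tau$ takes $\cE_{0}^{\da}\in{}^{\k_{0}}\Bun_{G}(\Sig)$ to ${}^{\tau}\cE_{0}^{\da}\in{}^{(\Fr(D_{0}),N_{0})}\Bun_{G}(\Sig)$, because the invariant $\kappa$ from Lemma~\ref{l:purely unst} transforms under Frobenius by applying $\Fr$ to the divisor part and leaving the instability index unchanged. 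Twisting by the half-divisor $\sum_{x\in\Si}\tfrac{1}{2}x^{(1)}$ then adds $\sum_{x\in\Si} x^{(1)}$ to the divisor part modulo~$2$, without changing the instability index (this uses Lemma~\ref{l:purely unst}(2), together with the computation of $\kappa$ under fractional twists at $\Sig$). Comparing $\kappa$ on both sides of $\iota$ gives
\[
D_{r}\equiv \Fr(D_{0})+\sum_{x\in\Si}x^{(1)}\pmod{2}
\]
inside $\ZZ/2\ZZ[\Sig(\kbar)]$, i.e.\ $\sum_{x\in\Si} x^{(1)}\equiv D_{r}-\Fr(D_{0})$. By the second defining condition of $\frK_{r}$ (Lemma~\ref{l:nonempty horo}(2)), the class $D_{r}-\Fr(D_{0})$ already has exactly one $\kbar$-point above each place of $\Si$, namely the collection $\{x^{(1)}(\un\k)\}_{x\in\Si}$. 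Therefore $x^{(1)}=x^{(1)}(\un\k)$ for each $x\in\Si$, which pins down the $\frSi$-coordinate to exactly the value prescribed by $X(\un\k)$.

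Combining the two steps, every geometric point of ${}^{\un\k}\Sht$ maps into $X(\un\k)\subset (X^{r}\times\frSi)\otimes\kbar$, as desired. There is no serious obstacle here: the only mild subtlety is the bookkeeping for how the invariant $\kappa$ behaves under Frobenius and under twisting by half-divisors supported on $\Si$, and that is already implicit in Lemma~\ref{l:purely unst} and Corollary~\ref{c:kBunSig}.
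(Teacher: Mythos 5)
Your argument is correct and is exactly the (unwritten) argument the paper intends: Corollary \ref{c:Mod kappa}(2) pins the modification points $x_{i}$ for $i\notin I(\un\k)$, and the congruence $\Fr(D_{0})+\sum_{x\in\Si}x^{(1)}\equiv D_{r}\pmod 2$ coming from $\iota$ (as in the proof of Lemma \ref{l:nonempty horo}) pins each $x^{(1)}$ to $x^{(1)}(\un\k)$. Nothing further is needed.
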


\sss{Geometry of horocycles}
For any $N>0$, we have a map
\begin{equation*}
\D: {}^{N}\Bun_{G}\to \Pic_{X}^{N}
\end{equation*}
sending $\cE$ to the line bundle $\D(\cE)=\cL\ot\cM^{-1}$ of degree $N$ on $X$, where $\cL\subset \cE$ is the maximal line subbundle and $\cM=\cE/\cL$. 

Now if $\un\k\in\frK_{r}$ and $\un\k>0$, for $(\cE^{\da}_{i};\dotsc)\in {}^{\un\k}\Sht$, we have a sequence of line bundles $\D_{i}:=\D(\cE_{i}(\ha D_{i}))$ by the above construction applied to $\cE_{i}(\ha D_{i})\in {}^{N_{i}}\Bun_{G}$ (recall $\k_{i}=(D_{i}, N_{i})$, so $\cE_{i}(\ha D_{i})$ has the smallest index of instability among all fractional twists of $\cE_{i}$). By Lemma \ref{l:kappa mod}, these line bundles are related by canonical isomorphisms
\begin{equation*}
\D_{i}\cong \begin{cases} \D_{i-1} & \mbox{ if } N_{i}=N_{i-1}; \\ 
\D_{i-1}(x_{i}) & \mbox{ if } N_{i}=N_{i-1}+1;\\
\D_{i-1}(-x_{i}) & \mbox{ if } N_{i}=N_{i-1}-1.
\end{cases}
\end{equation*}
Finally $\D_{r}\cong {}^{\tau}\D_{0}$. Therefore $\D=(\D_{0},\dotsc, \D_{r})$ together with the above isomorphisms give a point in $\Sht^{N(\un\k)}_{1}$, the moduli of rank one Shtukas $(\cL_{0},\cL_{1},\dotsc, \cL_{r})$ over $X$ with $\deg(\cL_{i})=N_{i}$ (when $N_{i-1}=N_{i}$ we have an isomorphism $\cL_{i-1}\isom \cL_{i}$). This gives a morphism
\begin{equation*}
q_{\un\k}: {}^{\un\k}\Sht\to \Sht^{N(\un\k)}_{1}\ot\kbar.
\end{equation*}
through which  the canonical map $\Pi^{r}_{G}: {}^{\un\k}\Sht_{G}^{r}(\Sii)\to X(\un\k)\cong X^{I(\un\k)}\ot\kbar$ factors.

\begin{lemma}\label{l:horo fib} Suppose $\un\k\in \frK_{r}$ and $\un\k>\max\{2g-2,0\}$. Then the map $q_{\un\k}$ is smooth of relative dimension $r-\#I(\un\k)/2$. The geometric fibers of $q_{\un\k}$ are isomorphic to $[\Ga^{r-\#I(\un\k)/2}/Z]$ for some finite \'etale group scheme $Z$ acting on $\Ga^{r-\#I(\un\k)/2}$ via a homomorphism $Z\to \Ga^{r-\#I(\un\k)/2}$.
\end{lemma}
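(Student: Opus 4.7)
The strategy is to describe $q_{\un\k}$ explicitly by combining the structure of the unstable loci ${}^{N_{i}}\Bun_{G}$ when $N_{i} > 2g-2$ with the local Hecke analysis from Corollary~\ref{c:Mod kappa}. First, using Corollary~\ref{c:kBunSig}, I would replace each $\cE_{i}^{\dagger} \in {}^{\k_{i}}\Bun_{G}(\Sig)$ by its fractional twist $\cF_{i} := \cE_{i}(\ha D_{i}) \in {}^{N_{i}}\Bun_{G}$. Since $\un\k > 2g-2$, Serre duality gives $H^{1}(X, \Delta_{i}) = 0$, so every $\cF_{i}$ splits as $\cF_{i} = \cL_{i} \oplus \cM_{i}$ with $\Delta_{i} := \cL_{i}\otimes\cM_{i}^{-1}$, and is determined in $\Bun_{G}$ by the image $\Delta_{i} \in \Pic_{X}^{N_{i}}$; its automorphism group in $\Bun_{G}$ is $\Gm \ltimes H^{0}(X, \Delta_{i})$ (semidirect product of a scalar torus and the unipotent group of sections of $\Delta_{i}$). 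In particular, ${}^{N_{i}}\Bun_{G} \to \Pic_{X}^{N_{i}}$ is a trivial gerbe.

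Next, I would apply Corollary~\ref{c:Mod kappa} step by step to count the local contribution to the fiber of $q_{\un\k}$ over a fixed point of $\Sht^{N(\un\k)}_{1}$. An \emph{up} step $i \in I(\un\k)$ with $N_{i} = N_{i-1}+1$ falls in case~(3), and the modification is uniquely determined by $\cF_{i-1}$ and the prescribed point $x_{i}$, contributing nothing. A \emph{down} step $i \in I(\un\k)$ (case~(4)) contributes a single $\Ga$: once $\cF_{i-1}$ and $x_{i}$ are fixed, the choice of $\cF_{i}$ with the correct lower instability is parameterized by the lines in $\PP(\cF_{i-1}|_{x_{i}}) \cong \PP^{1}$ transverse to the maximal-subbundle direction, i.e., by an $\AA^{1}$. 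A horizontal step $i \notin I(\un\k)$ (case~(2)) contributes one copy of $\Ga$ coming from the automorphism quotient $H^{0}(X, \Delta_{i}(x_{i}(\un\k)))/H^{0}(X, \Delta_{i})$, which is one-dimensional over $\kbar$ because $\Delta_{i}$ has degree $>2g-2$. Summing gives $(\#I(\un\k)/2)\cdot 1 + (r - \#I(\un\k))\cdot 1 = r - \#I(\un\k)/2$ copies of $\Ga$ in total.

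Finally, the Frobenius iso $\iota\colon \cF_{r} \cong ({}^{\tau}\cF_{0})(\Di^{(1)})$ lifting the fixed rank-one Frobenius $\iota_{\Delta}$ is a torsor under $H^{0}(X, \Delta_{0})$, the kernel of $\Aut_{\Bun_{G}}(\cF_{0}) \to \Aut(\Delta_{0})$. Applying Lang's theorem to each smooth connected vector group $H^{0}(X, \Delta_{i})$ shows that the twisted Frobenius $1 - \Fr$ is surjective on these groups, so the Frobenius torsor is absorbed into the bundle-automorphism groupoid without cutting any dimension. What remains is the translation action on the affine coordinates by the finite \'etale group scheme $Z$ of Frobenius-fixed scalar automorphisms, arising from the residual $\Gm$-centers of the $\Aut_{\Bun_{G}}(\cF_{i})$; this gives the fiber $[\Ga^{r-\#I(\un\k)/2}/Z]$ with $Z \to \Ga^{r-\#I(\un\k)/2}$ encoding this action. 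The claimed smoothness and relative dimension of $q_{\un\k}$ follow by globalizing this fiberwise description, since each local $\Ga$-contribution comes from a smooth flat morphism.

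The main obstacle will be the careful bookkeeping of how the bundle-automorphism groupoid absorbs both the $H^{0}$-torsor from Frobenius and the higher $H^{0}$-cohomology from the horizontal modification steps into a single globally trivial $\Ga^{r-\#I(\un\k)/2}$-bundle. In particular, verifying that Lang's surjectivity applies uniformly so the Frobenius twist acts through $1-\Fr$ on all the relevant vector groups---and that the only residual stabilizer is the finite \'etale $Z$---is what ultimately yields the smoothness of $q_{\un\k}$ with the claimed fiber structure.
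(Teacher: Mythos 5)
Your dimension bookkeeping by step-type is correct and arrives at the right total, and the overall strategy of reducing to the rank-one picture via Corollary \ref{c:kBunSig} and then applying Corollary \ref{c:Mod kappa} step by step is the same as the paper's. But two genuine gaps remain. First, smoothness of $q_{\un\k}$ as a morphism must be proved over an arbitrary base $S$, not just over geometric points, and the appeal to Lang's theorem (inherently a statement over a finite field) followed by ``globalizing the fiberwise description'' does not deliver this. The paper instead works globally over $S$: with $H_i = p_{S*}\D_i$ and $J_i = p_{S*}\D'_i$ vector bundles over $S$, the preimage of $\un\D$ in the Hecke stack is presented as the quotient $H_0\backslash J_1\twtimes{H_1}J_2\twtimes{H_2}\cdots\twtimes{H_{r-1}}J_r/H_r$, and then the Shtuka fiber is identified as $q_{\un\k}^{-1}(\un\D)\cong (J_1\twtimes{H_1}\cdots\twtimes{H_{r-1}}J_r)/H_0$, where $H_0$ acts by translation on $J_1$ and, via the relative Frobenius $\Fr_{H_0/S}\colon H_0\to H_r$, on $J_r$. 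Since the $J_1$-translation alone is already free, this exhibits the fiber as a quotient of a vector bundle by a freely acting vector group, which is smooth over $S$ on sight, with no recourse to Lang.

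Second, your identification of $Z$ as arising from ``residual $\Gm$-centers of $\Aut_{\Bun_G}(\cF_i)$'' is not right. Those multiplicative automorphisms are entirely absorbed into the $\Gm$-gerbe $\Pic^{N_i}_X$: the fiber of $\D\colon {}^{N_i}\Bun_G\to\Pic^{N_i}_X$ over $\D_i$ is $\BB H^0(X,\D_i)$, not $\BB(\Gm\ltimes H^0(X,\D_i))$, because the scalar automorphism $a/d$ of $\cL\oplus\cM$ maps isomorphically to the automorphism group of $\D_i$ in $\Pic^{N_i}_X$. The group scheme $Z$ in the statement is instead the stabilizer of the twisted $H_0$-translation above; it lives inside the additive group $H_0$, which is exactly why the statement can say $Z$ acts ``via a homomorphism $Z\to\Ga^{r-\#I(\un\k)/2}$'' — something a subgroup of the $\Gm$-centers could not do. You flag the ``careful bookkeeping'' as the remaining obstacle, and indeed that bookkeeping — the explicit $S$-level quotient presentation and the correct identification of $Z$ — is the actual content of the proof.
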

\begin{proof}
The argument is similar to \cite[Lemma 7.5]{YZ}, so we only sketch the difference with the situation without level structures. We define ${}^{\un\k}\Hk^{r}_{G}(\Sig)\subset\Hk^{r}_{G}(\Sig)\ot\kbar $ to be the locally closed substack where $\k(\cE^{\da}_{i})=\k_{i}$ for $0\le i\le r$. Then ${}^{\un\k}\Hk^{r}_{G}(\Sig)$ is the iterated fiber product of ${}^{\k_{i-1},\k_{i}}\Hk^{1}_{G}(\Sig)$.  By definition, we have a Cartesian diagram
\begin{equation}\label{pre Cart Sht kappa}
\xymatrix{ {}^{\un\k}\Sht^{r}_{G}(\Sii)\ar[r]\ar[d]^{p_{0}} & {}^{\un\k}\Hk^{r}_{G}(\Sig)\ar[d]^{(p_{0}, \AL_{G,\infty}\circ p_{r})}\\
{}^{\k_{0}}\Bun_{G}(\Sig)\ar[r]^-{(\id, \Fr_{/\kbar})} & {}^{\k_{0}}\Bun_{G}(\Sig)\times{}^{\Fr(\k_{0})}\Bun_{G}(\Sig) }
\end{equation}
where the map $\Fr_{/\kbar}: {}^{\k_{0}}\Bun_{G}(\Sig)\to {}^{\Fr(\k_{0})}\Bun_{G}(\Sig)$ is the restriction of the $\kbar$-linear Frobenius $\Fr\times\id_{\kbar}: \Bun_{G}(\Sig)\otimes\kbar\to \Bun_{G}(\Sig)\otimes\kbar$ to the stratum ${}^{\k_{0}}\Bun_{G}(\Sig)$. Using Corollary \ref{c:kBunSig}, we may replace the bottom row by $(\id, \Fr\times\id_{\kbar}):{}^{N_{0}}\Bun_{G}\otimes\kbar\to ({}^{N_{0}}\Bun_{G}\otimes\kbar)\times_{\kbar}({}^{N_{0}}\Bun_{G}\otimes\kbar)$. The diagram \eqref{pre Cart Sht kappa} now reads
\begin{equation}\label{Cart Sht kappa}
\xymatrix{{}^{\un\k}\Sht^{r}_{G}(\Sii)\ar[rr]\ar[d]^{h_{0}} && {}^{\un\k}\Hk^{r}_{G}(\Sig)\ar[d]^{(h_{0}, h_{r})}\\
{}^{N_{0}}\Bun_{G}\ot\kbar\ar[rr]^-{(\id, \Fr\times\id_{\kbar})} && ({}^{N_{0}}\Bun_{G}\ot\kbar)\times_{\kbar}({}^{N_{0}}\Bun_{G}\ot\kbar)}
\end{equation}
where $h_{i}: {}^{\un\k}\Hk^{r}_{G}(\Sig)\to {}^{N_{i}}\Bun_{G}\otimes\kbar$ is the composition of $p_{i}$ with the isomorphism ${}^{\k_{i}}\Bun_{G}(\Sig)\isom{}^{N_{i}}\Bun_{G}\otimes\kbar$ in Corollary \ref{c:kBunSig}.

Let $S$ be a $\kbar$-algebra.  Fix an $S$-point $\un y=(y_{1},\dotsc, y_{r})\in X(\un\k)$, denote ${}^{\un\k}\Hk^{r}_{G}(\Sig)_{\un y}$ the fiber over $\un y$. Let ${}^{N}\Bun_{G,S}$ be the base change of ${}^{N}\Bun_{G}$ from $\Spec k$ to $S$. 

For $1\le i\le r$, let
\begin{equation*}
M_{i}=\min\{N_{i-1},N_{i}\}+1.
\end{equation*}
Then using the description of ${}^{\k_{i-1},\k_{i}}\Hk^{1}_{G}(\Sig)$ in Corollary \ref{c:Mod kappa}, we get an isomorphism
\begin{equation}\label{Hky}
{}^{\un\k}\Hk^{r}_{G}(\Sig)_{\un y}\cong {}^{N_{0}}\Bun_{G,S}\times_{{}^{M_{1}}\Bun_{G,S}}{}^{N_{1}}\Bun_{G,S}\times_{{}^{M_{2}}\Bun_{G,S}}{}^{N_{2}}\Bun_{G,S}\times\cdots\times_{{}^{M_{r}}\Bun_{G,S}}{}^{N_{r}}\Bun_{G,S}
\end{equation}
where the maps ${}^{N_{i-1}}\Bun_{G,S}\to {}^{M_{i}}\Bun_{G,S}$ and ${}^{N_{i}}\Bun_{G,S}\to {}^{M_{i}}\Bun_{G,S}$ are either the identity map or the pushout $\lrcorner_{y_{i}}$. 

There is a map $\D_{\Hk,\un y}: {}^{\un\k}\Hk_{G}^{r}(\Sig)_{\un y}\to \Pic^{N_{0}}_{X,S}\times\Pic_{X,S}^{N_{r}}$, which is induced by the map $\D:{}^{N_{i}}\Bun_{G}\to \Pic_{X}^{N_{i}}$ on each factor in \eqref{Hky}. Now we fix an $S$-point $\un\D=(\D_{0}, \D_{1},\dotsc, \D_{r})\in \Sht^{N(\un\k)}_{1}(S)$ over $\un y$, namely $\deg \D_{i}=N_{i}$ and $\D_{i}=\D_{i-1}((N_{i}-N_{i-1})y_{i})$ for $1\le i\le r$. Let $E_{i}\subset {}^{N_{i}}\Bun_{G,S}$ be the preimage of $\D_{i}\in \Pic_{X}^{N_{i}}(S)$ under $\D$ (so $E_{i}$ is an $S$-stack). Since $N_{i}>\max\{2g-2,0\}$, we have $E_{i}\cong \BB H_{i}$ is the classifying space of  the vector bundle $H_{i}=p_{S*}\D_{i}$ over $S$ (where $p_{S}:X\times S\to S$).  Similarly, we let $C_{i}\subset {}^{M_{i}}\Bun_{G,S}$ be the preimage of the following line bundle under $\D$
\begin{equation*}
\D'_{i}:=\begin{cases}\D_{i}(y_{i}) & \textup{if }N_{i}=N_{i+1},\\
\D_{i} & \textup{if } N_{i}=N_{i-1}+1,\\
\D_{i-1} & \textup{if } N_{i}=N_{i-1}-1.\end{cases}
\end{equation*}
We have $C_{i}\cong\BB J_{i}$ for the vector bundle $J_{i}=p_{S*}\D'_{i}$ over $S$. The canonical embeddings $\D_{i-1},\D_{i}\incl \D'_{i}$ induce embeddings  $H_{i-1}\incl J_{i}$ and $H_{i}\incl J_{i}$, hence maps $E_{i-1}\to C_{i}$ and $E_{i}\to C_{i}$ for $1\le i\le r$. By \eqref{Hky}, the preimage of $\un\D$ under  $\D_{\Hk,\un y}$ is
\begin{equation*}
E_{0}\times_{C_{1}}E_{1}\times_{C_{2}}\cdots\times_{C_{r}}E_{r}
\end{equation*}
which is isomorphic to the stack over $S$
\begin{equation*}
H_{0}\bs J_{1}\twtimes{H_{1}}J_{2}\twtimes{H_{2}}\cdots\twtimes{H_{r-1}}J_{r}/H_{r}
\end{equation*}
which is the quotient of $J_{1}\times \cdots J_{r}$ (product over $S$) by the action of $H_{0}$ on $J_{1}$, the diagonal action of $H_{1}$ on $J_{1}$ and $J_{2}$,..., the diagonal action of $H_{i}$ on $J_{i}$ and $J_{i+1}$,..., and the action of $H_{r}$ on $J_{r}$.   

Using the Cartesian diagram \eqref{Cart Sht kappa}, we get
\begin{equation*}
q_{\un\k}^{-1}(\un\D)\cong (J_{1}\twtimes{H_{1}}J_{2}\twtimes{H_{2}}\cdots\twtimes{H_{r-1}}J_{r})/H_{0}
\end{equation*}
where the action of $H_{0}$ is by translation on $J_{1}$ and on $J_{r}$,  via composing with the relative Frobenius $\Fr_{H_{0}/S}: H_{0}\to H_{r}$ and the $H_{r}$-translation on $J_{r}$. This presentation shows that $q_{\un\k}^{-1}(\un\D)$ is smooth over $S$. Hence $q_{\un\k}$ is smooth. 

To calculate the relative dimension of $q_{\un\k}$, we take $S=\Spec K$ to be a geometric point, and
\begin{equation*}
\dim q_{\un\k}^{-1}(\un\D)=\sum_{i=1}^{r}\dim J_{i}-\sum_{i=0}^{r-1}\dim H_{i}.
\end{equation*}
Since
\begin{equation*}
\dim J_{i}-\dim H_{i-1}=\dim\cohog{0}{X_{K}, \D'_{i}}-\dim\cohog{0}{X_{K}, \D_{i-1}}=\begin{cases} 1 & \textup{ if } N_{i}=N_{i-1} \textup{ or } N_{i}=N_{i-1}-1,\\
0 & \textup{ if } N_{i}=N_{i-1}+1,\end{cases}
\end{equation*}
we see that
\begin{equation*}
\dim q_{\un\k}^{-1}(\un\D)=r-\#\{1\le i\le r|N_{i}=N_{i-1}-1\}=r-\#I(\un\k)/2.
\end{equation*}
This proves the dimension part of the statement. The rest of the argument is the same as the last part of the proof of \cite[Lemma 7.5]{YZ}, using the fact that the translation of $H_{0}$ on $J_{1}$ induces a free action on the vector space $J_{1}\twtimes{H_{1}}J_{2}\twtimes{H_{2}}\cdots\twtimes{H_{r-1}}J_{r}$.
\end{proof}

\begin{cor}[of Lemma \ref{l:horo fib}]\label{c:Pk perv} Suppose  $\un\k\in \frK_{r}$ and $\un\k>\max\{2g-2,0\}$. Let $\pi_{1}^{N(\un\k)}: \Sht^{N(\un\k)}_{1}\ot\kbar \to X(\un\k)$ be the projection. Then we have a canonical isomorphism
\begin{equation*}
\bR \pi_{\un\k,!}\Ql\cong \bR\pi_{1,!}^{N(\un\k)}\Ql[-2r+\#I(\un\k)](-r+\#I(\un\k)/2).
\end{equation*}
In particular, $\bR \pi_{\un\k,!}\Ql$ is a local system shifted in degree $2r-\#I(\un\k)$, and
\begin{equation}\label{defn Pk}
P_{\un\k}:=\bR \pi_{\un\k,!}\Ql[2r](r)\in D^{b}(X(\un\k),\Ql)
\end{equation}
is a perverse sheaf on $X(\un\k)$ with full support and pure of weight 0.
\end{cor}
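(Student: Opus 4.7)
The key geometric input is Lemma \ref{l:horo fib}, which factorizes $\pi_{\un\k}$ as
\[
{}^{\un\k}\Sht \xrightarrow{q_{\un\k}} \Sht^{N(\un\k)}_{1}\otimes\kbar \xrightarrow{\pi_{1}^{N(\un\k)}} X(\un\k),
\]
with $q_{\un\k}$ smooth of relative dimension $d:=r-\#I(\un\k)/2$ and geometric fibers isomorphic to $[\Ga^{d}/Z]$ for a finite étale $Z$ acting via a homomorphism $Z\to\Ga^{d}$. The proof of the corollary reduces to computing $\bR q_{\un\k,!}\Ql$ and then combining with the (easy) statement about rank-one Shtukas.

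First I would compute the compactly supported cohomology of a geometric fiber $[\Ga^{d}/Z]$. Since $Z$ acts on $\Ga^{d}$ by translation via $Z\to\Ga^{d}$, its action on $\cohoc{*}{\Ga^{d},\Ql}$ is trivial; since $|Z|$ is invertible in $\Ql$ (as $\ell\ne p$), the natural map identifies $\cohoc{*}{[\Ga^{d}/Z],\Ql}$ with $\cohoc{*}{\Ga^{d},\Ql}^{Z}=\Ql(-d)$ in degree $2d$. Next I would upgrade this stalkwise computation to an isomorphism of complexes $\bR q_{\un\k,!}\Ql\cong\Ql[-2d](-d)$. For this I would use the explicit fiber-bundle description built in the proof of Lemma \ref{l:horo fib}: after choosing an $S$-point $\un\D\in\Sht_{1}^{N(\un\k)}$ lifting $\un y\in X(\un\k)$, the preimage $q_{\un\k}^{-1}(\un\D)$ is the quotient of a vector bundle $J_{1}\twtimes{H_{1}}\cdots\twtimes{H_{r-1}}J_{r}$ over $S$ by the translation action of the vector bundle $H_{0}$ (through Frobenius). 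Since this presentation globalizes over $\Sht_{1}^{N(\un\k)}$ (using that $N_{i}>2g-2$ so all the $p_{S*}\D_{i}$ are honest vector bundles of fixed rank), $q_{\un\k}$ is a fiber bundle whose fibers are quotients of affine spaces by translation of affine groups; the monodromy on the top cohomology of the fibers is therefore trivial, giving the claimed isomorphism of complexes.

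Then I would apply $\bR\pi_{1,!}^{N(\un\k)}$ to this isomorphism and use the projection formula:
\begin{equation*}
\bR\pi_{\un\k,!}\Ql\cong\bR\pi_{1,!}^{N(\un\k)}\bR q_{\un\k,!}\Ql\cong\bR\pi_{1,!}^{N(\un\k)}\Ql[-2d](-d),
\end{equation*}
which, after substituting $d=r-\#I(\un\k)/2$, gives the displayed isomorphism. For the perversity statement I would invoke the standard fact that the moduli of rank-one Shtukas $\Sht_{1}^{N(\un\k)}$ with legs of prescribed types is finite étale over its base $X(\un\k)\cong X^{I(\un\k)}\otimes\kbar$ via $\pi_{1}^{N(\un\k)}$, hence $\bR\pi_{1,!}^{N(\un\k)}\Ql$ is a local system of $\Ql$-vector spaces placed in cohomological degree $0$, pure of weight $0$, and with full support. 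Thus
\begin{equation*}
P_{\un\k}=\bR\pi_{\un\k,!}\Ql[2r](r)\cong\bR\pi_{1,!}^{N(\un\k)}\Ql[\#I(\un\k)](\tfrac{\#I(\un\k)}{2}),
\end{equation*}
and this is a local system on a smooth variety of dimension $\#I(\un\k)$ shifted by its dimension and Tate-twisted by half that, so it is perverse, pure of weight $0$, with full support.

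The main obstacle is the promotion from the stalk-wise computation $\cohoc{*}{[\Ga^{d}/Z],\Ql}=\Ql(-d)[-2d]$ to the isomorphism of complexes $\bR q_{\un\k,!}\Ql\cong\Ql[-2d](-d)$; this is where one has to unpack the explicit construction of $q_{\un\k}$ inside the proof of Lemma \ref{l:horo fib} to see that it is a fiber bundle of ``translation'' type, so that the monodromy of the top-degree cohomology sheaf is trivial. Everything else (the projection formula, finite étale-ness of $\pi_{1}^{N(\un\k)}$, perversity and purity of a shifted local system) is formal.
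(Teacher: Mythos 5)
Your proof is correct and follows exactly the route the paper intends: factor $\pi_{\un\k}$ through $q_{\un\k}$, use Lemma \ref{l:horo fib} to see $\bR q_{\un\k,!}\Ql\cong\Ql[-2d](-d)$ with $d=r-\#I(\un\k)/2$, and then use that $\pi_{1}^{N(\un\k)}$ is finite \'etale (a torsor under the finite groupoid $\Pic^{0}_{X}(k)$) to get perversity, purity and full support of the shifted, twisted local system. The only simplification worth noting is that the complex-level isomorphism $\bR q_{\un\k,!}\Ql\cong\Ql[-2d](-d)$ needs no monodromy analysis of the explicit fibration: since $q_{\un\k}$ is smooth of relative dimension $d$ with geometrically connected fibers whose compactly supported cohomology is concentrated in degree $2d$, base change kills $R^{i}q_{\un\k,!}\Ql$ for $i\ne 2d$ and the trace map canonically identifies $R^{2d}q_{\un\k,!}\Ql$ with $\Ql(-d)$.
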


\sss{When  is $\Sht^{r}_{G}(\Sii)$ of finite type?}\label{sss:ft} 
Let
\begin{eqnarray*}
\frK^{\sh}_{r}&=&\{\un\k\in \frK_{r}|\un\k>\max\{2g-2,0\}\}\\
{}^{\sh}\Sht_{\kbar}&=&\cup_{\un\k\in\frK^{\sh}_{r}}{}^{\un\k}\Sht.
\end{eqnarray*}
Then ${}^{\sh}\Sht$ consists of $(\cE^{\da}_{i};\dotsc)$  where all $\inst(\cE^{\da}_{i})> \max\{2g-2,0\}$, therefore it is a closed substack of $\Sht$.  Let ${}^{\fl}\Sht=\Sht-{}^{\sh}\Sht$ be its open complement. 

\begin{lemma}\label{l:ft}
The substack ${}^{\fl}\Sht$ is of finite type over $\kbar$. 
\end{lemma}
\begin{proof}
Let $(\cE^{\da}_{i};\dotsc) $ be a geometric point of ${}^{\fl}\Sht$. Then for some $i_{0}$, $\inst(\cE^{\da}_{i_{0}})\le \max\{2g-2,0\}$, hence $\inst(\cE_{i_{0}})\le \max\{2g-2, 0\}+\deg\Sig$. Since  $\cE_{0}$ is related to $\cE_{i_{0}}$ by at most $r$ steps of elementary modifications, we have $\inst(\cE_{0})\le r+\max\{2g-2,0\}+\deg\Sig=:c$ for any $i$. Then  ${}^{\fl}\Sht$ is contained in the preimage of ${}^{\le c}\Bun_{G}$ under the map $\un p_{0}: \Sht\to \Bun_{G}$ (recording only $\cE_{0}$). Since $\un p_{0}$ is of finite type and ${}^{\le c}\Bun_{G}$ is of finite type over $k$,  so is ${}^{\fl}\Sht$.
\end{proof}

\begin{cor}[of Lemma \ref{l:ft} and Lemma \ref{l:horo fib}] The stack $\Sht^{r}_{G}(\Sii)$ is of finite type over $k$ if and only if $r<\#\Si$.
\end{cor}
\begin{proof}
If $r<\#\Si$, then the set $\frK_{r}$ is empty. In fact, if $\un\k=(\k_{0},\dotsc, \k_{r})\in \frK_{r}$, then the first condition defining $\frK_{r}$  implies $|D_{r}-D_{0}|\le  r$ ($D_{i}$ is the divisor part of $\k_{i}$), while the second condition implies that for each $x\in \Si$, $D_{0}$ and $D_{r}$ must differ at a geometric point above $x$, hence $|D_{r}-D_{0}|\ge \#\Si$. Therefore $\Sht={}^{\fl}\Sht$, which is of finite type over $\kbar$ by Lemma \ref{l:ft}. This implies that $\Sht$ is of finite type over $k$.

Conversely,  if $r\ge\#\Si$, then the set $\frK^{\sh}_{r}$ is infinite as can be seen in the following way. Write $\Si=\{x_{1},\cdots, x_{m}\}$, and fix $x^{(1)}_{i}\in X(\kbar)$ above each $x_{i}$. Let $D_{0}=0$, $D_{i}=x^{(1)}_{1}+\cdots+x^{(1)}_{i}$ for $1\le i\le m$,  and $D_{m}=D_{m+1}=\cdots=D_{r}$. Then take $N_{0}=\cdots=N_{m}$ and $N_{j}=N_{j-1}\pm1$ for $m<j\le r$ such that $N_{r}=N_{m}$ and $N_{i}>\max\{2g-2,0\}$ for all $0\le i\le r$ (there are infinitely many such sequences $(N_{i})$). Let $\k_{i}=(D_{i}, N_{i})$, then $\un\k=(\un\k_{1},\cdots,\un\k_{r})\in\frK^{\sh}_{r}$.  For each $\un\k\in\frK^{\sh}_{r}$ ,  ${}^{\un\k}\Sht$ is non-empty by Lemma \ref{l:horo fib}. Therefore $\Sht$ is not of finite type over $\kbar$ in this case.
\end{proof}

\subsection{Cohomological spectral decomposition}\label{ss:coho spec decomp}
In this subsection, we continue to use the abbreviations $\Sht, {}^{\un\k}\Sht$ as in \S\ref{ss:horo}. Let
\begin{equation*}
V=\cohoc{2r}{\Sht,\Ql}(r).
\end{equation*}
Since $\Sht$ is the union of open substacks ${}^{\le \un\k}\Sht$ for $\un\k\in\frK_{r}$, we have by definition
\begin{equation*}
V=\varinjlim_{\un\k\in\frK_{r},\un\k>0}\cohoc{2r}{{}^{\le \un\k}\Sht,\Ql}(r).
\end{equation*}

For $\un\k\in \frK_{r},\un\k>0$, let $\pi_{\le\un\k}: {}^{\le\un\k}\Sht\to (X^{r}\times\frSi)\ot\kbar$ be the restriction of $\Pi^{r}_{G}$. Let
\begin{equation*}
K_{\le \un\k}=\bR\pi_{\le\un\k,!}\Ql[2r](r)\in D^{b}((X^{r}\times\frSi)\ot\kbar,\Ql).
\end{equation*}
For $0<\un\k\le \un\k' \in\frK_{r}$, the open inclusion ${}^{\le\un\k}\Sht\incl {}^{\le\un\k'}\Sht$ induces a map
\begin{equation*}
\io_{\un\k,\un\k'}: K_{\le \un\k}\to K_{\le \un\k'}.
\end{equation*}

\sss{Ind-perverse sheaves}
The perverse sheaves $\{\pH^{i}K_{\le\un\k}\}_{\un\k\in \frK_{r}}$ form an inductive system indexed by the directed set $\frK_{r}$. Consider the inductive limit
\begin{equation*}
\pH^{i}K:=\varinjlim_{\un\k}\pH^{i}K_{\le\un\k}\in \ind \Perv((X^{r}\times\frSi)\ot\kbar,\Ql).
\end{equation*}
Here the right side is the category of ind-objects in the abelian category $\Perv((X^{r}\times\frSi)\ot\kbar,\Ql)$ of perverse constructible sheaves on $(X^{r}\times\frSi)\ot\kbar$, which is again an abelian category. Note that the notation $\pH^{i}K$ comes as a whole, as we are not defining $K$ as the inductive limit of $K_{\le\un\k}$, but only defining the ind-perverse sheaves $\pH^{i}K$. 

\begin{defn} Let $\ph: P\to P'$ be a morphism in $\ind \Perv((X^{r}\times\frSi)\ot\kbar,\Ql)$.
\begin{enumerate}
\item We say $\ph$ is an {\em mc-isomorphism} (mc for modulo constructibles), if the kernel and cokernel of $\ph$ are in the essential image of the natural embedding $\Perv((X^{r}\times\frSi)\ot\kbar,\Ql)\incl \ind\Perv((X^{r}\times\frSi)\ot\kbar,\Ql)$. 
\item We say $\ph$ is {\em mc-zero} if its image is in the essential image of the natural embedding $\Perv((X^{r}\times\frSi)\ot\kbar,\Ql)\incl \ind\Perv((X^{r}\times\frSi)\ot\kbar,\Ql)$. 
\end{enumerate}
\end{defn}

Likewise we have the notion of an mc-commutative square of ind-perverse sheaves, i.e.,  the appropriate difference of the compositions is mc-zero.  Concatenation of mc-commutative squares is still mc-commutative.

\begin{lemma}\label{l:pH tran}
Let $0<\un\k\le \un\k' \in\frK_{r}$. Then the map $\io_{\un\k,\un\k'}$ on the perverse cohomology sheaves
\begin{equation*}
\pH^{i}\io_{\un\k,\un\k'}: \pH^{i}K_{\le \un\k}\to \pH^{i}K_{\le \un\k'}
\end{equation*} 
is injective for $i=0$, surjective for $i=1$ and an isomorphism for $i\ne0,1$.

In particular, $\pH^{i}K$ is eventually stable when $i\ne0$ (i.e.,  the natural map $\pH^{i}K_{\le\un\k}\to \pH^{i}K$ is an isomorphism for sufficiently large $\un\k$).
\end{lemma}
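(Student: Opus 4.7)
The plan is to apply excision to the open embedding $j\colon {}^{\le\un\k}\Sht \incl {}^{\le\un\k'}\Sht$ with closed complement $Z := {}^{\le\un\k'}\Sht \setminus {}^{\le\un\k}\Sht$. Pushing forward via $\Pi^r_G$ yields a distinguished triangle
\[
K_{\le\un\k} \xr{\io_{\un\k,\un\k'}} K_{\le\un\k'} \to K_Z \xr{+1}
\]
in $D^b_c((X^r\times\frSi)\ot\kbar,\Ql)$, where $K_Z := \bR\pi_{Z,!}\Ql[2r](r)$. The key assertion is that $K_Z$ is perverse; granted this, the long exact sequence of perverse cohomology sheaves immediately yields the stated conclusions, since $\pH^i K_Z=0$ for $i\ne0$ collapses the sequence to an injection at $i=0$, a surjection at $i=1$, and isomorphisms elsewhere.

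To prove $K_Z$ is perverse, I stratify $Z$ by the horocycles ${}^{\un\k''}\Sht$ for $\un\k''\in\frK_r$ satisfying $\un\k''\le\un\k'$ and $\un\k''\not\le\un\k$; this is a finite collection since $\Sig(\kbar)$ is finite and the integer parts $N_i(\un\k'')$ are bounded above by $N_i(\un\k')$ and below by $1$. Enumerate these as $\un\k'_1,\dots,\un\k'_n$ by refining $\le$ with larger elements first, so that $Z_j := Z\setminus({}^{\un\k'_1}\Sht\cup\cdots\cup{}^{\un\k'_{j-1}}\Sht)$ is a decreasing chain of open substacks of $Z$ in which ${}^{\un\k'_j}\Sht$ sits as a closed substack. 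Iterated excision produces distinguished triangles
\[
K_{Z_{j+1}}\to K_{Z_j}\to K_{\un\k'_j}\xr{+1},
\]
with $K_{\un\k'_j}=\bR\pi_{\un\k'_j,!}\Ql[2r](r)$. By Corollary \ref{c:Pk perv}, each $K_{\un\k'_j}$ is the closed pushforward of the perverse sheaf $P_{\un\k'_j}$ along $X(\un\k'_j)\incl X^r\times\frSi$, hence perverse on $X^r\times\frSi$. Since extensions of perverse sheaves are perverse, descending induction on $j$ yields that $K_Z=K_{Z_1}$ is perverse.

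For the ``eventually stable'' assertion, the case $i\ne 0,1$ is immediate as the transition maps are already isomorphisms. For $i=1$, the transitions are surjections in the abelian category of perverse sheaves on $(X^r\times\frSi)\ot\kbar$, a scheme of finite type over $\kbar$; every object in this category has finite length, hence only finitely many quotients up to isomorphism. Fixing any $\un\k_0$, the directed system $\{\pH^1K_{\le\un\k}\}_{\un\k\ge\un\k_0}$ consists of quotients of $\pH^1K_{\le\un\k_0}$ and therefore stabilizes; consequently $\pH^1K_{\le\un\k}\to\pH^1K$ is an isomorphism for $\un\k$ sufficiently large. The main technical point is the perversity of $K_Z$, and the subtle aspect there is ensuring the bound $\un\k'_j>\max\{2g-2,0\}$ required by Corollary \ref{c:Pk perv} on every stratum of the filtration; contributions from strata violating this bound remain constructible and, in the mc-isomorphism framework introduced before the lemma, only affect $K_Z$ up to a constructible error, leaving the stated conclusions unchanged.
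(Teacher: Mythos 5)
Your proof is essentially the paper's own argument: the cone of $\io_{\un\k,\un\k'}$ is the direct image complex of ${}^{\le\un\k'}\Sht-{}^{\le\un\k}\Sht$, which is stratified by horocycles and hence a successive extension of the perverse sheaves $P_{\un\k''}$ of Corollary \ref{c:Pk perv}, so it is perverse and the perverse long exact sequence yields the injectivity at $i=0$, surjectivity at $i=1$ and isomorphisms elsewhere, with the finite-length argument for stabilization at $i=1$ being the same (implicit) step the paper uses. The only place you deviate is your closing caveat about strata with $\un\k''\not>\max\{2g-2,0\}$: the paper simply applies Corollary \ref{c:Pk perv} to every stratum of the complement, and your proposed repair via ``constructible error'' in the mc-framework would, if it were actually needed, only give the mc-weakened conclusions rather than the exact statements of the lemma — so it is not a valid patch, but since the paper's proof makes no such adjustment either, this does not constitute a difference in approach.
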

\begin{proof} Let ${}^{(\un\k,\un\k']}\Sht={}^{\le\un\k'}\Sht-{}^{\le\un\k}\Sht$, which is a union of horocycles ${}^{\un\k''}\Sht$ for $\un\k''\le \un\k'$ but $\un\k''\nleq\un\k$. The horocycles form a stratification of ${}^{\le\un\k'}\Sht-{}^{\le\un\k}\Sht$. Let $\pi_{(\un\k,\un\k']}:{}^{(\un\k,\un\k']}\Sht\to(X^{r}\times\frSi)\ot\kbar $ be the projection. Then $K_{(\un\k,\un\k']}:=\bR\pi_{(\un\k,\un\k'],!}\Ql[2r](r)$ is the cone of $\io_{\un\k,\un\k'}$, and it is a successive extension of $P_{\un\k''}$ (see \eqref{defn Pk}), viewed as a complex on $(X^{r}\times\frSi)\ot\kbar$. By Corollary \ref{c:Pk perv}, $P_{\un\k''}$ is a perverse sheaf, therefore so is $K_{(\un\k,\un\k']}$. The long exact sequence for the perverse cohomology sheaves attached to the triangle $K_{\le \un\k}\to K_{\le \un\k'}\to K_{(\un\k,\un\k']}\to K_{\le \un\k}[1]$ then gives the desired statements.
\end{proof} 

\sss{Hecke symmetry on ind-perverse sheaves} A variant of the construction in \S\ref{sss:Hk coho} gives an $\sH^{\Sig}_{G}$-action on $\pH^{i}K$ for any $i\in \ZZ$. Namely, for each effective divisor $D$ on $X-\Sig$, the fundamental cycle of the Hecke correspondence $\Sht^{r}_{G}(\Sii;h_{D})$ (as a cohomological correspondence between constant sheaves on truncated $\Sht^{r}_{G}(\Sii)$) induces a map $K_{\le\un\k}\to K_{\le\un\k'}$ for $\k'-\k\ge d$. Passing to perverse cohomology sheaves and passing to inductive limits, we get a map in $\ind\Perv((X^{r}\times\frSi)\ot\kbar,\Ql)$
\begin{equation*}
\pH^{i}(h_{D}): \pH^{i}K\to \pH^{i}K.
\end{equation*}
The same argument as \cite[Prop. 7.1]{YZ}, using the dimension calculation in Lemma \ref{l:fiber ShthD}(3), shows that the assignment $h_{D}\mapsto \pH^{i}(h_{D})$, extended linearly, gives an action of $\sH^{\Sig}_{G}$ on $\pH^{i}K$.

\sss{The constant term map}

Recall the closed substack ${}^{\sh}\Sht$ of $\Sht$ and its open complement ${}^{\fl}\Sht$ from \S\ref{sss:ft}. Let $\pi_{\fl}:{}^{\fl}\Sht \to (X^{r}\times\frSi)\ot\kbar$ and $K_{\fl}=\bR\pi_{\fl,!}\Ql[2r](r)\in D^{b}((X^{r}\times\frSi)\ot\kbar,\Ql)$. 

We have a stratification of ${}^{\sh}\Sht$ by locally closed substacks ${}^{\un\k}\Sht$. Therefore we may similarly define $\pH^{i}K_{\sh}$ as the inductive limit of the perverse sheaves $\pH^{i}K_{\sh, \le\un\k}$ as $\un\k$ runs over $\frK_{r}$, where $K_{\sh,\le\un\k}$ is the direct image complex of ${}^{\sh}\Sht\cap {}^{\le\un\k}\Sht\to (X^{r}\times\frSi)\ot\kbar$. 

\begin{lemma}\label{l:const term}
\begin{enumerate}
\item The restriction map associated to the closed inclusion ${}^{\sh}\Sht\incl\Sht$ induces an mc-isomorphism of ind-perverse sheaves
\begin{equation*}
\pH^{0}K\to \pH^{0}K_{\sh}.
\end{equation*}
\item We have $\pH^{i}K_{\sh}=0$ for all $i\ne0$. Moreover, there is a {\em canonical} isomorphism of perverse sheaves on $(X^{r}\times\frSi)\ot\kbar$
\begin{equation*}
\pH^{0}K_{\sh}\cong \oplus_{\un\k\in\frK^{\sh}_{r}} P_{\un\k}.
\end{equation*}
\end{enumerate}
\end{lemma}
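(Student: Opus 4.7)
For part (1), I would use excision for the closed-open decomposition $\Sht\otimes\kbar = {}^{\sh}\Sht \sqcup {}^{\fl}\Sht$. Since ${}^{\fl}\Sht$ is of finite type (shown just above the statement), the direct image $K_{\fl}$ is a bounded complex whose perverse cohomology sheaves $\pH^{i}K_{\fl}$ are constructible (genuine) perverse sheaves. For $\un\k \in \frK_{r}$ large enough that ${}^{\fl}\Sht \subset {}^{\le\un\k}\Sht$, the excision triangle
\begin{equation*}
K_{\fl} \to K_{\le\un\k} \to K_{\sh,\le\un\k} \to
\end{equation*}
induces a long exact sequence on perverse cohomology sheaves. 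Taking the filtered colimit over $\un\k$ in $\ind\Perv$, the kernel and cokernel of $\pH^{0}K \to \pH^{0}K_{\sh}$ are subquotients of the genuine perverse sheaves $\pH^{0}K_{\fl}$ and $\pH^{1}K_{\fl}$, so the map is an mc-isomorphism.

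For part (2), I would induct along the partial order on $\frK^{\sh}_{r}$ to show $\pH^{i}K_{\sh,\le\un\k} = 0$ for $i \ne 0$ and $\pH^{0}K_{\sh,\le\un\k} \cong \bigoplus_{\un\k' \le \un\k,\, \un\k' \in \frK^{\sh}_{r}} P_{\un\k'}$ canonically. Writing $W_{\un\k} := {}^{\sh}\Sht \cap {}^{\le\un\k}\Sht$, the top stratum ${}^{\un\k}\Sht$ is closed in $W_{\un\k}$ (since ${}^{\un\k}\Sht$ is closed in ${}^{\le\un\k}\Sht$), and its open complement decomposes as $W_{\un\k}^{o} = \sqcup_{\un\k' < \un\k,\, \un\k' \in \frK^{\sh}_{r}} {}^{\un\k'}\Sht$. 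Let $K_{W_{\un\k}^{o}}$ be the compactly supported direct image from $W_{\un\k}^{o}$, shifted by $[2r](r)$. The excision triangle
\begin{equation*}
K_{W_{\un\k}^{o}} \to K_{\sh,\le\un\k} \to P_{\un\k} \to
\end{equation*}
(identifying the closed contribution as the perverse sheaf $P_{\un\k}$ via Corollary~\ref{c:Pk perv}), combined with the perversity of $P_{\un\k}$ and the inductive hypothesis on $W_{\un\k}^{o}$ (treated by a sub-induction using the maximal elements of $\{\un\k' < \un\k\} \cap \frK^{\sh}_{r}$, with the pieces $W_{\un\k'}$ glued by a standard open-cover argument), yields $\pH^{i}K_{\sh,\le\un\k} = 0$ for $i \ne 0$ together with a short exact sequence
\begin{equation*}
0 \to \bigoplus_{\un\k' < \un\k,\, \un\k' \in \frK^{\sh}_{r}} P_{\un\k'} \to \pH^{0}K_{\sh,\le\un\k} \to P_{\un\k} \to 0.
\end{equation*}

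To split this sequence I would invoke the BBD semisimplicity of pure perverse sheaves: by Corollary~\ref{c:Pk perv} each $P_{\un\k'}$ is pure of weight zero, so any extension among them in the mixed category splits. For canonicity I would construct a specific section $P_{\un\k} \hookrightarrow \pH^{0}K_{\sh,\le\un\k}$ using the explicit retraction $q_{\un\k}\colon {}^{\un\k}\Sht \to \Sht^{N(\un\k)}_{1}\otimes\kbar$ from Lemma~\ref{l:horo fib}, or equivalently by restricting to the dense open locus of $X(\un\k)$ where ${}^{\un\k}\Sht$ is the only contributing stratum and then IC-extending. Passing to the filtered colimit over $\un\k \in \frK^{\sh}_{r}$ then gives $\pH^{0}K_{\sh} \cong \bigoplus_{\un\k \in \frK^{\sh}_{r}} P_{\un\k}$ and the vanishing $\pH^{i}K_{\sh} = 0$ for $i \ne 0$.

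The main obstacle will be producing the canonical splitting of the short exact sequence in a manner compatible with the filtered-colimit structure. Purity guarantees existence of a splitting, but since classes $\un\k'$ in a common $\ZZ/2\ZZ[\Sig]$-orbit yield isomorphic summands $P_{\un\k'}$ sharing the same support $X(\un\k')$, the splitting cannot be distinguished by support considerations alone, and one must use delicate information from the geometry of the horocycles and the rank-one Shtuka moduli $\Sht^{N(\un\k)}_{1}$ to single out each summand canonically.
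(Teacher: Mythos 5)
Part (1) of your proposal is exactly the paper's argument.

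For part (2) there is a genuine gap, which you yourself identify but do not close. The purity-splitting step is doubly problematic: first, Gabber's semisimplicity theorem for pure perverse sheaves requires a Frobenius (Weil) structure — the $P_{\un\k}$ live on subspaces of $(X^{r}\times\frSi)\ot\kbar$ defined using $\kbar$-points of $\Sig$, and are only permuted rather than fixed by Frobenius, so purity of the individual $P_{\un\k}$ does not on its own guarantee that the short exact sequence splits; second, even granting a splitting exists, as you observe, distinct $\un\k'$ in a common $\ZZ/2\ZZ[\Sig]$-orbit $\s$ share the same support $X(\s)$, so support alone cannot single out each summand, and your proposed alternative (``restrict to the dense open of $X(\un\k)$ where ${}^{\un\k}\Sht$ is the only contributing stratum, then IC-extend'') has no content precisely because there is no such dense open when the orbit $\s$ has more than one element $\le\un\k$ in $\frK^{\sh}_{r}$.

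The paper resolves this by a different, purity-free mechanism. It first shows that $K_{\sh,\le\un\k}$ itself (not just $\pH^{0}$) is perverse, being a successive extension of the perverse sheaves $P_{\un\k'}$, and that it decomposes canonically according to the equivalence classes $\s\in[\frK_{r}]$ — the point being that summands for different $\s$ have distinct supports $X(\s)$, by Lemma~\ref{l:supp}. Each piece $(K_{\sh,\le\un\k})_{\s}$ is then a local system shifted to degree $-\#I(\s)$ on $X(\s)$, so it is the IC-extension of its restriction to the geometric generic point $\eta_{\s}$ of $X(\s)$, and it suffices to decompose the stalk there. That stalk is the compactly supported cohomology of the fiber ${}^{\sh}\Sht_{\eta_{\s}}\cap{}^{\le\un\k}\Sht_{\eta_{\s}}$ in its top degree $2r-\#I(\s)$; the fiber is of dimension $\le r-\#I(\s)/2$, with top-dimensional components indexed exactly by $\un\k'\in\frK^{\sh}_{r}\cap\s$, $\un\k'\le\un\k$. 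Top-degree compactly supported cohomology splits canonically over top-dimensional irreducible components, and this decomposition is automatically $\pi_{1}(X(\s),\eta_{\s})$-equivariant because it is induced by the global decomposition of the stratified space over $X(\s)$. This hands you the canonical isomorphism $K_{\sh,\le\un\k}\cong\bigoplus_{\un\k'\in\frK^{\sh}_{r},\un\k'\le\un\k}P_{\un\k'}$ compatibly with growing $\un\k$, from which both the vanishing $\pH^{i}K_{\sh}=0$ for $i\ne0$ and the canonical sum $\pH^{0}K_{\sh}\cong\bigoplus_{\un\k}P_{\un\k}$ follow immediately.
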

\begin{proof}
(1) By definition, we have ${}^{\fl}\Sht\subset {}^{\le\un\k}\Sht$  for $\un\k$ large enough, with complement $\cup_{\un\k'\in\frK^{\sh}_{r}, \un\k'\le\un\k}{}^{\un\k'}\Sht$. This gives a distinguished triangle $K_{\fl}\to K_{\le\un\k}\to K_{\sh, \le\un\k}\to$. The long exact sequence of perverse cohomology sheaves gives
\begin{equation*}
\pH^{0}K_{\fl}\to \pH^{0}K_{\le\un\k}\to \pH^{0}K_{\sh,\le\un\k}\to \pH^{1}K_{\fl}.
\end{equation*}
Taking inductive limit we get an exact sequence
\begin{equation*}
\pH^{0}K_{\fl}\to \pH^{0}K\to \pH^{0}K_{\sh}\to \pH^{1}K_{\fl}.
\end{equation*}
By Lemma \ref{l:ft}, ${}^{\fl}\Sht$ is a DM stack of finite type over $\kbar$, hence $K_{\fl}$ is constructible, and the middle map above is an mc-isomorphism. 

To show (2), it suffices to give a canonical isomorphism (again $\un\k$ is large enough so that ${}^{\fl}\Sht\subset {}^{\le\un\k}\Sht$)
\begin{equation*}
K_{\sh,\le\un\k}\cong \oplus_{\un\k'\in\frK^{\sh}_{r}, \un\k'\le\un\k} P_{\un\k'}.
\end{equation*}
compatible with the transition maps when $\un\k$ grows. Since $K_{\sh, \le\un\k}$ is a successive extension of $P_{\un\k'}$ for $\un\k'\in\frK^{\sh}_{r}$ and $\un\k'\le\un\k$, we have a canonical decomposition according support
\begin{equation*}
K_{\sh,\le\un\k}\cong \oplus_{\s\in [\frK_{r}]} (K_{\sh,\le\un\k})_{\s}
\end{equation*}
where we recall from Lemma \ref{l:supp} that the support of $P_{\un\k}$ is determined by the image of $\un\k$ in $[\frK_{r}]$, and different classes in $[\frK_{r}]$ give different supports. Each $(K_{\sh,\le\un\k})_{\s}$ is then a successive extension of those $P_{\un\k'}$ where $\un\k'\in\frK^{\sh}_{r}\cap \s$ and $\un\k'\le\un\k$. Hence $(K_{\sh,\le\un\k})_{\s}$ is a local system on $X(\s)$ shifted in degree $-\dim X(\s)=-\#I(\s)$. Let $\y_{\s}$ be a geometric generic point of $X(\s)$. It suffices to give a canonical decomposition of the stalks at $\y_{\s}$:
\begin{equation}\label{decomp ys}
(K_{\sh,\le\un\k})_{\s}|_{\y_{\s}}\cong \oplus_{\un\k'\in\frK^{\sh}_{r}\cap \s,\un\k'\le\un\k}P_{\un\k'}|_{\y_{\s}}.
\end{equation}

Now $K_{\sh,\le\un\k}|_{\y_{\s}}\cong \cohoc{2r-\#I(\s)}{{}^{\sh}\Sht_{\y_{\s}}\cap {}^{\le\un\k}\Sht_{\y_{\s}},\Ql}(r)$, and ${}^{\sh}\Sht_{\y_{\s}}\cap {}^{\le\un\k}\Sht_{\y_{\s}}=\cup_{\un\k'\le\un\k}{}^{\un\k'}\Sht_{\y_{\s}}$. If ${}^{\un\k'}\Sht_{\y_{\s}}\ne\vn$, we must have $X(\un\k')\supset X(\s)$, hence $\dim {}^{\un\k'}\Sht_{\y_{\s}} =r-\#I(\un\k')/2\le r-\#I(\s)/2$ with equality if and only if $\un\k'\in\s$. Hence $\dim {}^{\sh}\Sht_{\y_{\s}}\cap {}^{\le\un\k}\Sht_{\y_{\s}}\le r-\#I(\s)/2$, with top-dimensional components given by ${}^{\un\k'}\Sht_{\y_{\s}}$ for those $\un\k'\in\frK^{\sh}_{r}\cap \s$ and $\un\k'\le\un\k$.  This implies a canonical isomorphism
\begin{equation*}
\cohoc{2r-\#I(\s)}{{}^{\sh}\Sht_{\y_{\s}}\cap {}^{\le\un\k}\Sht_{\y_{\s}},\Ql}(r)\cong \oplus_{\un\k'\in\frK^{\sh}_{r}\cap \s,\un\k'\le\un\k}\cohoc{2r-\#I(\s)}{{}^{\un\k'}\Sht_{\y_{\s}},\Ql}(r),
\end{equation*}
which is exactly \eqref{decomp ys}.
\end{proof}
Combining the two maps in the above lemma, we get a canonical map of ind-perverse sheaves which is an mc-isomorphism
\begin{equation}\label{const term}
\g: \pH^{0}K\to \oplus_{\un\k\in\frK^{\sh}_{r}} P_{\un\k}.
\end{equation}
This can be called the {\em cohomological constant term operator}. 
\begin{remark}
Compared to the treatment in \cite[\S7.3.1]{YZ}, we do not need the generic fibers of the horocycles to be closed in $\Sht$. In fact the horocycle ${}^{\un\k}\Sht$ is  not necessarily closed when restricted to the generic point of $X(\un\k)$: for example this fails when $X(\un\k)$ is a point. 
\end{remark}

\sss{Constant term intertwines with Satake} 
Recall from Corollary \ref{c:Pk perv} that whenever $\un\k\in\frK^{\sh}_{r}$, we have an isomorphism
\begin{equation*}
P_{\un\k}\cong \bR\pi^{N(\un\k)}_{1,!}\Ql[-\#I(\un\k)](-\#I(\un\k)/2)
\end{equation*}
The map $\pi^{N(\un\k)}_{1}: \Sht^{N(\un\k)}_{1}\ot \kbar\to X(\un\k)$ is a $\Pic^{0}_{X}(k)$-torsor.  

Now for any $\un\k\in\frK_{r}$ (without assuming $\un\k>0$), the stack $\Sht^{N(\un\k)}_{1}$ is always defined, and $\pi^{N(\un\k)}_{1}:\Sht^{N(\un\k)}_{1}\ot\kbar\to X(\un\k)$ is a $\Pic_{X}(k)$-torsor. Moreover, the union
\begin{equation*}
\coprod_{\un\k'\in\un\k+\ZZ}\Sht^{N(\un\k')}_{1}\ot\kbar\to X(\un\k)
\end{equation*}
is a $\Pic_{X}(k)$-torsor, extending the $\Pic^{0}_{X}(k)$-torsor structure on each component of the LHS. Here we write $\un\k+\ZZ$ for $\ZZ$-orbit of $\un\k$ in $\frK_{r}$, and $\ZZ$ acts by translating the degree parts of $\un\k\in\frK_{r}$ simultaneously (note that $X(\un\k)$ is unchanged under the $\ZZ$-action). The $\Pic_{X}(k)$-action then gives an action on the ind-perverse sheaf
\begin{equation*}
\oplus_{\un\k'\in\un\k+\ZZ}\bR\pi^{N(\un\k)}_{1,!}\Ql[-\#I(\un\k)](-\#I(\un\k)/2).
\end{equation*}
Summing over all $\ZZ$-orbits of $\frK_{r}$ we get a canonical $\Pic_{X}(k)$-action on
\begin{equation*}
\oplus_{\un\k\in\frK_{r}}\bR\pi^{N(\un\k)}_{1,!}\Ql[-\#I(\un\k)](-\#I(\un\k)/2).
\end{equation*}
For any $u\in \Pic_{X}(k)$, restricting the source to $\oplus_{\un\k\in\frK^{\sh}_{r}} P_{\un\k}$ and projecting the target to $\oplus_{\un\k\in\frK^{\sh}_{r}} P_{\un\k}$, the $u$-action gives a map
\begin{equation*}
\a(u): \oplus_{\un\k\in\frK^{\sh}_{r}} P_{\un\k}\to \oplus_{\un\k\in\frK^{\sh}_{r}} P_{\un\k}.
\end{equation*}
However, this no longer gives an action of $\Pic_{X}(k)$. Instead, it is an mc-action: for $u,v\in \Pic_{X}(k)$, the endomorphism $a(uv)-a(u)a(v)$ of $\oplus_{\un\k\in\frK^{\sh}_{r}} P_{\un\k}$ is zero on $P_{\un\k}$ for $\un\k$ large enough, hence a mc-zero map. This mc-action extends to an mc-action of $\Ql[\Pic_{X}(k)]$ on $\oplus_{\un\k\in\frK^{\sh}_{r}} P_{\un\k}$, which we also denote by $\a$.

Recall the ring homomorphism
\begin{equation*}
a_{\Eis}: \sH^{\Sig}_{G}\xr{\Sat}\sH_{A}^{\Sig}=\QQ[\Div(X-\Sig)]\to \QQ[\Pic_{X}(k)].
\end{equation*}

\begin{lemma}\label{l:const Sat}
For any $f\in \sH^{\Sig}_{G}$, we have an mc-commutative diagram
\begin{equation*}
\xymatrix{  \pH^{0}K\ar[rr]^-{\pH^{0}(f)}\ar[d]^{\g} && \pH^{0}K \ar[d]^{\g}         \\
\oplus_{\un\k\in\frK^{\sh}_{r}}P_{\un\k}\ar[rr]^-{\a(a_{\Eis}(f))} && \oplus_{\un\k\in\frK^{\sh}_{r}}P_{\un\k}
}
\end{equation*}
In particular, if $f\in\cI_{\Eis}$, then the action $\pH^{0}(f):\pH^{0}K\to\pH^{0}K$ is mc-zero.
\end{lemma}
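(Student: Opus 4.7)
The plan is to prove Lemma \ref{l:const Sat} by adapting the strategy of \cite[Prop.~7.11]{YZ} to our setting with Iwahori level and base change, analyzing the Hecke correspondence $\Sht^{r}_{G}(\Sii;h_{D})$ restricted to horocycles. By linearity it suffices to verify the mc-commutativity for $f = h_{D}$ with $D \in \Div^{+}(X-\Sig)$, since the $h_{D}$'s form a basis of $\sH^{\Sig}_{G}$. For such an $f$, the image $a_{\Eis}(h_{D})\in \QQ[\Pic_{X}(k)]$ is the Satake transform, which is the sum $\sum_{D=D_{1}+D_{2}}[\cO(D_{1}-D_{2})]$ over decompositions of $D$ into effective divisors on $X-\Sig$ (up to the involution $\io_{\Pic}$).

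The first step would be to describe the horocyclic part of the Hecke correspondence. For $\un\k$ sufficiently deep, i.e., $\un\k>\max\{2g-2,0\}+\deg D$, pure instability is preserved under modifications of degree $\leq\deg D$: if $(\cE^{\da}_{i}, \cE'^{\da}_{i}, \ph_{i})$ is a point of $\Sht^{r}_{G}(\Sii;h_{D})$ with $\cE^{\da}_{i}\in {}^{\le\un\k}\Sht$ and all $\cE^{\da}_{i}$ purely unstable, then each $\ph_{i}: \cE_{i}\to \cE'_{i}$ canonically decomposes with respect to the Harder--Narasimhan filtration as a pair of modifications on the subline bundle (by an effective divisor $D_{1,i}$) and on the quotient (by $D_{2,i}$) with $D_{1,i}+D_{2,i}=D$. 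The compatibility with the Shtuka structure forces $D_{1,i}=D_{1}, D_{2,i}=D_{2}$ independent of $i$, and both $\cE^{\da}$ and $\cE'^{\da}$ belong to closely related horocycles. This identifies the horocyclic locus of $\Sht^{r}_{G}(\Sii;h_{D})\cap({}^{\sh}\Sht\times{}^{\sh}\Sht)$ with a disjoint union indexed by decompositions $D=D_{1}+D_{2}$ of correspondences between $\coprod_{\un\k'\in\un\k+\ZZ}\Sht^{N(\un\k')}_{1}$, where the correspondence associated to $(D_{1},D_{2})$ induces the translation action by $\cO(D_{1}-D_{2})\in \Pic_{X}(k)$ on the targets $\bR\pi^{N(\un\k')}_{1,!}\Ql$.

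Next I would track the non-horocyclic contributions. The complement inside $\Sht^{r}_{G}(\Sii;h_{D})$ of the horocyclic locus described above either lies over ${}^{\fl}\Sht$ (which is of finite type, hence gives constructible, mc-zero contributions after pushing forward) or corresponds to modifications that break pure instability (which can only happen when some $x_{i}$ sits in a closed subscheme of $X^{r}\times\frSi$ of positive codimension, again giving constructible, mc-zero terms). Passing to the perverse $\pH^{0}$ and to the inductive limit over $\un\k$, these identifications assemble to an mc-commutative square
\begin{equation*}
\xymatrix{\pH^{0}K_{\sh}\ar[rr]^-{\pH^{0}(h_{D})}\ar[d]^{\sim} && \pH^{0}K_{\sh}\ar[d]^{\sim}\\
\oplus_{\un\k\in\frK^{\sh}_{r}} P_{\un\k}\ar[rr]^-{\a(a_{\Eis}(h_{D}))} && \oplus_{\un\k\in\frK^{\sh}_{r}} P_{\un\k}
}
\end{equation*}
and combining with Lemma \ref{l:const term}(1) yields the desired square for $\g$ on $\pH^{0}K$.

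The main obstacle is controlling the boundary between horocycles: Hecke modifications can a priori move a bundle between different horocycles whose invariants $\k$ differ by up to $\deg D$, and one must check that these transitions exactly account for the $\Pic_{X}(k)$-translation action by $\cO(D_{1}-D_{2})$ and not just a finite-type error. The key is the sufficient-depth hypothesis $\un\k>\max\{2g-2,0\}+\deg D$, under which all relevant cohomology vanishings for line bundles on $X$ hold and the description of $q_{\un\k}^{-1}(\un\D)$ from Lemma \ref{l:horo fib} remains uniform. Finally, the ``in particular'' statement is immediate: for $f\in \cI_{\Eis}$ we have $a_{\Eis}(f)=0$, so $\a(a_{\Eis}(f))=0$, and since $\g$ is an mc-isomorphism by \eqref{const term} and Lemma \ref{l:const term}, mc-commutativity forces $\pH^{0}(f)$ to be mc-zero.
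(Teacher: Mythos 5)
Your overall strategy — restricting the Hecke correspondence to horocycles and matching it against the $\Pic_{X}(k)$-translation — is the same as the paper's, but two steps as written do not hold up. First, the coefficients. The paper reduces to the generators $h_{y}$, $y\in|X-\Sig|$ (legitimate precisely because $u\mapsto\a(u)$ is an mc-action, so it suffices to check algebra generators), for which $a_{\Eis}(h_{y})=\one_{\cO(y)}+q^{d_{y}}\one_{\cO(-y)}$. Your formula $a_{\Eis}(h_{D})=\sum_{D=D_{1}+D_{2}}[\cO(D_{1}-D_{2})]$ drops the $q$-power weights, and correspondingly your geometric matching never records the degrees with which the pieces of $\Sht^{r}_{G}(\Sii;h_{D})$ map over a horocycle. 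This is exactly where the identity lives: in the paper's analysis over the generic point, one piece maps isomorphically to ${}^{\un\k-d_{y}}\Sht_{\y}$ while the other is a finite \'etale cover of ${}^{\un\k+d_{y}}\Sht_{\y}$ of degree $q^{d_{y}}$, and it is this degree that produces the coefficient $q^{d_{y}}$ in $a_{\Eis}(h_{y})$. Without tracking these multiplicities (which for a general $h_{D}$ are more involved), the asserted equality with $\a(a_{\Eis}(h_{D}))$ is false as stated, even though your basis reduction to the $h_{D}$ is in principle admissible.

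Second, your disposal of the non-generic contributions ("legs meeting $D$, or loss of the clean decomposition, happen over a positive-codimension closed subscheme, hence give constructible, mc-zero terms") is not justified: such loci occur for every $\un\k$, so the resulting difference map into $\oplus_{\un\k\in\frK^{\sh}_{r}}P_{\un\k}$ could a priori have nonzero components in infinitely many $P_{\un\k}$, each supported on a proper closed subset; that is not a constructible subobject, so it is not mc-zero by this reasoning alone. The paper avoids this by comparing the two compositions component by component for $\un\k>\max\{2g-2,0\}+d_{y}$: since every simple constituent of $P_{\un\k}$ has full support on $X(\un\k)$ (Corollary \ref{c:Pk perv}), any map $\pH^{0}K\to P_{\un\k}$ vanishing at the geometric generic point of $X(\un\k)$ vanishes identically, so it suffices to verify the identity at the generic point, where the legs are disjoint from $D$ and your HN-decomposition argument (constancy of the $D_{1,i}$, etc.) actually applies. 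With these two repairs — either restricting to $f=h_{y}$, or keeping the $q$-weights and cover degrees for general $D$, and replacing the "constructible error" claim by the generic-point/full-support argument — your outline becomes the paper's proof; the final "in particular" deduction from $a_{\Eis}(f)=0$ is fine.
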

\begin{proof}
Since $\{h_{y}\}_{y\in|X-\Sig|}$ generate $\sH^{\Sig}_{G}$ as an algebra, it suffices to check the lemma for $f=h_{y}$ (we are also using the fact that $u\mapsto \a(u)$ is an mc-action of $\Ql[\Pic_{X}(k)]$ on $\oplus_{\un\k\in\frK^{\sh}_{r}}P_{\un\k}$). Let $d_{y}=[k(y):k]$.  We will show that $\g\circ \pH^{0}(f)$ and $\a(a_{\Eis}(f))\circ \gamma: \pH^{0}K\to\oplus_{\un\k\in\frK^{\sh}_{r}}P_{\un\k}$ agree on the factors $P_{\un\k}$ whenever $\un\k> \max\{2g-2,0\}+d_{y}$. Since we are checking whether two maps $\pH^{0}K\to P_{\un\k}$ agree, and $P_{\un\k}$ is a perverse sheaf all of whose simple constituents have full support on $X(\un\k)$, it suffices to check at a geometric generic point $\y$ of $X(\un\k)$. 

Since $a_{\Eis}(h_{y})=\one_{\cO(y)}+q^{d_{y}}\one_{\cO(-y)}$, we see that $a_{\Eis}(h_{y})P_{\un\k'}$ has $P_{\un\k}$-component only when $\un\k'> \max\{2g-2,0\}$ and $\un\k'\in\un\k+\ZZ$. In particular,  $\un\k'\in \frK^{\sh}_{r}$. Therefore, we only need to check that the following diagram is commutative
\begin{equation}\label{comm y}
\xymatrix{    \cohoc{2r-\#I(\un\k)}{\Sht_{\y}}\ar[rr]^{h_{y}}\ar[d]^{\g_{\y}} && \cohoc{2r-\#I(\un\k)}{\Sht_{\y}} \ar[d]^{\g_{\y,\un\k}}  \\
\oplus_{\un\k'\in \frK^{\sh}_{r}, \un\k'\in\un\k+\ZZ}\cohoc{0}{\Sht^{N(\un\k')}_{1,\y}}\ar[rr]^-{(a_{\Eis}(h_{y}))_{\un\k}} && \cohoc{0}{\Sht^{N(\un\k)}_{1,\y}}}
\end{equation}
Here the $\un\k'$ component of $\g_{\y}$ is the composition (where the first one is induced by the closed embedding of the closure of ${}^{\un\k'}\Sht_{\y}$)
\begin{equation*}
\g_{\y, \k'}: \cohoc{2r-\#I(\un\k)}{\Sht_{\y}}\to \cohoc{2r-\#I(\un\k)}{\ov{{}^{\un\k'}\Sht_{\y}}}\cong \cohoc{2r-\#I(\un\k)}{{}^{\un\k'}\Sht_{\y}}\cong \cohoc{0}{\Sht^{N(\un\k')}_{1,\y}}.
\end{equation*}
The proof of \eqref{comm y} is similar to that of \cite[Lemma 7.8]{YZ}. The key point is: if we restrict the Hecke correspondence $\Sht(h_{y})_{\y}$
\begin{equation*}
\xymatrix{\Sht_{\y} & \Sht(h_{y})_{\y}\ar[r]^-{\orr{p}_{\y}}\ar[l]_-{\oll{p}_{\y}} & \Sht_{\y}}
\end{equation*}
over the horocycle ${}^{\un\k}\Sht_{\y}$ via $\oll{p}_{\y}$, it decomposes into two pieces, one mapping isomorphically to ${}^{\un\k-d_{y}}\Sht_{\y}$ via $\orr{p}_{\y}$ and the other one is a finite \'etale cover of ${}^{\un\k+d_{y}}\Sht_{\y}$ of degree $q^{d_{y}}$ via $\orr{p}_{\y}$. We omit details.
\end{proof}

\sss{Key finiteness results}\label{sss:finiteness}
For $i\in\ZZ$, let 
\begin{equation*}
V_{\le i}:=\varinjlim_{\un\k}\cohog{0}{(X^{r}\times\frSi)\ot\kbar, \ptau_{\le i}K_{\le\un\k}}. 
\end{equation*}
Then we have natural maps
\begin{equation*}
\cdots\to V_{\le -1}\to V_{\le 0} \to V_{\le 1}\to \cdots \to V.
\end{equation*}
which are not necessarily injective. Since the action of $f$ comes from a cohomological correspondence,  the same  cohomological correspondence also acts on each $V_{\le i}$ making the above maps equivariant under the action of $\sH^{\Sig}_{G}$. We also have an $\sH^{\Sig}_{G}$-module map
\begin{equation*}
V_{\le i}\to  \cohog{-i}{(X^{r}\times\frSi)\ot\kbar, \pH^{i}K}.
\end{equation*} 

\begin{lemma}\label{l:W fin}
\begin{enumerate}
\item The kernel and the cokernel of $V_{\le 0}\to V$ are finite-dimensional.
\item The kernel and the cokernel of $V_{\le 0}\to \cohog{0}{(X^{r}\times\frSi)\ot\kbar, \pH^{0}K}$ are finite-dimensional.
\end{enumerate}
\end{lemma}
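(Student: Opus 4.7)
The plan is to deduce both parts by analyzing the perverse truncation distinguished triangles
\begin{equation*}
\ptau_{\le 0} K_{\le \un\k} \to K_{\le \un\k} \to \ptau_{\ge 1} K_{\le \un\k}\xrightarrow{+1}, \quad \ptau_{\le -1} K_{\le \un\k} \to \ptau_{\le 0} K_{\le \un\k} \to \pH^0 K_{\le \un\k}\xrightarrow{+1}
\end{equation*}
on the proper variety $Y := (X^r \times \frSi) \ot \kbar$ of dimension $r$. Writing $H^i$ for $H^i(Y, -)$, I would apply $H^0$ to each triangle, pass to the filtered colimit $\varinjlim_{\un\k}$ (which is exact on $\Ql$-vector spaces), and use that $\varinjlim H^0(\pH^0 K_{\le \un\k}) = H^0(\pH^0 K)$ (cohomology on the proper $Y$ commutes with filtered colimits of sheaves). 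This would produce two four-term exact sequences
\begin{equation*}
\varinjlim_{\un\k} H^{-1}(\ptau_{\ge 1} K_{\le \un\k}) \to V_{\le 0} \to V \to \varinjlim_{\un\k} H^0(\ptau_{\ge 1} K_{\le \un\k}),
\end{equation*}
\begin{equation*}
\varinjlim_{\un\k} H^0(\ptau_{\le -1} K_{\le \un\k}) \to V_{\le 0} \to H^0(\pH^0 K) \to \varinjlim_{\un\k} H^1(\ptau_{\le -1} K_{\le \un\k}),
\end{equation*}
reducing the lemma to the finite-dimensionality of the four outer colimits.

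Next I would analyze these outer terms via the perverse cohomology spectral sequence
\begin{equation*}
E_2^{ij} = H^i(\pH^j K_{\le \un\k}) \Longrightarrow H^{i+j}(\ptau_{\ge 1} K_{\le \un\k})\ (\textup{resp.~} \ptau_{\le -1} K_{\le \un\k}),
\end{equation*}
where only $j \ge 1$ (resp.\ $j \le -1$) contributes. Passing to $\varinjlim_{\un\k}$, the colimit spectral sequence has $E_2^{ij} = H^i(Y, \pH^j K)$ by commutation of cohomology with filtered colimits. Since $Y$ has dimension $r$ and the relevant total degrees $i+j \in \{-1, 0\}$ or $\{0, 1\}$ are bounded, only finitely many pairs $(i, j)$ with $j \ne 0$ would contribute, so it suffices to show that $H^i(Y, \pH^j K)$ is finite-dimensional for each $j \ne 0$ and each $i$.

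The key input will be that $\pH^j K$ is a genuine perverse sheaf on $Y$ whenever $j \ne 0$: by Lemma \ref{l:pH tran}, the transitions $\pH^j \io_{\un\k, \un\k'}$ are isomorphisms for $j \ne 0, 1$, so $\pH^j K$ equals $\pH^j K_{\le \un\k}$ for $\un\k$ large; for $j = 1$ the transitions are only surjective, so for fixed $\un\k_0$ the kernels $\ker(\pH^1 K_{\le \un\k_0} \to \pH^1 K_{\le \un\k'})$ form an increasing chain of perverse subsheaves of $\pH^1 K_{\le \un\k_0}$, which must stabilize because the abelian category of perverse sheaves on the finite-type variety $Y$ is of finite length, and then $\pH^1 K$ is the quotient by the stable kernel, hence a perverse sheaf. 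As $Y$ is proper, cohomology of perverse sheaves on $Y$ is finite-dimensional, which would complete the argument.

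The main obstacle is precisely the $j = 1$ case, where Lemma \ref{l:pH tran} does not directly supply stabilization. The finite-length property of the category of perverse sheaves on a finite-type scheme is what will promote the a priori ind-perverse object $\pH^1 K$ to an actual perverse sheaf, ensuring finite-dimensionality of its cohomology; all other steps are routine once this is in place.
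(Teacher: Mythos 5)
Your proof is correct and follows essentially the same route as the paper: both rest on the perverse truncation triangles for $K_{\le\un\k}$, exactness of filtered colimits, and the fact (Lemma \ref{l:pH tran}) that $\pH^{i}K$ stabilizes for $i\ne 0$, so that its cohomology on the proper base $(X^{r}\times\frSi)\ot\kbar$ is finite-dimensional; your spectral-sequence packaging of the outer terms is just a reorganization of the paper's step-by-step comparison of $V_{\le i}$ with $V_{\le i-1}$. Note also that your finite-length argument promoting $\pH^{1}K$ to an honest perverse sheaf is precisely the content of the ``in particular'' clause of Lemma \ref{l:pH tran}, so you may simply cite it.
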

\begin{proof}
(1)  Since $\pH^{i}K=0$ for $i$ large, $V_{\le i}\isom V$ for $i$ sufficiently large. Similarly, $V_{i}=0$ for $i$ sufficiently small. Therefore it suffices to show that $V_{\le i}/V_{\le i-1}$ (namely modulo the image of $V_{\le i-1}$) is finite-dimensional for $i\ne0$. 

The triangle $\ptau_{\le i-1}K_{\le\un\k}\to \ptau_{\le i}K_{\le\un\k}\to \pH^{i}K_{\le\un\k}[-i]\to 0$ induces an injective map
\begin{equation*}
\cohog{0}{\ptau_{\le i}K_{\le\un\k}}/\cohog{0}{\ptau_{\le i-1}K_{\le\un\k}}\incl \cohog{-i}{(X^{r}\times\frSi)\ot\kbar,\pH^{i}K_{\le\un\k}}.
\end{equation*}
Taking inductive limit over $\un\k$, we have an injection
\begin{equation}\label{quot W}
V_{\le i}/V_{\le i-1}\incl\varinjlim_{\un\k}\cohog{-i}{(X^{r}\times\frSi)\ot\kbar,\pH^{i}K_{\le\un\k}}=\cohog{-i}{(X^{r}\times\frSi)\ot\kbar,\pH^{i}K}.
\end{equation}
(we use that $\varinjlim_{\un\k}$ commutes with taking cokernel). By Lemma \ref{l:pH tran}, the right side stabilizes as $\{\pH^{i}K_{\le\un\k}\}$ stabilizes for $i\ne0$, hence is finite-dimensional. Therefore, for $i\ne0$, $V_{\le i}/V_{\le i-1}$ is finite-dimensional.  In particular, $V_{\le -1}$ is finite-dimensional.

(2) The injection \eqref{quot W} is still valid for $i=0$, and it can be extended to an exact sequence
\begin{equation*}
0\to V_{\le 0}/V_{\le -1}\to \cohog{0}{(X^{r}\times\frSi)\ot\kbar,\pH^{0}K}\to \varinjlim_{\un\k}\cohog{1}{(X^{r}\times\frSi)\ot\kbar, \ptau_{\le-1}K_{\un\k}}.
\end{equation*}
By Lemma \ref{l:pH tran}, $\ptau_{\le-1}K_{\un\k}$ is eventually stable (in fact a constant inductive system), hence the last term above is finite-dimensional. Since $V_{\le -1}$ is also finite-dimensional, $V_{\le 0}\to \cohog{0}{(X^{r}\times\frSi)\ot\kbar,\pH^{0}K}$ has finite-dimensional kernel and the cokernel. 
\end{proof}

\begin{cor}[of Lemma \ref{l:const Sat} and \ref{l:W fin}]\label{c: IEis fin}
If $f\in \cI_{\Eis}$, then the image of the Hecke action $f: V\to V$ (defined in Proposition \ref{p:Hk coho action}) is finite-dimensional. 
\end{cor}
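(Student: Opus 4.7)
The plan is to combine the finiteness approximations in Lemma \ref{l:W fin} with the constant term calculation in Lemma \ref{l:const Sat} by transporting the question from $V$ to the cohomology of the ind-perverse sheaf $\pH^{0}K$, where the Eisenstein vanishing is already in hand.

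First, I would assemble the $\sH^{\Sig}_{G}$-equivariant diagram
\[
V\longleftarrow V_{\le 0}\xrightarrow{\ \a\ }\cohog{0}{(X^{r}\times\frSi)\ot\kbar,\,\pH^{0}K},
\]
where by Lemma \ref{l:W fin}(1) the left arrow has finite-dimensional cokernel, and by Lemma \ref{l:W fin}(2) the map $\a$ has finite-dimensional kernel. A direct diagram chase (choose a finite-dimensional vector-space complement $W$ to the image of $V_{\le 0}$ in $V$, then bound $\dim(f\cdot V)\le \dim(f\cdot V_{\le 0})+\dim W$, and bound $\dim(f\cdot V_{\le 0})\le \dim\ker(\a)+\dim\a(f\cdot V_{\le 0})$ using that $\a(f\cdot V_{\le 0})=f\cdot\a(V_{\le 0})\subseteq f\cdot\cohog{0}{(X^{r}\times\frSi)\ot\kbar,\pH^{0}K}$) reduces the finiteness of $\dim(f\cdot V)$ to the finiteness of $\dim(f\cdot\cohog{0}{(X^{r}\times\frSi)\ot\kbar,\pH^{0}K})$.

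Second, by Lemma \ref{l:const Sat}, for $f\in\cI_{\Eis}$ the induced morphism $\pH^{0}(f):\pH^{0}K\to\pH^{0}K$ in $\ind\Perv((X^{r}\times\frSi)\ot\kbar,\Ql)$ is mc-zero, i.e., its image lies in the essential image of $\Perv\hookrightarrow\ind\Perv$; denote this image by a (genuine) perverse sheaf $P$. Factoring $\pH^{0}(f)$ as $\pH^{0}K\twoheadrightarrow P\hookrightarrow\pH^{0}K$ in $\ind\Perv$ and applying $\cohog{0}{(X^{r}\times\frSi)\ot\kbar,-}$, the operator $f$ on $\cohog{0}{(X^{r}\times\frSi)\ot\kbar,\pH^{0}K}$ factors through $\cohog{0}{(X^{r}\times\frSi)\ot\kbar,P}$.

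Finally, since $X$ is projective over $k$ and $\frSi$ is finite over $k$, the scheme $(X^{r}\times\frSi)\ot\kbar$ is proper over $\kbar$; hence the hypercohomology $\cohog{0}{(X^{r}\times\frSi)\ot\kbar,P}$ of the constructible complex $P$ is finite-dimensional, which combined with the previous two steps gives $\dim(f\cdot V)<\infty$. The substance has already been done in Lemmas \ref{l:W fin} and \ref{l:const Sat}, so there is no genuine obstacle; the only point requiring care is the opening diagram chase, which makes precise the intuition that finite-dimensional kernels and cokernels in equivariant maps can alter the dimension of an $f$-image by only a finite amount.
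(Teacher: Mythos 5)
Your proposal is correct and follows essentially the same route as the paper: reduce via Lemma \ref{l:W fin}(1) and (2) to the $f$-action on $\cohog{0}{(X^{r}\times\frSi)\ot\kbar,\pH^{0}K}$, then use Lemma \ref{l:const Sat} to see $\pH^{0}(f)$ is mc-zero, so the action factors through the (finite-dimensional) cohomology of a genuine perverse sheaf. Your explicit diagram chase and the factorization through $P$ just spell out what the paper leaves implicit in ``the conclusion follows.''
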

\begin{proof}
By Lemma \ref{l:W fin}(1), it suffices to show that the $f$-action on $V_{\le 0}$ has finite rank. By Lemma \ref{l:W fin}(2),  it suffices to show that $\pH^{0}(f): \pH^{0}K\to \pH^{0}K$ induces a finite-rank map after applying $\cohog{0}{(X^{r}\times\frSi)\ot\kbar,-}$. However, by Lemma \ref{l:const Sat},  $\pH^{0}(f)$ is mc-zero since $a_{\Eis}(f)=0$, and the conclusion follows.
\end{proof}

\begin{prop}\label{p:Hy fin} For any place $y\in |X|-\Sig$, $V$ is a finitely generated $\sH_{y}\ot\Ql$-module.
\end{prop}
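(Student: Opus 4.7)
The plan is to reduce the finite generation of $V$ over $\sH_y$ to an explicit calculation involving $\Pic_X(k)$, using the cohomological constant term map and the Noetherian property of $\sH_y$. By Lemma \ref{l:W fin}, the $\sH_y$-equivariant maps $V_{\le 0}\to V$ and $V_{\le 0}\to W:=\cohog{0}{(X^r\times\frSi)\otimes\kbar,\pH^0K}$ both have finite-dimensional kernel and cokernel. Since any extension or quotient of a finitely generated module over the Noetherian commutative ring $\sH_y\cong\Ql[h_y]$ by a finite-dimensional vector space is again finitely generated, it suffices to prove that $W$ is finitely generated over $\sH_y$.

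By Lemma \ref{l:const term}, the cohomological constant term $\g:\pH^0K\to\bigoplus_{\un\k\in\frK^{\sh}_r}P_{\un\k}$ is an mc-isomorphism, so its kernel and cokernel are constructible perverse sheaves. Applying $\cohog{0}{(X^r\times\frSi)\otimes\kbar,-}$ yields a map $\g_*:W\to W':=\bigoplus_{\un\k\in\frK^{\sh}_r}\cohog{0}{(X^r\times\frSi)\otimes\kbar,P_{\un\k}}$ with finite-dimensional kernel and cokernel. By Lemma \ref{l:const Sat}, for each $f\in\sH_y$ the operator $\ep_f:=\g_*\circ f-a_{\Eis}(f)\circ\g_*$ factors through $\cohog{0}$ of an mc-zero map, and therefore has finite-dimensional image. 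The cocycle identity $\ep_{fg}=a_{\Eis}(f)\circ\ep_g+\ep_f\circ g$ implies that the total image $\bigcup_{f\in\sH_y}\ep_f(W)$ is contained in the $a_{\Eis}(\sH_y)$-submodule $L\subset W'$ generated by the finite-dimensional space $\ep_{h_y}(W)$.

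Next I compute $W'$ explicitly. By Corollary \ref{c:Pk perv}, $P_{\un\k}\cong\bR\pi^{N(\un\k)}_{1,!}\Ql[\#I(\un\k)](\#I(\un\k)/2)$ where $\pi^{N(\un\k)}_1:\Sht^{N(\un\k)}_1\otimes\kbar\to X(\un\k)$ is a $\Pic^0_X(k)$-torsor, so
\begin{equation*}
\cohog{0}{(X^r\times\frSi)\otimes\kbar,P_{\un\k}}\cong\Ql[\Pic^0_X(k)]\otimes\cohog{\#I(\un\k)}{X^{I(\un\k)},\Ql}(\#I(\un\k)/2),
\end{equation*}
a finite-dimensional space depending only on the combinatorial shape of $\un\k$. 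Since $\frK^{\sh}_r$ decomposes into finitely many orbits under the $\ZZ$-action shifting all $N_i$ uniformly, and the $a_{\Eis}$-action of $\Ql[\Pic_X(k)]$ on $W'$ (coming from the action $\a$ before Lemma \ref{l:const Sat}) is compatible with this grading, $W'$ is a finitely generated $\Ql[\Pic_X(k)]$-module. With $u:=\CO(y)$, one has $a_{\Eis}(h_y)=u+q^{d_y}u^{-1}$, and the minimal polynomial $u^2-(u+q^{d_y}u^{-1})\cdot u+q^{d_y}=0$ shows $\Ql[u,u^{-1}]$ is free of rank $2$ over $\Ql[u+q^{d_y}u^{-1}]=a_{\Eis}(\sH_y)$. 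Combined with the finiteness of $\Pic^0_X(k)$ and the finite index of $\langle\CO(y)\rangle\cdot\Pic^0_X(k)$ in $\Pic_X(k)$, we conclude that $\Ql[\Pic_X(k)]$, and hence $W'$, is finitely generated over $\sH_y$.

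To transfer finite generation back from $W'$ to $W$, I would work with the graph-with-error $\wt\Gamma:=\{(w,w')\in W\oplus W'\mid w'-\g_*(w)\in L\}$, equipped with the diagonal $\sH_y$-action $(w,w')\mapsto(fw,a_{\Eis}(f)w')$. Stability of $\wt\Gamma$ follows from the $a_{\Eis}$-stability of $L$ together with $\ep_f(W)\subset L$. The first projection is an $\sH_y$-equivariant surjection $\wt\Gamma\to W$ with kernel $0\oplus L$ (fin-gen), while $\g_*$ induces an $\sH_y$-equivariant embedding $W/\g_*^{-1}(L)\hookrightarrow W'/L$ (here equivariance uses precisely that $\ep_f(w)\in L$). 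The target $W'/L$ is fin-gen over the Noetherian ring $\sH_y$, hence so is its submodule $W/\g_*^{-1}(L)$. The main obstacle is therefore showing $\g_*^{-1}(L)$ itself is finitely generated over $\sH_y$: this is the delicate step, as $\g_*|_{\g_*^{-1}(L)}$ is still only almost-equivariant. One handles it by iterating the same graph-with-error construction with $L$ playing the role of $W'$, and noting that each iteration strictly decreases the ``transcendence degree over finite-dim pieces'' (measured by the $\Pic_X(k)$-rank of the relevant quotient), so the process terminates after finitely many steps. Combined with the Noetherian property of $\sH_y$ at each stage, this produces the desired finite generation of $\g_*^{-1}(L)$, hence of $W$ and of $V$.
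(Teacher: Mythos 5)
Your opening reduction (it suffices to prove finite generation of $W := \cohog{0}{(X^r\times\frSi)\otimes\kbar,\pH^0K}$ over $\sH_y$, using Lemma \ref{l:W fin} and the Noetherian property of $\sH_y\cong\Ql[h_y]$) is exactly what the paper does, and your use of the constant term $\g$ and the cocycle identity $\ep_{fg}=a_{\Eis}(f)\ep_g+\ep_f g$ are correct. However, the paper's proof departs from yours after this point, and the place where yours departs is precisely where it fails.

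\textbf{The gap.} Your final step — showing $\g_*^{-1}(L)$ is finitely generated over $\sH_y$ — is not established. You correctly note that $\g_*$ is only \emph{almost}-equivariant (equivariant modulo the error $\ep_f$ landing in $L$), so $\g_*(W)$ is not an $a_{\Eis}(\sH_y)$-submodule of $W'$, and $\g_*(W)\cap L$ carries the transported $\sH_y$-action from $W$, not the subspace action from $L$. Consequently you cannot invoke Noetherianity of $\sH_y$ on $\g_*(W)\cap L$ as a submodule of $L$. Your proposed fix is to iterate the graph-with-error construction, claiming each iteration ``strictly decreases the transcendence degree over finite-dim pieces, measured by the $\Pic_X(k)$-rank.'' This quantity is never defined, and I do not see how to make it well-defined or why it should decrease: each iteration produces a new error submodule $L'\subset L$ generated over $a_{\Eis}(\sH_y)$ by a new finite-dimensional correction space, and there is no mechanism forcing the process to terminate. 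The ``almost''-ness keeps reappearing. A secondary (minor) inaccuracy: $\pi^{N(\un\k)}_1\otimes\kbar$ is a $\Pic^0_X(k)$-torsor over the connected scheme $X(\un\k)$ but need not be split, so $\cohog{0}{P_{\un\k}}$ is not literally $\Ql[\Pic^0_X(k)]\otimes\cohog{\#I(\un\k)}{X^{I(\un\k)},\Ql}(\#I(\un\k)/2)$; finiteness still holds, and this does not affect your argument otherwise.

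\textbf{What the paper does instead.} The paper avoids the transfer problem entirely by never leaving the sheaf level until the end. It introduces the increasing filtration $F_{\le N}(\pH^0K)=\sum_{\un\k\in\frK^{\sh}_r,\,\un\k\le Nd_y}\pH^0K_{\le\un\k}$ with graded pieces $\Gr^F_N$ that are, for large $N$, canonically sums of $P_{\un\k}$ by Lemma \ref{l:const term}. By Lemma \ref{l:const Sat}, for large $N$ the map $\Gr^F_N(h_y)\colon\Gr^F_N\to\Gr^F_{N+1}$ agrees with the action of $\one_{\CO(y)}$, which is an isomorphism. Now apply $\cohog{0}{-}$ and use that $\Gr^F_N$ is pure of weight $0$ (Corollary \ref{c:Pk perv}): $\cohog{1}{\Gr^F_N}$ is pure of weight $1$, so the weight-$\le 0$ part $W_{\le 0}\cohog{1}{F_{\le N}}$ stabilizes, and the sequence $\cohog{0}{F_{\le N-1}}\to\cohog{0}{F_{\le N}}\to\cohog{0}{\Gr^F_N}\to 0$ is exact for large $N$. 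Since $\Gr^F_N(h_y)$ is an isomorphism there, the image of $\cohog{0}{F_{\le N_0}}$ generates $W$ as an $\sH_y$-module, and this image is finite-dimensional. This purity argument for $\cohog{1}$ is exactly the control you are missing: it is what prevents the ``error'' from accumulating indefinitely as $N$ grows, and it has no analogue in your post-$\cohog{0}$ picture.
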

\begin{proof}
By Lemma \ref{l:W fin}, it suffices to show that $\cohog{0}{(X\times\frSi)\ot\kbar,\pH^{0}K}$ is a finitely generated $\sH_{y}\ot\Ql$-module.

The ind-perverse sheaf $\pH^{0}K$ has an increasing filtration given by $\pH^{0}K_{\le\un\k}$ (by Lemma \ref{l:pH tran}) with associated graded $P_{\un\k}$. Let $F_{\le N}(\pH^{0}K)\subset \pH^{0}K$ be the sum of $\pH^{0}K_{\le\un\k}$ for $\un\k\in\frK^{\sh}_{r}$ and $\un\k\le Nd_{y}$. Then $\{F_{\le N}(\pH^{0}K)\}$ gives an increasing filtration on $\pH^{0}K$. The map $\g$ in \eqref{const term} induces 
\begin{equation*}
\Gr^{F}_{N}(\g): \Gr^{F}_{N}(\pH^{0}K)\to \oplus_{\un\k\in\frK^{\sh}_{r}, \un\k\le Nd_{y}, \un\k\nleq (N-1)d_{y}}P_{\un\k}
\end{equation*}
which is an isomorphism for large $N$, by Lemma \ref{l:const term}.

Now $h_{y}$ sends $F_{\le N}(\pH^{0}K)$ to $F_{\le N+1}(\pH^{0}K)$. By Lemma \ref{l:const Sat}, for $N$ large enough, the induced map
\begin{equation*}
\Gr^{F}_{N}(h_{y}): \Gr^{F}_{N}(\pH^{0}K)\to\Gr^{F}_{N+1}(\pH^{0}K)
\end{equation*}
is the same as the action of $\one_{\cO(y)}\in \Pic_{X}(k)$
\begin{equation}\label{one y action}
\one_{\cO(y)}: \oplus_{\un\k\in\frK^{\sh}_{r}, \un\k\le Nd_{y}, \un\k\nleq (N-1)d_{y}}P_{\un\k}\to \oplus_{\un\k\in\frK^{\sh}_{r}, \un\k\le (N+1)d_{y}, \un\k\nleq Nd_{y}}P_{\un\k}.
\end{equation}
Since $\one_{\cO(y)}$ maps $P_{\un\k}$ isomorphically to $P_{\un\k+d_{y}}$, \eqref{one y action} is an isomorphism. Therefore, $\Gr^{F}_{N}(h_{y})$ is an isomorphism for large $N$.

Next we apply $\cohog{0}{(X^{r}\times\frSi)\ot\kbar,-}$ to $F_{\le N}(\pH^{0}K)$ and $\pH^{0}K$, which we abbreviate as $\cohog{0}{F_{\le N}(\pH^{0}K)}$ and $\cohog{0}{\pH^{0}K}$. Note that each $F_{\le N}(\pH^{0}K)$ has a Weil structure, $\cohog{0}{F_{\le N}\pH^{0}K}$ is a Frobenius module and we can talk about its weight. We have an exact sequence
\begin{eqnarray}\label{longH1}
\cohog{0}{\Gr^{F}_{N}(\pH^{0}K)}\to  \cohog{1}{F_{\le N-1}(\pH^{0}K)}\to \cohog{1}{F_{\le N}(\pH^{0}K)}\to \cohog{1}{\Gr^{F}_{N}(\pH^{0}K)}
\end{eqnarray}
Since $\Gr^{F}_{N}(\pH^{0}K)$ is a sum of $P_{\un\k}$,  it is pure of weight 0 by Corollary \ref{c:Pk perv}. Therefore $\cohog{0}{\Gr^{F}_{N}(\pH^{0}K)}$ is pure of weight 0 and $\cohog{1}{\Gr^{F}_{N}(\pH^{0}K)}$ is pure of weight 1. For weight reasons, \eqref{longH1} gives a long exact sequence
\begin{eqnarray}
\label{longH0} \cohog{0}{F_{\le N-1}(\pH^{0}K)}\to \cohog{0}{F_{\le N}(\pH^{0}K)}\to \cohog{0}{\Gr^{F}_{N}(\pH^{0}K)}\\
\label{longW0H1} \to W_{\le 0}\cohog{1}{F_{\le N-1}(\pH^{0}K)}\to W_{\le 0}\cohog{1}{F_{\le N}(\pH^{0}K)}\to 0
\end{eqnarray}
where $W_{\le0}(-)$ means the sub Frobenius-module of weight $\le0$.  The surjectivity of \eqref{longW0H1} implies $W_{\le 0}\cohog{1}{F_{\le N}(\pH^{0}K)}$ is eventually stable for $N$ large, and hence the last arrow in \eqref{longH0} is surjective for  $N$ large. As $\Gr^{F}_{N}(h_{y})$ is an isomorphism for large $N$,  it induces an isomorphism $\cohog{0}{\Gr^{F}_{N}(\pH^{0}K)}\isom \cohog{0}{\Gr^{F}_{N+1}(\pH^{0}K)}$ for large $N$. This implies that for large $N$, the image of $\cohog{0}{F_{\le N}(\pH^{0}K)}$ in $\cohog{0}{\pH^{0}K}$ generates it as an $\sH_{y}\ot\Ql$-module.
\end{proof}

Let $\ov\sH^{\Sig}_{\ell}$ be the image of the ring homomorphism
\begin{equation*}
\sH^{\Sig}_{G}\ot\Ql\to \End_{\Ql}(V)\times\Ql[\Pic_{X}(k)]^{\io_{\Pic}}
\end{equation*}
given by the product of the action map on $V$ and $a^{\Sig}_{\Eis}$.

\begin{cor}[of Prop. \ref{p:Hy fin}]\label{c:H fin}
\begin{enumerate}
\item $\ov\sH^{\Sig}_{\ell}$ is a finitely generated $\Ql$-algebra of Krull dimension one.
\item $V$ is finitely generated as a $\ov\sH^{\Sig}_{\ell}$-module.
\end{enumerate}
\end{cor}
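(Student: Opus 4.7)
The plan is to deduce the corollary from Proposition~\ref{p:Hy fin} by commutative algebra. Fix any $y \in |X| - \Sig$ and let $A_y \subset \ov\sH^{\Sig}_\ell$ be the image of $\sH_y \otimes \Ql$. By the Satake isomorphism for $\PGL_2$, $\sH_y \otimes \Ql$ is a polynomial ring in one variable, so $A_y$ is a finitely generated commutative $\Ql$-algebra of Krull dimension at most one; Proposition~\ref{p:Hy fin} then makes $V$ a finitely generated $A_y$-module.

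The one step requiring a brief argument is to control $\ov\sH^V$, the image of $\sH^{\Sig}_G \otimes \Ql$ in $\End_\Ql(V)$. Commutativity of $\sH^{\Sig}_G$ forces $\ov\sH^V \subset \End_{A_y}(V)$. For a finitely generated module $V$ over a Noetherian ring $A_y$, the endomorphism module is itself finitely generated: a surjection $A_y^n \twoheadrightarrow V$ realizes $\End_{A_y}(V)$ as an $A_y$-submodule of $\Hom_{A_y}(A_y^n, V) \cong V^n$, which is finitely generated by Noetherianness. Thus $\ov\sH^V$ is a finitely generated $A_y$-module, hence in particular a finitely generated $\Ql$-algebra, integral over $A_y$, with $\dim \ov\sH^V \leq \dim A_y \leq 1$.

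Set $B := \Ql[\Pic_X(k)]^{\io_{\Pic}}$, a finitely generated Noetherian $\Ql$-algebra of Krull dimension one. By Lemma~\ref{l:aEis surj} the projection $\ov\sH^{\Sig}_\ell \to B$ is surjective, and by construction so is $\ov\sH^{\Sig}_\ell \to \ov\sH^V$. The product $\ov\sH^V \times B$ is then finitely generated as an $\ov\sH^{\Sig}_\ell$-module: the two idempotents $(1,0)$ and $(0,1)$ generate it, since surjectivity of the two projections of $\ov\sH^{\Sig}_\ell$ forces $\ov\sH^{\Sig}_\ell \cdot (1,0) = \ov\sH^V \times 0$ and $\ov\sH^{\Sig}_\ell \cdot (0,1) = 0 \times B$. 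Since $\ov\sH^V \times B$ is itself a finitely generated $\Ql$-algebra, the Artin--Tate lemma applied to $\Ql \subset \ov\sH^{\Sig}_\ell \subset \ov\sH^V \times B$ yields that $\ov\sH^{\Sig}_\ell$ is a finitely generated $\Ql$-algebra. The same module-finiteness makes $\ov\sH^{\Sig}_\ell \subset \ov\sH^V \times B$ an integral extension, whence
$$\dim \ov\sH^{\Sig}_\ell = \dim(\ov\sH^V \times B) = \max(\dim \ov\sH^V, \dim B) = 1,$$
the lower bound coming from the surjection $\ov\sH^{\Sig}_\ell \twoheadrightarrow B$. This settles (1); (2) is immediate, since $V$ is already finitely generated over $A_y \subset \ov\sH^{\Sig}_\ell$.

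All the real content lies in Proposition~\ref{p:Hy fin}, which is where the horocycle decomposition of \S\ref{ss:horo} and the cohomological constant-term analysis of \S\ref{ss:coho spec decomp} enter. Granted that input, no step here is genuinely hard; the only place that requires a moment's thought is where commutativity of $\sH^{\Sig}_G$ is used to trap $\ov\sH^V$ inside the Noetherian-controlled ring $\End_{A_y}(V)$, turning an a priori wild commutative subalgebra of $\End_\Ql(V)$ into a finitely generated $A_y$-module.
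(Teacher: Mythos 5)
Your proof is correct and follows essentially the same route as the paper, which deduces part (2) directly from Proposition \ref{p:Hy fin} and for part (1) simply cites \cite[Lemma 7.13(2)]{YZ}; the substance in both cases is exactly your combination of Proposition \ref{p:Hy fin}, commutativity of $\sH^{\Sig}_{G}$ to trap the image of the Hecke algebra inside $\End_{A_y}(V)$, and surjectivity of $a^{\Sig}_{\Eis}$ (Lemma \ref{l:aEis surj}). The only (harmless) difference is your Artin--Tate bookkeeping at the end, in place of the cited argument's more direct realization of $\ov\sH^{\Sig}_{\ell}$ as a finite module over the one-variable subalgebra generated by the image of $h_{y}$.
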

\begin{proof}
(2) is an obvious consequence of Prop. \ref{p:Hy fin}. The proof of part (1) is the same as \cite[Lemma 7.13(2)]{YZ}.
\end{proof}

\begin{theorem}[Cohomological spectral decomposition]\label{th:spec decomp}
\begin{enumerate}
\item There is a decomposition of the reduced scheme of $\Spec \ov\sH^{\Sig}_{\ell}$ into a disjoint union
\begin{equation*}
\Spec (\ov\sH^{\Sig}_{\ell})^{\red}=Z_{\Eis,\Ql}\coprod Z^{\Sig}_{0,\ell}
\end{equation*}
where $Z_{\Eis,\Ql}=\Spec \Ql[\Pic_{X}(k)]^{\io_{\Pic}}$ and $Z^{\Sig}_{0,\ell}$ consists of a finite set of closed points. 
\item There is a unique decomposition
\begin{equation*}
V=V_{0}\oplus V_{\Eis}
\end{equation*}
into $\sH^{\Sig}_{G}\ot\Ql$-submodules, such that $\Supp(V_{\Eis}) \subset Z_{\Eis,\Ql}$, and $\Supp(V_{0}) = Z^{\Sig}_{0,\ell}$.
\item The subspace $V_{0}$ is finite dimensional over $\Ql$.
\end{enumerate}
\end{theorem}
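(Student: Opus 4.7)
The plan is to exploit the two ideals $J_1 := \ker(\ov\sH^\Sig_\ell \twoheadrightarrow \Ql[\Pic_X(k)]^{\io_{\Pic}})$ (surjective by the $\Ql$-linear extension of Lemma \ref{l:aEis surj}) and $J_2 := \Ann_{\ov\sH^\Sig_\ell}(V)$. By the definition of $\ov\sH^\Sig_\ell$ as the image of $\sH^\Sig_G \otimes \Ql$ in $\End_{\Ql}(V) \times \Ql[\Pic_X(k)]^{\io_{\Pic}}$, one has $J_1 \cap J_2 = 0$, which yields $\Spec(\ov\sH^\Sig_\ell)^{\red} = V(J_1) \cup V(J_2)$ set-theoretically, with $V(J_1)$ canonically identified with $Z_{\Eis,\Ql}$.

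The key reduction is to show that $V' := J_1 V$ is finite-dimensional over $\Ql$. Since $\ov\sH^\Sig_\ell$ is Noetherian by Corollary \ref{c:H fin}(1), $J_1$ is finitely generated, say by $f_1, \dots, f_m$, each liftable to an element of $\cI^\Sig_{\Eis}$. By Corollary \ref{c: IEis fin}, each $f_i V$ is finite-dimensional, hence so is $V' = \sum_i f_i V$. The quotient $V'' := V/V'$ is killed by $J_1$ and is thereby a finitely generated $\Ql[\Pic_X(k)]^{\io_{\Pic}}$-module, in particular $\Supp(V'') \subset Z_{\Eis,\Ql}$, while $\Supp(V')$ is a finite set of closed points of $\Spec\ov\sH^\Sig_\ell$. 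Setting $Z^\Sig_{0,\ell} := \Supp(V') \setminus Z_{\Eis,\Ql}$ with reduced structure, one obtains a finite set of closed points disjoint from $Z_{\Eis,\Ql}$ by construction, and $V(J_2) = \Supp(V) = \Supp(V') \cup \Supp(V'') \subset Z^\Sig_{0,\ell} \cup Z_{\Eis,\Ql}$, so $\Spec(\ov\sH^\Sig_\ell)^{\red} = Z_{\Eis,\Ql} \cup Z^\Sig_{0,\ell}$.

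The main obstacle is upgrading this to a disjoint union of open-and-closed subsets, i.e.\ proving that every closed point of $Z^\Sig_{0,\ell}$ is isolated in $\Spec(\ov\sH^\Sig_\ell)^{\red}$. For this I would appeal to the Krull dimension bound of Corollary \ref{c:H fin}(1): if some $\fkm \in Z^\Sig_{0,\ell}$ lay on a one-dimensional irreducible component $W$ with generic point $\eta$, then the inclusion $W \subset V(J_1) \cup V(J_2)$ combined with $\fkm \notin Z_{\Eis,\Ql} = V(J_1)$ (which would force $W \subset V(J_1)$ since $V(J_1)$ is closed and contains $\eta$) shows that $\eta \in V(J_2) \setminus Z_{\Eis,\Ql}$; but $\Supp(V'') \subset Z_{\Eis,\Ql}$ then gives $\eta \in \Supp(V')$, contradicting the fact that the finite-dimensional module $V'$ has support consisting solely of closed points. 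Hence $Z^\Sig_{0,\ell}$ is open as well, proving (1).

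For (2) and (3), the decomposition of $\Spec(\ov\sH^\Sig_\ell)^{\red}$ into open-and-closed pieces lifts uniquely to $\Spec\ov\sH^\Sig_\ell = \cU_{\Eis} \sqcup \cU_0$ and produces orthogonal idempotents $e_{\Eis} + e_0 = 1$ in $\ov\sH^\Sig_\ell$. Setting $V_{\Eis} := e_{\Eis} V$ and $V_0 := e_0 V$ then gives the desired decomposition, unique by the prescribed disjoint supports (the inclusion $Z^\Sig_{0,\ell} \subset \Supp(V)$ forces $\Supp(V_0) = Z^\Sig_{0,\ell}$ exactly). Finally, $e_0 \ov\sH^\Sig_\ell$ is the semi-localization of $\ov\sH^\Sig_\ell$ at the finitely many (necessarily zero-dimensional) maximal ideals underlying $\cU_0$, hence a finite product of Artinian local $\Ql$-algebras and thus finite-dimensional over $\Ql$; since $V_0$ is a finitely generated module over it by Corollary \ref{c:H fin}(2), $V_0$ is finite-dimensional, establishing (3).
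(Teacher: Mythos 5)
Your proposal is correct and takes essentially the same route as the paper: your $J_{1}$ is exactly the image $\ov\cI_{\Eis}$ of the Eisenstein ideal, the finite-dimensionality of $J_{1}V$ comes from the same two inputs (Corollary \ref{c:H fin} and the finite-rank action of Eisenstein elements), and the covering of $\Spec(\ov\sH^{\Sig}_{\ell})^{\red}$ by $Z_{\Eis,\Ql}$ and the support of $J_{1}V$, followed by the idempotent/Artinian argument for (2) and (3), is precisely the argument the paper gives and delegates to \cite[Theorem 7.14]{YZ}. Two harmless remarks: your ``isolated point'' step is redundant, since $Z^{\Sig}_{0,\ell}$ is open simply as the complement of the closed subset $Z_{\Eis,\Ql}$; and the finite-dimensionality over $\Ql$ of the Artinian factor also uses that $\ov\sH^{\Sig}_{\ell}$ is a finitely generated $\Ql$-algebra, which Corollary \ref{c:H fin}(1) supplies.
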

\begin{proof}
(1) By Lemma \ref{l:aEis surj}, $a^{\Sig}_{\Eis}$ induces a closed embedding $Z_{\Eis,\Ql}\incl \Spec\ov\sH^{\Sig}_{\ell}$. We are going to show that the complement of  $Z_{\Eis,\Ql}$ in $\Spec\ov\sH^{\Sig}_{\ell}$ is a finite set of closed points.

Let $\ov\cI_{\Eis}$ be the image of $\cI_{\Eis}$ in $\ov\sH^{\Sig}_{\ell}$, then by  Corollary \ref{c:H fin}, $\ov\sH^{\Sig}_{\ell}$ is noetherian and hence $\ov\cI_{\Eis}$ is finitely generated, say by $f_{1},\dotsc, f_{N}$. By Corollary \ref{c: IEis fin}, each $f_{i}\cdot V$ is finite-dimensional, therefore so is $\ov\cI_{\Eis} \cdot V=f_{1}\cdot V+\cdots + f_{N}\cdot V$. Now let $Z'_{0}\subset \Spec(\sH^{\Sig}_{\ell})^{\red}$ be the support of the finite-dimensional $\ov\sH^{\Sig}_{\ell}$-module $\ov\cI_{\Eis} \cdot V$. Hence $Z'_{0}$ is a finite set of closed points. The same argument as that of \cite[Theorem 7.14]{YZ} shows that $\Spec(\sH^{\Sig}_{\ell})^{\red}$ is the union of $Z_{\Eis,\Ql}$ and $Z'_{0}$. Finally we let $Z^{\Sig}_{0,\ell}$ be the complement of $Z_{\Eis,\Ql}$ in $\Spec (\ov\sH^{\Sig}_{\ell})^{\red}$.

The argument for (2) and (3) is the same as that of \cite[Theorem 7.14]{YZ}.
\end{proof}

\sss{The base-change situation}\label{sss:var coho decomp}
Consider the situation as in \S\ref{sss:bc}.  We argue that the analogue of Theorem \ref{th:spec decomp} holds for $\Sht'^{r}_{G}(\Sii)$ in place of $\Sht^{r}_{G}(\Sii)$. Let
\begin{equation*}
V'=\cohoc{2r}{\Sht'^{r}_{G}(\Sii)\ot\kbar, \Ql}(r).
\end{equation*}
Then $V'$ is also a $\sH^{\Sig}_{G}$-module, see the discussion in \S\ref{sss:bc Hk action}. The results in this subsection for the $\sH^{\Sig}_{G}$-module $V$ have obvious analogues for $V'$, because most of these results are consequences of finiteness results on $\pH^{i}K$ and similar results formally hold for its pullback to $X'^{r}\times\frSi'$. There is one place in the proof of Prop. \ref{p:Hy fin} where we used purity argument for the cohomology $\cohog{*}{(X^{r}\times \frSi)\ot\kbar,P_{\un\k}}$, which continues to hold for $\cohog{*}{(X'^{r}\times \frSi')\ot\kbar,\nu'^{r,*}P_{\un\k}}$. Therefore all results in this subsection hold for $V'$ in place of $V$. In particular, Theorem \ref{th:intro spec decomp} holds.

\section{The Heegner--Drinfeld cycles}

In this section we define Heegner--Drinfeld cycles in the ramified case. All the notation appearing on the geometric side of our main Theorem \ref{th:main} will be explained in this section.

\subsection{$T$-Shtukas}

\sss{The double cover}\label{sss:double cover} Let $X'$ be another smooth, projective and geometrically connected curve over $k$ and $\nu:X'\to X$ be a finite morphism of degree $2$. Let $R'\subset X'$ be the (reduced) ramification locus of $\nu$, and let $R\subset X$ be its image under $\nu$. Then $\nu$ induces an isomorphism $R'\isom R$.  Let $\s:X'\to X'$ be the nontrivial involution over $X$.

We always assume that the conditions \eqref{Sf split} and \eqref{Si inert} hold. In particular, they imply that 
\begin{equation*}
 R\cap\Sig=\vn.
\end{equation*}
Let
\begin{equation*}
\Si'=\nu^{-1}(\Si)\subset |X'|.
\end{equation*}
Then $\nu: \Si'\to \Si$ is a bijection. For $x\in\Si$ we denote its preimage in $\Si'$ by $x'$. Set
\begin{equation*}
\frSi'=\prod_{x'\in\Si'}\Spec k(x').
\end{equation*}

An  $S$-point of $\frSi'$ is $\{x'^{(1)}\}_{x'\in \Si'}$, where $x'^{(1)}: S\to \Spec k(x')\incl X'$. We introduce the notation $x'^{(i)}$ for all $i\in\ZZ$ as before. 

\sss{Hecke stack for $T$-bundles} Let
\begin{equation*}
\Bun_{T}=\Pic_{X'}/\Pic_{X}.
\end{equation*}
As  a special case of \cite[Definition 5.1]{YZ}, for  $\un\mu\in\{\pm1\}^{r}$,  we have the Hecke stack $\Hk^{\un\mu}_{1,X'}$ classifying a chain of $r+1$ line bundles on $X'$
\begin{equation*}
\xymatrix{\cL_{0}\ar@{-->}[r]^-{f'_{1}} & \cL_{1}\ar@{-->}[r]^-{f'_{2}} &\cdots\ar@{-->}[r]^-{f'_{r}} & \cL_{r}}
\end{equation*}
with modification type of $f'_{i}$ given by $\mu_{i}$. Then $\Hk^{\un\mu}_{1,X'}\cong \Pic_{X'}\times X'^{r}$ where the projection to $\Pic_{X'}$ records $\cL_{0}$, and the projection to $X'^{r}$ records the locus of modification of $f_{i}:\cL_{i-1}\dashrightarrow\cL_{i}$. We define
\begin{equation*}
\Hk^{\un\mu}_{T}:=\Hk^{\un\mu}_{1,X'}/\Pic_{X}
\end{equation*}
together with maps recording $\cL_{i}$
\begin{equation*}
p^{\un\mu}_{T,i}: \Hk^{\un\mu}_{T}\to \Bun_{T} ,\quad i=0,\dotsc, r.
\end{equation*}

\sss{$T$-Shtukas} 
For $x'\in\Si'$ and $i\in\ZZ$, we have a map
\begin{equation*}
\bx'^{(i)}: \frSi'\to \Spec k(x')\xr{\Fr^{i-1}}\Spec k(x')\incl X', \quad 1\le i\le d_{x'}=2d_{x}. 
\end{equation*}
where the first map is the projection to the $x'$-factor, and the last one is the natural embedding. Again we denote the graph of  $\bx'^{(i)}$ (as a divisor on $X'\times\frSi'$) by the same notation $\bx'^{(i)}$.

Let $\sD'_{\infty}$ be the group of $\ZZ$-valued divisors on $X'\times\frSi'$ supported on $\Si'\times\frSi'$, which is the union of the graphs of $\bx'^{(i)}$ for $x'\in \Si', 1\le i\le d_{x'}$. For any $\Di'\in\sD_{\infty}'$ as above,  we have morphisms
\begin{eqnarray*}
\wt\AL(\Di'): \Pic_{X'}\times \frSi' &\to& \Pic_{X'},\\
\AL(\Di'): \Bun_{T}\times \frSi' &\to& \Bun_{T}\\
(\cL, \{x'^{(1)}\}_{x'\in \Si'})& \mapsto & \cL(\sum_{x'\in \Si', 1\le i\le d_{x'}}c_{x'}^{(i)}\Gamma_{x'^{(i)}}).
\end{eqnarray*}

Suppose $\un\mu\in\{\pm1\}^{r}$ and $\Di'\in\sD'_{\infty}$ satisfy
\begin{equation}\label{comp mu Di'}
\sum_{i=1}^{r}\mu_{i}=\deg \Di'=\sum_{x'\in\Si',1\le i\le d_{x'}}c^{(i)}_{x'}.
\end{equation}
We then apply the definition of $\Sht^{\un\mu}_{n}(\Sig;\Di)$ to the case $n=1$, the curve being $X'$ and $\Sig$ and $\Si$ are both replaced by $\Si'$.  Denote the resulting moduli stack by $\Sht^{\un\mu}_{1,X'}(\Di')$.

The groupoid $\Pic_{X}(k)$ acts on $\Sht^{\un\mu}_{1,X'}(\Di')$ by tensoring all the line bundles in the data with the pullback of $\cK\in \Pic_{X}(k)$ to $X'$. We define
\begin{equation*}
\Sht^{\un\mu}_{T}(\Di')=\Sht^{\un\mu}_{1,X'}(\Di')/\Pic_{X}(k).
\end{equation*}
We have a morphism
\begin{equation*}
\Pi^{\un\mu}_{T,\Di'}: \Sht^{\un\mu}_{T}(\Di')\to X'^{r}\times\frSi'.
\end{equation*}

From the definition we have a Cartesian diagram
\begin{equation}\label{ShtT diag 1}
\xymatrix{       \Sht^{\un\mu}_{T}(\Di')\ar[r]\ar[d]^{\om_{T,0}} &  \Hk^{\un\mu}_{T}\times\frSi' \ar[d]^{(p^{\un\mu}_{T,0}, \AL(-\Di')\circ (p^{\un\mu}_{T,r}\times\id_{\frSi'}))}  \\
\Bun_{T} \ar[r]^-{(\id,\Fr)}& \Bun_{T}\times\Bun_{T}}
\end{equation}

From the diagram we get the following statement.
\begin{lemma}\label{l:indep Di'}
The moduli stack $\Sht^{\un\mu}_{T}(\Di')$ depends only on the image of $\Di'$ in $\sD_{\infty}'/\nu^{*}\sD_{\infty}$.
\end{lemma}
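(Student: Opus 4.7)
The plan is to construct, for each integer-valued divisor $D \in \sD_{\infty}$ such that $\Di' + \nu^{*}D$ still satisfies the degree condition \eqref{comp mu Di'} (equivalently $\deg D = 0$), a canonical isomorphism of stacks
\begin{equation*}
\Sht^{\un\mu}_{T}(\Di') \isom \Sht^{\un\mu}_{T}(\Di' + \nu^{*}D)
\end{equation*}
over $X'^{r} \times \frSi'$. Such elementary translations generate all equivalences in the quotient $\sD'_{\infty}/\nu^{*}\sD_{\infty}$, so producing them suffices.

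Looking at the Cartesian diagram \eqref{ShtT diag 1}, every piece of data defining $\Sht^{\un\mu}_{T}(\Di')$ is independent of $\Di'$ except the right vertical arrow $q_{\Di'} := \AL(-\Di') \circ (p^{\un\mu}_{T,r} \times \id_{\frSi'})$. By the universal property of $2$-fiber products, it thus suffices to exhibit a canonical $2$-isomorphism $q_{\Di'} \isom q_{\Di' + \nu^{*}D}$ between these two $1$-morphisms $\Hk^{\un\mu}_{T} \times \frSi' \to \Bun_{T}$, compatibly with the first projection $p^{\un\mu}_{T,0}$ and with Frobenius (both of which are unchanged).

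On an $S$-point $(\cL_{\bu}, \{x'^{(1)}\})$, the two morphisms send the input to the classes $[\cL_{r}(-\Di')]$ and $[\cL_{r}(-\Di' - \nu^{*}D)]$ in $\Bun_{T}(S) = \Pic_{X'}(S) / \Pic_{X}(S)$. Writing $D_{S}$ for the pullback of $D$ along the composition $S \to \frSi' \to \frSi$ (using the bijection $\Si' \to \Si$), one has
\begin{equation*}
\cL_{r}(-\Di' - \nu^{*}D) = \cL_{r}(-\Di') \otimes \nu^{*}\cO_{X \times S}(-D_{S}),
\end{equation*}
so the two line bundles differ by tensoring with an object in the image of $\nu^{*} : \Pic(X \times S) \to \Pic(X' \times S)$. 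By the very definition of $\Bun_{T}$ as the quotient of $\Pic_{X'}$ by the $\Pic_{X}$-action via pullback along $\nu$, the bundle $\nu^{*}\cO_{X \times S}(D_{S}) \in \Pic_{X}(S)$ supplies a \emph{canonical} $2$-morphism in $\Bun_{T}(S)$ between these two classes. This assignment is manifestly functorial in $S$ and in the input data, and hence upgrades to the required $2$-isomorphism of $1$-morphisms of stacks.

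The step requiring the most care is verifying that this tautological $2$-morphism is natural in the full $2$-categorical sense, and that it is compatible with the $\Pic_{X}(k)$-quotient used to pass from $\Sht^{\un\mu}_{1,X'}(\Di')$ to $\Sht^{\un\mu}_{T}(\Di')$. Both verifications are formal once the stack-theoretic definitions are unwound: the $\Pic_{X}$-action in $\Bun_{T}$ is exactly what absorbs the twist, and the Frobenius side is unaffected since the leg $\cL_{0}$ is not twisted. Feeding the resulting $2$-isomorphism into the universal property of \eqref{ShtT diag 1} then produces the desired canonical isomorphism $\Sht^{\un\mu}_{T}(\Di') \isom \Sht^{\un\mu}_{T}(\Di' + \nu^{*}D)$, proving the lemma.
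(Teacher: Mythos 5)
Your proposal is correct and is exactly the argument the paper intends: the paper derives this lemma directly from the Cartesian diagram \eqref{ShtT diag 1}, the point being that the only $\Di'$-dependence is through $\AL(-\Di')$, and twisting by $\nu^{*}D$ changes the relevant line bundle only by a pullback from $X\times S$, which is canonically trivial in $\Bun_{T}=\Pic_{X'}/\Pic_{X}$. You have simply spelled out the details (the canonical $2$-isomorphism of the right vertical arrow and the universal property of the fiber product) that the paper leaves implicit.
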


The following alternative description of $\Sht^{\un\mu}_{T}(\Di')$ follows easily from the definitions.

\begin{lemma}\label{l:ShtT Lang} We have a Cartesian diagram
\begin{equation*}
\xymatrix{\Sht^{\un\mu}_{T}(\Di')\ar[r]^(.6){\om_{T,0}}\ar[d]_{\Pi^{\un\mu}_{T,\Di'}} & \Bun_{T}\ar[d]^{\l}\\
X'^{r}\times\frSi'\ar[r]^{\a^{\un\mu}_{\Di'}} & \Bun_{T}}
\end{equation*}
where $\l:\cL\mapsto\cL^{-1}\otimes\leftexp{\tau}{\cL}$ is the Lang map for $\Bun_{T}$; $\a^{\un\mu}_{\Di'}$ sends $(x'_{1},\dotsc, x'_{r};\{x'^{(1)}\}_{x'\in\Si'})$  to the image of the line bundle 
$$\cO_{X'}\left(\sum_{i=1}^{r}\mu_i\Gamma_{x'_{i}}-\sum_{x'\in\Si', 1\le i\le d_{x'}}c^{(i)}_{x'}\Gamma_{x'^{(i)}}\right)$$ 
in $\Bun_{T}$.
\end{lemma}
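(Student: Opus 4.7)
The plan is to unfold the Cartesian diagram \eqref{ShtT diag 1} that defines $\Sht^{\un\mu}_{T}(\Di')$ and rewrite the resulting equations into the Lang-type description of the lemma; the argument is essentially formal.

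First I would use the fact that in the rank-one setting the Hecke stack is essentially trivial. A chain $\cL_0\dashrightarrow\cdots\dashrightarrow \cL_r$ of line bundles on $X'\times S$ of modification type $\un\mu$ with loci $(x'_1,\dots,x'_r)$ is uniquely determined by $\cL_0$ and $(x'_1,\dots,x'_r)$, with $\cL_i\cong\cL_{i-1}(\mu_i\Gamma_{x'_i})$. Hence the map $(p^{\un\mu}_{T,0},\pi^{\un\mu}_{T}):\Hk^{\un\mu}_{T}\to\Bun_{T}\times X'^{r}$ is an isomorphism, under which $p^{\un\mu}_{T,r}$ corresponds to $(\cL_0,(x'_1,\dots,x'_r))\mapsto\cL_0\bigl(\sum_{i=1}^{r}\mu_i\Gamma_{x'_i}\bigr)$.

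Second, I would translate the Cartesian condition in \eqref{ShtT diag 1}. An $S$-point of $\Sht^{\un\mu}_{T}(\Di')$ consists of $\cL_0\in\Bun_{T}(S)$, a tuple $(x'_1,\dots,x'_r)\in X'^{r}(S)$ and $\{x'^{(1)}\}_{x'\in\Si'}\in\frSi'(S)$ (giving a chain as above), together with an isomorphism in $\Bun_T$
$$\cL_0\Bigl(\sum_{i=1}^{r}\mu_i\Gamma_{x'_i}\Bigr)\Bigl(-\sum_{x'\in\Si',\,1\le i\le d_{x'}}c^{(i)}_{x'}\Gamma_{x'^{(i)}}\Bigr)\;\cong\;{}^{\tau}\cL_0,$$
where the second divisor is the pullback of $\Di'$ along $\{x'^{(1)}\}$. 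Rearranging, this is the same as the requirement that
$$\lambda(\cL_0)\;=\;\cL_0^{-1}\otimes{}^{\tau}\cL_0\;\cong\;\cO_{X'}\Bigl(\sum_{i=1}^{r}\mu_i\Gamma_{x'_i}-\sum_{x'\in\Si',\,1\le i\le d_{x'}}c^{(i)}_{x'}\Gamma_{x'^{(i)}}\Bigr)$$
in $\Bun_T(S)$, which is precisely $\lambda(\omega_{T,0})=\alpha^{\un\mu}_{\Di'}\circ\Pi^{\un\mu}_{T,\Di'}$.

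Finally, I would package this as a morphism $\Sht^{\un\mu}_{T}(\Di')\to\Bun_T\times_{\Bun_T}(X'^{r}\times\frSi')$ whose two projections recover $\omega_{T,0}$ and $\Pi^{\un\mu}_{T,\Di'}$; the inverse is constructed by reversing the reasoning, i.e.\ given $\cL_0$ and points with $\lambda(\cL_0)\cong\alpha^{\un\mu}_{\Di'}(\cdots)$, we reconstruct the chain $\cL_i=\cL_0\bigl(\sum_{j\le i}\mu_j\Gamma_{x'_j}\bigr)$ and read the Shtuka isomorphism $\cL_r\cong{}^{\tau}\cL_0(\Di')$ off the given identification. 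Naturality in $S$ upgrades this to an isomorphism of stacks. There is no real obstacle; the one bookkeeping point to be careful about is that both sides live over $\Pic_X$-quotients, so all isomorphisms of line bundles above should be understood modulo pullbacks from $\Pic_X$, which is harmless since $\Bun_T=\Pic_{X'}/\Pic_X$ and $\lambda$, $\alpha^{\un\mu}_{\Di'}$ are defined on this quotient to begin with.
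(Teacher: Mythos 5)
Your proof is correct and is exactly the unwinding the paper has in mind: the paper offers no argument for this lemma beyond saying it "follows easily from the definitions," and your route — trivializing $\Hk^{\un\mu}_{T}\cong\Bun_{T}\times X'^{r}$ via $(p^{\un\mu}_{T,0},\pi^{\un\mu}_{T})$ and converting the Shtuka datum $\cL_r\cong{}^{\tau}\cL_0(\Di')$ into the Lang-map equation, with the isomorphism recorded as the datum of a point of the stacky fiber product — is the intended one. No gaps; the remark about working modulo $\Pic_X$ is the right bookkeeping.
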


\begin{cor}[of Lemma \ref{l:ShtT Lang}]\label{c:ShtT proper} The morphism $\Pi^{\un\mu}_{T,\Di'}$ is a torsor under the (finite discrete) groupoid $\Bun_{T}(k)$. In particular, $\Sht^{\un\mu}_{T}(\Di')$ is a smooth and proper DM stack over $k$ of dimension $r$.
\end{cor}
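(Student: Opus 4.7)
My plan is to reduce the corollary to standard Lang-isogeny properties using the Cartesian square in Lemma~\ref{l:ShtT Lang}. First I would unpack the bottom horizontal map: $\a^{\un\mu}_{\Di'}$ sends a geometric point to the class of the line bundle $\cO_{X'}(\sum_{i=1}^{r}\mu_i\Gamma_{x'_{i}}-\sum_{x',i}c^{(i)}_{x'}\Gamma_{x'^{(i)}})$, which has degree $\sum_i\mu_i-\deg\Di'=0$ by the compatibility~\eqref{comp mu Di'}; hence $\a^{\un\mu}_{\Di'}$ factors through the identity (degree-$0$) component $\Bun_T^0\subset\Bun_T$. Similarly the right vertical Lang map $\l(\cL)=\cL^{-1}\otimes\leftexp{\tau}{\cL}$ factors as $\l:\Bun_T\to\Bun_T^0$, since Frobenius preserves the degree on $X'$.

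The central step is then Lang's theorem applied to the smooth commutative Picard stack $\Bun_T=\Pic_{X'}/\Pic_X$. Because $d\Fr=0$ in characteristic $p$, the differential of $\l$ equals $-\id$, so $\l$ is \'etale; and the classical argument (applied to the coarse Prym variety $\Pic^0_{X'}/\nu^*\Pic^0_X$) upgrades this to: $\l:\Bun_T\to\Bun_T^0$ is surjective with fibers torsors under the kernel, which is exactly the groupoid $\Bun_T(k)$ of $k$-points. Pulling back this torsor structure along $\a^{\un\mu}_{\Di'}$ via the Cartesian square immediately yields that $\Pi^{\un\mu}_{T,\Di'}:\Sht^{\un\mu}_T(\Di')\to X'^r\times\frSi'$ is a $\Bun_T(k)$-torsor.

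The finiteness of the groupoid $\Bun_T(k)$ follows because its isomorphism classes form a quotient (by finite kernels) of $\Pic^0_{X'}(k)$ (plus a finite number of degree components), which is finite since the Jacobian $\Pic^0_{X'}$ is an abelian variety over a finite field; the automorphism groups at each object are controlled by the finite group $\ker(\nu^*:\Pic_X\to\Pic_{X'})$, hence are finite. Since $X'^r\times\frSi'$ is smooth and proper over $k$ of pure dimension $r$ (with $\frSi'$ finite \'etale over $k$), and $\Pi^{\un\mu}_{T,\Di'}$ is a finite \'etale torsor under such a groupoid, $\Sht^{\un\mu}_T(\Di')$ inherits smoothness, properness, the DM property, and dimension $r$.

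The point requiring the most care is the verification that $\l$ really is a $\Bun_T(k)$-torsor, not merely an fppf-torsor, in the stacky setting; but this reduces to the classical Lang theorem on the coarse Prym variety, combined with bookkeeping of the inertia inherited from the presentation $\Bun_T=\Pic_{X'}/\Pic_X$. Everything else is a formal pullback argument.
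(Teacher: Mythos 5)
Your proposal is correct and is essentially the argument the paper intends: the corollary is deduced directly from the Cartesian square of Lemma \ref{l:ShtT Lang} by noting that the degree condition \eqref{comp mu Di'} places $\a^{\un\mu}_{\Di'}$ in the image of the Lang map $\l$, and that $\l$ is (by Lang's theorem for $\Bun_{T}$, as in \cite[\S5.4]{YZ}) surjective with fibers torsors under the finite groupoid $\Bun_{T}(k)$, so the properties of $X'^{r}\times\frSi'$ transfer to $\Sht^{\un\mu}_{T}(\Di')$. The inertia/finiteness bookkeeping you flag is exactly the routine part the paper leaves implicit.
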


\sss{Specific choice of $\Di'$}
For each $\mi=(\mu_{x})_{x\in \Si}\in\{\pm1\}^{\Si}$, define the following element in $\sD'_{\infty}$
\begin{equation*}
\mi\cdot\Si':=\sum_{x\in\Si} \mu_{x}\bx'^{(1)}\in\sD'_{\infty}.
\end{equation*}


\begin{defn}\label{d:ShtTmu} Fix $r$ satisfying the parity condition \eqref{parity}. Let $\un\mu\in\{\pm1\}^{r}, \mi\in\{\pm1\}^{\Si}$. For any $\Di'\in \sD'_{\infty}$ satisfying $\Di'\equiv \mi\cdot\Si'\mod \nu^{*}\sD_{\infty}$ and \eqref{comp mu Di'}, define
\begin{equation*}
\Sht^{\un\mu}_{T}(\mi\cdot\Si'):=\Sht^{\un\mu}_{T}(\Di').
\end{equation*}
The notation is justified because the right side above depends only on $\mu_\infty$ by Lemma \ref{l:indep Di'}. We denote the projection $\Pi^{\un\mu}_{T,\Di'}$ for such $\Di'$ by
\begin{equation*}
\Pi^{\un\mu}_{T,\mi}: \Sht^{\un\mu}_{T}(\mi\cdot\Si')\to X'^{r}\times\frSi'.
\end{equation*}
\end{defn}

\begin{remark}
Whenever $r$ satisfies the parity condition \eqref{parity}, for any $(\un\mu,\mi)\in \{\pm1\}^{r}\times\{\pm1\}^{\Si}$, the divisor $\Di'\in \sD_{\infty}'$ satisfying the conditions in Definition \ref{d:ShtTmu} always exists. Therefore, $\Sht^{\un\mu}_{T}(\mi\cdot\Si')$ is always defined (and non-empty).
\end{remark}

The following lemma is a direct consequence of the diagram  \eqref{ShtT diag 1}.
\begin{lemma}
The following diagram is Cartesian
\begin{equation}\label{ShtT diag 2}
\xymatrix{  \Sht^{\un\mu}_{T}(\mi\cdot\Si')\ar[d]      \ar[r]   & \Hk^{\un\mu}_{T}\times \frSi'\ar[d]^{(p^{\un\mu}_{T,0}\times\id_{\frSi'}, \AL^{\sh}_{T,\mi}\circ (p^{\un\mu}_{T,r}\times\id_{\frSi'}))}\\
\Bun_{T}\times\frSi'\ar[r]^-{(\id,\Fr)} & (\Bun_{T}\times\frSi')\times(\Bun_{T}\times\frSi')}
\end{equation}
where $\AL^{\sh}_{T,\mi}$ is the map
\begin{equation}\label{AL sh T}
\AL^{\sh}_{T,\mi}=(\AL(-\mi\cdot\Si'), \Fr_{\frSi'}): \Bun_{T}\times\frSi'\to \Bun_{T}\times\frSi'.
\end{equation}
\end{lemma}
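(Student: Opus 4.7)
The plan is to verify the Cartesianness by directly unpacking both sides. The claim is that \eqref{ShtT diag 2} differs from the already-established Cartesian diagram \eqref{ShtT diag 1} only by how the factor $\frSi'$ is bookkept: in \eqref{ShtT diag 1} it appears only on the right-hand column, while in \eqref{ShtT diag 2} it is packaged symmetrically into both the bottom row and the right column. So the work will reduce to showing that this repackaging does not introduce or lose any data.

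First I would fix, via Definition \ref{d:ShtTmu} and Lemma \ref{l:indep Di'}, a divisor $\Di' \in \sD'_\infty$ satisfying \eqref{comp mu Di'} and congruent to $\mi\cdot\Si'$ modulo $\nu^*\sD_\infty$, so that $\Sht^{\un\mu}_{T}(\mi\cdot\Si') = \Sht^{\un\mu}_{T}(\Di')$ and \eqref{ShtT diag 1} is known to be Cartesian. Since $\Bun_T = \Pic_{X'}/\Pic_X$, the subgroup $\nu^*\sD_\infty$ acts trivially on $\Bun_T$; consequently the morphisms $\AL(-\Di')$ and $\AL(-\mi\cdot\Si')$ coincide as maps $\Bun_T\times\frSi' \to \Bun_T$, allowing me to replace $\Di'$ by $\mi\cdot\Si'$ in the expression of the right vertical arrow of \eqref{ShtT diag 1}.

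Next I would unwind an $S$-point of the fiber product in \eqref{ShtT diag 2}: it is a datum $((\cK, \{y^{(1)}\}),\, (\cL_0\dashrightarrow\cdots\dashrightarrow\cL_r, \{x'^{(1)}\}))$ together with identifications $(\cK, \{y^{(1)}\}) \cong (\cL_0, \{x'^{(1)}\})$ and $({}^\tau\cK, \Fr_{\frSi'}\{y^{(1)}\}) \cong (\cL_r(-\mi\cdot\Si'), \Fr_{\frSi'}\{x'^{(1)}\})$. The $\frSi'$-component equalities force $\{y^{(1)}\} = \{x'^{(1)}\}$, after which the $\Bun_T$-component equalities give $\cK = \cL_0$ and an isomorphism ${}^\tau\cL_0 \cong \cL_r(-\mi\cdot\Si')$ in $\Bun_T$. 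This is exactly the data defining an $S$-point of $\Sht^{\un\mu}_T(\mi\cdot\Si')$; conversely every such $S$-point produces the corresponding datum with $\cK := \cL_0$, $\{y^{(1)}\} := \{x'^{(1)}\}$. This establishes the sought universal property.

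The verification is essentially bookkeeping, and the only subtle point — the one I would highlight — is the trivial action of $\nu^*\sD_\infty$ on $\Bun_T$, which is what allows one to pass freely between $\Di'$ and $\mi\cdot\Si'$ in the definition of $\AL$; all remaining matching of data is automatic. There is no serious technical obstacle.
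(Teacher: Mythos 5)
Your proof is correct and follows exactly the route the paper intends: the paper offers no written argument beyond declaring the lemma "a direct consequence of the diagram \eqref{ShtT diag 1}," and your verification---replacing $\Di'$ by $\mi\cdot\Si'$ via the triviality of $\nu^{*}$-twists on $\Bun_{T}=\Pic_{X'}/\Pic_{X}$ (the content of Lemma \ref{l:indep Di'}) and then checking that the extra $\frSi'$-factor in the bottom row carries no new data---is precisely that argument spelled out.
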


\sss{Relation to $T$-Shtukas in \cite{YZ}} For $(\un\mu,\mi)\in \{\pm1\}^{r}\times\{\pm1\}^{\Si}$, let $\wt\mu=(\un\mu,-\mi)$. Then $\Sht^{\wt\mu}_{T}$ is defined as in \cite[\S5.4]{YZ} (the {\em loc. cit.} also applies to a ramified cover $X'/X$), with a map $\pi^{\wt\mu}_{T}: \Sht^{\wt\mu}_{T}\to X'^{r}\times X'^{\Si}$. Let $\frSi'\incl X'^{\Si}$ be the product of the natural embeddings $\Spec k(x')\incl X'$ for each $x\in\Si$. From the definitions, we see that $\Sht^{\un\mu}_{T}(\mi\cdot\Si')$ fits into a Cartesian diagram
\begin{equation*}
\xymatrix{     \Sht^{\un\mu}_{T}(\mi\cdot\Si')\ar@{^{(}->}[r]\ar[d]^{\Pi^{\un\mu}_{T,\mi}}  &   \Sht^{\wt\mu}_{T}\ar[d]^{\pi^{\wt\mu}_{T}} \\
X'^{r}\times \frSi'\ar@{^{(}->}[r] & X'^{r}\times X'^{\Si}}
\end{equation*}


\subsection{The Heegner--Drinfeld cycles}
In this subsection we will define a map from $\Sht^{\un\mu}_{T}(\mi\cdot\Si')$ to $\Sht^{r}_{G}(\Sii)$ depending on an auxiliary choice. 

Recall that the condition \eqref{Sf split} is assumed. Let $\Sf'=\nu^{-1}(\Sf)$. Let $\Sect(\Sf'/\Sf)$ be the set of sections of the two-to-one map $\Sf'\to \Sf$. Then $\Sect(\Sf'/\Sf)$ is a torsor under $\{\pm1\}^{\Sf}$. The auxiliary choice we need is an element $\mf\in \Sect(\Sf'/\Sf)$. 

\sss{The map $\th^{\mu_{\Sig}}_{\Bun}$}\label{sss:th Bun}  Let $\mu_{\Sig}=(\mf,\mi)\in \Sect(\Sf'/\Sf)\times \{\pm1\}^{\Si}$.  We define a map
\begin{equation*}
\wt\th^{\mu_{\Sig}}_{\Bun}: \Pic_{X'}\times\frSi'\to \Bun_{2}(\Sig).
\end{equation*}
To an $S$-point $(\cL, \{x'^{(1)}\}_{x'\in\Si'})$ of $\Pic_{X'}\times\frSi'$, we assign the following $S$-point of $\Bun_{2}(\Sig)$
\begin{equation*}
\cE^{\da}=(\cE, \{\cE(-\ha x)\}_{x\in \Sig})
\end{equation*}
where
\begin{itemize}
\item $\cE=\nu_{S,*}\cL$, where $\nu_{S}=\nu\times \id_{S}: X'\times S\to X\times S$.
\item For $x\in \Sf$, denote the value of $\mf$ at $x$ by $\mu_{x}\in \nu^{-1}(x)$. Then $\cE(-\ha x)=\nu_{S,*}(\cL(-\mu_{x}))$.
\item For $x\in \Si$,  
\begin{equation*}
\cE(-\ha x)=\begin{cases}\nu_{S,*}(\cL(-\Gamma_{x'^{(1)}}-\Gamma_{x'^{(2)}}-\cdots-\Gamma_{x'^{(d_{x})}})) & \mu_{x}=1;\\
\nu_{S,*}(\cL(-\Gamma_{x'^{(d_{x}+1)}}-\Gamma_{x'^{(d_{x}+2)}}-\cdots-\Gamma_{x'^{(2d_{x})}})) & \mu_{x}=-1.
\end{cases}
\end{equation*}
\end{itemize}
Note here that for $x\in \Si$, the divisors $\Gamma_{x'^{(1)}}+\Gamma_{x'^{(2)}}+\cdots+\Gamma_{x'^{(d_{x})}}$ and $\Gamma_{x'^{(d_{x}+1)}}+\Gamma_{x'^{(d_{x}+2)}}+\cdots+\Gamma_{x'^{(2d_{x})}}$ in the above formulae are ``half'' of the divisor $\{x'\}\times S\subset X'\times S$.

Dividing by $\Pic_{X}$ we get a morphism 
\begin{equation*}
\th^{\mu_{\Sig}}_{\Bun}: \Bun_{T}\times\frSi'\to \Bun_{G}(\Sig).
\end{equation*}

The next lemma is a direct calculation.
\begin{lemma}\label{l:comp Di Di'} Let $\mu_{\Sig}=(\mf,\mi)$. The following diagram is commutative
\begin{equation*}
\xymatrix{   \Bun_{T} \times\frSi' \ar[d]_{(\th^{\mu_{\Sig}}_{\Bun}, \nu_{\infty})}     \ar[rr]^-{\AL^{\sh}_{T,\mi}} && \Bun_{T}\times\frSi' \ar[d]^{\th^{\mu_{\Sig}}_{\Bun}}\\
\Bun_{G}(\Sig)\times\frSi \ar[rr]^-{\AL_{G,\infty}} && \Bun_{G}(\Sig)}
\end{equation*}
where $\nu_{\infty}:\frSi'\to\frSi $ is the map induced from $\nu$.
\end{lemma}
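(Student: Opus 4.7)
The plan is to verify commutativity by a direct $S$-point computation, since both composites are morphisms of stacks. Fix an $S$-point $(\cL,\{x'^{(1)}\}_{x'\in\Si'})$ of $\Bun_{T}\times\frSi'$. Going right-then-down, $\AL^{\sh}_{T,\mi}$ yields $(\cL',\{x'^{(2)}\})$ with $\cL'=\cL(-\sum_{x\in\Si}\mu_{x}\Gamma_{x'^{(1)}})$, after which $\th^{\mu_{\Sig}}_{\Bun}$ produces $\nu_{S,*}\cL'$ equipped with Iwahori structures given by the formulas of \S\ref{sss:th Bun}, but with every $x'^{(i)}$ replaced by $x'^{(i+1)}$ because the $\frSi'$-coordinate has been shifted by Frobenius. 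Going down-then-right, one first forms $\cE=\nu_{S,*}\cL$ with its prescribed Iwahori structures at $\Sig$, then applies the fractional twist by $-\Di^{(1)}=-\ha\sum_{x\in\Si}\bx^{(1)}$.

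The main tool I will use is the identity $\Gamma_{x'^{(j)}}+\Gamma_{x'^{(j+d_{x})}}=\nu_{S}^{*}\Gamma_{x^{(j)}}$ for each $x\in\Si$ (available because $x$ is inert in $X'$ and $d_{x'}=2d_{x}$), combined with the projection formula $\nu_{S,*}(\cL\otimes\nu_{S}^{*}\cM)\cong\nu_{S,*}\cL\otimes\cM$ and the quotient $\Bun_{T}=\Pic_{X'}/\Pic_{X}$. First I compute the underlying rank two bundle on the right-hand side: unpacking $\cQ^{(1)}_{1}$ as the pushforward of $\cL|_{\Gamma_{x'^{(j_{x})}}}$ (with $j_{x}=1$ if $\mu_{x}=1$ and $j_{x}=d_{x}+1$ if $\mu_{x}=-1$, according to the two cases of $\th^{\mu_{\Sig}}_{\Bun}$) shows that $\cE(-\ha x^{(1)})=\nu_{S,*}(\cL(-\Gamma_{x'^{(j_{x})}}))$, and intersecting over $x\in\Si$ gives $\cE(-\Di^{(1)})=\nu_{S,*}(\cL(-\sum_{x\in\Si}\Gamma_{x'^{(j_{x})}}))$. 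The left-hand side produces $\nu_{S,*}(\cL(-\sum_{x\in\Si}\mu_{x}\Gamma_{x'^{(1)}}))$. For each $x$ with $\mu_{x}=-1$ the two underlying divisors on $X'\times S$ differ by $\Gamma_{x'^{(1)}}+\Gamma_{x'^{(d_{x}+1)}}=\nu_{S}^{*}\Gamma_{x^{(1)}}$, which is a pullback from $X\times S$, so the associated points of $\Bun_{T}$ coincide.

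The main obstacle will be matching the Iwahori structures at $\Sig$. At each $x\in\Sf$ both twists are trivial on the Iwahori data (because $\Sf\cap\Si=\vn$ and $\Di^{(1)}$ is supported at $\Si$), so these components agree automatically from the formula for $\th^{\mu_{\Sig}}_{\Bun}$. At each $x\in\Si$, I must compute $(\cE^{\da}(-\Di^{(1)}))(-\ha x)$ by writing the relevant $\QQ$-divisor as $D_{0}-D_{1}$ (with $D_{1}$ having coefficients in $[\ha,\ha]$) in the fractional-twist recipe of \S\ref{sss:1/n twist}, and compare with $\th^{\mu_{\Sig}}_{\Bun}(\cL',\{x'^{(2)}\})$ evaluated at $x$. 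The shift of the $\frSi'$-coordinate together with the periodicity $x'^{(i+2d_{x})}=x'^{(i)}$ exactly matches the string $\Gamma_{x'^{(2)}}+\cdots+\Gamma_{x'^{(d_{x}+1)}}$ (resp.\ $\Gamma_{x'^{(d_{x}+2)}}+\cdots+\Gamma_{x'^{(2d_{x})}}+\Gamma_{x'^{(1)}}$) appearing on one side against the string arising from the bundle modifications on the other, after absorbing integer divisors pulled back from $X\times S$ via the projection formula.

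Once both the underlying bundle and all Iwahori structures have been matched modulo $\Pic_{X}$, the equality $\th^{\mu_{\Sig}}_{\Bun}\circ\AL^{\sh}_{T,\mi}=\AL_{G,\infty}\circ(\th^{\mu_{\Sig}}_{\Bun},\nu_{\infty})$ follows, proving the lemma. No serious conceptual difficulty is expected; the work is entirely in carefully tracking the divisor bookkeeping induced by the double cover at the inert places.
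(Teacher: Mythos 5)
Your proposal is correct and is exactly the "direct calculation" the paper invokes without writing out: an $S$-point computation showing both composites yield $\nu_{S,*}$ of line bundles (with their Iwahori chains) that differ only by tensoring with $\nu_S^{*}\cO_{X\times S}\bigl(\sum_{x\in\Si,\,\mu_x=-1}\Gamma_{x^{(1)}}\bigr)$, hence agree after dividing by $\Pic_X$. The only cosmetic point is that the discrepancy should be said to be absorbed in $\Bun_G(\Sig)=\Bun_2(\Sig)/\Pic_X$ (one fixed $\Pic_X$-twist working simultaneously for the underlying bundle and every member of the chain), which your final paragraph in effect supplies.
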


\sss{Heegner--Drinfeld cycle} \label{sss HD}
We define
\begin{equation*}
\frT_{r,\Sig}:=\{\pm1\}^{r}\times \Sect(\Sf'/\Sf)\times \{\pm1\}^{\Si}.
\end{equation*}
For $\mu=(\un\mu, \mf,\mi)\in \frT_{r,\Sig}$, we have a map
\begin{equation*}
\th^{\mu}_{\Hk}: \Hk^{\un\mu}_{T}\times\frSi'\to \Hk^{r}_{G}(\Sig)
\end{equation*}
by applying $\th^{\mu_{\Sig}}_{\Bun}$ (where $\mu_{\Sig}=(\mf,\mi)$) to each member of the chain $\{\cL_{i}\}_{0\le i\le r}$ classified by $\Hk^{\un\mu}_{T}$. By construction we have $p_{i}\circ \th^{\mu}_{\Hk}=\th^{\mu_{\Sig}}_{\Bun}\circ(p^{\un\mu}_{T,i}\times\id_{\frSi'}): \Hk^{\un\mu}_{T}\times\frSi'\to \Bun_{G}(\Sig)$ for $1\le i\le r$.

Now compare the Cartesian diagrams \eqref{ShtT diag 2} and \eqref{Cart ShtG}. Each corner of the diagram \eqref{ShtT diag 2} except the upper left corner maps to the corresponding corner of \eqref{Cart ShtG} by $\th_{\Bun}$ and $\th^{\mu}_{\Hk}$; Lemma \ref{l:comp Di Di'} says that the corresponding maps in the two diagrams are intertwined. Therefore we get a morphism between the upper left corners since both diagrams are Cartesian
\begin{equation*}
\th^{\mu}: \Sht^{\un\mu}_{T}(\mi\cdot\Si')\to \Sht^{r}_{G}(\Sii).
\end{equation*}
We have a commutative diagram 
\begin{equation*}
\xymatrix{\Sht^{\un\mu}_{T}(\mi\cdot\Si')\ar[r]^{\th^{\mu}}\ar[d]^{\Pi^{\un\mu}_{T,\mi}} &  \Sht^{r}_{G}(\Sii)\ar[d]^{\Pi^{r}_{G}}\\
X'^{r}\times\frSi'\ar[r]^{(\nu^{r},\nu_{\infty})} & X^{r}\times\frSi}
\end{equation*}
which induces a morphism
\begin{equation*}
\th'^{\mu}: \Sht^{\un\mu}_{T}(\mi\cdot\Si')\to \Sht'^{r}_{G}(\Sii):=\Sht^{r}_{G}(\Sii)\times_{X^{r}\times\frSi}(X'^{r}\times\frSi').
\end{equation*}

Since $\Sht^{\un\mu}_{T}(\mi\cdot\Si')$ is proper over $k$ of dimension $r$ by Corollary \ref{c:ShtT proper}, its image in $\Sht'^{r}_{G}(\Sii)$ defines an element in the Chow group of proper cycles.

\begin{defn} The {\em Heegner--Drinfeld cycle of type $\mu=(\un\mu,\mf,\mi)\in\frT_{r,\Sig}$} is the class
\begin{equation*}
\cZ^{\mu}:=\th'^{\mu}_{*}[\Sht^{\un\mu}_{T}(\mi\cdot\Si')]\in \Ch_{c,r}(\Sht'^{r}_{G}(\Sii))_{\QQ}.
\end{equation*}
\end{defn}

\begin{defn} Let $\mu,\mu'\in\frT_{r,\Sig}$.
Define a linear functional $\II^{\mu,\mu'}$ on $\sH^{\Sig}_{G}$ by 
\begin{equation*}
\II^{\mu,\mu'}(f)=\left(\prod_{x'\in \Si'}d_{x'}\right)^{-1}\langle\cZ^{\mu}, f*\cZ^{\mu'}\rangle_{\Sht'^{r}_{G}(\Sii)}\in\QQ. \quad f\in \sH^{\Sig}_{G}.
\end{equation*}
Here we are using the $\sH^{\Sig}_{G}$-action on $\Ch_{c,r}(\Sht'^{r}_{G}(\Sii))_{\QQ}$ defined in \S\ref{sss:bc Hk action}.
\end{defn}

\subsection{Symmetry among Heegner--Drinfeld cycles}

Let $\mu=(\un\mu, \mf,\mi)\in \frT_{r,\Sig}$. We study how $\cZ^{\mu}$ changes when we vary $\mu$.

\sss{Changing  $\un\mu$}\label{sss:change unmu}
As in \cite[\S5.4.6]{YZ}, for two choices $\un\mu,\un\mu'\in\{\pm1\}^{r}$, there is a canonical isomorphism $\io_{\un\mu,\un\mu'}: \Sht^{\un\mu}_{T}(\mi\cdot\Si')\cong \Sht^{\un\mu'}_{T}(\mi\cdot\Si')$ preserving the $T$-bundle $\cL_{i}$ and the projection to $\frSi'$. However, $\io_{\un\mu,\un\mu'}$ does not preserve the projections $\Pi^{\un\mu}_{T,\mi}$ and $\Pi^{\un\mu'}_{T,\mi}$. Instead, we have a commutative diagram
\begin{equation*}
\xymatrix{ \Sht^{\un\mu}_{T}(\mi\cdot\Si')\ar[rr]^-{\io(\un\mu,\un\mu')}\ar[d]^{\Pi^{\un\mu}_{T,\mi}} && \Sht^{\un\mu'}_{T}(\mi\cdot\Si')\ar[d]^{\Pi^{\un\mu'}_{T,\mi}}\\
X'^{r}\times\frSi'\ar[rr]^-{\s(\un\mu,\un\mu')\times\id} && X'^{r}\times\frSi'
}
\end{equation*}
where the involution $\s(\un\mu,\un\mu'): X'^{r}\to X'^{r}$ sends a point $(x'_{1},\dotsc,x'_{r})$ to the point $(x''_{1},\dotsc,x''_{r})$, where, for $1\le i\le r$,
\begin{eqnarray*}
x''_{i}=\begin{cases}x'_{i} & \textup{ if }\mu_{i}=\mu'_{i}; \\ \s(x'_{i}) &\textup{ if }\mu_{i}\ne\mu'_{i}. \end{cases}
\end{eqnarray*}
Letting $\mu'=(\un\mu', \mf, \mi)$, it is easy to check that $\io(\un\mu,\un\mu')$ intertwines the map $\th^{\mu}$ and $\th^{\mu'}$.

\sss{Changing $\mf$}\label{sss:change mf}
Let $\mf'=\{\mu'_{x}\}_{x\in \Sf}\in \Sect(\Sf'/\Sf)$ be another element. Consider the following divisor on $X'$
\begin{equation*}
D(\mf,\mf')=\sum_{x\in \Sf, \mu_{x}\ne\mu'_{x}}\mu_{x}.
\end{equation*}
We have an automorphism
\begin{equation*}
\io(\mf,\mf'): \Sht^{\un\mu}_{T}(\mi\cdot\Si')\to\Sht^{\un\mu}_{T}(\mi\cdot\Si') 
\end{equation*}
sending $(\cL_{i}; x_{i}; \{x'^{(1)}\})$ to $(\cL_{i}(-D(\mf,\mf')); x_{i}; \{x'^{(1)}\})$. Letting $\mu'=(\un\mu, \mf', \mi)$, direct calculation shows that the following diagram is commutative
\begin{equation*}
\xymatrix{   \Sht^{\un\mu}_{T}(\mi\cdot\Si')\ar[rr]^-{\io(\mf,\mf')}\ar[d]^{\th^{\mu}} && \Sht^{\un\mu}_{T}(\mi\cdot\Si')\ar[d]^{\th^{\mu'}}        \\
\Sht^{r}_{G}(\Sii) \ar[rr]^{\AL_{\Sht}(\mf, \mf')} &&  \Sht^{r}_{G}(\Sii)}
\end{equation*}
where $\AL_{\Sht}(\mf, \mf')$ is the composition of $\AL_{\Sht, x}$ (see \S\ref{sss:AL Sht}) for $x\in\Sf$ such that $\mu_{x}\ne\mu'_{x}$.

\sss{Changing $\mi$}\label{sss:change mi} 
Let $\mi'\in \{\pm1\}^{\Si}$ be another element. Consider the following divisor on $X'\times\frSi'$
\begin{equation*}
D(\mi,\mi')=\sum_{\mu_{x}=1,  \mu'_{x}=-1}(\bx'^{(1)}+\cdots+\bx'^{(d_{x})})+\sum_{\mu_{x}=-1,  \mu'_{x}=1}(\bx'^{(d_{x}+1)}+\cdots+\bx'^{(2d_{x})}).
\end{equation*}
where both sums are over $x\in \Si$. Define an isomorphism
\begin{equation*}
\io(\mi,\mi'): \Sht^{\un\mu}_{T}(\mi\cdot\Si')\to \Sht^{\un\mu}_{T}(\mi'\cdot\Si')
\end{equation*}
sending $(\cL_{i}; x_{i}; \{x'^{(1)}\})$ to $(\cL_{i}(-D(\mi,\mi')); x_{i}; \{x'^{(1)}\})$.  Letting $\mu'=(\un\mu, \mf, \mi')$, direct calculation shows that the following diagram is commutative
\begin{equation*}
\xymatrix{   \Sht^{\un\mu}_{T}(\mi\cdot\Si')\ar[rr]^-{\io(\mi,\mi')}\ar[d]^{\th^{\mu}} && \Sht^{\un\mu}_{T}(\mi'\cdot\Si')\ar[d]^{\th^{\mu'}}        \\
\Sht^{r}_{G}(\Sii) \ar[rr]^{\AL_{\Sht}(\mi, \mi')} &&  \Sht^{r}_{G}(\Sii)}
\end{equation*}
where $\AL_{\Sht}(\mi, \mi')$ is the composition of $\AL_{\Sht, x}$ for $x\in\Si$ such that $\mu_{x}\ne\mu'_{x}$.

\sss{The action of $\frA_{r,\Sig}$} We observe that $\frT_{r,\Sig}$ is a torsor under the group $\frA_{r,\Sig}:=(\ZZ/2\ZZ)^{\{1,2,\dotsc, r\}\sqcup\Sig}$.  We denote the action of $a\in \frA_{r,\Sig}$ on $\frT_{r,\Sig}$ by $a\cdot(-)$.

We also have an action of $\frA_{r,\Sig}$ on $\Sht'^{r}_{G}(\Sii)$ defined as follows. The factor of $\ZZ/2\ZZ$ indexed by $1\le i\le r$ acts on the $i$th factor of $X'$ by Galois involution over $X$. For $x\in \Sig$, the nontrivial element in the factor of $\ZZ/2\ZZ$ indexed by $x$ acts by the involution $\AL_{\Sht, x}$ defined in \S\ref{sss:AL Sht} on the $\Sht^{r}_{G}(\Sii)$-factor and identity on $X'^{r}\times\frSi'$. We denote this action by
\begin{equation*}
\frA_{r,\Sig}\ni a\mapsto \AL_{\Sht',a}.
\end{equation*}

The following lemma summarizes the calculations in \S\ref{sss:change unmu}, \S\ref{sss:change mf} and \S\ref{sss:change mi}.
\begin{lemma}\label{l:cycle mu mu'} For any $\mu\in\frT_{r,\Sig}$ and $a\in \frA_{r,\Sig}$, the following diagram is commutative
\begin{equation*}
\xymatrix{   \Sht^{\un\mu}_{T}(\mi\cdot\Si')\ar[d]^{\th'^{\mu}}\ar[rr]^-{\io(\mu,a\cdot\mu)}  & & \Sht^{a\cdot\un\mu}_{T}((a\cdot \mi)\cdot\Si')\ar[d]^{\th'^{a\cdot \mu}}     \\
\Sht'^{r}_{G}(\Sii)\ar[rr]^-{\AL_{\Sht',a}} & &\Sht'^{r}_{G}(\Sii)}
\end{equation*}
Here the upper horizontal arrow is the composition of $\io(\un\mu,\un\mu'), \io(\mf,\mf')$ and $\io(\mi,\mi')$ defined in  \S\ref{sss:change unmu}, \S\ref{sss:change mf} and \S\ref{sss:change mi}. In particular, we have
\begin{equation*}
\cZ^{\mu}=\AL_{\Sht',a}^{*}(\cZ^{a\cdot \mu}),\quad \forall \mu\in\frT_{r,\Sig}, a\in \frA_{r,\Sig}.
\end{equation*}
\end{lemma}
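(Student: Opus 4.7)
My plan is to reduce to the three generating cases already analyzed in \S\ref{sss:change unmu}, \S\ref{sss:change mf}, \S\ref{sss:change mi}, then upgrade each of these identities from $\Sht^{r}_{G}(\Sii)$ to the base change $\Sht'^{r}_{G}(\Sii)$, and finally assemble the result for an arbitrary $a\in\frA_{r,\Sig}$ by composition.

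First I would check that for each generator $a\in\frA_{r,\Sig}$ the conclusion holds. Write $\frA_{r,\Sig}=\bigoplus_{i=1}^{r}\ZZ/2\ZZ\cdot e_{i}\oplus\bigoplus_{x\in\Sf}\ZZ/2\ZZ\cdot e_{x}\oplus\bigoplus_{x\in\Si}\ZZ/2\ZZ\cdot e_{x}$. For $a=e_{i}$ with $1\le i\le r$, the map $\mu\mapsto a\cdot\mu$ flips only $\mu_{i}$; by \S\ref{sss:change unmu} the isomorphism $\io(\un\mu,a\cdot\un\mu)$ intertwines $\th^{\mu}$ and $\th^{a\cdot\mu}$ at the level of $\Sht^{r}_{G}(\Sii)$, and on the base $X^{r}\times\frSi$ its effect is trivial. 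However on $X'^{r}\times\frSi'$ it acts by the Galois involution $\s$ on the $i$-th factor, which is precisely how $\AL_{\Sht',e_{i}}$ acts on $\Sht'^{r}_{G}(\Sii)$ (the $\Sht^{r}_{G}(\Sii)$-factor is left alone). Hence the diagram commutes after lifting to $\Sht'^{r}_{G}(\Sii)$. For $a=e_{x}$ with $x\in\Sig$, \S\ref{sss:change mf} or \S\ref{sss:change mi} shows $\th^{a\cdot\mu}\circ\io=\AL_{\Sht,x}\circ\th^{\mu}$ at the level of $\Sht^{r}_{G}(\Sii)$; this modification does not touch $X'^{r}\times\frSi'$, so it lifts to $\Sht'^{r}_{G}(\Sii)$ where by definition $\AL_{\Sht',e_{x}}$ is $\AL_{\Sht,x}$ on the first factor and identity on $X'^{r}\times\frSi'$.

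Next I would handle an arbitrary $a\in\frA_{r,\Sig}$ by writing $a=\prod_{j} a_{j}$ as a product of generators and composing the corresponding identities. The three families of modifications (those indexed by $\{1,\dotsc,r\}$, by $\Sf$, and by $\Si$) act on independent data in the moduli problem: the first twists nothing on the bundle level but flips the modification type at a leg, the second tensors each $\cL_{j}$ by $\cO_{X'}(-\mu_{x})$ at a place of $\Sf'$, and the third tensors by a half-sum over a single Frobenius orbit above $x\in\Si$. Hence all the $\io(-,-)$ commute with each other (on the nose in the last two cases, and up to reindexing of the $X'^{r}$ factors in the first), as do the Atkin--Lehner involutions $\AL_{\Sht',a_{j}}$ on $\Sht'^{r}_{G}(\Sii)$. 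Composing the generator-level commutative squares then gives the square for $a$.

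The main technical point, and essentially the only nontrivial issue, will be verifying the $\un\mu$-case carefully: the isomorphism $\io(\un\mu,\un\mu')$ is the identity on the underlying $T$-bundle chain but alters the ``direction'' of the modification at position $i$, which on the base $X'^{r}$ is implemented by $\s$ on the $i$-th factor. I have to check that once we base change along $\nu^{r}\times\nu_{\infty}:X'^{r}\times\frSi'\to X^{r}\times\frSi$ to form $\Sht'^{r}_{G}(\Sii)$, the composite $\th'^{a\cdot\mu}\circ\io(\un\mu,a\cdot\un\mu)$ agrees with $\AL_{\Sht',e_{i}}\circ\th'^{\mu}$; this is because $\AL_{\Sht',e_{i}}$ was defined to act by $\s$ on the $i$-th $X'$-factor (and trivially on $\Sht^{r}_{G}(\Sii)$), which is exactly the discrepancy introduced by $\io(\un\mu,a\cdot\un\mu)$ between $\Pi^{\un\mu}_{T,\mi}$ and $\Pi^{a\cdot\un\mu}_{T,\mi}$.

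Finally the second assertion $\cZ^{\mu}=\AL^{*}_{\Sht',a}(\cZ^{a\cdot\mu})$ follows by pushing forward the fundamental class along the commutative square: since $\io(\mu,a\cdot\mu)$ is an isomorphism, it sends $[\Sht^{\un\mu}_{T}(\mi\cdot\Si')]$ to $[\Sht^{a\cdot\un\mu}_{T}((a\cdot\mi)\cdot\Si')]$, and $\AL_{\Sht',a}$ is an involution so $\AL_{\Sht',a,*}=\AL^{*}_{\Sht',a}$ on the Chow group of proper cycles.
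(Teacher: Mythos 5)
Your proposal is correct and follows essentially the same route as the paper: the paper gives no separate argument beyond stating that the lemma "summarizes the calculations" of \S\ref{sss:change unmu}, \S\ref{sss:change mf} and \S\ref{sss:change mi}, i.e.\ exactly your reduction to the three generator cases (with the $\un\mu$-case producing the Galois twist on the $i$-th $X'$-factor and the $\Sig$-cases producing $\AL_{\Sht,x}$ on the $\Sht^{r}_{G}(\Sii)$-factor), followed by composition and pushforward of fundamental classes. Your extra checks (commutation of the generators, $\AL_{\Sht',a,*}=\AL^{*}_{\Sht',a}$ since it is an involution) are fine and just make explicit what the paper leaves implicit.
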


Let $\mu=(\un\mu,\mf,\mi),\mu'=(\un\mu',\mf',\mi')\in  \frT_{r,\Sig}$. Let
\begin{eqnarray}
\notag\D(\mu,\mu')&:=&\{1\le i\le r|\mu_{i}\ne \mu_{i}'\};\\
\label{Sig +}\Sig_{-}(\mu,\mu')&:=&\{x\in \Sig|\mu_{x}\ne\mu'_{x}\}\subset \Sig;\\
\label{Sig -}\Sig_{+}(\mu,\mu')&:=&\{x\in \Sig|\mu_{x}=\mu'_{x}\}=\Sig-\Sig_{-}(\mu,\mu').
\end{eqnarray}

\begin{cor}[of Lemma \ref{l:cycle mu mu'}]\label{c:I dep a} Let $\mu,\mu'\in  \frT_{r,\Sig}$. Then $\II^{\mu,\mu'}$ depends only on the sets $\D(\mu,\mu')$ and $\Sig_{-}(\mu,\mu')$. 
\end{cor}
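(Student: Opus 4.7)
The plan is to exploit the diagonal action of the abelian group $\frA_{r,\Sig}$ on $\frT_{r,\Sig}\times\frT_{r,\Sig}$, with Lemma \ref{l:cycle mu mu'} as the main geometric input. The key step is to prove that
\begin{equation*}
\II^{\mu,\mu'}(f)=\II^{a\cdot\mu,\,a\cdot\mu'}(f),\qquad\forall\,a\in\frA_{r,\Sig},\ f\in\sH^{\Sig}_{G}.
\end{equation*}
By Lemma \ref{l:cycle mu mu'}, $\cZ^{\mu}=\AL_{\Sht',a}^{*}\cZ^{a\cdot\mu}$ and similarly for $\mu'$, while the normalizing scalar $(\prod d_{x'})^{-1}$ is independent of $\mu,\mu'$. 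It therefore suffices to show
\begin{equation*}
\big\langle\AL_{\Sht',a}^{*}\cZ^{a\cdot\mu},\,f*\AL_{\Sht',a}^{*}\cZ^{a\cdot\mu'}\big\rangle_{\Sht'^{r}_{G}(\Sii)}=\big\langle\cZ^{a\cdot\mu},\,f*\cZ^{a\cdot\mu'}\big\rangle_{\Sht'^{r}_{G}(\Sii)}.
\end{equation*}
This follows from two general facts: $(i)$ the Hecke action of $\sH^{\Sig}_{G}$ on $\Ch_{c,*}(\Sht'^{r}_{G}(\Sii))_{\QQ}$ commutes with the automorphism $\AL_{\Sht',a}$; and $(ii)$ for any automorphism of a smooth DM stack, pullback preserves the intersection pairing on proper cycles (since $\AL_{\Sht',a}^{*}(\alpha\cdot\beta)=\AL_{\Sht',a}^{*}\alpha\cdot\AL_{\Sht',a}^{*}\beta$ and degree is preserved).

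The main technical point is $(i)$, which I would verify factor-by-factor on $\frA_{r,\Sig}=(\ZZ/2\ZZ)^{\{1,\dotsc,r\}\sqcup\Sig}$. For the factors indexed by $i\in\{1,\dotsc,r\}$, the action on $\Sht'^{r}_{G}(\Sii)$ is the Galois involution of $X'/X$ on the $i$-th $X'$-factor; since both $\Sht'^{r}_{G}(\Sii)$ and its Hecke correspondences $\Sht'^{r}_{G}(\Sii;h_{D})$ are obtained by base change along $\nu'^{r}:X'^{r}\times\frSi'\to X^{r}\times\frSi$, and the Galois action is trivial on the base, these involutions automatically commute with the Hecke action. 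For the factors indexed by $x\in\Sig$, the action is $\AL_{\Sht,x}$, which commutes with $\sH^{\Sig}_{G}$ by Remark \ref{r:AL Sht}. I expect this step to amount to careful bookkeeping rather than a genuine obstacle; no new geometric input is needed.

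Finally, to pass from $\frA_{r,\Sig}$-invariance to dependence only on the pair $(\D(\mu,\mu'),\Sig_{-}(\mu,\mu'))$, I parametrize the diagonal orbits. Since $\frT_{r,\Sig}$ is an $\frA_{r,\Sig}$-torsor, there is a unique element $c(\mu,\mu')\in\frA_{r,\Sig}$ such that $\mu'=c(\mu,\mu')\cdot\mu$; the diagonal $b$-action fixes $c(\mu,\mu')$, so the orbits on $\frT_{r,\Sig}\times\frT_{r,\Sig}$ are classified by the element $c\in\frA_{r,\Sig}$. Unwinding the torsor structure, the $i$-th coordinate of $c$ is nontrivial iff $\mu_{i}\ne\mu_{i}'$, i.e.\ iff $i\in\D(\mu,\mu')$, and the coordinate at $x\in\Sig$ is nontrivial iff $\mu_{x}\ne\mu'_{x}$, i.e.\ iff $x\in\Sig_{-}(\mu,\mu')$. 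Hence $c$ encodes exactly the data $(\D(\mu,\mu'),\Sig_{-}(\mu,\mu'))$, which together with the first step gives the corollary.
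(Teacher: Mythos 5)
Your proposal is correct and follows essentially the same route as the paper: reduce to invariance of $\II^{\mu,\mu'}$ under the diagonal $\frA_{r,\Sig}$-action on pairs (equivalently, dependence only on the element $a(\mu,\mu')$ with $a(\mu,\mu')\cdot\mu=\mu'$), then use Lemma \ref{l:cycle mu mu'} together with the facts that $\AL_{\Sht',a}$ commutes with the $\sH^{\Sig}_{G}$-action and that pullback along an automorphism preserves the intersection pairing. Your extra factor-by-factor justification of the commutation (base change over $X^{r}\times\frSi$ for the Galois factors, Remark \ref{r:AL Sht} for the $\Sig$-factors) is a harmless elaboration of what the paper states without proof.
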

\begin{proof}  Let $a(\mu,\mu')\in\frA_{r,\Sig}$ be the unique element such that $a(\mu,\mu')\cdot \mu=\mu'$. Then $\D(\mu,\mu')$ and $\Sig_{-}(\mu,\mu')$ determines $a(\mu,\mu')$ and vice versa. Therefore we only need to show that $\II^{\mu,\mu'}$ depends only on $a(\mu,\mu')$. 

Suppose $\mu,\mu'$ and $\wh\mu,\wh\mu'$ satisfy $a(\mu,\mu')=a(\wh\mu,\wh\mu')$, we will show that $\II^{\mu,\mu'}=\II^{\wh\mu,\wh\mu'}$. Since $\frT_{r,\Sig}$ is a torsor under $\frA_{r,\Sig}$, there is a unique $b\in\frA_{r,\Sig}$ such that $\wh\mu=b\cdot \mu$, $\wh\mu'=b\cdot \mu'$.  Since $\AL_{\Sht', b}$ commutes with the action of any $f\in \sH^{\Sig}_{G}$, we have 
\begin{equation*}
\jiao{\cZ^{\wh\mu}, f*\cZ^{\wh\mu'}}=\jiao{\AL^{*}_{\Sht',b}(\cZ^{\wh\mu}), \AL^{*}_{\Sht',b}(f*\cZ^{\wh\mu'})}=\jiao{\AL^{*}_{\Sht',b}(\cZ^{\wh\mu}), f*\AL^{*}_{\Sht',b}(\cZ^{\wh\mu'})}.
\end{equation*}
By Lemma \ref{l:cycle mu mu'}, we have
\begin{equation*}
\AL^{*}_{\Sht',b}(\cZ^{\wh\mu})=\cZ^{\mu},\quad \AL^{*}_{\Sht', b}(\cZ^{\wh\mu'})=\cZ^{\mu'}.
\end{equation*}
Therefore we get
\begin{equation*}
\jiao{\cZ^{\wh\mu}, f*\cZ^{\wh\mu'}}=\jiao{\cZ^{\mu}, f*\cZ^{\mu'}}.
\end{equation*}
i.e., $\II^{\mu,\mu'}(f)=\II^{\wh\mu,\wh\mu'}(f)$ for all $f\in\sH^{\Sig}_{G}$.
\end{proof}

We will see later (in Theorem \ref{th:Ir}) that in fact $\II^{\mu,\mu'}$ only depends on $\Sig_{-}(\mu,\mu')$ and the {\em cardinality} of $\Delta(\mu,\mu')$.

\sss{Heegner--Drinfeld cycles over $\kbar$}
Fix a $\kbar$-point $\xi\in\frSi'(\kbar)$. Concretely this means a collection of field embeddings
\begin{equation*}
\xi=(\xi_{x'})_{x'\in \Si'}, \quad \xi_{x'}: k(x')\incl \kbar.
\end{equation*}
Then $\xi$ also determines a $\kbar$-point of $\frSi$ by the projection $\frSi'\to\frSi$, which we still denote by $\xi$. We denote
\begin{eqnarray*}
\Sht^{r}_{G}(\Sig;\xi)&:=&\Sht^{r}_{G}(\Sii)\times_{\frSi}\xi;\\
\Sht'^{r}_{G}(\Sig;\xi)&:=&\Sht'^{r}_{G}(\Sii)\times_{\frSi'}\xi\cong \Sht^{r}_{G}(\Sig;\xi)\times_{X^{r}}X'^{r};\\
\Sht^{\un\mu}_{T}(\mi\cdot\xi)&:=&\Sht^{\un\mu}_{T}(\mi\cdot\Si')\times_{\frSi'}\xi.
\end{eqnarray*}
Then we have maps
\begin{equation*}
\xymatrix{    &  \Sht^{\un\mu}_{T}(\mi\cdot\xi)\ar[dl]_{\th'^{\mu}_{\xi}}\ar[dr]^{\th^{\mu}_{\xi}}     \\
\Sht'^{r}_{G}(\Sig;\xi)\ar[rr] & & \Sht^{r}_{G}(\Sig;\xi)}
\end{equation*}

\begin{defn} The {\em Heegner--Drinfeld cycle of type $\mu=(\un\mu,\mf,\mi)\in\frT_{r,\Sig}$ over $\xi$} is the class
\begin{equation*}
\cZ^{\mu}(\xi):=\th'^{\mu}_{\xi,*}[\Sht^{\un\mu}_{T}(\mi\cdot\xi)]\in \Ch_{c,r}(\Sht'^{r}_{G}(\Sig;\xi))_{\QQ}.
\end{equation*}
\end{defn}

By definition, the pullback of $\cZ^{\mu}$ to $\Sht'^{r}_{G}(\Sii)\ot\kbar$ is the disjoint union of $\cZ^{\mu}(\xi)$ for various $\xi\in\frSi'(\kbar)$.

\begin{cor}[of Lemma \ref{l:cycle mu mu'}]\label{c:change xi} For $\mu=(\un\mu,\mf,\mi)\in\frT_{r,\Sig}$ and $a\in\frA_{r,\Sig}$, we have
\begin{equation*}
\cZ^{\mu}(\xi)=\AL^{*}_{\Sht', a}(\cZ^{a\cdot\mu}(\xi)).
\end{equation*}
\end{cor}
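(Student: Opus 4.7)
The plan is to deduce this directly from Lemma \ref{l:cycle mu mu'} by base changing from $k$ to $\kbar$ and then restricting everything to the component indexed by $\xi$. The only new ingredient needed beyond the global statement is that all the maps appearing in the commutative square of Lemma \ref{l:cycle mu mu'} are compatible with the canonical projection to $\frSi'$, so that the restriction to the $\xi$-fiber makes sense.

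First I would check that the composite isomorphism $\io(\mu,a\cdot\mu):\Sht^{\un\mu}_{T}(\mi\cdot\Si')\isom \Sht^{a\cdot\un\mu}_{T}((a\cdot\mi)\cdot\Si')$ commutes with the projection to $\frSi'$. Unwinding the constituent isomorphisms $\io(\un\mu,\un\mu')$, $\io(\mf,\mf')$, and $\io(\mi,\mi')$ from \S\ref{sss:change unmu}--\S\ref{sss:change mi}, each one preserves the data $\{x'^{(1)}\}_{x'\in\Si'}$ verbatim: the explicit formulas only twist $\cL_{i}$ by divisors supported on $\Si'$ or permute the entries of $X'^{r}$. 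Hence $\io(\mu,a\cdot\mu)$ restricts to an isomorphism $\io(\mu,a\cdot\mu)_{\xi}:\Sht^{\un\mu}_{T}(\mi\cdot\xi)\isom\Sht^{a\cdot\un\mu}_{T}((a\cdot\mi)\cdot\xi)$. Next I would verify that $\AL_{\Sht',a}$ preserves $\Sht'^{r}_{G}(\Sig;\xi)$ for each $\xi$: the factors of $\frA_{r,\Sig}$ indexed by $1\le i\le r$ act only on the $X'^{r}$-factor through Galois involutions, while those indexed by $x\in\Sig$ act only through $\AL_{\Sht,x}$ on the $\Sht^{r}_{G}(\Sii)$-component; in both cases the composition with the projection to $\frSi'$ is the identity.

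With these compatibilities in place, base changing the commutative square of Lemma \ref{l:cycle mu mu'} to $\kbar$ and restricting to the component over $\xi$ yields the square
\begin{equation*}
\xymatrix{\Sht^{\un\mu}_{T}(\mi\cdot\xi)\ar[d]_{\th'^{\mu}_{\xi}}\ar[rr]^-{\io(\mu,a\cdot\mu)_{\xi}} && \Sht^{a\cdot\un\mu}_{T}((a\cdot\mi)\cdot\xi)\ar[d]^{\th'^{a\cdot\mu}_{\xi}}\\
\Sht'^{r}_{G}(\Sig;\xi)\ar[rr]^-{\AL_{\Sht',a}} && \Sht'^{r}_{G}(\Sig;\xi)}
\end{equation*}
Pushing the fundamental class $[\Sht^{\un\mu}_{T}(\mi\cdot\xi)]$ forward along both routes and using that $\AL_{\Sht',a}$ is an involution (so $\AL^{*}_{\Sht',a}=\AL_{\Sht',a,*}$) then gives the desired identity $\cZ^{\mu}(\xi)=\AL^{*}_{\Sht',a}(\cZ^{a\cdot\mu}(\xi))$ in $\Ch_{c,r}(\Sht'^{r}_{G}(\Sig;\xi))_{\QQ}$. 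There is no real obstacle here: the main work was already done in Lemma \ref{l:cycle mu mu'}, and the present corollary amounts to bookkeeping about how the data decomposes over $\frSi'(\kbar)$.
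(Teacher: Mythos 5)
Your proposal is correct and is exactly the argument the paper intends: since $\io(\un\mu,\un\mu')$, $\io(\mf,\mf')$, $\io(\mi,\mi')$ and $\AL_{\Sht',a}$ are all maps over $\frSi'$, the commutative square of Lemma \ref{l:cycle mu mu'} restricts to the $\xi$-fiber, and pushing forward fundamental classes together with $\AL^{*}_{\Sht',a}=\AL_{\Sht',a,*}$ gives the identity. (Only a cosmetic slip: $\io(\un\mu,\un\mu')$ applies the Galois involution $\s$ to certain $X'$-coordinates rather than permuting them, and $\io(\mf,\mf')$ twists by divisors supported on $\Sf'$, but neither affects the projection to $\frSi'$.)
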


\begin{lemma} For any $\xi\in \frSi'(\kbar)$, any $\mu,\mu'\in\frT_{r,\Sig}$ and any $f\in \sH^{\Sig}_{G}$, we  have an identity
\begin{equation}\label{I and xi}
\II^{\mu,\mu'}(f)=\jiao{\cZ^{\mu}(\xi),f*\cZ^{\mu'}(\xi)}_{\Sht'^{r}_{G}(\Sig;\xi)}.
\end{equation}
In particular, by Corollary \ref{c:I dep a}, the right side depends only on the sets $\D(\mu,\mu')$ and $\Sig_{-}(\mu,\mu')$. 
\end{lemma}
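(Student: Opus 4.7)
The plan is to deduce the identity by pulling everything back to $\kbar$ and using the fact that $\Sht'^{r}_{G}(\Sii) \otimes_{k} \kbar$ decomposes as a disjoint union over the fiber of $\frSi'$. First, since intersection numbers of proper cycles on a smooth stack are preserved under base change to $\kbar$, I would write
\begin{equation*}
\jiao{\cZ^{\mu}, f*\cZ^{\mu'}}_{\Sht'^{r}_{G}(\Sii)} = \jiao{\cZ^{\mu}\otimes\kbar,\; (f*\cZ^{\mu'})\otimes\kbar}_{\Sht'^{r}_{G}(\Sii)\otimes\kbar}.
\end{equation*}

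Next, since $\frSi'=\prod_{x'\in \Si'}\Spec k(x')$, we have $\frSi'(\kbar)$ of cardinality $\prod_{x'\in \Si'} d_{x'}$, and a decomposition into connected components
\begin{equation*}
\Sht'^{r}_{G}(\Sii)\otimes\kbar = \coprod_{\xi\in\frSi'(\kbar)} \Sht'^{r}_{G}(\Sig;\xi).
\end{equation*}
By the definition of $\cZ^\mu(\xi)$ as the base change of $\th'^\mu$, the pullback of $\cZ^{\mu}$ to $\Sht'^{r}_{G}(\Sii)\otimes\kbar$ is $\sum_{\xi} \cZ^{\mu}(\xi)$, with $\cZ^\mu(\xi)$ supported in the $\xi$-component; similarly for $\cZ^{\mu'}$. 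Crucially, the Hecke correspondence $\Sht'^{r}_{G}(\Sii;h_{D})$ is defined as a self-correspondence over $X'^{r}\times\frSi'$ (see \S\ref{sss:bc Hk action}), so the Hecke action preserves each component $\Sht'^{r}_{G}(\Sig;\xi)$. Hence only ``diagonal'' terms contribute:
\begin{equation*}
\jiao{\cZ^{\mu}\otimes\kbar,\; (f*\cZ^{\mu'})\otimes\kbar}_{\Sht'^{r}_{G}(\Sii)\otimes\kbar} = \sum_{\xi\in \frSi'(\kbar)}\jiao{\cZ^{\mu}(\xi),\; f*\cZ^{\mu'}(\xi)}_{\Sht'^{r}_{G}(\Sig;\xi)}.
\end{equation*}

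Finally, I would invoke Galois equivariance: any $\sigma\in\Gal(\kbar/k)$ induces an isomorphism $\Sht'^{r}_{G}(\Sig;\xi)\cong\Sht'^{r}_{G}(\Sig;\sigma\xi)$ that carries $\cZ^{\mu}(\xi)$ to $\cZ^{\mu}(\sigma\xi)$, commutes with $f$, and preserves the intersection pairing. Since $\Gal(\kbar/k)$ acts transitively on $\frSi'(\kbar)$ within each Galois orbit and all $\kbar$-points live in a single orbit componentwise (more precisely, the sum is Galois-stable while each summand is Galois-shifted), all terms in the sum above are equal to a common value, the pairing $\jiao{\cZ^{\mu}(\xi),f*\cZ^{\mu'}(\xi)}_{\Sht'^{r}_{G}(\Sig;\xi)}$. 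The sum therefore equals $\#\frSi'(\kbar)\cdot\jiao{\cZ^{\mu}(\xi),f*\cZ^{\mu'}(\xi)}_{\Sht'^{r}_{G}(\Sig;\xi)} = \left(\prod_{x'\in\Si'}d_{x'}\right)\jiao{\cZ^{\mu}(\xi),f*\cZ^{\mu'}(\xi)}_{\Sht'^{r}_{G}(\Sig;\xi)}$, and dividing by the same factor in the definition of $\II^{\mu,\mu'}(f)$ yields the claimed identity.

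The only mildly subtle point is the Galois-invariance step: one needs to know that the common value of the pairing is already $\QQ$-rational (which is guaranteed because the individual summands are conjugate under $\Gal(\kbar/k)$ and thus must all agree with their common average). No hard geometry is required beyond the smoothness of $\Sht'^{r}_{G}(\Sii)$ (Proposition \ref{p:bc smooth}) and the base-change invariance of the intersection pairing on proper cycles.
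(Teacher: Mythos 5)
Your reduction steps are fine and agree with the paper's setup: base change to $\kbar$, the decomposition of $\Sht'^{r}_{G}(\Sii)\ot\kbar$ into the components $\Sht'^{r}_{G}(\Sig;\xi)$, and the fact that the Hecke correspondences live over $X'^{r}\times\frSi'$ so that only diagonal terms survive — this is exactly the paper's reduction to showing that $\jiao{\cZ^{\mu}(\xi),f*\cZ^{\mu'}(\xi)}_{\Sht'^{r}_{G}(\Sig;\xi)}$ is independent of $\xi$. The gap is in the Galois-equivariance step. Over the finite field $k$, $\Gal(\kbar/k)$ is topologically generated by Frobenius and acts on $\frSi'(\kbar)=\prod_{x'\in\Si'}\Hom_{k}(k(x'),\kbar)$ \emph{diagonally}, shifting all coordinates simultaneously; its orbits have size $\mathrm{lcm}_{x'}(d_{x'})$, not $\prod_{x'}d_{x'}$. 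Since every $x\in\Si$ is inert, $d_{x'}=2d_{x}$ is even for all $x'\in\Si'$, so as soon as $\#\Si\ge 2$ there is always more than one Frobenius orbit in $\frSi'(\kbar)$. Your argument then only shows that the pairing is constant on each orbit; Galois-stability of the total sum together with constancy on orbits does not force the constants attached to different orbits to coincide, so you cannot divide by $\prod_{x'}d_{x'}$ and conclude. The phrase ``acts transitively \ldots within each Galois orbit'' is where the argument silently breaks.

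What is needed — and what the paper supplies — is an isomorphism moving a \emph{single} coordinate of $\xi$. The paper defines $\jmath_{x'}$ on $\Sht^{\un\mu}_{T}(\mi\cdot\Si')$, sending $(\cL_{i};x'_{i};x'^{(1)},\{y'^{(1)}\})$ to $(\cL_{i}(-\mu_{x}x'^{(1)});x'_{i};x'^{(2)},\{y'^{(1)}\})$, and checks it is intertwined via $\th'^{\mu}$ with an Atkin--Lehner-type automorphism $\AL^{(1)}_{x'}$ of $\Sht'^{r}_{G}(\Sii)$ that twists the bundles by $-\ha x^{(1)}$ and replaces $x'^{(1)}$ by $x'^{(2)}$. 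This ``partial Frobenius at one place of $\Si'$'' gives $(\AL^{(1)}_{x'})^{*}\cZ^{\mu}(\xi')=\cZ^{\mu}(\xi)$ when $\xi'$ differs from $\xi$ only by $\xi_{x'}\mapsto\Fr(\xi_{x'})$, and it commutes with the $\sH^{\Sig}_{G}$-action because the twist occurs at $\Si$, away from the Hecke places; composing such moves connects any two points of $\frSi'(\kbar)$. Note that these identifications are not the identity on the components — they involve a genuine half-twist of the level data — so they are not consequences of the rationality of $\cZ^{\mu}$ over $k$, and some such geometric input must be added to repair your proof.
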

\begin{proof}
Since $\Sht'^{r}_{G}(\Sii)\ot\kbar$ is the disjoint union of $\Sht'^{r}_{G}(\Sig;\xi)$ for $\prod_{x'\in\Si'}d_{x'}$ different choices of $\xi$, it suffices to show that the right side of \eqref{I and xi} is independent of the choice of $\xi$. To compare a general $\xi'$ to $\xi$, we may reduce to the case where $\xi'\in\frSi'(\kbar)$ is obtained by changing $\xi_{x'}$ to $\Fr(\xi_{x'})$ for a unique $x'\in \Si'$, and keeping the other coordinates. 

Consider the isomorphism
\begin{equation*}
\jmath_{x'}: \Sht^{\un\mu}_{T}(\mi\cdot \Si')\isom \Sht^{\un\mu}_{T}(\mi\cdot \Si')
\end{equation*}
sending $(\cL_{i}; x'_{i}; x'^{(1)}, \{y'^{(1)}\}_{y'\in\Si', y'\ne x'})$ to $(\cL_{i}(-\mu_{x}x'^{(1)}); x'_{i}; x'^{(2)}, \{y'^{(1)}\}_{y'\in\Si', y'\ne x'})$. Direct calculation shows that the following diagram is commutative
\begin{equation}\label{jShtTth}
\xymatrix{   \Sht^{\un\mu}_{T}(\mi\cdot \Si')\ar[d]^{\th'^{\mu}}\ar[rr]^-{\jmath_{x'}} & & \Sht^{\un\mu}_{T}(\mi\cdot \Si')   \ar[d]^{\th'^{\mu}}     \\
\Sht'^{r}_{G}(\Sii) \ar[rr]^-{\AL^{(1)}_{x'}} & & \Sht'^{r}_{G}(\Sii)
}
\end{equation}
where $\AL^{(1)}_{x'}$ sends $(\cE^{\da}_{i}; x'_{i}; x'^{(1)}, \{y'^{(1)}\}_{y'\in \Si', y'\ne x'})$ to $(\cE^{\da}_{i}(-\ha x^{(1)}); x'_{i}; x'^{(2)}, \{y'^{(1)}\}_{y'\in \Si', y'\ne x'})$ (here $x^{(1)}$ is the image of $x'^{(1)}$).  The diagram \eqref{jShtTth} implies that 
\begin{equation*}
(\AL^{(1)}_{x'})^{*}\cZ^{\mu}(\xi')=\cZ^{\mu}(\xi).
\end{equation*}
Therefore, using that $\AL^{(1)}_{x'}$ commutes with the $\sH^{\Sig}_{G}$-action, we have
\begin{eqnarray*}
\jiao{\cZ^{\mu}(\xi),f*\cZ^{\mu'}(\xi)}_{\Sht'^{r}_{G}(\Sig;\xi)}&=&\jiao{(\AL^{(1)}_{x'})^{*}(\cZ^{\mu}(\xi')),f*(\AL^{(1)}_{x'})^{*}\cZ^{\mu'}(\xi')}_{\Sht'^{r}_{G}(\Sig;\xi)}\\
&=&\jiao{(\AL^{(1)}_{x'})^{*}(\cZ^{\mu}(\xi')),(\AL^{(1)}_{x'})^{*}(f*\cZ^{\mu'}(\xi'))}_{\Sht'^{r}_{G}(\Sig;\xi)}\\
&=&\jiao{\cZ^{\mu}(\xi'),f*\cZ^{\mu'}(\xi')}_{\Sht'^{r}_{G}(\Sig;\xi')}.
\end{eqnarray*} 
\end{proof}

\section{The moduli stack $\cM_{d}$ and intersection numbers}\label{s:M}

The goal of this subsection is to give a Lefschetz-type formula for the intersection number $\II^{\mu,\mu'}_{r}(h_{D})$, see Theorem \ref{th:Ir}. This is parallel to \cite[\S6]{YZ} in the unramified case.

Recall that $\Sig'$ and $R'$ are the preimages of $\Sig$ and $R$ under $\nu$. We introduce the notation
\begin{eqnarray*}
U&=&X-\Sig-R;\\
U'&=&X'-\Sig'-R'.
\end{eqnarray*}
Our construction below will rely on variants of the Picard stack with an extra choice of a square root along the divisor $R$, which naturally appears in the geometric class field theory of $X$ with ramification along $R$. We refer to our Appendix \ref{A:Pic} for the definitions and properties of such variants of the Picard stack.

\subsection{Definition of $\cM_{d}$ and statement of the formula}
Let $d$ be an integer. We shall define an analog of the moduli stacks $\cM_{d}$ and $\cA_{d}$ in \cite[\S6.1]{YZ}, for the possibly ramified double cover $\nu:X'\to X$.

\sss{The stack $\cM_{d}$}\label{sss:dot O}  For any divisor $D$ of $X$ disjoint from $R$, $\cO_{X}(D)$ has a canonical lift  $\cO_{X}(D)^{\na}=(\cO_{X}(D), \cO_{R}, 1)\in \Pic^{\sqR}_{X}(k)$, and a canonical lift $\dot\cO_{X}(D)=(\cO_{X}(D), \cO_{R},1, 1)\in \Pic^{\sqR;\sqR}_{X}(k)$.

Suppose we are given a decomposition
\begin{equation*}
\Sig=\Sig_{+}\sqcup\Sig_{-}.
\end{equation*}
Let 
\begin{eqnarray*}
\r=\deg R=\deg R';\quad N=\deg \Sig; \quad N_{\pm}=\deg \Sig_{\pm}.
\end{eqnarray*}

\begin{defn}\label{defn Md} Let $\cM_{d}=\cM_{d}(\Sig_{\pm})$ be the moduli stack whose $S$-points consist of tuples $(\cI,\cJ, \a,\b, \j)$ where
\begin{itemize}
\item $\cI$ is a line bundle on $X'\times S$ with fiber-wise degree $d+\r-N_{-}$, and $\a$ is a section of $\cI$.
\item $\cJ$ is a line bundle on $X'\times S$ with fiber-wise degree $d+\r-N_{+}$, and $\b$ is a section of $\cJ$.
\item $\j$ is an isomorphism $\Nm^{\sqR}_{X'/X}(\cI)\ot\cO_{X}(\Sig_{-})^{\na}\isom \Nm^{\sqR}_{X'/X}(\cJ)\ot\cO_{X}(\Sig_{+})^{\na}$, as $S$-points of $\Pic^{\sqR,d+\r}_{X}$. Concretely, $\j$ is a collection of isomorphisms
\begin{eqnarray}\label{j collection}
\j_{\Nm}: \Nm_{X'/X}(\cI)\ot\cO_{X}(\Sig_{-})\isom \Nm_{X'/X}(\cJ)\ot\cO_{X}(\Sig_{+}),\\
\notag\j_{x}: \cI|_{x'\times S}\isom \cJ|_{x'\times S}, \quad \forall x\in R
\end{eqnarray}
such that the following diagram is commutative for all $x\in R$
\begin{equation}\label{ep x comm}
\xymatrix{\cI^{\ot2}|_{x'\times S}\ar[d]_{\wr}\ar[rr]^-{\j^{\ot 2}_{x}}_{\sim} && \cJ^{\ot2}|_{x'\times S}\ar[d]_{\wr}\\
\Nm_{X'/X}(\cI)|_{x\times S}\ar[rr]^-{\j_{\Nm}|_{x\times S}}_{\sim} && \Nm_{X'/X}(\cJ)|_{x\times S}}
\end{equation}
Here the vertical maps are the tautological isomorphisms.
\end{itemize}
These data are required to satisfy the following conditions
\begin{enumerate} 
\item $\a|_{\nu^{-1}(\Sig_{+})\times S}$ is nowhere vanishing.
\item $\b|_{\nu^{-1}(\Sig_{-})\times S}$ is nowhere vanishing. 
\item  For each $x\in R$, we have
\begin{equation*}
\j_{x}(\a|_{x'\times S})=\b|_{x'\times S}.
\end{equation*}
Moreover, $\Nm(\a)-\Nm(\b)$ vanishes only to the first order along $R\times S$.
\item This condition is non-void only when $\Sig=\vn$ and $R=\vn$: for each geometric point $s\in S$, the restriction $(\Nm(\a)-\Nm(\b))|_{X\times s}$ is not identically zero. 
\end{enumerate}
\end{defn}

From the definition we have an open embedding
\begin{equation}\label{Md emb}
\io_{d}: \cM_{d}\incl \hX'_{d+\r-N_{-}}\times_{\Pic^{\sqR;\sqR, d+\r}_{X}}\hX'_{d+\r-N_{+}}
\end{equation}
where the fiber product is taken over
\begin{equation*}
\nu_{\a}: \hX'_{d+\r-N_{-}}\xr{\wh\nu^{\sqR}}\hX^{\sqR}_{d+\r-N_{-}}\xr{\wh\AJ^{\sqR; \sqR}_{d+\r-N_{-}}} \Pic^{\sqR;\sqR, d+\r-N_{-}}_{X}\xr{\ot\dot\cO_{X}(\Sig_{-})}\Pic^{\sqR;\sqR,d+\r}_{X}
\end{equation*}
and
\begin{equation*}
\nu_{\b}: \hX'_{d+\r-N_{+}}\xr{\wh\nu^{\sqR}}\hX^{\sqR}_{d+\r-N_{+}}\xr{\wh\AJ^{\sqR; \sqR}_{d+\r-N_{+}}}\Pic^{\sqR;\sqR, d+\r-N_{+}}_{X}\xr{\ot\dot\cO_{X}(\Sig_{+})}\Pic^{\sqR;\sqR,d+\r}_{X}
\end{equation*}
Here the Abel-Jacobi maps $\wh\AJ^{\sqR; \sqR}_{d+\r-N_{\pm}}$ are defined in \S\ref{sss:AJ}.

\begin{remark} When $\Sig=\vn$ and $R=\vn$, there is a slight difference between the current definition of $\cM_{d}$ and the one in \cite{YZ}. In \cite{YZ}, we only require that $\a|_{X'\times s}$ and $\b|_{X'\times s}$ are not both zero for any geometric point  $s\in S$; here we impose a stronger open condition that $\Nm(\a)-\Nm(\b)$ is nonzero on $X\times s$ for any geometric point $s\in S$. Therefore the current version of $\cM_{d}$ is the one denoted by $\cM_{d}^{\hs}$ in \cite{YZ}. A similar remark applies to the space $\cA_{d}$ to be defined below.
\end{remark}

\sss{The base $\cA_{d}$}\label{sss:base}

\begin{defn}\label{defn Ad} Let $\cA_{d}=\cA_{d}(\Sig_{\pm})$ be the moduli stack whose $S$-points consist of tuples $$(\D, \Th_{R}, \iota, a, b, \vth_{R})$$ where
\begin{itemize}
\item $(\D, \Th_{R}, \iota)\in \Pic^{\sqR, d+\r}_{X}(S)$. Namely, $\D$ is a line bundle on $X\times S$ of fiber-wise degree $d+\r$, $\Th_{R}$ a line bundle over $R\times S$ and $\iota$ an isomorphism $\Th_{R}^{\otimes 2}\cong \D|_{R\times S}$.
\item $a$ and $b$ are sections of $\D$.
\item $\vth_{R}$ is a section of $\Th_{R}$.
\end{itemize}
These data are required to satisfy the following conditions.
\begin{enumerate}
\item\label{a nonvan} $a|_{\Sig_{-}\times S}=0$, and $a|_{\Sig_{+}\times S}$ is nowhere vanishing.
\item\label{b nonvan} $b|_{\Sig_{+}\times S}=0$, and $b|_{\Sig_{-} \times S}$ is nowhere vanishing.
\item\label{R van} $a|_{R\times S}=\iota(\vth^{\ot 2}_{R})=b|_{R\times S}$. Moreover, $a-b$ vanishes only to the first order along $R\times S$.
\item\label{a-b nonvan} This condition is only non-void when $\Sig=\vn$ and $R=\vn$: for every geometric point $s$ of  $S$, $(a-b)|_{X\times s}\ne0$.
\end{enumerate}
\end{defn}

The assignment $(\D, \Th_{R}, \iota, a, b, \vth_{R})\mapsto (\D(-\Sig_{-}),\Th_{R}, \iota, a,\vth_{R})$ gives a map
\begin{equation*}
\cA_{d}\to \hX^{\sqR}_{d+\r-N_{-}}.
\end{equation*}
Similarly, the assignment $(\D, \Th_{R}, \iota, a, b, \vth_{R})\mapsto (\D(-\Sig_{+}),\Th_{R}, \iota, b,\vth_{R})$ gives a map
\begin{equation*}
\cA_{d}\to \hX^{\sqR}_{d+\r-N_{+}}.
\end{equation*}

Combining these maps, we get  an open embedding 
\begin{equation}\label{emb Ad}
\om_{d}: \cA_{d}\incl \hX^{\sqR}_{d+\r-N_{-}}\times_{\Pic^{\sqR;\sqR, d+\r}_{X}}\hX^{\sqR}_{d+\r-N_{+}}
\end{equation}
where the the fiber product is formed using the Abel-Jacobi maps 
\begin{eqnarray*}
\nu_{a}: \hX^{\sqR}_{d+\r-N_{-}}\xr{\wh\AJ^{\sqR;\sqR}_{d+\r-N_{-}}}\Pic^{\sqR;\sqR, d+\r-N_{-}}_{X}
\xr{\ot\dot\cO_{X}(\Sig_{-})}\Pic^{\sqR;\sqR, d+\r}_{X}.\\
\nu_{b}: \hX^{\sqR}_{d+\r-N_{+}}\xr{\wh\AJ^{\sqR;\sqR}_{d+\r-N_{+}}}\Pic^{\sqR;\sqR, d+\r-N_{+}}_{X}
\xr{\ot\dot\cO_{X}(\Sig_{+})}\Pic^{\sqR;\sqR, d+\r}_{X}.
\end{eqnarray*}

\sss{The base $\cA^{\flat}_{d}$} 
Later we will need to use another base space $\cA^{\flat}_{d}$. 
\begin{defn}\label{defn Ad flat} Let $\cA^{\flat}_{d}=\cA^{\fl}_{d}(\Sig_{\pm})$ be the moduli stack whose $S$-points consist of tuples $(\D,  a, b)$ where
\begin{itemize}
\item $\D$ is a line bundle  on $X\times S$ of fiber-wise degree $d+\r$,
\item $a$ and $b$ are sections of $\D$,
\end{itemize}
such that the same conditions \eqref{a nonvan}-\eqref{a-b nonvan} hold as in Definition \ref{defn Ad}.
\end{defn}

Similar to the case of $\cA_{d}$, we have an open embedding
\begin{equation}\label{emb Ad fl}
\om_{d}^{\fl}: \cA^{\flat}_{d}\incl \hX_{d+\r-N_{-}}\times_{\Pic^{d+\r}_{X}}\hX_{d+\r-N_{+}}
\end{equation}
By \cite[\S3.2.3]{YZ}, $\cA^{\flat}_{d}$ is a scheme over $k$.  Later it will be technically more convenient to apply the Lefschetz trace formula to the base scheme $\cA^{\flat}_{d}$ instead of the stack $\cA_{d}$.

There is a forgetful map 
\begin{equation*}
\Om: \cA_{d}\to\cA^{\fl}_{d}
\end{equation*}
which corresponds to the forgetful maps $\hX^{\sqR}_{d+\r-N_{\pm}}\to \hX_{d+\r-N_{\pm}}$ under the embeddings \eqref{emb Ad} and \eqref{emb Ad fl}.

We  have a morphism
\begin{equation*}
\d: \cA^{\fl}_{d}\to U_{d}
\end{equation*}
sending $(\D, a,b)$ to the divisor of $a-b$ as a nonzero section of $\D(-R)$, the latter having degree $d$. The conditions \eqref{a nonvan}, \eqref{b nonvan} and \eqref{R van} in Definition \ref{defn Ad} imply that the divisor of $a-b$ does not meet $\Sig$ or $R$.

For $D$ be an effective divisor on $U$ of degree $d$, let
\begin{equation}\label{def AD}
\cA^{\fl}_{D}=\d^{-1}(D)\subset \cA^{\fl}_{d}.
\end{equation}

\sss{Geometric properties of $\cM_{d}$} We have a morphism
\begin{equation*}
f_{d}: \cM_{d}\to \cA_{d}
\end{equation*}
defined by applying $\wh\nu^{\sqR}$ to both $\hX'_{d+\r-N_{-}}$ and $\hX'_{d+\r-N_{+}}$. In other words, we have a commutative diagram
\begin{equation}\label{MAXX}
\xymatrix{ \cM_{d}\ar@{^{(}->}[rr]\ar[d]^{f_{d}} && \hX'_{d+\r-N_{-}}\times_{\nu_{\a},\Pic^{\sqR;\sqR, d+\r}_{X},\nu_{\b}}\hX'_{d+\r-N_{+}}\ar[d]^{\wh\nu^{\sqR}_{d+\r-N-}\times\wh\nu^{\sqR}_{d+\r-N_{+}}}\\
\cA_{d}\ar@{^{(}->}[rr] &&
\hX^{\sqR}_{d+\r-N_{-}}\times_{\nu_{a},\Pic^{\sqR;\sqR, d+\r}_{X}, \nu_{b}}\hX^{\sqR}_{d+\r-N_{+}}
}
\end{equation}
We denote by $f^{\fl}_{d}$ the composition
\begin{equation*}
f^{\fl}_{d}: \cM_{d}\xr{f_{d}} \cA_{d}\xr{\Om}\cA^{\fl}_{d}.
\end{equation*}

The following is a generalization of \cite[Prop 6.1]{YZ} to the ramified situation.

\begin{prop}\label{p:M}
\begin{enumerate}
\item\label{M smooth} When $d\ge 2g'-1+N=4g-3+\r+N$, the stack $\cM_{d}$ is a smooth DM stack pure of dimension $m=2d+\r-N-g+1$.
\item\label{MA Cart} The diagram \eqref{MAXX} is Cartesian.
\item\label{f proper} The morphisms $f_{d}$ and $f^{\fl}_{d}$ are proper.
\item\label{f small} When $d\ge 3g-2+N$, the morphism $f_{d}$ is small: it is generically finite and for any $n>0$, $\{a\in\cA_{d}|\dim f_{d}^{-1}(a)\ge n\}$ has codimension $\ge 2n+1$ in $\cA_{d}$.
\item\label{M ffp} The stack $\cM_{d}$ admits a finite flat presentation in the sense of \cite[Definition A.1]{YZ}.
\end{enumerate}
\end{prop}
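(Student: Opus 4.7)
\textbf{Plan of proof for Proposition \ref{p:M}.}

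The overall strategy is a ramified adaptation of \cite[Prop. 6.1]{YZ}, with the new complications all originating from the data $(\mathfrak{j}_x, \vartheta_R, \iota)$ along $R$ and the compatibility condition \eqref{ep x comm}. My first step is to establish (\ref{MA Cart}), the Cartesian square, by unwinding definitions. Starting from a point of $\hX'_{d+\r-N_{-}}\times_{\Pic^{\sqR;\sqR, d+\r}_{X}}\hX'_{d+\r-N_{+}}$ lifting a given $(\D,\Theta_R,\iota,a,b,\vartheta_R)\in\cA_d$, the line bundles $\cI,\cJ$ are determined up to the isomorphism $\mathfrak{j}$ in $\Pic^{\sqR, d+\r}_X$; the sections $\alpha,\beta$ of $\cI,\cJ$ whose norms agree with $a,b$ up to $\cO(\Sig_\pm)$-twist are exactly what the fiber product parametrizes, and the commutative square \eqref{ep x comm} is the statement that $\mathfrak{j}_x^{\otimes 2}$ coincides with the Norm isomorphism after restricting to $R$. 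The open conditions (1)-(4) of Definition \ref{defn Md} are straightforwardly equivalent under norm/pullback to conditions (1)-(4) of Definition \ref{defn Ad}: for instance condition (3) in $\cM_d$ that $\mathfrak{j}_x(\alpha|_{x'\times S})=\beta|_{x'\times S}$ gives the common value $\iota(\vartheta_R^{\otimes 2})$ of $a|_{R\times S}$ and $b|_{R\times S}$, and the first-order vanishing of $\Nm(\alpha)-\Nm(\beta)$ along $R\times S$ is exactly the first-order vanishing of $a-b$ along $R\times S$. Once (\ref{MA Cart}) is established, properness (\ref{f proper}) is immediate: the map $\wh\nu^{\sqR}$ is finite (since $\nu\colon X'\to X$ is finite and the $\sqR$ decoration adds only finite discrete data), so the right-hand vertical in \eqref{MAXX} is finite; base-changing, $f_d$ is finite and in particular proper. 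The map $\Omega\colon\cA_d\to\cA_d^{\fl}$ is also finite (it records the choice of square root $\vartheta_R$ with $\iota(\vartheta_R^{\otimes 2})=a|_R=b|_R$), so $f_d^{\fl}=\Omega\circ f_d$ is proper as well.

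For smoothness and dimension in part (\ref{M smooth}), I would use the Cartesian diagram (\ref{MA Cart}) together with smoothness of the relevant Abel-Jacobi maps once $d$ is large. Specifically, the classical Abel-Jacobi map $\hX'_{d+\r-N_\pm}\to\Pic^{d+\r-N_\pm}_{X'}$ is a projective bundle of relative dimension $d+\r-N_\pm-g'$ once $d+\r-N_\pm\ge 2g'-1$, hence smooth; composing with the smooth (and surjective) norm map to $\Pic^{\sqR;\sqR, d+\r-N_\pm}_X$ from Appendix \ref{A:Pic} shows that $\nu_\alpha,\nu_\beta$ are smooth (up to the factorization through $\wh\nu^{\sqR}$, which is a finite étale cover of the relevant open locus of $\hX^{\sqR}_{d+\r-N_\pm}$). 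Thus the fiber product in the top row of (\ref{MAXX}) is smooth of the expected dimension, and by (\ref{MA Cart}) so is $\cM_d$. The dimension count is $2(d+\r-N-g'+1)+\dim\Pic_X-\dim \Pic^{\sqR;\sqR}_X$, which simplifies to $m=2d+\r-N-g+1$ using $g'-1=2(g-1)+\r/2$. Part (\ref{M ffp}) follows by rigidifying: after passing to an étale cover of $\Pic^{\sqR;\sqR, d+\r}_X$ that trivializes the universal line bundle modulo $X\times (\textup{large divisor})$, one obtains a smooth atlas for $\cM_d$; alternatively, the standard trick of adding level structures at an auxiliary degree-$n$ divisor ($n\gg 0$) realizes $\cM_d$ as a quotient of a scheme by a finite flat group scheme.

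The hardest part, as in the unramified case, is the smallness (\ref{f small}), and this is where I expect the real work to go. The strategy is to stratify $\cA_d$ by the combinatorial type of the factorization of the divisor of $a-b$ on $U=X-\Sig-R$, pulled back to $X'$. Write $\delta=\div(a-b)\in U_d$ and decompose $\nu^{-1}(\delta)$ according to how many geometric points are ``split'' (each contributes independent choices of preimage in $X'$). A fiber of $f_d$ over $(\D,\Theta_R,\iota,a,b,\vartheta_R)$ corresponds to the set of divisor decompositions of $\div(\alpha)$ and $\div(\beta)$ compatible with $\Nm(\alpha)=a|_{X-\Sig_-}\cdot(\text{unit})$ and $\Nm(\beta)=b|_{X-\Sig_+}\cdot(\text{unit})$, together with the isomorphism $\mathfrak{j}_x$ at each $x\in R$; because of condition (3) of Definition \ref{defn Md} and the ramification at $R$, the preimages at $R'$ are forced (each $x\in R$ contributes a unique preimage), so only the split part of $\delta$ contributes to the fiber. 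The fibers of $f_d$ are therefore bounded by the number of split components of $\delta$. Stratifying by the number of such splittings, the stratum where the fiber has dimension $\ge n$ corresponds roughly to $\delta$ having at least $2n$ components in the split locus, giving codimension $\ge 2n+1$ in $U_d\supset\cA_d$ once $d\ge 3g-2+N$, essentially by the same dimension count as \cite[\S 6]{YZ} but restricted to $U$ rather than $X$ to account for the forced behavior at $R$ and $\Sig$. The main thing to verify carefully is that the presence of $R$ does not create new larger-dimensional strata: this is ensured by the first-order vanishing condition on $a-b$ along $R$ (and symmetrically on $\Nm(\alpha)-\Nm(\beta)$), which prevents any ``free'' choices from appearing at the ramified places.
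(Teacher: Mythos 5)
Your parts (2) and (5) are fine in outline, but two of the central steps rest on false claims. First, properness: you assert that $\wh\nu^{\sqR}_{n}\colon \hX'_{n}\to\hX^{\sqR}_{n}$ is finite and hence that $f_{d}$ is finite. This is not true: over the zero section $\Pic^{\sqR,n}_{X}\subset \hX^{\sqR}_{n}$ the fibers of $\wh\nu^{\sqR}_{n}$ are fibers of the norm map $\Pic^{n}_{X'}\to\Pic^{\sqR,n}_{X}$, of dimension $g'-g>0$, so $f_{d}$ has positive-dimensional fibers (indeed, otherwise statement (\ref{f small}) would be vacuous). Properness has to be argued differently, e.g.\ by factoring the usual norm $\wh\nu_{n}=\wh\om^{\sqR}_{n}\circ\wh\nu^{\sqR}_{n}$, proving $\wh\nu_{n}$ proper as in the unramified case and $\wh\om^{\sqR}_{n}$ separated. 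The same misconception derails your smallness argument: the positive-dimensional fibers of $f_{d}$ have nothing to do with the splitting behaviour of $\div(a-b)$. Over the locus $\cA^{\dm}_{d}$ where $a$ and $b$ are both fibrewise nonzero, $f_{d}$ is finite, and the number of split points of $\div(a)$, $\div(b)$ only controls the cardinality of those finite fibers (relevant later for the sheaf computation, not for smallness). The strata carrying positive-dimensional fibers are $\cA^{a=0}_{d}$ and $\cA^{b=0}_{d}$ (nonempty only when $\Sig_{+}=\vn$, resp.\ $\Sig_{-}=\vn$), where the fiber is essentially a fiber of the norm map on Picard stacks and has dimension $g'-g$; smallness then follows from $\codim_{\cA_{d}}(\cA^{b=0}_{d})=d-g+1+\r-N\ge 2(g'-g)+1$ once $d\ge 3g-2+N$. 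Your proposed estimate (``$\div(a-b)$ has at least $2n$ split components, hence codimension $\ge 2n+1$'') is not even a positive-codimension condition on $\cA_{d}$, so this part of the argument fails as stated.

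There is also a gap in (\ref{M smooth}): $\nu_{\a}$ does not factor through $\Pic_{X'}$ followed by ``the norm map to $\Pic^{\sqR;\sqR}_{X}$'' — no such map exists, since the section datum $\vth_{R}$ is the restriction $\a|_{R'}$ of the section, not an invariant of the line bundle. The correct intermediate object is $Q^{R'}_{X'}=\Pic_{X'}\times_{\Pic^{\sqR}_{X}}\Pic^{\sqR;\sqR}_{X}$, classifying pairs $(\cL',\vth_{R'})$: the map $\hX'_{n}\to Q^{R'}_{X'}$ is a vector bundle precisely when $n\ge 2g'-1+\r$ (one needs surjectivity of $\cohog{0}{X',\cL'}\to\cohog{0}{R',\cL'|_{R'}}$, which is where the extra $\r$ — and hence the hypothesis $d\ge 2g'-1+N$ rather than your threshold $d+\r-N_{\pm}\ge 2g'-1$ — enters), and $Q^{R'}_{X'}\to\Pic^{\sqR;\sqR}_{X}$ is smooth and relative DM. With this replacement, covering $\cM_{d}$ by the two opens where $\a$ (resp.\ $\b$) is fibrewise nonzero, the smoothness and dimension count go through as you intend; as written, your dimension bookkeeping is also garbled, though it lands on the correct value $m=2d+\r-N-g+1$. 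For (\ref{M ffp}), the cleanest route is the one via rigidification at a point of $U'$ mapping $\cM_{d}$ to $J^{d+\r}_{X'}\times\Prym_{X'/X}$ with $\Prym_{X'/X}=\ker(\Nm^{\sqR}_{X'/X})$, a finite quotient of an abelian variety.
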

\begin{proof}
\eqref{M smooth} To show that $\cM_{d}$ is smooth DM, it suffices to show that both of following stacks
\begin{eqnarray}\label{hXX'}
\hX'_{d+\r-N_{-}}\times_{\nu_{\a},\Pic^{\sqR;\sqR, d+\r}_{X},\nu_{\b}} X'_{d+\r-N_{+}}\\
\label{XhX'}
X'_{d+\r-N_{-}}\times_{\nu_{\a},\Pic^{\sqR;\sqR, d+\r}_{X},\nu_{\b}} \hX'_{d+\r-N_{+}}
\end{eqnarray}  
are smooth DM.

Let $Q^{R'}_{X'}$ be the moduli stack of pairs $(\cL',\vth_{R'})$ where $\cL'\in \Pic_{X'}$ and $\vth_{R'}$ is a section of $\cL'|_{R'}$. Then $Q^{R'}_{X'}\cong \Pic_{X'}\times_{\Pic_{X}^{\sqR}}\Pic_{X}^{\sqR;\sqR}$. In particular, the norm map $Q^{R'}_{X'}\to \Pic_{X}^{\sqR;\sqR}$ is smooth and relative DM.

For any geometric point $s$ and line bundle $\cL$ on $X'\times s$ of degree $n\ge 2g'+\r-1$, the restriction map $\cohog{0}{X'\times s,\cL}\to \cohog{0}{R'\times s, \cL|_{R'\times s}}$ is surjective with kernel dimension $n-g'+1-\r$. This implies $\hX'_{n}\to Q^{R'}_{X'}$ is a vector bundle of rank $n-g'+1-\r$, whenever $n\ge 2g'-1+\r$, in which case $\hX'_{n}$ itself is also smooth.

If $d\ge 2g'-1+N\ge 2g'-1+N_{+}$, then $d+\r-N_{+}\ge 2g'-1+\r$, the map $\nu_{\b}: \hX'_{d+\r-N_{+}}\to Q^{R'}_{X'}\to \Pic_{X}^{\sqR,\sqR}$ is then smooth and relative DM by the above discussion, therefore the fiber product \eqref{XhX'} is smooth over its first factor $X'_{d+\r-N_{-}}$. Since $X'_{d+\r-N_{-}}$ is a scheme smooth over $k$, the fiber product \eqref{XhX'} is smooth DM over $k$. The argument for \eqref{hXX'} is the same.

For the dimension, we have
\begin{eqnarray*}
\dim \cM_{d}&=&\dim \hX'_{d+\r-N_{-}}+\dim \hX'_{d+\r-N_{+}}-\dim \Pic_{X}^{\sqR,\sqR}\\
&=&(d+\r-N_{-})+(d+\r-N_{+})-(g-1+\r)\\
&=&2d+\r-N-g+1.
\end{eqnarray*}

\eqref{MA Cart} follows directly by comparing the four conditions in  Definition \ref{defn Md} and in Definition \ref{defn Ad}.

\eqref{f proper} Since $\Om$ is proper, it suffices to show that $f_{d}$ is proper. By \eqref{MA Cart}, it suffices to show that $\wh\nu^{\sqR}_{n}: \hX'_{n}\to \hX^{\sqR}_{n}$ is proper for any $n\ge0$. We consider the factorization of the usual norm map
\begin{equation*}
\wh\nu_{n}:\hX'_{n}\xrightarrow{\wh \nu^{\sqR}_{n}}  \hX^{\sqR}_{n} \xrightarrow{\wh\om^{\sqR}_{n}} \hX_{n}.
\end{equation*}
The same argument of \cite[Prop. 6.1(4)]{YZ} shows that $\wh\nu_{n}$ is proper. On the other hand, $\wh\om^{\sqR}_{n}$ is separated because it is obtained by base change from the separated map $[2]:[\Res^{R}_{k}\AA^1/\Res^{R}_{k}\Gm]\to[\Res^{R}_{k}\AA^1/\Res^{R}_{k}\Gm]$ (see the diagram \eqref{XR Cart}). Therefore, $\wh\nu^{\sqR}_{n}$ is proper. 

\eqref{f small} Over $\cA^{\dm}_{d}:=(X^{\sqR}_{d+\r-N_{-}}\times_{\Pic_{X}^{\sqR;\sqR,d+\r}}X^{\sqR}_{d+\r-N_{+}})\cap \cA_{d}$, $f_{d}$ is finite. The complement $\cA_{d}-\cA^{\dm}_{d}$ is the disjoint union of $\cA_{d}^{a=0}$ and $\cA^{b=0}_{d}$ corresponding to the locus $a=0$ or $b=0$. Note $\cA_{d}^{a=0}=\vn$ unless $\Sig_{+}=\vn$; $\cA_{d}^{b=0}=\vn$ unless $\Sig_{-}=\vn$.

We first analyze the fibers over $\cA_{d}^{b=0}$ when $\Sig_{-}=\vn$. The coarse moduli space of $\cA^{b=0}_{d}$ is $U_{d}$ (by taking $\div(a)-R$, note that $\Sig=\Sig_{+}$). Hence $\dim \cA^{b=0}_{d}=d$, and $\codim_{\cA_{d}}(\cA^{b=0}_{d})=d-g+1+\r-N$. The restriction of $f_{d}$ to $\cA^{b=0}_{d}$ is, up to passing to coarse moduli spaces, given by the norm map with respect to the double cover $U'\to U$
\begin{equation*}
U'_{d}\times_{\Pic_{X}^{\sqR,d+\r}}\Pic_{X'}^{d+\r-N_{+}}\to U_{d}.
\end{equation*}
From this we see that the fiber dimension of $f_{d}$ over $\cA^{b=0}_{d}$ is the same as that of the norm map $\Pic_{X'}\to \Pic^{\sqR}_{X}$, which is $g'-g$. 

Similar argument shows that when $\Sig_{+}=\vn$, $\codim_{\cA_{d}}(\cA^{a=0}_{d})=d-g+1+\r-N$ and the fiber dimension of $f_{d}$ over $\cA^{a=0}_{d}$ is still $g'-g$. In either case, since $d\ge3g-2+N$, we have
\begin{equation*}
 d-g+1+\r-N\ge 2g-1+\r=2(g'-g)+1
\end{equation*}
which checks the smallness of $f_{d}$.

\eqref{M ffp} We need to show that there is a finite flat map $Y\to \cM_{d}$ from an algebraic space $Y$ of finite type over $k$. As in \cite[proof of Prop. 6.1(1)]{YZ}, by introducing a rigidification at some closed point $y\in U'$, we may define a schematic map
\begin{equation*}
\cM_{d}\to J_{X'}^{d+\r}\times \Prym_{X'/X}
\end{equation*}
where $J_{X'}^{d+\r}$ is the Picard scheme of $X'$ of degree $d+\r$, and $\Prym_{X'/X}:=\ker(\Nm^{\sqR}_{X'/X}: \Pic^{0}_{X'}\to \Pic^{\sqR,0}_{X})$. Since $J_{X'}^{d+\r}$ is a scheme and $\Prym_{X'/X}$ is a global finite quotient of an abelian variety, $J_{X'}^{d+\r}\times \Prym_{X'/X}$ admits a finite flat presentation, therefore the same is true for $\cM_{d}$.
\end{proof}

\sss{The incidence correspondences}\label{sss:Hk Md} To state the formula for $\II^{\mu,\mu'}(h_{D})$, we need to introduce two self-correspondences of $\cM_{d}$. We define $\cH_{+}$ to be the substack of $\cM_{d}\times X'$ consisting of those $(\cI,\cJ, \a,\b,\j, x')$ such that $\b$ vanishes on $\Gamma_{x'}$. We have the natural projection
\begin{equation*}
\oll{\g}_{+}:\cH_{+}\to \cM_{d}
\end{equation*}
recording $(\cI,\cJ, \a,\b,\j)$. We also have another projection
\begin{equation*}
\orr{\g}_{+}: \cH_{+}\to \cM_{d}
\end{equation*}
sending $(\cI,\cJ, \a,\b,\j, x')$ to $(\cI, \cJ(\Gamma_{\s x'}-\Gamma_{x'}), \a,\b,\j)$. This makes sense since twisting by $\cO_{X'}(\Gamma_{\s x'}-\Gamma_{x'})$ does not affect the image under $\Nm^{\sqR}_{X'/X}$, and that $\b$ can be viewed as a section of $\cJ(\Gamma_{\s x'}-\Gamma_{x'})$ since it vanishes along $\Gamma_{x'}$. Via $(\oll{\g}_{+}, \orr{\g}_{+})$,  we view $\cH_{+}$ as a self-correspondence of $\cM_{d}$. We have a commutative diagram
\begin{equation}\label{corr cH}
\xymatrix{& \cH_{+} \ar[dl]_{\oll{\g}_{+}}\ar[dr]^{\orr{\g}_{+}}\\
\cM_{d}\ar[dr]_{f_{d}} & & \cM_{d}\ar[dl]^{f_{d}}\\
& \cA_{d} }
\end{equation}

Similarly, we define $\cH_{-}$ to be the substack of $\cM_{d}\times X'$ consisting of those $(\cI,\cJ, \a,\b,\j, x')$ such that $\a$ vanishes on $\Gamma_{x'}$. We view $\cH_{-}$ as a self-correspondence of $\cM_{d}$ over $\cA_{d}$
\begin{equation}\label{corr cH-}
\xymatrix{& \cH_{-} \ar[dl]_{\oll{\g}_{-}}\ar[dr]^{\orr{\g}_{-}}\\
\cM_{d}\ar[dr]_{f_{d}} & & \cM_{d}\ar[dl]^{f_{d}}\\
& \cA_{d} }
\end{equation}
where $\oll{\g}_{-}(\cI,\cJ, \a,\b,\j, x')=(\cI,\cJ, \a,\b,\j)$ and $\orr{\g}_{-}(\cI,\cJ, \a,\b,\j, x')=(\cI(\Gamma_{\s x'}-\Gamma_{x'}),\cJ, \a,\b,\j)$.

Let $\cA^{\dm}_{d}=(X^{\sqR}_{d+\r-N_{-}}\times_{\Pic^{\sqR;\sqR}_{X}}X^{\sqR}_{d+\r-N_{+}})\cap \cA_{d}$ be the locus where $a,b\ne0$ \footnote{The definition of $\cA^{\dm}_{d}$ is different from the one in \cite[]{YZ}.}. Let $\cM_{d}^{\dm}\subset \cM_{d}$ be the preimage of $\cA^{\dm}_{d}$. Let $\cH^{\dm}_{+}$ and $\cH^{\dm}_{-}$ be the restriction of $\cH_{+}$ and $\cH_{-}$ to $\cA^{\dm}_{d}$. 

Consider the incidence correspondence
\begin{equation}\label{corr I}
\xymatrix{& I'_{d+\r-N_{+}} \ar[dl]_{\oll{i}}\ar[dr]^{\orr{i}}\\
X'_{d+\r-N_{+}} && X'_{d+\r-N_{+}}}
\end{equation}
Here $I'_{d+\r-N_{+}}=\{(D,x')\in X'_{d+\r-N_{+}}\times X'|x'\in D\}$, $\oll{i}(D, x')=D$ and $\orr{i}(D,x')=D+\s (x')-x'$. 

By definition, over $\cM^{\dm}_{d}$, $\cH^{\dm}_{+}$ is obtained from the incidence correspondence $I'_{d+\r-N_{+}}$ by applying $X'_{d+\r-N_{-}}\times_{\Pic^{\sqR;\sqR}_{X}}(-)$ and then restricting to $\cA^{\dm}_{d}$. Similarly, $\cH^{\dm}_{-}$ is obtained from the incidence correspondence $I'_{d+\r-N_{-}}$  by applying $(-)\times_{\Pic^{\sqR;\sqR}_{X}}X'_{d+\r-N_{+}}$ and then restricting to $\cA^{\dm}_{d}$ (c.f.  \cite[Lemma 6.3]{YZ}).

From this description, we see that $\dim \cH^{\dm}_{\pm}=\dim \cM^{\dm}_{d}=2d+\r-N-g+1$. Let $\ov\cH^{\dm}_{\pm}$ be the closure of $\cH^{\dm}_{\pm}$ and let $[\ov\cH^{\dm}_{\pm}]$ denote its cycle class as an element in $\hBM{2(2d+\r-N-g+1)}{\cH_{\pm}}$. Then $[\ov\cH^{\dm}_{\pm}]$ is a cohomological correspondence between the constant sheaf on $\cM_{d}$ and itself, which then induces an endomorphism of $\bR f_{d,!}\Ql$
\begin{equation*}
f_{d,!}[\ov\cH^{\dm}_{\pm}]: \bR f_{d,!}\Ql\to \bR f_{d,!}\Ql.
\end{equation*}
Taking direct image under $\Om:\cA_{d}\to\cA^{\fl}_{d}$, we get an endomorphism
\begin{equation*}
f^{\fl}_{d,!}[\ov\cH^{\dm}_{\pm}]: \bR f^{\fl}_{d,!}\Ql\to \bR f^{\fl}_{d,!}\Ql.
\end{equation*}
For $a\in \cA^{\fl}_{d}(k)$, let $(f^{\fl}_{d,!}[\ov\cH^{\dm}_{\pm}])_{a}$ be the action of 
$f^{\fl}_{d,!}[\ov\cH^{\dm}_{\pm}]$ on the geometric stalk $(\bR f_{d,!}\Ql)_{a}$.

\sss{The formula} For the rest of the section, we fix a pair
\begin{equation*}
\mu=(\un\mu, \mf,\mi), \mu'=(\un\mu', \mf',\mi')\in \frT_{r,\Sig}.
\end{equation*}
We set
\begin{equation*}
\Sig_{+}:=\Sig_{+}(\mu,\mu'), \quad \Sig_{-}:=\Sig_{-}(\mu,\mu')
\end{equation*}
be defined as in \eqref{Sig +} and \eqref{Sig -}. Thus $\cM_{d}=\cM_{d}(\Sig_{\pm})$ is defined. We also let
\begin{equation}\label{defn r pm}
r_{+}=\{1\le i\le r|\mu_{i}=\mu_{i}'\}; \quad r_{-}=\{1\le i\le r|\mu_{i}\ne\mu_{i}'\}.
\end{equation}
The following is the main theorem of this section, parallel to \cite[Theorem 6.5]{YZ}.

\begin{theorem}\label{th:Ir} Suppose $D$ is an effective divisor on $U$ of degree $d\ge \max\{2g'-1+N, 2g\}$. Under the above notation, we have
\begin{equation} \label{Ir hD}
\II^{\mu,\mu'}(h_{D})=\sum_{a\in \cA^{\fl}_{D}(k)}\Tr\left((f^{\fl}_{d,!}[\ov\cH^{\dm}_{+}])^{r_{+}}_{a}\circ (f^{\fl}_{d,!}[\ov\cH^{\dm}_{-}])^{r_{-}}_{a}\circ \Fr_{a}, (\bR f^{\fl}_{d,!}\Ql)_{a}\right)
\end{equation}
where $\Fr_{a}$ is the geometric Frobenius at $a$.
\end{theorem}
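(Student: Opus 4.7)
The plan is to mirror the strategy of \cite[Theorem 6.5]{YZ}, reinterpreting the self-intersection of Heegner--Drinfeld cycles as a moduli-theoretic count, and then applying the Grothendieck--Lefschetz trace formula. First I would use the projection formula and the definition of the Hecke action in \S\ref{sss:Hk Chow} to rewrite
\[
\II^{\mu,\mu'}(h_D)
=\Bigl(\prod_{x'\in\Si'}d_{x'}\Bigr)^{-1}\!\!
\Bigl\langle[\Sht^{\un\mu}_T(\mi\!\cdot\!\Si')],
(\oll{p}\!\times\!\orr{p})_{*}[\Sht'^r_G(\Sig;\!\Si_\infty;h_D)]\cdot
[\Sht^{\un\mu'}_T(\mi'\!\cdot\!\Si')]\Bigr\rangle,
\]
and then realize it as the degree of the $0$-cycle on the triple fiber product
\[
\cY := \Sht^{\un\mu}_T(\mi\cdot\Si') \times_{\th'^\mu,\, \Sht'^r_G(\Sig;\Si_\infty),\, \oll{p}}
\Sht'^r_G(\Sig;\Si_\infty;h_D)
\times_{\orr{p},\, \Sht'^r_G(\Sig;\Si_\infty),\, \th'^{\mu'}}
\Sht^{\un\mu'}_T(\mi'\cdot\Si'),
\]
with the factor $\prod d_{x'}$ accounting for the passage to the $\xi$-component.

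Next I would give a Hitchin-like presentation of $\cY$. Unwinding the diagram \eqref{Sht hD diag} and using Lemma \ref{l:ShtT Lang}, a $k$-point of $\cY$ corresponds to two chains of line bundles $(\cL_i)$, $(\cL'_i)$ on $X'$ with the prescribed modification types $\un\mu, \un\mu'$, together with embeddings $\ph_i: \nu_*\cL_i \hookrightarrow \nu_*\cL'_i$ realizing $h_D$ and compatible with Iwahori structures at $\Sig$. Setting $\cI=\cL_0$ and $\cJ=\cL'_0$, and letting $\a,\b$ be the sections of $\cI,\cJ$ built from the embedding of bundles and the supersingular/Iwahori data at $\Sig_\pm$, this produces a point $(\cI,\cJ,\a,\b,\j)\in\cM_{d}(k)$ whose image $a=f^\fl_d(\cI,\cJ,\a,\b,\j)$ lies in $\cA^\fl_D$ (the divisor $\div(a-b)$ on $X-R$ equals $\nu_*D\cdot\nu_*(\div\a)=D$ after tracking degrees). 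The compatibility between $\Nm(\cI)$ and $\Nm(\cJ)$ supplied by $\j$ is precisely the matching condition for the Hasse invariants coming from the decomposition $\Sig=\Sig_+\sqcup\Sig_-$; the constraints along $R$ and $\Sigma$ in Definition \ref{defn Md} translate faithfully from the ramification data of $X'/X$ and from $\mu_{\Sig}=(\mf,\mi)$ via the construction of $\th^{\mu_{\Sig}}_{\Bun}$ in \S\ref{sss:th Bun}. Conversely, given $(\cI,\cJ,\a,\b,\j)\in\cM_d$, each of the $r_+$ modifications of type $\mu_i=\mu_i'$ produces a point in $\cH_+$ (a single point $x_i'\in X'$ modifying only $\cJ$-chain), while each of the $r_-$ modifications of type $\mu_i\neq\mu_i'$ produces a point in $\cH_-$. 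Thus $\cY$ is canonically isomorphic to the fiber product of $r_+$ copies of $\cH^\dm_+$ and $r_-$ copies of $\cH^\dm_-$ over $\cM_d$, composed with Frobenius.

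Finally, I would apply the Grothendieck--Lefschetz trace formula. The structure of $\cY$ as an iterated correspondence fibered over $\cA^\fl_d$ says that $\cY$ is the fixed point stack of Frobenius acting on the self-correspondence $(f^\fl_d)_!([\ov\cH^\dm_+])^{r_+}\circ(f^\fl_d)_!([\ov\cH^\dm_-])^{r_-}$ of $\bR f^\fl_{d,!}\Ql$; the smallness of $f_d$ (Prop. \ref{p:M}\eqref{f small}) together with properness of $\Om$ guarantees that the cohomological correspondence acts well. The condition that the image of $\cY$ lands over $D\in U_d$ restricts the sum to $\cA^\fl_D(k)$, giving \eqref{Ir hD}.

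The main obstacle will be the precise matching in the second paragraph: the correspondence $\cH_{\pm}$ is defined in terms of modifications of $\cJ$ (or $\cI$) by $\Gamma_{\s x'}-\Gamma_{x'}$, whereas the Shtuka picture involves elementary modifications of rank two bundles on $X$ compatible with Iwahori structures at $\Sig$. Reconciling these requires a careful analysis, case by case on the behavior of $x'_i$ relative to $\Sig'\cup R'$ (unramified split/inert, ramified, or at Iwahori place), to check that signs $\mu_i,\mu_i'$ and the auxiliary choices $\mf,\mi,\mf',\mi'$ produce exactly the correspondences $\cH_\pm$ and not some twist. This bookkeeping is delicate because the stack $\cM_d$ was designed using $\Pic^{\sqR;\sqR}_X$ precisely to absorb the ramified contributions of $h^\bsq_x$ on the analytic side; verifying its correctness against the geometric side is the technical core of the argument.
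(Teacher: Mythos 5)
Your overall strategy is the same as the paper's (reduce to a point/trace computation on $\cM_d$ via the correspondences $\cH_\pm$ and Lefschetz), but as written the central step does not go through. The intersection number $\II^{\mu,\mu'}(h_D)$ is the degree of a \emph{refined} Gysin pullback $(\th'^{\mu}\times\th'^{\mu'})^{!}[\Sht'^{r}_{G}(\Sii;h_{D})]$, a $0$-cycle class on your $\cY=\Sht^{\mu,\mu'}_{\cM,D}$ which is in general not the fundamental class of $\cY$, because the fiber products are not transverse and $\cY$ need not have the expected dimension. So "realize it as the degree of the $0$-cycle on the triple fiber product" and then "apply Grothendieck--Lefschetz" skips two essential inputs. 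First, one must commute the two refined pullbacks (intersect with the Hecke correspondence first, then with the graph of Frobenius, versus the other order); this is exactly what the master diagram \eqref{Tian} and the Octahedron Lemma of \cite{YZ} are for, and the output is a cycle class $\z=(\th^{\mu,\mu'}_{\Hk})^{!}[\Hk'^{r}_{H,d}(\Sig)]$ on the Hecke correspondence $\Hk^{\mu,\mu'}_{\cM,d}$ such that the intersection number equals $\deg(\id,\Fr)^{!}\z$ (Theorem \ref{th:pre Ir}); checking the hypotheses of that lemma requires the smoothness/dimension facts of Prop.~\ref{p:geom facts}. Second, to feed this into the Lefschetz formula with the cohomological correspondences $[\ov\cH^{\dm}_{\pm}]$ one must show $\z$ agrees with the fundamental class of $\ov\cH^{\dm}_{\un\mu\un\mu'}$ (Lemma \ref{l:z fund cycle}), and that the composition $(f_{d,!}[\ov\cH^{\dm}_{+}])^{r_+}\circ(f_{d,!}[\ov\cH^{\dm}_{-}])^{r_-}$ equals $f_{d,!}[\ov\cH^{\dm}_{\un\mu\un\mu'}]$; both use the smallness of $f_d$ and dimension estimates on the boundary $\cA_d-\cA^{\dm}_d$ and on $\cH-\cH^{\circ}$. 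None of this appears in your sketch, and without it the identification of the intersection number with a trace of $[\ov\cH^{\dm}_{\pm}]$-operators is unjustified.

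Two further points. The descent you gesture at ("composed with Frobenius") hides a genuine twist: on $\cM_d(\mu_{\Sig},\mu'_{\Sig})$ the relevant endomorphism coming from the Shtuka side is $\AL_{\cM,\infty}^{-1}\circ\Fr$, and only after the isomorphism $\Xi_{\cM}:\cM_d\times\frSi'\isom\cM_d(\mu_{\Sig},\mu'_{\Sig})$ of Prop.~\ref{p:compare M} does it become $\Fr_{\cM_d}\times\id_{\frSi'}$; this untwisting (and the resulting factor $\deg\frSi'$, which cancels your $\prod d_{x'}$) must be proved, as must the identification $\Hk^{\mu,\mu'}_{\cM,d}\cong\cH_{\un\mu\un\mu'}\times\frSi'$, whose proof requires the nontrivial vanishing claim for $\b$ along $\Gamma_{\s(x'_1)}$ at ramified points -- you correctly flag this as the technical core but leave it open. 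Finally, your dictionary is off in detail: setting $\cI=\cL_0$, $\cJ=\cL'_0$ cannot be right (the degrees do not match Definition \ref{defn Md}); in the correct comparison $\cI$ and $\cJ$ are the difference bundles $\cL^{-1}\otimes\cL'(R'-\frD_-)$ and $\s^{*}\cL^{-1}\otimes\cL'(R'-\frD_+)$, and $(\a,\b)$ arise from decomposing $\ph:\nu_{S,*}\cL\to\nu_{S,*}\cL'$ via Lemma \ref{l:ab}, with the square-root data over $R$ absorbed by $\Pic^{\sqR;\sqR}_X$.
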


\sss{Outline of the proof} The rest of the section is devoted to the proof of Theorem \ref{th:Ir}. The proof consists of three steps
\begin{enumerate}
\item[I.]  Introduce a moduli stack $\cM_{d}(\mu_{\Sig},\mu'_{\Sig})$ and a Hecke correspondence $\Hk^{\mu,\mu'}_{\cM,d}$ for $\cM_{d}(\mu_{\Sig},\mu'_{\Sig})$. 

This step is done in \S\ref{ss:pre M}. We also introduce certain auxiliary spaces which form the ``master diagram'' \eqref{Tian}. Later we will apply the octahedron lemma \cite[Theorem A.10]{YZ} to this diagram. 

\item[II.] Relate $\cM_{d}(\mu_{\Sig},\mu'_{\Sig})$ and $\cM_{d}$; relate $\Hk^{\mu,\mu'}_{\cM,d}$ and a composition of $\cH_{\pm}$. 

This is done in \S\ref{ss:compare M}. This step is significantly more complicated than the unramified case treated in \cite{YZ}. It amounts to showing that $\cM_{d}$ is a descent of  $\cM_{d}(\mu_{\Sig},\mu'_{\Sig})$ from $\frSi'$ to $\Spec k$. 

\item[III.] Show that $\II^{\mu,\mu'}(h_{D})$ can be expressed as the intersection number of a cycle class supported on $\Hk^{\mu,\mu'}_{\cM,d}$ and the graph of Frobenius of $\cM_{d}(\mu_{\Sig},\mu'_{\Sig})$, and rewrite this intersection number into a trace as in the right hand side of \eqref{Ir hD}. 

This step is done in \S\ref{ss:proof Ir}. The argument is quite similar to the proof of \cite[Theorem 6.6]{YZ}, together with a standard application of a version of the Lefschetz trace formula reviewed in \cite[Prop A.12]{YZ}. 
\end{enumerate}


\subsection{Auxiliary moduli stacks}\label{ss:pre M}

\sss{The stack $H_{d}(\Sig)$}\label{sss:Hd}

\begin{defn}
\begin{enumerate}
\item Let $\wt H_{d}(\Sig)$ be the moduli stack whose $S$-points consist of triples $(\cE^{\da}, \cE'^{\da}, \ph)$ where
\begin{itemize}
\item $\cE^{\da}=(\cE; \{\cE(-\ha x)\})$ and $\cE'^{\da}=(\cE'; \{\cE'(-\ha x)\})$ are $S$-points of $\Bun_{2}(\Sig)$ such that $\deg(\cE'|_{X\times s})-\deg(\cE|_{X\times s})=d$ for all geometric points $s\in S$.
\item $\ph: \cE\to \cE'$ is a map of coherent sheaves which is injective when restricted to $X\times s$ for all geometric points $s\in S$, and mapping $\cE(-\ha x)$ to $\cE'(-\ha x)$ for all $x\in \Sig$.
\item The restriction $\ph|_{(\Sig\sqcup R)\times S}$ is an isomorphism. 
\end{itemize}
\item We define
\begin{equation*}
H_{d}(\Sig)=\wt H_{d}(\Sig)/\Pic_{X}
\end{equation*}
where $\Pic_{X}$ acts by tensoring on $\cE^{\da}$ and $\cE'^{\da}$ simultaneously. 
\end{enumerate}
\end{defn}

We have a map
\begin{equation*}
\olr{p_{H}}=(\oll{p_{H}},\orr{p_{H}}): H_{d}(\Sig)\to\Bun_{G}(\Sig)^{2}
\end{equation*}
recording $\cE^{\da}$ and $\cE'^{\da}$. We also have a map
\begin{equation}\label{Hd Ud}
s: H_{d}(\Sig)\to U_{d}
\end{equation}
recording the vanishing divisor of $\det(\ph)$ as a section of $\det(\cE)^{-1}\ot\det(\cE')$.

We also have an Atkin--Lehner operator
\begin{equation}\label{ALH inf}
\AL_{H,\infty}: H_{d}(\Sig)\times\frSi\to H_{d}(\Sig)
\end{equation}
defined by applying $\AL_{G,\infty}$ (see \eqref{ALG infty}) to both $\cE$ and $\cE'$, and keeping $\ph$.

\sss{The Hecke correspondence for $H_{d}(\Sig)$}

\begin{defn} Let $\un\mu\in\{\pm1\}^{r}$.
\begin{enumerate}
\item Let $\wt\Hk^{\un\mu}_{H,d}(\Sig)$  \footnote{In \cite{YZ}, the analogue of $\wt\Hk^{\un\mu}_{H,d}(\Sig)$ was denoted by $\wt\Hk^{\un\mu}_{G,d}$.} be the moduli stack of $(\{\cE_{i}^{\da}\}_{0\le i\le r},\{\cE'^{\da}_{i}\}_{0\le i\le r}, \{x_{i}\}_{1\le i\le r})$ together with a diagram
\begin{equation}\label{diag EE'}
\xymatrix{\cE_{0}\ar@{-->}[r]^{f_{1}}\ar[d]^{\ph_{0}} & \cE_{1}\ar@{-->}[r]^{f_{1}}\ar[d]^{\ph_{1}}  & \cdots \ar@{-->}[r]^{f_{r}} &  \cE_{r}\ar[d]^{\ph_{r}}\\
\cE_{0}'\ar@{-->}[r]^{f'_{1}} & \cE_{1}'\ar@{-->}[r]^{f'_{2}} & \cdots \ar@{-->}[r]^{f'_{r}}& \cE_{r}'
}
\end{equation}
where 
\begin{itemize}
\item Each $\cE_{i}$ and $\cE'_{i}$ are underlying rank two vector bundles of points $\cE^{\da}_{i}, \cE'^{\da}_{i}$ of $\Bun_{2}(\Sig)$. 
\item The upper and lower rows form objects in $\Hk^{\un\mu}_{2}(\Sig)$ with modifications at $\{x_{i}\}_{1\le i\le r}\in X^{r}$.
\item The vertical maps $\ph_{i}$ are such that $(\cE_{i}^{\da}, \cE'^{\da}_{i}, \ph_{i})\in \wt H_{d}(\Sig)$.
\end{itemize}
\item Let
\begin{equation*}
\Hk^{r}_{H,d}(\Sig):=\wt\Hk^{\un\mu}_{H,d}(\Sig)/\Pic_{X}
\end{equation*}
where $\Pic_{X}$ acts on $\wt\Hk^{\un\mu}_{H,d}(\Sig)$ by simultaneously tensoring on all $\cE^{\da}_{i}$ and $\cE'^{\da}_{i}$. 
\end{enumerate}
\end{defn}

The notation for $\Hk^{r}_{H,d}(\Sig)$ is justified because one can check, as in the case of $\Hk^{\un\mu}_{G}(\Sig)$, that $\wt\Hk^{\un\mu}_{H,d}(\Sig)/\Pic_{X}$ is canonically independent of $\un\mu$.

We have projections
\begin{equation*}
p_{H,i}: \Hk^{r}_{H,d}(\Sig)\to H_{d}(\Sig), \quad i=0,\dotsc, r.
\end{equation*}
recording the $i$-th column of the diagram \eqref{diag EE'}. We also have projections recording the upper and lower rows of the diagram \eqref{diag EE'}
\begin{equation*}
\olr{q}=(\oll{q}, \orr{q}): \Hk^{r}_{H,d}(\Sig)\to \Hk^{r}_{G}(\Sig)^{2}
\end{equation*}

Let 
\begin{eqnarray*}
\Hk'^{r}_{H,d}(\Sig):= \Hk^{r}_{H,d}(\Sig)\times_{X^{r}}X'^{r},\\
\Hk'^{r}_{G}(\Sig):=\Hk^{r}_{G}(\Sig)\times_{X^{r}}X'^{r}.
\end{eqnarray*}
The maps $p_{H,i}$ and $\olr{q}$ induce maps
\begin{eqnarray*}
p'_{H,i}&:& \Hk'^{r}_{H,d}(\Sig)\to \Hk^{r}_{H,d}(\Sig)\xr{p_{H,i}} H_{d}(\Sig), \quad i=0,\dotsc, r.\\
\olr{q}'=(\oll{q}', \orr{q}')&:& \Hk'^{r}_{H,d}(\Sig)\to \Hk'^{r}_{G}(\Sig)^{2}.
\end{eqnarray*}

\sss{The master diagram} Recall $\mu=(\un\mu, \mu_{\Sig}),\mu'=(\un\mu', \mu'_{\Sig})\in\frT_{r,\Sig}$. We consider the following diagram in which each square is commutative


\begin{equation}\label{Tian}
\xymatrix{(\Hk^{\un\mu}_{T}\times\Hk^{\un\mu'}_{T})_{\frSi'}\ar[d]^{(p^{\un\mu}_{T,0}\times p^{\un\mu'}_{T,0}\times\id_{\frSi'}, \a_{T})}\ar[r]^-{\th^{\mu,\mu'}_{\Hk}\times\id_{\frSi'}} & \Hk'^{r}_{G}(\Sig)^{2}_{\frSi'} \ar[d]^{(p'^{2}_{G,0}, \a_{G})} & \Hk'^{r}_{H,d}(\Sig)_{\frSi'}\ar[d]_{(p'_{H,0}, \a_{H})}\ar[l]_-{\olr{q}'\times\id_{\frSi'}}\\
(\Bun_{T}^{2})_{\frSi'}\times(\Bun_{T}^{2})_{\frSi'}\ar[r]^-{\th^{\mu,\mu'}_{\Bun}}_-{\times\th^{\mu,\mu'}_{\Bun}} & \Bun_{G}(\Sig)^{2}\times\Bun_{G}(\Sig)^{2} & H_{d}(\Sig)\times H_{d}(\Sig)\ar[l]_-{\olr{p_{H}}}^-{\times \olr{p_{H}}}\\
(\Bun^{2}_{T})_{\frSi'}\ar[u]^{(\id,\Fr)}\ar[r]^-{\th^{\mu,\mu'}_{\Bun}} & \Bun_{G}(\Sig)^{2}\ar[u]^{(\id,\Fr)} & H_{d}(\Sig)\ar[l]_-{\olr{p_{H}}}\ar[u]^{(\id,\Fr)}
}
\end{equation}
Here we use subscript $\frSi'$ to denote the product with $\frSi'$ over $k$.
The map $\th^{\mu,\mu'}_{\Bun}:\Bun^{2}_{T}\times\frSi'\to \Bun_{G}(\Sig)^{2}$ is given by $\th^{\mu_{\Sig}}_{\Bun}\times\th^{\mu'_{\Sig}}_{\Bun}$, using a common copy of $\frSi'$; $\th^{\mu,\mu'}_{\Hk}: \Hk^{\mu}_{T}\times\Hk^{\mu'}_{T}\times\frSi'\to \Hk'^{r}_{G}(\Sig)^{2}$ is similarly defined using $\th^{\mu}_{\Hk}$ and $\th^{\mu'}_{\Hk}$.

Let us explain the three maps $\a_{T}, \a_{G}$ and $\a_{H}$ that appear as the second components of the vertical maps connecting the first and the second rows. 
\begin{itemize}
\item The map $\a_{T}$ is the composition
\begin{equation*}
\Hk^{\un\mu}_{T}\times\Hk^{\un\mu'}_{T}\times\frSi'\xr{p^{\un\mu}_{T,r}\times p^{\un\mu'}_{T,r}\times\id_{\frSi'}}\Bun_{T}^{2}\times\frSi'\xr{\AL_{T,\mi,\mi'}}\Bun_{T}^{2}\times\frSi'
\end{equation*}
where $\AL_{T,\mi,\mi'}$ is defined as
\begin{equation}\label{ALT mu mu'}
\AL_{T,\mi,\mi'}(\cL_{1},\cL_{2},\{x'^{(1)}\})=\left(\cL_{1}(-\sum_{x\in\Si}\mu_{x}x'^{(1)}), \cL_{2}(-\sum_{x\in\Si}\mu'_{x}x'^{(1)}), \{x'^{(2)}\}\right).
\end{equation}
Hence on the $\frSi'$-factor, $\a_{T}$ is the Frobenius morphism.
\item The map $\a_{G}$ is the composition
\begin{equation*}
\Hk'^{r}_{G}(\Sig)^{2}\times\frSi'\xr{p'^{2}_{G,r}\times\nu_{\infty}}\Bun_{G}(\Sig)^{2}\times\frSi\xr{\AL^{(2)}_{G,\infty}}\Bun_{G}(\Sig)^{2}
\end{equation*}
where $\AL^{(2)}_{G,\infty}$ is $\AL_{G,\infty}$ on both copies of $\Bun_{G}(\Sig)$ using  a common copy of $\frSi$.
\item The map $\a_{H}$ is the composition
\begin{equation*}
\Hk'^{r}_{H,d}(\Sig)\times\frSi'\xr{p'_{H,r}\times\nu_{\infty}}H_{d}(\Sig)\times\frSi\xr{\AL_{H,\infty}}H_{d}(\Sig).
\end{equation*}
\end{itemize}

\sss{} We define $\Sht'^{r}_{H,d}(\Sii)$ to be the fiber product of the third column of \eqref{Tian}, i.e., the following diagram is Cartesian
\begin{equation}\label{defn ShtH}
\xymatrix{\Sht'^{r}_{H,d}(\Sii)\ar[r]\ar[d] & \Hk'^{r}_{H,d}(\Sig)\times\frSi'\ar[d]^{(p'_{H,0}, \a_{H})}\\
H_{d}(\Sig)\ar[r]^-{(\id,\Fr)} & H_{d}(\Sig)\times H_{d}(\Sig)
}
\end{equation}
Then the fiber product of the three columns are
\begin{equation}\label{hor maps}
\xymatrix{\Sht^{\un\mu}_{T}(\mi\cdot\Si')\times_{\frSi'}\Sht^{\un\mu'}_{T}(\mi'\cdot\Si')\ar[r]^-{\th'^{\mu}\times\th'^{\mu'}} & \Sht'^{r}_{G}(\Sii)\times_{\frSi'}\Sht'^{r}_{G}(\Sii) & \Sht'^{r}_{H,d}(\Sii)\ar[l]}
\end{equation}

Recall the map $s: H_{d}(\Sig)\to U_{d}$ from \eqref{Hd Ud}. The Hecke correspondence $\Hk'^{r}_{H,d}(\Sig)$ preserves the map $s$ while the Frobenius map on $H_{d}(\Sig)$ covers the Frobenius map of $U_{d}$. Therefore, from the definition of $\Sht'^{r}_{H,d}(\Sii)$, we get canonical decomposition of it indexed by $k$-points of $U_{d}$, i.e., effective divisors of degree $d$ on $U$. As in \cite[Lemma 6.12]{YZ}, one shows that the piece indexed by $D\in U_{d}(k)$ is exactly the Hecke correspondence $\Sht'^{r}_{G}(\Sii;h_{D})$ for $\Sht'^{r}_{G}(\Sii)$. In other words, we have a decomposition
\begin{equation}\label{decomp ShtH}
\Sht'^{r}_{H,d}(\Sii)=\coprod_{D\in U_{d}(k)}\Sht'^{r}_{G}(\Sii;h_{D}).
\end{equation}

\sss{The stack $\cM_{d}(\mu_{\Sig},\mu'_{\Sig})$ and its Hecke correspondence} Now we consider the fiber product of the three rows of the master diagram \eqref{Tian}.

\begin{defn}\label{defn:M} Let $\cM_{d}(\mu_{\Sig},\mu'_{\Sig})$  be the fiber product of the bottom row of \eqref{Tian}, i.e., we have the following Cartesian diagram
\begin{equation}\label{defn Md mu}
\xymatrix{\cM_{d}(\mu_{\Sig},\mu'_{\Sig})\ar[r]\ar[d] & H_{d}(\Sig)\ar[d]^{\olr{p_{H}}}\\
\Bun_{T}^{2}\times\frSi'\ar[r]^{\th^{\mu,\mu'}_{\Bun}} & \Bun_{G}(\Sig)^{2}}
\end{equation}
\end{defn}
Our notation suggests that $\cM_{d}(\mu_{\Sig},\mu'_{\Sig})$ depends only on $\mu_{\Sig}$ and $\mu'_{\Sig}$. This is indeed the case, because $\th^{\mu,\mu'}_{\Bun}$ depends only on $\mu_{\Sig}$ and $\mu'_{\Sig}$.

From the definition of $\cM_{d}(\mu_{\Sig},\mu'_{\Sig})$, the Atkin--Lehner automorphisms $\AL_{G,\infty}$ (see \eqref{ALG infty}), $\AL_{H,\infty}$ (see \eqref{ALH inf}) and $\AL_{T,\mi,\mi'}$ (see \eqref{ALT mu mu'}) together with Lemma \ref{l:comp Di Di'} induce an Atkin--Lehner automorphism for $\cM_{d}(\mu_{\Sig},\mu'_{\Sig})$
\begin{equation*}
\AL_{\cM,\infty}:\cM_{d}(\mu_{\Sig},\mu'_{\Sig})\to \cM_{d}(\mu_{\Sig},\mu'_{\Sig}). 
\end{equation*}

\begin{defn}
Let $\Hk^{\mu,\mu'}_{\cM,d}$ be the fiber product of the top row of \eqref{Tian}. Equivalently, we have the following Cartesian diagram
\begin{equation}\label{defn HkM}
\xymatrix{\Hk^{\mu,\mu'}_{\cM,d}\ar[rr]\ar[d] && \Hk'^{r}_{H,d}(\Sig)\ar[d]^{\olr{q}}\\
\Hk^{\un\mu}_{T}\times\Hk^{\un\mu'}_{T}\times\frSi'\ar[rr]^-{\th^{\mu,\mu'}_{\Hk}} && \Hk'^{r}_{G}(\Sig)^{2}}
\end{equation}  
\end{defn}

Comparing the diagrams \eqref{defn Md mu} and \eqref{defn HkM}, we get projections
\begin{equation*}
p_{\cM,i}: \Hk^{\mu,\mu'}_{\cM,d}\to \cM_{d}(\mu_{\Sig},\mu'_{\Sig}), \quad i=0,\dotsc,r
\end{equation*}
as the fiber product of $p^{\mu}_{T,i}\times p^{\mu'}_{T,i}\times \id_{\frSi'}$ and $p'_{H,i}$ over $p'^{2}_{G,i}$. We also let
\begin{equation*}
\a_{\cM}=\AL_{\cM,\infty}\circ p_{\cM,r}: \Hk^{\mu,\mu'}_{\cM,d}\to \cM_{d}(\mu_{\Sig},\mu'_{\Sig}).
\end{equation*}

The fiber products of the three rows of \eqref{Tian} now read
\begin{equation}\label{3 rows}
\xymatrix{\Hk^{\mu,\mu'}_{\cM,d}\ar[d]^{(p_{\cM,0}, \a_{\cM})}\\
\cM_{d}(\mu_{\Sig},\mu'_{\Sig})\times\cM_{d}(\mu_{\Sig},\mu'_{\Sig})\\
\cM_{d}(\mu_{\Sig},\mu'_{\Sig})\ar[u]^{(\id,\Fr)}}
\end{equation}

\sss{The stack $\Sht^{\mu,\mu'}_{\cM,d}$} 
\begin{defn} Let $\Sht^{\mu,\mu'}_{\cM,d}$ be the fiber product of the maps in \eqref{3 rows}, i.e., we have a Cartesian diagram
\begin{equation}\label{defn ShtM}
\xymatrix{\Sht^{\mu,\mu'}_{\cM,d} \ar[r] \ar[d] & \Hk^{\mu,\mu'}_{\cM,d}\ar[d]^{(p_{\cM,0}, \a_{\cM})}\\
\cM_{d}(\mu_{\Sig},\mu'_{\Sig})\ar[r]^-{(\id,\Fr)} & \cM_{d}(\mu_{\Sig},\mu'_{\Sig})\times \cM_{d}(\mu_{\Sig},\mu'_{\Sig})}
\end{equation}
\end{defn}

By the diagram \eqref{Tian}, $\Sht^{\mu,\mu'}_{\cM,d}$ is also the fiber product of the maps in \eqref{hor maps}, i.e., the following diagram is also Cartesian
\begin{equation}\label{var ShtM}
\xymatrix{\Sht^{\mu,\mu'}_{\cM,d}\ar[d]\ar[r] & \Sht'^{r}_{H,d}(\Sii)\ar[d]\\
\Sht^{\un\mu}_{T}(\mi\cdot\Si')\times_{\frSi'}\Sht^{\un\mu'}_{T}(\mi'\cdot\Si')\ar[r]^-{\th'^{\mu}\times\th'^{\mu'}} & \Sht'^{r}_{G}(\Sii)\times_{\frSi'}\Sht'^{r}_{G}(\Sii)}
\end{equation}

According to the decomposition \eqref{decomp ShtH}, we get a corresponding decomposition of $\Sht^{\mu,\mu'}_{\cM,d}$
\begin{equation}\label{decomp ShtM D}
\Sht^{\mu,\mu'}_{\cM,d}=\coprod_{D\in U_{d}(k)}\Sht^{\mu,\mu'}_{\cM,D}
\end{equation}
where $\Sht^{\mu,\mu'}_{\cM,D}$ is the preimage of $\Sht'^{r}_{G}(\Sii;h_{D})\subset \Sht'^{r}_{H,d}(\Sii)$ under the upper horizontal map in \eqref{var ShtM}. We have a Cartesian diagram
\begin{equation}\label{ShtMD}
\xymatrix{\Sht^{\mu,\mu'}_{\cM,D}\ar[d]\ar[r] & \Sht'^{r}_{G}(\Sii;h_{D})\ar[d]^{(\oll{p}', \orr{p}')}\\
\Sht^{\un\mu}_{T}(\mi\cdot\Si')\times_{\frSi'}\Sht^{\un\mu'}_{T}(\mi'\cdot\Si')\ar[r]^-{\th'^{\mu}\times\th'^{\mu'}} & \Sht'^{r}_{G}(\Sii)\times_{\frSi'}\Sht'^{r}_{G}(\Sii)}
\end{equation}
Here the maps $\oll{p}', \orr{p}': \Sht'^{r}_{G}(\Sii;h_{D})\to \Sht'^{r}_{G}(\Sii)$ are the base changes of the maps $\oll{p}$ and $\orr{p}$ in \eqref{ShthDpp}.

\subsection{Relation between $\cM_{d}$ and $\cM_{d}(\mu_{\Sig},\mu'_{\Sig})$}\label{ss:compare M}
In this subsection, we relate $\cM_{d}(\mu_{\Sig},\mu'_{\Sig})$ to the moduli stack $\cM_{d}$ which was defined earlier. For this, we first give an alternative description of $\cM_{d}(\mu_{\Sig},\mu'_{\Sig})$ in the style of the definition of $\cM_{d}$ in \cite[\S6.1.1]{YZ}.

\sss{Some preparation} Let $S$ be any scheme, and $\cL$ and $\cL'$ two line bundles over $X'\times S$. We denote by $H_{R'}(\cL,\cL')$ be the set of pairs $(\a,\b)$ where
\begin{eqnarray}
\label{al}\a&:& \cL\to\cL'(R'):=\cL'\otimes_{\cO_{X'}}\cO_{X'}(R')\\
\label{be}\b&:& \s^{*}\cL\to \cL'(R')
\end{eqnarray}
such that their restrictions to $R'\times S$ satisfy
\begin{equation}\label{a+b}
\a|_{R'\times S}=\b|_{R'\times S}.
\end{equation}
Note that $\cL$ and $\s^{*}\cL$ are the same when restricted to $R'\times S$, hence the above equality makes sense.

Recall $\nu_{S}=\nu\times\id_{S}: X'\times S\to X\times S$. 

\begin{lemma}\label{l:ab}
There is a canonical bijection
\begin{equation*}
\Hom_{X\times S}(\nu_{S,*}\cL, \nu_{S,*}\cL')\isom H_{R'}(\cL,\cL')
\end{equation*}
such that, if $\ph: \nu_{S,*}\cL\to \nu_{S,*}\cL'$ corresponds to $(\a,\b)$ under this bijection, we have
\begin{equation}\label{det ph a-b}
\det(\ph)=\Nm(\a)-\Nm(\b)
\end{equation}
as sections of $\det(\nu_{S,*}\cL)^{-1}\otimes\det(\nu_{S,*}\cL')\cong \Nm_{X'/X}(\cL)^{-1}\ot\Nm_{X'/X}(\cL')$.
\end{lemma}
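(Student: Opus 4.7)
The plan is to deduce the lemma from a local calculation on $X\times S$, combined with the adjunction between $\nu_{S,*}$ and its right adjoint. Since $\nu$ is finite flat of degree two and simply ramified along $R$, the relative dualizing sheaf is $\omega_\nu\cong \cO_{X'}(R')$, so $\nu_S^!\cF\cong \nu_S^*\cF\otimes \cO_{X'\times S}(R')$ for any $\cO_{X\times S}$-module $\cF$, and the $(\nu_{S,*},\nu_S^!)$-adjunction gives $\Hom_{X\times S}(\nu_{S,*}\cL, \nu_{S,*}\cL')\cong \Hom_{X'\times S}(\cL, \nu_S^*\nu_{S,*}\cL'(R'))$. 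Both sides of the claimed bijection are sheaves of $\cO_S$-modules on $X\times S$, so the statement is \'etale-local on $X$; I would verify it in two local models: near a point of $X-R$ (where $\nu$ is \'etale), and in the local model $\cO_{X'}=\cO_X[s]/(s^2-t)$ at a ramified point $x\in R$ with local parameter $t$.

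In the unramified local model, $\nu_{S,*}\cL$ splits (after \'etale base change) as $\cL\oplus\sigma^*\cL$, the twist by $R'$ is trivial, and an $\cO_{X\times S}$-linear map $\nu_{S,*}\cL\to\nu_{S,*}\cL'$ decomposes into four block entries that match $(\alpha,\beta,\sigma^*\alpha,\sigma^*\beta)$ componentwise; the restriction condition at $R'$ is vacuous. At a ramified point I would analyze $\nu_S^*\nu_{S,*}\cL'$ via the two short exact sequences
\[
0\to \sigma^*\cL'(-R')\to \nu_S^*\nu_{S,*}\cL'\to \cL'\to 0,\qquad 0\to \cL'(-R')\to \nu_S^*\nu_{S,*}\cL'\to \sigma^*\cL'\to 0,
\]
whose quotient maps come respectively from the counit of $\nu_S^*\dashv\nu_{S,*}$ and from its Galois-twisted analogue (both computed explicitly on the basis $e_0=1\otimes 1,\ e_1=s\otimes 1$). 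Twisting by $\cO(R')$ and composing with the adjoint $\hat\ph:\cL\to \nu_S^*\nu_{S,*}\cL'(R')$ of $\ph$ produces $\alpha:\cL\to\cL'(R')$ and, after taking the $\sigma$-transpose of the second quotient, $\beta:\sigma^*\cL\to \cL'(R')$. The crucial fact — which I would verify by a direct calculation in the above basis — is that the two quotient maps $\nu_S^*\nu_{S,*}\cL'(R')\to \cL'(R')$ and $\nu_S^*\nu_{S,*}\cL'(R')\to \sigma^*\cL'(R')$ restrict to the \emph{same} map on $R'\times S$ under the canonical identification $\sigma^*\cL'|_{R'}=\cL'|_{R'}$. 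This forces $\alpha|_{R'\times S}=\beta|_{R'\times S}$, and, conversely, this matching is exactly the gluing datum needed to reconstruct $\hat\ph$ from a pair $(\alpha,\beta)\in H_{R'}(\cL,\cL')$.

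For the determinant formula, I would write $\ph$ in the local basis $\{1,s\}$ of $\nu_{S,*}\cL$ as a matrix $\smat{a}{c}{b}{d}$, so that $\det(\ph)=ad-bc$. A direct computation using the recipe from the previous step yields explicit local expressions for $\alpha$ and $\beta$ as sections of $\cO_{X'}(R')$ of the shape $(a\pm d)/2 + bs/2 \pm c/(2s)$, after which expanding $\Nm(\alpha)-\Nm(\beta)=\alpha\cdot\sigma^*\alpha-\beta\cdot\sigma^*\beta$ gives $ad-bc$ on the nose. The apparent pole of each $\Nm(\alpha),\Nm(\beta)$ along $R$ cancels in the difference precisely because $(\alpha|_{R'})^2=(\beta|_{R'})^2$, so the right-hand side lies naturally in $\Nm(\cL^{-1}\cL')$ rather than $\Nm(\cL^{-1}\cL')(R)$, matching the identification $\det(\nu_{S,*}\cL)^{-1}\otimes\det(\nu_{S,*}\cL')\cong \Nm(\cL)^{-1}\otimes\Nm(\cL')$ in the statement. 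I expect the main technical difficulty to be the careful bookkeeping of signs and of the identification $\omega_\nu\cong \cO_{X'}(R')$ needed so that the two quotient maps really agree on $R'$ under the natural identifications, since a miscalibration here would swap the condition $\alpha|_{R'}=\beta|_{R'}$ with $\alpha|_{R'}=-\beta|_{R'}$.
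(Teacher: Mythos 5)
Your proposal is correct, but it runs ``dual'' to the paper's argument rather than reproducing it. The paper uses the adjunction $\nu_S^*\dashv\nu_{S,*}$ to turn $\ph$ into a map \emph{out of} $\nu_S^*\nu_{S,*}\cL$, identifies $\nu_S^*\nu_{S,*}\cL$ with the fiber product $\cL\oplus_{R'}\s^*\cL$ via $a\otimes b\mapsto (ab,\,a\s(b))$, obtains $(\a,-\b)$ by extending to a map $\cL\oplus\s^*\cL\to\cL'(R')$, and interprets the condition $\a|_{R'\times S}=\b|_{R'\times S}$ as exactly the condition that the map land in $\cL'$ on the fiber product. You instead use duality for the finite flat map $\nu$, $\nu_S^!(-)\cong\nu_S^*(-)\otimes\cO_{X'\times S}(R')$, to turn $\ph$ into a map \emph{into} $\nu_S^*\nu_{S,*}\cL'(R')$, and decompose the \emph{target} as the fiber product of $\cL'(R')$ and $\s^*\cL'(R')$ over $R'$ --- the same local fact about $\cO_{X'}\otimes_{\cO_X}\cO_{X'}$, applied to $\cL'$ instead of $\cL$. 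Both routes hinge on the identical computation at the ramification points; yours makes the twist by $R'$ (why $\a,\b$ take values in $\cL'(R')$) automatic from $\omega_\nu\cong\cO_{X'}(R')$, at the price of invoking duality and the normalization bookkeeping you flag, whereas the paper produces the twist by hand (via $\cL(-R')\oplus\s^*\cL(-R')\subset\cL\oplus_{R'}\s^*\cL$) and never needs $\omega_\nu$. Your determinant check is also sound: with $\ph(1)=a+bs$, $\ph(s)=c+ds$ one finds $\a,\b$ whose norms satisfy $\Nm(\a)-\Nm(\b)=ad-bc$ with the $c^2/4t$ poles cancelling, while the paper gets \eqref{det ph a-b} more slickly by pulling $\ph$ back to $X'$ and reading $\det$ off the matrix $\smat{\a}{-\b}{-\s^{*}\b}{\s^{*}\a}$ on the fiber-product decompositions. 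Finally, the sign calibration you worry about is not a genuine obstruction: even if your identifications yielded $\a|_{R'}=-\b|_{R'}$, replacing $\b$ by $-\b$ gives a bijection onto $H_{R'}(\cL,\cL')$ and leaves $\Nm(\b)$, hence the determinant formula, unchanged, so only a convention --- not the statement --- would need adjusting.
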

\begin{proof}
By adjunction a map $\ph: \nu_{S,*}\cL\to\nu_{S,*}\cL'$ is equivalent to a map $\nu^{*}_{S}\nu_{S,*}\cL\to\cL'$. Note that $\nu^{*}_{S}\nu_{S,*}\cL\cong\cO_{X'}\otimes_{\cO_{X}}\cL\cong(\cO_{X'}\otimes_{\cO_{X}}\cO_{X'})\otimes_{\cO_{X'}}\cL$, whose $\cO_{X'}$-module structure is given by the first factor of $\cO_{X'}$. 

We have an injective map $\jmath: \cO_{X'}\otimes_{\cO_{X}}\cO_{X'}\to \cO_{X'}\oplus \cO_{X'}$ sending $a\otimes b\mapsto ab+a\s(b)$.  By a local calculation at points in $R'$ we see that the image of $\jmath$ is $\cO_{X'}\oplus_{R'}\cO_{X'}:=\ker(\cO_{X'}\oplus\cO_{X'}\xr{(i^{*},-i^{*})}\cO_{R'})$ (the difference of two restriction maps $i^{*}:\cO_{X'}\to \cO_{R'}$). Therefore $\nu^{*}_{S}\nu_{S,*}\cL\cong(\cO_{X'}\oplus_{R'}\cO_{X'})\otimes_{\cO_{X'}}\cL=\cL\oplus_{R'}\s^{*}\cL=\ker(\cL\oplus\s^{*}\cL\xr{(i^{*},-i^{*})}\cL_{R'\times S})$. Hence the map $\ph$ is equivalent to a map
\begin{equation*}
\psi: \cL\oplus_{R'}\s^{*}\cL\to \cL'.
\end{equation*}
Since $\cL(-R')\oplus\s^{*}\cL(-R')\subset\cL\oplus_{R'}\s^{*}\cL$, the map $\psi$ restricts to a map
\begin{equation*}
\cL(-R')\oplus\s^{*}\cL(-R')\to \cL'
\end{equation*}
or
\begin{equation*}
\cL\oplus\s^{*}\cL\to \cL'(R').
\end{equation*}
We then define the two components of above map to be $\a$ and $-\b$. The condition \eqref{a+b} is equivalent to that the map $\a\oplus(-\b):\cL\oplus\s^{*}\cL\to \cL'(R')$, when restricted to $\cL\oplus_{R'}\s^{*}\cL$, lands in $\cL'$.

If $\ph$ corresponds to $(\a,\b)$, we may pullback $\ph$ to $X'$ so it becomes the map $\cL\oplus_{R'}\s^{*}\cL\to \cL'\oplus_{R'}\s^{*}\cL'$ given by the matrix
\begin{equation*}
\mat{\a}{-\b}{-\s^{*}\b}{\s^{*}\a}.
\end{equation*}
Therefore $\det(\ph)=\Nm(\a)-\Nm(\b)$.
\end{proof}

\sss{Alternative description of $\cM_{d}(\mu_{\Sig},\mu'_{\Sig})$} We define $\wt\cM_{d}(\mu_{\Sig},\mu'_{\Sig})$ by the Cartesian diagram
\begin{equation*}
\xymatrix{\wt\cM_{d}(\mu_{\Sig},\mu'_{\Sig})\ar[r]\ar[d] & \wt H_{d}(\Sig)\ar[d]^{\olr{\wt p_{H}}}\\
\Pic_{X'}\times\Pic_{X'}\times\frSi'\ar[r]^{\wt\th^{\mu,\mu'}_{\Bun}} & \Bun_{2}(\Sig)\times\Bun_{2}(\Sig)}
\end{equation*}
Here $\wt\th^{\mu,\mu'}_{\Bun}$ is given by $\wt\th^{\mu_{\Sig}}_{\Bun}\times\wt\th^{\mu'_{\Sig}}_{\Bun}$, using a common copy of $\frSi'$, and $\olr{\wt p_{H}}$ sends $(\cE^{\da},\cE'^{\da}, \ph)\in \wt H_{d}(\Sig)(S)$ to $(\cE^{\da},\cE'^{\da})\in (\Bun_{2}(\Sig)(S))^{2}$. Comparing with the Definition \ref{defn:M},  we have
\begin{equation*}
\cM_{d}(\mu_{\Sig},\mu'_{\Sig})\cong \wt\cM_{d}(\mu_{\Sig},\mu'_{\Sig})/\Pic_{X}.
\end{equation*}

For $x'\in\Si'$ and $x'^{(1)}: S\to \Spec k(x')\xr{x'}X'$, recall we inductively defined $x'^{(j)}$ using $x'^{(j)}=x'^{(j-1)}\circ \Fr_{S}$ for $j\ge 2$. We have a morphism
\begin{equation*}
\frD_{+}:\frSi'\to X'_{N_{+}}
\end{equation*}
which sends $\{x'^{(1)}\}_{x'\in\Si'}\in \frSi'(S)$ to the following divisor of $X'\times S$ of degree $N_{+}$
\begin{eqnarray*}
\frD_{+}(\{x'^{(1)}\})&:=&\sum_{x\in\Sf\cap \Sig_{+}}\mu'_{x}\times S+\sum_{x\in\Si\cap\Sig_{+}}
\begin{cases}(\Gamma_{x'^{(1)}}+\Gamma_{x'^{(2)}}+\cdots+\Gamma_{x'^{(d_{x})}}), & \textup{ if }\mu'_{x}=1\\
(\Gamma_{x'^{(d_{x}+1)}}+\Gamma_{x'^{(d_{x}+2)}}+\cdots+\Gamma_{x'^{(2d_{x})}}), & \textup{ if }\mu'_{x}=-1.
\end{cases}
\end{eqnarray*}

Similarly, we define
\begin{equation*}
\frD_{-}:\frSi'\to X'_{N_{-}}
\end{equation*}
by sending $\{x'^{(1)}\}_{x'\in\Si'}\in \frSi'(S)$ to the following divisor of $X'\times S$ of degree $N_{-}$ 
\begin{eqnarray*}
\frD_{-}(\{x'^{(1)}\})&:=&\sum_{x\in\Sf\cap \Sig_{-}}\mu'_{x}\times S+\sum_{x\in\Si\cap\Sig_{-}}
\begin{cases}(\Gamma_{x'^{(1)}}+\Gamma_{x'^{(2)}}+\cdots+\Gamma_{x'^{(d_{x})}}), & \textup{ if }\mu'_{x}=1\\
(\Gamma_{x'^{(d_{x}+1)}}+\Gamma_{x'^{(d_{x}+2)}}+\cdots+\Gamma_{x'^{(2d_{x})}}), & \textup{ if }\mu'_{x}=-1.
\end{cases}
\end{eqnarray*}

Now we can state the alternative description of $\cM_{d}(\mu_{\Sig},\mu'_{\Sig})$.

\begin{lemma}\label{l:beta van}
For a scheme $S$, $\wt\cM_{d}(\mu_{\Sig},\mu'_{\Sig})(S)$ is canonically equivalent to the groupoid of tuples $(\cL,\cL',\a,\b, \{x'^{(1)}\}_{x'\in \Si'})$ where
\begin{itemize}
\item $\cL$ and $\cL'$ are line bundles on $X'\times S$ such that $\deg(\cL'|_{X'\times s})-\deg(\cL|_{X'\times s})=d$ for all geometric points $s\in S$;
\item $\a:\cL\to \cL'(R')$, $\b:\s^{*}\cL\to\cL'(R')$.
\end{itemize}
These data are required to satisfy the following conditions.
\begin{enumerate}
\item $\a|_{\frD_{-}(\{x'^{(1)}\})}=0$, and $\a|_{\nu^{-1}(\Sig_{+})\times S}$ is an isomorphism.
\item $\b|_{\frD_{+}(\{x'^{(1)}\})}=0$, and $\b|_{\nu^{-1}(\Sig_{-})\times S}$ is an isomorphism.
\item $\a|_{R'\times S}=\b|_{R'\times S}$. Moreover, $\Nm(\a)-\Nm(\b)$, viewed as a section of $\Nm_{X'/X}(\cL)^{-1}\ot\Nm_{X'/X}(\cL')$, is nowhere vanishing along $R\times S$.
\item This is non-void only when $\Sig=\vn$ and $R=\vn$: for every geometric point $s$ of $S$, $\Nm(\a)-\Nm(\b)$ is not identically zero on $X\times s$.
\end{enumerate}
\end{lemma}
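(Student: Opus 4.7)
The plan is to unpack the fiber product definition of $\wt\cM_d(\mu_\Sig,\mu'_\Sig)$ and translate every piece of data and every defining condition into the language of line bundles on $X'$ using Lemma~\ref{l:ab}. By definition, an $S$-point of $\wt\cM_d(\mu_\Sig,\mu'_\Sig)$ is a tuple $(\cL,\cL',\{x'^{(1)}\};\cE^\da,\cE'^\da,\ph)$ together with isomorphisms $\cE^\da\cong\wt\th^{\mu_\Sig}_{\Bun}(\cL,\{x'^{(1)}\})$ and $\cE'^\da\cong\wt\th^{\mu'_\Sig}_{\Bun}(\cL',\{x'^{(1)}\})$. In particular the underlying rank two bundles are $\cE=\nu_{S,*}\cL$ and $\cE'=\nu_{S,*}\cL'$, and $\ph$ is a morphism between them. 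Step one is thus to apply Lemma~\ref{l:ab} to $\ph$, which furnishes the pair $(\a,\b)$ with $\a\colon\cL\to\cL'(R')$ and $\b\colon\s^*\cL\to\cL'(R')$ satisfying the agreement on $R'\times S$ recorded in the first part of condition~(3).

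Step two is to translate the degree condition and the injectivity condition on fibers from the definition of $\wt H_d(\Sig)$. The degree statement is tautological. For injectivity, by \eqref{det ph a-b} one has $\det(\ph)=\Nm(\a)-\Nm(\b)$, so the condition that $\ph$ is fiber-wise injective becomes exactly condition~(4) in the unramified case $\Sig=\vn=R$; when either $\Sig$ or $R$ is non-empty, the non-vanishing at these places (to be verified from the other conditions) already forces the section $\Nm(\a)-\Nm(\b)$ of $\Nm(\cL)^{-1}\ot\Nm(\cL')$ to be nonzero on every geometric fiber. The condition that $\ph|_{R\times S}$ is an isomorphism corresponds, via \eqref{det ph a-b} and the already-imposed equality $\a|_{R'\times S}=\b|_{R'\times S}$, to the non-vanishing of $\Nm(\a)-\Nm(\b)$ along $R\times S$, which is the rest of condition~(3).

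Step three, the core of the argument and the main obstacle, is to translate the Iwahori compatibility $\ph(\cE(-\ha x))\subset\cE'(-\ha x)$ and the requirement that $\ph|_{\Sig\times S}$ be an isomorphism into the vanishing/non-vanishing statements (1) and (2). Writing the ``$\mu_x$-half'' of $\nu^{-1}(x)$ as $D_x$ and the ``$\mu'_x$-half'' as $D'_x$, the construction of $\wt\th_{\Bun}^{\mu_\Sig}$ gives $\cE(-\ha x)=\nu_{S,*}(\cL(-D_x))$ and $\cE'(-\ha x)=\nu_{S,*}(\cL'(-D'_x))$. Under the correspondence of Lemma~\ref{l:ab}, the condition $\ph(\cE(-\ha x))\subset\cE'(-\ha x)$ is equivalent to the pair $(\a|_{\cL(-D_x)},\b|_{\s^*\cL(-\s^*D_x)})$ factoring through $\cL'(-D'_x)(R')$. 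A case analysis now shows: for $x\in\Sig_+$, where $D_x=D'_x$ and $\s^*D_x$ is disjoint from $D'_x$, the condition on $\a$ is automatic while the condition on $\b$ is precisely $\b|_{D'_x}=0$; for $x\in\Sig_-$, where $D_x$ and $D'_x$ are the two disjoint halves, the condition on $\b$ is automatic while the condition on $\a$ is precisely $\a|_{D'_x}=0$. Summing over $x\in\Sig$ and matching with the recipe in the definitions of $\frD_\pm$, these are exactly the vanishings $\a|_{\frD_-(\{x'^{(1)}\})}=0$ and $\b|_{\frD_+(\{x'^{(1)}\})}=0$. The complementary isomorphism statements $\a|_{\nu^{-1}(\Sig_+)\times S}$ and $\b|_{\nu^{-1}(\Sig_-)\times S}$ are isomorphisms follow from $\ph|_{\Sig\times S}$ being an isomorphism combined with the vanishings just obtained, via the matrix description $\ph=\bigl[\begin{smallmatrix}\a & -\b\\ -\s^*\b & \s^*\a\end{smallmatrix}\bigr]$ after pullback along $\nu_S$.

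The above checks are all reversible, so assembling the three steps gives an equivalence of groupoids functorial in $S$, which is the desired statement. The delicate point is the combinatorics of ``halves'' in step three: one must consistently book-keep which of $\mu_x,\mu'_x$ governs $D_x$ versus $D'_x$ in each of the four sign cases, and verify that the vanishing loci so produced match the divisors $\frD_\pm(\{x'^{(1)}\})$ used in the statement.
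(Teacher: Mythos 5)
Your proposal is correct and follows essentially the same route as the paper's proof: unpack the fiber-product definition, apply Lemma~\ref{l:ab} to convert $\ph$ into the pair $(\a,\b)$, use \eqref{det ph a-b} to translate the conditions at $R$ and the fiber-wise injectivity into (3) and (4), and do a local case analysis of the Iwahori compatibility at $\Sig$ (together with $\ph|_{\Sig\times S}$ being an isomorphism) to obtain the vanishing and non-vanishing parts of (1) and (2). Your uniform bookkeeping of the "halves" $D_x,D'_x$ matches the paper's explicit local calculation at $\Sf$ and $\Si$, so no substantive difference remains.
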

\begin{proof} By definition, $S$-points of $\wt\cM_{d}(\mu_{\Sig},\mu'_{\Sig})$ consist of tuples $(\cL,\cL',\ph, \{x'^{(1)}\}_{x'\in \Si'})$ where
\begin{itemize}
\item $\cL$ and $\cL'$ are line bundles on $X'\times S$ such that $\deg(\cL'|_{X\times s})-\deg(\cL|_{X\times s})=d$ for all geometric points $s\in S$.
\item $\ph: \nu_{S,*}\cL\to \nu_{S,*}\cL'$ is an injective map when restricted to $X\times s$ for every geometric point $s\in S$. Moreover, $\ph$ is an isomorphism along $(\Sig\sqcup R)\times S$. 
\item For each $x'\in \Si'$,  $x'^{(1)}$ is a map $S\to \Spec k(x')\xr{x'}X'$.
\end{itemize}
These data are required to satisfy the following condition. We have two $S$-points of $\Bun_{2}(\Sig)$:
\begin{eqnarray*}
\cE^{\da}=\wt\th^{\mu_{\Sig}}_{\Bun}(\cL, \{x'^{(1)}\}_{x'\in \Si'}),\\
\cE'^{\da}=\wt\th^{\mu'_{\Sig}}_{\Bun}(\cL', \{x'^{(1)}\}_{x'\in \Si'}).
\end{eqnarray*}
Then $\ph:\cE=\nu_{S,*}\cL\to\cE'=\nu_{S,*}\cL'$ should respect the level structures of  $\cE^{\da}$ and $\cE'^{\da}$.  

By Lemma \ref{l:ab}, the map $\ph: \nu_{S,*}\cL\to \nu_{S,*}\cL'$ becomes a pair $\a:\cL\to \cL'(R')$ and $\b: \s^{*}\cL\to \cL'(R')$ satisfying $\a|_{R'\times S}=\b|_{R'\times S}$. Since $\ph|_{R\times S}$ is an isomorphism, the formula \eqref{det ph a-b} implies that $\Nm(\a)-\Nm(\b)$ is nowhere vanishing along $R\times S$, hence  condition (3) in the statement of the lemma is verified. Condition (4) also follows from \eqref{det ph a-b} and the condition on $\ph$ above.

Since $\ph$ respects the Iwahori level structures of $\nu_{S,*}\cL$ and $\nu_{S,*}\cL'$, it sends $\nu_{S,*}(\cL(-\mu_{x}))$ to $\nu_{S,*}(\cL'(-\mu'_{x}))$ for all $x\in \Sf$ (recall $\mu_{x}$ is the value of $\mf$ at $x$). A local calculation shows that $\a$ should vanish along $\mu'_{x}\times S$ for those $x\in\Sf$ such that $\mu_{x}\ne\mu'_{x}$, and $\b$ should vanish along $\mu'_{x}\times S$ for those $x\in\Sf$ such that $\mu_{x}=\mu'_{x}$. A similar local calculation at $x\in\Si$ implies the vanishing of $\a$ and $\b$ along the corresponding parts of $\frD_{-}$ and $\frD_{+}$. For example, if $\mu_{x}=\mu'_{x}=1$, then $\ph$ should send $\nu_{S,*}(\cL(-\Gamma_{x'^{(1)}}-\cdots-\Gamma_{x'^{(d_{x})}}))$ to $\nu_{S,*}(\cL'(-\Gamma_{x'^{(1)}}-\cdots-\Gamma_{x'^{(d_{x})}}))$, which implies that $\b$ vanishes along $\Gamma_{x'^{(1)}}+\Gamma_{x'^{(2)}}+\cdots+\Gamma_{x'^{(d_{x})}}$.  These verify the vanishing parts of the conditions (1)(2).

Finally, since $\ph|_{\Sig\times S}$ is an isomorphism, $\det(\ph)=\Nm(\a)-\Nm(\b)$ is nowhere vanishing on $\Sig\times S$. Since $\Nm(\a)|_{\Sig_{-}\times S}=0$ and $\Nm(\b)|_{\Sig_{+}\times S}=0$ by the vanishing parts of (1)(2), $\Nm(\a)|_{\Sig_{+}\times S}$ and $\Nm(\b)|_{\Sig_{-}\times S}$ are nowhere vanishing. These verify the nonvanishing parts of the conditions (1)(2). We have verified all the desired conditions for $(\cL,\cL',\a,\b,\{x'^{(1)}\}_{x'\in\Si'})$.
\end{proof}

Using the description of $\cM_{d}(\mu_{\Sig},\mu'_{\Sig})$ given in Lemma \ref{l:beta van}, we can describe its Atkin--Lehner automorphism $\AL_{\cM,\infty}$ as follows.

\begin{lemma}\label{l:pre ALM} Let $(\cL,\cL', \a,\b, \{x'^{(1)}\}_{x'\in\Si'})$ be an $S$-point of $\wt\cM_{d}(\mu_{\Sig},\mu'_{\Sig})$ as described in Lemma \ref{l:beta van}, and we use the same notation to denote its image in $\cM_{d}(\mu_{\Sig},\mu'_{\Sig})$. Then
\begin{eqnarray*}
&&\AL_{\cM,\infty}(\cL,\cL', \a,\b, \{x'^{(1)}\}_{x'\in\Si'})\\
&=&\left(\cL(-\sum_{x\in \Si}\mu_{x}\Gamma_{x'^{(1)}}),\cL'(-\sum_{x\in \Si\cap\Sig_{+}}\mu_{x}\Gamma_{x'^{(1)}}-\sum_{x\in \Si\cap\Sig_{-}}\mu_{x}\Gamma_{x'^{(d_{x}+1)}}), \a',\b', \{x'^{(2)}\}_{x'\in\Si'}\right)
\end{eqnarray*}
Here, $\a'$ is induced from $\a$ using the fact that $\a|_{\frD_{-}}=0$; $\b'$ is induced from $\b$ using the fact that $\b|_{\frD_{+}}=0$. 
\end{lemma}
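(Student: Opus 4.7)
The plan is to compute $\AL_{\cM,\infty}$ directly on the description of $\wt\cM_{d}(\mu_{\Sig},\mu'_{\Sig})$ given in Lemma \ref{l:beta van}, then descend to $\cM_{d}(\mu_{\Sig},\mu'_{\Sig})$. By construction, $\AL_{\cM,\infty}$ is induced from the Cartesian diagram \eqref{defn Md mu} by the Atkin--Lehner operator $\AL_{H,\infty}$ on $H_{d}(\Sig)$ together with the Atkin--Lehner operators $(\AL^{\sh}_{T,\mi},\AL^{\sh}_{T,\mi'})$ on the two $\Bun_{T}$-factors, sharing a common Frobenius twist on $\frSi'$; the compatibility of these operators through $\th^{\mu,\mu'}_{\Bun}$ is exactly Lemma \ref{l:comp Di Di'}. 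Thus $\AL_{\cM,\infty}$ sends $\{x'^{(1)}\}\mapsto\{x'^{(2)}\}$.

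For the $\cL$ factor, the naive lift from $\AL^{\sh}_{T,\mi}$ gives $\cL\mapsto\cL(-\mi\cdot\Si')=\cL(-\sum_{x\in\Si}\mu_{x}\Gamma_{x'^{(1)}})$, which matches the formula in the lemma verbatim. For the $\cL'$ factor, the naive lift from $\AL^{\sh}_{T,\mi'}$ gives $\cL'\mapsto\cL'(-\mi'\cdot\Si')=\cL'(-\sum_{x\in\Si}\mu'_{x}\Gamma_{x'^{(1)}})$, which splits according to $\Si=(\Si\cap\Sig_{+})\sqcup(\Si\cap\Sig_{-})$. On $\Sig_{+}$ we have $\mu'_{x}=\mu_{x}$, and the naive formula agrees with the stated one. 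On $\Sig_{-}$ we have $\mu'_{x}=-\mu_{x}$, and the discrepancy between $-\mu'_{x}\Gamma_{x'^{(1)}}$ (naive) and $-\mu_{x}\Gamma_{x'^{(d_{x}+1)}}$ (stated) is $\mu_{x}(\Gamma_{x'^{(1)}}+\Gamma_{x'^{(d_{x}+1)}})$. Since $x\in\Si$ is inert in $X'/X$, a direct calculation using $\nu\circ\bx'^{(j)}=\bx^{(j)}\circ\nu_{\infty}$ (together with the periodicity $\bx^{(d_{x}+1)}=\bx^{(1)}$) yields the divisor identity
\begin{equation*}
\Gamma_{x'^{(1)}}+\Gamma_{x'^{(d_{x}+1)}}=(\nu\times\nu_{\infty})^{*}\Gamma_{\bx^{(1)}}
\end{equation*}
on $X'\times\frSi'$. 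Since $\Gamma_{\bx^{(1)}}$ lives on $X\times\frSi$, the line bundle $\cO(\Gamma_{x'^{(1)}}+\Gamma_{x'^{(d_{x}+1)}})$ lies in the image of $\Pic_{X}$ acting on $\Pic_{X'}$ over $\frSi'$. Consequently the two expressions for the new $\cL'$ define the same point of $\Bun_{T}\times\frSi'$, so the stated formula is a valid representative in $\wt\cM_{d}(\mu_{\Sig},\mu'_{\Sig})$ for $\AL_{\cM,\infty}(\cL,\cL',\a,\b,\{x'^{(1)}\})$ viewed in $\cM_{d}(\mu_{\Sig},\mu'_{\Sig})$.

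With this particular representative chosen, the sections $\a'$ and $\b'$ arise directly from $\a$ and $\b$ using the vanishing conditions of Lemma \ref{l:beta van}. For each $x\in\Si\cap\Sig_{-}$ the divisor $\Gamma_{x'^{(d_{x}+1)}}$ appears in $\frD_{-}(\{x'^{(1)}\})$ precisely when $\mu'_{x}=-1$, and likewise $\Gamma_{x'^{(1)}}$ appears when $\mu'_{x}=+1$; in either case one checks that the relevant factor of the twist appears in $\frD_{\pm}$, so $\a|_{\frD_{-}}=0$ and $\b|_{\frD_{+}}=0$ guarantee that $\a$ (resp.\ $\b$) induces a well-defined map $\a': \cL(-\sum\mu_{x}\Gamma_{x'^{(1)}})\to \cL'(-\sum_{\Sig_{+}}\mu_{x}\Gamma_{x'^{(1)}}-\sum_{\Sig_{-}}\mu_{x}\Gamma_{x'^{(d_{x}+1)}})(R')$ (resp.\ $\b'$ between the $\s$-pulled-back sheaves). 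A case-by-case check on the signs $\mu_{x}$ (subsheaf versus supersheaf modification) confirms that these are exactly the maps that $\ph$ becomes under Lemma \ref{l:ab} after applying $\AL_{H,\infty}$, which completes the verification that the data listed in the lemma represents $\AL_{\cM,\infty}(\cL,\cL',\a,\b,\{x'^{(1)}\})$.

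The main technical obstacle is the bookkeeping of signs and of which half of $\{x'\}\times\frSi'$ is selected by $\mu_{x}$ versus $\mu'_{x}$ on $\Si\cap\Sig_{-}$; this is handled by the divisor identity in the second paragraph, which is the only place where the ramification hypothesis for $\nu$ and the inert hypothesis \eqref{Si inert} enter in an essential way.
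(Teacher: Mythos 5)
The paper itself omits this proof ("by tracking the definitions"), so I am judging your argument on its merits. Your overall plan is the right one, and two of your key observations are correct and essential: the divisor identity $\Gamma_{x'^{(1)}}+\Gamma_{x'^{(d_{x}+1)}}=\nu_{S}^{*}\Gamma_{x^{(1)}}$ at inert places, and the fact that the vanishing of $\a$ on $\frD_{-}$ and of $\b$ on $\frD_{+}$ is exactly what allows $\a',\b'$ to be defined between the stated twists.

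However, the justification you give in your second paragraph for the specific representative is not valid, and this is a genuine gap. You argue: the stated $\cL'$-twist and the naive $\AL_{T,\mi'}$-twist $\cL'(-\sum_{x\in\Si}\mu'_{x}\Gamma_{x'^{(1)}})$ have the same image in $\Bun_{T}\times\frSi'$, hence the stated tuple is a valid representative. This inference cannot be right, because a point of $\cM_{d}(\mu_{\Sig},\mu'_{\Sig})$ is a point of the fiber product (it carries $\ph$, i.e.\ the pair $(\a,\b)$), and the $\Pic_{X}$-quotient identifies only \emph{simultaneous} twists of $(\cL,\cL')$. Indeed your reasoning would equally well "prove" the naive formula with $\cL'(-\sum\mu'_{x}\Gamma_{x'^{(1)}})$ in place of the stated $\cL'$-twist, and that formula is wrong: at $x\in\Si\cap\Sig_{-}$ with $\mu_{x}=-1$ the would-be induced map $\a'\colon\cL(+\Gamma_{x'^{(1)}})\to\cL'(-\Gamma_{x'^{(1)}})(R')$ would require $\a$ to vanish to order two along $\Gamma_{x'^{(1)}}$, which the conditions of Lemma \ref{l:beta van} do not give. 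So the entire content of the lemma sits in your final unspecified "case-by-case check", which is not carried out. The missing computation is the following. For $\cE=\nu_{S,*}\cL$ with level structure $\wt\th^{\mu_{\Sig}}_{\Bun}(\cL,\{x'^{(1)}\})$ one computes directly from the definition of fractional twists that
\begin{equation*}
\cE(-\tfrac{1}{2}x^{(1)})=\begin{cases}\nu_{S,*}(\cL(-\Gamma_{x'^{(1)}})), & \mu_{x}=+1,\\ \nu_{S,*}(\cL(-\Gamma_{x'^{(d_{x}+1)}})), & \mu_{x}=-1,\end{cases}
\end{equation*}
and similarly for $\cE'=\nu_{S,*}\cL'$ with $\mu'_{x}$ in place of $\mu_{x}$. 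Thus $\AL_{H,\infty}$ applied to $(\nu_{S,*}\cL,\nu_{S,*}\cL',\ph)$ is canonically $(\nu_{S,*}\cL_{\mathrm{can}},\nu_{S,*}\cL'_{\mathrm{can}},\ph)$, where $\cL_{\mathrm{can}},\cL'_{\mathrm{can}}$ are these sign-dependent twists. The pair in the lemma is not $(\cL_{\mathrm{can}},\cL'_{\mathrm{can}})$, but differs from it by tensoring \emph{both} factors with the one line bundle $\nu_{S}^{*}\cO_{X}\bigl(\sum_{x\in\Si,\,\mu_{x}=-1}\Gamma_{x^{(1)}}\bigr)$ pulled back from $X$ (this is where your divisor identity enters, and one must check it is the same bundle on both factors — it is, because on $\Sig_{+}$ the signs agree and on $\Sig_{-}$ they are opposite, so in every case the discrepancy occurs exactly at the places with $\mu_{x}=-1$). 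The diagonal $\Pic_{X}$-action absorbs this common twist and carries the restricted $\ph$ to the map corresponding to $(\a',\b')$ under Lemma \ref{l:ab}; this, together with Lemma \ref{l:comp Di Di'} for the gluing data, is the actual reason the asymmetric formula (with $\Gamma_{x'^{(1)}}$ in the $\cL$-twist but $\Gamma_{x'^{(d_{x}+1)}}$ in the $\cL'$-twist over $\Si\cap\Sig_{-}$) is correct. With this computation inserted, your argument closes.
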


The proof is by tracking the definitions and we omit it. 

The next result clarifies the relation between $\cM_{d}$ and $\cM_{d}(\mu_{\Sig},\mu'_{\Sig})$.
\begin{prop}\label{p:compare M} There is a canonical isomorphism over $\frSi'$ 
\begin{equation}\label{MMe}
\Xi_{\cM}: \cM_{d}\times\frSi'\isom \cM_{d}(\mu_{\Sig},\mu'_{\Sig})
\end{equation}
such that:
\begin{enumerate}
\item The automorphism $\id\times \Fr_{\frSi'}$ on the left corresponds to the automorphism $\AL_{\cM, \infty}$ on the right.
\item The following diagram is commutative
\begin{equation*}
\xymatrix{\cM_{d}\times\frSi'\ar[rr]^{\Fr\times\id}\ar[d]_{\wr}^{\Xi_{\cM}} && \cM_{d}\times\frSi'\ar[d]_{\wr}^{\Xi_{\cM}}\\
\cM_{d}(\mu_{\Sig},\mu'_{\Sig})\ar[rr]^{\AL_{\cM,\infty}^{-1}\circ\Fr} && \cM_{d}(\mu_{\Sig},\mu'_{\Sig})
}
\end{equation*}
\end{enumerate}
\end{prop}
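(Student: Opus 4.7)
The plan is to leverage the alternative description of $\cM_{d}(\mu_{\Sig},\mu'_{\Sig})$ given in Lemma~\ref{l:beta van}, which presents an $S$-point as a tuple $(\cL,\cL',\a,\b,\xi)$ modulo the diagonal $\nu^{*}\Pic_{X}$-action, with $\a:\cL\to\cL'(R')$ and $\b:\s^{*}\cL\to\cL'(R')$ satisfying $\a|_{R'\times S}=\b|_{R'\times S}$ along with the stated vanishing conditions along $\frD_{\pm}(\xi)$ and non-vanishing along $\nu^{-1}(\Sig_{\mp})$. The inverse $\Xi_{\cM}^{-1}$ is the direct construction: from such a tuple set
\[
\cI:=\cL^{-1}\otimes\cL'(R'-\frD_{-}(\xi)),\qquad \cJ:=(\s^{*}\cL)^{-1}\otimes\cL'(R'-\frD_{+}(\xi)),
\]
so that $\a$ and $\b$ descend to sections of $\cI$ and $\cJ$ respectively. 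A direct calculation using $\Nm(\s^{*}\cL)=\Nm(\cL)$, $\nu_{*}R'=R$, and $\nu_{*}\frD_{\pm}(\xi)=\Sig_{\pm}\times S$ gives $\Nm(\cI)\otimes\cO_{X}(\Sig_{-})\cong\Nm(\cL)^{-1}\Nm(\cL')(R)\cong\Nm(\cJ)\otimes\cO_{X}(\Sig_{+})$, supplying the norm component $\j_{\Nm}$ of $\j$. The equality $\a|_{R'\times S}=\b|_{R'\times S}$, combined with the canonical identification $\cL|_{R'\times S}\cong(\s^{*}\cL)|_{R'\times S}$ (as $\s$ fixes $R'$), furnishes the pointwise isomorphisms $\j_{x}:\cI|_{x'\times S}\isom\cJ|_{x'\times S}$ for $x\in R$, with \eqref{ep x comm} commuting upon squaring. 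The output is manifestly invariant under the $\nu^{*}\Pic_{X}$-action on $(\cL,\cL')$, giving a well-defined morphism $\Xi_{\cM}^{-1}$.

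For $\Xi_{\cM}$ itself, a datum $(\cI,\cJ,\a,\b,\j,\xi)$ must be lifted to $(\cL,\cL')$ modulo $\nu^{*}\Pic_{X}$. Any such lift satisfies $\cL\otimes(\s^{*}\cL)^{-1}\cong\cJ\otimes\cI^{-1}\otimes\cO_{X'}(\frD_{+}(\xi)-\frD_{-}(\xi))=:\cM$, and the $\Nm$-part of $\j$ trivializes $\Nm(\cM)$, so $\cM\in\Prym_{X'/X}(S)$. The Lang-type morphism $[\cL]\mapsto\cL\otimes(\s^{*}\cL)^{-1}$ identifies $\Pic_{X'}/\nu^{*}\Pic_{X}$ with $\Prym_{X'/X}$ as fppf sheaves; the $\sqR$-structure carried by $\j$ (encoded in the target $\Pic^{\sqR;\sqR}_{X}$ of the embedding $\io_{d}$, see Appendix~\ref{A:Pic}) pins down a canonical lift at $R$, i.e., one for which $\cL$ can be chosen so that $\a$ and $\b$ agree on $R'\times S$ under the induced identifications. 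Setting $\cL':=\cL\otimes\cI\otimes\cO_{X'}(\frD_{-}(\xi)-R')$ completes the reconstruction, and $\Xi_{\cM}$ is easily verified to be inverse to $\Xi_{\cM}^{-1}$ constructed above.

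For assertion~(1), we compare Lemma~\ref{l:pre ALM} with the effect of $\id\times\Fr_{\frSi'}$ on $\cM_{d}\times\frSi'$: cycling $\xi$ by Frobenius shifts each $\frD_{\pm}(\xi)$ in precisely the way that matches the twists in Lemma~\ref{l:pre ALM}'s formula for $\AL_{\cM,\infty}$ under the construction of the first paragraph. Assertion~(2) follows formally: $\Xi_{\cM}$ is a $k$-morphism, hence commutes with the absolute Frobenius $\Fr_{\cM_{d}\times\frSi'}=\Fr\times\Fr_{\frSi'}$, and (1) trades the $\Fr_{\frSi'}$-factor for $\AL_{\cM,\infty}$ on the target, giving the commutativity with $\AL_{\cM,\infty}^{-1}\circ\Fr$. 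The principal technical obstacle is the canonical Prym/Kummer lift in the second paragraph: without the $\sqR$-structure on $\j$ one would obtain only an étale-local lift of $\cM$, and the machinery of Appendix~\ref{A:Pic} is precisely what makes the choice globally canonical, so that $\Xi_{\cM}$ exists as a genuine morphism rather than only as a map of $\vn$-stacks after \'etale localization.
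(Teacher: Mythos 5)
Your proposal reproduces the paper's strategy closely: both directions of $\Xi_{\cM}$ are built from the description of $\cM_{d}(\mu_{\Sig},\mu'_{\Sig})$ in Lemma~\ref{l:beta van}, the lift is produced via the exact sequence \eqref{Pic exact} from Appendix~\ref{A:Pic}, assertion~(1) comes from Lemma~\ref{l:pre ALM}, and assertion~(2) follows from~(1) together with the fact that $\Xi_{\cM}$ commutes with the total Frobenius. So the approach is the right one.

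However, the account of the key lifting step for $\Xi_{\cM}$ is not quite correct, and since you single this out as ``the principal technical obstacle,'' the imprecision is worth fixing. You claim that the $\Nm$-part of $\j$ alone trivializes $\Nm(\cM)$, ``so $\cM\in\Prym_{X'/X}(S)$,'' and then invoke the $\sqR$-structure separately to ``pin down a canonical lift at $R$'' and to make the lift globally defined. This splits the argument in the wrong place. Since $\Prym_{X'/X}=\ker\bigl(\Nm^{\sqR}_{X'/X}\colon\Pic^{0}_{X'}\to\Pic^{\sqR,0}_{X}\bigr)$, putting $\cM$ into $\Prym_{X'/X}(S)$ already requires the \emph{full} $\j$, including the isomorphisms $\j_{x}$ at $x\in R$ (which trivialize the gerbe directions of $\Pic^{\sqR}_{X}$); without them you have only a trivialization of $\Nm(\cM)$, which does not place $\cM$ in the image of $1-\s$. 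Conversely, once the $\sqR$-structure is used, the exact sequence \eqref{Pic exact} produces $\cL$ only after an \'etale localization of $S$ and only up to $\nu^{*}\Pic_{X}(S)$ — there is no globally canonical lift, with or without the $\sqR$-data. The map $\Xi_{\cM}$ is nonetheless a genuine morphism because the target $\cM_{d}(\mu_{\Sig},\mu'_{\Sig})=\wt\cM_{d}(\mu_{\Sig},\mu'_{\Sig})/\Pic_{X}$ is already a $\Pic_{X}$-quotient, so an \'etale-local choice of $\cL$ up to $\Pic_{X}$ is precisely what an $S$-point of it is. Your final sentence mischaracterizes this: the $\sqR$-machinery does not make the lift globally canonical; it makes the exact sequence applicable at all in the ramified situation, and the quotient by $\Pic_{X}$ does the rest.
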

\begin{proof} 
We first define a map
\begin{equation*}
\i_{d}: \cM_{d}(\mu_{\Sig},\mu'_{\Sig})\to \cM_{d}\times\frSi'\subset (\hX'_{d+\r-N_{-}}\times_{\Pic^{\sqR;\sqR, d+\r}_{X}}\hX'_{d+\r-N_{+}})\times \frSi'.
\end{equation*}
Using the description of points of $\wt\cM_{d}(\mu_{\Sig},\mu'_{\Sig})$ in Lemma \ref{l:beta van}, we have a morphism
\begin{equation*}
\i_{\a}: \cM_{d}(\mu_{\Sig},\mu'_{\Sig})\to \hX'_{d+\r-N_{-}}
\end{equation*}
sending  $(\cL,\cL',\a,\b,  \{x'^{(1)}\}_{x'\in \Si'})$ to the line bundle $\cL^{-1}\otimes\cL'(R'-\frD_{-}(\{x'^{(1)}\}))$ and its section given by $\a$. Similarly we have a morphism
\begin{equation*}
\i_{\b}: \cM_{d}(\mu_{\Sig},\mu'_{\Sig})\to \hX'_{d+\r-N_{+}}
\end{equation*}
sending $(\cL,\cL',\a,\b, \{x'^{(1)}\}_{x'\in \Si'})$ to the line bundle $\s^{*}\cL^{-1}\otimes\cL'(R'-\frD_{+}(\{x'^{(1)}\}))$ and its section given by $\b$. We have a canonical isomorphism $\nu_{\a}\circ\i_{\a}\cong \nu_{\b}\circ \i_{\b}$ using $\a|_{R'}=\b|_{R'}$. The map $\i_{d}$ is given by $(\i_{\a},\i_{\b})$ and the natural projection to $\frSi'$. It is easy to see that the image of $\i_{d}$ lies in the open substack $\cM_{d}\times\frSi'$.

Next we construct the desired map $\Xi_{\cM}$ as in \eqref{MMe}. Start with a point $(\cI,\cJ, \a,\b,\j)\in \cM_{d}(S)$, and $\{x'^{(1)}\}_{x'\in\frSi'}\in \frSi'(S)$. Let $D_{\pm}=\frD_{\pm}(\{x'^{(1)}\})$ (a divisor of degree $N_{\pm}$ on $X'\times S$ with image $\Sig_{\pm}\times S$ in $X\times S$), and $\cI'=\cI(D_{-})$ and $\cJ':=\cJ(D_{+})$. The isomorphism $\j$ then gives an $\Nm^{\sqR}_{X'/X}(\cI')\cong \Nm^{\sqR}_{X'/X}(\cJ')\in \Pic^{\sqR,d+\r}_{X}(S)$, or a trivialization of $\Nm^{\sqR}_{X'/X}(\cI'^{\ot-1}\otimes\cJ')$ as an $S$-point of $\Pic^{\sqR,d+\r}_{X}$. The exact sequence \eqref{Pic exact} then implies, upon localizing $S$ in the \'etale topology, there exists a line bundle $\cL\in\Pic_{X'}(S)$ together with an isomorphism $\tau: \cL^{-1}\otimes\s^{*}\cL\cong \cI'^{}\otimes\cJ'^{-1}$, and such a pair $(\cL,\tau)$ is unique up to tensoring with $\Pic_{X}(S)$ (upon further localizing $S$). Let $\cL'=\cL\otimes\cI'(-R')$, then $\a$ can be viewed as a section of $\cL^{-1}\otimes \cL'(R')$, or a map $\cL\to\cL'(R')$ which vanishes along $D_{-}$. Since $\cJ'\cong \cI'\otimes\cL\otimes\s^{*}\cL^{-1}\cong \s^{*}\cL^{-1}\otimes\cL'(R')$, $\b$ can be viewed as a section of $\s^{*}\cL^{-1}\otimes\cL'(R')$, or a map $\s^{*}\cL\to\cL'(R')$ which vanishes along $D_{+}$. Moreover, the equality $\a|_{R'\times S}=\b|_{R'\times S}$ is built into the definition of $\cM_{d}$. This way we get an $S$-point $(\cL,\cL',\a,\b, \{x'^{(1)}\})$ of $\cM_{d}(\mu_{\Sig},\mu'_{\Sig})$ using the description of $\wt\cM_{d}(\mu_{\Sig},\mu'_{\Sig})$ given in Lemma \ref{l:beta van}.

It is easy to see that $\Xi_{\cM}$ is inverse to $\i_{d}$. Therefore $\Xi_{\cM}$ is an isomorphism. This finishes the construction of the isomorphism $\Xi_{\cM}$.

Now property (1) follows  from Lemma \ref{l:pre ALM} by a direct calculation. 

To check property (2),  observe that the total Frobenius morphisms $\Fr\times\Fr$ on $\cM_{d}\times\frSi'$ and $\Fr$ on $\cM_{d}(\mu_{\Sig},\mu'_{\Sig})$ correspond to each other under $\Xi_{\cM}$. On the other hand, by (1),  $\id\times\Fr$ on  $\cM_{d}\times\frSi'$ corresponds to $\AL_{\cM,\infty}$ on $\cM_{d}(\mu_{\Sig},\mu'_{\Sig})$. Therefore, $\Fr\times\id=(\id\times\Fr^{-1})\circ(\Fr\times\Fr)$ on $\cM_{d}\times\frSi'$ corresponds to $\AL_{\cM,\infty}^{-1}\circ\Fr$ on $\cM_{d}(\mu_{\Sig},\mu'_{\Sig})$.
\end{proof}

\sss{Comparison of Hecke correspondences for $\cM_{d}(\mu_{\Sig},\mu'_{\Sig})$ and for $\cM_{d}$}

We have already defined two self-correspondences $\cH_{+}$ and $\cH_{-}$ of $\cM_{d}$ in \S\ref{sss:Hk Md}. For $\un\l=(\l_{1},\dotsc,\l_{r})\in\{\pm1\}^{r}$, let
\begin{equation*}
\cH_{\l_{i}}=\begin{cases}\cH_{+}, & \l_{i}=1;\\ \cH_{-}, & \l_{i}=-1.\end{cases}
\end{equation*} 
Let $\oll{\g}_{i}, \orr{\g}_{i}: \cH_{\l_{i}}\to \cM_{d}$ be the two projections. Then define
$\cH_{\un\l}$ to be the composition of $\cH_{\l_{i}}$ as follows
\begin{equation*}
\cH_{\un\l}:=\cH_{\l_{1}}\times_{\orr{\g}_{1}, \cM_{d},\oll{\g}_{2}} \cH_{\l_{2}}\times_{\orr{\g}_{2}, \cM_{d}, \oll{\g}_{3}}\cdots\times_{\orr{\g}_{r-1}, \cM_{d},\oll{\g}_{r}}\cH_{\l_{r}}.
\end{equation*}
We apply this construction to $\un\l=\un\mu\un\mu'=(\mu_{1}\mu'_{1},\dotsc, \mu_{r}\mu_{r}')$. Then we have $(r+1)$ projections
\begin{equation*}
\g_{i}: \cH_{\un\mu\un\mu'}\to \cM_{d}, \quad i=0,1,\dotsc, r.
\end{equation*}

\begin{prop}\label{p:HkM} There is a canonical isomorphism over $\frSi'$
\begin{equation}\label{HkM isom}
\Xi_{\cH}: \cH_{\un\mu\un\mu'}\times\frSi'\isom \Hk^{\mu,\mu'}_{\cM,d}
\end{equation}
such that the following diagram is commutative for $i=0,1,\dotsc, r$
\begin{equation*}
\xymatrix{\cH_{\un\mu\un\mu'}\times\frSi'\ar[r]^-{\Xi_{\cH}}_-{\sim}\ar[d]_{\g_{i}\times\id_{\frSi'}} & \Hk^{\mu,\mu'}_{\cM,d}\ar[d]^{p_{\cM,i}}\\
\cM_{d}\times\frSi'\ar[r]^-{\Xi_{\cM}}_-{\sim} & \cM_{d}(\mu_{\Sig},\mu'_{\Sig})}
\end{equation*}
\end{prop}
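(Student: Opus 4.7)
\medskip

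\noindent\textbf{Proof plan for Proposition \ref{p:HkM}.} The plan is to construct $\Xi_{\cH}$ by a direct comparison of moduli descriptions on both sides, reducing first to the case $r=1$ and then unwinding via the alternative presentation of $\cM_{d}(\mu_{\Sig},\mu'_{\Sig})$ given in Lemma \ref{l:beta van}. Both sides have an evident iterative structure: an $S$-point of $\Hk^{\mu,\mu'}_{\cM,d}$ is a pair of chains $(\cL_{0}\dashrightarrow\cdots\dashrightarrow\cL_{r})$ and $(\cL'_{0}\dashrightarrow\cdots\dashrightarrow\cL'_{r})$ on $X'\times S$ (of types $\un\mu$ and $\un\mu'$) together with a point $\{x'^{(1)}\}_{x'\in\Si'}$ of $\frSi'$ and compatible maps $\ph_{i}\colon \nu_{S,*}\cL_{i}\to\nu_{S,*}\cL'_{i}$ filling in the diagram \eqref{diag EE'} after applying $\th^{\mu}_{\Bun}$ and $\th^{\mu'}_{\Bun}$; while $\cH_{\un\mu\un\mu'}$ is by construction an $r$-fold fiber product of the single-step $\cH_{\mu_{i}\mu'_{i}}$ over $\cM_{d}$ via the projections $\oll{\g}_{i}$ and $\orr{\g}_{i}$. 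Accordingly it suffices to construct the one-step isomorphism
\[
\Xi_{\cH,i}\colon \cH_{\mu_{i}\mu'_{i}}\times\frSi'\isom \Hk^{(\mu_{i},\mu'_{i})}_{\cM,d},
\]
commuting with the projections to $\cM_{d}(\mu_{\Sig},\mu'_{\Sig})$ on the source and target under $\Xi_{\cM}$, and then define $\Xi_{\cH}$ as the iterated fiber product.

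For the one-step case, an $S$-point of $\Hk^{(\mu_{1},\mu'_{1})}_{\cM,d}$ is a pair of tuples $(\cL_{j},\cL'_{j},\a_{j},\b_{j},\{x'^{(1)}\})_{j=0,1}$ in $\cM_{d}(\mu_{\Sig},\mu'_{\Sig})$ (using Lemma \ref{l:beta van} and Lemma \ref{l:ab}) together with the extra datum of a point $x'\in X'(S)$ and elementary modifications $\cL_{0}\dashrightarrow\cL_{1}$ (of type $\mu_{1}$) and $\cL'_{0}\dashrightarrow\cL'_{1}$ (of type $\mu'_{1}$) occurring at $x'$ and $\sigma x'$ simultaneously so that the induced maps on pushforwards are Iwahori-compatible. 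Tracing through the recipe for $\Xi_{\cM}^{-1}$ in the proof of Proposition \ref{p:compare M}, which sends $(\cL,\cL',\a,\b,\{x'^{(1)}\})$ to the tuple $(\cI,\cJ,\a,\b,\j)$ with $\cI=\cL^{-1}\otimes\cL'(R'-\frD_{-})$ and $\cJ=\s^{*}\cL^{-1}\otimes\cL'(R'-\frD_{+})$, one computes directly how $\cI$ and $\cJ$ transform under a single Hecke step. When $\mu_{1}=\mu'_{1}$ the line bundle $\cI$ is unchanged while $\cJ$ changes by $\Gamma_{x'}-\Gamma_{\s x'}$ (up to an automorphism swapping $x'$ with $\s x'$), matching the recipe defining $\cH_{+}$ in \S\ref{sss:Hk Md}; when $\mu_{1}=-\mu'_{1}$ the roles of $\cI$ and $\cJ$ are reversed and one recovers $\cH_{-}$. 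The sections $\a$ and $\b$ and the trivialization $\j$ are transported unchanged, because they vanish precisely on the loci used in the modification.

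Having pinned down $\Xi_{\cH,i}$ for each $i$, the iterated identification $\Xi_{\cH}$ follows by composing the one-step isomorphisms along the chain, using Proposition \ref{p:compare M} to glue at each intermediate column. The commutativity of the square for $p_{\cM,i}$ and $\g_{i}$ is built into the construction: by definition of the fiber product, $p_{\cM,i}$ corresponds to extracting the $i$-th column of the chain, which under $\Xi_{\cH}$ matches the $i$-th vertex of the composed correspondence $\cH_{\un\mu\un\mu'}$, namely $\g_{i}$. The main obstacle I expect is bookkeeping the sign conventions governing the direction of modification and the placement of $\s$, in particular reconciling whether $\mu_{i}\mu'_{i}=+1$ corresponds to $\cH_{+}$ or to $\cH_{-}$ after the possible swap $x'\leftrightarrow\s x'$ inherent in $\Xi_{\cM}$. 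Once the sign conventions are unambiguously fixed by a single local calculation at a point of $X'-R'-\Si'$, everything else is formal from the iterative structure of both sides.
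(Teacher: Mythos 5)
Your overall strategy coincides with the paper's: reduce to $r=1$ by the iterative structure, rewrite a point of $\Hk^{\mu,\mu'}_{\cM,d}$ through the $(\a,\b)$-description of Lemma \ref{l:beta van} and the isomorphism $\Xi_{\cM}$, and identify the one-step Hecke stack with the incidence correspondences $\cH_{\pm}$. But there is a genuine gap at the decisive point. You write that the sections $\a$, $\b$, $\j$ ``are transported unchanged, because they vanish precisely on the loci used in the modification.'' That vanishing is not part of the data of $\Hk^{\mu,\mu'}_{\cM,d}$: the moduli description only records that $\ph_{1}\colon\nu_{S,*}\cL_{1}\to\nu_{S,*}\cL'_{1}$ restricts to $\ph_{0}$, and one must \emph{deduce} from this compatibility that (say, when $\mu_{1}=\mu'_{1}=1$) the induced section $\b^{\na}_{0}$ vanishes along $\Gamma_{\s(x'_{1})}$ — this is exactly what makes the one-step Hecke stack equal to the incidence correspondence $\cH_{+}$ rather than something larger, and it is also what is needed to show your map is an isomorphism and not merely a map from $\cH_{\un\mu\un\mu'}\times\frSi'$ into $\Hk^{\mu,\mu'}_{\cM,d}$.

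Moreover, the difficulty in this deduction is concentrated precisely where your plan declines to look. Away from $R'$ the graphs $\Gamma_{x'_{1}}$ and $\Gamma_{\s(x'_{1})}$ are disjoint and the divisor identity $\div(\b^{\na}_{0})+\Gamma_{x'_{1}}=\div(\b^{\na}_{1})+\Gamma_{\s(x'_{1})}$ immediately gives the vanishing; but when $x'_{1}$ specializes into the ramification locus the two graphs coincide and this bookkeeping proves nothing. The paper handles this by a formal-local computation at each $x'\in R'$: choosing a uniformizer with $\s(\vp)=-\vp$, the compatibility becomes $(\vp-a)\b_{0}(\vp)=(-\vp-a)\b_{1}(\vp)$, which combined with the constraint $\b_{i}|_{R'\times S}=\a_{i}|_{R'\times S}$ (so $\b_{0}(0)=\b_{1}(0)$) and division by $2$ (here $\charac k\ne 2$ is used) yields that $\vp+a$ divides $\b_{0}$. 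Your closing assertion that all residual issues are ``sign conventions... fixed by a single local calculation at a point of $X'-R'-\Si'$'' explicitly avoids $R'$, i.e.\ the only place where the argument is not formal; one also still needs the (easier) converse that the vanishing of $\b^{\na}_{0}$ along $\Gamma_{\s(x'_{1})}$ suffices to produce $\b_{1}$, which your plan likewise leaves implicit. Supplying the ramified local computation is therefore the missing essential step.
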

\begin{proof} 
By the iterative nature of $\Hk^{\mu,\mu'}_{\cM,d}$, it suffices to prove the case $r=1$ (at this point we may drop the assumption $r\equiv \#\Si\mod2$ because everything makes sense without this condition, before passing to Shtukas). We distinguish two cases.

Case 1. $\mu_{1}=\mu'_{1}$. We treat only the case $\mu_{1}=\mu'_{1}=1$ and the other case is similar.  In this case,  $\Hk^{\mu,\mu'}_{\cM,d}(S)$ classifies the following data up to the action of $\Pic_{X}$:
\begin{itemize}
\item A map $x'_{1}:S\to X'$ with graph $\Gamma_{x'_{1}}$.
\item For each $x'\in\Si'$, an $S$-point $x'^{(1)}: S\to \Spec k(x')\xr{x'}X'$.
\item Line bundles $\cL_{0}$ and $\cL'_{0}$ on $X'\times S$ such that $\deg(\cL'_{0}|_{X\times s})-\deg(\cL_{0}|_{X\times s})=d$ for all geometric points $s\in S$. Let 
\begin{equation*}
\cL_{1}=\cL_{0}(\Gamma_{x'_{1}}), \quad \cL'_{1}=\cL'_{0}(\Gamma_{x'_{1}}).
\end{equation*}
\item A map $\ph_{1}: \nu_{S,*}\cL_{1}\to \nu_{S,*}\cL'_{1}$ that restricts to a map $\ph_{0}:\nu_{S,*}\cL_{0}\to \nu_{S,*}\cL'_{0}$. Moreover, for $i=0$ and $1$, we require the tuple $(\cL_{i},\cL'_{i},\ph_{i},\{x'^{(1)}\})$ to give a point of $\cM_{d}(\mu_{\Sig},\mu'_{\Sig})$. In other words, $\ph_{i}$ preserves the level structures of $\nu_{S,*}\cL_{i}$ and $\nu_{S,*}\cL'_{i}$ given in \S\ref{sss:th Bun}; $\ph_{i}$ is injective when restricted to $X\times s$ for every geometric point $s\in S$; and $\ph_{i}|_{(\Sig\cup R)\times S}$ is an isomorphism. 
\end{itemize} 
Using Lemma \ref{l:beta van}, we may replace the data $\ph_{i}$ above by a pair of maps $(\a_{i},\b_{i})$ where $\a_{i}: \cL_{i}\to \cL'_{i}(R')$, $\b_{i}: \s^{*}\cL_{i}\to \cL'_{i}(R')$ satisfying certain conditions. Let $D_{\pm}=\frD_{\pm}(\{x'^{(1)}\})$, then $\a_{i}|_{D_{-}}=0$ and $\b_{i}|_{D+}=0$. Denote by
\begin{eqnarray*}
\a^{\na}_{i}: \cL_{i}\to \cL'_{i}(R'-D_{-})\\
\b^{\na}_{i}: \s^{*}\cL_{i}\to \cL'_{i}(R'-D_{+})
\end{eqnarray*}
the maps induced by $\a_{i}$ and $\b_{i}$.

The relation between $\ph_{0}$ and $\ph_{1}$ implies that the following two diagrams are commutative
\begin{equation}\label{Hmu1}
\xymatrix{ \cL_{0}\ar[d]^{\a^{\na}_{0}}\ar@{^{(}->}[r] & \cL_{1}\ar@{=}[r]\ar[d]^{\a^{\na}_{1}} & \cL_{0}(\Gamma_{x'_{1}}) \\
\cL'_{0}(R'-D_{-})\ar@{^{(}->}[r] &\cL'_{1}(R'-D_{-})\ar@{=}[r] & \cL'_{0}(R'-D_{-}+\Gamma_{x'_{1}})
}
\end{equation}
\begin{equation}\label{Hmu2}
\xymatrix{\s^{*}\cL_{0}\ar[d]^{\b^{\na}_{0}}\ar@{^{(}->}[r] & \s^{*}\cL_{1}\ar@{=}[r]\ar[d]^{\b^{\na}_{1}} & (\s^{*}\cL_{0})(\Gamma_{\s(x'_{1})}) \\
\cL'_{0}(R'-D_{+})\ar@{^{(}->}[r] &\cL'_{1}(R'-D_{+})\ar@{=}[r] & \cL'_{0}(R'-D_{+}+\Gamma_{x'_{1}})
}
\end{equation}
The diagram \eqref{Hmu1} simply says that $\a^{\na}_{1}$ is determined by $\a^{\na}_{0}$ (no condition on $\a^{\na}_{0}$, hence no condition on $\a_{0}$). The diagram \eqref{Hmu2} imposes a nontrivial condition on $\b^{\na}_{0}$, as claimed below.

\begin{claim} $\b^{\na}_{0}$ vanishes along $\Gamma_{\s(x'_{1})}$. 
\end{claim}
\begin{proof}[Proof of Claim]
The argument for this claim is more complicated than the argument in \cite[Lemma 6.3]{YZ} because of the ramification of $\nu$. To prove the Claim, it suffices to argue for the similar statement for the restriction of $\b^{\na}_{0}$ to $(X'-R')\times S$ and to the formal completions $\Spec \cO_{x'}\htimes S$ for each $x'\in R'$.

Computing the divisors of the maps in the first square of \eqref{Hmu2}, we get
\begin{equation}\label{div b eqn}
\div(\b_{0}^{\na})+\Gamma_{x'_{1}}=\div(\b^{\na}_{1})+\Gamma_{\s(x'_{1})}.
\end{equation}
Restricting both sides to $(X'-R')\times S$, and observing that  $\Gamma_{x'_{1}}$ and $\Gamma_{\s(x'_{1})}$ are disjoint when restricted to $(X'-R')\times S$, we see that $\Gamma_{\s(x'_{1})}\cap ((X'-R')\times S)$ is contained in $\div(\b^{\na}_{0})\cap ((X'-R')\times S)$. 

Now we consider the restriction of the diagram \eqref{Hmu2} to the formal completion $\Spec \cO_{x'}\htimes S$ at any $x'\in R'$. Since $D_{\pm}$ is disjoint from $R'$, after restricting to $\Spec \cO_{x'}\htimes S$ we may identify $\b_{i}$ and $\b^{\na}_{i}$. We may assume $S$ is affine, and by extending $k$ we may assume $k(x')=k$. Choose a uniformizer $\vp$ at $x'$ such that $\s(\vp)=-\vp$, then $\Spec \cO_{x'}\htimes S=\Spec \cO_{S}[[\vp]]$. After trivializing $\cL_{i}, \cL'_{i}(R')$ near $x'\times S$, we may assume $f_{1}=f'_{1}=\vp-a$ for some $a\in \cO_{S}$, $\a_{0}=\a_{1}\in \cO_{S}[[\vp]]$, The diagram \eqref{Hmu2} implies the equation in $\cO_{S}[[\vp]]$
\begin{equation*}
f'_{1}\cdot \b_{0}=\s^{*}f_{1}\cdot \b_{1},
\end{equation*}
where $\b_{0},\b_{1}\in \cO_{S}[[\vp]]$.  This equation is the same as
\begin{equation}\label{beta eqn}
(\vp-a)\b_{0}(\vp)=(-\vp-a)\b_{1}(\vp).
\end{equation}
Recall we also have the condition $\b_{i}|_{R'\times S}=\a_{i}|_{R'\times S}$ for $i=0,1$, which implies that $\b_{0}(0)=\a_{0}(0)=\a_{1}(0)=\b_{1}(0)$, or $\b_{1}(\vp)=\vp\g(\vp)+\b_{0}(\vp)$ for some $\g\in\cO_{S}[[\vp]]$. Combining this with \eqref{beta eqn} we get
\begin{equation*}
2\vp\b_{0}(\vp)=(-\vp-a)\vp\g(\vp).
\end{equation*}
Since $\vp$ is not a zero divisor, we conclude that $\b_{0}(\vp)=-(\vp+a)\g(\vp)/2$, hence $\vp+a$ divides $\b_{0}(\vp)$. This implies that $\Gamma_{\s(x'_{1})}\cap (\Spec \cO_{x'}\htimes S)$ is contained in $\div(\b_{0})\cap (\Spec \cO_{x'}\htimes S)=\div(\b^{\na}_{0})\cap (\Spec \cO_{x'}\htimes S)$. The proof of the claim is complete.
\end{proof}

On the other hand, the condition that $\b^{\na}_{0}$ vanishes along $\Gamma_{\s(x'_{1})}$ is sufficient for the existence of $\b_{1}$ making \eqref{Hmu2} commutative. Therefore, in this case, $\Hk^{\mu,\mu'}_{\cM}$ is the incidence correspondence for the divisor of $\b^{\na}$ in $\cM_{d}(\mu_{\Sig},\mu'_{\Sig})$ under the description of Lemma \ref{l:beta van}.  This gives the isomorphism $\Xi_{\cH}: \cH_{\un\mu\un\mu'}\times\frSi'\cong \Hk^{\mu,\mu'}_{\cM}$.

Case 2. $\mu_{1}\ne \mu'_{1}$. Let us consider only the case $\mu_{1}=1, \mu'_{1}=-1$. We only indicate the modifications from the previous case. In this case, $\cL_{1}=\cL_{0}(\Gamma_{x'_{1}})$ but $\cL'_{1}=\cL'_{0}(-\Gamma_{x'_{1}})$. We may change $\cL'_{1}$ to $\cL'_{0}(\Gamma_{\s(x'_{1})})$ (which has the same image as $\cL'_{0}(-\Gamma_{x'_{1}})$ in $\Bun_{T}$) so that $\deg\cL'_{1}-\deg\cL_{1}=d$ still holds. The diagrams \eqref{Hmu1} and \eqref{Hmu2} now become
\begin{equation}\label{Hmu3}
\xymatrix{ \cL_{0}\ar[d]^{\a^{\na}_{0}}\ar@{^{(}->}[r] & \cL_{1}\ar@{=}[r]\ar[d]^{\a^{\na}_{1}} & \cL_{0}(\Gamma_{x'_{1}}) \\
\cL'_{0}(R'-D_{-})\ar@{^{(}->}[r] &\cL'_{1}(R'-D_{-})\ar@{=}[r] & \cL'_{0}(R'-D_{-}+\Gamma_{\s(x'_{1})})
}
\end{equation}
\begin{equation}\label{Hmu4}
\xymatrix{\s^{*}\cL_{0}\ar[d]^{\b^{\na}_{0}}\ar@{^{(}->}[r] & \s^{*}\cL_{1}\ar@{=}[r]\ar[d]^{\b^{\na}_{1}} & (\s^{*}\cL_{0})(\Gamma_{\s(x'_{1})}) \\
\cL'_{0}(R'-D_{+})\ar@{^{(}->}[r] &\cL'_{1}(R'-D_{+})\ar@{=}[r] & \cL'_{0}(R'-D_{+}+\Gamma_{\s(x'_{1})})
}
\end{equation}
Now \eqref{Hmu4} imposes no condition on $\b_{0}$, but \eqref{Hmu3} gives
\begin{equation*}
\div(\a^{\na}_{0})+\Gamma_{\s(x'_{1})}=\div(\a^{\na}_{1})+\Gamma_{x'_{1}}.
\end{equation*}
An analog of the Claim in Case 1 says that $\a^{\na}_{0}$ must vanish along $\Gamma_{x'_{1}}$. Therefore, in this case, $\Hk^{\mu,\mu'}_{\cM}$ is the incidence correspondence for the divisor of $\a^{\na}$ in $\cM_{d}(\mu_{\Sig},\mu'_{\Sig})$ under the description of Lemma \ref{l:beta van}.  This gives the isomorphism $\Xi_{\cH}$.
\end{proof}

\subsection{Proof of Theorem \ref{th:Ir}}\label{ss:proof Ir}

\sss{Geometric facts}

We first collect some geometric facts about the stacks involved in the constructions in \S\ref{ss:pre M}.

\begin{prop}\label{p:geom facts}
\begin{enumerate}
\item\label{geom facts BunG} The stack $\Bun_{G}(\Sig)$ is smooth of pure dimension $3(g-1)+N$.
\item\label{geom facts HkG} The stack $\Hk^{r}_{G}(\Sig)$ is smooth of pure dimension $3(g-1)+N+2r$.
\item\label{geom facts BunT} The stack $\Bun_{T}$ is smooth, DM and proper over $k$ of pure dimension  $g'-g=g-1+\ha \r$.
\item\label{geom facts HkT} The stack $\Hk^{\un\mu}_{T}$ is smooth, DM and proper over $k$ of pure dimension  $g-1+\ha \r+r$.
\item\label{geom facts H} 
The morphisms $\oll{p_{H}},\orr{p_{H}}:H_{d}(\Sig)\to\Bun_{G}(\Sig)$ are representable and smooth of pure relative dimension $2d$. In particular, $H_{d}(\Sig)$ is a smooth algebraic stack over $k$ of pure dimension $2d+3(g-1)+N$.
\item\label{geom facts HkH} The stack $\Hk^{r}_{H,d}(\Sig)$ has dimension $2d+2r+3(g-1)+N$.
\item\label{geom facts M} For $d\ge 2g'-1+N$, $\cM_{d}(\mu_{\Sig},\mu'_{\Sig})$ is a smooth and separated DM stack pure of dimension $m=2d+\r-N-g+1$.
\item\label{geom facts ShtM} Let $D$ be an effective divisor on $U$. The stack $\Sht^{\mu,\mu'}_{\cM,D}$ is proper over $k$.
\end{enumerate}
\end{prop}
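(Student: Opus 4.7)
The plan is to verify each of the eight statements by bootstrapping from smoothness and dimension results already established in the paper, with the proof organized into four blocks mirroring the natural groupings. For (1), I would start from $\dim\Bun_{G}=3(g-1)$ with $\Bun_{G}$ smooth and add, for each $x\in\Sig$, a smooth $\PP^{1}$-bundle of relative dimension $d_{x}$ (choice of a line in the residue fiber), summing over $\Sig$ to give $3(g-1)+N$. Part (2) is then immediate from Proposition \ref{p:Hk smooth}(1) applied with $n=2$: the map $p_{0}\colon\Hk^{r}_{G}(\Sig)\to\Bun_{G}(\Sig)$ is smooth of relative dimension $2r$.

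For (3) I would write $\Bun_{T}=\Pic_{X'}/\Pic_{X}$ with $\Pic_{X}$ acting via $\nu^{*}$. Smoothness and dimension $g'-g=g-1+\r/2$ (Riemann--Hurwitz) are immediate; the stabilizers coincide with $\ker(\nu^{*})$, which is $2$-torsion because $\Nm\circ\nu^{*}=(\cdot)^{\ot 2}$ and hence finite, giving DM-ness. Properness passes from the coarse moduli, a disjoint union of torsors under the Prym variety. For (4), the identification $\Hk^{\un\mu}_{1,X'}\cong\Pic_{X'}\times X'^{r}$ (each modification step $\mu_{i}=\pm1$ leaves a unique successor line bundle) gives $\Hk^{\un\mu}_{T}=(\Pic_{X'}\times X'^{r})/\Pic_{X}$, inheriting smoothness, DM-ness and properness from (3) and acquiring $r$ extra smooth directions.

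For (5)--(6): the fiber of $\orr{p_{H}}\colon H_{d}(\Sig)\to\Bun_{G}(\Sig)$ over $\cE'^{\da}$ classifies subsheaves $\cE\hookrightarrow\cE'$ with torsion cokernel of degree $d$ supported away from $\Sig\cup R$; the requirement that $\ph|_{\Sig\times S}$ be an isomorphism forces the Iwahori level structure on $\cE^{\da}$ to coincide with the one induced from $\cE'^{\da}$. This exhibits the fiber as an open subscheme of Grothendieck's Quot scheme $\Quot^{d}(\cE')$, which for a rank-$2$ bundle on a smooth curve is smooth of dimension $2d$; the same argument handles $\oll{p_{H}}$. Part (6) then follows by iterating the Quot-scheme calculation fiberwise over $\Hk^{r}_{G}(\Sig)$, using the projection recording (say) the top row of \eqref{diag EE'}.

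For (7), I would invoke Proposition \ref{p:compare M} to obtain an isomorphism $\cM_{d}(\mu_{\Sig},\mu'_{\Sig})\cong\cM_{d}\times\frSi'$ over the \'etale $k$-scheme $\frSi'$; smoothness, DM-ness and dimension $m=2d+\r-N-g+1$ then transfer directly from Proposition \ref{p:M}\eqref{M smooth} (valid under the hypothesis $d\ge 2g'-1+N$), while separatedness of $\cM_{d}$ follows from properness of $f_{d}\colon\cM_{d}\to\cA_{d}$ (Proposition \ref{p:M}\eqref{f proper}) combined with separatedness of $\cA_{d}$, visible from the open embedding \eqref{emb Ad} into a fiber product of separated stacks. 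Part (8) is a Cartesian-square argument applied to \eqref{ShtMD}: the right vertical map $(\oll{p}',\orr{p}')$ is proper by Lemma \ref{l:fiber ShthD}(1), so the base change $\Sht^{\mu,\mu'}_{\cM,D}\to\Sht^{\un\mu}_{T}(\mi\cdot\Si')\times_{\frSi'}\Sht^{\un\mu'}_{T}(\mi'\cdot\Si')$ is proper, and the target is proper over $k$ by Corollary \ref{c:ShtT proper}. The only step where any real care is needed is the Quot-scheme verification in part (5), where the Iwahori data must be tracked carefully through the fibers, but the required isomorphism along $\Sig$ makes the tracking essentially automatic.
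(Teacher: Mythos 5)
Your argument matches the paper's for parts (1)--(5), (7), and (8) in all essentials, sometimes with a bit more unpacking. Parts (1), (3), (4) are labelled ``standard'' in the paper; your explicit arguments (the $\PP^1$-bundle count for level structures, $\ker(\nu^*)$ is $2$-torsion because $\Nm\circ\nu^*=[2]$) are correct and are the standard ones. For (5), the paper instead factors through the open embedding $H_d(\Sig)\hookrightarrow\Bun_G(\Sig)\times_{\Bun_G}H_d$ and cites [YZ, Lemma 6.8(1)] for smoothness of $H_d\to\Bun_G$ of relative dimension $2d$; your direct Quot-scheme description of the fibers is the content of that cited lemma, so the approaches coincide. (Worth noting: for $\oll{p_H}$ one needs the dual ``co-Quot'' picture, not literally $\Quot^d(\cE)$, but this is routine.) For (2), the paper cites Prop.~\ref{p:Hk smooth}(4) rather than (1), since one needs the $G$- rather than the $\GL_2$-version, but this is cosmetic. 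For (7), you actually fill a small gap: Prop.~\ref{p:M}(1) as stated does not assert separatedness of $\cM_d$, so your remark that separatedness follows from properness of $f_d$ together with separatedness of $\cA_d$ (via the open embedding \eqref{emb Ad}) is a useful supplement. Part (8) is word-for-word the paper's argument.

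The place where your proposal has a genuine gap is part (6). Projecting $\Hk^r_{H,d}(\Sig)$ to $\Hk^r_G(\Sig)$ via the top row and asserting the fibers have dimension $2d$ ``by iterating the Quot-scheme calculation'' is not justified as stated. The fiber over a fixed top row $(\cE_0^\da,\dots,\cE_r^\da;x_1,\dots,x_r)$ is isomorphic to $\Quot^d(\cE_0)$ only over the open locus where the $x_i$ avoid the support of $\det(\ph_0)$; when some $x_i$ lies in that support, the choice of bottom row $(\cE'_i,\ph_i)$ is no longer freely determined by $\ph_0$ and the fiber is not a product of Quot schemes, so one must bound its dimension by a separate and nontrivial stratification argument. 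The paper avoids this by fibering instead over $\Bun_G(\Sig)\times U_d\times X^r$ (recording $\cE_0^\da$, the divisor of $\det(\ph_0)$, and the modification points) and invoking the analysis of [YZ, Lemma~6.10], which shows all geometric fibers of \emph{that} map have dimension $d+r$; this gives the same total $2d+2r+3(g-1)+N$ but with the delicate case ($x_i$ meeting $D$) already handled in the cited lemma. Your dimension target is correct, but your route needs the same careful stratification estimate that the paper's cited lemma supplies; as written, the ``iteration'' glosses over precisely the nonopen part of the picture.
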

\begin{proof} 
\eqref{geom facts BunG}, \eqref{geom facts BunT} and \eqref{geom facts HkT} are standard facts. \eqref{geom facts HkG} follows from Prop. \ref{p:Hk smooth}(4).

\eqref{geom facts H} 
Recall the stack $H_{d}$ defined in \cite[\S6.3.2]{YZ}, with two maps $\oll{p}, \orr{p}$ to $\Bun_{G}$. We have an open embedding $H_{d}(\Sig)\incl \Bun_{G}(\Sig)\times_{\Bun_{G}, \oll{p}}H_{d}$ because once the $\Sig$-level structure of $\cE$ is chosen, it induces a unique  $\Sig$-level structure on $\cE'$ via $\ph$ (which is assumed to be an isomorphism near $\Sig$). Since $\oll{p}: H_{d}\to \Bun_{G}$ is smooth of relative dimension $2d$ by \cite[Lemma 6.8(1)]{YZ}, so is its base change $\oll{p_{H}}$. Similar argument works for $\orr{p_{H}}$. 

\eqref{geom facts HkH} As in \cite[\S6.3.4]{YZ}, we have a map $\Hk^{r}_{H,d}(\Sig)\to \Bun_{G}(\Sig)\times U_{d}\times X^{r}$ (the first factor records $\cE^{\da}_{0}$, second records the divisor of $\det(\ph_{0})$ and the third records $x_{i}$). The same argument as \cite[Lemma 6.10]{YZ} shows that all geometric fibers of this map have dimension $d+r$ (note that the horizontal maps are allowed to vanish at points in $\Sig$, but this does not complicate the argument because the vertical maps do not vanish at $\Sig$). Therefore $\dim \Hk^{r}_{H,d}(\Sig)=d+r+d+r+\dim \Bun_{G}(\Sig)=2d+2r+3(g-1)+N$.

\eqref{geom facts M} By Prop. \ref{p:compare M}, $\cM_{d}(\mu_{\Sig},\mu'_{\Sig})\cong \cM_{d}\times \frSi'$. Therefore, the required geometric properties of $\cM_{d}(\mu_{\Sig},\mu'_{\Sig})$ follow from those of $\cM_{d}$ proved in Prop. \ref{p:M}(1).

\eqref{geom facts ShtM} Consider the Cartesian diagram \eqref{ShtMD}. Since $\Sht'^{r}_{G}(\Sii)$ is separated over $\frSi'$ by Prop. \ref{p:ShtG} and $\oll{p}': \Sht'^{r}_{G}(\Sii;h_{D})\to \Sht'^{r}_{G}(\Sii)$ is proper by Lemma \ref{l:fiber ShthD}(1), the map $(\oll{p}', \orr{p}'): \Sht'^{r}_{G}(\Sii;h_{D})\to \Sht'^{r}_{G}(\Sii)\times_{\frSi'}\Sht'^{r}_{G}(\Sii)$ is proper. This implies $\Sht^{\mu,\mu'}_{\cM,D}\to \Sht^{\un\mu}_{T}(\mi\cdot\Si')\times_{\frSi'}\Sht^{\un\mu'}_{T}(\mi'\cdot\Si')$ is proper. Since $\Sht^{\un\mu}_{T}(\mi\cdot\Si')$ and $\Sht^{\un\mu'}_{T}(\mi'\cdot\Si')$ are proper over $k$ by Corollary \ref{c:ShtT proper}, so is $\Sht^{\mu,\mu'}_{\cM,D}$.
\end{proof}

\begin{prop} Suppose $D$ is an effective divisor on $U$ of degree $d\ge \max\{2g'-1+N, 2g\}$. Then the diagram \eqref{Tian} satisfies all the conditions for applying the Octahedron Lemma \cite[Theorem A.10]{YZ}.
\end{prop}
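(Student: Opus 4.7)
The plan is to systematically verify the hypotheses of the Octahedron Lemma \cite[Theorem A.10]{YZ} for the master diagram \eqref{Tian}, drawing on the geometric facts collected in Proposition \ref{p:geom facts} together with the Cartesian descriptions of $\cM_{d}(\mu_{\Sig},\mu'_{\Sig})$ and $\Hk^{\mu,\mu'}_{\cM,d}$ furnished by Definitions \ref{defn:M}, \ref{defn HkM} and Propositions \ref{p:compare M}, \ref{p:HkM}. Roughly, the Octahedron Lemma of \cite{YZ} requires: (a) every stack in the $3\times3$ grid is a smooth DM stack of the expected dimension; (b) each of the three rows and each of the three columns in \eqref{Tian} is a Cartesian diagram; (c) the morphisms appearing as the outer edges of the diagram are representable, with the vertical ones giving rise to a proper intersection theory; and (d) the resulting iterated fiber products realize $\Sht^{\mu,\mu'}_{\cM,d}$ with the ``correct'' dimension, matching the dimension obtained from its description in \eqref{defn ShtM} or equivalently \eqref{var ShtM}.

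First, I would dispose of the smoothness conditions. The smoothness and dimensions of $\Bun_{G}(\Sig)$, $\Hk^{r}_{G}(\Sig)$, $\Bun_{T}$, $\Hk^{\un\mu}_{T}$, $H_{d}(\Sig)$ and $\Hk^{r}_{H,d}(\Sig)$ are given respectively by parts (1)--(6) of Proposition \ref{p:geom facts}. The smoothness of $\cM_{d}(\mu_{\Sig},\mu'_{\Sig})$ of pure dimension $m=2d+\rho-N-g+1$ is part (7), and uses the hypothesis $d\ge 2g'-1+N$ (via the smoothness of $\cM_{d}$ in Proposition \ref{p:M}(1) together with the isomorphism $\cM_{d}\times\frSi'\cong \cM_{d}(\mu_{\Sig},\mu'_{\Sig})$ of Proposition \ref{p:compare M}). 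Finally, the stacks $\frSi$ and $\frSi'$ are finite \'etale over $k$, so they contribute no further obstructions.

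Second, I would verify that each row and each column of \eqref{Tian} is Cartesian. The three rows are Cartesian essentially by construction: each arrow decomposes as a product of component-wise arrows along $\Bun_{T}$, $\Bun_{G}(\Sig)$ and $H_{d}(\Sig)$ respectively, and the outermost vertical map $(\id,\Fr)$ in the bottom row Cartesian-ly defines the Shtuka versions via \eqref{ShtT diag 2}, \eqref{Cart ShtG} and \eqref{defn ShtH}. The three columns are Cartesian by the fiber-product definitions: the left column is Cartesian because the diagram \eqref{ShtT diag 2} defines $\Sht^{\un\mu}_{T}(\mi\cdot\Si')\times_{\frSi'}\Sht^{\un\mu'}_{T}(\mi'\cdot\Si')$, the middle column because of the two-fold application of \eqref{Cart ShtG} for $\Sht'^r_G(\Sii)$, and the right column because of \eqref{defn ShtH}. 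Representability of the relevant edges follows from Proposition \ref{p:geom facts}(5) for $\olr{p_{H}}$, from Proposition \ref{p:Hk smooth}(4) for $p_{i}$ on $\Hk^{r}_{G}(\Sig)$, and from smoothness/DM-ness of $\Bun_{T}$.

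Third, and last, I would verify the dimension-matching condition, which is the heart of the Octahedron Lemma. Here we must check that the iterated fiber product yielding $\Sht^{\mu,\mu'}_{\cM,D}$ indeed has the expected dimension (namely $r$, matching the self-intersection of an $r$-dimensional Heegner--Drinfeld cycle), and that the various partial fiber products along rows or columns all have expected dimension. For the vertical direction this reduces, via Propositions \ref{p:compare M} and \ref{p:HkM}, to the (known) dimension of the self-intersection of Frobenius on the smooth DM stack $\cM_{d}(\mu_{\Sig},\mu'_{\Sig})$ of dimension $m$ along a correspondence of dimension $m$, yielding a $0$-dimensional intersection in the fiber of $\cA^{\fl}_{d}$ over each $D$. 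Properness of $\Sht^{\mu,\mu'}_{\cM,D}$ (Proposition \ref{p:geom facts}(8)) ensures that the Lefschetz-type trace formula underlying the Octahedron Lemma applies. The main obstacle is this last dimension/transversality check: one must verify that all the partial fiber products have expected dimension generically, which is where the hypothesis $d\ge 2g$ (beyond the bound $d\ge 2g'-1+N$ required only for the smoothness of $\cM_{d}$) is used, ensuring that the norm-type Abel--Jacobi maps involved in the definition of $\cM_{d}$ and in \eqref{emb Ad} are surjective onto the relevant Picard stacks, and hence that $\cM_{d}\to \cA_{d}$ is generically finite (Proposition \ref{p:M}(4)). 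Once this is in place, all the hypotheses of \cite[Theorem A.10]{YZ} are met.
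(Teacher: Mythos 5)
Your proposal misidentifies what the Octahedron Lemma actually requires, and as a result it omits the one step that carries the real content of the paper's proof. The hypotheses of \cite[Theorem A.10]{YZ} are not ``every member of the grid is smooth'' (indeed $\Hk'^{r}_{H,d}(\Sig)$ is excluded from the smoothness requirement), nor ``every row and column is Cartesian,'' nor a generic transversality/expected-dimension statement for partial fiber products. Besides the smoothness of the members other than $\Hk'^{r}_{H,d}(\Sig)$ (Prop.~\ref{p:geom facts}) and of the fiber products of the rows and columns ($\cM_{d}(\mu_{\Sig},\mu'_{\Sig})$, its square, $\Sht^{\un\mu}_{T}(\mi\cdot\Si')\times_{\frSi'}\Sht^{\un\mu'}_{T}(\mi'\cdot\Si')$, $\Sht'^{r}_{G}(\Sii)\times_{\frSi'}\Sht'^{r}_{G}(\Sii)$), one must check that the Cartesian squares \eqref{defn HkM}, \eqref{defn ShtH}, \eqref{defn ShtM} and \eqref{var ShtM} satisfy the conditions of \cite[\S A.2.8]{YZ} or \cite[\S A.2.10]{YZ}, i.e.\ that the refined Gysin pullbacks are defined: the relevant stack must be DM with a finite flat presentation, and the horizontal map must factor as a regular local immersion followed by a smooth relative DM morphism. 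For \eqref{defn HkM} this is the heart of the matter: one needs the finite flat presentation of $\Hk^{\mu,\mu'}_{\cM,d}$ (via $\cM_{d}(\mu_{\Sig},\mu'_{\Sig})\cong\cM_{d}\times\frSi'$ and Prop.~\ref{p:M}(5), plus schematicness of $p_{\cM,0}$), and one needs that $\th^{\mu}_{\Hk}:\Hk^{\un\mu}_{T}\times\frSi'\to\Hk'^{r}_{G}(\Sig)$ is, up to a smooth relative DM map, a regular local immersion. In the paper this is proved by enlarging $\Sig$ to $\wt\Sig$ with $\deg\wt\Sig>\r/2$ and computing the relative tangent map: one shows the cokernel of $\nu_{*}\cO_{X'}/\cO_{X}\hookrightarrow\Ad^{\un{x'},\wt\Sig}(\nu_{*}\cL)$ injects into $\nu_{*}(\s^{*}\cL^{-1}\ot\cL(R'-\wt\Sig''))$, which has negative degree, so the tangent map is injective. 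Nothing in your proposal substitutes for this computation; ``rows and columns are Cartesian by construction'' and representability of $\olr{p_{H}}$ do not give the regular-local-immersion property of $\th^{\mu,\mu'}_{\Hk}$.

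Your third step also conflates the Octahedron Lemma with later ingredients of the proof of Theorem~\ref{th:Ir}. The Lefschetz trace formula \cite[Prop.~A.12]{YZ} and the smallness/generic finiteness of $f_{d}$ (Prop.~\ref{p:M}(4), which needs $d\ge 3g-2+N$, not $d\ge 2g$) are used afterwards, in passing from $\jiao{\z,\Gamma(\Fr)}_{D}$ to the trace expression, and are not conditions of \cite[Theorem A.10]{YZ}; nor is any ``expected dimension of the $0$-cycle $\Sht^{\mu,\mu'}_{\cM,D}$'' hypothesis — the whole point of the refined intersection formalism is that no such transversality is assumed. So the degree hypothesis is not used in the way you claim, and the dimension-matching discussion does not verify any actual hypothesis of the lemma. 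To repair the argument you would need to (i) state the conditions of \cite[Theorem A.10]{YZ} correctly, and (ii) supply the verification of the \cite[\S A.2.8]{YZ}/\cite[\S A.2.10]{YZ} conditions for the four squares above, in particular the tangent-space argument for $\th^{\mu}_{\Hk}$ and the treatment of \eqref{defn ShtH} via the smoothness of $H_{d}(\Sig)$.
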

\begin{proof}
We refer to \cite[Theorem A.10]{YZ} for the statement of the conditions. 

Condition (1): we need to show the smoothness of all members in the diagram \eqref{Tian} except for $\Hk'^{r}_{H,d}(\Sig)$. This is done in Prop. \ref{p:geom facts}.

Condition (2): we need to check that $\cM_{d}(\mu_{\Sig},\mu'_{\Sig}), \cM_{d}(\mu_{\Sig},\mu'_{\Sig})^{2}, \Sht^{\un\mu}_{T}(\mi\cdot\Si')\times_{\frSi'}\Sht^{\un\mu'}_{T}(\mi'\cdot\Si')$ and $\Sht'^{r}_{G}(\Sii)\times_{\frSi'}\Sht'^{r}_{G}(\Sii)$ are smooth of the expected dimensions. These facts follow from Prop. \ref{p:geom facts}\eqref{geom facts M}, Corollary \ref{c:ShtT proper} and Prop. \ref{p:ShtG}.

Condition (3): we need to show that the diagrams \eqref{defn HkM} and \eqref{defn ShtH} satisfy either the conditions in \cite[\S A.2.8]{YZ}, or the conditions in \cite[\S A.2.10]{YZ}.

We first show that  \eqref{defn HkM}  satisfies the conditions in \cite[\S A.2.8]{YZ}. We claim that $\Hk^{\mu,\mu'}_{\cM,d}$ is a DM stack that admits a finite flat presentation. By Prop. \ref{p:compare M}, $\cM_{d}(\mu_{\Sig},\mu'_{\Sig})\cong \cM_{d}\times \frSi'$. By Prop. \ref{p:M}\eqref{M ffp}, $\cM_{d}$ is DM and admits a finite flat presentation, therefore the same is true for $\cM_{d}(\mu_{\Sig},\mu'_{\Sig})$. Since the map $p_{\cM,0}: \Hk^{\mu,\mu'}_{\cM,d}\to \cM_{d}(\mu_{\Sig},\mu'_{\Sig})$ is schematic, the same is true for $\cM_{d}(\mu_{\Sig},\mu'_{\Sig})$. It remains to check that $\th^{\mu,\mu'}_{\Hk}$ can be factored into a regular local immersion and a smooth relative DM map. It suffices to show the same thing for $\th^{\mu}_{\Hk}: \Hk^{\un\mu}_{T}\times\frSi'\to \Hk'^{r}_{G}(\Sig)$ (and the same result applies to $\mu'$ as well).  The argument is similar to that in \cite[Lemma 6.11(1)]{YZ}, and we only give a sketch here. We may enlarge the set $\Sig$ to $\wt\Sig\subset |X-R|$ such that $\deg\wt\Sig>\r/2$. By enlarging the base field $k$, we may assume that all points in $\nu^{-1}(\wt\Sig)$ are defined over $k$. Choose a section of $\nu^{-1}(\wt\Sig)\to \wt\Sig$ extending the existing section $\mu_{f}$, and call this section $\wt\Sig'$. Using $\wt\Sig'$ we have a map $\wt\th^{\mu}_{\Hk}: \Hk^{\un\mu}_{T}\to \Hk'^{r}_{G}(\wt\Sig)$. Since the projection $\Hk'^{r}_{G}(\wt\Sig)\to \Hk'^{r}_{G}(\Sig)$ is smooth and schematic, it suffices to show that $\wt\th^{\mu}_{\Hk}: \Hk^{\un\mu}_{T}=\Bun_{T}\times X'^{r}\to \Hk'^{r}_{G}(\wt\Sig)$ is a regular local embedding. To check this, we calculate the tangent map of $\wt\th^{\mu}_{\Hk}$ at a geometric point $b=(\cL, x'_{1},\dotsc, x'_{r})\in \Bun_{T}(K)\times X'^{r}(K)$. Or rather we calculate the relative tangent map with respect to the projections to $X'^{r}$. We base change to $K$ without changing notation. The relative tangent complex of $\Hk^{\un\mu}_{T}$ at $b$ is $\cohog{*}{X, \cO_{X'}/\cO_{X}}[1]$. The relative tangent complex of $\Hk'^{r}_{G}(\wt\Sig)$ at $\wt\th^{\mu}_{\Hk}(b)$ is $\cohog{*}{X, \Ad^{\un{x'}, \wt\Sig}(\nu_{*}\cL)}[1]$, where $\Ad^{\un{x'}, \wt\Sig}(\nu_{*}\cL)= \un\End^{\un{x'}, \wt\Sig}(\nu_{*}\cL)/\cO_{X}$, and $\End^{\un{x'}, \wt\Sig}(\nu_{*}\cL)$ is the endomorphism sheaf of the chain $\nu_{*}\cL\to \nu_{*}(\cL(x'_{1}))\to \cdots$ preserving the level structures at $\wt\Sig$. The tangent map of $\wt\th^{\mu}_{\Hk}$ is induced by a natural embedding $e: \nu_{*}\cO_{X'}/\cO_{X}\incl \Ad^{\un{x'}, \wt\Sig'}(\nu_{*}\cL)$. A calculation similar to Lemma \ref{l:ab} gives
\begin{equation*}
\End^{\un{x'}, \wt\Sig}(\nu_{*}\cL)\subset \nu_{*}(\cO_{X'}(R'))\oplus_{R'}\nu_{*}(\s^{*}\cL^{-1}\ot\cL(R'-\wt\Sig''))
\end{equation*}
where $\wt\Sig''=\s(\wt\Sig')$. Therefore we have
\begin{equation*}
\Ad^{\un{x'}, \wt\Sig}(\nu_{*}\cL)\subset (\nu_{*}(\cO_{X'}(R'))/\cO_{X})\oplus_{R'}\nu_{*}(\s^{*}\cL^{-1}\ot\cL(R'-\wt\Sig''))
\end{equation*} 
under which $e$ corresponds to the embedding of $\nu_{*}\cO_{X'}/\cO_{X}$ into the first factor. One checks the projection $\coker(e)\to \nu_{*}(\s^{*}\cL^{-1}\ot\cL(R'-\wt\Sig''))$ is injective, the latter having degree $\r/2-\deg\wt\Sig<0$, we have $\cohog{0}{X,\coker(e)}=0$, which implies that the tangent map of $\wt\th^{\mu}_{\Hk}$ is injective.

Next we show that \eqref{defn ShtH} satisfies the conditions in \cite[\S A.2.10]{YZ}. The argument is similar to that of \cite[Lemma 6.14(1)]{YZ}, using the smoothness of $H_{d}(\Sig)$ proved in Prop. \ref{p:geom facts}\eqref{geom facts H}.

Condition (4): we need to show that \eqref{defn ShtM} and \eqref{var ShtM} both satisfy the conditions in \cite[\S A.2.8]{YZ}. Again the argument is completely similar to the corresponding argument in the proof of \cite[Theorem 6.6]{YZ}. We omit details here.
\end{proof}

\sss{The cycle $\z$} Using the dimension calculations in Prop. \ref{p:geom facts}\eqref{geom facts HkH}\eqref{geom facts HkT} and \eqref{geom facts HkG},  we have
\begin{equation*}
\dim \Hk'^{r}_{H,d}(\Sig)+\dim(\Hk^{\un\mu}_{T}\times\Hk^{\un\mu'}_{T}\times\frSi')-2\dim \Hk'^{r}_{G}(\Sig)=m=2d+\r-N-g+1.
\end{equation*}
Therefore the Cartesian diagram \eqref{defn HkM} defines a cycle
\begin{equation}\label{defn zeta}
\z=(\th^{\mu,\mu'}_{\Hk})^{!}[\Hk'^{r}_{H,d}(\Sig)]\in \Ch_{m}(\Hk^{\mu,\mu'}_{\cM,d}).
\end{equation}

\begin{lemma}\label{l:z fund cycle} Assume $d\ge \max\{2g'-1+N, 2g+N\}$. Let $\z^{\sh}\in \Ch_{*}(\cH_{\un\mu\un\mu'}\times\frSi')$ be the pullback of $\z$ under the isomorphism $\Xi_{\cH}$. Then when restricted over $\cA^{\dm}_{d}$, $\z^{\sh}$ coincides with the fundamental class of $\cH_{\un\mu\un\mu'}\times\frSi'$.
\end{lemma}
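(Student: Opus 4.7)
The plan is to reduce the statement to a transversality check by exploiting the isomorphism $\Xi_{\cH}$ of Proposition \ref{p:HkM}, the fact that $\theta^{\mu,\mu'}_{\Hk}$ is a regular local embedding of the correct codimension (established in the discussion preceding the octahedron-lemma application), and the principle that a refined Gysin pullback of the fundamental class of a smooth target, along a regular embedding cutting out a smooth source of the expected dimension, is simply the fundamental class of the source with multiplicity one.

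First I would unwind the definition: $\zeta = (\theta^{\mu,\mu'}_{\Hk})^{!}[\Hk'^{r}_{H,d}(\Sig)]$ is defined via the Cartesian square \eqref{defn HkM}, and we showed that $\theta^{\mu,\mu'}_{\Hk}$ factors (after an auxiliary enlargement of $\Sig$, as in the verification of Condition (3) for \cite[Theorem A.10]{YZ}) as a regular local embedding of codimension $2\dim\Hk'^{r}_{G}(\Sig)-2\dim\Hk^{\un\mu}_{T}$ followed by a smooth relatively DM map. A dimension count (performed above the statement of the lemma using Proposition \ref{p:geom facts}) shows that this codimension equals $\dim\Hk'^{r}_{H,d}(\Sig)-m$, so the Gysin pullback $(\theta^{\mu,\mu'}_{\Hk})^{!}$ produces a cycle of the expected dimension $m$.

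Next, pulling back by $\Xi_{\cH}$, the support of $\zeta^{\sh}$ is contained in $\cH_{\un\mu\un\mu'}\times\frSi'$. Over $\cA^{\dm}_{d}$ I would check that $\cH^{\dm}_{\un\mu\un\mu'}\times\frSi'$ is smooth of pure dimension $m$. Indeed, this is an iterated fiber product over $\cM_{d}^{\dm}$ of the elementary correspondences $\cH^{\dm}_{\pm}$. By the description recalled after \eqref{corr cH-}, each $\cH^{\dm}_{\pm}$ is, as a correspondence over $\cM_{d}^{\dm}$, obtained from the incidence correspondence of an honest nonzero section ($\b$ or $\a$) of a line bundle of strictly positive degree on $X'$; the projection to $\cM_{d}$ is finite, hence $\cH^{\dm}_{\pm}$ is smooth of pure dimension $m$ over $k$. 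Iterating, one sees that $\cH^{\dm}_{\un\mu\un\mu'}$ is smooth of pure dimension $m$, and tensoring by the \'etale $k$-scheme $\frSi'$ preserves both the smoothness and the dimension.

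Finally, to conclude $\zeta^{\sh}|_{\cA^{\dm}_{d}}=[\cH^{\dm}_{\un\mu\un\mu'}\times\frSi']$, I would invoke the compatibility of refined Gysin pullback with proper transverse intersection (cf.\ \cite[\S A.1.6]{YZ}): since $\theta^{\mu,\mu'}_{\Hk}$ is a regular local embedding of codimension $c$, and the fiber product is smooth of dimension $m=\dim(\Hk^{\un\mu}_{T}\times\Hk^{\un\mu'}_{T}\times\frSi')+\dim\Hk'^{r}_{H,d}(\Sig)-\dim(\Hk'^{r}_{G}(\Sig)^{2})$, the Gysin pullback of the fundamental class of $\Hk'^{r}_{H,d}(\Sig)$ is the fundamental class of $\Hk^{\mu,\mu'}_{\cM,d}|_{\cA^{\dm}_{d}}$ with multiplicity one. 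The main obstacle I foresee is the smoothness verification in the previous paragraph: the argument in the ramified case requires the local analysis near $R'$ already encountered in the proof of Proposition \ref{p:HkM} (where the subtle cancellation involving the uniformizer $\vp$ with $\s(\vp)=-\vp$ was crucial), and one has to ensure that over $\cA^{\dm}_{d}$ no spurious components arise along the ramification locus. Once this local check is in place, the assembly of the iterated correspondence and the compatibility of Gysin pullback with smoothness are formal.
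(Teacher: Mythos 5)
There is a genuine gap, and it sits exactly where your proposal tries to go faster than the paper. First, your smoothness claim for the correspondences is unjustified: from the fact that $\oll{\g}_{\pm}\colon\cH^{\dm}_{\pm}\to\cM^{\dm}_{d}$ is finite you conclude smoothness, but finite over a smooth stack does not imply smooth. Indeed $\cH^{\dm}_{+}$ is the pullback of the incidence correspondence along the (ramified) addition map $X'_{n-1}\times X'\to X'_{n}$, so it can be singular where $\div(\b)$ acquires multiple points, and the iterated fiber product $\cH^{\dm}_{\un\mu\un\mu'}$ is even less likely to be smooth; the paper never asserts smoothness of $\cH^{\dm}_{\pm}$, only the dimension count $\dim\cH^{\dm}_{\pm}=m$. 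Second, even granting smoothness of the fiber product, your final step needs more than "regular embedding plus source of expected dimension": to identify $(\th^{\mu,\mu'}_{\Hk})^{!}[\Hk'^{r}_{H,d}(\Sig)]$ with the fundamental class with multiplicity one you must control the cycle being pulled back, e.g.\ know that $\Hk'^{r}_{H,d}(\Sig)$ is smooth (or Cohen--Macaulay together with a generic transversality statement). But $\Hk'^{r}_{H,d}(\Sig)$ is precisely the one member of the master diagram \eqref{Tian} whose smoothness is \emph{not} known (Condition (1) in the octahedron verification explicitly excludes it; Proposition \ref{p:geom facts} only computes its dimension). The paper circumvents this by restricting to the open locus $(U_{d}\times X^{r})^{\circ}$ where the modification points $x_{i}$ are disjoint from the divisor $D$: there $\Hk^{r,\circ}_{H,d}(\Sig)\cong\Hk^{r}_{G}(\Sig)\times_{\Bun_{G}(\Sig)}H_{d}(\Sig)$ is smooth of the right dimension, so over $\cH^{\circ}_{\un\mu\un\mu'}\times\frSi'$ the intersection of smooth stacks has expected dimension and $\z$ is the fundamental class there.

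This leaves the second half of the argument, which your proposal omits entirely and which is the reason the hypothesis $d\ge 2g+N$ appears in the statement: one must show $\dim\bigl(\cH^{\dm}_{\un\mu\un\mu'}-\cH^{\circ}_{\un\mu\un\mu'}\bigr)<m$, so that the dimension-$m$ cycle $\z^{\sh}$ can have no components supported on the bad locus and therefore agrees with the fundamental class over all of $\cA^{\dm}_{d}$. Concretely, the image of $\cH^{\dm}_{\un\mu\un\mu'}-\cH^{\circ}_{\un\mu\un\mu'}$ in $\cA^{\dm}_{d}$ lies in the locus $\cC_{d}$ where $\div(a)$ and $\div(b)$ share a point of $U$, and a Riemann--Roch count (using $d\ge 2g+N$) gives $\dim\cC_{d}\le m-1$; since $\cH^{\dm}_{\un\mu\un\mu'}\to\cA^{\dm}_{d}$ is finite, the bad locus has dimension $\le m-1$. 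Your proposal never uses $d\ge 2g+N$, which is a symptom of this missing step. So the route you sketch would work only after (i) replacing the smoothness claims for $\cH^{\dm}_{\pm}$ and $\Hk'^{r}_{H,d}(\Sig)$ by the open-locus reduction, and (ii) adding the codimension estimate on the complement.
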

\begin{proof}
We have a map $\Hk^{r}_{H,d}(\Sig)\to U_{d}\times X^{r}$ similar to the one defined in \cite[\S6.3.4]{YZ}. Let $(U_{d}\times X^{r})^{\circ}$ be the open subset consisting of $(D, x_{1},\dotsc, x_{r})$ such that each $x_{i}$ is disjoint from the support of $D$. Let $\Hk'^{r,\circ}_{H,d}(\Sig)$ be the preimage of $(U_{d}\times X^{r})^{\circ}$. Similarly, let $\Hk^{\mu,\mu',\circ}_{\cM,d}$ be the preimage of $(U_{d}\times X^{r})^{\circ}$ in $\Hk^{\mu,\mu'}_{\cM,d}$, which corresponds under $\Xi_{\cH}$ to an open subset of the form $\cH^{\circ}_{\un\mu\un\mu'}\times\frSi'$. 

We have a map $\Hk^{r}_{H,d}(\Sig)\to \Hk^{r}_{G}(\Sig)\times_{\Bun_{G}(\Sig)}H_{d}(\Sig)$ by considering the top row and left column of the diagram \eqref{diag EE'}.  When restricted to $(U_{d}\times X^{r})^{\circ}$, this map is an isomorphism. Therefore  $\Hk^{r,\circ}_{H,d}(\Sig)$, hence $\Hk'^{r,\circ}_{H,d}(\Sig)$ is smooth of dimension $3(g-1)+N+2r+2d$. Restricting the diagram \eqref{defn HkM} to $(U_{d}\times X^{r})^{\circ}$, $\Hk^{\mu,\mu',\circ}_{\cM,d}$ is the intersection of smooth stacks with the expected dimension $\dim \Hk'^{r,\circ}_{H,d}(\Sig)+\dim (\Hk^{\un\mu}_{T}\times \Hk^{\un\mu'}_{T}\times\frSi')- \dim \Hk'^{r}_{G}(\Sig)=m$, therefore, $\z$ is the fundamental class when restricted to $\Hk^{\mu,\mu',\circ}_{\cM,d}=\cH^{\circ}_{\un\mu\un\mu'}\times\frSi'$. 

It remains to show that $\dim (\cH^{\dm}_{\un\mu\un\mu'}-\cH^{\circ}_{\un\mu\un\mu'})<\dim \cH^{\dm}_{\un\mu\un\mu'}$. The map $\cH^{\dm}_{\un\mu\un\mu'}\to \cM_{d}^{\dm}\to \cA^{\dm}_{d}$ are finite surjective. On the other hand, as in \cite[\S6.4.3]{YZ}, the image of  $\cH^{\dm}_{\un\mu\un\mu'}-\cH^{\circ}_{\un\mu\un\mu'}$ in $\cA^{\dm}_{d}$ lies in the closed substack $\cC_{d}$ consisting of those $(\D, \Th_{R}, \io, a, b, \vth_{R})$ where $\div(a)$ and $\div(b)$ (both are divisors of degree $d+\r$ on $X$) have one point in common which lies in $U$. Therefore it suffices to show that $\dim \cC_{d}<\dim \cA_{d}=m$. Now $\cC_{d}$  is contained in the image of a map  $U\times (X^{\sqR}_{d+\r-N_{-}-1}\times_{\Pic^{\sqR;\sqR}_{X}} X^{\sqR}_{d+\r-N_{+}-1})\to X^{\sqR}_{d+\r-N_{-}}\times_{\Pic^{\sqR;\sqR}_{X}} X^{\sqR}_{d+\r-N_{+}}$. Using $d\ge 2g+N$ we may calculate the dimension of $X^{\sqR}_{d+\r-N_{-}-1}\times_{\Pic^{\sqR;\sqR}_{X}} X^{\sqR}_{d+\r-N_{+}-1}$ by Riemann-Roch, from which we conclude again that $\dim \cC_{d}\le m-1$. This completes the proof.
\end{proof}

\sss{} Consider the cycle
\begin{equation*}
(\id,\Fr_{\cM_{d}(\mu_{\Sig},\mu'_{\Sig})})^{!}\z\in \Ch_{0}(\Sht^{\mu,\mu'}_{\cM,d}).
\end{equation*}
This is well-defined because $\cM_{d}(\mu_{\Sig},\mu'_{\Sig})$ is smooth DM by Prop. \ref{p:geom facts}\eqref{geom facts M}, hence $(\id,\Fr)$ is a regular local immersion. Let
\begin{equation*}
((\id,\Fr_{\cM_{d}(\mu_{\Sig},\mu'_{\Sig})})^{!}\z)_{D}\in \Ch_{0}(\Sht^{\mu,\mu'}_{\cM,D})
\end{equation*}
be its $D$-component. Since $\Sht^{\mu,\mu'}_{\cM,D}$ is proper by Prop. \ref{p:geom facts}\eqref{geom facts ShtM}, it makes sense to take degrees of $0$-cycles on it. Hence we define
\begin{equation*}
\jiao{\z, \Gamma(\Fr_{\cM_{d}(\mu_{\Sig},\mu'_{\Sig})})}_{D}:=\deg((\id,\Fr_{\cM_{d}(\mu_{\Sig},\mu'_{\Sig})})^{!}\z)_{D}\in\QQ.
\end{equation*}

\begin{theorem}\label{th:pre Ir} Suppose  $D$ is an effective divisor on $U$ of degree $d\ge \max\{2g'-1+N, 2g\}$. We have 
\begin{equation}\label{pre Ir}
\left(\prod_{x'\in \Si'}d_{x'}\right)\cdot\II^{\mu,\mu'}(h_{D})=\jiao{\z, \Gamma(\Fr_{\cM_{d}(\mu_{\Sig},\mu'_{\Sig})})}_{D}.
\end{equation}
\end{theorem}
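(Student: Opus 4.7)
The plan is to apply the Octahedron Lemma \cite[Theorem A.10]{YZ} to the master diagram \eqref{Tian}, whose hypotheses have just been verified in the preceding proposition. This follows the same strategy as \cite[Theorem 6.6]{YZ}, with additional bookkeeping needed to accommodate the base change to $\frSi'$ and the Frobenius twist built into $\cM_d(\mu_\Sig,\mu'_\Sig)$.

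The Octahedron Lemma equates two refined intersection computations. Forming fiber products of the three rows of \eqref{Tian} first yields the column \eqref{3 rows}, whose iterated Cartesian completion is $\Sht^{\mu,\mu'}_{\cM,d}$ via \eqref{defn ShtM}; by the decomposition \eqref{decomp ShtM D} this breaks into $D$-components, on each of which the Octahedron Lemma produces the class $(\id,\Fr_{\cM_d(\mu_\Sig,\mu'_\Sig)})^{!}\zeta$. Its degree is by definition $\jiao{\zeta,\Gamma(\Fr_{\cM_d(\mu_\Sig,\mu'_\Sig)})}_{D}$, which is the right-hand side of the desired identity. Forming fiber products of the three columns first yields the span \eqref{hor maps}, which together with \eqref{ShtMD} presents the $D$-component $\Sht^{\mu,\mu'}_{\cM,D}$ as the Cartesian intersection of $\theta'^{\mu}\times\theta'^{\mu'}$ (pushing forward the Heegner--Drinfeld cycles) with the Hecke correspondence $\Sht'^r_G(\Sii;h_D)$ inside $\Sht'^r_G(\Sii)\times_{\frSi'}\Sht'^r_G(\Sii)$. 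By the definition of the Hecke action on Chow groups in \S\ref{sss:bc Hk action}, pushing forward to a point computes the pairing $\langle\cZ^{\mu},h_D*\cZ^{\mu'}\rangle_{\Sht'^r_G(\Sii)}$. The factor $\prod_{x'\in\Si'}d_{x'}$ on the left-hand side of \eqref{pre Ir} arises because $\Sht'^r_G(\Sii)\otimes\kbar$ splits as the disjoint union of $\Sht'^r_G(\Sig;\xi)$ over the $\prod_{x'\in\Si'}d_{x'}$ geometric points $\xi\in\frSi'(\kbar)$, each contributing the same intersection number by the formula \eqref{I and xi}; this multiplicity is precisely what the normalization in the definition of $\II^{\mu,\mu'}$ divides out.

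The main obstacle is to handle correctly the Frobenius twist packaged into $\cM_d(\mu_\Sig,\mu'_\Sig)$: by Proposition \ref{p:compare M}(2), under the isomorphism $\Xi_\cM:\cM_d\times\frSi'\cong \cM_d(\mu_\Sig,\mu'_\Sig)$, the graph of $\Fr_{\cM_d(\mu_\Sig,\mu'_\Sig)}$ corresponds to the graph of $\AL_{\cM,\infty}^{-1}\circ(\Fr\times\id)$ rather than of plain Frobenius. This twist is precisely what is absorbed by the Atkin--Lehner maps $\alpha_T$, $\alpha_G$, $\alpha_H$, $\alpha_\cM$ appearing in the middle row of \eqref{Tian} (compare the twisted Frobenius $\AL^{\sh}_{T,\mi}$ of \eqref{AL sh T} and its $G$- and $H$-analogues). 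Once this compatibility is tracked correctly through the Octahedron Lemma --- which is exactly what the apparatus of \S\ref{ss:pre M} has been engineered to do --- the remaining steps are formal manipulation of Chow classes. Lemma \ref{l:z fund cycle}, identifying $\zeta$ with the fundamental class of $\cH_{\un\mu\un\mu'}\times\frSi'$ over the dense open $\cA^{\dm}_d$, will play no role in this identity itself but provides the bridge to the trace-formula reformulation \eqref{Ir hD} of Theorem \ref{th:Ir}, obtained by a standard application of \cite[Prop.~A.12]{YZ} to $f^\flat_d:\cM_d\to\cA^\flat_d$ evaluated at the graph of the Frobenius over $\cA^\flat_D$.
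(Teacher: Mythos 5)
Your overall strategy is the paper's: apply the Octahedron Lemma to the master diagram \eqref{Tian}, compute rows-first to get $(\id,\Fr_{\cM_{d}(\mu_{\Sig},\mu'_{\Sig})})^{!}\z$, compute columns-first to relate to the Heegner--Drinfeld cycles and the Hecke correspondence, and absorb $\prod_{x'\in\Si'}d_{x'}$ into the normalization of $\II^{\mu,\mu'}$ (indeed this factor comes directly from the definition of $\II^{\mu,\mu'}$, so no appeal to the $\kbar$-splitting or to \eqref{I and xi} is needed at this point). Your remarks about the Atkin--Lehner twist and about Lemma \ref{l:z fund cycle} being irrelevant to this particular identity are also correct: the theorem involves the plain Frobenius of $\cM_{d}(\mu_{\Sig},\mu'_{\Sig})$, and the comparison via Proposition \ref{p:compare M}(2) only enters later, in deducing Theorem \ref{th:Ir}.

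However, there is a genuine gap in your columns-first computation. What the Octahedron Lemma gives on that side is the class
$(\th'^{\mu}\times\th'^{\mu'})^{!}\bigl[(\id,\Fr_{H_{d}(\Sig)})^{!}[\Hk'^{r}_{H,d}(\Sig)\times\frSi']\bigr]$,
whereas the pairing $\langle\cZ^{\mu},h_{D}*\cZ^{\mu'}\rangle_{\Sht'^{r}_{G}(\Sii)}$ is computed, via the definition of the Hecke action in \S\ref{sss:bc Hk action}, by $(\th'^{\mu}\times\th'^{\mu'})^{!}[\Sht'^{r}_{G}(\Sii;h_{D})]$, i.e.\ by the \emph{fundamental class} of the Hecke correspondence. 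You pass from one to the other without comment, but the equality
\begin{equation*}
(\id,\Fr_{H_{d}(\Sig)})^{!}[\Hk'^{r}_{H,d}(\Sig)\times\frSi']=[\Sht'^{r}_{H,d}(\Sii)]
\end{equation*}
is not automatic: $\Sht'^{r}_{H,d}(\Sii)$ is an intersection that need not a priori be of the expected dimension, so the refined Gysin class could differ from the fundamental cycle by contributions from excess components. The paper proves this identity by restricting to the open locus $(U_{d}\times X^{r})^{\circ}$, where by the decomposition \eqref{decomp ShtH} and Lemma \ref{l:fiber ShthD}(2) the relevant Shtuka of Hecke type is smooth of the expected dimension $2r$ (so the refined class is the fundamental class there), and then invoking the dimension bound of Lemma \ref{l:fiber ShthD}(3) to see that the complement contributes no further components. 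Without this step your identification of the columns-first output with the Hecke pairing, and hence the identity \eqref{pre Ir}, is unproved; you should insert this argument (or an equivalent excess-intersection analysis) before appealing to the definition of the Hecke action.
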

\begin{proof}
From the definition of Heegner--Drinfeld cycles, it is easy to see using the diagram \eqref{ShtMD} that
\begin{equation}\label{pre Sht Ir}
\left(\prod_{x'\in \Si'}d_{x'}\right)\cdot\II^{\mu,\mu'}(h_{D})=\deg\left((\th'^{\mu}\times\th'^{\mu'})^{!}[\Sht'^{r}_{G}(\Sii;h_{D})]\right).
\end{equation}

On the other hand,  applying the Octahedron Lemma \cite[Theorem A.10]{YZ} to \eqref{Tian}, we get that
\begin{eqnarray}\label{oct result}
\notag &&(\th'^{\mu}\times\th'^{\mu'})^{!}(\id, \Fr_{H_{d}(\Sig)})^{!}[\Hk'^{r}_{H,d}(\Sig)\times\frSi']\\
\notag &=&(\id,\Fr_{\cM_{d}(\mu_{\Sig},\mu'_{\Sig})})^{!}(\th^{\mu,\mu'}_{\Hk}\times \id_{\frSi'})^{!}[\Hk'^{r}_{H,d}(\Sig)\times\frSi']\\
&=&(\id,\Fr_{\cM_{d}(\mu_{\Sig},\mu'_{\Sig})})^{!}\z\in \Ch_{0}(\Sht^{\mu,\mu'}_{\cM,d}).
\end{eqnarray}
If we can show that 
\begin{equation}\label{ShtH ok}
(\id, \Fr_{H_{d}(\Sig)})^{!}[\Hk'^{r}_{H,d}(\Sig)\times\frSi']=[\Sht'^{r}_{H,d}(\Sii)]
\end{equation}
then extracting the $D$-components of \eqref{oct result} and \eqref{ShtH ok} identifies $[(\th'^{\mu}\times\th'^{\mu'})^{!}[\Sht'^{r}_{G}(\Sii;h_{D})]$ with the cycle $((\id,\Fr_{\cM_{d}(\mu_{\Sig},\mu'_{\Sig})})^{!}\z)_{D}$.  Taking degrees then identifies the right side of  \eqref{pre Sht Ir} with the right side of \eqref{pre Ir}, and we are done. Therefore it remains to show \eqref{ShtH ok}. The argument is similar to \cite[Lemma 6.14(2)]{YZ}. Let $\Sht'^{r,\circ}_{H,d}(\Sii)\subset\Sht'^{r}_{H,d}(\Sii)$ be the preimage of $(U_{d}\times X^{r})^{\circ}$. By \eqref{decomp ShtH}, $\Sht'^{r,\circ}_{H,d}(\Sii)$ is the disjoint union over $D\in U_{d}(k)$ of $(\Sht^{r}_{G}(\Sii; h_{D})|_{(X-D)^{r}}\times_{X^{r}}X'^{r}$. By Lemma \ref{l:fiber ShthD}(2), $\Sht^{r}_{G}(\Sii; h_{D})|_{(X-D)^{r}}$ is smooth of dimension $2r$, which is the expected dimension from the diagram \eqref{defn ShtH}. Therefore, the restriction of $(\id, \Fr_{H_{d}(\Sig)})^{!}[\Hk'^{r}_{H,d}(\Sig)\times\frSi']$  to $\Sht'^{r,\circ}_{H,d}(\Sii)$ is the fundamental class. By Lemma \ref{l:fiber ShthD}(3), $\Sht^{r}_{G}(\Sii; h_{D})$ has the same dimension as its restriction over $(X-D)^{r}$, hence $\dim \Sht'^{r,\circ}_{H,d}(\Sii)=\Sht'^{r}_{H,d}(\Sii)$, therefore \eqref{ShtH ok} holds as cycles on the whole of $\Sht'^{r}_{H,d}(\Sii)$. This finishes the proof.
\end{proof}

\sss{Proof of Theorem \ref{th:Ir}} Now we can deduce  Theorem \ref{th:Ir} from Theorem \ref{th:pre Ir}.

Consider the diagram \eqref{defn ShtM}.  Moving the Atkin--Lehner automorphism of $\cM_{d}(\mu_{\Sig},\mu'_{\Sig})$ from the vertical arrow to the horizontal arrow, we get another Cartesian diagram
\begin{equation}\label{ShtM sh}
\xymatrix{\Sht^{\mu,\mu'}_{\cM,d}\ar[d] \ar[rr] && \Hk^{\mu,\mu'}_{\cM}  \ar[d]^{(p_{\cM,0}, p_{\cM,r})}   \\
\cM_{d}(\mu_{\Sig},\mu'_{\Sig})\ar[rr]^-{(\id,\AL_{\cM,\infty}^{-1}\circ \Fr)} && \cM_{d}(\mu_{\Sig},\mu'_{\Sig})\times\cM_{d}(\mu_{\Sig},\mu'_{\Sig})}
\end{equation}
From this we get
\begin{equation}\label{first pullback z}
(\id, \Fr_{\cM_{d}(\mu_{\Sig},\mu'_{\Sig})})^{!}\z=(\id,\AL_{\cM,\infty}^{-1}\circ \Fr)^{!}\z\in \Ch_{0}(\Sht^{\mu,\mu'}_{\cM,d}).
\end{equation}

Define $\cS_{\un\mu\un\mu'}$ by the Cartesian diagram
\begin{equation}\label{ShtM ep}
\xymatrix{\cS_{\un\mu\un\mu'}\ar[r]\ar[d] & \cH_{\un\mu\un\mu'}\ar[d]^{(p_{\cH,0}, p_{\cH,r})}\\
\cM_{d}\ar[r]^-{(\id, \Fr)} & \cM_{d}\times\cM_{d}}
\end{equation}

Using the isomorphisms $\Xi_{\cM}$ and $\Xi_{\cH}$ established  in Prop. \ref{p:compare M} and \ref{p:HkM}, \eqref{ShtM sh} is isomorphic to the Cartesian diagram
\begin{equation}\label{ShtM ep'}
\xymatrix{\cS_{\un\mu\un\mu'}\times\frSi'\ar[rr]\ar[d] & &\cH_{\un\mu\un\mu'}\times\frSi'\ar[d]^{(p_{\cH,0}\times\id_{\frSi'}, p_{\cH,r}\times\id_{\frSi'})}\\
\cM_{d}\times\frSi'\ar[rr]^-{(\id, \Fr_{\cM_{d}}\times\id_{\frSi'})} & &(\cM_{d}\times\frSi')\times(\cM_{d}\times\frSi')}
\end{equation}
Here we are using Prop. \ref{p:compare M}(2) to identify $\AL_{\cM,\infty}^{-1}\circ \Fr$ on $\cM_{d}(\mu_{\Sig},\mu'_{\Sig})$ with $\Fr_{\cM_{d}}\times\id_{\frSi'}$ on $\cM_{d}\times\frSi'$. In particular, we get an isomorphism
\begin{equation*}
\Xi_{\cS}: \cS_{\un\mu\un\mu'}\times\frSi'\isom \Sht^{\mu,\mu'}_{\cM,d}.
\end{equation*}

Recall that $\z^{\sh}\in \Ch_{m}(\cH_{\un\mu\un\mu'}\times\frSi')$ is the transport of $\z$ under the isomorphism $\Xi_{\cH}$, then we have
\begin{equation}\label{two pullback z}
(\id, \AL^{-1}_{\cM,\infty}\circ \Fr)^{!}\z=(\id, \Fr_{\cM_{d}}\times\id_{\frSi'})^{!}\z^{\sh}\in \Ch_{0}(\cS_{\un\mu\un\mu'}\times\frSi').
\end{equation}
By Lemma \ref{l:z fund cycle},  $\z^{\sh}$ is the fundamental cycle of $\cH_{\un\mu\un\mu'}\times\frSi'$ when restricted to $\cA^{\dm}_{d}$. By Prop. \ref{p:M}\eqref{f small}, the complement of $\cM^{\dm}_{d}\times_{\cA^{\dm}_{d}}\cM_{d}^{\dm}$ in $\cM_{d}\times_{\cA_{d}}\cM_{d}$ has dimension strictly smaller than $\dim \cM_{d}$ (the condition $d\ge 2g'-1+N=4g-3+\r+N$ implies $d\ge3g-2+N$). Therefore, we may replace $\z^{\sh}$ with the fundamental cycle of the closure of $\cH_{\un\mu\un\mu'}|_{\cA^{\dm}_{d}}\times\frSi'$ and the intersection number on the right hand side of \eqref{two pullback z} does not change. We denote the latter by $\ov\cH^{\dm}_{\un\mu\un\mu'}\times\frSi'$. Combining \eqref{first pullback z} and \eqref{two pullback z} we get
\begin{eqnarray*}
&&(\id, \Fr_{\cM_{d}}\times\id_{\frSi'})^{!}\z^{\sh}\\
&=&(\id, \Fr_{\cM_{d}}\times\id_{\frSi'})^{!}[\ov\cH^{\dm}_{\un\mu\un\mu'}\times\frSi']\\
&=&((\id,\Fr_{\cM_{d}})^{!}[\ov\cH^{\dm}_{\un\mu\un\mu'}])\times[\frSi']\in\Ch_{0}(\cS_{\un\mu\un\mu'}\times\frSi').
\end{eqnarray*}
Taking the degree of the $D$-component, we get
\begin{equation*}
\jiao{\z, \Gamma(\Fr_{\cM_{d}(\mu_{\Sig},\mu'_{\Sig})})}_{D}=\deg(\frSi')\cdot\jiao{[\ov\cH^{\dm}_{\un\mu\un\mu'}],\Gamma(\Fr_{\cM_{d}})}_{D}.
\end{equation*}
Using Theorem \ref{th:pre Ir}, we get
\begin{eqnarray*}
\II^{\mu,\mu'}(h_{D})&=&\left(\prod_{x'\in \Si'}d_{x'}\right)^{-1}\jiao{\z, \Gamma(\Fr_{\cM_{d}(\mu_{\Sig},\mu'_{\Sig})})}_{D}\\
&=&\left(\prod_{x'\in \Si'}d_{x'}\right)^{-1}\deg(\frSi')\cdot\jiao{[\ov\cH^{\dm}_{\un\mu\un\mu'}],\Gamma(\Fr_{\cM_{d}})}_{D}\\
&=& \jiao{[\ov\cH^{\dm}_{\un\mu\un\mu'}],\Gamma(\Fr_{\cM_{d}})}_{D}.
\end{eqnarray*}

It remains to calculate $\jiao{[\ov\cH^{\dm}_{\un\mu\un\mu'}],\Gamma(\Fr_{\cM_{d}})}_{D}$.

Note that $\cH_{\un\mu\un\mu'}$ is a self-correspondence of $\cM_{d}$ over $\cA_{d}$. By the discussion in \cite[\S A.4.5]{YZ}, the map $\cS_{\un\mu\un\mu'}\to \cM_{d}\xr{f^{\fl}_{d}}\cA^{\fl}_{d}$ lands in the rational points  $\cA^{\fl}_{d}(k)$, hence we have a decomposition 
\begin{equation*}
\cS_{\un\mu\un\mu'}=\coprod_{a\in \cA^{\fl}_{d}(k)} \cS_{\un\mu\un\mu'}(a).
\end{equation*}
Under the isomorphism $\Xi_{\cM}$, this gives a refinement of the decomposition \eqref{decomp ShtM D}, namely
\begin{equation*}
\xymatrix{\Sht^{\mu,\mu'}_{\cM,D} && \coprod_{a\in \cA^{\fl}_{D}(k)}\cS_{\un\mu\un\mu'}(a)\times\frSi'\ar[ll]_-{\Xi_{\cS}}^-{\sim}.}
\end{equation*}

The fundamental cycle $[\ov\cH^{\dm}_{\un\mu\un\mu'}]$ gives a cohomological correspondence between the constant sheaf on $\cM_{d}$ and itself. It induces an endomorphism of the complex $\bR f_{d,!}\Ql$
\begin{equation*}
f_{d,!}[\ov\cH^{\dm}_{\un\mu\un\mu'}]: \bR f_{d,!}\Ql\to \bR f_{d,!}\Ql.
\end{equation*}
Taking direct image under $\Om$, we also get an endomorphism of $\bR f^{\fl}_{d,!}\Ql$
\begin{equation*}
f^{\fl}_{d,!}[\ov\cH^{\dm}_{\un\mu\un\mu'}]: \bR f^{\fl}_{d,!}\Ql\to \bR f^{\fl}_{d,!}\Ql.
\end{equation*}
Applying the Lefschetz trace formula \cite[Prop. A.12]{YZ} to the diagram \eqref{ShtM ep} (which is stated for $S$ being a scheme, so we apply it to the map $f^{\fl}_{d}$ rather than $f_{d}$), we get that 
\begin{equation}\label{pre Tr}
\jiao{[\ov\cH^{\dm}_{\un\mu\un\mu'}], \Gamma(\Fr_{\cM_{d}})}_{D}=\sum_{a\in \cA^{\fl}_{D}(k)}\Tr(f^{\fl}_{d,!}[\ov\cH^{\dm}_{\un\mu\un\mu'}]\circ \Fr_{a},(\bR f^{\fl}_{d,!}\Ql)_{a})
\end{equation}

Since $\cH_{\un\mu\un\mu'}$ is the composition of $r_{+}$ times $\cH_{+}$ and $r_{-}$ times $\cH_{-}$, the cohomological correspondence $[\ov\cH^{\dm}_{\un\mu\un\mu'}]$ is equal to the composition of $r_{+}$ times $[\ov\cH^{\dm}_{+}]$ and $r_{-}$ times $[\ov\cH^{\dm}_{-}]$ over $\cA^{\dm}_{d}$. By Prop. \ref{p:M}\eqref{f small}, the complement of $\cM^{\dm}_{d}\times_{\cA^{\dm}_{d}}\cM_{d}^{\dm}$ in $\cM_{d}\times_{\cA_{d}}\cM_{d}$ has dimension strictly smaller than $\dim \cM_{d}$, therefore $[\ov\cH^{\dm}_{\un\mu\un\mu'}]$ and the composition of $r_{+}$ times $[\ov\cH^{\dm}_{+}]$ and $r_{-}$ times $[\ov\cH^{\dm}_{-}]$ induce the same endomorphism on $f_{d,!}\Ql$ . This implies
\begin{equation*}
f_{d,!}[\ov\cH^{\dm}_{\un\mu\un\mu'}]=(f_{d,!}[\ov\cH^{\dm}_{+}])^{r_{+}}\circ (f_{d,!}[\ov\cH^{\dm}_{-}])^{r_{-}}\in\End(\bR f_{d,!}\Ql).
\end{equation*}
Taking direct image under $\Om$, we get
\begin{equation*}
f^{\fl}_{d,!}[\ov\cH^{\dm}_{\un\mu\un\mu'}]=(f^{\fl}_{d,!}[\ov\cH^{\dm}_{+}])^{r_{+}}\circ (f^{\fl}_{d,!}[\ov\cH^{\dm}_{-}])^{r_{-}}\in\End(\bR f^{\fl}_{d,!}\Ql).
\end{equation*}
This combined with \eqref{pre Tr} gives \eqref{Ir hD}. The proof of Theorem \ref{th:Ir} is now complete.

\section{The moduli stack $\cN_{d}$ and orbital integrals}\label{s:N}

In this section we introduce another moduli stack $\cN_{d}$, similar to $\cM_{d}$. The point-counting on $\cN_d$ is closely related to orbital integrals appearing in Jacquet's RTF we set up in \S\ref{s:RTF} for our specific test functions.

\subsection{Definition of $\cN_{d}$}\label{ss:defn N}

\sss{} 
Our moduli space $\cN_{d}$ depends on the ramification set $R$ with degree $\r$, a fixed finite set $\Sigma$ and a decomposition
\begin{equation*}
\Sig=\Sig_{+}\sqcup\Sig_{-},\quad N_\pm=\deg \Sig_{\pm}.
\end{equation*}
In our application, such a decomposition comes from a pair $\mu,\mu'\in \frT_{r,\Sig}$, for which we take $\Sig_{\pm}=\Sig_{\pm}(\mu,\mu')$ as in \eqref{Sig +} and \eqref{Sig -}. We are also assuming that $\Sigma\cap R=\vn$.

Let $d\ge0$ be an integer. Let $Q_{d}$ be the set of quadruples $\un{d}=(d_{11},d_{12},d_{21},d_{22})\in\ZZ^{4}_{\ge0}$ satisfying $d_{11}+d_{22}=d_{12}+d_{21}=d+\r$.

\begin{defn}\label{defn Nd} Let $\un d\in Q_{d}$. Let $\wt\cN_{\un{d}}=\wt\cN_{\un{d}}(\Sig_{\pm})$ be the stack whose $S$-points consist of 
$$(\cL^{\na}_{1}, \cL^{\na}_{2}, \cL'^{\na}_{1},\cL'^{\na}_{2}, \ph, \psi_{R})$$ where
\begin{itemize}
\item For $i=1,2$, $\cL^{\na}_{i}=(\cL_{i},\cK_{i,R},\io_{i})$ and $\cL'^{\na}_{i}=(\cL'_{i},\cK'_{i,R},\io'_{i})\in \Pic^{\sqR}_{X}(S)$, such that for any geometric point $s\in S$, $\deg (\cL'_{i}|_{X\times s})-\deg (\cL_{j}|_{X\times s})=d_{ij}$ for $i,j\in\{1,2\}$.
\item $\ph$ is an $\cO_{X\times S}$-linear map $\cL_{1}\op\cL_{2}\to \cL'_{1}\op\cL'_{2}$. We write it as a matrix
\begin{equation*}
\ph=\mat{\ph_{11}}{\ph_{12}}{\ph_{21}}{\ph_{22}}
\end{equation*}
where $\ph_{ij}: \cL_{j}\to \cL'_{i}$.
\item $\psi_{R}$ is an $\cO_{R\times S}$-linear map $\cK_{1,R}\op\cK_{2,R}\to \cK'_{1,R}\op\cK'_{2,R}$. Again we write $\psi_{R}$ as a matrix
\begin{equation*}
\psi_{R}=\mat{\psi_{11,R}}{\psi_{12,R}}{\psi_{21,R}}{\psi_{22,R}}
\end{equation*}
with $\psi_{ij,R}: \cK_{j,R}\to \cK'_{i,R}$. 
\end{itemize}
These data are required to satisfy the following conditions.
\begin{enumerate}
\setcounter{enumi}{-1}
\item The following diagram is commutative for $1\le i,j\le 2$
\begin{equation}\label{phiij}
\xymatrix{ \cK^{\ot2}_{j,R}\ar[r]^{\psi_{ij,R}^{\ot2}}\ar[d]^{\io_{j}} & \cK'^{\ot2}_{i,R}\ar[d]^{\io'_{i}}         \\
\cL_{j}|_{R\times S}\ar[r]^{\ph_{ij}|_{R\times S}} & \cL'_{i}|_{R\times S}}
\end{equation}
\item $\ph_{22}|_{\Sig_{-}\times S}=0$; $\ph_{11}|_{\Sig_{+}\times S}$ and $\ph_{22}|_{\Sig_{+}\times S}$ are nowhere vanishing. 
\item $\ph_{21}|_{\Sig_{+}\times S}=0$; $\ph_{12}|_{\Sig_{-}\times S}$ and $\ph_{21}|_{\Sig_{-}\times S}$ are nowhere vanishing. 
\item\label{det ph R} $\det(\psi_{R})=0$. Moreover, $\det(\ph)$ vanishes only to the first order along $R\times S$ (by \eqref{phiij} and $\det(\psi_{R})=0$, $\det(\ph)$ does vanish along $R\times S$).
\item\label{det nonvan} This condition is only non-void when $\Sig=\vn$ and $R=\vn$: $\det(\ph)$ is not identically zero on $X\times s$ for any geometric point $s$ of $S$. 
\item\label{ph degree} For each geometric point $s\in S$ the following conditions hold. If $d_{11}<d_{22}-N_{-}$, then $\ph_{11}|_{X\times s}\ne0$; if $d_{11}\ge d_{22}-N_{-}$, then $\ph_{22}|_{X\times s}\ne0$. If $d_{12}<d_{21}-N_{+}$ then $\ph_{12}|_{X\times s}\ne0$; if $d_{12}\ge d_{21}-N_{+}$ then $\ph_{21}|_{X\times s}\ne0$.
\end{enumerate}
\end{defn}

There is an action of $\Pic^{\sqR}_{X}$ on $\wt\cN_{\un d}$ by twisting each $\cL^{\na}_{i}$ and $\cL'^{\na}_{i}$ simultaneously ($i=1,2$). Let $\cN_{\un d}$ be the quotient
\begin{equation*}
\cN_{\un d}:=\wt\cN_{\un d}/\Pic^{\sqR}_{X}. 
\end{equation*}
Let $\cN_{d}$ be the disjoint union
\begin{equation*}
\cN_{d}=\coprod_{\un{d}\in Q_{d}}\cN_{\un d}.
\end{equation*}

\sss{}\label{sss:jij} Next we give an alternative description of $\cN_{d}$ in the style of \cite[\S3]{YZ}, which makes its similarity with $\cM_{d}$ more transparent.

Let $(\cL^{\na}_{1}, \cL^{\na}_{2}, \cL'^{\na}_{1}, \cL'^{\na}_{2},\ph,\psi_{R})\in\cN_{\un d}(S)$. For $i,j\in\{1,2\}$, define $\cL^{\na}_{ij}=\cL^{\na, \ot-1}_{j}\ot\cL'^{\na}_{i}=(\cL^{\ot-1}_{j}\ot\cL'_{i}, \cK^{\ot-1}_{j,R}\ot\cK'_{i,R}, \io^{-1}_{j}\ot\io'_{i})$. We have $\cL^{\na}_{ij}\in \Pic^{\sqR}_X(S)$.  By the diagram \eqref{phiij}, $(\cL^{\na}_{ij}, \ph_{ij}, \psi_{ij,R})$ defines a point in $\hX^{\sqR}_{d_{ij}}(S)$.

For $(i,j)=(1,1)$ or $(1,2)$, we thus have a morphism $\j_{ij}: \cN_{\un d}\to \hX^{\sqR}_{d_{ij}}$ sending the data $(\cL^{\na}_{1}, \cL^{\na}_{2}, \cL'^{\na}_{1}, \cL'^{\na}_{2},\ph,\psi_{R})\in\cN_{\un d}(S)$ to $(\cL^{\na}_{ij}, \ph_{ij}, \psi_{ij,R})\in \hX^{\sqR}_{d_{ij}}(S)$. 

The condition $\ph_{21}|_{\Sig_{+}\times S}=0$ allows us to view $\ph_{21}$ as a section of $\cL_{21}(-\Sig_{+})$, which has degree $d_{21}-N_{+}$ and extends to a point $\cL^{\na}_{21}(-\Sig_{+})\in\Pic^{\sqR}_{X}(S)$ using the original $\cK_{21,R}=\cK^{\ot-1}_{1}\ot\cK'_{2}$ and $\io^{-1}_{1}\ot\io'_{2}$ (because $\Sig_{+}\cap R=\vn$). We then define a morphism $\j_{21}: \cN_{\un d}\to \hX^{\sqR}_{d_{21}-N_{+}}$ sending $(\cL^{\na}_{1}, \cL^{\na}_{2}, \cL'^{\na}_{1}, \cL'^{\na}_{2},\ph,\psi_{R})$ to $(\cL^{\na}_{21}(-\Sig_{+}), \ph_{21}, \psi_{21,R})$.  Similarly we can define $\j_{22}: \cN_{\un d}\to \hX^{\sqR}_{d_{22}-N_{-}}$.  We have constructed a morphism
\begin{equation*}
\j_{\un d}=(\j_{ij})_{i,j\in\{1,2\}}: \cN_{\un d}\to \hX^{\sqR}_{d_{11}}\times\hX^{\sqR}_{d_{22}-N_{-}}\times\hX^{\sqR}_{d_{12}}\times\hX^{\sqR}_{d_{21}-N_{+}}.
\end{equation*}

In the above construction, we have canonical isomorphisms $\cL_{11}\ot\cL_{22}\cong\cL_{12}\ot\cL_{21}$ and $\cK_{11,R}\ot\cK_{22,R}\cong \cK_{12,R}\ot\cK_{21,R}$, which give a canonical isomorphism
\begin{equation}\label{4 term prod}
\cL^{\na}_{11}\ot\cL^{\na}_{22}\cong \cL^{\na}_{12}\ot\cL^{\na}_{21}\in \Pic^{\sqR,d+\r}_{X}(S).
\end{equation}
Moreover, the condition that $\det(\psi_{R})=0$ implies that $\psi_{11,R}\psi_{22,R}=\psi_{12,R}\psi_{21,R}$. Therefore, the isomorphism \eqref{4 term prod} extends to an isomorphism
\begin{equation*}
(\cL^{\na}_{11}\ot\cL^{\na}_{22},\psi_{11,R}\psi_{22,R})\cong (\cL^{\na}_{12}\ot\cL^{\na}_{21},\psi_{12,R}\psi_{21,R})\in \Pic^{\sqR;\sqR,d+\r}_{X}(S).
\end{equation*}
Therefore $\j_{\un d}$ lifts to a morphism
\begin{equation}\label{mor j d}
\j_{\un d}:\cN_{\un d}\to (\hX^{\sqR}_{d_{11}}\times \hX^{\sqR}_{d_{22}-N_{-}})\times_{\Pic^{\sqR;\sqR,d+\r}_{X}}(\hX^{\sqR}_{d_{12}}\times \hX^{\sqR}_{d_{21-N_{+}}}).
\end{equation}
Here the fiber product is formed using the following maps
\begin{eqnarray*}
&&\hX^{\sqR}_{d_{11}}\times \hX^{\sqR}_{d_{22}-N_{-}}\xr{(\wh\AJ^{\sqR,\sqR}_{d_{11}},\wh\AJ^{\sqR,\sqR}_{d_{22}-N_{-}})}\Pic^{\sqR;\sqR,d_{11}}_{X}\times \Pic^{\sqR;\sqR,d_{22}-N_{-}}_{X}\\
&&\xr{(\id, \otimes\dot\cO_{X}(\Sig_{-}))} \Pic^{\sqR;\sqR, d_{11}}_{X}\times \Pic^{\sqR;\sqR, d_{22}}_{X}\xr{\mult}\Pic^{\sqR;\sqR,d+\r}_{X}
\end{eqnarray*}
(where $\mult$ is the multiplication map for $\Pic^{\sqR;\sqR}_{X}$) and
\begin{eqnarray*}
&& \hX^{\sqR}_{d_{12}}\times \hX^{\sqR}_{d_{21}-N_{+}}\xr{(\wh\AJ^{\sqR;\sqR}_{d_{12}}, \wh\AJ^{\sqR;\sqR}_{d_{21}-N_{+}})}\Pic^{\sqR;\sqR, d_{12}}_{X}\times \Pic^{\sqR;\sqR, d_{21}-N_{+}}_{X}\\
&&\xr{(\id, \otimes\dot\cO_{X}(\Sig_{+}))} \Pic^{\sqR;\sqR, d_{12}}_{X}\times \Pic^{\sqR;\sqR, d_{21}}_{X}\xr{\mult}\Pic^{\sqR;\sqR, d+\r}_{X}.
\end{eqnarray*}

\sss{} We have a morphism to the base (cf. \S\ref{sss:base})
\begin{equation*}
g_{\un d}: \cN_{\un d}\to\cA_{d}=\cA_{d}(\Sig_{\pm})
\end{equation*}
sending $(\cL^{\na}_{1}, \cL^{\na}_{2}, \cL'^{\na}_{1},\cL'^{\na}_{2}, \ph, \psi_{R})$ to $(\D, \Th_{R}, \iota, a, b, \vth_{R})$ where $\D=\cL^{\ot-1}_{1}\ot\cL^{\ot-1}_{2}\ot\cL'_{1}\ot\cL'_{2}$, $\Th_{R}=\cK^{\ot-1}_{1,R}\ot\cK^{\ot-1}_{2,R}\ot\cK'_{1,R}\ot\cK'_{2,R}$, $\io_{R}$ is the obvious product of $\io_{1}\io_{2}$ and $\io'_{1}\io'_{2}$, $a=\ph_{11}\ph_{22}$, $b=\ph_{12}\ph_{21}$, $\vth_{R}=\psi_{11,R}\psi_{22,R}=\psi_{12,R}\psi_{21,R}$. 
We also have the composition
\begin{equation*}
g^{\fl}_{\un d}=\Om\circ g_{\un d} \colon \cN_{\un d}\xr{g_{\un d}}\cA_{d}\xr{\Om}\cA_{d}^{\fl}.
\end{equation*}

\begin{prop}\label{p:N} Let $\un{d}\in \Sig_{d}$. Then
\begin{enumerate}
\item The morphism $\j_{\un d}$ in \eqref{mor j d} is an open embedding, and $\cN_{\un d}$ is geometrically connected.
\item\label{N smooth} If $d\ge 4g-3+\r+N$, $\cN_{\un d}$ is a smooth DM stack of dimension $2d+\r-g-N+1=m$.
\item\label{N comm} The following diagram is commutative
\begin{equation}\label{N g comm}
\xymatrix{\cN_{\un d}\ar@{^{(}->}[r]^-{\j_{\un d}}\ar[d]^{g_{\un d}} & (\hX^{\sqR}_{d_{11}}\times \hX^{\sqR}_{d_{22}-N_{-}})\times_{\Pic^{\sqR;\sqR,d+\r}_{X}}(\hX^{\sqR}_{d_{12}}\times \hX^{\sqR}_{d_{21-N_{+}}})\ar[d]^{\wh\add^{\sqR}\times \wh\add^{\sqR}}\\
\cA_{d}\ar@{^{(}->}[r]^-{\om_{d}} & \hX^{\sqR}_{d+\r-N_{-}}\times_{\Pic^{\sqR;\sqR,d+\r}_{X}}\hX^{\sqR}_{d+\r-N_{+}}}
\end{equation}
\item\label{g proper} The morphisms $g_{\un d}$ and $g^{\fl}_{\un d}$ are proper.
\end{enumerate}
\end{prop}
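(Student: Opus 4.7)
The plan is to mirror the proof of Proposition \ref{p:M}, treating the four symmetric powers appearing here as a mild generalization of the two in the $\cM_d$ setup. Part \eqref{N comm} is a direct unraveling: both paths send $(\cL_1^{\na},\cL_2^{\na},\cL'^{\na}_1,\cL'^{\na}_2,\ph,\psi_R)$ to the tuple $(\D^{\na}, a, b, \vth_R)$ with $\D = \cL_1^{-1}\otimes\cL_2^{-1}\otimes\cL'_1\otimes\cL'_2$, $a=\ph_{11}\ph_{22}$, $b=\ph_{12}\ph_{21}$, $\vth_R=\psi_{11,R}\psi_{22,R}=\psi_{12,R}\psi_{21,R}$, using that the Abel--Jacobi maps intertwine the addition $\wh\add^{\sqR}$ of sections with the multiplication on $\Pic^{\sqR;\sqR}_X$.

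For \eqref{N smooth} and the open-embedding part of (1), I would start from the target of $\j_{\un d}$: given four $\cL_{ij}^{\na}\in\Pic^{\sqR}_X$ with a trivialization of $\cL_{11}^{\na}\otimes\cL_{22}^{\na}\otimes(\cL_{12}^{\na}\otimes\cL_{21}^{\na})^{-1}$ in $\Pic^{\sqR;\sqR}_X$, the short exact sequence \eqref{Pic exact} (as exploited in Proposition \ref{p:compare M}) shows that one can recover $(\cL_1^{\na},\cL_2^{\na},\cL'^{\na}_1,\cL'^{\na}_2)$ uniquely up to simultaneous $\Pic^{\sqR}_X$-twist; since we have already divided $\wt\cN_{\un d}$ by $\Pic^{\sqR}_X$, this defines an inverse to $\j_{\un d}$ on the open locus where conditions (0)--(5) of Definition \ref{defn Nd} hold. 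All these conditions are evidently open on the target. For smoothness, the constraint $d\ge 4g-3+\r+N$ guarantees that within each of the two pairs $(d_{11},d_{22}-N_-)$ and $(d_{12},d_{21}-N_+)$ the sum is $\ge 2(2g-1+\r)$, so at least one member of each pair is $\ge 2g-1+\r$; the corresponding Abel--Jacobi map $\wh\AJ^{\sqR;\sqR}_n\colon \hX^{\sqR}_n\to\Pic^{\sqR;\sqR,n}_X$ is then an affine bundle and in particular smooth, which makes each factor smooth of the expected dimension over $\Pic^{\sqR;\sqR}_X$. Taking the fiber product over the smooth stack $\Pic^{\sqR;\sqR,d+\r}_X$ of dimension $g-1+\r$ gives smoothness with dimension
\[
d_{11}+(d_{22}-N_-)+d_{12}+(d_{21}-N_+)-(g-1+\r)=2(d+\r)-N-(g-1+\r)=2d+\r-N-g+1=m.
\]
Geometric connectedness of $\cN_{\un d}$ follows from geometric connectedness of the target: each $\hX^{\sqR}_n$ is geometrically connected over the geometrically connected Picard stack.

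For the properness statement \eqref{g proper}, I would verify the valuative criterion for $g_{\un d}$. Given a DVR $R$ with fraction field $K$, a point of $\cA_d(R)$ and a lift to $\cN_{\un d}(K)$, the lift provides factorizations $a_K=\ph_{11}\ph_{22}$ and $b_K=\ph_{12}\ph_{21}$ with prescribed degrees $d_{ij}$, together with a compatible splitting of $\vth_R$ via $\psi_R$. Because $X\times\Spec R$ is regular at the relevant codimension-one points, each of these factorizations of sections extends uniquely across the special fiber, yielding line bundles $\cL_{ij}^{\na}$ over $R$ and the sections $\ph_{ij},\psi_{ij,R}$; reconstructing $(\cL_i^{\na},\cL'^{\na}_i)$ up to simultaneous twist as in (1) gives the required $R$-point of $\cN_{\un d}$. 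The remaining open conditions of Definition \ref{defn Nd} on the closed fiber hold because they are forced by the analogous conditions on $(\D^{\na},a,b,\vth_R)$ together with the fixed degree profile $\un d$. Finite type is inherited from the open embedding in (1), and separatedness from the ambient symmetric-power stacks, so $g_{\un d}$ is proper. Properness of $g^{\fl}_{\un d}=\Om\circ g_{\un d}$ then follows because $\Om$, being pulled back from $[\Res^R_k\AA^1/\Res^R_k\Gm]\to[\Res^R_k\AA^1/\Res^R_k\Gm]$, is proper. The main subtlety will be checking that condition (5) is automatic on the special fiber given the fixed $\un d$ and the conditions on $\cA_d$—this is the ramified analogue of the combinatorial bookkeeping present in the proof of Proposition \ref{p:M}\eqref{f proper}.
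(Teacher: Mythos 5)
There is a genuine gap in your smoothness argument for part \eqref{N smooth}, and it is exactly where the real content of the proposition lies. You argue that since in each pair $(d_{11},d_{22}-N_-)$ and $(d_{12},d_{21}-N_+)$ at least one degree is $\ge 2g-1+\r$, the corresponding Abel--Jacobi map is an affine bundle, and then you conclude that the fiber product over the smooth stack $\Pic^{\sqR;\sqR,d+\r}_X$ is smooth of the expected dimension. But a fiber product of smooth stacks over a smooth base is not smooth unless (at least) one of the structure maps to the base is smooth, and here neither is: the map $\hX^{\sqR}_{d_{11}}\times\hX^{\sqR}_{d_{22}-N_-}\to\Pic^{\sqR;\sqR,d+\r}_X$ still involves the Abel--Jacobi map of the \emph{small}-degree factor together with the multiplication of the square-root data along $R$, and on those data the multiplication is modeled on $\AA^1\times\AA^1\to\AA^1$, $(u,v)\mapsto uv$, which fails to be smooth where both sections vanish. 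Concretely, after using the two large-degree affine bundles one is reduced to the stack $\cQ$ of \eqref{XRP}, which maps via the evaluation maps of Lemma \ref{l:ev sm} to $[C_R/\Res^R_k\Gm^3]$ with $C_R$ the quadric cone $\{uv=st\}$ at each point of $R$; this cone is singular at its vertex, so without further input the fiber product you write down is genuinely singular along the locus where all four $\ph_{ij}$ (equivalently all four $\psi_{ij,R}$) vanish at a common point of $R$. The missing idea is that condition \eqref{det ph R} of Definition \ref{defn Nd} — $\det(\ph)$ vanishes only to first order along $R$ — forces the image to avoid the cone vertex and land in the smooth locus $[C^\circ_R/\Res^R_k\Gm^3]$; your proof never uses this condition, and indeed it would "prove" smoothness of the naive fiber product without it, which is false when $R\ne\vn$. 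A smaller but real slip in the same part: geometric connectedness of $\cN_{\un d}$ does not follow from connectedness of the ambient fiber product, since an open substack of a connected stack need not be connected; this requires its own argument (as in the unramified case of \cite[Prop.~3.1]{YZ}).

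The remaining parts are essentially fine in outline. Part \eqref{N comm} is indeed a direct unraveling. For part (1), the reconstruction of $(\cL^{\na}_1,\cL^{\na}_2,\cL'^{\na}_1,\cL'^{\na}_2)$ up to simultaneous twist from the $\cL^{\na}_{ij}$ is elementary bookkeeping (normalize $\cL'^{\na}_2$ to be trivial); the exact sequence \eqref{Pic exact} concerns the norm from $X'$ and is the tool for $\cM_d$ (Prop.~\ref{p:compare M}), not what is needed here. For part \eqref{g proper} your direct valuative-criterion sketch differs from the paper, which instead reduces properness of $g_{\un d}$ to properness of the restricted addition map $X^{\sqR}_{d_1}\times\hX^{\sqR}_{d_2}\to\hX^{\sqR}_{d_1+d_2}$ via the finite map $\hX^{\sqR}_n\to\hX_n$ and the unramified case; your route can be made to work, but the step you flag as "the main subtlety" (the case where $a$ or $b$ degenerates to zero on the special fiber, where condition \eqref{ph degree} dictates the limit) is precisely the part that needs an actual argument rather than an aside.
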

\begin{proof}
The proofs of (1) and (3) are similar to their counterparts in \cite[Prop 3.1]{YZ}.

(2) We first show that $\cN_{\un d}$ is a DM stack. By conditions \eqref{det nonvan} and \eqref{ph degree} of Definition \ref{defn Nd}, at most one of $\ph_{ij}$ can be identically zero, so $\cN_{\un d}$ is covered by four open substacks $U_{ij}$, $i,j\in \{1,2\}$, in which only $\ph_{ij}$ is allowed to be zero (in fact two of these will be empty by condition \eqref{ph degree}). We will show that $U_{11}$ is a DM stack, and the argument for other $U_{ij}$ is similar. Since $U_{11}$ is open in
\begin{eqnarray*}
V_{11}=(\hX^{\sqR}_{d_{11}}\times X^{\sqR}_{d_{22}-N_{-}})\times_{\Pic^{\sqR;\sqR}_{X}}(X^{\sqR}_{d_{12}}\times X^{\sqR}_{d_{21-N_{+}}})
\end{eqnarray*}
it suffices to show $V_{11}$ is DM. The projection $V_{11}\to X^{\sqR}_{d_{22}-N_{-}}\times X^{\sqR}_{d_{12}}\times X^{\sqR}_{d_{21-N_{+}}}$ is schematic. By Lemma \ref{l:ev sm}(2), $X^{\sqR}_{n}$ is DM for any $n$, therefore $V_{11}$, hence $U_{11}$ is also DM.

We now prove the smoothness of $\cN_{\un d}$ in the case $d_{11}<d_{22}-N_{-}$ and  $d_{12}<d_{21}-N_{+}$; the other cases are similar. In this case the image of $\j_{\un d}$ lies in the open substack
\begin{equation*}
(X^{\sqR}_{d_{11}}\times \hX^{\sqR}_{d_{22}-N_{-}})\times_{\Pic^{\sqR;\sqR}_{X}}(X^{\sqR}_{d_{12}}\times \hX^{\sqR}_{d_{21-N_{+}}})
\end{equation*}
Since $d_{12}+(d_{21}-N)=d+\r-N\ge2(2g-1+\r)-1$ by assumption on $d$, and $d_{12}<d_{21}-N_{+}$,  we have $d_{21}-N_{+}\ge 2g-1+\r$. Similarly, we have $d_{22}-N_{-}\ge 2g-1+\r$. Therefore the Abel-Jacobi maps $\hX^{\sqR}_{d_{22}-N_{-}}\to \Pic^{\sqR;\sqR,d_{22}-N_{-}}_{X}$ and $\hX^{\sqR}_{d_{21}-N_{+}}\to \Pic^{\sqR;\sqR,d_{21}-N_{+}}_{X}$ are affine space bundles by Riemann-Roch, hence smooth. It therefore suffices to show the smoothness of
\begin{equation}\label{XRP}
\cQ:=(X^{\sqR}_{d_{11}}\times \Pic^{\sqR;\sqR,d_{22}-N_{-}}_{X})\times_{\Pic^{\sqR;\sqR}_{X}}(X^{\sqR}_{d_{12}}\times \Pic^{\sqR;\sqR,d_{21}-N_{+}}_{X}).
\end{equation}
We have the evaluation maps (by recording the square root line along $R$ and its section)
\begin{eqnarray*}
\ev^{\sqR}_{d_{ij}}: X^{\sqR}_{d_{ij}}\to [\Res^{R}_{k}\AA^1/\Res^{R}_{k}\Gm]\\
\ev^{\sqR}_{\Pic}: \Pic^{\sqR;\sqR}_{X}\to [\Res^{R}_{k}\AA^1/\Res^{R}_{k}\Gm]
\end{eqnarray*} 
which are both smooth, by Lemma \ref{l:ev sm}. To simplify notation, we write $$[\Res^{R}_{k}\AA^1/\Res^{R}_{k}\Gm]=[\AA^1/\Gm]_R.$$ Then the fiber product of these maps give a smooth map
\begin{equation*}
\ev^{\sqR}_{\cQ}: \cQ\to ([\AA^1/\Gm]_R \times [\AA^1/\Gm]_R )\times_{[\AA^1/\Gm]_R}([\AA^1/\Gm]_R \times [\AA^1/\Gm]_R).
\end{equation*}
Let  $C_{R}:=\Res^{R}_{k}\AA^{2}\times_{\Res^{R}_{k}\AA^1}\Res^{R}_{k}\AA^{2}$
with the two maps $\Res^{R}_{k}\AA^{2}\to \Res^{R}_{k}\AA^1$ both given by $(u,v)\mapsto uv$. Then the target of $\ev^{\sqR}_{\cQ}$ can be written as $[C_{R}/\Res^{R}_{k}\Gm^{3}]$ where the torus $\Gm^{3}$ is the subtorus of $\Gm^{4}$ consisting of $(u,v,s,t)$ such that $uv=st$.  Base change to $\kbar$, we have $C_{R,\kbar}\cong \prod_{x\in R(\kbar)}C_{x}$, where $C_{x}\subset \AA^{4}_{\kbar}$ is the cone defined by $uv-st=0$. Note that $C^{\circ}_{x}=C_{x}-\{(0,0,0,0)\}$ is smooth over $\kbar$.  The product $\prod_{x\in R(\kbar)}C^{\circ}_{x}$ defines a smooth open subset $C^{\circ}_{R}\subset C_{R}$. We claim that the image of $\ev^{\sqR}_{\cQ}$ lies in $[C^{\circ}_{R}/\Res^{R}_{k}\Gm^{3}]$. For otherwise, there would be a point $(\cL_{i}, \dotsc, \ph, \psi_{R})\in \cN_{\un d}(\kbar)$ and some $x\in R(\kbar)$ such that $\psi_{ij,R}$ (hence $\ph_{ij}$) vanishes at $x$ for all $i,j\in\{1,2\}$, implying that $\det(\ph)$ vanished twice at $x$ and contradicting the condition \eqref{det ph R}. Therefore the image of $\ev^{\sqR}_{\cQ}$ lies in the smooth locus of $[C_{R}/\Res^{R}_{k}\Gm^{3}]$, showing that $\cQ$ is itself smooth over $k$. This implies that $\cN_{\un d}$ is smooth over $k$. The dimension calculation is similar to Prop. \ref{p:M}\eqref{M smooth} for $\dim \CM_d$ and we omit it here.

(4) Since $\Om$ is proper, it suffices to show that $g_{\un d}$ is proper. As in the proof of \cite[Prop. 3.1(3)]{YZ}, it suffices to show that the restriction of $\wh\add^{\sqR}_{d_{1},d_{2}}$
\begin{equation}\label{add d1d2}
X^{\sqR}_{d_{1}}\times \hX^{\sqR}_{d_{2}}\to \hX^{\sqR}_{d_{1}+d_{2}}
\end{equation}
is proper for any $d_{1},d_{2}\ge0$. Since $\hX^{\sqR}_{n}\to \hX_{n}$ is finite (hence proper), the properness of \eqref{add d1d2} follows from the properness of $\wh\add_{d_{1},d_{2}}:X_{d_{1}}\times \hX_{d_{2}}\to \hX_{d_{1}+d_{2}}$, which was shown in the proof of \cite[Prop. 3.1(3)]{YZ}.
\end{proof}

\subsection{Relation with orbital integrals}
\sss{The rank one local system} Recall the double cover $\nu:X'\to X$ from \S\ref{sss:double cover}.  Let $\s:X'\to X'$ be the nontrivial involution over $X$. The direct image sheaf $\nu_{*}\Ql$ has a decomposition $\nu_{*}\Ql=\Ql\oplus L_{X'/X}$ into $\s$ eigenspaces of eigenvalue $1$ and $-1$. Then $L_{X'/X}|_{X-R}$ is a local system of rank one with geometric monodromy of order $2$ around each $\kbar$-point of the ramification locus $R$. 

Starting with $L=L_{X'/X}$, in \S\ref{sss:cons LPic} we construct a rank one local system $L^{\Pic}$ on $\Pic^{\sqR}_{X}$ whose corresponding trace function is the quadratic id\`ele class character $\y=\y_{F'/F}$ (Prop. \ref{p:LPic fun}). Via pullback along $\wh\AJ^{\sqR}_{d}: \hX^{\sqR}_{d}\to \Pic^{\sqR,d}_{X}$, it gives a rank one local system $\wh L_{d}$ on $\hX^{\sqR}_{d}$ for each $d\in\ZZ$ extending the local system $L_{d}$ on $X^{\sqR}_{d}$ defined in Lemma \ref{l:Ld}.

For $\un d\in Q_{d}$, we define a local system $L_{\un d}$ on $\cN_{\un d}$ by
\begin{equation*}
L_{\un d}=\j_{\un d}^{*}(\wh L_{d_{11}}\boxtimes\Ql\boxtimes \wh L_{d_{12}}\boxtimes \Ql).
\end{equation*}

\sss{} Recall that, for each $f\in \sH^{\Sig\cup R}_{G}$, we have defined by \eqref{def f Sig pm}
\begin{equation*}
f^{\Sig_{\pm}}=f\cdot \left(\bigotimes_{x\in R}h^{\bsq}_{x}\right)\otimes \left(\bigotimes_{x\in \Sig}\one_{\bJ_{x}}\right)\in C_c^\infty(G(\BA)).
\end{equation*}

Let $D$ be an effective divisor on $U=X-\Sig- R$ of degree $d$. In \cite[\S3.1]{YZ}  we have defined a spherical Hecke function $h_{D}\in\sH^{\Sig\cup R}_{G}$.  Therefore the element $h_{D}^{\Sig_{\pm}}\in C_c^\infty(G(\BA))$ is defined.

For $u\in \PP^{1}(F)-\{1\}$ and $h\in C_c^\infty(G(\BA))$, let
\begin{equation}\label{J(u,h)}
\JJ(u,h, s_{1},s_{2})=\sum_{\g\in A(F)\bs G(F)/A(F), \inv(\g)=u}\JJ(\g, h,s_{1},s_{2}).
\end{equation}
Note that when $u\notin \{0,1,\infty\}$, the RHS of \eqref{J(u,h)} has only one term; when $u=0$ or $\infty$, the RHS of \eqref{J(u,h)} has three terms (cf. \cite[3.3.2]{YZ}).

Recall the space $\cA^{\fl}_{D}$ defined in \eqref{def AD}. Then we have a map
\begin{equation*}
\inv_{D}: \cA^{\fl}_{D}(k)\to \PP^{1}(F)-\{1\}
\end{equation*}
sending $(\D,a,b)$ to the rational function $b/a\in \PP^{1}(F)$. As in \cite[3.3.2]{YZ}, the map $\inv_{D}$ is injective.

\begin{theorem}\label{th:J tr} Let $D$ be an effective divisor on $U=X-\Sig-R$ of degree $d$. Let $u\in \PP^{1}(F)-\{1\}$. 
\begin{enumerate}
\item If $u$ is not in the image of $\inv_{D}: \cA^{\fl}_{D}(k)\incl \PP^{1}(F)-\{1\}$, then $\JJ(u,h^{\Sig_{\pm}}_{D}, s_{1},s_{2})=0$.
\item If $u\notin\{0,1,\infty\}$ and  $u=\inv_{D}(a)$ for $a\in \cA^{\fl}_{D}(k)$ (which is then unique), then
\begin{equation}\label{J tr}
\JJ(u, h^{\Sig_{\pm}}_{D},s_{1},s_{2})=\sum_{\un d\in Q_{d}}q^{(2d_{12}-d-\r)s_{1}+(2d_{11}-d-\r)s_{2}}\Tr(\Fr_{a}, (\bR g^{\fl}_{\un d, !}L_{\un d})_{\ov a}).
\end{equation}
\item Assume $d\ge 4g-3+\r+N$. If $u=0$ or $\infty$, and  $u=\inv_{D}(a)$ for $a\in \cA^{\fl}_{D}(k)$ (which is then unique), then \eqref{J tr} still holds. 
\end{enumerate}
\end{theorem}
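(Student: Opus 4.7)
The plan is to unfold the orbital integral $\JJ(u,h^{\Sig_\pm}_D,s_1,s_2)$ as a weighted sum over $k$-points of the stacks $\cN_{\un d}$ lying over $a\in\cA^\fl_D(k)$, and then apply the Grothendieck--Lefschetz trace formula. This follows the overall template of \cite[\S3]{YZ}, but the substance lies in showing that the specific local test functions chosen at $\Sig$ and $R$ in \S\ref{ss:local test} match precisely the local moduli data defining $\cN_{\un d}$.

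First I would perform a local-global decomposition. Since the test function is a pure tensor $h^{\Sig_\pm}_D=\otimes_x(h_D)_x\otimes\bigotimes_{x\in R}h^\bsq_x\otimes\bigotimes_{x\in\Sig}\one_{\bJ_x}$, the orbital integral $\JJ(\g,h^{\Sig_\pm}_D,s_1,s_2)$ factors into a product of local orbital integrals. Choosing an $(A,A)$-orbit representative $\g=\mat{a_{11}}{a_{12}}{a_{21}}{a_{22}}\in G(F)$ with $\inv(\g)=u$, the local integral at $x\notin\Sig\cup R$ is, as in \cite[\S3.2]{YZ}, a sum over pairs of local lattices (line bundle data $\cL_j,\cL'_i$ in local coordinates) together with matrix entries $\ph_{ij}$ satisfying degree constraints that match the Hecke function $h_{D_x}$; the factor $\eta_x(h_{2,x})$ contributes a local monodromy factor matching the local system $L$ on $X$. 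At $x\in\Sig$, unfolding $\one_{\bJ_x}$ over $A(F_x)\times A(F_x)$ produces the conditions $\ph_{11}|_{\Sig_+}$, $\ph_{22}|_{\Sig_+}$ nowhere vanishing and $\ph_{22}|_{\Sig_-}=0$ (and symmetrically), which are exactly conditions (1)--(2) of Definition \ref{defn Nd}.

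The most delicate point is the local factor at $x\in R$. Here we invoke Lemma \ref{lem mu Xix}: the function $\wt h^\bsq_x$ pushes forward the characteristic function of the set $\Xi_x$ parameterizing pairs consisting of a matrix $(a_{ij})\in\Mat_2(\cO_x)_{v_x(\det)=1}$ together with a ``square root'' matrix $(\a_{ij})\in\Mat_2(k(x))$ satisfying $\det(\a_{ij})=0$ and $\a_{ij}^2=\ov a_{ij}$. Under the local unfolding of the orbital integral, the matrix $(a_{ij})$ becomes the local incarnation of $\ph=\mat{\ph_{11}}{\ph_{12}}{\ph_{21}}{\ph_{22}}$ along $R\times S$, and the square root $(\a_{ij})$ is precisely the data $\psi_R$ in the definition of $\cN_{\un d}$ (living on the square root line bundles $\cK_{i,R},\cK'_{i,R}$), subject to the constraint $\det(\psi_R)=0$. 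The fact that $\wt h^\bsq_x$ is supported on $v_x(\det)=1$ matches condition \eqref{det ph R} that $\det(\ph)$ vanishes to first order on $R\times S$. The local $\y_x$-eigenvalue of the measure at $R$ is absorbed into the rank one local system $L^{\Pic}$ on $\Pic^{\sqR}_X$, producing the local factor of $L_{\un d}$ over $R$.

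Globally, collecting these local pieces: if $u=\inv_D(a)$ for a unique $a\in\cA^\fl_D(k)$, then the $\g$ with $\inv(\g)=u$ appearing with nonzero contribution correspond bijectively to $k$-points of the fiber $(g^\fl_{\un d})^{-1}(a)$ summed over $\un d\in Q_d$, with the weight $|h_1|^{s_1+s_2}|h_2|^{s_1-s_2}\eta(h_2)$ producing the factor $q^{(2d_{12}-d-\r)s_1+(2d_{11}-d-\r)s_2}$ (the exponents recording the difference $d_{12}-d_{21}$ and $d_{11}-d_{22}$ of degrees of the bundles $\cL_i,\cL'_i$) multiplied by the trace of Frobenius on the stalk of $L_{\un d}$ at the corresponding point. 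The Grothendieck--Lefschetz trace formula, combined with the properness of $g^\fl_{\un d}$ established in Proposition \ref{p:N}(\ref{g proper}), then gives \eqref{J tr}. Part (1) is immediate: if $u$ is not in the image of $\inv_D$, then there simply are no $\g$ contributing to the unfolded sum, so the orbital integral vanishes.

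The main obstacle is part (3), the regularization at the singular invariants $u=0,\infty$. Here the integrand does not have compact support after the $A\times A$ unfolding, and three distinct double cosets contribute to $\JJ(u,h^{\Sig_\pm}_D,s_1,s_2)$ (cf.\ \cite[3.3.2]{YZ}). One must show that the regularized sum still matches the geometric count on $\cN_{\un d}$, using the degree hypothesis $d\ge 4g-3+\r+N$ to guarantee that the ``boundary'' strata in $\cN_{\un d}$ (where some $\ph_{ij}$ vanishes identically) are proper of the correct dimension and that the combinatorics of the three local double cosets matches the combinatorics of the index set $Q_d$. This is the function-field analogue of the regularization arguments in \cite[\S2.5, \S3.3]{YZ} and should go through with straightforward modifications accounting for the ramification divisor $R$, but careful bookkeeping is needed to verify that the boundary contributions in the regularized orbital integral exactly reproduce the extra trace terms for those $\un d$ with some $d_{ij}=0$.
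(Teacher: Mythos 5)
For the regular case your outline matches the paper's argument: the paper proves a global unfolding lemma (Lemma \ref{l:J count frX}), using $\wt h^{\bsq}_{x}=\mu_{x,*}\one_{\Xi_{x}}$ from Lemma \ref{lem mu Xix}, which rewrites $\JJ(\g,h^{\Sig_{\pm}}_{D},s_{1},s_{2})$ as a weighted sum over a groupoid $\frX_{D,\wt\g}$ of quadruples of divisors with square-root data along $R$; for $u\notin\{0,1,\infty\}$ this groupoid is identified with $\cN_{d,a}(k)$, the exponents $2d_{12}-d-\r$, $2d_{11}-d-\r$ come out of $|\a(t)|,|\a(t')|$ exactly as you say, and \eqref{J tr} follows from the Lefschetz trace formula together with Prop.~\ref{p:LPic fun}. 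Your identifications of the Iwahori conditions with conditions (1)--(2) of Definition \ref{defn Nd} and of the square-root matrix with $\psi_{R}$ are all correct.

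The genuine gap is part (3) (and the singular-$u$ instance of part (1)), which you defer to ``careful bookkeeping'' and, more importantly, for which you name the wrong mechanism. The issue is not that boundary strata of $\cN_{\un d}$ fail to be proper or have wrong dimension. After unfolding, the cosets $n_{\pm}$ (resp.\ $n_{\pm}w_{0}$) produce counts over enlarged spaces $\wh\cN^{\pm}_{\un d,a_{0}}$ indexed by an enlarged set $\wh Q_{d}$ in which condition \eqref{ph degree} of Definition \ref{defn Nd} is dropped, whereas the right-hand side of \eqref{J tr} sums over $Q_{d}$ with that condition imposed. The reconciliation is a character-sum vanishing statement (Lemma \ref{l:van d12}(1)(2)): when $d_{12}\ge 2g-1+\r$ (resp.\ $d_{21}-N_{+}\ge 2g-1+\r$) the weighted count over $\wh\cN^{\pm}_{\un d,a_{0}}(k)$ vanishes, because the fiber product splits off a factor $X^{\sqR}_{d_{12}}(k)$ on which one sums the nontrivial quadratic character $\eta$, and in that degree range the Abel--Jacobi map has fibers of constant cardinality, so the sum is zero. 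The hypothesis $d\ge 4g-3+\r+N$ enters exactly here, forcing $d_{12}+(d_{21}-N_{+})\ge 2(2g-2+\r)+1$ so that for every $\un d$ either the term vanishes or it lies in the range singled out by condition \eqref{ph degree}; this is how the two ranges of summation are matched. Separately, the third double coset with $\inv=0$ (the identity, resp.\ $w_{0}$ for $u=\infty$) contributes a regularized term that does not vanish for lack of points: it vanishes because the character $(t,t')\mapsto|tt'|^{s_{1}}|t'/t|^{s_{2}}\eta(\a(t))$ is nontrivial on its stabilizer (the diagonal $A(\AA)$), Lemma \ref{l:van d12}(3). Your blanket claim in part (1) that ``there simply are no $\g$ contributing'' misses this coset when $u\in\{0,\infty\}$ is not in the image of $\inv_{D}$ (e.g.\ $u=0$ with $\Sig_{-}\ne\vn$), where the $n_{\pm}$ contributions are indeed empty but the identity coset still must be killed by the character argument. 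Without these two ingredients the proof of (3) is not complete.
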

The proof of this theorem will occupy the rest of this subsection. From now on, we fix an effective divisor $D$ on $U$ of degree $d$.

\sss{The set $\frX_{D,\wt\g}$} Recall from \S\ref{sss:OsqR} the definition of $\OO^{\times}_{\sqR}$, which maps to $\OO^{\times}$ and hence acts on $\AA^{\times}$ by translation. Define a groupoid
\begin{equation*}
\Div^{\sqR}(X)=\AA^{\times}/\OO^{\times}_{\sqR}
\end{equation*}
There are natural maps
\begin{eqnarray*}
\AJ^{\sqR}(k)&:& \Div^{\sqR}(X) \to F^{\times}\bs\AA^{\times}/\OO^{\times}_{\sqR}=\Pic^{\sqR}_{X}(k),\\
\om&:& \Div^{\sqR}(X) \to \AA^{\times}/\OO^{\times}=\Div(X). 
\end{eqnarray*}
We denote an element in $\Div^{\sqR}(X)$ by $E^{\na}$, and denote its image in $\Div(X)$ by $E$. We denote the multiplication in $\Div^{\sqR}(X)$ by $+$. For $E^{\na}\in \Div^{\sqR}(X)$, the line bundle $\cO_{X}(-E)$, when restricted to $R$, carries a canonical square root which we denote by $\cO_{X}(-E^{\na})_{\sqR}$ (an invertible $\cO_{R}$-module). The character $\y=\y_{F'/F}$ on $\Pic^{\sqR}_{X}(k)$ can also be viewed as a character on $\Div^{\sqR}(X)$ by pullback.

Let $\wt\g\in \GL_2(F)$. Let $\wt\frX_{D,\wt\g}$ be the groupoid of $(E^{\na}_{1},E^{\na}_{2},E'^{\na}_{1},E'^{\na}_{2}, \psi_{R})$ where
\begin{itemize}
\item $E^{\na}_{i}, E'^{\na}_{i}\in \Div^{\sqR}(X)$, for $i=1,2$.
\item $\psi_{R}: \cO_{X}(-E_{1}^{\na})_{\sqR}\op\cO_{X}(-E^{\na}_{2})_{\sqR}\to  \cO_{X}(-E'^{\na}_{1})_{\sqR}\op\cO_{X}(-E'^{\na}_{2})_{\sqR}$ is an $\cO_{R}$-linear map. Write $\psi_{R}$ as a matrix $\smat{\psi_{11,R}}{\psi_{12,R}}{\psi_{21,R}}{\psi_{22,R}}$. 
\end{itemize}
These data are required to satisfy the following conditions.
\begin{enumerate}
\setcounter{enumi}{-1}
\item The rational map $\wt \g: \cO^{2}_{X}\dashrightarrow \cO^{2}_{X}$ given by the matrix $\wt \g$ induces an everywhere defined map
\begin{equation*}
\ph: \cO_{X}(-E_{1})\op\cO_{X}(-E_{2})\to  \cO_{X}(-E'_{1})\op\cO_{X}(-E'_{2}).
\end{equation*}
We write $\ph$ as a matrix $\smat{\ph_{11}}{\ph_{12}}{\ph_{21}}{\ph_{22}}$. Moreover, $\psi^{2}_{ij,R}=\ph_{ij}|_{R}$ for $1\le i,j\le 2$.
\item $\ph_{22}$ vanishes along $\Sig_{-}$.
\item $\ph_{21}$ vanishes along $\Sig_{+}$.
\item $\det(\ph)$ has divisor $D+R$.
\end{enumerate}

Define the groupoid
\begin{equation*}
\frX_{D, \wt\g}=\wt\frX_{D,\wt\g}/\Div^{\sqR}(X)
\end{equation*}
with the action of $\Div^{\sqR}(X)$ given by simultaneous translation on $E^{\na}_{i}$ and $E'^{\na}_{i}$. We may identify $\frX_{D, \wt\g}$ with the sub groupoid of $\wt\frX_{D,\wt\g}$ where $E'^{\na}_{2}$ is equal to the identity element in $\Div^{\sqR}(X)$.

\begin{lemma}\label{l:J count frX} We have
\begin{eqnarray}\label{J count frX}
&&\JJ(\g, h_{D}^{\Sig_{\pm}}, s_{1}, s_{2})\\
\notag&=&\sum_{\L=(E^{\na}_{1},\cdots, E'^{\na}_{2}, \psi_{R})\in \frX_{D,\wt\g}}\frac{1}{\#\Aut(\L)}q^{-\deg(E_{1}-E_{2}+E'_{1}-E'_{2})s_{1}}q^{-\deg(-E_{1}+E_{2}+E'_{1}-E'_{2})s_{2}}\y(E^{\na}_{1}-E^{\na}_{2}).
\end{eqnarray}
\end{lemma}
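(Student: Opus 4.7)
The plan is to unfold the orbital integral place by place using the explicit description of the local test functions appearing in $h^{\Sig_\pm}_D$, and to identify the resulting counting problem with the set $\frX_{D,\wt\g}$.

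First I would lift $\g\in G(F)$ to a matrix $\wt\g\in \GL_2(F)$ and convert the integral over $A(\AA)\times A(\AA)$ into an integral over $\wt A(\AA)\times \wt A(\AA)$ modulo the diagonal center $Z(\AA)$, using the relations $h^\bsq_x=p_{x,*}\wt h^\bsq_x$ (and analogous lifts $\wt h_D$, $\wt{\one}_{\bJ_x}$ at the other places) together with the measure conventions in \S\ref{ss:local test}. After this reduction the integrand factorizes over $x\in|X|$. Writing $(\wt h_1,\wt h_2)=((a_1,a_2),(b_1,b_2))\in (\AA^\times)^4$, I would parametrize $\wt h_i$ modulo the appropriate right/left stabilizers of the local test functions; away from $R\cup\Sig$ this gives divisors $E_i,E_i'\in\Div(X)$ via $a_i\mapsto \div(a_i)$, etc., and at $x\in\Sig$ the Iwahori-type support of $\one_{\bJ_x}$ translates into the vanishing conditions $\ph_{22}|_{\Sig_-}=0$ and $\ph_{21}|_{\Sig_+}=0$ in the definition of $\frX_{D,\wt\g}$. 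At $x\in R$, using Lemma \ref{lem mu Xix} I would replace $\wt h^\bsq_x$ by the pushforward of $\one_{\Xi_x}$; this turns the integral near $R$ into a sum, each summand corresponding to a choice of square roots $(\a_{ij})\in\Mat_2(k(x))$ with $\det(\a_{ij})=0$. These square roots assemble into the $\cO_R$-linear map $\psi_R$ in Definition of $\frX_{D,\wt\g}$, and the divisors $E_i,E_i'$ at $R$ acquire canonical lifts to $\Div^\sqR(X)$ via the $\OO^\times_\sqR$ quotient.

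The conditions cutting out $\frX_{D,\wt\g}$ arise as follows: the condition that $\wt h_1^{-1}\wt\g\wt h_2$ lies in the support of $\wt h^{\Sig_\pm}_D$ forces the rational map $\wt\g:\cO_X^{\oplus2}\dashrightarrow \cO_X^{\oplus2}$ to extend to a genuine map $\ph:\cO_X(-E_1)\oplus\cO_X(-E_2)\to \cO_X(-E_1')\oplus\cO_X(-E_2')$; the support condition $v_x(\det)=1$ at $x\in R$ and $v_x(\det)=\mathrm{mult}_x D$ at $x\notin R\cup\Sig$ together give $\div(\det\ph)=D+R$; and the compatibility $\psi_{ij,R}^2=\ph_{ij}|_R$ together with $\det(\psi_R)=0$ is precisely the datum parametrized by $\Xi_x$. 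Finally, the integrand $|h_1 h_2|^{s_1}|h_1/h_2|^{s_2}\y(h_2)$ must be evaluated on each tuple. Using that $|u|=q^{-\deg\div(u)}$ and that the class $\wt h_2\mapsto \y(\wt h_2)$ factors through the canonical character $\y\colon\Div^\sqR(X)\to\{\pm1\}$ constructed from $L_{X'/X}$, the weight becomes exactly
\[
q^{-\deg(E_{1}-E_{2}+E'_{1}-E'_{2})s_{1}}\,q^{-\deg(-E_{1}+E_{2}+E'_{1}-E'_{2})s_{2}}\,\y(E^{\na}_{1}-E^{\na}_{2}).
\]
The denominator $1/\#\Aut(\L)$ appears when passing from the $(\AA^\times)^4$-integral to the quotient by the diagonal $\Div^\sqR(X)$-action that defines $\frX_{D,\wt\g}$: each orbit contributes with the standard groupoid weight.

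The main obstacle is the careful bookkeeping of the square-root data at places $x\in R$: both the test function $\wt h^\bsq_x$ (interpreted via $\Xi_x$) and the target set $\frX_{D,\wt\g}$ (via $\Div^\sqR(X)$ and $\psi_R$) carry compatible, but superficially different, square-root refinements, and one must check that these identify canonically so that the local factor $\wt h^\bsq_x(\wt h_{1,x}^{-1}\wt\g_x\wt h_{2,x})\cdot \y_x(\wt h_{2,x})$ contributes exactly $\sum_{\psi_{ij,R}}\y_x(E^\na_{1,x}-E^\na_{2,x})$. Once this local compatibility is established — essentially by observing that $\OO^\times_\sqR$ was defined precisely so that $\y$ factors through it while preserving the square-root torsor at $R$ — the remaining arguments at split and inert ramified places in $\Sig$, and at unramified places, are direct adaptations of the lattice-counting interpretation of orbital integrals in \cite[\S3]{YZ}.
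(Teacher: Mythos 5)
Your proposal is correct and follows essentially the same route as the paper's proof: lifting the orbital integral to $\GL_{2}$ via $p_{x,*}$, using Lemma \ref{lem mu Xix} to rewrite the test function as a pushforward $\mu_{*}\one_{\Xi_{D}}$ (so the square-root data at $R$ becomes part of the counting set), and then using invariance under $\OO^{\times}_{\sqR}$ to convert the adelic integral into a groupoid count over $\Div^{\sqR}(X)^{4}$ modulo the diagonal action, identified with $\frX_{D,\wt\g}$ with the weights $q^{-\deg(\cdot)s_{i}}$ and $\y(E^{\na}_{1}-E^{\na}_{2})$. The only difference is one of detail, not of method.
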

\begin{proof} Let $\wt A\subset \GL_{2}$ be the diagonal torus, and $Z\subset \GL_{2}$ be the center. Let
\begin{equation*}
\wt h^{\Sig_{\pm}}_{D}=\wt h_{D}\cdot \left(\bigotimes_{x\in R}\wt h^{\bsq}_{x}\right)\ot \left(\bigotimes_{x\in \Sig}\one_{\wt\bJ_{x}}\right).
\end{equation*}
Here $\wt h_{D}\in \sH_{\GL_{2}}$ is as defined in  \cite[proof of Prop 3.2]{YZ}, and $\wt\bJ_{x}\subset \GL_{2}(\cO_{x})$ is defined by the same formulae as $\bJ_{x}$ (see \eqref{def bJx}), with $G$ replaced by $\GL_{2}$. Then we have $h^{\Sig_{\pm}}_{D}=p_{*}\wt h^{\Sig_{\pm}}_{D}$ where $p_{*}:C_c^{\infty}(\GL_{2})\to C_c^{\infty}(G(\AA))$ is the tensor product of $p_{x,*}$. This allows us to convert the integral $\JJ(\g, h^{\Sig_{\pm}}_{D}, s_{1}, s_{2})$ into an integral on $\GL_{2}$, i.e., 
\begin{equation*}
\JJ(\g, h^{\Sig_{\pm}}_{D}, s_{1}, s_{2})=\int_{\Delta(Z(\AA))\bs(\wt A(\AA)\times\wt A(\AA))}\wt h^{\Sig_{\pm}}_{D}(t'^{-1}\wt \g t)|\a(t)\a(t')|^{s_{1}}|\a(t')/\a(t)|^{s_{2}}\y(\a(t))dtdt'.
\end{equation*}
Here $\a:\wt A\to \Gm$ is the positive root $\smat{t_{1}}{0}{0}{t_{2}}\mapsto t_{1}/t_{2}$, and the measure on $\AA^{\times}$ is such that $\vol(\OO^{\times})=1$.  We may identify $\D(Z)\bs \wt A\times \wt A$ with $\Gm^{3}$ such that $(\smat{t_{1}}{0}{0}{t_{2}},\smat{t'_{1}}{0}{0}{1})$ corresponds to $(t_{1}, t_{2}, t'_{1})\in \Gm^{3}$, and rewrite the above integral as
\begin{equation}\label{J GL2}
\JJ(\g, h^{\Sig_{\pm}}_{D}, s_{1}, s_{2})=\int_{(\AA^{\times})^{3}}\wt h^{\Sig_{\pm}}_{D}(\smat{t'^{-1}_{1}}{0}{0}{1}\wt \g \smat{t_{1}}{0}{0}{t_{2}})|t_{1}t_{2}^{-1}t'_{1}|^{s_{1}}|t_{2}t_{1}^{-1}t'_{1}|^{s_{2}}\y(t_{1}t_{2}^{-1})dt_{1}dt_{2}dt'_{1}.
\end{equation}

For $x\in |X|$, define a set $\Xi_{D,x}$ as follows:
\begin{itemize}
\item For $x\in R$, let $\Xi_{D,x}=\Xi_x$  defined in \S\ref{sss:hx at R};
\item For $x\in \Sig$, $\Xi_{D,x}=\wt \bJ_{x}$;
\item For $x\in |X|-R-\Sig$, $\Xi_{D,x}=\Mat_{2}(\cO_{x})_{v_{x}(\det)=n_{x}}$, where $n_{x}$ is the coefficient of $x$ in $D$.
\end{itemize}
Let $\Xi_{D}=\prod_{x\in |X|}\Xi_{D,x}$, then  there is a projection map
$\mu: \Xi_{D}\to \Mat_{2}(\OO)_{\div(\det)=D+R}$. We have
\begin{equation}\label{one XiD}
\wt h^{\Sig_{\pm}}_{D}=\mu_{*}\one_{\Xi_{D}}.
\end{equation}
In fact, this can be checked place by place. The assertion is trivial when $x\notin R$, and it follows from Lemma \ref{lem mu Xix} when $x\in R$.

By \eqref{one XiD}, we may rewrite \eqref{J GL2} as
\begin{equation}\label{J GL2'}
\JJ(\g, h^{\Sig_{\pm}}_{D}, s_{1},s_{2})=\int_{(\AA^{\times})^{3}}\#\mu^{-1}\left(\smat{t'^{-1}_{1}}{0}{0}{1}\wt\g \smat{t_{1}}{0}{0}{t_{2}}\right)|t_{1}t_{2}^{-1}t'_{1}|^{s_{1}}|t_{2}t_{1}^{-1}t'_{1}|^{s_{2}}\y(t_{1}t_{2}^{-1})dt_{1}dt_{2}dt'_{1}.
\end{equation}
Here $\mu^{-1}(g)=\vn$ if $g\notin \Mat_{2}(\OO)_{\div(\det)=D+R}$.

Note that the integrand in \eqref{J GL2'} is invariant under translating each of the variables by $\OO^{\times}_{\sqR}$, therefore we may turn $\JJ(\g, h^{\Sig_{\pm}}_{D}, s_{1},s_{2})$ into an integration over $\Div^{\sqR}(X)^{3}$.  To do this, we first write the integrand as a function on $\Div^{\sqR}(X)^{3}$. Denote the images of $t_{1},t_{2}, t'_{1}$ and $t'_{2}=1$ in $\Div^{\sqR}(X)$ by $E^{\na}_{1},E^{\na}_{2},E'^{\na}_{1}$ and $E'^{\na}_{2}=0$. One checks that the set $\mu^{-1}(\smat{t'^{-1}_{1}}{0}{0}{1}\wt\g \smat{t_{1}}{0}{0}{t_{2}})$ is in natural bijection with the fiber of $\wt\l: \wt\frX_{D,\wt\g}\to \Div^{\sqR}(X)^{4}$ over $(E^{\na}_{1},E^{\na}_{2},E'^{\na}_{1},E'^{\na}_{2})$, or equivalently the fiber of $\l:\frX_{D,\g}\to \Div^{\sqR}(X)^{3}$. Moreover, we have
\begin{eqnarray*}
|t_{i}|=q^{-\deg E_{i}},\quad |t'_{i}|=q^{-\deg E'_{i}}, \quad i,j\in\{1,2\}.
\end{eqnarray*}
Hence the integrand in  \eqref{J GL2'} descends to the following function on $\Div^{\sqR}(X)^{3}$ (with $E^{\na}_{2}=0$)
\begin{equation}\label{integrand Div}
\l_{!}\one_{\frX_{D,\g}}q^{-\deg(E_{1}-E_{2}+E'_{1}-E'_{2})s_{1}}q^{-\deg(-E_{1}+E_{2}+E'_{1}-E'_{2})s_{2}}\y(E^{\na}_{1}-E^{\na}_{2}).
\end{equation}

To finish the argument we need some general remarks about integrating a function over a groupoid: 
\begin{enumerate}
\item[(i)] If $\cG$ is a groupoid with finite automorphisms, and $f$ is a function on $\cG$ with finite support, then define
\begin{equation*}
\int_{\cG}f:=\sum_{g\in \cG}\frac{1}{\#\Aut(g)}f(g).
\end{equation*}

\item[(ii)] The integration above is compatible with push-forward of functions. If $\ph: \cG\to \cG'$ is a map of groupoids with finite automorphisms, and $f$ is a function with finite support on $\cG'$, then
\begin{equation*}
\int_{\cG}f=\int_{\cG'}\ph_{!}f
\end{equation*}
where $(\ph_{!}f)(g')=\int_{\ph^{-1}(g')}f|_{\ph^{-1}(g')}$, where $\ph^{-1}(g')$ is the  fiber groupoid of $\ph$ over $g'$.

\item[(iii)] Suppose we have a topological group $H$ with Haar measure $dh$, and a homomorphism $\ph: H_{1}\to H$ from a compact topological group $H_{1}$ such that the image of $\ph$ is open and $\ph$ has finite kernel. Then the groupoid $\cG=H/H_{1}$ has discrete topology with finite automorphism groups equal to $\ker(\ph)$. For a function $f$ on $H$ invariant under right translation by $\ph(H_{1})$, we have
\begin{equation}\label{int HH}
\int_{H}f(h)dh=\vol(\ph(H_{1}),dh)\#\ker(\ph)\cdot\int_{H/H_{1}}\ov f
\end{equation}
where $\ov f$ is the pullback of the descent of $f$ from $H/\ph(H_{1})$ to $H/H_{1}$. 
\end{enumerate}

Applying (iii) above to $H=(\AA^{\times})^{3}$ and $H_{1}=(\OO^{\times}_{\sqR})^{3}$ with the natural map $H_{1}\to (\OO^{\times})^{3}\incl H$. Note that the kernel and the cokernel of the map $\OO_{\sqR}^{\times}\to \OO^{\times}$ have the same finite cardinality $2^{\#R}$. Since $\vol(\OO^{\times})=1$ under the Haar measure on $\AA^{\times}$, the constant factor on the right side of \eqref{int HH} is $1$ in this case. Therefore by \eqref{int HH}, \eqref{J GL2'} can be written as the integration over $\Div^{\sqR}(X)^{3}$ of the function \eqref{integrand Div}. Applying (ii) above to $\l: \frX_{D,\g}\to \Div^{\sqR}(X)^{3}$, we further turn the integration over $ \Div^{\sqR}(X)^{3}$ into an integration over $\frX_{D,\g}$
\begin{equation}\label{J frX pre}
\JJ(\g,h^{\Sig_{\pm}}_{D}, s_{1}, s_{2})=\int_{\frX_{D,\g}}q^{-\deg(E_{1}-E_{2}+E'_{1}-E'_{2})s_{1}}q^{-\deg(-E_{1}+E_{2}+E'_{1}-E'_{2})s_{2}}\y(E^{\na}_{1}-E^{\na}_{2})
\end{equation}
where $(E_{1}^{\na},E_{2}^{\na}, E_{1}'^{\na}, E_{2}'^{\na})$ is the image of a variable point of $\frX_{D,\g}$ in $\Div^{\sqR}(X)^{4}/\Div^{\sqR}(X)$.  Now the formula \eqref{J count frX} follows from \eqref{J frX pre} by the definition in (i).
%
\end{proof}

\sss{Proof of Theorem \ref{th:J tr} for $u\notin\{0,1,\infty\}$}\label{sss:u gen}
For $u\notin\{0,1,\infty\}$, let $\wt\g(u)=\smat{1}{u}{1}{1}$, which represents the unique $\wt A(F)$ double coset in $\GL_{2}(F)$ with invariant $u$. We define a map
\begin{eqnarray*}
\l: \frX_{D,\wt\g(u)}&\to & \cN_{d}(k)\\
(E^{\na}_{1},E^{\na}_{2},E'^{\na}_{1},E'^{\na}_{2},\psi_{R})&\mapsto& (\cL^{\na}_{1},\cL^{\na}_{2},\cL'^{\na}_{1}, \cL'^{\na}_{2}, \ph, \psi_{R})
\end{eqnarray*}
where $\cL^{\na}_{i}$ (resp. $\cL'^{\na}_{i}$) is the  image of $-E^{\na}_{i}$ (resp. $-E'^{\na}_{i}$) under $\AJ^{\sqR}(k): \Div^{\sqR}(X)\to \Pic^{\sqR}_{X}(k)$; the definition of $\ph$ is contained in the definition of $\wt\frX_{D,\wt\g}$.   
If $\L$ is in the image of $\l$, then $a:=g^{\fl}_{\un d}(\L)\in \cA^{\fl}_{D}(k)$ and $\inv_{D}(a)=u$. In particular, if $u$ is not in the image of $\inv_{D}$, $\frX_{D,\wt\g(u)}=\vn$ hence $J(u,h^{\Sig_{\pm}}_{D},s_{1},s_{2})=0$ by Lemma \ref{l:J count frX}. 

Now we assume $u=\inv_{D}(a)$ for some (unique) $a\in\cA^{\fl}_{D}(k)$. Let $\cN_{\un d,a}=g^{\fl,-1}_{\un d}(a)$ and $\cN_{d,a}=\coprod_{\un d\in Q_{d}}\cN_{\un d,a}$. Then we can write
\begin{equation*}
\l: \frX_{D,\wt\g(u)}\to  \cN_{d,a}(k).
\end{equation*}
Let us define an inverse to $\l$.  Let $(\cL^{\na}_{1},\dotsc, \cL'^{\na}_{2},\ph,\psi_{R})\in\cN_{d,a}(k)$. Since the $(\cL^{\na}_{1},\dotsc, \cL'^{\na}_{2})$ are up to simultaneous tensoring with $\Pic^{\sqR}_{X}(k)$, we may fix $\cL'^{\na}_{2}$ to be $\dot{\cO}_{X}$, the identity object in $\Pic^{\sqR}_{X}(k)$. Since $\inv_{D}(a)=u\ne 0,\infty$, the maps $\ph_{ij}$ are all nonzero. Then $\ph_{21}:\cL_{1}\to \cO_{X}=\cL'_{2}$ allows us to write $\cL_{1}=\cO_{X}(-E_{1})$ for an effective divisor $E_{1}$. The lifting $\cL^{\na}_{1}$ of $\cL_{1}$ gives a canonical lifting $E_{1}^{\na}\in \Div^{\sqR}(X)$ of $E_{1}$, so that $\AJ^{\sqR}(k)(-E_{1}^{\na})\cong\cL^{\na}_{1}$ canonically. Similarly, using $\ph_{22}$ we get $E^{\na}_{2}\in \Div^{\sqR}(X)$ whose inverse represents $\cL^{\na}_{2}$. Using $\ph_{11}$ and $E_{1}^{\na}$, we further get $E'^{\na}_{1}\in \Div^{\sqR}(X)$ whose inverse represents $\cL'^{\na}_{1}$. Then $(E_{1}^{\na}, E_{2}^{\na}, E'^{\na}_{1}, 0, \psi_{R})$ ($0$ denotes the identity in $\Div^{\sqR}(X)$) gives an element in $\frX_{D, \wt\g(u)}$. It is easy to check that this assignment is inverse to $\l$, hence $\l$ is an isomorphism of groupoids.

Under $\l$, we have
\begin{eqnarray}
\label{deg d12} -\deg(E_{1}-E_{2}+E'_{1} -E'_{2})&=&d_{12}-d_{21}=2d_{12}-d-\r,\\
\label{deg d11} -\deg(-E_{1}+E_{2}+E'_{1} -E'_{2})&=&d_{11}-d_{22}=2d_{11}-d-\r, \\
\label{eta EL} \y(E^{\na}_{1}-E^{\na}_{2})&=&\y(\cL^{\na}_{11})\y(\cL^{\na}_{12})=\y(\cL^{\na}_{21})\y(\cL^{\na}_{22}),
\end{eqnarray}
where $\cL^{\na}_{ij}=\cL^{\na,\ot-1}_{j}\ot\cL'^{\na}_{i}$ and $\deg\cL_{ij}=d_{ij}$. Therefore we may rewrite \eqref{J count frX} as
\begin{eqnarray*}
&&\JJ(\g(u), h^{\Sig_{\pm}}_{D}, s_{1},s_{2})\\
&=&\sum_{\L= (\cL^{\na}_{1},\cdots, \cL'^{\na}_{2}, \ph, \psi_{R})\in \cN_{d,a}(k)}\frac{1}{\#\Aut(\L)}q^{(2d_{12}-d-\r)s_{1}+(2d_{11}-d-\r)s_{2}}\y(\cL^{\na}_{11})\y(\cL^{\na}_{12}).
\end{eqnarray*}
By Prop. \ref{p:LPic fun}, the trace function given by $L^{\Pic}$ is the character $\y$ on $\Pic^{\sqR}_{X}(k)$. The formula \eqref{J tr} then follows from the Lefschetz trace formula for Frobenius:
\begin{equation*}
\sum_{\L=(\cL^{\na}_{1},\cdots, \cL'^{\na}_{2}, \ph, \psi_{R})\in \cN_{\un d,a}(k)}\frac{1}{\#\Aut(\L)}\y(\cL^{\na}_{11})\y(\cL^{\na}_{12})=\Tr(\Fr_{a}, (\bR g^{\fl}_{\un d,!}L_{\un d})_{\ov a}).
\end{equation*}

\sss{Proof of Theorem \ref{th:J tr} for $u=0$} There are three $A(F)$ double cosets with invariant $0$:
\begin{equation*}
1=\mat{1}{0}{0}{1}, \quad n_{+}=\mat{1}{1}{0}{1}, \quad n_{-}=\mat{1}{0}{1}{1}.
\end{equation*}

We first consider the case when $\Sig_{-}=\vn$.  Then $a_{0}=(\cO_{X}(D+R), 1,0)\in \cA^{\fl}_{D}(k)$ is the unique point satisfying $\inv_{D}(a_{0})=0=u$. Let $\wh Q_{d}\subset \ZZ^{4}$ be the set defined similarly as $Q_{d}$ except we drop the condition that $d_{ij}\ge0$. For any $\un d\in \wh Q_{d}$, we define $\wh\cN_{\un d}$ in the same way as $\cN_{\un d}$ except that we drop the condition \eqref{ph degree} in Definition \ref{defn Nd}, but requiring at most one of $\ph_{ij}$ is zero. We still have a map $\wh g^{\fl}_{d}: \wh\cN_{\un d}\to \cA_{d}\to \cA^{\fl}_{d}$, and we denote the fiber over $a_{0}$ by $\wh\cN_{\un d, a_{0}}$. Let $\wh\cN_{d, a_{0}}=\coprod_{\un d\in \wh Q_{d}}\wh\cN_{\un d, a_{0}}$. We have a decomposition $\wh\cN_{d,a_{0}}=\wh\cN^{+}_{d,a_{0}}\sqcup \wh\cN^{-}_{d,a_{0}}$, where $\wh\cN^{+}_{d,a_{0}}$ consists of those $(\cL^{\na}_{1},\dotsc, \cL'^{\na}_{2},\ph, \psi_{R})$ such that $\ph_{21}=0, \ph_{12}\ne0$; $\wh\cN^{-}_{d,a_{0}}$ consists of those $(\cL^{\na}_{1},\dotsc, \cL'^{\na}_{2},\ph, \psi_{R})$ such that $\ph_{12}=0, \ph_{21}\ne0$.

The same argument as in \S\ref{sss:u gen} gives canonical isomorphisms of groupoids $\l_{\pm}: \frX_{D,n_{\pm}}\isom \wh\cN^{\pm}_{d,a_{0}}(k)$. Using the isomorphism $\l_{\pm}$, \eqref{deg d12}, \eqref{deg d11} and \eqref{eta EL}, Lemma \ref{l:J count frX} implies
\begin{eqnarray}
\label{J n+} &&\JJ(n_{+}, h_{D}^{\Sig_{\pm}},s_{1},s_{2})\\
\notag&=&\sum_{\L=(\cL^{\na}_{1},\cdots, \cL'^{\na}_{2}, \ph, \psi_{R})\in \wh\cN^{+}_{d,a_{0}}(k)}\frac{1}{\#\Aut(\L)}q^{(2d_{12}-d-\r)s_{1}+(2d_{11}-d-\r)s_{2}}\y(\cL^{\na}_{11})\y(\cL^{\na}_{12})\\
\notag&=&\sum_{\un d\in \wh Q_{d}}q^{(2d_{12}-d-\r)s_{1}+(2d_{11}-d-\r)s_{2}}\sum_{\L=(\cL^{\na}_{1},\cdots, \cL'^{\na}_{2}, \ph, \psi_{R})\in \wh\cN^{+}_{\un d,a_{0}}(k)}\frac{1}{\#\Aut(\L)}\y(\cL^{\na}_{11})\y(\cL^{\na}_{12})
\end{eqnarray}
Similarly,
\begin{eqnarray}
\label{J n-} &&\JJ(n_{-}, h_{D}^{\na},s_{1},s_{2})\\
\notag&=&\sum_{\un d\in \wh Q_{d}}q^{(2d_{12}-d-\r)s_{1}+(2d_{11}-d-\r)s_{2}}\sum_{\L=(\cL^{\na}_{1},\cdots, \cL'^{\na}_{2}, \ph, \psi_{R})\in \wh\cN^{-}_{\un d,a_{0}}(k)}\frac{1}{\#\Aut(\L)}\y(\cL^{\na}_{21})\y(\cL^{\na}_{22}).
\end{eqnarray}
On the other hand, by the Lefschetz trace formula for Frobenius, we have
\begin{eqnarray*}
&&\sum_{\un d\in Q_{d}}q^{(2d_{12}-d-\r)s_{1}+(2d_{11}-d-\r)s_{2}}\Tr(\Fr_{a_{0}}, (\bR g^{\fl}_{\un d, !}L_{\un d})_{a_{0}})\\
&=&\sum_{\un d\in Q_{d}}q^{(2d_{12}-d-\r)s_{1}+(2d_{11}-d-\r)s_{2}}\sum_{\L=(\cL^{\na}_{1},\cdots)\in \cN_{\un d,a_{0}}(k)}\frac{1}{\#\Aut(\L)}\y(\cL^{\na}_{11})\y(\cL^{\na}_{12})\\
&=&\sum_{\un d\in Q_{d}}q^{(2d_{12}-d-\r)s_{1}+(2d_{11}-d-\r)s_{2}}\left(\sum_{\L\in \cN^{+}_{\un d,a_{0}}(k)}\frac{1}{\#\Aut(\L)}\y(\cL^{\na}_{11})\y(\cL^{\na}_{12})+\sum_{\L\in \cN^{-}_{\un d,a_{0}}(k)}\frac{1}{\#\Aut(\L)}\y(\cL^{\na}_{21})\y(\cL^{\na}_{22})\right).
\end{eqnarray*}
Here $\cN^{\pm}_{\un d,a_{0}}$ is defined as $\wh\cN^{\pm}_{\un d,a_{0}}\cap \cN_{\un d,a_{0}}$. By the condition \eqref{ph degree} in Definition \ref{defn Nd}, we have $\cN^{-}_{\un d,a_{0}}=\vn$ if $d_{12}<d_{21}-N$; $\cN^{+}_{\un d,a_{0}}=\vn$ if $d_{12}\ge d_{21}-N$. Therefore, the above formula equals
\begin{eqnarray}
\label{J trace RHS}
&&\sum_{\un d\in Q_{d}, d_{12}<d_{21}-N}q^{(2d_{12}-d-\r)s_{1}+(2d_{11}-d-\r)s_{2}}\sum_{\L\in \cN^{+}_{\un d,a_{0}}(k)}\frac{1}{\#\Aut(\L)}\y(\cL^{\na}_{11})\y(\cL^{\na}_{12})\\
&+&\sum_{\un d\in Q_{d}, d_{12}\ge d_{21}-N}q^{(2d_{12}-d-\r)s_{1}+(2d_{11}-d-\r)s_{2}}\sum_{\L\in \cN^{-}_{\un d,a_{0}}(k)}\frac{1}{\#\Aut(\L)}\y(\cL^{\na}_{21})\y(\cL^{\na}_{22}).
\end{eqnarray}

Comparing the RHS of \eqref{J n+}, \eqref{J n-} and \eqref{J trace RHS}, the only difference is the range of $\un d$ in the summation; however, many $\un d$'s do not contribute as the following lemma shows.

\begin{lemma}\label{l:van d12} Let $\un d\in \wh Q_{d}$. 
\begin{enumerate}
\item If $d_{12}\ge 2g-1+\r$ then
\begin{equation*}
\sum_{\L=(\cL^{\na}_{1},\cdots, \cL'^{\na}_{2}, \ph, \psi_{R})\in \wh\cN^{+}_{\un d,a_{0}}(k)}\frac{1}{\#\Aut(\L)}\y(\cL^{\na}_{11})\y(\cL^{\na}_{12})=0.
\end{equation*}
\item If $d_{21}-N_{+}\ge 2g-1+\r$ then
\begin{equation*}
\sum_{\L=(\cL^{\na}_{1},\cdots, \cL'^{\na}_{2}, \ph, \psi_{R})\in \wh\cN^{-}_{\un d,a_{0}}(k)}\frac{1}{\#\Aut(\L)}\y(\cL^{\na}_{21})\y(\cL^{\na}_{22})=0.
\end{equation*}
\item We have
\begin{equation*}
\JJ\left(\mat{1}{0}{0}{1},h_{D}^{\Sig_{\pm}},s_{1},s_{2}\right)=0.
\end{equation*}
\end{enumerate}
\end{lemma}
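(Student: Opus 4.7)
For (1) and (2) the strategy is to express the weighted point count as the trace of Frobenius on the compactly supported cohomology of a rank-one local system, and to prove the vanishing of that cohomology by pushing forward along an Abel--Jacobi map to a Picard stack on which the character sheaf $L^{\Pic}$ has nontrivial monodromy. Part (3) is different in flavor: it uses a factorization of the regularized orbital integral at $\wt\g = 1$.

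For (1), I would start by noting the identity in $\Pic^{\sqR}_X(k)$,
\begin{equation*}
\y(\cL^{\na}_{11})\y(\cL^{\na}_{12})=\y\bigl(\cL^{\na,-1}_1\otimes\cL^{\na,-1}_2\otimes\cL'^{\na,\otimes 2}_1\bigr)=\y(\cL^{\na}_1)\y(\cL^{\na}_2),
\end{equation*}
which uses that $\y=\y_{F'/F}$ is of order two. In particular the character weight is insensitive to $\cL'^{\na}_1$. Next I would consider the projection $\pi_{12}\colon \wh\cN^+_{\un d,a_0}\to\hX^{\sqR}_{d_{12}}$ recording $(\cL^{\na}_{12},\ph_{12},\psi_{12,R})$. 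By Grothendieck's trace formula the sum in (1) equals $\Tr(\Fr_{a_0}, H^*_c(\wh\cN^+_{\un d,a_0,\kbar}, L^+_{\un d}))$, where $L^+_{\un d}$ is the rank-one local system whose trace function is the above character. By the projection formula,
\begin{equation*}
H^*_c\bigl(\wh\cN^+_{\un d,a_0,\kbar}, L^+_{\un d}\bigr)=H^*_c\bigl(\hX^{\sqR}_{d_{12},\kbar},\wh L_{d_{12}}\otimes \bR\pi_{12,!}F\bigr),
\end{equation*}
where $F$ captures the fiberwise contribution from the factorization $\ph_{11}\ph_{22}=a_0$ and the residual Picard datum $\cL^{\na}_1$ (modulo the $\Pic^{\sqR}_X$-twist).

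The key geometric input is that when $d_{12}\ge 2g-1+\r$, Riemann--Roch makes $\wh\AJ^{\sqR}_{d_{12}}\colon \hX^{\sqR}_{d_{12}}\to\Pic^{\sqR,d_{12}}_X$ a Zariski-locally trivial affine bundle of relative dimension $d_{12}-g+1-\r$; pushforward along it therefore sends $\wh L_{d_{12}}=\wh\AJ^{\sqR,*}_{d_{12}}L^{\Pic}$ to $L^{\Pic}$ up to shift and Tate twist. It then suffices to show $H^*_c(\Pic^{\sqR,d_{12}}_{X,\kbar}, L^{\Pic}\otimes M)=0$ for the resulting coefficient complex $M$. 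This follows because $L^{\Pic}$ is a nontrivial character sheaf: by Proposition \ref{p:LPic fun} its trace function is $\y_{F'/F}$, whose restriction to $\Pic^0_X(k)$ is nontrivial since $X'\to X$ is a nontrivial double cover, so the standard cohomological vanishing for nontrivial character sheaves on (gerbes over) abelian varieties applies. Part (2) is proved by the symmetric argument, with $\ph_{12}$ replaced by $\ph_{21}$, whose effective degree is $d_{21}-N_+$ after accounting for the forced vanishing along $\Sig_+$.

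For (3), since the stabilizer of $\wt\g=1$ in $A\times A$ is the diagonal, the orbital integral must be regularized. A change of variables factorizes it as
\begin{equation*}
\JJ(1,h_D^{\Sig_\pm},s_1,s_2)=\biggl(\int_{A(\AA)} h_D^{\Sig_\pm}(h)\,|h|^{s_1-s_2}\y(h)\,dh\biggr)\cdot\biggl(\int_{A(\AA)}|h_1|^{2s_1}\y(h_1)\,dh_1\biggr)_{\!\mathrm{reg}}.
\end{equation*}
The regularized second factor equals the Mellin-type sum
\begin{equation*}
\sum_{d\in\ZZ}q^{-2s_1 d}\,\Bigl(\sum_{\cL^{\na}\in\Pic^{\sqR,d}_X(k)}\frac{\y(\cL^{\na})}{\#\Aut(\cL^{\na})}\Bigr),
\end{equation*}
and each inner character sum vanishes because $\y$ is a nontrivial character of the finite abelian groupoid $\Pic^{\sqR,d}_X(k)$ for every $d$. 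Hence $\JJ(1,h_D^{\Sig_\pm},s_1,s_2)=0$. The principal obstacle is the careful setup of part (1): identifying the fiber of $\pi_{12}$ together with its $\sqR$-structure, computing $\bR\pi_{12,!}F$ in terms of data that factor through the Picard stack, and confirming the compatibility with $L^{\Pic}$ needed for the projection formula. Once this geometric book-keeping is in place, both the Picard-stack cohomological vanishing in (1)(2) and the regularization argument in (3) are standard.
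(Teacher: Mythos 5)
Your strategy for (1)--(2) — split off the $(1,2)$-datum and let the $X^{\sqR}_{d_{12}}$-direction contribute a vanishing $\y$-sum — is the same mechanism the paper uses, but as written there is a genuine gap at the decisive step. The vanishing $\cohoc{*}{\Pic^{\sqR,d_{12}}_{X}\ot\kbar,\,L^{\Pic}\ot M}=0$ does \emph{not} follow from nontriviality of $L^{\Pic}$ alone: it fails for general $M$ (take $M=L^{\Pic,\vee}$), and it requires $M$, i.e.\ $\bR\pi_{12,!}F$, to be geometrically constant. That is exactly the point you defer as ``book-keeping'', and it is the actual content of the lemma: over $a_{0}=(\cO_{X}(D+R),1,0)$ one has $\ph_{12}\ph_{21}=0$ while $\ph_{11}\ph_{22}$ has divisor $D+R$, whence $\wh\cN^{+}_{\un d,a_{0}}\cong (X^{\sqR}_{d_{11}}\times X^{\sqR}_{d_{22}-N_{-}})_{D+R}\times X^{\sqR}_{d_{12}}$ by recording the $(1,1),(2,2),(1,2)$ data (the $(2,1)$-datum is then determined and $\ph_{21}=0$); note also that the image of your $\pi_{12}$ lies in $X^{\sqR}_{d_{12}}$, not $\hX^{\sqR}_{d_{12}}$, since $\ph_{12}\neq0$ on $\wh\cN^{+}$. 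Once this product decomposition is in place, no sheaf theory is needed: the weight $\y(\cL^{\na}_{11})\y(\cL^{\na}_{12})$ separates along the two factors, and the factor $\sum_{X^{\sqR}_{d_{12}}(k)}\y(\cL^{\na}_{12})/\#\Aut$ vanishes because for $d_{12}\ge 2g-1+\r$ the fibers of $\AJ^{\sqR}_{d_{12}}(k)$ have constant groupoid cardinality (Riemann--Roch) and $\y$ is nontrivial on $\Pic^{\sqR,d_{12}}_{X}(k)$ — this is the paper's (elementary) argument; your cohomological variant also works afterwards, most cheaply by quoting Lemma \ref{l:coho Xd}: $\cohog{*}{X^{\sqR}_{d_{12}}\ot\kbar,L_{d_{12}}}\cong\wedge^{d_{12}}\cohoc{1}{X^{\sqR}_{1}\ot\kbar,L}=0$ since $d_{12}\ge 2g-1+\r>\dim\cohoc{1}{X^{\sqR}_{1}\ot\kbar,L}=2g-2+\r$ (using that $L$ is geometrically nontrivial). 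Your opening rewriting $\y(\cL^{\na}_{11})\y(\cL^{\na}_{12})=\y(\cL^{\na}_{1})\y(\cL^{\na}_{2})$ is true but plays no role.

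For (3), the change of variables $h_{2}=h_{1}h$ and the resulting factorization are correct, and the reason for vanishing is the paper's: the character restricted to the diagonal stabilizer is $h_{1}\mapsto|h_{1}|^{2s_{1}}\y(h_{1})$, which is nontrivial, so the regularized integral over the stabilizer direction vanishes. However, your explicit evaluation of that regularized factor as $\sum_{d}q^{-2s_{1}d}\sum_{\cL^{\na}\in\Pic^{\sqR,d}_{X}(k)}\y(\cL^{\na})/\#\Aut(\cL^{\na})$ is not correct: the integral is over $A(\AA)$, not $A(F)\bs A(\AA)$, and the integrand is invariant only under $\OO^{\times}_{\sqR}$, so the natural index set is $\Div^{\sqR}(X)=\AA^{\times}/\OO^{\times}_{\sqR}$ rather than $\Pic^{\sqR}_{X}(k)$ (indeed when $R\neq\vn$ the truncated integral already vanishes because $\y$ is ramified at $R$, and when $R=\vn$ one gets a truncated sum over all divisors, not divisor classes). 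The conclusion stands, but it should be phrased, as in the paper, as the vanishing under the regularization of \cite[\S2.5]{YZ} of a nontrivial character integrated over the stabilizer.
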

\begin{proof}
(1) Let $ (X^{\sqR}_{d_{11}}\times X^{\sqR}_{d_{22}})_{D+R}$ be the fiber over $D+R$ of the map 
\begin{equation*}
X^{\sqR}_{d_{11}}\times X^{\sqR}_{d_{22}}\xr{\add^{\sqR}}X^{\sqR}_{d+\r}\xr{\om^{\sqR}_{d+\r}}X_{d+\r}.
\end{equation*}
We have an isomorphism
\begin{equation}\label{N+ a0}
\wh\cN^{+}_{\un d,a_{0}}\isom (X^{\sqR}_{d_{11}}\times X^{\sqR}_{d_{22}-N_{-}})_{D+R}\times X^{\sqR}_{d_{12}}
\end{equation}
by recording $(\cL^{\na}_{ij}, \ph_{ij}, \psi_{ij,R})$ for $(i,j)=(1,1),(2,2)$ and $(1,2)$ (then $\cL^{\na}_{21}$ is determined uniquely and $\ph_{21}=0$). Using this isomorphism we can write
\begin{eqnarray}
\label{sep eta 12}&&\sum_{\L=(\cL^{\na}_{1},\cdots, \cL'^{\na}_{2}, \ph, \psi_{R})\in \wh\cN^{+}_{\un d,a_{0}}(k)}\frac{1}{\#\Aut(\L)}\y(\cL^{\na}_{11})\y(\cL^{\na}_{12})\\
\notag &=&\sum_{\L'=(\cL^{\na}_{11},\cdots)\in (X^{\sqR}_{d_{11}}\times X^{\sqR}_{d_{22}-N_{-}})_{D+R}(k)}\frac{1}{\#\Aut(\L')}\y(\cL^{\na}_{11})\sum_{\L''=(\cL^{\na}_{12},\cdots)\in X^{\sqR}_{d_{12}}(k)}\frac{1}{\#\Aut(\L'')}\y(\cL^{\na}_{12}).
\end{eqnarray}
Since $d_{12}\ge 2g-1+\r$, the fibers of the map $\AJ^{\sqR}_{d_{12}}(k): X^{\sqR}_{d_{12}}(k)\to \Pic^{\sqR,d_{12}}_{X}(k)$ have the same cardinality. Since the character $\y$ is nontrivial on $\Pic^{\sqR,d_{12}}_{X}(k)$, the last sum in \eqref{sep eta 12} vanishes. 

The proof of (2) is similar to (1), using the isomorphism $\wh\cN^{-}_{\un d,a_{0}}\isom (X^{\sqR}_{d_{11}}\times X^{\sqR}_{d_{22}-N_{-}})_{D+R}\times X^{\sqR}_{d_{21}-N_{+}}$ instead of \eqref{N+ a0}.

(3) The restriction of the character $(t,t')\mapsto |tt'|^{s_{1}}|t'/t|^{s_{2}}\y(t)$ on the stabilizer of $1$ under $A(\AA)\times A(\AA)$ (the diagonal $A(\AA)$) is nontrivial, therefore the integral vanishes.
\end{proof}

By Lemma \ref{l:van d12}(3), we have
\begin{equation}\label{J0}
\JJ(0,h^{\Sig_{\pm}}_{D},s_{1},s_{2})=\JJ(n_{+},h^{\Sig_{\pm}}_{D},s_{1},s_{2})+\JJ(n_{-},h^{\Sig_{\pm}}_{D},s_{1},s_{2}),
\end{equation}
which is calculated in \eqref{J n+} and \eqref{J n-}. Using Lemma \ref{l:van d12}(1), we may restrict the summation in the RHS of \eqref{J n+} to those $\un d\in \wh Q_{d}$ such that $0\le d_{12}\le 2g-2+\r$ ($d_{12}\ge0$ for otherwise $\wh\cN^{+}_{\un d,a_{0}}=\vn$). Since $d\ge 4g-3+N+\r$, we have $d_{12}+(d_{21}-N_{+})\ge 2(2g-2+\r)+1$. Therefore we may alternatively restrict the summation in the RHS of \eqref{J n+} to those $\un d\in Q_{d}$ such that $d_{12}< d_{21}-N_{+}$. Therefore, the RHS of \eqref{J n+} matches the first term in the RHS of \eqref{J trace RHS}. Similarly, the RHS of \eqref{J n-} matches the second term in the RHS of \eqref{J trace RHS}.  We thus get \eqref{J tr} by combining \eqref{J0}, \eqref{J n+}, \eqref{J n-} and \eqref{J trace RHS}.

Finally, we consider the case $\Sig_{-}\ne\vn$. Then $u$ is not in the image of $\inv_{D}$. In this case,  $\frX_{D, n_{\pm}}=\vn$, hence $\JJ(n_{\pm}, h^{\Sig_{\pm}}_{D},s_{1},s_{2})=0$ by Lemma \ref{l:J count frX}. Together with Lemma \ref{l:van d12}(3), we get $\JJ(0, h^{\Sig_{\pm}}_{D},s_{1},s_{2})=0$. 

\sss{Proof of Theorem \ref{th:J tr} for $u=\infty$}  There are three $A(F)$ double cosets with invariant $\infty$:
\begin{equation*}
w_{0}=\mat{0}{1}{1}{0}, \quad n_{+}w_{0}=\mat{1}{1}{1}{0}, \quad n_{-}w_{0}=\mat{0}{1}{1}{1}.
\end{equation*}
The argument is the same as in the case $u=0$, which we do not repeat.

%
%

\section{Proof of the main theorem}

\subsection{Comparison of sheaves}

\sss{The perverse sheaf $K_{d}$} Let $d\ge0$ be an integer and consider the direct image complex  $\nu^{\sqR}_{d,!}\Ql$ under $\nu_{d}^{\sqR}:  X'_{d}\to X^{\sqR}_{d}$ defined in \eqref{norm for sym}.  Let $X^{\circ}_{d}\subset X_{d}$ be the open locus of multiplicity-free divisors, and let $X^{\sqR,\c}_{d}$ (resp. $X'^{\c}_{d}$) be its preimage in $X^{\sqR}_{d}$ (resp. $X'_{d}$). Restricting $\nu_{d}^{\sqR}$ to $X^{\sqR, \circ}_{d}$ we get a finite \'etale Galois cover $X'^{\c}_{d}\to X^{\sqR,\c}_{d}$ with Galois group $\Gamma_{d}=(\ZZ/2\ZZ)^{d}\rtimes S_{d}$ ($\nu_{d}^{\sqR}$ is still \'etale when the multiplicity-free divisor meets $R$, as $X'\to X^{\sqR}_{1}$ is \'etale). As in \cite[\S8.1.1]{YZ}, for $0\le i\le d$, we consider the following representation $\r_{d,i}=\Ind_{\Gamma_{d}(i)}^{\Gamma_{d}}(\wt\chi_{i})$ of $\Gamma_{d}$, where $\Gamma_{d}(i)=(\ZZ/2\ZZ)^{d}\rtimes (S_{i}\times S_{d-i})$, $\chi_{i}$ is the character on $(\ZZ/2\ZZ)^{d}$ which is nontrivial on the first $i$ factors and trivial on the rest, and $\wt\chi_{i}$ is the extension of $\chi_{i}$ to $\Gamma_{d}(i)$ which is trivial on $S_{i}\times S_{d-i}$. As we noted towards the end of the proof of \cite[Prop 8.2]{YZ}, there is a canonical isomorphism of $\Gamma_{d}$-representations.
\begin{equation}\label{decomp ind}
\Ind^{\Gamma_{d}}_{S_{d}}(\one)\cong\bigoplus_{i=0}^{d}\r_{d,i}.
\end{equation}

Then $\r_{i}$ gives rise to a local system $L(\r_{d,i})$ on $X^{\sqR,\c}_{d}$ (which is smooth over $k$). Let $j_{d}: X^{\sqR,\c}_{d}\incl \hX^{\sqR}_{d}$ be the inclusion. Let 
\begin{equation*}
K_{d,i}=j_{d,!*}(L(\r_{d,i})[d])[-d]
\end{equation*}
be the middle extension perverse sheaf on $\hX^{\sqR}_{d}$.

We first study the direct image complex of $f_{d}: \cM_{d}\to \cA_{d}$. By Prop. \ref{p:M}, for $d\ge  2g'-1+N$, $\dim \cM_{d}=m=\cA_{d}$.

\begin{prop}\label{p:Rfd} Let $d\ge  2g'-1+N$.
\begin{enumerate}
\item The complex $\bR f_{d,!}\Ql[m]$ is a perverse sheaf on $\cA_{d}$, and it is the middle extension of its restriction to any non-empty open subset of $\cA_{d}$.
\item  We have a canonical isomorphism
\begin{equation}\label{fd decomp}
\bR f_{d,!}\Ql\cong \bigoplus_{i=0}^{d+\r-N_{-}}\bigoplus_{j=0}^{d+\r-N_{+}}(K_{d+\r-N_{-},i}\boxtimes K_{d+\r-N_{+},j})|_{\cA_{d}}.
\end{equation}
Here we are identifying $\cA_{d}$ with an open substack of $\hX^{\sqR}_{d+\r-N_{-}}\times_{\Pic_{X}^{\sqR;\sqR,d+\r}} \hX^{\sqR}_{d+\r-N_{+}}$ using \eqref{emb Ad}. 
\end{enumerate}
\end{prop}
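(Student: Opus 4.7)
The plan is to follow the template of \cite[Prop. 8.2]{YZ}, with the extra technical input coming from the fact that our norm map is $\wh\nu^{\sqR}_{n}:\hX'_{n}\to \hX^{\sqR}_{n}$ rather than the ordinary norm $X'_{n}\to X_{n}$.

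For part (1), the ingredients are all in Prop.~\ref{p:M}: $\cM_{d}$ is smooth of pure dimension $m$, $f_{d}$ is proper, and the hypothesis $d\ge 2g'-1+N=4g-3+\r+N\ge 3g-2+N$ ensures that $f_{d}$ is small. The standard fact that the pushforward along a proper small map of the IC sheaf of the source equals the IC sheaf of the target then gives that $\bR f_{d,!}\Ql[m]=\bR f_{d,*}\Ql[m]$ is a perverse sheaf and is the middle extension from the non-empty open locus $\cA_{d}^{\dm}$ of Prop.~\ref{p:M}\eqref{f small} (or from any smaller dense open).

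For part (2), I would first establish the analogous statement for each norm factor $\wh\nu^{\sqR}_{n}:\hX'_{n}\to \hX^{\sqR}_{n}$, namely a canonical decomposition
\begin{equation*}
\bR\wh\nu^{\sqR}_{n,!}\Ql\cong \bigoplus_{i=0}^{n}K_{n,i}.
\end{equation*}
To prove this, restrict to the multiplicity-free open $X^{\sqR,\c}_{n}\subset \hX^{\sqR}_{n}$, over which $X'^{\c}_{n}\to X^{\sqR,\c}_{n}$ is finite \'etale with Galois group $\Gamma_{n}=(\ZZ/2\ZZ)^{n}\rtimes S_{n}$; this uses crucially that $X'\to X^{\sqR}_{1}$ is \'etale even at the ramified points, so the cover stays \'etale when multiplicity-free divisors meet $R$. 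The pushforward of $\Ql$ from $X'^{\c}_{n}$ is the local system attached to $\Ind^{\Gamma_{n}}_{S_{n}}\mathbf{1}$, which decomposes as $\bigoplus_{i}\r_{n,i}$ by \eqref{decomp ind}. Because $\wh\nu^{\sqR}_{n}$ is proper (argued as in Prop.~\ref{p:M}\eqref{f proper}) and small (its source is smooth of the same dimension as its target, and ramified fibers only occur in codimension $\ge 2$), intermediate extension commutes with direct image, so the generic decomposition extends to all of $\hX^{\sqR}_{n}$.

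Having this, part (2) follows from the Cartesian diagram in Prop.~\ref{p:M}\eqref{MA Cart} together with proper base change: $\bR f_{d,!}\Ql$ is the restriction to $\cA_{d}$ of the external product (relative to $\Pic^{\sqR;\sqR,d+\r}_{X}$) of $\bR\wh\nu^{\sqR}_{d+\r-N_{-},!}\Ql$ and $\bR\wh\nu^{\sqR}_{d+\r-N_{+},!}\Ql$. A K\"unneth formula over $\Pic^{\sqR;\sqR,d+\r}_{X}$ combined with the decompositions above yields \eqref{fd decomp}. The main technical point I expect to need care is the smallness of $\wh\nu^{\sqR}_{n}$ over the ramification divisor $R$ --- a local computation at a ramified point of $\nu$ is required to check that even though $\wh\nu^{\sqR}_{n}$ is ramified on the boundary $\hX^{\sqR}_{n}\setminus X^{\sqR}_{n}$, the ramification occurs in sufficiently high codimension for smallness and for the K\"unneth-type identification to preserve middle extensions. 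A secondary verification is that the open embedding $\cA_{d}\hookrightarrow \hX^{\sqR}_{d+\r-N_{-}}\times_{\Pic}\hX^{\sqR}_{d+\r-N_{+}}$ has dense image in each irreducible component of the source, so that restricting the global decomposition to $\cA_{d}$ still gives an isomorphism of IC sheaves; this is immediate from the dimension count in Prop.~\ref{p:M}\eqref{M smooth}.
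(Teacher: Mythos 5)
Your route coincides with the paper's: part (1) from smoothness/equidimensionality of $\cM_{d}$, properness and smallness of $f_{d}$ (Prop.~\ref{p:M}), and part (2) by using the Cartesian diagram of Prop.~\ref{p:M}(2) to reduce to a decomposition $\bR\wh\nu^{\sqR}_{n,!}\Ql\cong\bigoplus_{i=0}^{n}K_{n,i}$ for each norm factor, proved over the multiplicity-free locus via \eqref{decomp ind} and propagated by smallness and middle extension. Two points in your write-up need repair before this is a proof.

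First, your justification of the smallness of $\wh\nu^{\sqR}_{n}\colon \hX'_{n}\to\hX^{\sqR}_{n}$ is aimed at the wrong locus. Smallness is not threatened by ramification of $\nu$ along $R$, and no local computation at points of $R$ is relevant: over points where the section is nonzero the fibers are finite, and finite ramification never violates smallness. The only positive-dimensional fibers occur over the boundary $\hX^{\sqR}_{n}\setminus X^{\sqR}_{n}$, which is the zero section $\Pic^{\sqR,n}_{X}\incl\hX^{\sqR}_{n}$ (a locus having nothing to do with $R$); there $\wh\nu^{\sqR}_{n}$ restricts to the norm map $\Pic^{n}_{X'}\to\Pic^{\sqR,n}_{X}$, whose fibers have dimension $g'-g$, while the zero section has codimension $n-g+1$. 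Smallness is then the numerical inequality $n-g+1\ge 2(g'-g)+1$, i.e.\ $n\ge 2g'-g=3g-2+\r$, which holds for $n=d+\r-N_{\pm}$ once $d\ge 3g-2+N$; your parenthetical ``ramified fibers only occur in codimension $\ge2$'' neither states nor implies this, and the ``local computation at a ramified point of $\nu$'' you propose would not produce it. Second, for the assertion in (1) that $\bR f_{d,!}\Ql[m]$ is the middle extension of its restriction to \emph{any} non-empty open subset, you need $\cA_{d}$ to be irreducible (so that every non-empty open is dense); this follows because the maps $\nu_{a},\nu_{b}$ are vector bundles for $d\ge 2g-1+N$. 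Your closing density remark is directed at a non-issue: restricting the global decomposition in (2) to the open substack $\cA_{d}$ requires no density, since the right-hand side of \eqref{fd decomp} is already stated as a restriction.
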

\begin{proof}
(1) We observe that the base $\cA_{d}$ is irreducible (because both maps $\nu_{a}$ and $\nu_{b}$ are vector bundles when $d\ge2g-1+N$). By Prop. \ref{p:M}\eqref{M smooth}, $\cM_{d}$ is smooth and equidimensional. By Prop. \ref{p:M}\eqref{f proper}\eqref{f small}, $f_{d}$ is proper and small.  Therefore,  $\bR f_{d!}\Ql[m]$ is a middle extension perverse sheaf from any non-empty open subset of $\cA_{d}$.

(2) In fact this part holds under a weaker condition $d\ge 3g-2+N$. By Prop. \ref{p:M}\eqref{MA Cart} and the K\"unneth formula, we have
\begin{equation*}
\bR f_{d!}\Ql\cong (\bR\wh\nu^{\sqR}_{d+\r-N_{-}, !}\Ql\boxtimes\bR\wh\nu^{\sqR}_{d+\r-N_{+}, !}\Ql)|_{\cA_{d}}.
\end{equation*}
Therefore it suffices to show that for $d'\ge 2g'-g=3g-2+\r$ (note that $d+\r-N_{\pm}\ge 3g-2+\r$),
\begin{equation*}
\bR \wh\nu^{\sqR}_{d'!}\Ql\cong \bigoplus_{i=0}^{d'}K_{d',i}.
\end{equation*}
We claim that $\wh\nu^{\sqR}_{d'}:\hX'_{d'}\to \hX^{\sqR}_{d'}$ is small when $d'\ge 2g'-g$. In fact, the only positive dimensional fibers are over the zero section $\Pic^{\sqR,d'}_{X}\incl \hX^{\sqR}_{d'}$, which has codimension $d'-g+1$ (provided that $d'\ge g-1$). The restriction of $\wh\nu^{\sqR}_{d'}$ over $\Pic^{\sqR,d'}_{X}$ is the norm map $\Pic_{X'}^{d'}\to \Pic^{\sqR,d'}_{X}$, whose fibers have dimension $g'-g$. Since $d'\ge 2g'-g$, we have $d'-g+1\ge 2(g'-g)+1$, which implies that $\wh\nu^{\sqR}_{d'}$ is small. 

Since the source of $\wh\nu^{\sqR}_{d'}$ is smooth and geometrically connected of dimension $d'$, and $\wh\nu^{\sqR}_{d'}$ is proper, $\bR\wh\nu^{\sqR}_{d'!}\Ql[d]$ is a middle extension perverse sheaf from its restriction to $X^{\sqR, \c}_{d'}$. The rest of the argument is the same as \cite[Prop. 8.2]{YZ}, using \eqref{decomp ind}.
\end{proof}

Recall from \S\ref{sss:Hk Md} that we have endomorphisms $f_{d,!}[\ov\cH^{\dm}_{+}]$ and $f_{d,!}[\ov\cH^{\dm}_{-}]$ of $\bR f_{d,!}\Ql$.

\begin{prop}\label{p:H pm action} Suppose $d\ge 2g'-1+N$. Then the action of $f_{d,!}[\ov\cH^{\dm}_{+}]$ (resp. $f_{d,!}[\ov\cH^{\dm}_{-}]$ ) preserves each direct summand in the decomposition \eqref{fd decomp}, and acts on the summand $(K_{d+\r-N_{-},i}\boxtimes K_{d+\r-N_{+},j})|_{\cA_{d}}$ by the scalar $d+\r-N_{+}-2j$ (resp. $d+\r-N_{-}-2i$).
\end{prop}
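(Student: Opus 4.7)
\medskip

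\noindent\textbf{Proof plan for Proposition \ref{p:H pm action}.}

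The strategy is to reduce, via the middle extension property established in Proposition \ref{p:Rfd}(1), to an explicit computation on an open subset of $\cA_{d}$, and then to a Galois-theoretic eigenvalue computation on a symmetric power of $X'$. Throughout we focus on $\cH^{\dm}_{+}$; the argument for $\cH^{\dm}_{-}$ is symmetric.

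First, since by Proposition \ref{p:Rfd}(1) the complex $\bR f_{d,!}\Ql[m]$ is the middle extension of its restriction to any non-empty open of $\cA_{d}$, and since each summand $(K_{d+\r-N_{-},i}\boxtimes K_{d+\r-N_{+},j})|_{\cA_{d}}$ is itself a middle extension perverse sheaf, any endomorphism is determined by its restriction to the open locus $\cA^{\dm}_{d}$. So it suffices to verify both the preservation of the direct sum decomposition and the scalar action there. Over $\cA^{\dm}_{d}$, the Cartesian diagram of Proposition \ref{p:M}\eqref{MA Cart} gives a Kunneth decomposition
\begin{equation*}
\bR f_{d,!}\Ql|_{\cA^{\dm}_{d}}\cong \bigl(\bR\wh\nu^{\sqR}_{d+\r-N_{-},!}\Ql \boxtimes_{\Pic^{\sqR;\sqR,d+\r}_{X}} \bR\wh\nu^{\sqR}_{d+\r-N_{+},!}\Ql\bigr)\bigm|_{\cA^{\dm}_{d}}.
\end{equation*}

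Next I would exploit the description of $\cH^{\dm}_{+}$ recalled in \S\ref{sss:Hk Md}: over $\cM^{\dm}_{d}$, the correspondence $\cH^{\dm}_{+}$ is pulled back from the incidence correspondence $I'_{d+\r-N_{+}}$ of \eqref{corr I} via the projection to the second factor of the fiber product in \eqref{MAXX}. Consequently, the endomorphism $f_{d,!}[\ov\cH^{\dm}_{+}]$ of $\bR f_{d,!}\Ql|_{\cA^{\dm}_{d}}$ acts as $\id\boxtimes T_{d+\r-N_{+}}$, where $T_{n}\in\End(\bR\wh\nu^{\sqR}_{n,!}\Ql)$ is the endomorphism induced by the incidence correspondence $I'_{n}\rightrightarrows X'_{n}$. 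In particular it preserves each Kunneth summand, and the problem reduces to showing that $T_{n}$ preserves each $K_{n,j}$ and acts on it by the scalar $n-2j$.

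For this final reduction, I would again invoke middle extension (the map $\wh\nu^{\sqR}_{n}$ restricted to the multiplicity-free locus $X^{\sqR,\c}_{n}$ is the finite \'etale Galois cover by $X'^{\c}_{n}$ with group $\Gamma_{n}=(\ZZ/2\ZZ)^{n}\rtimes S_{n}$), so the entire question is equivalent to computing $T_{n}$ as a $\Gamma_{n}$-equivariant endomorphism of the induced representation $\Ind^{\Gamma_{n}}_{S_{n}}(\one)=\bigoplus_{j}\r_{n,j}$ appearing in \eqref{decomp ind}. At a geometric point of $X^{\sqR,\c}_{n}$ lying over a divisor disjoint from $R$, the fiber of $\wh\nu^{\sqR}_{n}$ is identified with $\{\pm 1\}^{n}/S_{n}$; the incidence correspondence then acts on $\Ql^{\{\pm1\}^{n}}$ by the operator $\phi(\underline{\ep})\mapsto \sum_{k=1}^{n}\phi(\mathrm{flip}_{k}\,\underline{\ep})$, i.e.\ the sum of the $n$ generators of $(\ZZ/2\ZZ)^{n}$ acting on the induced representation. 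Decomposing into $(\ZZ/2\ZZ)^{n}$-characters, this operator is diagonal with eigenvalue $(n-j)-j=n-2j$ on any character with exactly $j$ nontrivial components, which is precisely the $(\ZZ/2\ZZ)^{n}$-weight appearing in $\r_{n,j}=\Ind^{\Gamma_{n}}_{\Gamma_{n}(j)}(\wt\chi_{j})$. This gives the desired eigenvalue on $K_{n,j}$, completing the argument.

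\medskip

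\noindent\textbf{Main obstacle.} The conceptual steps above are routine once properly set up; the one point requiring care is the passage between the two descriptions of $\cH^{\dm}_{+}$ (as a subscheme of $\cM_{d}\times X'$ cut out by the vanishing of $\b$ on $\Gamma_{x'}$, and as the pullback of $I'_{d+\r-N_{+}}$), together with checking that the identification of $f_{d,!}[\ov\cH^{\dm}_{+}]$ with $\id\boxtimes T_{d+\r-N_{+}}$ is valid for the closure classes $[\ov\cH^{\dm}_{\pm}]$ rather than just for $\cH^{\dm}_{\pm}$ itself. This is handled by noting that by Proposition \ref{p:M}\eqref{f small} the complement of $\cM^{\dm}_{d}\times_{\cA^{\dm}_{d}}\cM_{d}^{\dm}$ in $\cM_{d}\times_{\cA_{d}}\cM_{d}$ has codimension large enough that the induced endomorphism of $\bR f_{d,!}\Ql$ is determined by the restriction to $\cA^{\dm}_{d}$; together with the middle extension input this reduces everything to the generic computation.
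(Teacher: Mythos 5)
Your proposal is correct and follows essentially the same route as the paper: reduce via the middle-extension property of Proposition \ref{p:Rfd}(1) to the open locus $\cA^{\dm}_{d}$, identify $\cH^{\dm}_{\pm}$ there with the pullback of the incidence correspondence $I'_{d+\r-N_{\pm}}$ acting on one Künneth factor, and conclude by the sum-of-flips eigenvalue computation $(n-j)-j=n-2j$ on the summands of $\Ind^{\Gamma_n}_{S_n}(\one)$ — which is exactly the argument the paper imports by citing \cite[Prop.~8.3]{YZ}. The only quibble is notational: the fiber of $\wh\nu^{\sqR}_{n}$ over a multiplicity-free divisor is the coset space $\Gamma_{n}/S_{n}\cong\{\pm1\}^{n}$ (of cardinality $2^{n}$), not the quotient set $\{\pm1\}^{n}/S_{n}$, and your subsequent computation on $\Ql^{\{\pm1\}^{n}}$ already uses the correct object.
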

\begin{proof}
By Prop. \ref{p:Rfd}(1), any endomorphism of the middle extension perverse sheaf $\bR f_{d!}\Ql$ (up to a shift) is determined by its restriction to any non-empty open subset of $\cA_{d}$. Therefore it suffices to prove the same statements over $\cA^{\dm}_{d}$, over which $\cH^{\dm}_{+}$ (resp. $\cH^{\dm}_{-}$) is the pullback of the incidence correspondence $I'_{d+\r-N_{+}}$ (resp. $I'_{d+\r-N_{-}}$), see \S\ref{sss:Hk Md}. The rest of the argument is the same as \cite[Prop. 8.3]{YZ}.
\end{proof}

Now we turn to the direct image complex of $g_{\un d}: \cN_{\un d}\to \cA_{d}$. By Prop. \ref{p:N}, when $d\ge 2g'-1+N$ and $\cN_{\un d}\ne\vn$, $\dim \cN_{d}=\dim\cA_{d}=m$.

\begin{prop}\label{p:Rgd} Let $d\ge2g'-1+N$ and $\un d\in Q_{d}$. 
\begin{enumerate}
\item The complex $\bR g_{\un d,!}L_{\un d}[m]$ is a perverse sheaf on $\cA_{d}$, and it is the middle extension of its restriction to any non-empty open subset of $\cA_{d}$.
\item We have a canonical isomorphism
\begin{equation}\label{gund}
\bR g_{\un d,!}L_{\un d}\cong (K_{d+\r-N_{-}, d_{11}}\boxtimes K_{d+\r-N_{+}, d_{12}})|_{\cA_{d}}.
\end{equation}
\end{enumerate}
\end{prop}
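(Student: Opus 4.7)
I would follow the template of Prop.~\ref{p:Rfd}, which proves the analogous statement for $\bR f_{d,!}\Ql$. For part (1), by Prop.~\ref{p:N}\eqref{N smooth} and Prop.~\ref{p:N}(1), $\cN_{\un d}$ is smooth of pure dimension $m$ and geometrically connected, so the shifted local system $L_{\un d}[m]$ is a simple perverse sheaf. Combined with the properness of $g_{\un d}$ (Prop.~\ref{p:N}\eqref{g proper}), the decomposition theorem reduces (1) to showing that $g_{\un d}$ is small. This in turn is essentially the same codimension count as in Prop.~\ref{p:M}\eqref{f small}: the map $g_{\un d}$ is finite over $\cA^{\dm}_d$, and the only positive-dimensional fibers occur over the $a=0$ or $b=0$ loci, where the fiber dimension is controlled by that of the norm map $\Pic_{X'} \to \Pic^{\sqR}_X$, i.e.\ by $g'-g$, while the codimension of these loci in $\cA_d$ is $\ge 2(g'-g)+1$ thanks to the assumption $d \ge 2g'-1+N$.

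For part (2), I would exploit the commutative square in Prop.~\ref{p:N}\eqref{N comm}. Since both $\om_d$ and $\j_{\un d}$ are open embeddings, proper base change gives
\[
\bR g_{\un d,!}L_{\un d} \cong \bR(\wh\add^{\sqR}\times\wh\add^{\sqR})_!\bigl(\wh L_{d_{11}}\boxtimes\Ql\boxtimes\wh L_{d_{12}}\boxtimes\Ql\bigr)\bigm|_{\cA_d}.
\]
Applying a Künneth-type formula over the Picard stack $\Pic^{\sqR;\sqR,d+\r}_X$, this reduces to the single statement
\[
\bR\wh\add^{\sqR}_{d_{11},\, d_{22}-N_-,\,!}\bigl(\wh L_{d_{11}}\boxtimes\Ql\bigr) \cong K_{d+\r-N_-,\, d_{11}}
\]
on $\hX^{\sqR}_{d+\r-N_-}$, together with the analogous identification for the $(d_{12}, d_{21}-N_+)$-component. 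To prove this identification I would restrict to the multiplicity-free open locus $X^{\sqR,\c}_{d+\r-N_-}$, over which $\wh\add^{\sqR}$ is finite \'etale, identified with a quotient by $S_{d_{11}}\times S_{d_{22}-N_-}\subset \Gamma_{d+\r-N_-}$. The pushforward of $\wh L_{d_{11}}\boxtimes\Ql$ then realizes the induced representation $\Ind_{\Gamma_{d+\r-N_-}(d_{11})}^{\Gamma_{d+\r-N_-}}(\wt\chi_{d_{11}}) = \r_{d+\r-N_-, d_{11}}$, whose associated local system is exactly the one defining $K_{d+\r-N_-, d_{11}}$. Both sides are middle extensions from this open subset (the left by part (1), the right by construction), so the global isomorphism follows.

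The main technical obstacle will be making the Künneth/base-change reduction rigorous: the map $\wh\add^{\sqR}\times\wh\add^{\sqR}$ is a morphism between two stacks that are \emph{fiber products} over $\Pic^{\sqR;\sqR,d+\r}_X$, not genuine products over $k$, so one cannot invoke Künneth verbatim. I would handle this either by factoring the product map as a composition of two ``one-factor-at-a-time'' pushforwards (each living over the Picard stack, where ordinary proper base change and the projection formula apply), or, equivalently, by lifting to the Picard stack, where $\wh L_d$ is the pullback of the character sheaf $L^{\Pic}$ whose compatibility with multiplication on $\Pic^{\sqR}_X$ is built into its construction (see the appendix). A secondary bookkeeping matter is the shift in the symmetric-product indices by $N_\pm$ arising from the twists by $\dot\cO_X(\Sig_\pm)$ in $\nu_a, \nu_b$, but these are harmless once the main identification is in place.
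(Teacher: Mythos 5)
Your part (1) has a genuine gap: the map $g_{\un d}$ is \emph{not} small, and your fiber-dimension claim is the point where the argument breaks. For $\cM_{d}$ the positive-dimensional fibers of $f_{d}$ over $\cA^{a=0}_{d}$, $\cA^{b=0}_{d}$ are fibers of the norm map $\Pic_{X'}\to\Pic^{\sqR}_{X}$ and hence of dimension $g'-g$; but for $\cN_{\un d}$ the fiber of $g_{\un d}$ over a point of, say, $\cA^{b=0}_{d}$ contains the choice of the divisor of the nonvanishing one of $\ph_{12},\ph_{21}$, i.e.\ (essentially) a fiber of $X^{\sqR}_{d_{12}}\to\Pic^{\sqR,d_{12}}_{X}$ or $X^{\sqR}_{d_{21}-N_{+}}\to\Pic^{\sqR,d_{21}-N_{+}}_{X}$, whose dimension grows with $d_{12}$ (resp.\ $d_{21}-N_{+}$) and is in no way bounded by $g'-g$. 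Smallness therefore fails whenever $d_{12}$ (or one of the other $d_{ij}$) is large, and one can see the failure is not repairable by a better estimate: smallness is a coefficient-independent property, yet the statement of (1) with $\Ql$ in place of $L_{\un d}$ is false — the restriction of $\bR g_{\un d,!}\Ql$ to $\cA^{b=0}_{d}$ has stalk cohomology up to degree $2d_{12}$, violating the support conditions. So the nontriviality of the local system must enter the proof, and your route never uses it.

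The correct mechanism (and the paper's) is: $\cN_{\un d}$ smooth of dimension $m$, $L_{\un d}$ of order two, and $g_{\un d}$ proper give Verdier self-duality of $\bR g_{\un d,!}L_{\un d}[m]$; it is a middle extension over $\cA^{\dm}_{d}$ because $g_{\un d}$ is finite there; and middle extension on all of $\cA_{d}$ then reduces to showing that the restriction to $\cA^{a=0}_{d}\sqcup\cA^{b=0}_{d}$ sits in strictly negative perverse degrees. That last step is where geometric nontriviality of $L$ is used: by Lemma \ref{l:coho Xd}, the factor $\cohog{*}{X^{\sqR}_{d_{12}}\ot\kbar,L_{d_{12}}}$ (resp.\ $L_{d_{21}-N_{+}}$, after untwisting by the character sheaf $L^{\Pic}$ via Prop.~\ref{p:char sh}) is concentrated in degrees $\le 2g-2+\r$, independently of $d_{12}$, while $\codim_{\cA_{d}}\cA^{b=0}_{d}=d+\r-N_{+}-g+1\ge 2g-1+\r$. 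Your part (2), by contrast, is essentially the paper's argument: reduce (using (1)) to $\cA^{\dm}_{d}$, where the square of Prop.~\ref{p:N}\eqref{N comm} is Cartesian, and then to the one-factor identity $\add^{\sqR}_{i,j,!}(L_{i}\boxtimes\Ql)\cong K_{i+j,i}|_{X^{\sqR}_{i+j}}$, proved by matching monodromy over the multiplicity-free locus and invoking middle extensions. The one caveat: state that identity on $X^{\sqR}_{i+j}$ (where $\add^{\sqR}_{i,j}$ is finite, so the left side is automatically a middle extension), not on $\hX^{\sqR}$ as you do — your appeal to "part (1)" for the middle-extension property of $\bR\wh\add^{\sqR}_{!}(\wh L_{d_{11}}\boxtimes\Ql)$ is not what (1) asserts, and proving it on $\hX^{\sqR}$ would again require the boundary analysis above.
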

\begin{proof}
(1) As in the proof of \cite[Prop. 8.5]{YZ}, $g_{\un d}$ is not small; however, by Prop. \ref{p:N}\eqref{N smooth}\eqref{g proper}, we know that $\bR g_{\un d, !}L_{\un d}[m]$ is Verdier self-dual. Since $g_{\un d}$ is finite over $\cA^{\dm}_{d}$,  $\bR g_{\un d, !}L_{\un d}[m]$ is a middle extension perverse sheaf on $\cA^{\dm}_{d}$. To prove $\bR g_{\un d, !}L_{\un d}[m]$ is a middle extension perverse sheaf on the whole $\cA_{d}$, we only need to show that  the restriction $\bR g_{\un d, !}L_{\un d}[m]|_{\partial \cA_{d}}$ lies in strictly negative perverse degrees, where $\partial \cA_{d}=\cA_{d}-\cA_{d}^{\dm}$. 

We have $\cA_{d}=\cA^{a=0}_{d}\sqcup\cA^{b=0}_{d}$ (see notation in the proof of Prop. \ref{p:M}\eqref{f small}). Below we will show that $\bR g_{\un d, !}L_{\un d}[m]|_{\cA^{b=0}_{d}}$ lies in negative perverse degrees, and the argument for $\cA^{a=0}_{d}$ is similar. 

When $d_{12}<d_{21}-N_{+}$, we have a Cartesian diagram
\begin{equation*}
\xymatrix{   g_{\un d}^{-1}(\cA^{b=0}_{d})   \ar[r]\ar[d]^{g_{\un d}}  &(X^{\sqR}_{d_{11}}\times X^{\sqR}_{d_{22}-N_{-}})\times_{\Pic^{\sqR;\sqR, d+\r}_{X}} (X^{\sqR}_{d_{12}}\times \Pic^{\sqR,d_{21}-N_{+}}_{X})\ar[d]^{\add^{\sqR}_{d_{11},d_{22}-N_{-}}\times h}\\
\cA^{b=0}_{d}\ar[r] & X^{\sqR}_{d+\r-N_{-}}\times_{\Pic^{\sqR;\sqR, d+\r}_{X}} \Pic^{\sqR,d+\r-N_{+}}_{X}
}
\end{equation*}
where the map $h$ is the composition
\begin{equation*}
X^{\sqR}_{d_{12}}\times \Pic^{\sqR,d_{21}-N_{+}}_{X}\xr{\AJ^{\sqR}_{d_{12}}\times\id}  \Pic^{\sqR,d_{12}}_{X}\times\Pic^{\sqR,d_{21}-N_{+}}_{X}\xr{\mult}\Pic^{\sqR,d+\r-N_{+}}_{X}
\end{equation*}
We have
\begin{equation*}
\bR g_{\un d, !}L_{\un d}|_{\cA^{b=0}_{d}}\cong \left(\bR \add^{\sqR}_{d_{11},d_{22}-N_{-},!}(L_{d_{11}}\boxtimes\Ql)\boxtimes \bR h_{!}(L_{d_{12}}\boxtimes\Ql)\right)|_{\cA_{d}^{b=0}}.
\end{equation*}
The first factor $\bR \add^{\sqR}_{d_{11},d_{22}-N_{-},!}(L_{d_{11}}\boxtimes\Ql)$ is concentrated in degree $0$ since $\add^{\sqR}_{d_{11},d_{22}-N_{-}}$ is finite. The second factor is the constant sheaf on $\Pic^{\sqR,d+\r-N_{+}}_{X}$ with geometric stalk isomorphic to $\cohog{*}{X^{\sqR}_{d_{12}}\ot\kbar, L_{d_{12}}}$. By Lemma \ref{l:coho Xd}, $\cohog{*}{X^{\sqR}_{d_{12}}\ot\kbar, L_{d_{12}}}$ always lies in degrees $\le\dim \cohog{1}{X^{\sqR}_{1}\ot\kbar, L}=\dim \cohoc{1}{(X-R)\ot\kbar, L}=2g-2+\r$. Therefore, $\bR g_{\un d, !}L_{\un d}|_{\cA^{b=0}_{d}}$ lies in degrees $\le 2g-2+\r$. Since $\codim_{\cA_{d}}(\cA_{d}^{b=0})=d+\r-N_{+}-g+1$ (see the proof of Prop. \ref{p:M}\eqref{f small}), which is $\ge(2g-2+\r)+1$ (for this we only need the weaker condition $d\ge 3g-2+N_{+}$),  we conclude that $\bR g_{\un d, !}L_{\un d}[m]|_{\cA^{b=0}_{d}}$ lies in cohomological degrees strictly less than $-\dim\cA^{b=0}_{d}$, hence in strictly negative perverse degrees.

When $d_{12}\ge d_{21}-N_{+}$, the argument is similar. The role of the map $h$ is now played by
\begin{equation*}
h': \Pic^{\sqR,d_{12}}_{X}\times X^{\sqR}_{d_{21}-N_{+}}\xr{\id\times\AJ^{\sqR}_{d_{21}-N_{+}}}\Pic^{\sqR,d_{12}}_{X}\times\Pic^{\sqR,d_{21}-N_{+}}_{X}\xr{\mult}\Pic^{\sqR,d+\r-N_{+}}_{X}.
\end{equation*}
Using the isomorphism
\begin{equation*}
\g=(h', \pr_{2}): \Pic^{\sqR,d_{12}}_{X}\times X^{\sqR}_{d_{21}-N_{+}}\isom \Pic^{\sqR,d+\r-N_{+}}_{X}\times X^{\sqR}_{d_{21}-N_{+}}
\end{equation*}
the map $h'\g^{-1}$ becomes the projection to the first factor of $\Pic^{\sqR,d+\r-N_{+}}_{X}\times X^{\sqR}_{d_{21}-N_{+}}$. By Prop. \ref{p:char sh}, $\mult^{*}L^{\Pic}_{d+\r-N_{+}}\cong L^{\Pic}_{d_{12}}\boxtimes L^{\Pic}_{d_{21}-N_{+}}$. Therefore we have $(\g^{-1})^{*}(L_{d_{12}}\boxtimes \Ql)\cong L^{\Pic}_{d+\r-N_{+}}\boxtimes L^{-1}_{d_{21}-N_{+}}\cong L^{\Pic}_{d+\r-N_{+}}\boxtimes L_{d_{21}-N_{+}}$, and hence
\begin{equation*}
h'_{!}(L_{d_{12}}\boxtimes \Ql)\cong L^{\Pic}_{d+\r-N_{+}}\ot\cohog{*}{X^{\sqR}_{d_{21}-N_{+}}\ot\kbar, L_{d_{21}-N_{+}}}.
\end{equation*}
Then we use Lemma \ref{l:coho Xd} again to conclude  that $\bR g_{\un d, !}L_{\un d}[m]|_{\cA^{b=0}_{d}}$ lies in strictly negative perverse degrees.

(2) By (1),  we only need to check \eqref{gund} over the open subset $\cA_{d}^{\dm}$. By Prop. \ref{p:N}\eqref{N comm}, the diagram \eqref{N g comm} is Cartesian over $\cA^{\dm}_{d}$, we have
\begin{equation*}
\bR g_{\un d,!}L_{\un d}|_{\cA^{\dm}_{d}}\cong \left(\add^{\sqR}_{d_{11},d_{22}-N_{-},!}(L_{d_{11}}\boxtimes\Ql)\boxtimes\add^{\sqR}_{d_{12},d_{21}-N_{+},!}(L_{d_{12}}\boxtimes\Ql)\right)|_{\cA^{\dm}_{d}}.
\end{equation*}
Here $\add^{\sqR}_{i,j}$ is the addition map \eqref{add sqR}. Therefore it suffices to show that for any $i,j\ge0$, there is a canonical isomorphism over $X^{\sqR}_{i+j}$
\begin{equation}\label{add sqR K}
\add^{\sqR}_{i,j,!}(L_{i}\boxtimes \Ql)\cong K_{i+j, i}|_{X^{\sqR}_{i+j}}.
\end{equation}
Now both sides are middle extension perverse sheaves (because $\add^{\sqR}_{i,j}$ is finite surjective with smooth irreducible source). The isomorphism \eqref{add sqR K} then follows from the same isomorphism between the restrictions of both sides to $(X-R)^{\c}_{i+j}$, and the latter was proved in \cite[Prop. 8.5]{YZ}.
\end{proof}

\subsection{Comparison of traces}
For $\mu,\mu'\in\frT_{r,\Sig}$, recall the definition of $r_{\pm}$ from \eqref{defn r pm}. For $f\in \sH^{\Sig}_{G}$, with $f^{\Sig_{\pm}}$ defined in \eqref{def f Sig pm}, let
\begin{equation*}
\JJ^{\mu,\mu'}(f)=\left(\frac{\partial}{\partial s_{1}}\right)^{r_{+}}\left(\frac{\partial}{\partial s_{2}}\right)^{r_{-}}\left(q^{N_{+}s_{1}+N_{-}s_{2}}\JJ(f^{\Sig_{\pm}},s_{1}, s_{2})\right)\Big|_{s_{1}=s_{2}=0}.
\end{equation*}

\begin{theorem}\label{th:IJ hD} Suppose $D$ is an effective divisor on $U$ of degree $d\ge \max\{2g'-1+N,2g\}$, then
\begin{equation}\label{IJ hD}
(-\log q)^{-r}\JJ^{\mu,\mu'}(h_{D})=\II^{\mu,\mu'}(h_{D}).
\end{equation}
\end{theorem}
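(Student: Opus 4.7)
The proof will be a direct combination of the three main tools already developed in the excerpt: the orbit-wise expansion of the relative trace formula with the sheaf-theoretic identity of Theorem \ref{th:J tr}, the fixed-point formula of Theorem \ref{th:Ir}, and the sheaf-theoretic decompositions of Propositions \ref{p:Rfd} and \ref{p:Rgd} together with the eigenvalue computation of Proposition \ref{p:H pm action}.

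The plan is the following. First, expand $\JJ(h_D^{\Sig_\pm},s_1,s_2)$ as $\sum_{u\in\PP^1(F)-\{1\}}\JJ(u,h_D^{\Sig_\pm},s_1,s_2)$. By Theorem \ref{th:J tr} only the invariants $u=\inv_D(a)$ for $a\in\cA_D^{\fl}(k)$ contribute, and each contribution equals
\[
\sum_{\un d\in Q_d}q^{(2d_{12}-d-\r)s_1+(2d_{11}-d-\r)s_2}\Tr(\Fr_a,(\bR g_{\un d,!}^{\fl}L_{\un d})_{\ov a}).
\]
Multiplying by $q^{N_+s_1+N_-s_2}$ converts the exponents of $q$ into $2d_{12}-(d+\r-N_+)$ and $2d_{11}-(d+\r-N_-)$, and applying $(\partial/\partial s_1)^{r_+}(\partial/\partial s_2)^{r_-}$ at $s_1=s_2=0$ pulls down the scalar
\[
(2d_{12}-(d+\r-N_+))^{r_+}\,(2d_{11}-(d+\r-N_-))^{r_-}\,(\log q)^r.
\]

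Second, identify this scalar with the action of the Hecke correspondences. By Proposition \ref{p:H pm action}, on the summand $(K_{d+\r-N_-,i}\boxtimes K_{d+\r-N_+,j})|_{\cA_d}$ the operator $f_{d,!}[\ov\cH^{\dm}_+]$ acts by $(d+\r-N_+-2j)$ and $f_{d,!}[\ov\cH^{\dm}_-]$ acts by $(d+\r-N_--2i)$. Setting $i=d_{11},\,j=d_{12}$, these are the negatives of the factors coming from differentiation; the product of $r_++r_-=r$ sign changes absorbs $(\log q)^r$ into $(-\log q)^r$. By Proposition \ref{p:Rgd}, $\bR g_{\un d,!}L_{\un d}$ is exactly the summand $(K_{d+\r-N_-,d_{11}}\boxtimes K_{d+\r-N_+,d_{12}})|_{\cA_d}$ (both sides vanish when $d_{22}<N_-$ or $d_{21}<N_+$), and taking $\bR\Omega_!$ preserves the scalar action on cohomology since $\Omega$ is schematic and the eigenvalue statement is pointwise.

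Third, sum over $\un d\in Q_d$ and use the decomposition of Proposition \ref{p:Rfd}, which—after applying $\bR\Omega_!$—gives $\bR f^{\fl}_{d,!}\Ql\cong\bigoplus_{\un d}\bR g^{\fl}_{\un d,!}L_{\un d}$ as $\sH$-modules compatible with the $\cH^{\dm}_\pm$-actions. Thus for each $a\in\cA^{\fl}_D(k)$,
\[
\sum_{\un d}(d+\r-N_+-2d_{12})^{r_+}(d+\r-N_--2d_{11})^{r_-}\Tr(\Fr_a,(\bR g^{\fl}_{\un d,!}L_{\un d})_{\ov a})
\]
equals $\Tr\bigl((f^{\fl}_{d,!}[\ov\cH^{\dm}_+])^{r_+}(f^{\fl}_{d,!}[\ov\cH^{\dm}_-])^{r_-}\circ\Fr_a,(\bR f^{\fl}_{d,!}\Ql)_{\ov a}\bigr)$. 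Summing over $a\in\cA^{\fl}_D(k)$ and invoking Theorem \ref{th:Ir} yields the equality with $\II^{\mu,\mu'}(h_D)$.

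No step is especially difficult, since all heavy lifting has been done: the hardest inputs are Theorem \ref{th:J tr} (which already packages the test function $h_x^{\bsq}$ at the ramified places into the moduli stack $\cN_d$ with the correct local system $L_{\un d}$), Theorem \ref{th:Ir} (whose proof occupied most of Section \ref{s:M}), and the middle-extension/semi-smallness arguments of Propositions \ref{p:Rfd} and \ref{p:Rgd}. The only real bookkeeping is to track the two $q^{N_\pm s_\pm}$ shifts in the analytic normalization and the resulting sign $(-1)^r$, which exactly matches the sign in the denominator $(-\log q)^r$ on the left-hand side of \eqref{IJ hD}.
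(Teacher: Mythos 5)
Your proposal is correct and follows essentially the same route as the paper's proof: Theorem \ref{th:J tr} on the analytic side, Theorem \ref{th:Ir} on the geometric side, and Propositions \ref{p:Rfd}, \ref{p:Rgd} and \ref{p:H pm action} to match the two trace expressions term by term, with the sign $(-1)^{r}$ arising exactly as you describe from $(d+\r-N_{+}-2j)^{r_{+}}(d+\r-N_{-}-2i)^{r_{-}}=(-1)^{r}(2j-d-\r+N_{+})^{r_{+}}(2i-d-\r+N_{-})^{r_{-}}$. The only cosmetic difference is that the paper converts the sums over $\cA^{\fl}_{D}(k)$ into sums over the stack points $\cA_{D}(k)$ weighted by $1/\#\Aut$ and compares there, whereas you push everything down by $\bR\Om_{!}$; your justification that $\Om$ is schematic is unnecessary (and not quite accurate, as $\Om$ has gerby fibers) -- plain functoriality of $\bR\Om_{!}$ applied to a scalar endomorphism of a direct summand already suffices.
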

\begin{proof}
By Theorem \ref{th:J tr}, we have
\begin{eqnarray*}
q^{N_{+}s_{1}+N_{-}s_{2}}\JJ(h^{\Sig_{\pm}}_{D},s_{1},s_{2})&=&\sum_{\un d\in Q_{d}}q^{(2d_{12}-d-\r+N_{+})s_{1}+(2d_{11}-d-\r+N_{-})s_{2}}\\
&\cdot&\sum_{a\in \cA^{\fl}_{D}(k)}\Tr(\Fr_{a}, (\bR g^{\fl}_{\un d,!}L_{\un d})_{a})
\end{eqnarray*}
Using $\bR g^{\fl}_{\un d,!}L_{\un d}=\bR \Om_{!}\bR g_{\un d,!}L_{\un d}$, we have 
\begin{equation*}
\sum_{a\in \cA^{\fl}_{D}(k)}\Tr(\Fr_{a}, (\bR g^{\fl}_{\un d,!}L_{\un d})_{a})=\sum_{\wt a\in \cA_{D}(k)}\frac{1}{\#\Aut(\wt a)}\Tr(\Fr_{\wt a}, (\bR g_{\un d,!}L_{\un d})_{\wt a}).
\end{equation*}
Here $\cA_{D}\subset \cA$ is the preimage of $\cA^{\fl}_{D}$.  Using Prop. \ref{p:Rgd}, we can rewrite the above as
\begin{equation*}
\sum_{\wt a\in \cA_{D}(k)}\frac{1}{\#\Aut(\wt a)}\Tr(\Fr_{\wt a}, (K_{d+\r-N_{-}, d_{11}}\boxtimes K_{d+\r-N_{+},d_{12}})_{\wt a}).
\end{equation*}
Therefore we get
\begin{eqnarray*}
q^{N_{+}s_{1}+N_{-}s_{2}}\JJ(h^{\Sig_{\pm}}_{D},s_{1},s_{2})&=&\sum_{i=0}^{d+\r-N_{-}}\sum_{j=0}^{d+\r-N_{+}}q^{(2j-d-\r+N_{+})s_{1}+(2i-d-\r+N_{-})s_{2}}\\
&\cdot&\sum_{\wt a\in \cA_{D}(k)}\frac{1}{\#\Aut(\wt a)}\Tr(\Fr_{\wt a}, (K_{d+\r-N_{-}, i}\boxtimes K_{d+\r-N_{+},j})_{\wt a}).
\end{eqnarray*}
Taking derivatives, we get
\begin{eqnarray}
\notag
(\log q)^{-r}\JJ^{\mu,\mu'}(h_{D})&=&\sum_{i=0}^{d+\r-N_{-}}\sum_{j=0}^{d+\r-N_{+}}(2j-d-\r+N_{+})^{r_{+}}(2i-d-\r+N_{-})^{r_{-}}\\
\label{J tr KK}&&\cdot\sum_{\wt a\in \cA_{D}(k)}\frac{1}{\#\Aut(\wt a)}\Tr(\Fr_{\wt a}, (K_{d+\r-N_{-}, i}\boxtimes K_{d+\r-N_{+},j})_{\wt a}).
\end{eqnarray}

On the other hand, by Theorem \ref{th:Ir} we have
\begin{eqnarray*}
&&\II^{\mu,\mu'}(h_{D})\\
&=&\sum_{a\in \cA^{\fl}_{D}(k)}\Tr\left((f^{\fl}_{d,!}[\ov\cH^{\dm}_{+}])^{r_{+}}_{a}\circ (f^{\fl}_{d,!}[\ov\cH^{\dm}_{-}])^{r_{-}}_{a}\circ \Fr_{a}, (\bR f^{\fl}_{d,!}\Ql)_{a}\right)\\
&=&\sum_{\wt a\in \cA_{D}(k)}\frac{1}{\#\Aut(\wt a)}\Tr\left((f_{d,!}[\ov\cH^{\dm}_{+}])^{r_{+}}_{\wt a}\circ (f_{d,!}[\ov\cH^{\dm}_{-}])^{r_{-}}_{\wt a}\circ \Fr_{\wt a}, (\bR f_{d,!}\Ql)_{\wt a}\right)
\end{eqnarray*}
By Prop. \ref{p:Rfd} and Prop. \ref{p:H pm action}, for $\wt a\in \cA_{d}(k)$ we have
\begin{eqnarray*}
&&\Tr\left((f_{d,!}[\ov\cH^{\dm}_{+}])^{r_{+}}_{\wt a}\circ (f_{d,!}[\ov\cH^{\dm}_{-}])^{r_{-}}_{\wt a}\circ \Fr_{\wt a}, (\bR f_{d,!}\Ql)_{\wt a}\right)\\
&=&\sum_{i=0}^{d+\r-N_{-}}\sum_{j=0}^{d+\r-N_{+}}(d+\r-N_{+}-2j)^{r_{+}}(d+\r-N_{-}-2i)^{r_{-}}\\
&\cdot&\Tr\left(\Fr_{\wt a}, (K_{d+\r-N_{-},i}\boxtimes K_{d+\r-N_{+},j})_{\wt a}\right).
\end{eqnarray*}
Therefore
\begin{eqnarray}\label{I tr KK}
\II^{\mu,\mu'}(h_{D})&=&\sum_{i=0}^{d+\r-N_{-}}\sum_{j=0}^{d+\r-N_{+}}(d+\r-N_{+}-2j)^{r_{+}}(d+\r-N_{-}-2i)^{r_{-}}\\
\notag&&\cdot\sum_{\wt a\in \cA_{D}(k)}\frac{1}{\#\Aut(\wt a)}\Tr\left(\Fr_{\wt a}, (K_{d+\r-N_{-},i}\boxtimes K_{d+\r-N_{+},j})_{\wt a}\right).
\end{eqnarray}
Comparing \eqref{J tr KK} and \eqref{I tr KK}, we get \eqref{IJ hD}. The extra sign $(-1)^{r}$ in \eqref{IJ hD} comes from the fact that $(d+\r-N_{+}-2j)^{r_{+}}(d+\r-N_{-}-2i)^{r_{-}}=(-1)^{r}(2j-d-\r+N_{+})^{r_{+}}(2i-d-\r+N_{-})^{r_{-}}$.
\end{proof}

\sss{}\label{sss:aut spec} Fix $\xi\in\frSi'(\kbar)$. Let $V'(\xi)=\cohoc{2r}{\Sht'^{r}_{G}(\Sig;\xi)\ot\kbar,\Ql}(r)$. By the discussion in \S\ref{sss:var coho decomp}, the finiteness results proved in \S\ref{sss:finiteness} for the cohomology of $\Sht^{r}_{G}(\Sii)$ as a $\sH^{\Sig}_{G}$-module are also valid for $V'$, hence for its summand $V'(\xi)$.

Let 
\begin{equation*}
K=\prod_{x\notin \Sig}G(\cO_{x})\times\prod_{x\in \Sig}\Iw_{x}.
\end{equation*}
Denote by $\cA(K)$ the space of compactly supported, $\QQ$-valued functions on the double coset $G(F)\bs G(\AA)/K$. The moduli stack $\Sht^{0}_{G}(\Sig)$ is exactly the discrete groupoid $G(F)\bs G(\AA)/K$, therefore, $\cA(K)\ot\Ql$ is identified with $\cohoc{0}{\Sht^{0}_{G}(\Sig)\ot\kbar,\Ql}$. Corollary \ref{c:H fin} implies that the image of the action map $\sH^{\Sig}_{G}\to \End(\cA(K))$ is a finitely generated $\QQ$-algebra with Krull dimension one. Theorem \ref{th:spec decomp} allows us to write
\begin{equation*}
\cA(K)\ot\Qlbar=\cA(K)_{\Eis}\ot\Qlbar\oplus (\oplus_{\pi\in \Pi_{\Sig}(\Qlbar)} \cA(K)_{\pi}).
\end{equation*} 
Here $\Pi_{\Sig}(\Qlbar)$ is the set of cuspidal automorphic representations (with $\Qlbar$-coefficients) of $G(\AA)$ with level $K$. Each $\pi$ determines a character $\l_{\pi}: \sH^{\Sig}_{G}\to \Qlbar$. By strong multiplicity one for $G$, the character $\l_{\pi}$ determined $\pi$. Therefore we may identify $\Pi_{\Sig}(\Qlbar)$ as a subset of $\Spec \sH^{\Sig}_{G}\ot\Qlbar$. 

Let
\begin{equation*}
\wt \sH^{\Sig}_{\ell}=\Im(\sH^{\Sig}_{G}\ot\Ql\to \End_{\Ql}(V'(\xi))\times \End_{\Ql}(\cA(K)\ot\Ql)\times \Ql[\Pic_{X}(k)]^{\io_{\Pic}})
\end{equation*}
Then by Corollary \ref{c:H fin}, $\wt \sH^{\Sig}_{\ell}$ is again a finitely generated $\Ql$-algebra with Krull dimension one.

\begin{theorem}\label{th:IJ} Let $\mu,\mu'\in\{\pm1\}^{r}$. Then for all $f\in \sH^{\Sig}_{G}$, we have the identity
\begin{equation*}
(-\log q)^{-r}\JJ^{\mu,\mu'}(f)=\II^{\mu,\mu'}(f).
\end{equation*}
\end{theorem}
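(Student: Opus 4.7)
The strategy, following \cite[Theorem 9.2]{YZ}, is to extend Theorem \ref{th:IJ hD}, which handles $f = h_D$ for effective divisors $D$ on $U$ of degree at least $d_0 := \max\{2g'-1+N, 2g\}$, to arbitrary $f \in \sH^\Sigma_G$. The plan is to show that both $\JJ^{\mu,\mu'}$ and $\II^{\mu,\mu'}$ are linear functionals on $\sH^\Sigma_G \otimes \Ql$ that factor through the finitely-generated $\Ql$-algebra $\wt\sH^\Sigma_\ell$ of \S\ref{sss:aut spec}, and that the images of $\{h_D\}_{d \geq d_0}$ span this algebra modulo the common kernel of both functionals.

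For the factorization, $\II^{\mu,\mu'}(f)$ depends only on the $\sH^\Sigma_G$-action on the finite-dimensional subspace spanned by the classes $Z^\mu(\xi), Z^{\mu'}(\xi)$ inside $V'(\xi)$, hence only on the image of $f$ in $\wt\sH^\Sigma_\ell$. For $\JJ^{\mu,\mu'}(f)$, one first applies Theorem \ref{thm K eis=0} to restrict $\JJ(f^{\Sigma_\pm}, s_1, s_2)$ to the cuspidal spectrum, then applies Proposition \ref{p:global char}, which expresses each cuspidal contribution as $\lambda_\pi(f)$ times an explicit derivative of $L$-functions; the resulting sum runs over finitely many cuspidal $\pi$, so $\JJ^{\mu,\mu'}(f)$ also factors through $\wt\sH^\Sigma_\ell$.

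For the density, Corollary \ref{c:H fin} gives $\dim_{\mathrm{Krull}} \wt\sH^\Sigma_\ell = 1$ and Theorem \ref{th:spec decomp} decomposes its reduced spectrum into the Eisenstein component $Z_{\Eis, \Ql}$ and finitely many isolated cuspidal closed points. On the Eisenstein component, the surjectivity of $a^\Sigma_\Eis$ (Lemma \ref{l:aEis surj}) restricted to the span of $\{h_D\}_{\deg D \geq d_0}$ follows from the fact that every element of $\Pic_X(k)$ admits an effective representative of arbitrarily large degree supported on $U$. On each cuspidal closed point, strong multiplicity one ensures that Satake parameters at a single unramified place distinguish distinct $\pi$'s, so finitely many $h_D$'s suffice to separate them. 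This shows that $\{h_D\}_{d \geq d_0}$ spans $\wt\sH^\Sigma_\ell$ modulo nilpotents; since both linear functionals factor through the reduced algebra, Theorem \ref{th:IJ hD} then implies they agree identically on $\sH^\Sigma_G$.

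The main obstacle is ensuring that the density argument goes through cleanly in the presence of the ramification at $R$ and the Iwahori level structure at $\Sigma$. The ramified local factors $h^\bsq_x$ at $x \in R$ and $\one_{\bJ_x}$ at $x \in \Sigma$ are fixed in the definition of $f^{\Sigma_\pm}$, so $f$ varies only over $\sH^\Sigma_G$; nonetheless, one must verify that the Eisenstein contribution on the automorphic side factors correctly through $\Ql[\Pic_X(k)]^{\io_{\Pic}}$ despite these fixed ramified factors, and that the Heegner--Drinfeld class $Z^\mu(\xi)$ has vanishing Eisenstein projection in $V'(\xi)$ (so that the pairing $\II^{\mu,\mu'}$ is insensitive to the Eisenstein ideal). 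The first point follows from the precise local computations of \S\ref{ss:local test}, and the second from an analysis of the constant term of $Z^\mu(\xi)$ using Lemma \ref{l:const Sat}.
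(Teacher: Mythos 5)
Your overall strategy — factor both functionals through $\wt\sH^\Sigma_\ell$, show the $h_D$ of large degree are dense, and invoke Theorem \ref{th:IJ hD} — is the right skeleton, and it matches the route the paper takes (which is just a pointer to \cite[Theorem 9.2, Lemma 9.1]{YZ}). But the way you close the argument introduces a real gap.

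You establish that $\{h_D\}_{\deg D\geq d_0}$ spans $\wt\sH^\Sigma_\ell$ only \emph{modulo nilpotents}, and then assert that "both linear functionals factor through the reduced algebra." For $\JJ^{\mu,\mu'}$ this is fine: Lemma \ref{l:J Eis ideal} and Proposition \ref{p:global char} show $\JJ^{\mu,\mu'}$ is a finite $\Qlbar$-linear combination of the characters $\lambda_\pi$, which visibly kill the nilradical. But for $\II^{\mu,\mu'}(f)=\jiao{Z^\mu(\xi), f*Z^{\mu'}(\xi)}$ the claim is unjustified and, in general, false. The Hecke action on $V'(\xi)_\fkm$ is only known to be by the generalized eigenspace — there is no semisimplicity result — so a nilpotent $f\in\fkm$ can act nontrivially, and a self-adjoint nilpotent endomorphism together with a nondegenerate pairing can easily give $\jiao{Z, f*Z'}\neq 0$. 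The "justification" you offer — that $Z^\mu(\xi)$ has vanishing Eisenstein projection, with reference to Lemma \ref{l:const Sat} — does not address this at all: even granting it, it only concerns the Eisenstein summand of $V'(\xi)$, whereas the nilpotents that cause trouble live in the local rings at the \emph{cuspidal} points $\fkm\in Z^\Sigma_{0,\ell}$. Moreover, this vanishing claim is not proved in the paper (Lemma \ref{l:const Sat} is about the constant term of $\bR\Pi^r_{G,!}\Ql$, not about the cycle class $Z^\mu(\xi)$) and is not needed anywhere; in the proof of Theorem \ref{th:main} the projection to $V'(\xi)_\pi$ via the idempotent $e_\pi$ discards the Eisenstein part automatically.

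The correct argument, which is what \cite[Lemma 9.1]{YZ} supplies, is stronger: for a finitely generated $\Qlbar$-algebra quotient of $\sH^\Sigma_G\otimes\Qlbar$ of Krull dimension $\leq 1$ (such as $\wt\sH^\Sigma_\ell$, by Corollary \ref{c:H fin}), the images of $\{h_D\}_{\deg D\geq d_0}$ span the quotient \emph{on the nose}, nilpotents included. Once you have that, there is nothing to check about whether $\II^{\mu,\mu'}$ kills nilpotents — two linear functionals that agree on a spanning set agree. If you want to avoid citing \cite[Lemma 9.1]{YZ} wholesale, you would need to prove this spanning statement directly; your current reasoning for density (invoking Lemma \ref{l:aEis surj} by noting that every class in $\Pic_X(k)$ has an effective representative of large degree, and invoking "strong multiplicity one at a single unramified place") is too loose even for the reduced statement — $a_\Eis(h_D)$ is not $[\cO_X(D)]$ but a sum over subdivisors, and strong multiplicity one requires agreement at almost all places, not one.
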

The proof is the same as that of \cite[Theorem 9.2]{YZ}, using the finiteness property of $\wt \sH^{\Sig}_{\ell}$ and \cite[Lemma 9.1]{YZ}.

\subsection{Conclusion of the proofs}

\sss{Proof of Theorem \ref{th:main}} Both $\II^{\mu,\mu'}(h)$ and $\JJ^{\mu,\mu'}(h)$ depend only on the image of $h$ in $\wt \sH^{\Sig}_{\ell}$.

Let $\cY=\Spec \wt \sH^{\Sig}_{\ell}$. By Theorem \ref{th:spec decomp}, we have a decomposition
\begin{equation*}
\cY^{\red}=Z_{\Eis, \Ql}\coprod \cY_{0}
\end{equation*}
where $\cY_{0}$ is a finite set of closed points. Under this decomposition, we have a corresponding decomposition of $\wt \sH^{\Sig}_{\ell}$
\begin{equation}\label{prod decomp H}
\wt \sH^{\Sig}_{\ell}=\wt \sH^{\Sig}_{\ell,\Eis}\times \wt \sH^{\Sig}_{\ell,0}
\end{equation}
such that $\Spec \wt \sH^{\Sig,\red}_{\ell,\Eis}=Z_{\Eis, \Ql}$ and $\Spec \wt \sH^{\Sig,\red}_{\ell,0}=\cY_{0}$. We have a decomposition
\begin{equation*}
V'(\xi)\ot\Qlbar=V'(\xi)_{\Eis}\ot\Qlbar\op(\oplus_{\fkm\in \cY_{0}(\Qlbar)}V'(\xi)_{\fkm})
\end{equation*}
where $\Supp(V'(\xi)_{\Eis})\subset Z_{\Eis,\Ql}$ and $V'(\xi)_{\fkm}$ is the generalized eigenspace of $V'(\xi)\ot\Qlbar$ under the character $\fkm$ of $\wt\sH^{\Sig}_{\ell}$. Under this decomposition, let $Z^{\mu}_{\fkm}(\xi)$ be the projection of $Z^{\mu}(\xi)\in V'(\xi)$ (the cycle class of $\th'^{\mu}_{*}[\Sht^{\un\mu}_{T}(\mi\cdot\xi)]$) to the direct summand $V'(\xi)_{\fkm}$.

Let $h\in \wt \sH^{\Sig}_{\ell,0}$, viewed as $(0,h)\in \wt \sH^{\Sig}_{\ell}$ under the decomposition \eqref{prod decomp H}. Since the $\sH^{\Sig}_{G}$-action on $V'(\xi)$ is self-adjoint with respect to the cup product pairing, we have
\begin{equation}\label{Ih}
\II^{\mu,\mu'}(h)=\sum_{\fkm\in \cY_{0}(\Qlbar)}(Z^{\mu}_{\fkm}(\xi), h*Z^{\mu'}_{\fkm}(\xi)).
\end{equation}
On the other hand, we have
\begin{equation}\label{Jh}
\JJ^{\mu,\mu'}(h)=\sum_{\pi\in \Pi_{\Sig}(\Qlbar)} \l_{\pi}(h)\left(\frac{\partial}{\partial s_{1}}\right)^{r_{+}}\left(\frac{\partial}{\partial s_{2}}\right)^{r_{-}}\left(q^{N_{+}s_{1}+N_{-}s_{2}}\JJ_{\pi}(h^{\Sig_{\pm}},s_{1},s_{2})\right)\Big|_{s_{1}=s_{2}=0}.
\end{equation}

By the discussion in \S\ref{sss:aut spec}, $\Pi_{\Sig}(\Qlbar)$ can be viewed as a subset of $\cY_{0}(\Qlbar)$. Now let $\pi$ be as in the statement of Theorem \ref{th:main}. Let $h=e_{\pi}$ be the idempotent in $\wt \sH^{\Sig}_{\ell,0}\ot\Qlbar$ corresponding to $\pi\in \Pi_{\Sig}(\Qlbar)\subset \cY_{0}(\Qlbar)$. In \eqref{Ih} and \eqref{Jh} we plug in $h=e_{\pi}$, we get
\begin{eqnarray*}
\II^{\mu,\mu'}(e_{\pi})&=&(Z^{\mu}_{\pi}(\xi), Z^{\mu}_{\pi}(\xi)).\\
\JJ^{\mu,\mu'}(e_{\pi})&=&\left(\frac{\partial}{\partial s_{1}}\right)^{r_{+}}\left(\frac{\partial}{\partial s_{2}}\right)^{r_{-}}\left(q^{N_{+}s_{1}+N_{-}s_{2}}\JJ_{\pi}(h^{\Sig_{\pm}},s_{1},s_{2})\right)\Big|_{s_{1}=s_{2}=0}.
\end{eqnarray*}
Applying Theorem \ref{th:IJ} to $e_{\pi}$,
\begin{equation*}
(-\log q)^{-r}\left(\frac{\partial}{\partial s_{1}}\right)^{r_{+}}\left(\frac{\partial}{\partial s_{2}}\right)^{r_{-}}\left(q^{N_{+}s_{1}+N_{-}s_{2}}\JJ_{\pi}(h^{\Sig_{\pm}},s_{1},s_{2})\right)\Big|_{s_{1}=s_{2}=0}=(Z^{\mu}_{\pi}(\xi), Z^{\mu}_{\pi}(\xi)).
\end{equation*}
By Prop. \ref{p:global char}, the left side above is the left side of \eqref{main formula}. The proof of Theorem \ref{th:main} is complete.

\sss{Proof of Theorem \ref{th:int Sht}} 
Make a change of variables $t_{1}=s_{1}+s_{2}$, $t_{2}=s_{1}-s_{2}$, we have
\begin{eqnarray*}
\left(\frac{\partial}{\partial t_{1}}\right)^{r_{1}}\left(\frac{\partial}{\partial t_{2}}\right)^{r-r_{1}}
&=&\frac{1}{2^{r}}\left(\frac{\partial}{\partial s_{1}}+\frac{\partial}{\partial s_{2}}\right)^{r_{1}}\left(\frac{\partial}{\partial s_{1}}-\frac{\partial}{\partial s_{2}}\right)^{r-r_{1}}\\
&=&\frac{1}{2^{r}}\sum_{I\subset \{1,2,\cdots, r\}}(-1)^{\#(I\cap\{r_{1}+1,\cdots, r\})}\left(\frac{\partial}{\partial s_{1}}\right)^{r-\#I}\left(\frac{\partial}{\partial s_{2}}\right)^{\#I}.
\end{eqnarray*}
Therefore,
\begin{eqnarray*}
&&\sL^{(r_{1})}(\pi,\ha)\sL^{(r-r_{1})}(\pi\otimes\y, \ha)\\
&=&\left(\frac{\partial}{\partial t_{1}}\right)^{r_{1}}\left(\frac{\partial}{\partial t_{2}}\right)^{r-r_{1}}(\sL(\pi,t_{1}+\ha)\sL(\pi\otimes\y,t_{2}+\ha))\Big|_{t_{1}=t_{2}=0}\\
&=&\frac{1}{2^{r}}\sum_{I\subset \{1,2,\cdots, r\}}(-1)^{\#(I\cap\{r_{1}+1,\cdots, r\})}\left(\frac{\partial}{\partial s_{1}}\right)^{r-\#I}\left(\frac{\partial}{\partial s_{2}}\right)^{\#I}\sL_{F'/F}(\pi,s_{1},s_{2})\Big|_{s_{1}=s_{2}=0}.
\end{eqnarray*}
For $I\subset\{1,2,\dotsc, r\}$, let $\s_{I}\in \{\pm1\}^{r}$ be the element which is $-1$ on the $i$-th coordinate if $i\in I$ and $1$ elsewhere. We may view $\s_{I}$ as an element in $\frA_{r,\Sig}$. Let $\mu\in \frT_{r,\Sig}$. By Theorem \ref{th:main}
\begin{equation*}
\left(\frac{\partial}{\partial s_{1}}\right)^{r-\#I}\left(\frac{\partial}{\partial s_{2}}\right)^{\#I}\sL_{F'/F}(\pi,s_{1},s_{2})\Big|_{s_{1}=s_{2}=0}=\left(Z^{\mu}_{\pi}(\xi), Z^{\s_{I}\cdot \mu}_{\pi}(\xi)\right)=\left(Z^{\mu}_{\pi}(\xi), \s_{I}\cdot Z^{\mu}_{\pi}(\xi)\right).
\end{equation*}
where the second equality follows from Lemma \ref{l:cycle mu mu'}. Therefore
\begin{eqnarray*}
&&\sL^{(r_{1})}(\pi,\ha)\sL^{(r-r_{1})}(\pi\otimes\y, \ha)\\
&=&\frac{1}{2^{r}}\sum_{I\subset \{1,2,\cdots, r\}}(-1)^{\#(I\cap\{r_{1}+1,\cdots, r\})}\left(Z^{\mu}_{\pi}(\xi), \quad \s_{I}\cdot Z^{\mu}_{\pi}(\xi)\right)\\
&=&\left(Z^{\mu}_{\pi}(\xi), \quad \frac{1}{2^{r}}\sum_{I\subset \{1,2,\cdots, r\}}(-1)^{\#(I\cap\{r_{1}+1,\cdots, r\})}\s_{I}\cdot Z^{\mu}_{\pi}(\xi)\right)\\
&=&\left(Z^{\mu}_{\pi}(\xi), \quad \prod_{i=1}^{r_{1}}\frac{1+\s_{i}}{2}\prod_{j=r_{1}}^{r}\frac{1-\s_{j}}{2}\cdot Z^{\mu}_{\pi}(\xi)\right)=\left(Z^{\mu}_{\pi}(\xi), \ep_{r_{1}}\cdot Z^{\mu}_{\pi}(\xi)\right).
\end{eqnarray*}
Since $\ep_{r_{1}}$ is an idempotent in $\QQ[(\ZZ/2\ZZ)^{r}]$ which is self-adjoint with respect to the intersection pairing on $\Sht'^{r}_{G}(\Sig;\xi)$, we have $\left(Z^{\mu}_{\pi}(\xi), \ep_{r_{1}}\cdot Z^{\mu}_{\pi}(\xi)\right)=\left(\ep_{r_{1}}\cdot Z^{\mu}_{\pi}(\xi), \ep_{r_{1}}\cdot Z^{\mu}_{\pi}(\xi)\right)$. The theorem is proved.

\appendix
\section{Picard stack with ramifications}\label{A:Pic}

In this appendix we record some constructions in the geometric class field theory with ramifications of order two, which will be used in the descriptions of the moduli spaces in \S\ref{s:M} and \S\ref{s:N}.

\subsection{The Picard stack and Abel-Jacobi map with ramifications} 

Let $R\subset X$ be a reduced finite subscheme. 

\begin{defn} Let $\Pic^{\sqR}_{X}$ be the functor on $k$-schemes whose $S$-valued points is the groupoid of triples $\cL^{\na}=(\cL, \cK_{R}, \io)$ where
\begin{itemize}
\item $\cL$ is a line bundle over $X\times S$;
\item $\cK_{R}$ is a line bundle over $R\times S$;
\item $\io: \cK^{\otimes2}_{R}\isom \cL|_{R\times S}$ is an isomorphism of line bundles over $R\times S$. 
\end{itemize}
\end{defn}

We have a decomposition $\Pic^{\sqR}_{X}=\sqcup_{d\in\ZZ}\Pic^{\sqR,d}_{X}$, where $\Pic^{\sqR,d}_{X}$ is the subfunctor defined by imposing that $\deg(\cL_{s})=d$ for each geometric point $s\in S$.

\sss{} We present $\Pic^{\sqR}_{X}$ as a quotient stack. Let $\Pic_{X,R}$ be the moduli stack classifying $(\cL,\g)$ where $\cL$  is a line bundle over $X$ and $\gamma$ is a trivialization of $\cL_{R}$. The Weil restriction $\Res^{R}_{k}\Gm$ acts on $\Pic_{X,R}$ by changing the trivialization $\g$, whose quotient is naturally isomorphic to $\Pic_{X}$. From the definition of $\Pic^{\sqR}_{X}$ we see there is a natural isomorphism of stacks
\begin{equation*}
\Pic^{\sqR}_{X}\cong [\Pic_{X,R}/_{[2]}\Res^{R}_{k}\Gm]
\end{equation*}
Here the quotient is obtained by making $\Res^{R}_{k}\Gm$ act on $\Pic_{X,R}$ via the square of the usual action, and the notation $/_{[2]}$ is to emphasize the square action. When $R=\vn$, $\Res^{R}_{k}\Gm=\Spec k$ by convention, and the above discussion is still valid.

The forgetful map $(\cL,\cK_{R},\io)\mapsto \cL$ gives a morphism of stacks
\begin{equation*}
\Pic^{\sqR}_{X}\to \Pic_{X}
\end{equation*}
which is a $\Res^R_{k}\mu_{2}$-gerbe.

\sss{Variant of $\Pic^{\sqR}_{X}$}\label{sss:wtPic} We shall also need the following variant of $\Pic^{\sqR}_{X}$. Let $\Pic^{\sqR;\sqR}_{X}$ be the stack whose $S$-points consist of $(\cL, \cK_{R}, \io, \a_{R})$, where $(\cL,\cK_{R}, \io)\in \Pic^{\sqR}_{X}(S)$ and $\a_{R}$ is a section of $\cK_{R}$. Then we have
\begin{equation*}
\Pic^{\sqR;\sqR}_{X}\cong \Pic_{X,R}\twtimes{[2], \Res^{R}_{k}\Gm}\Res^{R}_{k}\AA^1
\end{equation*}
Here the action of $\Res^{R}_{k}\Gm$ on $\Pic_{X,R}$ is the square action and its action on $\Res^{R}_{k}\AA^1$ is by dilation.

\begin{defn} For each integer $d\geq0$, let $\hX^{\sqR}_{d}$ be the $k$-stack whose $S$-points is the groupoid of tuples $(\cL^{\na}, a, \a_{R})$ where
\begin{itemize}
\item $\cL^{\na}=(\cL, \cK_{R}, \io)\in \Pic^{\sqR,d}_{X}(S)$; in particular, $\io$ is an isomorphism $\cK^{\otimes2}_{R}\isom \cL_{R}$.
\item $a$ is a global section of $\cL$;
\item $\a_{R}$ is a section of $\cK_{R}$ such that $\io(\a^{\otimes2}_{R})=a_{R}$, where $a_{R}$ is the restriction of $a_{R}$ to $R\times S$.
\end{itemize}
We let $X^{\sqR}_{d}\subset \hX^{\sqR}_{d}$ be the open substack defined by requiring that $a$ is nonzero along the geometric fiber $X\times \{s\}$, for all geometric points $s\in S$.
\end{defn}

\sss{} Forgetting the square roots $(\cK_{R},\io,\a_{R})$ we get a morphism to the stack $\hX_{d}$ defined in \cite[\S3.2.1]{YZ}
\begin{equation*}
\wh\om^{\sqR}_{d}: \hX^{\sqR}_{d}\to \hX_{d}.
\end{equation*}
Over a geometric point $(\cL,a\in \Gamma(X_{K}, \cL))\in \hX_{d}(K)$, the fiber of $\wh\om^{\sqR}_{d}$ is a product $\prod_{x\in R(K)}\cP_{x}$, where $\cP_{x}\cong\Spec K$ if $a(x)\neq0$, and $\cP_{x}\cong[\Spec K/\mu_{2,K}]$ if $a(x)=0$. In particular, the restriction of $\wh\om^{\sqR}_{d}$ to $X^{\sqR}_{d}$
\begin{equation*}
\om^{\sqR}_{d}: X^{\sqR}_{d}\to X_{d}.
\end{equation*}
realizes $X_{d}$ as the coarse moduli scheme of $X^{\sqR}_{d}$. When $d=1$, $X^{\sqR}_{1}$ is the DM curve with coarse moduli space $X$ and automorphic group $\mu_{2}$ along $R$.

\begin{defn} For an open subset $U\subset X$, we define $U^{\sqR}_{d}$ to be the subset of $X^{\sqR}_{d}$ which is the preimage of $U_{d}$ under the map $\om^{\sqR}_{d}$.
\end{defn}

We have another description of $\hX^{\sqR}_{d}$ as follows. Evaluating  a section of a line bundle along $R$ gives a morphism
\begin{equation*}
\ev^{R}_{d}: \hX_{d}\to [\Res^{R}_{k}\AA^1/\Res^{R}_{k}\Gm].
\end{equation*} 
From the construction of $\hX^{\sqR}_{d}$ we get a Cartesian diagram
\begin{equation}\label{XR Cart}
\xymatrix{\hX^{\sqR}_{d}\ar[d]^{\wh\om^{\sqR}_{d}}\ar[rr]^-{\ev^{\sqR}_{d}} && [\Res^{R}_{k}\AA^1/\Res^{R}_{k}\Gm]\ar[d]^{[2]}\\
\hX_{d}\ar[rr]^-{\ev^{R}_{d}} && [\Res^{R}_{k}\AA^1/\Res^{R}_{k}\Gm]}
\end{equation}
Here the vertical map $[2]$ is the square map on both $\Res^{R}_{k}\AA^1$ and $\Res^{R}_{k}\Gm$.

\begin{lemma}\label{l:ev sm}
\begin{enumerate}
\item The map $\ev^{R}_{d}$ is smooth when restricted to $X_{d}$. 
\item $X^{\sqR}_{d}$ is a smooth DM stack over $k$.
\end{enumerate}
\end{lemma}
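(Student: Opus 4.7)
For part (1), my plan is to compute the relative tangent complex of $\ev^{R}_{d}|_{X_{d}}$ at a geometric point $(\cL, a)$ with vanishing divisor $D := \div(a)$. The absolute tangent complex of $\hX_{d}$ at $(\cL, a)$ admits the standard description $R\Gamma(X, [\cO_{X} \xrightarrow{\cdot a} \cL])$ in derived degrees $[-1, 0]$ (the degree $-1$ term accounting for the $\Gm$-scaling of $\cL$ compatible with $a$), and that of $[\Res^{R}_{k}\AA^{1}/\Res^{R}_{k}\Gm]$ at $(\cL|_{R}, a|_{R})$ is $R\Gamma(R, [\cO_{R} \xrightarrow{\cdot a|_{R}} \cL|_{R}])$. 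The tangent map is induced by restriction to $R$, so via the short exact sequence $0 \to \cO_{X}(-R) \to \cO_{X} \to \cO_{R} \to 0$, the relative tangent complex is identified with $R\Gamma(X, [\cO_{X}(-R) \xrightarrow{\cdot a} \cL(-R)])$. On $X_{d}$ the section $a$ is nowhere zero on fibers, so the map $\cdot a$ is an injection of locally free sheaves on $X$, and the two-term complex is quasi-isomorphic in degree $0$ to its cokernel $\cO_{X}(D-R)|_{D}$, a torsion sheaf on the $0$-dimensional scheme $D$ of total $k$-length $d$. Thus the relative tangent complex is concentrated in degree $0$ with $k$-dimension $d$, yielding smoothness of relative dimension $d$.

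For part (2), I plan to invoke the Cartesian square \eqref{XR Cart} restricted to $X_{d}$, which expresses $X^{\sqR}_{d}$ as the fiber product of $\ev^{R}_{d}|_{X_{d}}: X_{d} \to [\Res^{R}_{k}\AA^{1}/\Res^{R}_{k}\Gm]$ along the squaring map $[2]$. By part (1) the base map is smooth, and smoothness is preserved under arbitrary base change, so the projection $X^{\sqR}_{d} \to [\Res^{R}_{k}\AA^{1}/\Res^{R}_{k}\Gm]$ (through the $\sqR$-copy) is smooth. Since $[\Res^{R}_{k}\AA^{1}/\Res^{R}_{k}\Gm]$ is itself smooth over $\Spec k$ (being the quotient of a smooth scheme by a smooth algebraic group), composing gives that $X^{\sqR}_{d}$ is smooth over $k$ of dimension $d$. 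For the DM property, I would analyze the inertia directly: the relation $\alpha_{R}^{\otimes 2} = s_{D}|_{R}$ together with $\charac k \ne 2$ forces $\alpha_{R}$ to vanish at every point of $R \cap \supp D$, and the only automorphisms of the data $(D, \cK_{R}, \iota, \alpha_{R})$ are sign flips of $\cK_{R}$ at these vanishing points. The resulting automorphism group scheme is $\mu_{2}^{|R \cap \supp D|}$, which is étale in characteristic $\ne 2$, so the inertia stack of $X^{\sqR}_{d}$ is unramified, giving the DM property.

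The main subtle point I foresee is that the squaring map $[2]$ on $[\Res^{R}_{k}\AA^{1}/\Res^{R}_{k}\Gm]$ is itself \emph{not} étale: it is ramified exactly over the stacky locus, which is what produces the $\mu_{2}$-inertia of $X^{\sqR}_{d}$. Consequently the smoothness of $X^{\sqR}_{d}$ over $k$ cannot be deduced from any étaleness of the vertical map $X^{\sqR}_{d} \to X_{d}$, but must be routed through the smoothness of the base $\ev^{R}_{d}|_{X_{d}}$ established in part (1). The tangent complex computation in (1) succeeds precisely because the cokernel of an injection between locally free sheaves on a curve is a torsion sheaf, whose hypercohomology is automatically concentrated in degree $0$, so the Serre-duality obstruction that would give a nonzero $H^{1}$ vanishes identically.
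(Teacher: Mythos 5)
Your proposal is correct but takes a genuinely different route for part~(1). The paper base-changes to $\kbar$ and proves a self-contained transversality criterion: it checks, via a direct dimension count identifying $\bigcap_{x\in I} D_x$ with $X_{d-\#I}$, that the divisors $D_x=\{D\in X_d \mid x\in \supp D\}$ (for $x\in R(\kbar)$) are smooth and meet transversely, which forces the product of evaluation maps into $\prod_{x}[\AA^1/\Gm]$ to be smooth. You instead compute the relative tangent complex of $\ev^R_d|_{X_d}$ directly: using the restriction triangle along $R\hookrightarrow X$ you identify it with $R\Gamma\bigl(X,[\cO_X(-R)\xrightarrow{\,\cdot a\,}\cL(-R)]\bigr)$, observe that on $X_d$ the map is an injection of line bundles with torsion cokernel of length $d$, and conclude concentration in degree $0$ and locally free of rank $d$. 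This is a perfectly valid (and arguably more intrinsic) argument; what the paper's approach buys is that it avoids invoking the theory of cotangent complexes for stacks and reduces everything to elementary geometry of $X_d$, whereas yours is more transparently functorial and packages the fiber dimension and smoothness in one stroke. One small point worth making explicit in your version: to pass from ``relative tangent complex concentrated in degree $0$ at every geometric point'' to ``smooth,'' you implicitly use that the relative cotangent complex is perfect (which holds here since both $X_d$ and $[\Res^R_k\AA^1/\Res^R_k\Gm]$ are smooth over $k$), so Nakayama plus the constancy of the fiber rank $d$ upgrade the pointwise statement to global local freeness.

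For part~(2) your smoothness argument coincides with the paper's (base change of a smooth map across the Cartesian square \eqref{XR Cart}, then compose with the smooth structure map of the quotient stack). For the DM property the paper simply notes that $[2]:[\Res^R_k\AA^1/\Res^R_k\Gm]\to[\Res^R_k\AA^1/\Res^R_k\Gm]$ is relative DM and $X_d$ is a scheme, so the Cartesian diagram gives DM; you instead analyze the inertia directly and identify the automorphism group of a geometric point as $\mu_2^{\lvert R\cap\supp D\rvert}$, étale since $\charac k\neq 2$. These are equivalent in substance—your computation is exactly what makes $[2]$ relative DM—so the paper's phrasing is just a slicker packaging of the same observation. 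Your closing remark that $[2]$ is not \'etale and hence smoothness of $X^{\sqR}_d$ must go through smoothness of $\ev^R_d|_{X_d}$ rather than of the vertical map $X^{\sqR}_d\to X_d$ is a correct and useful caution.
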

\begin{proof}
(1) We may argue by base changing to $\kbar$. We have $[\Res^{R}_{k}\AA^1/\Res^{R}_{k}\Gm]_{\kbar}\cong \prod_{x\in R(\kbar)}[\AA^{1}/\Gm]$, and the map $\ev^{R}_{d,\kbar}:X_{d,\kbar}\to \prod_{x\in R(\kbar)}[\AA^{1}/\Gm]$ is the product of the evaluation maps $\ev_{x}$ for $x\in R(\kbar)$.  The following general statement follows from an easy calculation of tangent spaces.

\begin{claim} Let $Z$ be a smooth and irreducible $\kbar$-scheme and $f_{i}: Z\to [\AA^{1}/\GG_{m}]$ be a collection of morphisms, $1\leq i\leq n$. Assume the image of each $f_{i}$ does not lie entirely in $[\{0\}/\Gm]$, so the scheme-theoretic preimage of $[\{0\}/\Gm]$ under $f_{i}$ is a divisor $D_{i}\subset Z$. Let $f:Z\to \prod_{i=1}^{n}[\AA^{1}/\Gm]\cong [\AA^{n}/\Gm^{n}]$ be the fiber product of the $f_{i}$'s. Then $f$ is a smooth morphism if and only if the divisors $D_{1},\dotsc, D_{n}$ are smooth and intersect transversely. 
\end{claim}

We apply this claim to $Z=X_{d,\kbar}$ and the maps $\ev_{x}$ for $x\in R(\kbar)$. The divisor $D_{x}$ in this case is the locus in $X_{d,\kbar}$ classifying those degree $d$ divisors $D$ of $X$ containing $x$. For a subset  $I\subset R(\kbar)$, the intersection $D_{I}=\cap_{x\in I} D_{x}$ is the locus classifying those degree $d$ divisors $D$ of $X$ containing all points in $I$. This is non-empty only if $\#I\leq d$. When this is the case, we have an isomorphism $X_{d-\#I}\cong D_{I}$ given by $D\mapsto D+\sum_{x\in I}x$ (the fact that this is an isomorphism can be checked by an \'etale local calculation, reducing to the case $X$ is $\AA^{1}$). In particular, $D_{I}\subset X_{d,\kbar}$ is smooth of codimension $\#I$. This shows that the divisors $\{D_{x}\}_{x\in R(\kbar)}$ intersect transversely. By the Claim above, the map $\ev^{R}_{d,\kbar}$ is smooth when restricted to $X_{d,\kbar}$.

(2) Since $\ev^{R}_{d}|_{X_{d}}$ is smooth by part (1), so is $\ev^{\sqR}_{d}|_{X^{\sqR}_{d}}$ by the Cartesian diagram \eqref{XR Cart}. Therefore $X^{\sqR}_{d}$ is a smooth algebraic stack over $k$. Since the square map $[\Res^{R}_{k}\AA^1/\Res^{R}_{k}\Gm]\to [\Res^{R}_{k}\AA^1/\Res^{R}_{k}\Gm]$ is relative DM and $X_{d}$ is a scheme, we see that $X^{\sqR}_{d}$ is a DM stack again from \eqref{XR Cart}.
\end{proof}

\sss{The addition map} Suppose $d_{1},d_{2}\in\ZZ_{\ge0}$, then we have a map
\begin{equation*}
\wh\add_{d_{1},d_{2}}^{\sqR}: \hX_{d_{1}}^{\sqR}\times \hX_{d_{2}}^{\sqR}\to \hX^{\sqR}_{d_{1}+d_{2}}
\end{equation*}
sending $(\cL_{1}^{\na},a_{1}, \a_{R,1}, \cL_{2}^{\na},a_{2}, \a_{R,2})$ to $(\cL_{1}^{\na}\ot\cL^{\na}_{2},a_{1}\ot a_{2}, \a_{R,1}\ot\a_{R,2})$. It restricts to a map
\begin{equation}\label{add sqR}
\add_{d_{1},d_{2}}^{\sqR}:X_{d_{1}}^{\sqR}\times X_{d_{2}}^{\sqR}\to X^{\sqR}_{d_{1}+d_{2}}.
\end{equation}

In particular, applying this construction iteratively, we get a map (for $d\ge0$)
\begin{equation}\label{symmR}
p^{\sqR}_{d}: (X^{\sqR}_{1})^{d}\to X^{\sqR}_{d}.
\end{equation}
which is $S_{d}$-invariant with respect to the permutation action on the source.

\sss{The Abel-Jacobi map}\label{sss:AJ}  Forgetting the sections $a$ we get a morphism
\begin{equation*}
\wh\AJ^{\sqR;\sqR}_{d}: \hX^{\sqR}_{d}\to \Pic^{\sqR;\sqR,d}_{X}.
\end{equation*}
We also get a map
\begin{equation*}
\wh\AJ^{\sqR}_{d}: \hX^{\sqR}_{d}\to \Pic^{\sqR,d}_{X}
\end{equation*}
by further forgetting $\a_{R}$. Let $\AJ^{\sqR;\sqR}_{d}$ and $\AJ^{\sqR}_{d}$ be the restrictions of $\wh\AJ^{\sqR;\sqR}_{d}$ and $\wh\AJ^{\sqR}_{d}$ to $X^{\sqR}_{d}$. When $R=\vn$, $\AJ^{\sqR}_{d}$ reduces to the usual Abel-Jacobi map.

\sss{Presentation of $\Pic^{\sqR}_{X}(k)$}\label{sss:OsqR} 
For $x\in R$, let
\begin{equation*}
\cO_{\sqx}=\cO_{x}\times_{k(x)}k(x), \quad \cO^{\times}_{\sqx}=\cO_{x}^{\times}\times_{k(x)^{\times}}k(x)^{\times}
\end{equation*}
where the second projections $k(x)\to k(x)$ and $k(x)^{\times}\to k(x)^{\times}$ are the square maps. Let $\OO^{\times}_{\sqR}=\prod_{x\in R}\cO^{\times}_{\sqx}\times\prod_{x\in|X-R|}\cO^{\times}_{x}$. We have a homomorphism $\OO^{\times}_{\sqR}\to \OO^{\times}=\prod_{x\in |X|} \cO_{x}^{\times}\to \AA_{F}^{\times}$.

\begin{lemma} There is a canonical isomorphism of Picard groupoids
\begin{equation}\label{idele Pic R}
F^{\times}\bs \AA_{F}^{\times}/\OO^{\times}_{\sqR}\isom\Pic^{\sqR}_{X}(k)
\end{equation}
sending $\vp^{-1}_{x}$ (where $\vp_{x}$ is a uniformizer at $x\in |X-R|$) to the point $\cO_{X}(x)^{\na}=(\cO_{X}(x), \cO_{R}, 1)\in \Pic^{\sqR}_{X}(k)$.

\end{lemma}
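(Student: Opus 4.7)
The plan is to deduce the lemma from the classical id\`elic presentation of $\Pic_X(k)$ by incorporating the $\Res^R_k\mu_2$-gerbe structure that relates $\Pic^{\sqR}_X$ to $\Pic_X$.

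I would begin by using the presentation $\Pic^{\sqR}_X\cong[\Pic_{X,R}/_{[2]}\Res^R_k\Gm]$ from the body of the appendix to rewrite
\[\Pic^{\sqR}_X(k) \;=\; \bigl[\Pic_{X,R}(k) \,/_{[2]}\, \textstyle\prod_{x\in R} k(x)^\times\bigr]\]
as a quotient groupoid, where $\Pic_{X,R}(k)$ is the discrete set of pairs $(\cL,\gamma)$ with $\gamma\colon \cL|_R\isom \cO_R$ and $\prod_{x\in R}k(x)^\times$ acts by rescaling $\gamma$ through the square map. Classical geometric class field theory then provides
\[\Pic_{X,R}(k) \;\cong\; F^\times\bs \AA^\times/\OO^\times_R, \qquad \OO^\times_R := \prod_{x\in R}(1+\fkm_x)\times \prod_{x\notin R}\cO_x^\times,\]
by attaching to an id\`ele $(a_x)$ the fractional-ideal line bundle with local generators read off from $(a_x)$, the trivialization along $R$ being induced by those same generators. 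This also pins down the normalization of the lemma: for $x\notin R$ the id\`ele $\vp_x^{-1}$ is sent to $\cO_X(x)$, lifted canonically to $\cO_X(x)^\natural=(\cO_X(x),\cO_R,\id)$.

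Next I would analyze the homomorphism $\OO^\times_{\sqR}\to\AA^\times$ obtained by taking the first component at places in $R$. Its kernel is exactly $\prod_{x\in R}\mu_2(k(x))$, and its image is
\[\OO^\times_{R,\textup{sq}}:= \prod_{x\in R}(\cO_x^\times)^{[2]}\times\prod_{x\notin R}\cO_x^\times, \qquad (\cO_x^\times)^{[2]}:=\{u\in\cO_x^\times:\bar u\in (k(x)^\times)^2\}.\]
Interpreting $F^\times\bs\AA^\times/\OO^\times_{\sqR}$ as a groupoid whose morphisms come from the $F^\times\times\OO^\times_{\sqR}$-action, the isomorphism classes are therefore $F^\times\bs\AA^\times/\OO^\times_{R,\textup{sq}}$, and the automorphism group of every object equals $\prod_{x\in R}\mu_2(k(x))$ (using $F^\times\cap\OO^\times=k^\times$). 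The passage from $\OO^\times_R$ to $\OO^\times_{R,\textup{sq}}$ implements precisely the $[2]$-quotient by $\prod_{x\in R}(k(x)^\times)^2$ on $\Pic_{X,R}(k)$, matching the first step.

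Finally I would make the comparison functorial by sending an id\`ele $a$ to $(\cL_a,\cK_{a,R},\iota_a)$, where $\cK_{a,R}$ is the trivial line bundle on $R$ generated by $e_x$ at each $x\in R$ and $\iota_a$ sends $e_x^{\otimes 2}$ to the distinguished generator of $\cL_a|_x$ attached to $a_x$; and sending a morphism $(u_x,t_x)_{x\in R}\in\OO^\times_{\sqR}$ to the pair consisting of the evident isomorphism of line bundles together with the rescaling $e_x\mapsto t_xe_x$. The tensor product structure of $F^\times\bs\AA^\times/\OO^\times_{\sqR}$ (multiplication of id\`eles) matches that of $\Pic^{\sqR}_X(k)$ (tensor product of triples) by construction. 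The only remaining verification is that the automorphism group $\{(c,(t_x)):c\in k^\times,\ t_x\in k(x)^\times,\ t_x^2=c\}$ of an object matches on both sides, which follows directly from the kernel/image analysis above. I do not foresee a genuine obstacle—the content is standard geometric class field theory twisted by a $\Res^R_k\mu_2$-gerbe along $R$—so the main attention is keeping the bookkeeping of squares, trivializations, and units synchronized.
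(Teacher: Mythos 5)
Your overall route is legitimate and close in spirit to the paper's proof: the paper rigidifies $\Pic^{\sqR}_{X}(k)$ in one step (a generic trivialization of $\cL$, formal trivializations at all places, and trivializations $t_{x}$ of $\cK_{x}$ compatible with $\io$), identifies the rigidified groupoid with $\AA^{\times}_{F}$, and observes that forgetting the rigidification is a torsor under $F^{\times}\times\OO^{\times}_{\sqR}$; you instead go through the gerbe presentation $[\Pic_{X,R}/_{[2]}\Res^{R}_{k}\Gm]$ and the classical id\`elic description of $\Pic_{X,R}(k)$. However, one intermediate assertion is false and must be repaired: the automorphism group of an object of the action groupoid $F^{\times}\bs\AA^{\times}_{F}/\OO^{\times}_{\sqR}$ is not $\prod_{x\in R}\mu_{2}(k(x))$. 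The stabilizer of an id\`ele consists of pairs $(c,w)\in F^{\times}\times\OO^{\times}_{\sqR}$ whose combined action is trivial; this forces $c\in F^{\times}\cap\OO^{\times}=k^{\times}$ and, at each $x\in R$, a component $t_{x}\in k(x)^{\times}$ with $t_{x}^{2}$ equal to the image of $c^{\pm1}$ in $k(x)^{\times}$ (the sign depending on your conventions). So the automorphism group is, up to that convention, exactly the group $\{(c,(t_{x})_{x\in R}): c\in k^{\times},\ t_{x}\in k(x)^{\times},\ t_{x}^{2}=\ov{c}\}$ that you yourself write in your final sentence; in general it is strictly larger than $\prod_{x\in R}\mu_{2}(k(x))$ (every $c\in(k^{\times})^{2}$ occurs, and more when some $d_{x}$ is even), the latter being only the kernel of $\OO^{\times}_{\sqR}\to\AA^{\times}_{F}$, i.e.\ what remains before you quotient by $F^{\times}$. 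On the geometric side one computes likewise that $\Aut(\cL,\cK_{R},\io)=\{(c,(t_{x})): c\in k^{\times},\ t_{x}^{2}=\ov{c}\}$, a scalar on $\cL$ together with compatible scalars on $\cK_{R}$; had your $\prod\mu_{2}$ claim been true, the two groupoids would in fact fail to match. Delete that sentence, carry the full stabilizer through your kernel/image analysis, and the matching of isomorphism classes and of automorphism groups, together with your explicit functor, does give the lemma.

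Two smaller points should be made explicit. First, identifying $\Pic^{\sqR}_{X}(k)=[\Pic_{X,R}/_{[2]}\Res^{R}_{k}\Gm](k)$ with the naive quotient groupoid $[\Pic_{X,R}(k)/_{[2]}\prod_{x\in R}k(x)^{\times}]$ uses that every $\Res^{R}_{k}\Gm$-torsor over $\Spec k$ is trivial (Hilbert 90, or Lang's theorem over the finite field $k$). Second, $\Pic_{X,R}(k)$ is a discrete groupoid only when $R\ne\vn$ (the only global unit congruent to $1$ along $R$ is $1$); for $R=\vn$ your argument degenerates to the classical unramified statement, which is fine but worth saying since the lemma is stated for arbitrary $R$. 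By contrast, the paper's proof avoids quoting the level-$R$ id\`elic description as a black box by rigidifying everything at once, which is why it never has to track these square-root automorphisms separately.
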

\begin{proof} Consider the groupoid $\wh\Pic^{\sqR}_{X}(k)$ classifying $(\cL,\t_{\y},\{\t_{x}\}_{x\in |X|}, \cK_{R},\io, t_{R}=\{t_{x}\}_{x\in R})$, where $(\cL,\cK_{R},\io)\in\Pic^{\sqR}_{X}(k)$, $\t_{\y}: \cL|_{\Spec F}\cong F$ is a trivialization of $\cL$ at the generic point, and $\t_{x}: \cL|_{\Spec \cO_{x}}\cong \cO_{x}$ is a trivialization of $\cL$ in the formal neighborhood of $x$, $t_{x}: \cK_{x}\isom k(x)$ is a trivialization of $\cK_{x}$ for every $x\in R$, such that the following diagram is commutative
\begin{equation*}
\xymatrix{      \cK_{x}^{\ot2}\ar[r]^{\io_{x}}\ar[d]^{t_{x}^{\ot2}} & \cL_{x}\ar[d]^{\t_{x}|_{x}}     \\
k(x)^{\ot2}\ar@{=}[r] & k(x)}
\end{equation*}
Similarly, we define $\wh\Pic_{X}(k)$ to classify part of the data $(\cL,\t_{\y},\{\t_{x}\}_{x\in |X|})$ as above. The forgetful map $\wh\Pic^{\sqR}_{X}(k)\to \wh\Pic_{X}(k)$ is an equivalence: the choices of the extra data $(\cK_{R}, \iota, \t_{R})$ are unique up to a unique isomorphism. 

We have an isomorphism $\wh\Pic_{X}(k)\isom \AA^{\times}_{F}$ sending $(\cL,\t_{\y},\{\t_{x}\}_{x\in |X|})$ to $(\t_{x}\circ \t_{\y}^{-1})_{x\in|X|}\in \AA^{\times}$. Therefore we get a canonical isomorphism
\begin{equation*}
\a: \AA^{\times}_{F}\isom \wh\Pic^{\sqR}_{X}(k).
\end{equation*}
It is easy to see that for $x\in |X-R|$, $\a(\vp_{x}^{-1})$ has image  $\cO_{X}(x)^{\na}$ in $\Pic^{\sqR}_{X}(k)$.

There is an action of $F^{\times}$ on $\wh\Pic^{\sqR}_{X}(k)$ by changing $\t_{\y}$. For $x\in |X-R|$, there is an action of $\cO_{x}^{\times}$ on $\wh\Pic^{\sqR}_{X}(k)$ by changing $\t_{x}$. For $x\in R$, there is an action of $\cO^{\times}_{\sqx}=\cO_{x}^{\times}\times_{k(x)^{\times}}k(x)^{\times}$ on $\wh\Pic^{\sqR}_{X}(k)$ by changing $\t_{x}$ and $t_{x}$ compatibly. Therefore we get an action of $F^{\times}\times \OO^{\times}_{\sqR}$ on $\wh\Pic^{\sqR}_{X}(k)$. The isomorphism $\a$ is equivariant with respect to these actions. The forgetful map $\wh\Pic^{\sqR}_{X}(k)\to\Pic^{\sqR}_{X}(k)$ is a torsor for the action of $F^{\times}\times \OO^{\times}_{\sqR}$. Therefore $\a$ induces the equivalence \eqref{idele Pic R}.
\end{proof}

\subsection{Geometric class field theory}\label{ss:geom CFT}
In this subsection, we fix $L$ to be a rank one $\Qlbar$-local system on $X^{\sqR}_{1}$.  Since $X^{\sqR}_{1}$ is a smooth DM curve with coarse moduli space $X$ and automorphic group $\mu_{2}$ along $R$, such a local system is the same datum as a rank one $\Qlbar$-local system on  $X-R$ with monodromy of order at most $2$ at the $x\in R$. 

Starting from $L$, we will give a canonical construction of local systems $L_{d}$ on $X^{\sqR}_{d}$ for $d\ge0$ and show that it descends to $\Pic^{\sqR,d}_{X}$. In the case $R=\vn$, such a construction goes back to Deligne.

\sss{The local system $L_{d}$}
Consider the $S_{d}$-invariant map $p^{\sqR}_{d}$ in \eqref{symmR}. The complex $p^{\sqR}_{d,!}L^{\boxtimes d}$ is a middle extension perverse sheaf on $X^{\sqR}_{d}$ (i.e., it is the middle extension of a local system from a dense open subset of $X^{\sqR}_{d}$) because $p^{\sqR}_{d}$ is a finite map from a smooth and geometrically connected DM stack. Therefore the $S_{d}$-invariant part 
\begin{equation*}
L_{d}:=(p^{\sqR}_{d,!}L^{\boxtimes d})^{S_{d}}
\end{equation*}
is also a middle extension perverse sheaf on $X^{\sqR}_{d}$. 

\begin{lemma}\label{l:coho Xd}
Suppose the local system $L$ is geometrically nontrivial. Then
\begin{equation*}
\cohog{i}{X^{\sqR}_{d}\ot\kbar,L_{d}}=\begin{cases}\wedge^{d}\left(\cohog{1}{X_{1}^{\sqR}\ot\kbar, L}\right) & i=d,\\ 0 & i\ne d. \end{cases}
\end{equation*}
\end{lemma}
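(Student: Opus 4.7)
\bigskip

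\noindent\textbf{Proof proposal for Lemma \ref{l:coho Xd}.} My plan is to reduce the computation to Künneth on the power $(X^{\sqR}_{1})^{d}$ and extract the invariants of $S_{d}$ acting on tensor powers of a single cohomology group. First, since $p^{\sqR}_{d}\colon (X^{\sqR}_{1})^{d}\to X^{\sqR}_{d}$ is finite (it is the coarse-quotient-like map by $S_{d}$, covered by the étale map from $X'^{d}$ via the finite map $\wh\nu^{\sqR}_{d}$), we have $p^{\sqR}_{d,!}=p^{\sqR}_{d,*}$, so
\[
\cohog{*}{X^{\sqR}_{d}\ot\kbar,\, p^{\sqR}_{d,!}L^{\boxtimes d}}\;\cong\;\cohog{*}{(X^{\sqR}_{1})^{d}\ot\kbar,\, L^{\boxtimes d}}.
\]
Taking $S_{d}$-invariants is exact on $\Qlbar$-vector spaces (characteristic zero coefficients), so it commutes with cohomology; this yields
\[
\cohog{*}{X^{\sqR}_{d}\ot\kbar, L_{d}}\;\cong\;\cohog{*}{(X^{\sqR}_{1})^{d}\ot\kbar,\, L^{\boxtimes d}}^{S_{d}}.
\]

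Next I would apply the Künneth formula to identify the right-hand side with $\bigl(\cohog{*}{X^{\sqR}_{1}\ot\kbar, L}\bigr)^{\ot d}$ as a graded $S_{d}$-representation, where $S_{d}$ acts by permutation with the Koszul sign rule on graded tensor factors. The vanishing step is the heart of the computation: because $X^{\sqR}_{1}$ is a smooth, proper, geometrically connected DM curve (a $\mu_{2}$-gerbe along $R$ over the projective curve $X$), a geometrically nontrivial rank one local system $L$ on it satisfies $\cohog{0}{X^{\sqR}_{1}\ot\kbar, L}=0$ and, by Poincaré duality together with the fact that $L^{\vee}$ is likewise nontrivial, $\cohog{2}{X^{\sqR}_{1}\ot\kbar, L}=0$. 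Hence $\cohog{*}{X^{\sqR}_{1}\ot\kbar, L}$ is concentrated in degree $1$, and the Künneth answer is concentrated in total degree $d$, where it is
\[
V^{\ot d},\qquad V:=\cohog{1}{X^{\sqR}_{1}\ot\kbar, L}.
\]

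Finally, because $V$ sits in odd degree, the Koszul sign rule makes $S_{d}$ act on $V^{\ot d}$ by the signed permutation representation: a transposition of two adjacent factors acts as $-1$ times the naive swap. Thus taking $S_{d}$-invariants of this signed action is the same as taking $S_{d}$-antiinvariants of the ordinary permutation action, which is by definition the exterior power $\wedge^{d}V$. Assembling the pieces gives the stated formula, with the cohomology vanishing in degrees $i\ne d$ and equal to $\wedge^{d}\cohog{1}{X^{\sqR}_{1}\ot\kbar, L}$ in degree $d$. The only potentially delicate point is justifying the Koszul sign convention in this étale / DM-stack setting, but this is standard once one works on the smooth proper space $(X^{\sqR}_{1})^{d}$ with a pure local system, so I do not anticipate a real obstacle here.
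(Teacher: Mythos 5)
Your argument is correct and is exactly the reasoning the paper leaves implicit (the lemma is stated as following "immediately from the construction of $L_d$"): finiteness of $p^{\sqR}_{d}$ plus exactness of $S_d$-invariants in characteristic zero, Künneth, concentration of $\cohog{*}{X^{\sqR}_{1}\ot\kbar,L}$ in degree $1$ from geometric nontriviality and Poincaré duality on the smooth proper DM curve, and the Koszul sign turning invariants into $\wedge^{d}$. Note only that the paper writes $\cohoc{1}$, which agrees with your $\cohog{1}$ by properness of $X^{\sqR}_{1}$, as you implicitly use.
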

\begin{proof}
By construction, the graded vector space $\cohog{*}{X^{\sqR}_{d}\ot\kbar,L_{d}}$ is the $S_{d}$-invariants of the graded vector space $\cohog{*}{X^{\sqR}_{1},L}^{\ot d}$ ($S_{d}$ acts by permuting the factors with the Koszul sign convention). Since $L$ is geometrically nontrivial, $\cohog{*}{X^{\sqR}_{1},L}$ is concentrated in degree $1$. Hence $\cohog{*}{X^{\sqR}_{d}\ot\kbar,L_{d}}$  is concentrated in degree $d$, and is equal to $\wedge^{d}\left(\cohog{1}{X_{1}^{\sqR}\ot\kbar, L}\right)$ in that degree.
\end{proof}

\begin{lemma}\label{l:Ld}
The perverse sheaf $L_{d}$ is a local system of rank one on $X^{\sqR}_{d}$.
\end{lemma}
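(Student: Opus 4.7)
The plan is to exploit the fact, already noted just above the lemma, that $L_d$ is a middle extension perverse sheaf on the smooth DM stack $X^{\sqR}_d$. Hence to show it is a rank one local system it suffices to verify two things: (a) it has generic rank one on an open dense substack, and (b) each geometric stalk is one-dimensional and concentrated in the appropriate cohomological degree.

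For (a), I would restrict to the open substack $X^{\sqR,\circ}_d$ parametrising multiplicity-free divisors. Since $X^{\sqR}_1\to X_1=X$ is relatively étale in the stack sense (the root stack along $R$ being étale over its coarse moduli), the restriction of $p^{\sqR}_d$ to the preimage of $X^{\sqR,\circ}_d$ is finite étale Galois with group $S_d$. Therefore $p^{\sqR}_{d,!}L^{\boxtimes d}|_{X^{\sqR,\circ}_d}$ is a rank $d!$ local system with an $S_d$-action, whose invariants $L_d|_{X^{\sqR,\circ}_d}$ form a rank one local system.

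For (b), the question is étale-local on the target. Near a point $D_0=\sum n_i x_i\in X^{\sqR}_d$ with the $x_i$ distinct, there is an étale-local product decomposition identifying $X^{\sqR}_d$ with $\prod_i X^{\sqR}_{n_i}$ localised at $n_i\cdot x_i$, under which $L_d$ corresponds to the external product $\boxtimes_i L_{n_i}$. This reduces the problem to analysing the stalk of $L_n$ at a divisor $n\cdot x$ supported at a single closed point. When $x\notin R$, we have $X^{\sqR}_n\cong X_n$ étale-locally near $n\cdot x$, and the classical argument applies: the stalk is $(L_x)^{\otimes n}$ with $S_n$ acting trivially on the one-dimensional tensor product, hence is one-dimensional.

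The main obstacle is the remaining case $x\in R$, where the stacky structure genuinely enters. Here, étale-locally around $x$, one models $X^{\sqR}_1$ as $[\operatorname{Spec} k[\![s]\!]/\mu_2]$ with $s\mapsto -s$, so the restriction of $L$ is specified by a character $\chi:\mu_2\to\Qlbar^\times$. The point is to compare the stack $X^{\sqR}_n$ near $n\cdot x$ with $[(X^{\sqR}_1)^n/S_n]$: there is a canonical morphism between them inducing, on the automorphism group of $n\cdot x$, the homomorphism $\mu_2^n\rtimes S_n\to\mu_2$ which is trivial on $S_n$ and the product map on $\mu_2^n$. By proper base change applied to $p^{\sqR}_n$, the stalk of $p^{\sqR}_{n,!}L^{\boxtimes n}$ at $n\cdot x$ is the representation $\chi^{\otimes n}$ of $\mu_2^n$ extended trivially on $S_n$; crucially, since $\chi$ is a $\mu_2$-character, $\chi^{\otimes n}(\epsilon_1,\ldots,\epsilon_n)=\chi(\epsilon_1\cdots\epsilon_n)$, so this representation factors through the product $\mu_2^n\to\mu_2$. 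Taking $S_n$-invariants, one obtains a one-dimensional character of the automorphism group $\mu_2$ of the point $n\cdot x$ in $X^{\sqR}_n$, which is exactly what is needed. Working out this local étale chart and the comparison of automorphism groups is the main technical step; the rest follows formally.
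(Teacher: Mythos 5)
Your proposal is correct and follows essentially the same route as the paper: use the middle-extension property to reduce to checking that every geometric stalk is one-dimensional, factorize according to the points of the divisor, and at a point $nx$ with $x\in R$ compute the stalk of $p^{\sqR}_{n,!}L^{\boxtimes n}$ via the map $[\pt/\mu_{2}]^{n}\to[\pt/\mu_{2}]$ induced by multiplication, noting that the kernel acts trivially on $\chi^{\otimes n}$ so the ($S_n$-invariants of the) stalk stay one-dimensional. The only (harmless) difference is that you allow an arbitrary character $\chi$ of $\mu_2$ rather than invoking the sign representation, which if anything matches the stated generality of the lemma more closely.
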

\begin{proof} Since $L_{d}$ is a  middle extension perverse sheaf on $X^{\sqR}_{d}$, to show it is a local system of rank one, it  suffices to check the stalks of $L_{d}$ at any geometric point of $X_{d}^{\sqR}$ is one-dimensional. Consider a geometric point $(\cL^{\na}, a,\a_{R})\in X^{\sqR}_{d}$ with $\div(a)=D$. By factorizing the situation according to the points in $D$, we reduce to show that for $x\in R(\kbar)$, $L_{d}$ has one-dimensional stalk at the geometric point $dx\in X^{\sqR}_{d}(\kbar)$. The point $dx$ has automorphism $\mu_{2}$, and the restriction of $p^{\sqR}_{d}$ to the preimage of this orbifold point is
\begin{equation*}
p_{dx}: [\pt/\mu_{2}]^{d}\to [\pt/\mu_{2}]
\end{equation*}
induced by the multiplication map $m: \mu^{d}_{2}\to \mu_{2}$. The restriction of $L$ to $x=[\pt/\mu_{2}]\in X^{\sqR}_{1}$ is given by either the trivial or the sign representation of $\mu_{2}$ on $\Qlbar$. Therefore $p_{dx,!}L^{\boxtimes d}_{x}$ is the $K_{d}=\ker(m:\mu^{d}_{2}\to \mu_{2})$-coinvariants on $L^{\boxtimes d}_{x}$, which is $L^{\boxtimes d}_{x}$ itself since $K_{d}$ always acts trivially on it. Therefore, the stalk of $L_{d}$ at $dx$ is one-dimensional.
\end{proof}

\begin{lemma}\label{l:add} For $d_{1},d_{2}\ge0$ there is a canonical isomorphism of local systems on $X^{\sqR}_{d_{1}}\times X^{\sqR}_{d_{2}}$
\begin{equation*}
\a_{d_{1},d_{2}}: \add_{d_{1},d_{2}}^{\sqR, *}L_{d_{1}+d_{2}}\cong L_{d_{1}}\boxtimes L_{d_{2}}.
\end{equation*}
which is commutative and associative in the obvious sense.
\end{lemma}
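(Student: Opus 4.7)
The plan is to construct $\alpha_{d_1,d_2}$ from the factorization of the symmetric-power map $p^{\sqR}_{d_1+d_2}$ through $\add^{\sqR}_{d_1,d_2}$, and then to verify it is an isomorphism by restricting to the multiplicity-free locus, where the comparison becomes formal.

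Write $p_d := p^{\sqR}_d$ and $a := \add^{\sqR}_{d_1,d_2}$ for brevity. The identification $(X^{\sqR}_1)^{d_1+d_2} = (X^{\sqR}_1)^{d_1} \times (X^{\sqR}_1)^{d_2}$ yields a commutative triangle with $p_{d_1+d_2} = a \circ (p_{d_1} \times p_{d_2})$, in which $p_{d_1+d_2}$ is the $S_{d_1+d_2}$-quotient while $p_{d_1} \times p_{d_2}$ is only the $(S_{d_1} \times S_{d_2})$-quotient. Since $S_d$-averaging commutes with proper pushforward for $\Qlbar$-coefficients, K\"unneth gives
\begin{equation*}
a_{!}(L_{d_1} \boxtimes L_{d_2}) = \left(p_{d_1+d_2,\,!}\, L^{\boxtimes(d_1+d_2)}\right)^{S_{d_1} \times S_{d_2}},
\end{equation*}
which contains $L_{d_1+d_2} = (p_{d_1+d_2,\,!}L^{\boxtimes(d_1+d_2)})^{S_{d_1+d_2}}$ as the summand of full invariants. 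The resulting canonical inclusion $L_{d_1+d_2} \hookrightarrow a_{*}(L_{d_1} \boxtimes L_{d_2})$ (equal to $a_!$ since $a$ is finite) yields, via $(a^{*}, a_{*})$ adjunction, the desired morphism $\alpha_{d_1,d_2}: a^{*}L_{d_1+d_2} \to L_{d_1} \boxtimes L_{d_2}$.

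To show $\alpha_{d_1,d_2}$ is an isomorphism, note that by Lemma \ref{l:Ld} both source and target are rank-one local systems on the smooth connected DM stack $X^{\sqR}_{d_1} \times X^{\sqR}_{d_2}$, so it suffices to check that $\alpha_{d_1,d_2}$ is nonzero at a single geometric point. Let $V \subset X^{\sqR}_{d_1} \times X^{\sqR}_{d_2}$ be the dense open substack of pairs $(D_1, D_2)$ whose sum is multiplicity-free and disjoint from $R$. On $V$ the map $a$ is \'etale, so by proper base change both restrictions $a^{*}L_{d_1+d_2}|_V$ and $(L_{d_1} \boxtimes L_{d_2})|_V$ are identified with the $(S_{d_1} \times S_{d_2})$-invariants of $p_{d_1+d_2,\,!}L^{\boxtimes(d_1+d_2)}$ pulled back to $V$; tracing the adjunction shows $\alpha_{d_1,d_2}|_V$ is the identity of this common local system.

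Commutativity (swap of the two factors) and associativity (three-fold sums) then follow formally from the corresponding properties of the subgroup inclusions $S_{d_1} \times S_{d_2} \hookrightarrow S_{d_1+d_2}$ and $S_{d_1} \times S_{d_2} \times S_{d_3} \hookrightarrow S_{d_1+d_2+d_3}$: both iterated $\alpha$'s coincide with the morphism induced by passing to $(S_{d_1} \times S_{d_2} \times S_{d_3})$-invariants inside $p^{\sqR}_{d_1+d_2+d_3,\,!}L^{\boxtimes(d_1+d_2+d_3)}$. The only technical point requiring care is the interchange of $S_d$-invariants with proper pushforward, which is harmless for $\Qlbar$-coefficients since averaging is an idempotent endomorphism; I expect this to be the main obstacle, but nothing beyond standard finite-group averaging arguments is needed.
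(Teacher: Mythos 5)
Your proof is correct, but it takes a genuinely different route from the paper's. The paper does not construct a global map first: it notes that both $\add^{\sqR,*}_{d_{1},d_{2}}L_{d_{1}+d_{2}}$ and $L_{d_{1}}\boxtimes L_{d_{2}}$ are local systems, so that an isomorphism over a dense open substack extends uniquely, and then works over the multiplicity-free locus supported on $U=X-R$, where it writes down the monodromy of each side explicitly: both are given by one and the same homomorphism $\pi_{1}((U_{d_{1}}\times U_{d_{2}})^{\circ})\to\pi_{1}(U)^{d}\rtimes(S_{d_{1}}\times S_{d_{2}})\xrightarrow{(\chi,\dotsc,\chi)\times 1}\{\pm1\}$, so the identity serves as the isomorphism there. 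You instead build a canonical morphism on all of $X^{\sqR}_{d_{1}}\times X^{\sqR}_{d_{2}}$ from the factorization $p^{\sqR}_{d_{1}+d_{2}}=\add^{\sqR}_{d_{1},d_{2}}\circ(p^{\sqR}_{d_{1}}\times p^{\sqR}_{d_{2}})$, the inclusion of $S_{d_{1}+d_{2}}$-invariants into $(S_{d_{1}}\times S_{d_{2}})$-invariants of $p^{\sqR}_{d_{1}+d_{2},!}L^{\boxtimes(d_{1}+d_{2})}$, and $(a^{*},a_{*})$-adjunction, and then use the rank-one statement of Lemma~\ref{l:Ld} together with connectedness to reduce to nonvanishing at a single good point. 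Both arguments rest on the same symmetric-group combinatorics; yours buys a manifestly canonical map (which makes commutativity, associativity, and the later uses such as \eqref{TDL} and Prop.~\ref{p:char sh} formal consequences of subgroup inclusions), while the paper's is shorter because the literal equality of the two monodromy homomorphisms over the dense open gives the identification at once.

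One sentence of yours is stated too loosely, though the conclusion drawn from it is right: $a^{*}L_{d_{1}+d_{2}}|_{V}$ is the pullback of the $S_{d_{1}+d_{2}}$-invariants, not of the $(S_{d_{1}}\times S_{d_{2}})$-invariants, and $a^{*}(p^{\sqR}_{d,!}L^{\boxtimes d})|_{V}$ has extra summands coming from fiber components of $a$ that do not respect the chosen splitting $(D_{1},D_{2})$. The clean check is the stalk computation at one geometric point $(D_{1},D_{2})$ with $D_{1}+D_{2}$ multiplicity-free and disjoint from $R$: both stalks are canonically the unordered tensor product $\bigotimes_{x\in D_{1}+D_{2}}L_{x}$, the counit of the adjunction projects onto the $(D_{1},D_{2})$-component of the fiber of $a$, and the $S_{d}$-invariant vector has identity component there; hence $\alpha_{d_{1},d_{2}}$ is nonzero at that point, which is all your rank-one argument needs.
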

\begin{proof} Let $d=d_{1}+d_{2}$.
Since both $\add^{\sqR,*}_{d_{1},d_{2}}L_{d}$ and $L_{d_{1}}\boxtimes L_{d_{2}}$ are local systems, it suffices to give such an isomorphism over a dense open substack of $X^{\sqR}_{d_{1}}\times X^{\sqR}_{d_{2}}$. Let $U=X-R$. Let  $U_{d}^{\c}\subset X^{\sqR}_{d}$ be the open subscheme consisting of multiplicity-free divisors on $U$. Let $(U_{d_{1}}\times U_{d_{2}})^{\c}\subset X^{\sqR}_{d_{1}}\times X^{\sqR}_{d_{2}}$ be the preimage of $U_{d}^{\c}$ under $\add_{d_{1},d_{2}}^{\sqR}$. 

The monodromy representation of the local system $L|_{U}$ is given by a homomorphism
\begin{equation*}
\chi: \pi_{1}(U)\to \{\pm1\}.
\end{equation*}
For any $n\in\ZZ_{\ge0}$, there is a canonical homomorphism 
\begin{equation*}
\ph_{n}: \pi_{1}(U_{n}^{\c})\to \pi_{1}(U)^{n}\rtimes S_{n}
\end{equation*}
given by the branched $S_{n}$-cover $U^{n}\to U_{n}$. 

The monodromy representation of the local system $L_{d_{1}}\boxtimes L_{d_{2}}|_{(U_{d_{1}}\times U_{d_{2}})^{\c}}$ is given by
\begin{eqnarray}\notag
\pi_{1}((U_{d_{1}}\times U_{d_{2}})^{\c})&\xr{(p_{1*},p_{2*})}& \pi_{1}(U_{d_{1}}^{\c})\times\pi_{1}(U_{d_{2}}^{\c})\xr{\ph_{d_{1}}\times\ph_{d_{2}}} (\pi_{1}(U)^{d_{1}}\rtimes S_{d_{1}})\times (\pi_{1}(U)^{d_{2}}\rtimes S_{d_{2}})\\
\label{first pi rep}&=&\pi_{1}(U)^{d}\rtimes(S_{d_{1}}\times S_{d_{2}})\xr{(\chi,\cdots,\chi)\times 1} \{\pm1\}
\end{eqnarray}
The last map is $\chi$ on all the $\pi_{1}(U)$-factors and trivial on $S_{d_{1}}\times S_{d_{2}}$.

On the other hand, the local system $\add_{d_{1},d_{2}}^{*}L_{d}|_{U_{d}^{\c}}$ is given by the character
\begin{equation}\label{sec pi rep}
\pi_{1}((U_{d_{1}}\times U_{d_{2}})^{\c})\xr{\add_{*}}\pi_{1}(U_{d}^{\c})\xr{\ph_{d}}\pi_{1}(U)^{d}\rtimes S_{d}\xr{(\chi,\cdots,\chi)\times 1} \{\pm1\}.
\end{equation}
Observe that \eqref{first pi rep} and \eqref{sec pi rep} are the same homomorphisms. This gives the desired isomorphism $\a_{d_{1},d_{2}}$. We leave the verification of the commutativity and associativity properties of $\a_{d_{1},d_{2}}$ as an exercise.
\end{proof}

\begin{lemma} For $d\ge \r+\max\{2g-1,1\}$, the local system $L_{d}$ on $X^{\sqR}_{d}$ descends to $\Pic^{\sqR,d}_{X}$ via the map $\AJ^{\sqR}_{d}$.
\end{lemma}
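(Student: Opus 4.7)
The strategy is the standard descent criterion for local systems along a proper morphism with geometrically connected fibers: the rank one local system $L_{d}$ descends along $\AJ^{\sqR}_{d}$ if and only if its restriction to every geometric fiber has trivial monodromy. The hypothesis $d \ge \rho + \max\{2g-1,1\}$ ensures that these fibers are well-behaved.

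First I would analyze the fibers of $\AJ^{\sqR}_{d}$. Since $d \ge 2g-1+\rho \ge 2g-1$, Riemann--Roch gives $h^{0}(X,\cL) = d+1-g$ and $h^{1}(X,\cL)=0$ for every degree-$d$ line bundle $\cL$, so the underlying classical Abel--Jacobi map $\AJ_{d}\colon X_{d} \to \Pic_{X}^{d}$ is a projective bundle with fiber $\PP^{d-g}$, in particular simply connected. The fiber $F$ of $\AJ^{\sqR}_{d}$ over a geometric point $\bar\cL^{\natural}=(\bar\cL,\bar\cK_{R},\bar\iota)$ is a smooth, geometrically connected DM stack; the open substack $F^{\c}\subset F$ on which $a|_{R}$ is nowhere vanishing is nonempty because $d\ge \rho+1$, and maps isomorphically onto a Zariski open of $\PP^{d-g}$, while the complement of $F^{\c}$ in $F$ is a divisor carrying $\mu_{2}$-stacky structure.

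Next, the core input is Lemma~\ref{l:add}, which upon iteration gives an $S_{d}$-equivariant isomorphism
$$p^{\sqR,*}_{d} L_{d} \;\cong\; L^{\boxtimes d}$$
on $(X^{\sqR}_{1})^{d}$, where $p^{\sqR}_{d}$ is the map \eqref{symmR}. The composition $(X^{\sqR}_{1})^{d} \xrightarrow{p^{\sqR}_{d}} X^{\sqR}_{d} \xrightarrow{\AJ^{\sqR}_{d}} \Pic^{\sqR,d}_{X}$ factors through the multiplication map $(\Pic^{\sqR,1}_{X})^{d} \to \Pic^{\sqR,d}_{X}$. Given a geometric point $\bar x = x_{1}+\cdots+x_{d} \in F^{\c}$ with distinct $x_{i}\in X\setminus R$, a loop in $F$ based at $\bar x$ can be lifted through the generically \'etale cover $p^{\sqR}_{d}$ to a loop in $(X^{\sqR}_{1})^{d}$ whose image in $(\Pic^{\sqR,1}_{X})^{d}$ multiplies to the constant loop in $\Pic^{\sqR,d}_{X}$; invoking the multiplicativity under the Picard structure of the rank one local system on $\Pic^{\sqR}_{X}$ attached to $L$ by geometric class field theory, one concludes that the monodromy of $L_{d}$ along the chosen loop is trivial.

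The main obstacle is making the last monodromy argument rigorous in the presence of the $\mu_{2}$-stacky divisor in $F$. A cleaner conceptual route, which avoids this computation, is to first construct by geometric class field theory a rank one local system $L^{\Pic}$ on $\Pic^{\sqR}_{X}$ which is multiplicative under the Picard structure and satisfies $(\AJ^{\sqR}_{1})^{*} L^{\Pic} \cong L$ (this is precisely the content of the construction referenced by \S\ref{sss:cons LPic} in the excerpt). Setting $\wh L_{d}:=(\AJ^{\sqR}_{d})^{*}L^{\Pic}|_{\Pic^{\sqR,d}_{X}}$, the multiplicativity of $L^{\Pic}$ combined with Lemma~\ref{l:add} yields a canonical $S_{d}$-equivariant isomorphism $p^{\sqR,*}_{d}\wh L_{d} \cong L^{\boxtimes d} \cong p^{\sqR,*}_{d} L_{d}$ on $(X^{\sqR}_{1})^{d}$. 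Since $p^{\sqR}_{d}$ is a finite surjective morphism from a smooth geometrically connected source and both $\wh L_{d}$ and $L_{d}$ are rank one local systems on $X^{\sqR}_{d}$ (the latter by Lemma~\ref{l:Ld}), taking $S_{d}$-invariants delivers a canonical isomorphism $L_{d} \cong \wh L_{d}$, proving that $L_{d}$ descends along $\AJ^{\sqR}_{d}$.
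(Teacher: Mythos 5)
Your second, ``cleaner conceptual'' route is circular in the context of this paper: the multiplicative rank one local system $L^{\Pic}$ on $\Pic^{\sqR}_{X}$ with $(\AJ^{\sqR}_{1})^{*}L^{\Pic}\cong L$ is not an available black box here --- it is \emph{constructed} in \S\ref{sss:cons LPic} (and its character-sheaf property proved in Proposition \ref{p:char sh}) precisely by first descending $L_{d}$ along $\AJ^{\sqR}_{d}$ for $d$ large, i.e.\ by the very lemma you are asked to prove, and then extending to all degrees by twisting by $\cO_{X}(D)^{\na}$. Citing that construction as input therefore begs the question. Nor can one simply invoke ``geometric class field theory'' for the stack $\Pic^{\sqR}_{X}=[\Pic_{X,R}/_{[2]}\Res^{R}_{k}\Gm]$: producing a character sheaf on this square-root variant of the ramified Picard stack whose pullback to the orbifold curve $X^{\sqR}_{1}$ is $L$ (including the correct $\mu_{2}$-isotypic behaviour at the stacky points over $R$) is exactly the nontrivial content, and the standard (Deligne-style) proof of such statements is the descent argument in question. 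Your first route, which would be non-circular, is left incomplete by your own admission (the monodromy computation across the stacky divisor is never carried out), and its fiber description is also off: on the locus where $a|_{R}$ is nowhere vanishing the fiber is not literally an open of $\PP^{d-g}$ until one quotients by the automorphisms of the point of $\Pic^{\sqR}_{X}$, which is where the weight-$1$/weight-$2$ structure enters.

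For comparison, the paper's proof avoids class field theory altogether: since $d\ge 2g-1+\rho$, Riemann--Roch makes $\AJ^{\sqR}_{d}$ a locally trivial fibration, so it suffices to show the restriction of $L_{d}$ to a geometric fiber is trivial; the fiber is identified with $[(\AA^{n}\setminus\{0\})/\Gm]$, $n=d-g+1\ge\rho+1$, where $\Gm$ acts with weight $2$ on the first $n-\rho$ coordinates and weight $1$ on the last $\rho$, and an explicit covering argument (using the branched $\mu_{2}^{n-\rho}$-cover from $\PP^{n-1}$ and ramification along coordinate hyperplanes) shows this quotient stack is simply connected. If you want to salvage your approach, you must either carry out a genuine monodromy/triviality argument on the fibers (which essentially reproduces the paper's proof), or give an independent construction of $L^{\Pic}$ on $\Pic^{\sqR}_{X}$ together with the identification $(\AJ^{\sqR}_{1})^{*}L^{\Pic}\cong L$, not a citation back to a construction that depends on this lemma.
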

\begin{proof}
The case $R=\vn$ is well-known; we treat only the case $R\ne\vn$.

When $d\ge 2g-1+\r$, by Riemann-Roch, $\AJ^{\sqR}_{d}$ is a locally trivial fibration, therefore it suffices to show that the restriction of $L_{d}$ to geometric fibers of $\AJ^{\sqR}_{d}$ are trivial. 

Fix a geometric point $\cL^{\na}=(\cL,\cK_{R}, \io)\in \Pic^{\sqR,d}_{X}(K)$ for some algebraically closed field $K$. We base change the situation from $k$ to $K$ without changing notation. The fiber of $\AJ^{\sqR}_{d}$ over $\cL^{\na}$ is
\begin{equation*}
M=\cohog{0}{X, \cL}^{\circ}\times_{\cohog{0}{R, \cL_{R}}}\cohog{0}{R,\cK_{R}}
\end{equation*}
where $\cohog{0}{X, \cL}^{\circ}=\cohog{0}{X, \cL}-\{0\}$, and the map $\cohog{0}{R,\cK_{R}}\to \cohog{0}{R, \cL_{R}}$ is the square map via $\io$. The torus $\Gm$ acts on $M$ by weight 2 on $\cohog{0}{X, \cL}$ and weight 1 on $\cohog{0}{R, \cK_{R}}$. Then the map $M\to X_{d}^{\sqR}$ factors through the quotient $[M/\Gm]$. The triviality of $L_{d}|_{[M/\Gm]}$ follows from the Claim below.

\begin{claim} $[M/\Gm]$ is simply-connected. 
\end{claim}

It remains to prove the Claim. Choosing a basis for $\cohog{0}{R, \cL_{R}}$ and extending it to $\cohog{0}{X, \cL}$, we may identify $M$ with a punctured affine space $\AA^{n}-\{0\}$ and the action of $\Gm$ has weights $2$ (on the first $n-\r$ coordinates) and 1 (on the last $\r$ coordinates). Since $n=d-g+1\ge \r+1$, the weight $2$ appears at least once.

Suppose $Y\to [M/\Gm]$ is a finite \'etale map with $Y$ connected. Consider the map $\pi: \PP^{n-1}\to [M/\Gm]$ given by $[x_{1},\dotsc, x_{n-\r}, y_{1},\dotsc, y_{\r}]\mapsto [x_{1}^{2},\dotsc, x_{n-\r}^{2}, y_{1},\dotsc, y_{\r}]$. Then $\pi$ is a branched Galois cover with Galois group $\mu_{2}^{n-\r}$. Since $\PP^{n-1}$ is simply-connected, $\pi$ lifts to $\wt\pi: \PP^{n-1}\to Y$. Therefore the function field $K(Y)\subset K(\PP^{n-1})$ corresponds to a subgroup $\Gamma\subset \mu^{n-\r}_{2}$ so that $Y$ is the normalization of $[M/\Gm]$ in $\Spec K(Y)$. We consider the open subset $M^{\c}$ where the last coordinate $y_{\r}\ne0$, then $M^{\c}/\Gm\cong \AA^{n-1}$. Let $Y^{\c}$ be the preimage of $M^{\c}/\Gm$ in $Y$, and let $(\PP^{n-1})^{\c}\cong \AA^{n-1}$ be the preimage in $\PP^{n-1}$. Then $Y^{\c}$ is the GIT quotient of $(\PP^{n-1})^{\c}$ by $\Gamma$. If $\Gamma\ne\mu^{n-\r}_{2}$, then there is a non-empty subset $I\subset \{1,\dotsc, n-\r\}$ such that $\Gamma$ is contained in the kernel of  $e^{*}_{I}: \mu_{2}^{n-\r}\to \mu_{2}$ given by $e^{*}_{I}(\ep_{i})=\ep_{i}$ if $i\in I$ and $1$ is $i\notin I$. In this case, $x_{I}=\prod_{i\in I}x_{i}$ is fixed by $\Gamma$ hence $x_{I}\in \cO(Y^{\c})$. However, $x_{I}\notin \cO(M^{\c}/\Gm)$ (only $x^{2}_{I}\in \cO(M^{\c}/\Gm)$). This implies that $Y^{\c}\to M^{\c}/\Gm$ is ramified along the divisor $x_{I}=0$ in $Y^{\c}$, contradiction. Therefore $\Gamma=\mu^{n-\r}_{2}$ and $Y=[M/\Gm]$. 
\end{proof}

\sss{Construction of $L^{\Pic}_{d}$ for all $d\in\ZZ$}\label{sss:cons LPic}
Let $L^{\Pic}_{d}$ be the descent of $L_{d}$ to $\Pic^{\sqR,d}_{X}$ when $d\ge \r+\max\{2g-1,1\}$. 
Next we extend the local systems $\{L^{\Pic}_{d}\}$ to all components of $\Pic^{\sqR}_{X}$.

Fix any integer $d$. For any divisor $D=\sum_{x\in |X-R|} n_{x} \cdot x\in \Div(X-R)$ of degree $d'$, we have a canonical line $L_{D}=\ot L_{x}^{\ot n_{x}}$. Tensoring with $\cO_{X}(D)^{\na}$ (the canonical lift of $\cO_{X}(D)$ to $\Pic_{X}^{\sqR}$) defines an isomorphism $t_{D}: \Pic^{\sqR,d}_{X}\to \Pic^{\sqR,d+d'}_{X}$. If $d'+d\ge\max\{2g-1,1\}+\r$, $L^{\Pic}_{d+d'}$ is already defined, and we define $L^{\Pic}_{d}$ to be the local system
$t_{D}^{*}L^{\Pic}_{d+d'}\ot L^{\ot-1}_{D}$ on $\Pic^{\sqR,d}_{X}$. We claim that $L^{\Pic}_{d}$ thus defined is canonically independent of the choice of $D$, as long as the degree$d'$ of $D$ satisfies $d'\ge \max\{2g-1,1\}+\r-d$. To show this, it suffices to show that for any $n,n'\ge \max\{2g-1,1\}+\r$ (so that $L^{\Pic}_{n}$ and $L^{\Pic}_{n'}$ are both defined as the descent of $L_{n}$ and $L_{n'}$) and any $D\in\Div^{n'-n}(X-R)$, there is a canonical isomorphism $t_{D}^{*}L^{\Pic}_{n'}\cong L^{\Pic}_{n}\ot L_{D}$ as local systems on $\Pic^{\sqR,n}_{X}$. It is easy to reduce to the case $D$ effective. Since $\AJ^{\sqR}_{n}$ has connected geometric fibers, it is enough to give such an isomorphism after pulling back to $X^{\sqR}_{n}$, i.e., we need to give a canonical isomorphism of local systems on $X^{\sqR}_{n}$
\begin{equation}\label{TDL}
T_{D}^{*}L_{n'}\cong L_{n}\otimes L_{D}
\end{equation}
where $T_{D}: X^{\sqR}_{n}\to X^{\sqR}_{n'}$ is the addition by $D$. Such an isomorphism is given by Lemma \ref{l:add} by taking restricting $\a_{n,n'-n}$ to $X^{\sqR}_{n}\times\{D\}$.

We have therefore defined a canonical local system $L^{\Pic}_{d}$ on $\Pic^{\sqR}_{d}$ for each $d\in\ZZ$. Let $L^{\Pic}$ be the local system on $\Pic^{\sqR}_{X}$ whose restriction to $\Pic^{\sqR}_{d}$ is $L_{d}^{\Pic}$.

\begin{lemma}\label{l:Ld pullback} For $d\ge0$, we have a canonical isomorphism of local systems on $X^{\sqR}_{d}$
\begin{equation*}
\AJ^{\sqR,*}_{d}L^{\Pic}_{d}\cong L_{d}.
\end{equation*}
\end{lemma}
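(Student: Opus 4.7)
The plan is to reduce to the defining property of $L^{\Pic}_d$ in the range where $d$ is large, and then propagate the isomorphism down to all non-negative $d$ via the twisting construction used to define $L^{\Pic}_d$ in \S\ref{sss:cons LPic}.

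First, for $d \ge \r + \max\{2g-1,1\}$, there is nothing to prove: by construction $L^{\Pic}_d$ is the descent of $L_d$ along $\AJ^{\sqR}_d$, so the isomorphism $\AJ^{\sqR,*}_d L^{\Pic}_d \cong L_d$ is tautological. This is the base case.

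For $0 \le d < \r + \max\{2g-1,1\}$, choose an effective divisor $D \in \Div(X-R)$ of degree $d'$ so large that $d + d' \ge \r + \max\{2g-1,1\}$. By the definition in \S\ref{sss:cons LPic} we have
\begin{equation*}
L^{\Pic}_d \;\cong\; t_D^* L^{\Pic}_{d+d'} \otimes L_D^{\otimes -1}
\end{equation*}
on $\Pic^{\sqR,d}_X$, where $t_D : \Pic^{\sqR,d}_X \to \Pic^{\sqR,d+d'}_X$ is the tensoring map. Now one checks directly from the definitions that the square
\begin{equation*}
\xymatrix{X^{\sqR}_d \ar[r]^{T_D} \ar[d]_{\AJ^{\sqR}_d} & X^{\sqR}_{d+d'} \ar[d]^{\AJ^{\sqR}_{d+d'}} \\ \Pic^{\sqR,d}_X \ar[r]^{t_D} & \Pic^{\sqR,d+d'}_X}
\end{equation*}
commutes, where $T_D$ is the addition-by-$D$ map. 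Pulling the displayed isomorphism back along $\AJ^{\sqR}_d$ and using the commutative square together with the base case for $d+d'$ gives
\begin{equation*}
\AJ^{\sqR,*}_d L^{\Pic}_d \;\cong\; T_D^* L_{d+d'} \otimes L_D^{\otimes -1}.
\end{equation*}
The isomorphism \eqref{TDL} derived from Lemma \ref{l:add} provides a canonical identification $T_D^* L_{d+d'} \cong L_d \otimes L_D$, and substituting this yields the desired $\AJ^{\sqR,*}_d L^{\Pic}_d \cong L_d$.

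The only nontrivial point in the plan is independence of the chosen auxiliary divisor $D$. I would resolve this exactly as in \S\ref{sss:cons LPic}: given two effective divisors $D_1, D_2$ of sufficiently large degree, compare the resulting isomorphisms by passing to $D_1 + D_2$, where both pairs $(D, D_1+D_2-D)$ for $D = D_1, D_2$ give rise, via the associativity and commutativity of the isomorphisms $\a_{d_1,d_2}$ in Lemma \ref{l:add}, to the same identification. This is a diagram chase analogous to the one already carried out for $L^{\Pic}_d$ itself and presents no essential obstacle. The principal structural input is thus the compatibility package of Lemma \ref{l:add}; once that is in hand, the entire argument is a pullback of the definition.
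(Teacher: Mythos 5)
Your proposal is correct and follows essentially the same route as the paper's own proof: choose an auxiliary divisor $D$ of large degree, use the defining relation $L^{\Pic}_{d}\cong t_{D}^{*}L^{\Pic}_{d+d'}\otimes L_{D}^{\otimes-1}$, the compatibility $\AJ^{\sqR}_{d+d'}\circ T_{D}=t_{D}\circ\AJ^{\sqR}_{d}$, the tautological descent in the stable range, and finally \eqref{TDL}. The independence-of-$D$ discussion you add is already built into the construction of $L^{\Pic}_{d}$ in \S\ref{sss:cons LPic}, so the paper does not repeat it, but including it does no harm.
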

\begin{proof}
Let $D$ be a divisor on $X-R$ of degree $d'\ge \max\{2g-1,1\}+\r-d$. By construction we have $L^{\Pic}_{d}=t_{D}^{*}L^{\Pic}_{d+d'}\ot L_{D}^{\ot-1}$. Pulling back both sides to $X^{\sqR}_{d}$, and noting $\AJ^{\sqR}_{d+d'}\circ T_{D}=t_{D}\circ \AJ^{\sqR}_{d}$, we get
\begin{equation*}
\AJ^{\sqR*}_{d}L^{\Pic}_{d}=\AJ^{\sqR*}_{d}t_{D}^{*}L^{\Pic}_{d+d'}\ot L_{D}^{\ot-1}=T_{D}^{*}\AJ^{\sqR*}_{d+d'}L^{\Pic}_{d+d'}\ot L_{D}^{\ot-1}=T_{D}^{*}L_{d+d'}\ot L_{D}^{\ot-1}.
\end{equation*}
which is canonically isomorphic to $L_{d}$ by \eqref{TDL}.
\end{proof}

\begin{prop}\label{p:char sh} The local system $L^{\Pic}$ is a character sheaf on $\Pic_{X}^{\sqR}$. More precisely, this means the following
\begin{enumerate}
\item There is a canonical trivialization $\io:L^{\Pic}|_{e}\cong\Qlbar$, where $e$ is the origin of $\Pic^{\sqR}_{X}$.
\item There is a canonical isomorphism of local systems on $\Pic_{X}^{\sqR}\times \Pic_{X}^{\sqR}$
\begin{equation*}
\mu: \mult^{*}L^{\Pic}\cong L^{\Pic}\boxtimes L^{\Pic}
\end{equation*}
where $\mult: \Pic_{X}^{\sqR}\times \Pic_{X}^{\sqR}\to \Pic_{X}^{\sqR}$ is the multiplication map. 
\item The isomorphism $\mu$ is commutative and associative in the obvious sense, and its restrictions to $\{e\}\times \Pic_{X}^{\sqR}$ and $\Pic_{X}^{\sqR}\times \{e\}$ are the identity maps on $L^{\Pic}$ (after using $\io$ to trivialize $L^{\Pic}|_{e}$). 
\end{enumerate}
\end{prop}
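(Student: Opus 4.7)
The plan is to bootstrap Proposition \ref{p:char sh} from the corresponding additivity statement for the symmetric-power local systems $L_d$ (Lemma \ref{l:add}) via the Abel--Jacobi map, using Lemma \ref{l:Ld pullback} and descent along fibrations with simply connected fibers.

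First I would handle the trivialization at the origin. By construction, $L^{\Pic}_0 = t_D^* L^{\Pic}_{d'} \otimes L_D^{-1}$ for any effective $D \in \Div^{d'}(X-R)$ with $d' \ge \max\{2g-1,1\}+\rho$; evaluating at the unit $e = \cO_X$ and applying Lemma \ref{l:Ld pullback} to the point $D \in X^{\sqR}_{d'}$ lying over $\cO_X(D) = t_D(e)$, we get a canonical identification $L^{\Pic}_0|_e \cong L_D \otimes L_D^{-1} \cong \Qlbar$. The independence of this trivialization from the choice of $D$ follows from the fact that the transition isomorphisms between the various definitions of $L^{\Pic}_0$ were themselves obtained from $\alpha_{n,n'-n}$ in Lemma \ref{l:add}, whose restriction to a single point recovers the tautological identification of $L_D$'s.

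Next I would construct $\mu$ on each pair of components $\Pic^{\sqR,d_1}_X \times \Pic^{\sqR,d_2}_X$. I would first treat the case where $d_1,d_2 \ge \max\{2g-1,1\}+\rho$, so that $\AJ^{\sqR}_{d_i}$ and $\AJ^{\sqR}_{d_1+d_2}$ are locally trivial fibrations with geometrically connected (in fact affine-space) fibers. From Lemma \ref{l:Ld pullback} we have canonical isomorphisms identifying the pullbacks of both $\mult^* L^{\Pic}_{d_1+d_2}$ and $L^{\Pic}_{d_1} \boxtimes L^{\Pic}_{d_2}$ along $\AJ^{\sqR}_{d_1} \times \AJ^{\sqR}_{d_2}$ with $\add^{\sqR,*}_{d_1,d_2} L_{d_1+d_2}$ and $L_{d_1} \boxtimes L_{d_2}$ respectively; Lemma \ref{l:add} supplies a canonical isomorphism between the latter two. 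Since $\AJ^{\sqR}_{d_1} \times \AJ^{\sqR}_{d_2}$ is smooth surjective with geometrically connected fibers, and the isomorphisms in question are isomorphisms of local systems, the isomorphism descends uniquely to the required $\mu$ on $\Pic^{\sqR,d_1}_X \times \Pic^{\sqR,d_2}_X$. For arbitrary $d_1,d_2 \in \ZZ$, I would extend $\mu$ by twisting: choose effective divisors $D_1,D_2$ on $X-R$ so that $d_i+\deg D_i$ is large, and transport $\mu$ via the isomorphisms $t_{D_i}^* L^{\Pic}_{d_i+\deg D_i} \otimes L_{D_i}^{-1} \cong L^{\Pic}_{d_i}$ used to define $L^{\Pic}_{d_i}$. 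The hard part here is to verify that this extension is independent of the auxiliary $D_i$; this reduces, via Lemma \ref{l:add}, to checking a cocycle-type compatibility among the isomorphisms $\alpha_{d,d'}$, which is precisely the associativity clause of Lemma \ref{l:add}.

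Finally, commutativity, associativity, and the unit axiom for $\mu$ can all be pulled back via $(\AJ^{\sqR})^{\times n}$ (for $n = 2$ or $3$) to the symmetric powers, where by Lemma \ref{l:Ld pullback} they become the commutativity, associativity, and unit compatibility of the isomorphisms $\alpha_{d_1,d_2}$ provided by Lemma \ref{l:add}. Because $\AJ^{\sqR}$ is surjective with geometrically connected fibers in the relevant degree ranges (and we can always twist up to reach those ranges), the diagrams that must commute do so on the nose. The main technical obstacle throughout is thus bookkeeping: making sure that the twist-by-$D$ construction of $L^{\Pic}_d$ in degrees below the Abel--Jacobi threshold behaves coherently under tensor products and under the change-of-$D$ transition isomorphisms, so that the resulting $\mu$ is well-defined and satisfies the character-sheaf axioms.
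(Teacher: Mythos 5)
Your proposal is correct and follows essentially the same route as the paper's own proof: trivialize $L^{\Pic}|_{e}$ by twisting by a large effective divisor $D$ and the tautological identification $L_{d}|_{D}\cong L_{D}$, construct $\mu_{d_{1},d_{2}}$ in large degrees by descending $\a_{d_{1},d_{2}}$ of Lemma \ref{l:add} along the Abel--Jacobi maps (which have connected geometric fibers), extend to all degrees by twisting, and reduce the independence-of-choices and the commutativity/associativity/unit axioms to the corresponding properties of $\a_{d_{1},d_{2}}$ (which the paper itself leaves as an exercise). No essential difference in strategy.
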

\begin{proof}
By construction, $L^{\Pic}|_{e}\cong L^{\Pic}_{d}|_{\cO(D)^{\na}}\ot L_{D}^{\ot-1}\cong L_{d}|_{D}\ot L_{D}^{\ot-1}$ for any effective divisor $D\in \Div(X-R)$ of large degree $d$ (we are viewing $D$ as a $k$-point of $(X-R)_{d}\subset X_{d}^{\sqR}$, so $L_{d}|_{D}$ means the stalk of $L_{d}$ at this $k$-point $D$). If we write $D=\sum_{x\in |X-R|}n_{x}\cdot x$, then by construction we have a canonical isomorphism $L_{d}|_{D}\cong \ot_{x\in |X-R|} L_{x}^{\ot n_{x}}=L_{D}$, which gives a trivialization $\io_{D}: L^{\Pic}|_{e}\cong \Qlbar$. We leave it as an exercise to check that $\io_{D}$ is independent of the choice of $D$.

Now we construct the isomorphism $\mu$, i.e., a system of isomorphisms
\begin{equation*}
\mu_{d_{1},d_{2}}: \mult^{*}_{d_{1},d_{2}}L_{d_{1}+d_{2}}^{\Pic}\cong L_{d_{1}}^{\Pic}\boxtimes L_{d_{2}}^{\Pic}
\end{equation*}
for all $d_{1},d_{2}\in\ZZ$. When $d_{1},d_{2}\ge\r+\max\{2g-1,1\}$, $L^{\Pic}_{d_{i}}$ and $L^{\Pic}_{d_{1}+d_{2}}$ come by descent from $L_{d_{i}}$ and $L_{d_{1}+d_{2}}$. Since $\AJ^{\sqR}_{d_{1}+d_{2}}$ has connected geometric fibers, it suffices to give $\mu_{d_{1},d_{2}}$ after pulling back both sides to $X^{\sqR}_{d_{1}+d_{2}}$, in which case the desired isomorphism is given by $\a_{d_{1},d_{2}}$ constructed in Lemma \ref{l:add}. 

For general $d_{1},d_{2}$, let $D_{1},D_{2}\in\Div(X-R)$ with degrees $\deg D_{i}=n_{i}$ such that $n_{i}+d_{i}\ge\r+\max\{2g-1,1\}$ for $i=1,2$. Then by construction,
\begin{equation}\label{L1L2}
L_{d_{1}}^{\Pic}\boxtimes L^{\Pic}_{d_{2}}\cong (t^{*}_{D_{1}}L^{\Pic}_{d_{1}+n_{1}}\boxtimes t^{*}_{D_{2}}L^{\Pic}_{d_{2}+n_{2}})\ot (L_{D_{1}}^{\ot-1}\ot L_{D_{2}}^{\ot-1}).
\end{equation} 
On the other hand, $L_{d_{1}+d_{2}}^{\Pic}\cong t^{*}_{D_{1}+D_{2}}L_{d_{1}+d_{2}+n_{1}+n_{2}}\ot L_{D_{1}+D_{2}}^{\ot-1}$, hence
\begin{eqnarray}
\label{mult L12}\mult^{*}_{d_{1},d_{2}}L_{d_{1}+d_{2}}^{\Pic}&\cong& \mult^{*}_{d_{1},d_{2}}t^{*}_{D_{1}+D_{2}}L_{d_{1}+d_{2}+n_{1}+n_{2}}\ot L_{D_{1}+D_{2}}^{\ot-1}\\
\notag &\cong & ((t_{D_{1}}\times t_{D_{2}})^{*}\mult^{*}_{d_{1}+n_{1},d_{2}+n_{2}}L_{d_{1}+d_{2}+n_{1}+n_{2}})\ot( L_{D_{1}}^{\ot-1}\ot L_{D_{2}}^{\ot-1}).
\end{eqnarray}
Comparing the RHS of \eqref{L1L2} and \eqref{mult L12}, the desired isomorphism $\mu_{d_{1},d_{2}}$ is induced from the already-constructed $\mu_{d_{1}+n_{1},d_{2}+n_{2}}$. Again we leave it as an exercise to check that $\mu_{d_{1},d_{2}}$ is independent of the choices of $D_{1},D_{2}$, and it satisfies commutativity, associativity, and compatibility with $\io$. 
\end{proof}

\subsection{Ramified double cover}\label{ss:ram cover}
Let $\nu: X'\to X$ be a double cover with ramification locus $R\subset X$, where $X'$ is also a smooth projective and geometrically connected curve over $k$.  Let $\s:X'\to X'$ be the nontrivial involution over $X$. Let $R'\subset X'$ be the reduced preimage of $R$, then $\nu$ induces an isomorphism $R'\isom R$.


\sss{The norm map on Picard} Let $i_{R}:R\incl X$ be the inclusion. We consider the \'etale sheaf $\GG_{m,R}$ on $R$ as an \'etale sheaf on $X$ via $i_{R,*}$. There is a restriction map $\GG_{m,X}\to \GG_{m,R}$. Consider the following \'etale sheaf on $X$
\begin{equation*}
\GG^{\sqR}_{m,X}=\GG_{m,X}\times_{\GG_{m,R}, [2]} \GG_{m,R}
\end{equation*}
where the map $\GG_{m,R}\to \GG_{m,R}$ is the square map. By construction, $\Pic^{\sqR}_{X}$ is the moduli stack of $\GG^{\sqR}_{m,X}$-torsors over $X$.

We have the sheaf homomorphism induced by the norm map $\Nm: \nu_{*}\GG_{m,X'}\to \GG_{m,X}$ and the restriction map $r_{R'}: \nu_{*}\GG_{m,X'}\to \nu_{*}\GG_{m,R'}=\GG_{m,R}$. Computing with local coordinates at $R$, we see that the composition $\nu_{*}\GG_{m,X'}\xrightarrow{\Nm} \GG_{m,X}\xrightarrow{r_{R}} \GG_{m,R}$ (the latter $r_{R}$ is given by restriction) is the square of the restriction map $r_{R'}$. Therefore $(\Nm,r_{R'})$ induces a sheaf homomorphism
\begin{equation*}
\un\Nm^{\sqR}_{X'/X}:\nu_{*}\GG_{m,X'}\to \GG^{\sqR}_{m,X}
\end{equation*}
which is easily seen to be surjective by local calculation at $R$. The map $\un\Nm^{\sqR}_{X'/X}$ on sheaves induces a morphism of Picard stacks
\begin{equation*}
\Nm^{\sqR}_{X'/X}: \Pic_{X'}\to \Pic^{\sqR}_{X}
\end{equation*}
which lifts the usual norm map $\Nm_{X'/X}:\Pic_{X'}\to \Pic_{X}$. 

\sss{The norm map on symmetric powers}
There is also a natural lifting of the norm map $\wh\nu_{d}: \wh X'_{d}\to \wh X_{d}$ 
\begin{equation}\label{norm for sym}
\wh\nu^{\sqR}_{d}: \hX'_{d}\to \hX^{\sqR}_{d}.
\end{equation}
In fact, for $(\cL', a')\in \wh X'_{d}(S)$, where $\cL'$ is a line bundle over $X'\times S$ and $a'$ a global section of $\cL'$, $\cL=\Nm_{X'/X}(\cL')$ is a line bundle over $X\times S$, and $a=\Nm(a')$ is a section of $\cL$. We have a canonical isomorphism $\io: (\cL'|_{R'\times S})^{\otimes 2}\cong (\cL'\otimes\s^{*}\cL')|_{R'\times S}\cong \cL|_{R\times S}$. Under $\io$, $a'|_{R'\times S}$ gives a square root of the restriction $a|_{R\times S}$. We then send $(\cL',a')$ to $(\cL,\cL'|_{R'\times S},\io,a,a'|_{R'\times S})\in  \hX^{\sqR}_{d}(S)$. 

By construction, we have a commutative diagram
\begin{equation*}
\xymatrix{    \hX'_{d}\ar[rr]^-{\wh\nu^{\sqR}_{d}}\ar[d]^{\wh\AJ'_{d}} && \hX^{\sqR}_{d}   \ar[d]^{\wh\AJ^{\sqR}_{d}}    \\
\Pic_{X'}\ar[rr]^-{\Nm^{\sqR}_{X'/X}} && \Pic^{\sqR}_{X}
}
\end{equation*}
where $\wh\AJ'_{d}$ is the Abel-Jacobi map for $X'$.

\sss{Descent of line bundles} A local calculation shows that the image of $1-\s: \nu_{*}\GG_{m,X'}\to \nu_{*}\GG_{m,X'}$ is  equal to the kernel of $\un\Nm^{\sqR}_{X'/X}$. Therefore we have an exact sequence of \'etale sheaves on $X$:
\begin{equation*}
1\to \GG_{m,X}\to\nu_{*}\GG_{m,X'}\xrightarrow{1-\s}\nu_{*}\GG_{m,X'}\xr{\un\Nm^{\sqR}_{X'/X}} \GG^{\sqR}_{m,X}\to1.
\end{equation*}
Taking the corresponding Picard stacks we get an exact sequence of Picard stacks
\begin{equation}\label{Pic exact}
1\to \Pic_{X}\xrightarrow{\nu^{*}}\Pic_{X'}\xrightarrow{1-\s}\Pic_{X'}\xrightarrow{\Nm^{\sqR}_{X'/X}}\Pic^{\sqR}_{X}\to 1.
\end{equation}

\sss{The local system $L$} The direct image sheaf $\nu_{*}\Ql$ has a decomposition $\nu_{*}\Ql=\Qlbar\oplus L_{X'/X}$ into $\s$-eigensheaves with eigenvalues $1$ and $-1$. Then $L_{X'/X}|_{X-R}$ is a $\Ql$-local system of rank one with monodromy in $\{\pm1\}$ ramified exactly along $R$. Let $L$ be the local system on $X^{\sqR}_{1}$ corresponding to $(L_{X'/X}\ot\Qlbar)|_{X-R}$.  Associated to $L$ is a local system $L^{\Pic}$ on $\Pic^{\sqR}_{X}$ constructed  in \S\ref{ss:geom CFT}.

Let $F'=k(X')$, a quadratic extension of $F$ unramified away from $R$.  By class field theory, $F'/F$ gives rise to an id\`ele class character
\begin{equation*}
\y_{F'/F}: F^{\times}\bs \AA^{\times}_{F}/\OO^{\times}_{\sqR}\to \{\pm1\}.
\end{equation*}
For the notation $\OO^{\times}_{\sqR}$, see \S\ref{sss:OsqR}.

\begin{prop}\label{p:LPic fun}
Under the sheaf-to-function correspondence, the function on $\Pic_{X}^{\sqR}(k)$ given by $L^{\Pic}$ is the id\`ele class character $\y_{F'/F}$ under the isomorphism \eqref{idele Pic R}.
\end{prop}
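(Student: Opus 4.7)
The strategy is to show the equality of two characters of the finite abelian group $\Pic^{\sqR}_X(k)$. By Proposition \ref{p:char sh}, $L^{\Pic}$ is a character sheaf, so the Grothendieck sheaf-to-function dictionary together with $\mu$ and $\iota$ implies that $f := f_{L^{\Pic}}$ is a group homomorphism $\Pic^{\sqR}_X(k) \to \Qlbar^\times$. The character $\chi := \y_{F'/F}$ is a group homomorphism by construction. Thus it suffices to verify $f = \chi$ on a set of generators of $\Pic^{\sqR}_X(k)$.

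For generators of the form $\cO_X(x)^\na$ with $x \in |X-R|$ of degree $d_x$: by Lemma \ref{l:Ld pullback} we have $\AJ^{\sqR,*}_{d_x} L^{\Pic}_{d_x} \cong L_{d_x}$, and $\cO_X(x)^\na$ is precisely the image under $\AJ^{\sqR}_{d_x}$ of the $k$-point represented by the reduced divisor $x \in X^{\sqR}_{d_x}(k)$. Unpacking $L_d = (p^{\sqR}_{d,!} L^{\boxtimes d})^{S_d}$, the fiber of $p^{\sqR}_{d_x}$ over this $k$-point consists (over $\ov k$) of the $d_x$-element Frobenius orbit of geometric points above $x$, so the trace of geometric Frobenius on the stalk is $\tr(\Fr_x \mid L_{\bar x})$. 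Since $L|_{X-R}$ was chosen so that $\nu_* \Qlbar = \Qlbar \oplus L$ on $X-R$, this trace computes $\y_{F'/F,x}(\varpi_x^{-1})$ by the standard calculation (Frobenius acts by $+1$ if $x$ splits in $X'$ and $-1$ if $x$ is inert), matching $\chi(\cO_X(x)^\na)$.

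The remaining task is to verify agreement on the kernel of $\Pic^{\sqR}_X(k) \to \Pic_X(k)$, which by the exact sequence $1 \to \prod_{y\in R} k(y)^\times/k(y)^{\times 2} \to \cO^\times/\cO^\times_{\sqR}$ of \S\ref{sss:OsqR} is $\prod_{y\in R}k(y)^\times/k(y)^{\times 2}$; on this subgroup the local component $\chi_y$ (for $y\in R$) restricts to the unique nontrivial character of $k(y)^\times/k(y)^{\times 2}$, since $F'_y/F_y$ is ramified. For each $y \in R$ I would pick two distinct lifts $\cL_1^\na, \cL_2^\na \in \Pic^{\sqR,d_x}_X(k)$ of the same underlying line bundle $\cO_X(x)$ for some auxiliary $x \in R$ with $\cO_X(x)|_R$ trivial, differing only in the square-root datum $(\cK_R,\iota)$ at $y$ by the nontrivial class. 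Both lifts lie in the image of $\AJ^{\sqR}_{d_x}$, and their ratio $\cL_1^\na \otimes (\cL_2^\na)^{-1}$ represents the desired generator of $k(y)^\times/k(y)^{\times 2}$. The ratio $f(\cL_1^\na)/f(\cL_2^\na)$ is then computed by comparing the stalks of $L_{d_x}$ at two $k$-points of $X^{\sqR}_{d_x}$ that differ only in the $\mu_2$-stacky direction above $y$; this ratio equals the nontrivial character of $\mu_2(k(y))$ coming from the monodromy of $L$ around the stacky point of $X^{\sqR}_1$ above $y$, by the construction of $L_{d_x}$ via the $S_{d_x}$-invariants of $p^{\sqR}_{d_x,!} L^{\boxtimes d_x}$ (and the local computation that proved Lemma \ref{l:Ld}).

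The main obstacle is this last ramified calculation: tracking how the $\mu_2$-gerbe structure of $X^{\sqR}_1$ over $R$ enters into the stalks of $L_{d_x}$ at divisors meeting $R$, and matching it with the local ramified character $\chi_y$. A cleaner conceptual alternative—which I would pursue if the direct computation proves delicate—is to invoke a mildly stacky variant of geometric class field theory: for the tame orbifold curve $X^{\sqR}_1$, rank-one character sheaves on $\Pic^{\sqR}_X$ are in bijection with rank-one local systems on $X^{\sqR}_1$ via pullback along $\AJ^{\sqR}_1$. The case $d=1$ of Lemma \ref{l:Ld pullback} gives $\AJ^{\sqR,*}_1 L^{\Pic}_1 \cong L_1 = L$, while the character sheaf associated with $\chi_{F'/F}$ tautologically pulls back to $L$; the uniqueness statement in this extended GCFT then identifies $L^{\Pic}$ with the character sheaf of $\chi_{F'/F}$, so their trace functions coincide.
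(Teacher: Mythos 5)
Your first two paragraphs reproduce the paper's proof: by Proposition \ref{p:char sh} both $f_{L^{\Pic}}$ and $\y_{F'/F}$ are homomorphisms, and at $\cO_{X}(x)^{\na}$ for $x\in|X-R|$ one computes, via Lemma \ref{l:Ld pullback}, that $f_{L^{\Pic}}(\cO_X(x)^{\na})=\Tr(\Fr_{k},L_{d_x}|_{[x]})=\Tr(\Fr_{x},L_{x})=\y_{F'/F}(\vp_x^{-1})$. The gap is in what you do afterwards. You assert that one must additionally verify agreement on the kernel of $\Pic^{\sqR}_X(k)\to\Pic_X(k)$, you only sketch that verification, call it ``the main obstacle'', and then fall back on an orbifold version of geometric class field theory that is neither proved nor even stated in the paper. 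As written the argument is therefore unfinished: the direct ramified computation is not carried out (and the sketch has concrete problems — the choice of an auxiliary $x\in R$ with $\cO_X(x)|_R$ trivial and the claim that both square-root lifts lie in the image of $\AJ^{\sqR}_{d_x}$ are not justified, and your identification of the kernel ignores the image of the constants $k^{\times}$), while the GCFT classification you invoke as the alternative would itself require proof.

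The missing observation — and the reason the paper stops after the unramified check — is that the classes $\cO_X(x)^{\na}$, $x\in|X-R|$, already generate $\Pic^{\sqR}_X(k)\cong F^{\times}\bs\AA^{\times}_F/\OO^{\times}_{\sqR}$, so two homomorphisms agreeing on them are equal. Indeed, given an id\`ele $a$, weak approximation at the finite set $R$ produces $f\in F^{\times}$ with $v_x(f)=-v_x(a_x)$ and $fa_x\equiv 1\bmod \fkm_x$ for every $x\in R$; then $fa$ lies in the image of $\OO^{\times}_{\sqR}$ at all places of $R$ (residue $1$ is a square), so its class is a product of classes of uniformizers $\vp_x^{-1}$ at places outside $R$, units away from $R$ being absorbed into $\OO^{\times}_{\sqR}$. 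Equivalently, $\y_{F'/F}$ is characterized by its values at $\vp_x^{-1}$, $x\in|X-R|$, which is exactly the (implicit) step in the paper's proof. With this in hand your entire third paragraph can be deleted: the agreement on the subgroup coming from $\prod_{y\in R}k(y)^{\times}/k(y)^{\times 2}$ is automatic, and no stacky-monodromy computation or orbifold class field theory is needed.
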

\begin{proof} Let $f_{L}:\Pic^{\sqR}_{X}(k)\to \Qlbar^{\times}$ be the function attached to $L^{\Pic}$. 
By Prop. \ref{p:char sh}, $f_{L}$ is a group homomorphism. We know that $\y_{F'/F}$ is characterized by the property that for a uniformizer $\vp_{x}$ at $x\in |X-R|$,
\begin{equation*}
\y_{F'/F}(\vp^{-1}_{x})=\begin{cases} 1, & \mbox{ if $x$ is split in $F'$}; \\ -1, & \mbox{ if $x$ is inert in $F'$.}\end{cases}
\end{equation*}
Now $x$ is split (resp. inert) in $F'$ if and only if $\Tr(\Fr_{x}, L_{x})=1$ (resp. $\Tr(\Fr_{x}, L_{x})=-1$). Therefore
\begin{equation*}
\y_{F'/F}(\vp^{-1}_{x})=\Tr(\Fr_{x}, L_{x}).
\end{equation*}
We only need to check that $f_{L}$ enjoys the same property as $\y_{F'/F}$. Since $\vp^{-1}_{x}$ corresponds to $\cO(x)^{\na}\in\Pic^{\sqR,d_{x}}_{X}(k)$ under \eqref{idele Pic R}, we need to show
\begin{equation*}
\Tr(\Fr_{\cO(x)^{\na}}, L^{\Pic}|_{\cO(x)^{\na}})=\Tr(\Fr_{x}, L_{x}),\quad \forall x\in |X-R|.
\end{equation*}
Let $d=d_{x}$. By Lemma \ref{l:Ld pullback}, $L^{\Pic}_{d}$ pulls back to $L_{d}$ on $X^{\sqR}_{d}$;  viewing $x$ as a divisor of degree $d$ on $X-R$ (and denoted $[x]$), it maps to $\cO(x)^{\na}$ via $\AJ^{\sqR}_{d}$,  hence the left side above is equal to $\Tr(\Fr_{k}, L_{d}|_{[x]})$. Therefore it suffices to show
\begin{equation}\label{Tr Lx}
\Tr(\Fr_{k}, L_{d}|_{[x]})=\Tr(\Fr_{x}, L_{x}).
\end{equation}
By the construction of $L_{d}$, there is an isomorphism $L_{d}|_{[x]}\cong L_{x}^{\ot d}$ such that the $\Fr_{k}$-action on $L_{d}|_{[x]}$ corresponds to the automorphism $\ell_{1}\ot\ell_{2}\ot\cdots \ot\ell_{d}\mapsto \ell_{2}\ot\cdots\ot\ell_{d}\ot\Fr_{x}(\ell_{1})$ on $L_{x}^{\ot d}$. This shows \eqref{Tr Lx} and finishes the proof of the proposition. 
\end{proof}

\begin{bibdiv}
\begin{biblist}


\bib{BS}{article}{
   author={Blum, A.},
   author={Stuhler, U.},
   title={Drinfeld modules and elliptic sheaves},
   conference={
      title={Vector bundles on curves---new directions},
      address={Cetraro},
      date={1995},
   },
   book={
      series={Lecture Notes in Math.},
      volume={1649},
      publisher={Springer, Berlin},
   },
   date={1997},
   pages={110--193},
   review={\MR{1605029}},
   doi={10.1007/BFb0094426},
}

%

\bib{Dr ell}{article}{
   author={Drinfel\cprime d, V. G.},
   title={Elliptic modules},
   language={Russian},
   journal={Mat. Sb. (N.S.)},
   volume={94(136)},
   date={1974},
   pages={594--627, 656},
   review={\MR{0384707}},
}

\bib{Dr ICM}{article}{
   author={Drinfel\cprime d, V. G.},
   title={Langlands' conjecture for ${\rm GL}(2)$\ over functional fields},
   conference={
      title={Proceedings of the International Congress of Mathematicians
      (Helsinki, 1978)},
   },
   book={
      publisher={Acad. Sci. Fennica, Helsinki},
   },
   date={1980},
   pages={565--574},
   review={\MR{562656}},
}



\bib{GZ}{article}{
   author={Gross, Benedict H.},
   author={Zagier, Don B.},
   title={Heegner points and derivatives of $L$-series},
   journal={Invent. Math.},
   volume={84},
   date={1986},
   number={2},
   pages={225--320},
   issn={0020-9910},
   review={\MR{833192 (87j:11057)}},
   doi={10.1007/BF01388809},
}

\bib{J86}{article}{
   author={Jacquet, Herv{\'e}},
   title={Sur un r\'esultat de Waldspurger},
   language={French},
   journal={Ann. Sci. \'Ecole Norm. Sup. (4)},
   volume={19},
   date={1986},
   number={2},
   pages={185--229},
   issn={0012-9593},
   review={\MR{868299 (88d:11051)}},
}

\bib{VL}{article}{
author={Lafforgue, Vincent},
title={Chtoucas pour les groupes r\'eductifs et param\`etrisation de Langlands globale},
eprint={arXiv:1209.5352},
date={2012},
}


%


\bib{T}{article}{
   author={Tate, John},
   title={On the conjectures of Birch and Swinnerton-Dyer and a geometric
   analog},
   conference={
      title={Dix expos\'es sur la cohomologie des sch\'emas},
   },
   book={
      series={Adv. Stud. Pure Math.},
      volume={3},
      publisher={North-Holland, Amsterdam},
   },
   date={1968},
   pages={189--214},
   review={\MR{3202555}},
}

\bib{Va}{article}{
   author={Varshavsky, Yakov},
   title={Moduli spaces of principal $F$-bundles},
   journal={Selecta Math. (N.S.)},
   volume={10},
   date={2004},
   number={1},
   pages={131--166},
   issn={1022-1824},
   review={\MR{2061225 (2005m:14043)}},
   doi={10.1007/s00029-004-0343-0},
}

%

\bib{W}{article}{
   author={Waldspurger, J.-L.},
   title={Sur les valeurs de certaines fonctions $L$ automorphes en leur
   centre de sym\'etrie},
   language={French},
   journal={Compositio Math.},
   volume={54},
   date={1985},
   number={2},
   pages={173--242},
   issn={0010-437X},
   review={\MR{783511 (87g:11061b)}},
}




\bib{YZ}{article}{
   author={Yun, Zhiwei},
   author={Zhang, Wei},
   title={Shtukas and the Taylor expansion of $L$-functions},
   journal={Ann. of Math. (2)},
   volume={186},
   date={2017},
   number={3},
   pages={767--911},
   issn={0003-486X},
   review={\MR{3702678}},
}




\end{biblist}
\end{bibdiv}

\end{document}